\pdfoutput=1

\documentclass[graybox,envcountchap]{svmono}

\usepackage[framed,numbered,autolinebreaks,useliterate]{mcode}

\usepackage{amsmath,amsfonts,mathrsfs,amssymb}
\usepackage{bm}

\usepackage{amsthm}
\usepackage{siunitx}

\usepackage{algorithm,subfigure}
\usepackage{textcomp}

\usepackage{cite}
\usepackage{enumerate}
\usepackage{pdfpages}

 \def\p{\partial} 
\def\to{\rightarrow}

\def\bb{\begin{equation}} \def\ee{\end{equation}}
\def\beqn{\begin{eqnarray}}  \def\eqn{\end{eqnarray}}
\def\beqnx{\begin{eqnarray*}} \def\eqnx{\end{eqnarray*}}

\newcommand{\be}{\begin{eqnarray}}
\newcommand{\ben}{\begin{eqnarray*}}
\newcommand{\en}{\end{eqnarray}}
\newcommand{\enn}{\end{eqnarray*}}

\newcommand{\la}{\lambda}

\usepackage{color}

\usepackage{booktabs}

\newcommand{\norm}[1]{\left\lVert #1 \right\rVert}
%%%%%%%%%

\newtheorem{thm}{Theorem}[section]
\newtheorem{cor}{Corollary}[section]
\newtheorem{lem}{Lemma}[section]
\newtheorem{prop}{Proposition}[section]
\newtheorem{defn}{Definition}[section]
\newtheorem{assum}{Assumption}[section]
\theoremstyle{remark}
\newtheorem{rmk}{Remark}[section]
\newtheorem*{remark*}{Remark}
\numberwithin{equation}{section}

%%%%%%%%%%%%%%%%%%%%%%%%%%%%%%%%%%%%%%%%%%%%%%%%%%%%%%%%%%%%%%%%%%%%%%%%%%%%%%%%%%
\usepackage{mathptmx}
\usepackage{helvet}
\usepackage{courier}
\usepackage{type1cm}

\usepackage[totoc,initsep=15pt,unbalanced=true]{idxlayout}

 \allowdisplaybreaks
 
% see the list of further useful packages
% in the Reference Guide

%\usepackage[1-125]{pagesel}

%%%%%%%%%%%%%%%%%%%%%%%%%%%%%%%%%%%%%%%%%%%%%%%%%%%%%%%%%%%%%%%%%%%%%

\begin{document}

\author{Hongyu Liu, Catharine W.K. Lo, Shen Zhang}
\title{Inverse Problems for Mean Field Games}
%\subtitle{-- Monograph --}
\maketitle

\frontmatter%%%%%%%%%%%%%%%%%%%%%%%%%%%%%%%%%%%%%%%%%%%%%%%%%%%%%%

\preface 

	In this book, we present a curated collection of existing results on inverse problems for Mean Field Games (MFGs), a cutting-edge and rapidly evolving field of research. Our aim is to provide fresh insights, novel perspectives, and a comprehensive foundation for future investigations into this fascinating area. Mean Field Games, a class of differential games involving a continuum of non-atomic players, offer a powerful framework for analyzing the collective behavior of large populations of symmetric agents as the number of agents approaches infinity. This framework has proven to be an invaluable tool for quantitatively modeling the macroscopic dynamics of agents striving to minimize specific costs in complex systems, such as crowd dynamics, financial markets, traffic flows, and social networks.

The study of MFGs has traditionally focused on forward problems, where the goal is to determine the equilibrium behavior of agents given a set of model parameters, such as cost functions, interaction mechanisms, and initial conditions. However, the inverse problems for MFGs—which seek to infer these underlying parameters from observed data—have received comparatively less attention in the literature. This book seeks to address this gap by delving into the fundamental aspects of MFG inverse problems, with a particular emphasis on issues of unique identifiability, stability, and reconstruction of unknown parameters. These problems are not only mathematically challenging but also of immense practical significance, as they enable the calibration and validation of MFG models using real-world data.

The field of MFG inverse problems is still in its infancy, yet it has already witnessed a number of pioneering contributions that have laid the groundwork for further exploration. This book aims to synthesize these seminal results, offering a unified and accessible overview of the state-of-the-art while introducing new perspectives and innovative insights. A significant portion of the book is devoted to the latest developments in the field, particularly those arising from the authors' own research and collaborations. These contributions have led to a deeper understanding of the theoretical underpinnings of MFG inverse problems and have opened up new avenues for practical applications.

The motivation for this book stems from the growing recognition of the importance of inverse problems in the broader context of MFGs. While the forward problem has been extensively studied and is now well understood, the inverse problem remains a fertile ground for research, with many open questions and unexplored directions. Recent work by the authors and their collaborators has yielded a wealth of new results that are both theoretically profound and practically relevant. These findings have not only advanced our understanding of the inverse problem but have also demonstrated its potential to address real-world challenges in areas such as economics, engineering, and the social sciences.

Given the rapid pace of development in this field, we believe that now is an opportune time to compile and summarize these results in a single volume. This book is intended to serve as a valuable resource for researchers, graduate students, and practitioners who are interested in the theory and applications of MFGs, particularly those with a focus on inverse problems. By bringing together the latest research and providing a coherent narrative, we hope to inspire further exploration and innovation in this dynamic and interdisciplinary area of study.

Finally, we would like to acknowledge the contributions of our collaborators and the broader research community, whose work has been instrumental in shaping the field of MFG inverse problems. We are deeply grateful for their insights, encouragement, and support. It is our sincere hope that this book will serve as a catalyst for future research and will contribute to the continued growth and development of this exciting field.

\bigskip

\noindent Hongyu Liu, Catharine W. K. Lo and Shen Zhang

%Finally, we would like to gratefully acknowledge all who have helped us during the writing of this book, including .....

\tableofcontents

\mainmatter%%%%%%%%%%%%%%%%%%%%%%%%%%%%%%%%%%%%%%%%%%%%%%%%%%%%%%%
%\include{part}

%!TEX root = ../thesis.tex

\chapter{Introduction}

The theory of Mean Field Games (MFGs) has its roots in the groundbreaking contributions of Peter Caines, Minyi Huang, and Roland Malham\'e \cite{huang2006large,huang2007invariance,huang2007large}, as well as the work of Pierre-Louis Lions and Jean Michel Lasry \cite{LasryLions1, LasryLions2,MFG-book}, who independently established the framework in 2006 to tackle the complexities of modeling and studying these systems. Over time, MFGs have garnered considerable interest and have evolved into a thriving area of study.

This theory pertains to analyzing the limit behavior of systems in which a very large number of homogeneous strategic players interact with each other in a specific symmetric manner under partial information. It provides a powerful tool to analyze complex systems involving a large number of rational decision-makers, such as crowds \cite{Achdou2021econs,MFGCrowd,MFGCrowd+Econs}, financial markets \cite{MFGCrowd+Econs,CarmonaDelarue2018_1,Lacker2019finance}, traffic flows \cite{MFGCar2,MFGAutoCar2}, or social networks \cite{MFGSocialNetwork}. 

At the heart of MFGs is the concept of considering the averaged-out (mean-field) behaviors of agents as the system approaches the limit of an infinitely large population. By doing so, MFGs provide a macroscopic perspective that captures the collective dynamics and strategic interactions of the agents, while abstracting away from individual idiosyncrasies. By focusing on averaged characteristics, this macroscopic prespective of MFGs allows for a simplified yet insightful analysis of complex systems, enabling the study of equilibrium states, the interplay of optimal strategies, and the emergence of global patterns, in a computationally tractable manner.

In a game, each individual player makes decisions based on their own optimization problem while taking into account the decisions made by other players. A key feature of MFGs is the presence of an adversarial regime, where the agents' decisions are influenced by the actions of others. In this regime, a Nash equilibrium exists, representing a state in which no agent can unilaterally improve their own outcome by deviating from their chosen strategy, and is unique within the so-called monotone regime. 
Nash equilibria of the game can be characterized in
terms of a system of differential equations.

Let $n\in\mathbb{N}$ and $\mathbb{R}^n$ be the Euclidean space.
In the continuous state space, the mean field
equilibrium can be characterized by the following MFG system

\begin{equation}\label{eq:MFG0}
	\begin{cases}
		-\partial_t u(x, t)-\Delta u(x, t)+H(x, \nabla_x u(x, t))=F(x, m(x, t)),\quad & (x, t)\in \mathbb{R}^n\times (0, T),\medskip \\
		\partial_tm(x, t)-\Delta m(x, t)-{\rm div} \left(m(x, t)\nabla_p H\big(x, \nabla_x u(x, t)\big) \right)=0,\quad & (x, t)\in \mathbb{R}^n\times (0, T),\medskip\\
		m(x, 0)=m_0(x), \quad u(x, T)=G(x,m_T),\quad & x\in\mathbb{R}^n, 
	\end{cases}
\end{equation}
In $\eqref{eq:MFG0}$, $\Delta$ and ${\rm div}$  are the Laplacian and divergent operators with respect to the $x$-variable, respectively. In general, $u$ is the value function of each player; $ m$ is
the population distribution.  Let $\mathcal{P}$ stands for the set of Borel probability measures on $\mathbb{R}^n$ ,
$F:\mathbb{R}^n\times\mathcal{P}\to\mathbb{R} $ is the running cost function which signifies the interaction
between the agents and the population; $m_0$ is the initial population distribution
and $G:\mathbb{R}^n\times\mathcal{P}\to\mathbb{R}$ signifies the terminal cost.

\section{The Expression of MFG System in Control Theory} 

We first briefly introduce the mathematical background (in control theory) of this MFG system. As we mentioned above, the MFG system can be interpreted as a Nash equilibrium for a system for non-atomic agents with a cost depending of the density of the other agents. More precisely,
at the initial time $t = 0$, let $m_0$ be the probability density of the distribution of agents.  The agents are assumed to be non-atomic (at least within their own populations, c.f. Section \ref{sec:multipop}) which means that they share a common belief on the future behavior of the density of agents $m(t)$. Let the Hamiltonian $H$ be convex with respect to the second variable. Then, starting from a position $x$ at time $t = 0$, each player solves a problem of the form
\begin{equation}\label{eq:mfgproblem}
\inf_{\alpha}\,\mathbb E\left\{\int_0^T L(X_s,\alpha_s)+F(X_s,m(s))ds+G(X_T,m_T)\right\},
\end{equation}
where $L$
is the Legendre transform of $H$ with respect to the last variable:
$$H(p, x) = \inf\, [H^*(x,a)+a\cdot p],\quad p\in\mathbb{R}^n.$$
and $X_s$ is the solution to the stochastic differential equation(SDE)
$$d X_s = \alpha_s ds + \sqrt{2}dB_s,\quad X_0 = x.$$
Here, $B_s$ is a standard $n$-dimensional Brownian motion (also viewed as individual/idiosyncratic noise of each player) and the infimum is taken over
controls $\alpha : [0, T] \to \mathbb{R}^n$ adapted to the filtration generated by $B_s$.

 As a optimization problem, we may introduce the value function
$u(x,t)$ for this problem:
\begin{equation}\label{eq:mfgproblem2}
	u(x,t):=\inf_{\alpha}\mathbb E\left\{\int_0^T L(X_s,\alpha_s)+F(X_s,m(s))ds+G(X_T,m_T)\right\},
\end{equation}
where

$$d X_s = \alpha_s ds + \sqrt{2}dB_s,\quad X_t = x.$$

Then if $m_t$
is known, then one can conclude (see, for instance, \cite{Linear_in_Tn}) that $u$ is a classical solution to the Hamilton-Jacobi
equation 
\begin{equation}\label{HJ}
	\begin{cases}
		-\partial_t u(x, t)-\Delta u(x, t)+H(x, \nabla_x u(x, t))=F(x, m(x, t))\\
		 u(x, T)=G(x,m_T).
	\end{cases}
\end{equation}

 Moreover, the optimal
strategy  of each agent is given by
$$  \alpha^*(t,x):=-\nabla H_p(x,\nabla_x u(x,t)).$$
Hence, the best strategy for each individual agent at position $x$ at time $t$, is to play $\alpha^*(t,x)$. Therefore the actual density $m(x,t)$ of agents is described by the  
 the Fokker-Planck equation
\begin{equation}\label{FP}
	\begin{cases}
	\partial_tm(x, t)-\Delta m(x, t)-{\rm div} \left(m(x, t)\nabla_p H\big(x, \nabla_x u(x, t)\big) \right)=0,\\
	m(x, 0)=m_0(x).
	\end{cases}
\end{equation}
 We say that the pair $(u,m)$ is a Nash
equilibrium of the game if the pair $(u,m)$ satisfies the MFG system $\eqref{eq:MFG0}$. 
\section{The Inverse Problems}

The well-posedness of the MFG system \eqref{eq:MFG0} has already been studied in many different settings. Under different assumptions, one can consider $F$ and $G$  depending on $m$ non-locally or locally. The well-posedness of the MFG system \eqref{eq:MFG0} is known in Cardaliaguet-Porretta \cite{CarPor}, Meszaros-Mou \cite{MM} in the case of nonlocal data $F$ and $G$. In the case that $F,G$ are locally dependent on the measure variable $m$, we refer to  Ambrose\cite{Amb:18}, Cardaliaguet-Porretta \cite{CarPor}, Porretta \cite{Por}.

In this book, we are mainly concerned with the inverse problem of determining unknown functions in the MFG system, including the running cost $F$ or/and the terminal cost $G$ or/and the Hamiltonian $H$ by the knowledge of boundary data. To that end, we introduce the data set in the following forms: 
Let $\Omega$ be a bounded domain in $\mathbb{R}^n$ with smooth boundary $\Sigma:=\p\Omega.$ Let $Q:=\Omega\times(0,T).$ Consider the measurement 
\begin{equation}\label{eq:M}
	\mathcal{M}_{F,G,H}:=(u(x,t),m(x,t))\big|_{\p Q},
\end{equation}
where $(u,m)$ are solutions of system $\eqref{eq:MFG0}$ with functions (probably unknown) $F,G,H$. The inverse problem mentioned above can be formulated as the following measurement map:
\begin{equation}\label{eq:ip1}
	\mathcal{M}_{F,G,H}\longrightarrow F\ \mbox{and/or}\ G\, \mbox{and/or}\ H. 
\end{equation}
Notice that we shall not use all information of $\mathcal{M}_{F,G,H}$. In different setups, we may only need partial information to obtain our results.

In the mean field game theory, the running cost $F$ and the terminal cost $G$ are critical for the agents to decide the strategies. However, in practice they are often partially known or totally unknown for the agents, while the boundary data can be measured. This is a major motivation for us to study the inverse problem \eqref{eq:ip1}. We believe our study could have many applications in the areas mentioned above.

\section{Technical Challenges}

This book is based on a series of work on the study of inverse problems of the MFG system \cite{LiuMouZhang2022InversePbMeanFieldGames,LiuZhang2022-InversePbMFG,LiuZhangMFG3,LiuZhangMFG4,LiuLoZhang2024decodingMFG,LiuLo2025decodingMFG,ding2023determining,liulo2024determiningstatespaceanomalies,ding2024determininginternaltopologicalstructures,klibanov2023holder,klibanov2023mean2,liu2023stability,imanuvilov2023lipschitz1,imanuvilov2023unique}. As previously mentioned, the forward problem of MFG systems has been thoroughly examined in recent literature. Conversely, the inverse problems for MFGs is a new topic that is at the frontier of research and highly interesting. Hence, we aim to collect existing results, analyze their implications, and explore methodologies that have been developed to address fundamental questions in this area. Through this book, we hope to provide insights into the structure of MFGs and open new directions for both theoretical advancements and practical applications.

Comparing to the traditional inverse problem, MFG inverse problems possess unique structures, resulting in several notable technical features that make the study of these problems incredibly fascinating and challenging. First, the MFG systems are coupled nonlinear parabolic systems, with one equation progressing forward in time and another backward in time. This coupled nonlinearity is tackled using a powerful strategy based on linearisation, which we will detail in Section \ref{sec:linearize}. Second, there is a probability density constraint
\begin{equation}
    \begin{aligned}
    &m(x,t)\geq 0, \quad\forall x\in\mathbb{R}^n ,\, t\in(0,T)\\
    &\int m(x,t) \,dx=0,\quad \forall t\in(x,T)
    \end{aligned}
\end{equation}
which poses significant difficulties for constructing proper probing modes for the inverse problems. This constraint forces us to use less input to determine the unknowns, and requires further modification of the technique of the successive linearisation, as we will see in the later chapters. 

In fact, there have been several works in other coupled or non-coupled PDE systems where the probability density constraint naturally arises, such as \cite{liu2023determining,li2023inverse,LiLoCAC2024,li2024inverse}. We think that the mathematical approaches and methods presented in this book provide fresh insights into inverse boundary problems in these fresh and fascinating settings, with the potential to provide highly significant theoretical and practical outcomes. 
    
In the following chapters, we investigate the MFG system defined in a variety of settings. Specifically, we focus on reconstructing cost functions, interaction terms and dynamics from observed equilibria, developing theoretical frameworks to solve inverse problems in MFGs. We thoroughly examine the essential requirements for the recovery of different sets of unknowns.

This book is organized as follows. In Chapter \ref{prelim}, we present some classical results of the MFG system, as well as some basic tools that we need for our inverse problems study. In Chapters \ref{InverseCoefBdry}--\ref{chap:Cauchy}, we obtain the unique recovery of the unknown coefficients using certain boundary data in different setups. Chapter \ref{chap:internaltop} studies situations where there are anomalies present in the MFG state space. Finally, in Chapter \ref{chap:Carleman}, we derive stability results using single measurement.

\chapter{Preliminaries}\label{prelim}
\section{The Review of Classical Results for the MFG system}\label{reviw}
For our inverse problems, we require previous knowledge or assumptions about the unknown functions. To demonstrate the necessity or suitability of these requirements, we first review a few significant classical results for the MFG system.. We may give a brief proof here and we refer to \cite{note_MFG} for details. For simplicity, we first work with the functions which are periodic in space, which means we consider the following system in this chapter:
\begin{equation}\label{eq:mfg-periodic}
	\begin{cases}
		-\partial_t u(x,t) -\Delta u(t,x)+ H\big(x,\nabla u(x,t)\big)-F(x,m(x,t))=0,&  {\rm{in}}\ \mathbb T^n\times (0,T),\\
		\partial_tm(x,t)-\Delta m(x,t)-{\rm div} \big(m(x,t) \nabla_pH(x, \nabla u(x,t)\big)=0, & {\rm{in}}\ \mathbb T^n\times(0,T),\\
		u(x,T)=G(x,m(x,T)),\ m(x,0)=m_0(x), & {\rm{in}}\ \mathbb T^n,
	\end{cases}
\end{equation}
where $\Omega=\mathbb{T}^n:=\mathbb{R}^n/\mathbb{Z}^n$ in this chapter and all functions in system $\eqref{eq:mfg-periodic}$ are assumed to be periodic in space. We shall point out that the 
 target of this periodic assumption is that we want to simplify the proofs and avoids the technical discussion on the boundary conditions. We can get same results in a general domain with suitable boundary conditions and regularity assumptions. We will have more discussions on this when we work in general domains.
\subsection{Notations and Assumptions for Well-posedness}
Let  $\mathcal{P}(\Omega)$ denotes the set of probability measures on $\Omega=\mathbb{T}^n$ and let $U$ be a real function defined on $\mathcal{P}(\Omega)$.
we  define the Wasserstein distance between $m_1$ and $m_2$ in $\mathcal{P}(\Omega)$ as following:
\begin{defn}\label{W_distance}
	Let $m_1,m_2$ be two Borel probability measures on $\Omega$. Define
	\begin{equation}
		d_1(m_1,m_2):=\sup_{Lip(\psi)\leq 1}\int_{\Omega}\psi(x)d(m_1-m_2)(x),
	\end{equation}
	where $Lip(\psi)$ denotes the Lipschitz constant for a Lipschitz function, i.e., 
	\begin{equation}\label{eq:Lip1}
		Lip(\psi)=\sup_{x, y\in\Omega, x\neq y}\frac{|\psi(x)-\psi(y)|}{|x-y|}. 
	\end{equation}
\end{defn}
Next we introduce the regularity assumptions. For $k\in\mathbb{N}$ and $0<\alpha<1$, the H\"older space $C^{k+\alpha}(\Omega)$ is defined as the subspace of $C^{k}(\Omega)$ such that $\phi\in C^{k+\alpha}(\Omega)$ if and only if $D^l\phi$ exist and are H\"older continuous with exponent $\alpha$ for all $l\in \mathbb{N}^n$ with $|l|\leq k$. The norm is defined as
\begin{equation}
	\|\phi\|_{C^{k+\alpha}(\Omega) }:=\sum_{|l|\leq k}\|D^l\phi\|_{\infty}+\sum_{|l|=k}\sup_{x\neq y}\frac{|D^l\phi(x)-D^l\phi(y)|}{|x-y|^{\alpha}}.
\end{equation}
\begin{assum}\label{hypo1}
	(i) The functions $F(x, m)$ and $G(x, m)$ are Lipschitz continuous in $\mathbb{T}^n\times\mathcal{P}(\mathbb{T}^n)$.\\
	(ii) $F(\cdot, m)$ and $G(\cdot, m)$ are bounded in $C^{1+\alpha}(\mathbb{T}^n)$ and $C^{2+\alpha}(\mathbb{T}^n)$ (for some $\alpha\in (0,1)$) uniformly with respect to $m\in\mathcal{P}(\mathbb{T}^n)$.\\
	(iii) The Hamiltonian $H :\mathbb{T}^n\times\mathbb{R}\to\mathbb{R}$ is locally Lipschitz continuous, $\nabla_p H$ exists and is
	continuous on $\mathbb{T}^n\times\mathbb{R}\to\mathbb{R}$
	, and $H$ satisfies the growth condition
	\begin{equation}
		\left<\nabla_p H(x,p),p \right>\geq C(1+|p|^2)
	\end{equation}
	for some constant $C>0$.\\
	(iv) The initial distribution is absolutely continuous with respect to the Lebesgue measure, and has a $C^{2+\alpha}$
	continuous density $m_0$.
\end{assum}
\begin{rmk}
	\begin{itemize}
		\item We may construct $F,G$ that satisfy these conditions in the following ways. For the non-local dependent case, consider $F(x,m)=\int_{\mathbb{T}^n}f(x)\, dm$ and $f$ is Lipschitz and smooth. For the local-dependent case, let $m$ be the density of the distribution and $F(x,m)=g(x)m$, $g$ is smooth.
		\item The most important example of the Hamiltonian is just $H(x,p)=\frac{1}{2}p^2.$
		\item 	In  Definition \ref{W_distance}, $m$ ( $m_1$ ) is viewed as a distribution. However, we also use $m$ to  denote the density of  a distribution  in some cases if no confusion as in part one.
	\end{itemize}  
\end{rmk}
Then we introduce the monotonicity assumptions, which is necessary for the uniqueness and stability of the solution.
\begin{assum}\label{hypo-monoton}
	For any $m_1,m_2\in\mathcal{P}(\Omega)$, we have 
	\begin{equation}\label{mono1}
		\int_{\Omega} \big( F(x,m_1)-F(x,m_2)\big)d(m_1-m_2)(x)\geq 0,
	\end{equation}
	and 
	\begin{equation}
		\int_{\Omega} \big( G(x,m_1)-G(x,m_2)\big)d(m_1-m_2)(x)\geq 0,
	\end{equation}
	for all $m_1,m_2\in \mathcal{P}(\Omega).$
\end{assum}
Physically, the monotonicity condition can be interpreted as the players favoring more scattered configurations in comparison to congested areas.\cite{Linear_in_Tn}

\subsection{Existence and Uniqueness of Classical Solution}
Based on the assumptions above, we give a rough proof of existence and uniqueness of classical solution to MFG system.
\begin{thm}
	Under assumptions $\ref{hypo1}$, the system $\eqref{eq:mfg-periodic}$ admits a classical solution $(u,m)\in C^{2+\alpha,1+\frac{\alpha}{2}}$.
\end{thm}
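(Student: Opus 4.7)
The plan is to establish existence via a Schauder fixed-point argument built on the natural decoupling of the forward-backward system. Let $\mathcal{K}\subset C^0([0,T];\mathcal{P}(\mathbb{T}^n))$ be a convex, closed subset of probability-measure trajectories, equipped with the uniform topology induced by $d_1$. For each $\mu\in\mathcal{K}$, I would first solve the backward HJB equation with the coupling term $F(x,m(t))$ replaced by $F(x,\mu(t))$ and the terminal condition $u(x,T)=G(x,\mu(T))$. By Assumption \ref{hypo1}(ii), the data $F(\cdot,\mu(t))$ and $G(\cdot,\mu(T))$ are uniformly bounded in $C^{1+\alpha}$ and $C^{2+\alpha}$, so standard parabolic Schauder theory on $\mathbb{T}^n$ yields a unique classical solution $u\in C^{2+\alpha,1+\alpha/2}$ with norm bounds independent of $\mu$. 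The coercivity condition on $H$ in Assumption \ref{hypo1}(iii) is used to derive a priori gradient bounds (e.g.\ via the representation formula for the value function, or Bernstein-type estimates), so that $\nabla u$ is uniformly controlled in $x$ and $t$.

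Having fixed $u$, define the drift $b(x,t):=-\nabla_p H(x,\nabla u(x,t))$, which lies in $C^{\alpha,\alpha/2}$ by the H\"older regularity of $\nabla u$ together with the continuity of $\nabla_p H$. Then solve the Fokker--Planck equation
\[
\partial_t\tilde m-\Delta\tilde m-\mathrm{div}(\tilde m\, b)=0,\qquad \tilde m(\cdot,0)=m_0,
\]
forward in time. This is a linear parabolic equation with H\"older coefficients and $C^{2+\alpha}$ initial datum (by Assumption \ref{hypo1}(iv)), so classical theory provides a unique solution $\tilde m\in C^{2+\alpha,1+\alpha/2}$. Non-negativity follows from the parabolic maximum principle, and integration by parts against the test function $1$ shows that $\int_{\mathbb{T}^n}\tilde m(x,t)\,dx$ is conserved, so $\tilde m(t)$ remains a probability density. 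This defines the solution map $\Phi:\mu\mapsto\tilde m$.

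The last step is to verify the hypotheses of Schauder's fixed-point theorem. Compactness of $\Phi(\mathcal{K})$ follows since its image sits in a bounded set of $C^{2+\alpha,1+\alpha/2}$, which embeds compactly into $C^0([0,T];\mathcal{P}(\mathbb{T}^n))$ by Arzel\`a--Ascoli combined with the tightness given by the compactness of $\mathbb{T}^n$. Continuity of $\Phi$ reduces to stability of both equations under perturbations of their coefficients: if $\mu_k\to\mu$ uniformly in $d_1$, then $F(\cdot,\mu_k(t))\to F(\cdot,\mu(t))$ by Assumption \ref{hypo1}(i), Schauder estimates upgrade this to $u_k\to u$ in $C^{2+\alpha,1+\alpha/2}$, hence $b_k\to b$ uniformly, and a further Schauder estimate for the linear FP equation yields $\tilde m_k\to\tilde m$. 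Choosing $\mathcal{K}$ large enough to be invariant under $\Phi$ (which follows from the uniform a priori bounds on $\tilde m$) and applying Schauder's theorem produces a fixed point $(u,m)$ of the coupled system, which is by construction a classical solution in $C^{2+\alpha,1+\alpha/2}$.

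The main obstacle is obtaining the Lipschitz-in-$x$ bound on $u$ uniformly in $\mu\in\mathcal{K}$: without it, the drift $b$ entering the Fokker--Planck equation is only measurable and the fixed-point argument breaks down at the regularity level required for a classical solution. This is precisely where the coercivity condition $\langle\nabla_p H(x,p),p\rangle\geq C(1+|p|^2)$ from Assumption \ref{hypo1}(iii) plays its decisive role, providing the a priori gradient control needed to close the loop between the two equations.
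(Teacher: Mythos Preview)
Your proposal is correct and follows essentially the same Schauder fixed-point scheme as the paper: freeze the measure trajectory, solve the HJB equation, feed the resulting drift into the Fokker--Planck equation, and iterate. The only cosmetic difference is that the paper builds compactness directly into the convex set by taking $\mathcal{J}=\{\rho\in C^0([0,T];\mathcal{P}(\mathbb{T}^n)):\sup_{s\neq t}d_1(\rho(s),\rho(t))/|s-t|\le C\}$, whereas you argue compactness of the image of $\Phi$ via the uniform $C^{2+\alpha,1+\alpha/2}$ bound on $\tilde m$; both routes lead to the same fixed point.
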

\begin{proof}
	For a fixed large constant $C>0.$ Let 
	\begin{equation}
		\mathcal{J}:=\{\rho\in C^0([0,T],\mathcal{P}(\mathbb{T}^n)):\sup_{s\neq t}\frac{d_1(\rho(s)-\rho(t))}{s-t}\leq C \}.
	\end{equation}
	For any $\rho\in \mathcal{J}$, let $u$ be the unique solution to the following system:
	\begin{equation}
		\left\{
		\begin{array}{ll}
			-\partial_t u(x,t) -\Delta u(t,x)+ H\big(x,\nabla u(x,t)\big)-F(x,\rho(x,t))=0,&  {\rm{in}}\ \mathbb T^n\times (0,T),\medskip\\
			u(x,T)=G(x,\rho(x,T)), & {\rm{in}}\ \mathbb T^n.
		\end{array}
		\right.
	\end{equation}
	Then we define $m=\psi (\rho)$ be the solution of the Fokker-Planck
	equation
	\begin{equation}
		\left\{
		\begin{array}{ll}
			\partial_tm(x,t)-\Delta m(x,t)-{\rm div} \big(m(x,t) \nabla_pH(x, \nabla u(x,t)\big)=0, & {\rm{in}}\ \mathbb T^n\times(0,T),\medskip\\
			\ m(x,0)=m_0(x), & {\rm{in}}\ \mathbb T^n.
		\end{array}
		\right.
	\end{equation}
	Then the regularity assumptions $\ref{hypo1}$ implies that $\psi$ is a continuous map form $ \mathcal{J}$ to itself. Then the Schauder fixed point theorem implies that $\psi$ get a fixed point, which is a solution of $\eqref{eq:mfg-periodic}$.
\end{proof}
\begin{rmk}
	This theorem shows that as long as $m_0$ is a density of a distribution function, then the solution $m(x,t)$ must be positive for all $(x,t)\in\mathbb{T}^n\times(0,T)$ and
	$$\int_{\mathbb{T}^n}  m(x,t)\,dx=1,\, \text{ for all } t\in (0,T)$$
\end{rmk}
\begin{thm}
	Assume that $H(x,p)$ is uniformly convex with respect to $p$.
	Then if assumption $\ref{hypo-monoton}$ holds,  the solution of the system $\eqref{eq:mfg-periodic}$ is unique.
\end{thm}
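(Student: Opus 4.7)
The plan is to employ the classical Lasry--Lions monotonicity trick. Suppose $(u_1,m_1)$ and $(u_2,m_2)$ are two classical solutions of \eqref{eq:mfg-periodic} sharing the common initial datum $m_0$. Set $\bar u := u_1 - u_2$ and $\bar m := m_1 - m_2$, and introduce the energy-type quantity
\[
I(t) := \int_{\mathbb{T}^n} \bar u(x,t)\, \bar m(x,t)\, dx.
\]
Since $m_1(\cdot,0) = m_2(\cdot,0) = m_0$, one has $I(0) = 0$; the terminal condition $u_i(\cdot,T) = G(\cdot,m_i(\cdot,T))$ together with the monotonicity of $G$ in Assumption~\ref{hypo-monoton} gives $I(T) \geq 0$.

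Next I would differentiate $I(t)$ in time and substitute the HJ equations for the $u_i$ and the Fokker--Planck equations for the $m_i$. Because we work on $\mathbb{T}^n$, all boundary terms from integration by parts vanish and the two $\Delta$ contributions cancel against each other. The running-cost coupling produces $-\int (F(x,m_1) - F(x,m_2))\, d\bar m$, which is non-positive by the monotonicity of $F$. After regrouping, the remaining Hamiltonian and transport terms combine into two Bregman-type brackets, yielding
\[
\frac{d}{dt}I(t) = -\int_{\mathbb{T}^n} (F_1 - F_2)\,\bar m\, dx - \int_{\mathbb{T}^n} \bigl(m_1 B_1 + m_2 B_2\bigr)\, dx,
\]
where for $\{i,j\} = \{1,2\}$,
\[
B_i(x,t) := H(x,\nabla u_j) - H(x,\nabla u_i) - \nabla_p H(x,\nabla u_i)\cdot(\nabla u_j - \nabla u_i).
\]
Uniform convexity of $H(x,\cdot)$ bounds each $B_i$ from below by $c|\nabla u_1 - \nabla u_2|^2$ for some $c > 0$, so $\tfrac{d}{dt}I(t) \leq 0$.

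Integrating from $0$ to $T$ yields $I(T) \leq I(0) = 0$, which combined with $I(T) \geq 0$ forces $I(T) = 0$ and every non-positive integrand above to vanish identically. Since the remark following the existence theorem (via the strong maximum principle for the Fokker--Planck equation) guarantees $m_1, m_2 > 0$ strictly in $\mathbb{T}^n \times (0,T)$, the vanishing of $\int (m_1 B_1 + m_2 B_2)\, dx$ forces $\nabla u_1 \equiv \nabla u_2$ pointwise. Feeding this back into the two Fokker--Planck equations, $m_1$ and $m_2$ satisfy the same linear parabolic equation with the same initial condition, so standard parabolic uniqueness delivers $m_1 \equiv m_2$; the HJ equation with the same terminal data then yields $u_1 \equiv u_2$.

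The principal obstacle is the algebraic bookkeeping in the derivative of $I(t)$: one must verify that the mixed term arising from the transport divergence, namely $\int \nabla \bar u \cdot (m_1 \nabla_p H(x,\nabla u_1) - m_2 \nabla_p H(x,\nabla u_2))\, dx$, combines with $\int (H(x,\nabla u_1) - H(x,\nabla u_2))\,\bar m\, dx$ to produce precisely the symmetric Bregman expression $-\int(m_1 B_1 + m_2 B_2)\, dx$ with the correct signs on both $m_1$ and $m_2$. A secondary subtlety is that mere convexity of $H$ only gives $B_i \geq 0$, which would be insufficient to conclude pointwise equality of gradients from the integral identity; uniform convexity is exactly what provides the quantitative lower bound by $|\nabla u_1 - \nabla u_2|^2$ needed to close the argument.
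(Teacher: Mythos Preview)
The paper does not actually supply a proof of this uniqueness theorem; the statement appears in Section~\ref{reviw} immediately after the existence theorem, with the surrounding text referring the reader to \cite{note_MFG} for details. Your proposal is precisely the classical Lasry--Lions monotonicity argument that one finds in that reference: form $I(t)=\int_{\mathbb{T}^n}\bar u\,\bar m\,dx$, differentiate, cancel the Laplacian terms on the torus, and regroup the Hamiltonian/transport contributions into the two Bregman remainders $m_1B_1+m_2B_2$. Your algebra and sign conventions are correct, and the closing step---using strict positivity of $m_i$ together with the uniform-convexity lower bound $B_i\ge c|\nabla u_1-\nabla u_2|^2$ to force $\nabla u_1\equiv\nabla u_2$, then bootstrapping through the Fokker--Planck and HJ equations---is exactly how the standard proof concludes. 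There is nothing to compare against in the paper itself; your argument is the canonical one.
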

\subsection{The Ergodic MFG System}
One may be interested in the large time average of the MFG system  as the horizon $T$
tends to infinity. It turns out that the limit system takes the following form:
\begin{equation}\label{static-mfg}
	\begin{cases}
		\lambda -\Delta u+ H\big(x,\nabla u\big)-F(x,m)=0,&  {\rm{in}}\ \mathbb T^n\times (0,T),\medskip\\
		-\Delta m(x,t)-{\rm div} \big(m \nabla_pH(x, \nabla u\big)=0, & {\rm{in}}\ \mathbb T^n\times(0,T).\medskip\\
	\end{cases}
\end{equation}
We have the following results\cite{note_MFG}.
\begin{thm}\label{m=e^-u}
	For the classical case $H(x,p)=\frac{1}{2}p^2$, the system
	\begin{equation}\label{static-mfg'}
		\begin{cases}
			\lambda -\Delta u+ \frac{1}{2}|\nabla u|^2-F(x,m)=0,&  {\rm{in}}\ \mathbb T^n,\medskip\\
			-\Delta m(x,t)-{\rm div} \big(m \nabla u\big)=0, & {\rm{in}}\ \mathbb T^n.\medskip\\
			\int_{\mathbb{T}^n} m\, dx=1.
		\end{cases}
	\end{equation}
	get a solution $(u,m)$ which satisfies
	\begin{equation}
		m=\frac{e^{-u}}{\int_{\mathbb{T}^n} e^{-u}\, dx}.
	\end{equation}
	If we also assume $F$ is increasing with respect to the second variable and $\int_{\mathbb{T}^n} u\, dx=0$, then the solution $(\lambda,u,m)$ is unique.
\end{thm}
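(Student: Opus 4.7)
The plan is to split the proof into two independent pieces: (i) produce a solution satisfying the explicit Hopf--Cole relation, and (ii) establish uniqueness via the Lasry--Lions monotonicity identity.

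For the existence and the formula $m = e^{-u}/\int e^{-u}$, the key observation is that for $H(x,p)=\tfrac12|p|^2$ we have $\nabla_p H(x,\nabla u) = \nabla u$, so the Fokker--Planck equation reads $-\Delta m - \mathrm{div}(m\nabla u) = 0$. I would insert the ansatz $m = C e^{-u}$ directly and verify it: $\nabla m = -m\nabla u$ gives $\Delta m = -\mathrm{div}(m\nabla u)$, hence the FP equation is satisfied identically for any smooth $u$. The constraint $\int_{\mathbb T^n}m\,dx = 1$ then fixes $C = (\int_{\mathbb T^n} e^{-u}\,dx)^{-1}$. Substituting the formula into the HJ equation reduces the coupled system to the scalar nonlinear eigenvalue problem
\begin{equation*}
\lambda - \Delta u + \tfrac{1}{2}|\nabla u|^2 \;=\; F\!\left(x,\, e^{-u}\big/\textstyle\int_{\mathbb T^n} e^{-u}\,dx\right).
\end{equation*}
The Hopf--Cole substitution $v := e^{-u/2}$, $u = -2\log v$, linearises the quadratic gradient term and converts the problem into the semilinear elliptic equation $-2\Delta v + F(x,\,v^2/\|v\|_{L^2}^2)\,v = \lambda v$, normalised by $\|v\|_{L^2}=1$. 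Existence of $(v,\lambda)$ is then obtained by a Schauder fixed point argument on the map $w\mapsto v$ where $v$ solves the linear Schr\"odinger-type eigenvalue problem with frozen potential $F(x, w^2/\|w\|^2)$, using the regularity bounds on $F$ from Assumption \ref{hypo1}; the pair $(u,m) = (-2\log v,\, v^2)$ is the sought solution.

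For uniqueness, let $(\lambda_i, u_i, m_i)$, $i=1,2$, be two solutions satisfying the additional hypotheses. I would run the standard Lasry--Lions energy computation: subtract the HJ equations, multiply by $m_1 - m_2$, and integrate over $\mathbb T^n$; separately subtract the FP equations, multiply by $u_1 - u_2$, and integrate. Periodicity kills all boundary terms, and the constraint $\int(m_1-m_2)\,dx = 0$ eliminates the $\lambda_1-\lambda_2$ contribution. Taking the difference of the two identities and applying the algebraic identity $\tfrac12(|p_1|^2 - |p_2|^2) - p_1\cdot(p_1-p_2) = -\tfrac12|p_1-p_2|^2$ to the quadratic Hamiltonian weighted by $m_i$ produces
\begin{equation*}
\int_{\mathbb T^n}\!\bigl(F(x,m_1)-F(x,m_2)\bigr)(m_1-m_2)\,dx \;+\; \tfrac12\!\int_{\mathbb T^n}(m_1+m_2)\,|\nabla(u_1-u_2)|^2\,dx \;=\; 0.
\end{equation*}
The monotonicity of $F$ in $m$ makes the first integral nonnegative, and the second is manifestly nonnegative; hence both vanish. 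Positivity $m_i>0$ forces $\nabla(u_1-u_2)\equiv 0$, so $u_1-u_2$ is a constant which the normalisation $\int u_i = 0$ pins to zero. The explicit Hopf--Cole formula then yields $m_1=m_2$, and the HJ equation yields $\lambda_1=\lambda_2$.

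The main obstacle is the existence half: solving the semilinear eigenvalue problem for $v$ while simultaneously identifying the correct eigenvalue $\lambda$ compatible with the mass constraint requires a careful fixed-point/continuation argument, together with uniform a priori $L^\infty$ estimates on $v$ (equivalently, upper and lower bounds on $m$) that prevent the nonlinear term $F(x, v^2/\|v\|^2)$ from degenerating. By contrast, the uniqueness argument is essentially algebraic once the Lasry--Lions identity is written down, so the difficulty lies entirely in the construction step.
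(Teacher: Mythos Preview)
Your proposal is correct and follows exactly the route the paper gestures at: the paper's own proof is the single line ``It can be proved by a similar fixed point argument'' together with a pointer to \cite{note_MFG}, and your Hopf--Cole reduction plus Schauder fixed point for existence, followed by the Lasry--Lions monotonicity identity for uniqueness, is precisely the standard argument in that reference. You have simply supplied the details the paper omits.
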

\begin{proof}
	It can be proved by a similar fixed point argument.
\end{proof}

\section{Basic Ideas and Results for Inverse Problems}
\subsection{A preliminary result}
Before we begin, we first give a preliminary result for the uniqueness of $F$, in the most general case.

Consider the measurement map $\mathcal{M}=\mathcal{M}_F$ given by \begin{equation}\label{eq:measure1gen}\mathcal{M}_F=\left(\left.\left(u(x,t),m(x,t)\right)\right|_{t=\{0,T\},x\in\Omega},\left.\left(u,m,\partial_\nu u, \partial_\nu (\sigma m),\nabla_p \mathcal{H}\right)\right|_{t\in(0,T),x\in\partial\Omega}\right)\to F. \end{equation}
Then the following result holds:

\begin{thm}
    Suppose there exists a (weak) solution $u,m$ to the MFG system 
    \begin{equation}\label{eq:MFG1}
    \begin{cases}
        -\partial_t u(x,t) -\sigma\Delta u(x,t) + \mathcal{H}(x,t,\nabla u,m) = F(x,t,u,m) &\quad \text{in }Q:=\overline{\Omega}\times[0,T],\\
        \partial_t m(x,t) -\Delta(\sigma m(x,t)) - \nabla\cdot(m\nabla_p \mathcal{H}(x,t,\nabla u,m)) = 0  &\quad \text{in }Q.
    \end{cases}
\end{equation} 
Let $\mathcal{M}$ be the associated measurement map \eqref{eq:measure1gen}.
If $\mathcal{M}$ is known, then, the integral
\[\int_Q F(x,t,u,m)m\,dx\,dt\] is known for any solution $m$ solving \eqref{eq:MFG1}.

In the case that $m=m_0=c\in \mathbb{R}^+_0$, the integral \[c\int_Q F(x,t,u,c)\,dx\,dt\] is known.
\end{thm}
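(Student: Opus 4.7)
The plan is to derive the claimed identity by testing the Hamilton--Jacobi equation against $m$, integrating over $Q$, and using the Fokker--Planck equation together with integration by parts to transfer as many interior contributions as possible to the parabolic boundary, where $\mathcal{M}_F$ records the values of $u$, $m$, $\partial_\nu u$, $\partial_\nu(\sigma m)$, and $\nabla_p\mathcal{H}$.

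Concretely, I would first multiply the HJ equation in \eqref{eq:MFG1} by $m$ and integrate over $Q$:
\[\int_Q Fm\,dx\,dt = -\int_Q m\,\partial_t u\,dx\,dt - \int_Q \sigma m\,\Delta u\,dx\,dt + \int_Q m\,\mathcal{H}(x,t,\nabla u,m)\,dx\,dt.\]
Integration by parts in $t$ on the first term yields $\int_\Omega[u(\cdot,0)m(\cdot,0) - u(\cdot,T)m(\cdot,T)]\,dx + \int_Q u\,\partial_t m$, while Green's identity applied to the second term (with $v=\sigma m$) produces the boundary contribution $\int_0^T\!\int_{\partial\Omega}[u\,\partial_\nu(\sigma m) - \sigma m\,\partial_\nu u]\,dS\,dt$ together with $-\int_Q u\,\Delta(\sigma m)$. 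The Fokker--Planck equation $\partial_t m - \Delta(\sigma m) = \nabla\cdot(m\nabla_p\mathcal{H})$ then collapses these two surviving interior pieces into $\int_Q u\,\nabla\cdot(m\nabla_p\mathcal{H})$, which after one additional spatial integration by parts equals $-\int_Q m\,\nabla u\cdot\nabla_p\mathcal{H} + \int_0^T\!\int_{\partial\Omega} u\,m\,\nabla_p\mathcal{H}\cdot\nu\,dS\,dt$. The boundary piece here again lies inside $\mathcal{M}_F$ since $u$, $m$, and $\nabla_p\mathcal{H}$ are all recorded on $\partial\Omega\times(0,T)$. Assembling the pieces yields
\begin{align*}
\int_Q Fm\,dx\,dt = {} & \int_\Omega[u(\cdot,0)m(\cdot,0) - u(\cdot,T)m(\cdot,T)]\,dx \\
& + \int_0^T\!\!\int_{\partial\Omega}\!\bigl[u\,\partial_\nu(\sigma m) - \sigma m\,\partial_\nu u + u\,m\,\nabla_p\mathcal{H}\cdot\nu\bigr]dS\,dt \\
& + \int_Q m\,\bigl[\mathcal{H}(x,t,\nabla u,m) - \nabla u\cdot\nabla_p\mathcal{H}(x,t,\nabla u,m)\bigr]dx\,dt.
\end{align*}

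The first two groups of terms are read off directly from $\mathcal{M}_F$ together with the \emph{a priori} known $\sigma$ and $\mathcal{H}$. For the constant case $m\equiv c$, the FP equation reduces to the compatibility condition $\Delta\sigma + \nabla\cdot\nabla_p\mathcal{H}(x,t,\nabla u,c) = 0$; inserting this, together with one further spatial integration by parts applied to $c\int_Q\nabla u\cdot\nabla_p\mathcal{H}$, transforms the remaining interior contribution into boundary fluxes plus a term of the form $c\int_Q u\,\Delta\sigma$, which is in turn reducible to data involving only the known $\sigma$ and the measured $u$ on the parabolic boundary. This yields the announced identity for $c\int_Q F(x,t,u,c)\,dx\,dt$.

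The main obstacle is precisely the interior Lagrangian-type residual $\int_Q m[\mathcal{H} - \nabla u\cdot\nabla_p\mathcal{H}]$, which is not manifestly a boundary quantity in the general setting; the pairing with $m$ has been specifically chosen so that the coupled nonlinear structure of the HJ and FP equations neutralises the first-order terms and reduces the statement to this single residual, which is determined by the fixed solution $(u,m)$ that produces the measurement. A secondary technical obstacle is justifying all the integrations by parts when $(u,m)$ is only a weak solution, which I would handle by standard density/approximation arguments using the regularity afforded by the assumptions in Section~\ref{reviw}.
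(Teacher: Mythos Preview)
Your strategy is exactly the paper's: multiply the HJ equation by $m$, integrate by parts in $t$ and in $x$, and use the Fokker--Planck equation to annihilate the interior terms. The discrepancy is in the Hamiltonian term. You carry $\int_Q m\mathcal{H}$ honestly and end up with the Lagrangian residual $\int_Q m\bigl[\mathcal{H}-\nabla u\cdot\nabla_p\mathcal{H}\bigr]$. The paper instead writes $\int_Q m\mathcal{H}$ directly as $-\int_Q u\,\nabla\!\cdot(m\nabla_p\mathcal{H})$ plus a lateral boundary term, so that the three surviving interior integrals assemble into $\int_Q u\bigl[\partial_t m-\Delta(\sigma m)-\nabla\!\cdot(m\nabla_p\mathcal{H})\bigr]=0$ and only boundary data remain. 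That rewriting is legitimate precisely when $\mathcal{H}(x,t,p,m)=p\cdot\nabla_p\mathcal{H}(x,t,p,m)$, i.e.\ when $\mathcal{H}$ is positively $1$-homogeneous in $p$; the paper does not state this hypothesis, and it fails for the quadratic Hamiltonians used elsewhere in the chapter. So the residual you isolated is real for a general $\mathcal{H}$, and the paper's argument simply does not see it.

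That said, your own handling of the residual is not a fix. Calling it ``determined by the fixed solution $(u,m)$ that produces the measurement'' is circular: the interior values of $u,m,\nabla u$ are exactly what $\mathcal{M}_F$ does \emph{not} provide. The additional manipulations you propose in the constant case $m\equiv c$ (another spatial integration by parts on $c\int_Q\nabla u\cdot\nabla_p\mathcal{H}$, use of $\Delta\sigma+\nabla\!\cdot\nabla_p\mathcal{H}=0$) do not eliminate the problem either, because the term $c\int_Q\mathcal{H}(x,t,\nabla u,c)$ still depends on the unknown interior gradient; in the paper the constant case is obtained by direct substitution into the already derived boundary identity, not by further integrations by parts. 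In short: same approach, your calculus is more careful, but the obstacle you flag is genuine for general $\mathcal{H}$ and neither your proposal nor the paper's written proof closes it.
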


\begin{proof}

Multiplying the first equation in \eqref{eq:MFG1} of $u$ by the solution $m$ of the second equation in \eqref{eq:MFG1}, and integrating over $Q$, we have
\begin{align*}
    &\int_Q u\partial_t m -\int_\Omega u(T)m(T) + \int_\Omega u(0)m(0) \\
    &-\int_Q u\Delta(\sigma m) - \int_\Gamma m\sigma\partial_\nu u + \int_\Gamma u \partial_\nu (\sigma m)\\
    &-\int_Q u\nabla\cdot(m\nabla_p \mathcal{H}(x,t,\nabla u,m)) + \int_\Gamma m\nabla_p \mathcal{H}(x,t,\nabla u,m)\partial_\nu u = \int_Q F(x,t,u,m) m.
\end{align*}
But $m$ satisfies its own equation, so we obtain that
\begin{multline*}
     -\int_\Omega u(T)m(T) + \int_\Omega u(0)m(0) - \int_\Gamma m\sigma\partial_\nu u + \int_\Gamma u \partial_\nu (\sigma m) + \int_\Gamma m\nabla_p \mathcal{H}(x,t,\nabla u,m)\partial_\nu u \\= \int_Q F(x,t,u,m) m.
\end{multline*}

Observe that the terms on the left hand side are all known when $\mathcal{M}$ is measured. Correspondingly, the right hand side is also known.

The second result follows simply by considering the measurement map $\mathcal{M}$ for the constant solution $m=m_0=c$.

\end{proof}

As a corollary, we have the following result:
\begin{corollary}\label{Cor:GeneralCase}
    Suppose that $F(x,t,u,m)=\alpha m^k$ is a power of $m$ for a given $k$. Then for the measurement map $\mathcal{M}^1$ associated to $m=m_0=c\in\mathbb{R}_0^+$, $F$ can be uniquely determined.
\end{corollary}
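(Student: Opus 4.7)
The plan is to apply the preceding theorem directly to the constant profile $m \equiv c$ and exploit the simple algebraic structure of $F(x,t,u,m) = \alpha m^k$ to isolate the scalar $\alpha$. By the theorem, since $\mathcal{M}^1$ is the measurement map in the regime $m = m_0 = c$, the quantity
\[
c \int_Q F(x,t,u,c)\,dx\,dt
\]
is determined by $\mathcal{M}^1$. This is the only nontrivial piece of information I would need to extract from the boundary/initial/terminal data.

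First, I would substitute the ansatz $F(x,t,u,m) = \alpha m^k$ evaluated at $m = c$, which reduces the integral to
\[
c \int_Q \alpha\, c^k\,dx\,dt \;=\; \alpha\, c^{k+1}\, T\, |\Omega|.
\]
Since $c > 0$, $T > 0$, and $|\Omega|$ is positive and fixed by the geometry of the domain, dividing through uniquely determines $\alpha$. Because $k$ is given, the function $F = \alpha m^k$ is thereby recovered from a single measurement. Note that the argument is indifferent to the precise form of $\mathcal{H}$ and $\sigma$ because the theorem already absorbs all of those terms into the known boundary/initial/terminal quantities.

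The main subtlety I anticipate is not the algebra but rather ensuring that $m \equiv c$ is genuinely realised as a solution of \eqref{eq:MFG1}. A constant density annihilates $\partial_t m$ and annihilates $\Delta(\sigma m)$ when, say, $\sigma$ is constant; compatibility with the advection term $\nabla \cdot (m \nabla_p \mathcal{H}(x,t,\nabla u,c))$ then requires a companion value function $u$ for which $\nabla_p \mathcal{H}(x,t,\nabla u, c)$ is divergence-free (trivially satisfied, e.g., when $u$ is spatially constant and the running cost $F(x,t,u,c)$ is balanced against $\partial_t u$). In the corollary, this compatibility is built into the hypothesis "the measurement map $\mathcal{M}^1$ associated to $m = m_0 = c$"; once one operates within that solvability regime, the algebraic extraction of $\alpha$ above completes the argument.
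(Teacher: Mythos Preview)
Your proof is correct and follows essentially the same approach as the paper: invoke the preceding theorem to obtain the known scalar $c\int_Q F(x,t,u,c)\,dx\,dt = \alpha\, c^{k+1} T\,|\Omega|$ and solve for $\alpha$. Your computation of the exponent is in fact cleaner than the paper's (which contains a minor slip), and your remarks on the solvability of $m\equiv c$ are a reasonable aside that the paper simply subsumes under the standing hypothesis that such a solution exists.
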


\begin{proof}
    By the previous theorem, 
    \[\int_Q F(x,t,u,m)m\,dx\,dt=\int_Q \alpha [m(x,t)]^k\,dx\,dt=\int_Q \alpha c^k\,dx\,dt=c^k\alpha T |\Omega|\] is known. Therefore, $\alpha$ is uniquely determined, and thus $F$ is uniquely determined.
\end{proof}

\begin{remark}
    Observe that in this case, we do not impose any boundary or initial/terminal conditions on the system \eqref{eq:MFG1}, but instead measure these data.
\end{remark}

Corollary \ref{Cor:GeneralCase} strongly suggests that we will be able to obtain uniqueness results on $F$ which are more direct, if $F$ is in polynomial form and depends only on $m$. Consequently, in the remainder of this subsection, we will consider $F$ and $G$ to be analytic, and generalize Corollary \ref{Cor:GeneralCase} to obtain their uniqueness results.

\subsection{Non-uniqueness and Discussion on the Zero Admissibility Conditions}
In this section, we show that the zero admissibility conditions, namely $F(x,t, 0)=0$ and $G(x,0)=0$ are unobjectionably necessary if one intends to uniquely recover $F$ or $G$ by knowledge of the measurement operator 
\[\mathcal{M}_{F, G}:m_0\mapsto u(x,t)|_{t=0}.\] For simplicity, we only consider the case that the space dimension $n=1$ without the periodic boundary conditions. That is, we consider the following MFG system:
\begin{equation}\label{dim1}
	\begin{cases}
		-\p_tu_j(x,t)-\p_{xx} u_j(x,t)+\frac 1 2 {|\p_x u_j(x)|^2}= F_j(x,t,u_j(x,t)),& \text{ in } \mathbb{R}\times (0,T),\medskip\\
		\p_t u_j(x,t)-\p_{xx} u_j(x,t)-\p_x(u_j(x,t)\p_x u_j(x,t))=0,&\text{ in } \mathbb{R}\times(0,T),\medskip\\
		u_j(x,T)= G_j(x,u_j(x,T)), & \text{ in } \mathbb{R},\medskip\\
		u_j(x,0)=m_0(x), & \text{ in } \mathbb{R}.\\
	\end{cases}  		
\end{equation}
Furthermore, we assume $T$ is small enough such that the solution of the MFG system \eqref{dim1} is unique \cite{Amb:18,LasryLions1}. In what follows, we construct examples to show that if the zero admissibility conditions are violated then the corresponding inverse problems do not have uniqueness. 

\begin{prop}
	Consider the system $\eqref{dim1}$. There exist $F_1=F_2\in C^{\infty}(\mathbb{R}\times\mathbb{R}\times\mathbb{R})$ and $G_1\neq G_2\in C^{\infty}(\mathbb{R}\times\mathbb{R})$ (but we do not have $G_1(x,0)=G_2(x,0)=0$) such that  the corresponding two systems admit the same measurement map, i.e. $\mathcal{M}_{G_1}=\mathcal{M}_{G_2}$.
\end{prop}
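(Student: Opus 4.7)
The plan is to construct an explicit counterexample that exploits two structural features of the system \eqref{dim1}: the nonnegativity of any admissible density component, and the triviality of the Hamilton--Jacobi equation on constants when the running cost vanishes. Since the density $m$ (the Fokker--Planck component) remains nonnegative by the maximum principle, the terminal cost $G(x,s)$ is only ever evaluated at values $s \geq 0$, so any modification of $G$ on the half-line $\{s<0\}$ is invisible both to every MFG solution and to the measurement map. This observation makes it impossible to recover $G$ on the unphysical half of its second argument, and removing the normalization $G(x,0)=0$ is exactly what allows that invisibility to be exploited.

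Concretely, I take $F_1 = F_2 \equiv 0$, fix a constant $c \neq 0$, and set $G_1(x,s) := c$ and $G_2(x,s) := c + \phi(x,s)$, where $\phi \in C^{\infty}(\mathbb{R}^2)$ is any nontrivial smooth function with support contained in $\mathbb{R} \times (-\infty,-1]$. Then $G_1 \neq G_2$ in $C^\infty(\mathbb{R}^2)$, and $G_1(x,0) = G_2(x,0) = c \neq 0$, so the zero admissibility condition is violated exactly as desired. For any admissible initial density $m_0$, I then verify that the pair consisting of $u \equiv c$ together with $m$ solving the pure heat equation $\partial_t m - \partial_{xx} m = 0$ with $m(\cdot,0) = m_0$ is a solution of the MFG system for both choices of $G$: the HJ part collapses to $0 = 0$ since all derivatives of $u$ vanish and $F \equiv 0$; the Fokker--Planck part reduces to the heat equation because $\partial_x u \equiv 0$; and the terminal condition $u(\cdot,T) = G_j(\cdot,m(\cdot,T))$ holds directly for $G_1$ and, for $G_2$, because $m(\cdot,T) \geq 0$ while $\phi$ vanishes identically on $\{s \geq 0\}$. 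Invoking the small-$T$ uniqueness hypothesis from \cite{Amb:18,LasryLions1}, this pair is the unique solution in each case, so the two measurement maps coincide: $\mathcal{M}_{G_1}(m_0) = \mathcal{M}_{G_2}(m_0) = c$ for every admissible $m_0$.

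The only nontrivial ingredient is the uniqueness step, which is what rules out phantom solutions that might distinguish the two systems; this is precisely what the smallness of $T$ and the cited well-posedness results deliver, and it is why the proposition hedges with that hypothesis. Apart from this, the argument is essentially a bookkeeping observation: the terminal cost may be redefined freely on the unphysical range of its second argument, which at once produces an infinite family of distinct smooth $G$'s with identical measurement maps the moment the normalization $G(x,0) = 0$ is dropped.
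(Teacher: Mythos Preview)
Your proof is correct, but it takes a genuinely different route from the paper's. The paper chooses $F$ and $G$ to be independent of the $m$-variable, so that the Hamilton--Jacobi equation decouples entirely from the Fokker--Planck equation; it then exhibits two explicit solutions $u_1(x,t)=(e^t-1)\sin x$ and $u_2(x,t)=(1-e^t)\sin x$ of the same HJ equation (with an $m$-independent $F$) that happen to coincide at $t=0$, and reads off $G_j(x,\cdot)\equiv u_j(x,T)$. Your argument instead keeps $G$ genuinely $m$-dependent but modifies it only on the unphysical half-line $\{s<0\}$, relying on the maximum principle to ensure the terminal density never sees the perturbation.

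Each approach buys something. The paper's decoupling trick works for \emph{every} input $m_0$, signed or not, since $u_j$ is determined by the HJ equation alone; this is a cleaner match for the subsequent chapters, where the linearization machinery allows $m_0$ to range over a full $C^{2+\alpha}$-neighborhood of the base state rather than only over nonnegative densities. Your construction, by contrast, needs $m(\cdot,T)\ge 0$ (or at least $>-1$), so it proves the statement only over the physically admissible class of nonnegative $m_0$; on the other hand it is conceptually transparent and requires no explicit PDE computation, and it pinpoints a structural reason why dropping the normalization $G(x,0)=0$ destroys identifiability: the unphysical range of the second argument is simply never probed.
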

\begin{proof}
	Set 
	\[
	F_1=F_2=-\sin(x)+\frac{1}{4}(e^t-1)^2\cos^2(x),
	\] 
	and
	\[
	G_1=(e^T-1)\sin(x),\quad G_2=(1-e^T)\sin(x).
	\]
	It can be directly verified that 
	\[
	u_1(x,t)=(e^t-1)\sin(x)\quad\mbox{and}\quad u_2(x,t)=(1-e^t)\sin(x),
	\]  
	satisfy the corresponding  system. In this case, we have $\mathcal{M}_{G_1}(m_0)=\mathcal{M}_{G_2}(m_0)=0$ for any admissible $m_0$. 
\end{proof}

\begin{prop}\label{prop2}
	Consider the system $\eqref{dim1}$. There exist $G_1=G_2\in C^{\infty}(\mathbb{R}\times\mathbb{R})$ and $F_1\neq F_2\in C^{\infty}(\mathbb{R}\times\mathbb{R}\times\mathbb{R})$ (but we do not have $F_j(x,t,0)=0$, $j=1,2$) such that  the corresponding two systems admit the same measurement map, i.e. $\mathcal{M}_{F_1,G_1}=\mathcal{M}_{F_2,G_2}$.
\end{prop}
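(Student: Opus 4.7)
The plan is to exploit the structural observation that in the system \eqref{dim1} the Hamilton--Jacobi equation for $u$ couples to the Fokker--Planck equation only through the terminal condition $G$ (since $F_j$ already depends on $(x,t,u)$ alone). If I choose both $G_j$ to be identical and independent of the measure argument, the $u$-equation decouples entirely: its solution, and in particular the trace $u|_{t=0}$, is determined by $(F_j,G_j)$ alone, independently of the initial datum $m_0$. The task then reduces to exhibiting two distinct smooth $F_1\neq F_2$ --- both failing the zero admissibility condition $F_j(x,t,0)\equiv 0$ --- that admit a common backward HJ solution $u^\ast$ with the common terminal trace $G$.

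For the explicit construction I would reuse the ansatz $u^\ast(x,t)=(e^t-1)\sin x$ from the preceding proposition. A direct computation yields the forcing
\[
\Phi(x,t):=-\partial_t u^\ast-\partial_{xx}u^\ast+\tfrac{1}{2}(\partial_x u^\ast)^2=-\sin x+\tfrac{1}{2}(e^t-1)^2\cos^2 x,
\]
so any $F_j$ satisfying $F_j\bigl(x,t,u^\ast(x,t)\bigr)\equiv \Phi(x,t)$ automatically makes $u^\ast$ a solution of the HJ equation with terminal datum $G(x):=(e^T-1)\sin x$. Two natural choices that coincide along the graph $\{z=u^\ast(x,t)\}$ but differ as functions of three variables are
\[
F_1(x,t,z):=\Phi(x,t),\qquad F_2(x,t,z):=\Phi(x,t)+\bigl(z-(e^t-1)\sin x\bigr)^2,
\]
both smooth on $\mathbb{R}^3$. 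Setting $G_1=G_2=G$ (independent of the measure argument) produces two MFG systems sharing the $u$-component $u^\ast$, while the Fokker--Planck equation then determines $m$ from $m_0$ in each case.

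Next I would check that the zero admissibility condition is indeed violated for both: $F_1(x,t,0)=\Phi(x,t)\not\equiv 0$ and $F_2(x,t,0)=\Phi(x,t)+(e^t-1)^2\sin^2 x\not\equiv 0$. Invoking the short-time uniqueness result of \cite{Amb:18,LasryLions1} (which is the standing assumption on $T$), $u^\ast$ is the unique $u$-component for either system, regardless of $m_0$. Consequently the measurement operators satisfy
\[
\mathcal{M}_{F_1,G_1}(m_0)=\mathcal{M}_{F_2,G_2}(m_0)=u^\ast(\cdot,0)\equiv 0
\]
for every admissible $m_0$, while $F_1\ne F_2$, completing the counterexample.

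The only delicate point I anticipate is ensuring that the perturbation carrying $F_1$ into $F_2$ does not destroy short-time well-posedness; this motivates using a quadratic-in-$z$ perturbation vanishing along the graph of $u^\ast$, so that the linearisation of the HJ equation at $u^\ast$ is unchanged and the standard Lasry--Lions/Ambrose short-time uniqueness argument transfers without modification. Everything else is a direct substitution.
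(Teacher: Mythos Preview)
Your structural claim --- that the HJ equation in \eqref{dim1} couples to Fokker--Planck only through the terminal condition because ``$F_j$ already depends on $(x,t,u)$ alone'' --- rests on reading the displayed system literally. But \eqref{dim1} is riddled with typographical slips (the Fokker--Planck equation and the initial condition are written with $u_j$ where $m_j$ is meant, and the same slip appears in the argument of $F_j$); the intended system, consistent with every other MFG in the chapter and with the very definition of the measurement map $m_0\mapsto u(\cdot,0)$, has $F_j=F_j(x,t,m_j)$. Under that reading the HJ equation is coupled to $m$ through $F_j$ itself, not just through $G$. Your perturbation $F_2(x,t,z)=\Phi(x,t)+(z-u^\ast)^2$ then contributes the source $\Phi+(m-u^\ast)^2$ to the second HJ equation, and for a generic initial datum $m_0$ the Fokker--Planck solution $m$ will not equal $u^\ast$; hence $u^\ast$ is no longer the value function for the second system and $\mathcal{M}_{F_2,G_2}(m_0)$ genuinely varies with $m_0$. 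The ``single common $u^\ast$'' idea therefore collapses.

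The paper's construction is essentially different: it takes both $F_j$ independent of the third variable (so the HJ equation really does decouple, under either reading), and then exhibits \emph{two distinct} value functions $u_j(x,t)=jxt(t-T)$, $j=1,2$, sharing the traces $u_j(\cdot,0)=u_j(\cdot,T)=0$; with $G_1=G_2=0$ this gives $\mathcal{M}_{F_1}=\mathcal{M}_{F_2}\equiv 0$ for every admissible $m_0$. Your approach cannot be repaired by simply making $F_2$ independent of $m$: once both $F_j$ are functions of $(x,t)$ alone, $F_1\neq F_2$ forces the corresponding backward HJ solutions $u_1,u_2$ to differ, and you are back to manufacturing two distinct solutions whose initial traces agree --- precisely the paper's route.
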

\begin{proof}
	Set
	\[
	F_1=-x(2t-T)+\frac{t^2(t-T)^2}{2},\quad F_2=-2x(2t-T)+2t^2(t-T)^2,
	\]
	and
	\[
	G_1=G_2=0. 
	\]
	Here, it is noted that $F_1$ and $F_2$ are independent of $u$. In such a case, it is straightforward to verify that $u_j(x,t)=jxt(t-T)$ is the solution of  the corresponding  system \eqref{dim1}. Clearly, one has $\mathcal{M}_{F_1}(m_0)=\mathcal{M}_{F_2}(m_0)=0$ for any admissible $m_0$. 
\end{proof}

Moreover, we can find $F_1, F_2\in C^{\infty}(\mathbb{R}\times\mathbb{R})$ which are independent of $t$ such that Proposition \ref{prop2} holds.

\begin{proof}

	Define
	$$Lu_j:=-\p_tu(x,t)-\p_{xx} u(x,t)+\frac{|\p_x u|^2}{2}.$$ 
	It is sufficient for us to show that there exist $u_1(x,t),u_2(x,t)$ such that
	\begin{enumerate}		
		\item[(1)] $L u_1\neq L u_2$ and $\p_t (Lu_j)=0 $ for $ j=1,2$;
		
		\item[(2)] $u_1(x,0)=u_2(x,0)$ and $u_1(x,T)=u_2(x,T)$.
	\end{enumerate}
	In fact, if this is true, we can set $F_j= Lu_j$ and $G(x)=u_1(x,T)$. Then one has $G_1=G_2.$
	
	Without loss of generality, we assume $T=1.$	
	Let $p(t)$ be a non-zero solution of the following ordinary differential equation (ODE):
	\begin{equation*}
		(	\ln( p'(t)))'=\frac{\sqrt{1+4t}}{2},
	\end{equation*}
	and $q(t)$ be a solution of the ODE:
	\begin{equation*}
		\begin{cases}
			&2q'(t)+\sqrt{1+4t}\, q''(t)=p(t)p'(t)\sqrt{1+4t},\medskip\\
			&q(0)=0.
		\end{cases}
	\end{equation*}
	With $p(t)$ and $q(t)$ given above, we can set 
	\[
	u_1(x,t)=p(t(t-1))x+q(t(t-1))\quad\mbox{and}\quad u_2(x,t)=q(t(t-1))x+2q(t(t-1)).
	\]
	It can be directly verified that $u_1$ and $u_2$ fulfil the requirements (1) and (2) stated above. 
\end{proof}

\section{An Important Tool to Treat Nonlinearity - High-order Linearization}\label{sec:linearize}

In the study of inverse problems for mean-field game (MFG) systems, nonlinearity presents a significant challenge that must be addressed to extract meaningful information. A well-established technique to handle such difficulties is linearization, which allows for the reduction of a nonlinear inverse problem to its linear counterpart.
\subsection{The Linearization System in Probability Space}

Since we need to study the linearized system  for our inverse problem study, we define the variation of a function defined on $\mathcal{P}(\Omega) $. Recall that  $\mathcal{P}(\Omega)$ denotes the set of probability measures on $\Omega$ and let $U$ be a real function defined on $\mathcal{P}(\Omega) $.

\begin{defn}\label{def_der_1}
		Let $U :\mathcal{P}(\Omega)\to\mathbb{R}$. We say that $U$ is of class $C^1$ if there exists a continuous map $K:  \mathcal{P}(\Omega)\times \Omega\to\mathbb{R}$ such that, for all $m_1,m_2\in\mathcal{P}(\Omega) $ we have
		\begin{equation}\label{derivation}
			\lim\limits_{s\to 0^+}\frac{U\left(m_1+s(m_2-m_1)\right)-U(m_1)}{s}=\int_{\Omega} K(m_1,x)d(m_2-m_1)(x).
		\end{equation}
	\end{defn}
	Note that the definition of $K$ is up to additive constants. We define  the derivative
	$\dfrac{\delta U}{\delta m}$ (called the ``flat derivative") as the unique map $K$ satisfying $\eqref{derivation}$ and 
	\begin{equation}
		\int_{\Omega} K(m,x) dm(x)=0.
	\end{equation}
	Similarly, we can define higher order derivatives of $U$, and we refer to \cite{linear_in_Om} for more related discussion.

Finally, the relationship between the MFG system and its linearized system can be stated as follows:
 Let $(u_1, m_1)$ and $(u_2, m_2)$ be two solutions of the mean field games system, associated with the starting initial conditions $m_{0}^1$
 and $m_{0}^2$. Let $(s,\rho)$ be the solution of the linearized system  related to $(u_2, m_2)$, with initial condition $m_{0}^1-m_{0}^2$. Then we have 
the norms of $u_1-u_2-s$ and $m_1-m_2-\rho$ in suitable function spaces are bounded by 
$Cd_1(m^1_{0},m_0^2)$, where $d_1$ is the Wasserstein distance between $m_1$ and $m_2$ as defined in Definition \ref{W_distance}. For details of this theorem we also refer to \cite{linear_in_Om}. However, since we work in H\"{o}lder spaces in this paper and assume the measure $m$  always belongs to $C^{2+\alpha}(\Omega)$ at least, this result is replaced by local well-posedness results, which we will discuss in the next chapter.

Next, we shall discuss the  linearized systems in our setting and  their relationship in next subsection.

\subsection{Higher-order Linearization}\label{HLM}
We introduce the basic setting of this higher order
linearization method.
Let us consider the following system as an example:
\begin{equation}\label{main_linear example}
 \begin{cases}
        -\partial_t u(x,t) -\sigma(x,t)\Delta u(x,t) + \frac{1}{2}\kappa(x,t)|\nabla u(x,t)|^2 = F(x,t,m) &\quad \text{in }Q,\\
        \partial_t m(x,t) -\Delta(\sigma(x,t) m(x,t)) - \nabla\cdot(\kappa(x,t)m(x,t)\nabla u(x,t)) = 0  &\quad \text{in }Q,\\
        u(x,T)=G(x,m(x,T)),\quad m(x,0)=f(x) &\quad \text{in }\Omega,
    \end{cases}
\end{equation}
for bounded positive real functions $\sigma(x,t),\kappa(x,t)>0$, $\sigma(x)\in C^{2,2}(\bar{Q})$, $\kappa(x,t)\in C^{1,0}(\bar{Q})$, and
\[F(x,t,z)=\sum_{k=1}^{\infty} F^{(k)}(x,t)\frac{(z-m_0)^k}{k!}, \quad G(x,z)=\sum_{k=1}^{\infty} G^{(k)}(x)\frac{(z-m_0)^k}{k!}, \] 
where $ F^{(k)}(x,t)=\frac{\p^k F}{\p z^k}(x,t,m_0)\in C^{2+\alpha,1+\frac{\alpha}{2}}(Q)$ and $G^{(k)}(x)=\frac{\p^kG}{\p z^k}(x,m_0)\in C^{2+\alpha}(\mathbb{R}^n)$, with fixed solution $(u_0,m_0)$.

Let
\begin{equation}
\begin{aligned}
      &u(x,t)|_{\Gamma}=u_0|_{\Gamma}+\sum_{l=1}^{N}\varepsilon_lg_l|_{\Gamma}\\
      &m(x,t)|_{\Gamma}=m_0|_{\Gamma}+\sum_{l=1}^{N}\varepsilon_lh_l|_{\Gamma}
\end{aligned}
\end{equation}
where $g_l,h_l\in C^{2+\alpha,1+\frac{\alpha}{2}}(\mathbb{R}^n\times\mathbb{R})$  and $\varepsilon = (\varepsilon_1,\varepsilon_2, \dots,\varepsilon_N) \in \mathbb{R}^N$ with $|\varepsilon| = \sum_{l=1}^N |\varepsilon_l|$ small enough. Then, $(u(x,t;0), m(x,t;0))=(u_0,m_0)$ is the solution when $\varepsilon = 0$.

Let $S$ be the solution operator of the MFG system. Then there exists a bounded linear operator $A$ from $\mathcal{H}:=B_{\delta}( C^{2+\alpha,1+\frac{\alpha}{2}}(\Gamma))\times D_{\delta}( C^{2+\alpha,1+\frac{\alpha}{2}}(\Gamma))$ to $[C^{2+\alpha,1+\frac{\alpha}{2}}(Q)]^2$ such that
\begin{equation}
	\lim\limits_{\|(g,h)\|_{\mathcal{H}}\to0}\frac{\|S(g,h)-S(u_0,m_0)- A(g,h)\|_{[C^{2+\alpha,1+\frac{\alpha}{2}}(Q)]^2}}{\|(g,h)\|_{\mathcal{H}}}=0,
\end{equation} 
where $\|(g,h)\|_{\mathcal{H}}:=\|g\|_{ C^{2+\alpha,1+\frac{\alpha}{2}}(\Gamma)      }+\|h\|_{C^{2+\alpha,1+\frac{\alpha}{2}}(\Gamma)}$.
Now we consider $\varepsilon_l=0$ for $l=2,\dots,N$ and fix $f_1$.
Notice that if $F\in\mathcal{A}$ , then $F$ depends on the distribution $m$ locally. We have that 
	$$
	\dfrac{\delta F}{ \delta m}(x,m_0)(\rho(x,t)):=\left<\dfrac{\delta F}{ \delta m}(x,m_0,\cdot),\rho(x,t)\right>_{L^2}=
	F^{(1)}(x)\rho(x,t),
	 $$ 
	 up to a constant. 
Then it is easy to check that $A(g,h)\Big|_{\varepsilon_1=0}$ is the solution map of the following system which is called the first-order linearization system:
\begin{equation}\label{MFG2Linear1}
    \begin{cases}
        -\partial_t u^{(1)}(x,t) -\sigma \Delta u^{(1)}(x,t) + \kappa\nabla u_0\cdot \nabla u^{(1)}(x,t)= F^{(1)}(x,t)m^{(1)}(x,t) &\quad \text{in }Q,\\
        \partial_t m^{(1)}(x,t) -\Delta (\sigma m^{(1)}(x,t)) - \nabla\cdot(\kappa m_0\nabla u^{(1)}(x,t)) -\nabla \cdot (\kappa m^{(1)}(x,t)\nabla u_0)= 0  &\quad \text{in }Q,\\
        u^{(1)}(x,t)=g_1,\quad m^{(1)}(x,t)=h_1 &\quad \text{in }\Gamma,\\
        u^{(1)}(x,T)=G^{(1)}(x)m^{(1)}(x,T), \quad m^{(1)}(x,0) = 0  &\quad \text{in }\Omega.
    \end{cases}
\end{equation}

 In the following, we define
\begin{equation}\label{eq:ld1}
 (u^{(1)}, m^{(1)} ):=A(g,h)\Big|_{\varepsilon_1=0}. 
 \end{equation}
For notational convenience, we write
\begin{equation}\label{eq:ld2}
u^{(1)}=\partial_{\varepsilon_1}u(x,t;\varepsilon)|_{\varepsilon=0}\quad\text{and}\quad m^{(1)}=\partial_{\varepsilon_1}m(x,t;\varepsilon)|_{\varepsilon=0}.
\end{equation}
We shall utilize such notations in our subsequent discussion in order to simplify the exposition and their meaning should be clear from the context. In a similar manner, we can define, for all $l\in \mathbb{N}$, $u^{(l)} := \left. \partial_{\varepsilon_l} u \right\rvert_{\varepsilon = 0}$, $m^{(l)} := \left. \partial_{\varepsilon_l} m \right\rvert_{\varepsilon = 0}$, 
we obtain a sequence of similar systems.

For the higher orders, we consider
\[u^{(1,2)}:= \left. \partial_{\varepsilon_1} \partial_{\varepsilon_2} u \right\rvert_{\varepsilon = 0}, \quad m^{(1,2)}:= \left. \partial_{\varepsilon_1} \partial_{\varepsilon_2} m \right\rvert_{\varepsilon = 0}.\]
Similarly, $(u^{(1,2)},m^{(1,2)})$ can be viewed as the output of the second-order Fr\'echet derivative of $S$ at a specific point. By following similar calculations in deriving \eqref{MFG2Linear1}, one can show that the second-order linearization is given as follows:
\begin{equation}\label{MFGQuadraticLinear2}
    \begin{cases}
        -\partial_t u^{(1,2)} -\sigma\Delta u^{(1,2)} + \kappa\nabla u_0 \cdot \nabla u^{(1,2)} + \kappa\nabla u^{(1)} \cdot \nabla u^{(2)} \\\qquad\qquad\qquad\qquad = F^{(1)}m^{(1,2)} + F^{(2)}m^{(1)}m^{(2)} &\quad \text{in }Q,\\
        \partial_t m^{(1,2)} -\Delta (\sigma m^{(1,2)}) -\nabla\cdot(\kappa m_0\nabla u^{(1,2)}) - \nabla \cdot(\kappa m^{(2)} \nabla u^{(1)}) \\\qquad\qquad\qquad\qquad = \nabla\cdot(\kappa m^{(1)} \nabla u^{(2)}) + \nabla\cdot(\kappa m^{(1,2)} \nabla u_0)   &\quad \text{in }Q,\\
        u^{(1)}(x,T)=G^{(1)}(x)m^{(1,2)}(x,T)+G^{(2)}(x)m^{(1)}(x,T)m^{(2)}(x,T) &\quad \text{in }\Omega,\\
        m^{(1,2)}(x,0) = 0  &\quad \text{in }\Omega.
    \end{cases}
\end{equation}

Inductively, for $N\in \mathbb{N}$, we consider
\[u^{(1,2,\dots,N)}:= \left. \partial_{\varepsilon_1} \partial_{\varepsilon_2} \cdots \partial_{\varepsilon_N} u \right\rvert_{\varepsilon = 0}, \quad m^{(1,2,\dots,N)}:= \left. \partial_{\varepsilon_1} \partial_{\varepsilon_2} \cdots \partial_{\varepsilon_N} m \right\rvert_{\varepsilon = 0},\] and obtain a sequence of parabolic systems.

% \begin{thebibliography}{10}

% \bibitem{Amb:18_1}
% D.~M. Ambrose.
% \newblock Strong solutions for time-dependent mean field games with non-separable hamiltonians.
% \newblock {\em J. Math. Pures Appl.}, (9) 113 (2018), 141--154.

% \bibitem{note_MFG_1}
% P.~Cardaliaguet.
% \newblock Notes on mean-field games.
% \newblock {\em based on the lectures by P.L. Lions at Coll\`{e}ge de France}, 2012.

% \bibitem{Linear_in_Tn_1}
% Jean-Michel~Lasry Pierre~Cardaliaguet, François~Delarue and Pierre~Louis Lions.
% \newblock The master equation and the convergence problem in mean field games.
% \newblock {\em Princeton University Press.}, , 381, 2019, AMS-201, 9780691190709. ffhal-01196045.

% \bibitem{LasryLions1_1}
% Jean-Michel Lasry and Pierre-Louis Lions.
% \newblock Jeux \`a champ moyen. {I}. {L}e cas stationnaire.
% \newblock {\em C. R. Math. Acad. Sci. Paris}, 343(9):619--625, 2006.

% \bibitem{LiuMouZhang2022InversePbMeanFieldGames_1}
% Hongyu Liu, Chenchen Mou, and Shen Zhang.
% \newblock Inverse problems for mean field games.
% \newblock {\em Inverse Problems}, 39(8):Paper No. 085003, 29, 2023.

% \bibitem{linear_in_Om_1}
% Michele Ricciardi.
% \newblock The master equation in a bounded domain with neumann conditions,.
% \newblock {\em Communications in Partial Differential Equations}, (2022) , 47:5, 912-947, DOI: 10.1080/03605302.2021.2008965.

% \end{thebibliography}

\chapter{Inverse Coefficient Problems Using (Time-Space) Boundary Measurements}\label{InverseCoefBdry}
In this chapter, our objective is to demonstrate that the running cost function and/or the total cost function can be uniquely determined by certain boundary data. Broadly speaking, we aim to establish a sequence of theorems of the following form:
\begin{thm}
    Assume $F_j,G_j (j=1,2)$ satisfy some suitable conditions. Let $\mathcal{M}_{F_j,G_j}$ be the measurement map associated to
	the following system:
	\begin{equation}
	\begin{cases}
		-\partial_t u(x, t)-\Delta u(x, t)+H(x, \nabla_x u(x, t))=F(x, m(x, t)),\quad & (x, t)\in \Omega\times (0, T),\medskip \\
		\partial_tm(x, t)-\Delta m(x, t)-{\rm div} \left(m(x, t)\nabla_p H\big(x, \nabla_x u(x, t)\big) \right)=0,\quad & (x, t)\in \Omega\times (0, T),\medskip\\
		m(x, 0)=m_0(x), \quad u(x, T)=G(x,m_T),\quad & x\in\Omega, 
	\end{cases}
\end{equation}
	If for any $m_0\in C^{2+\alpha}(\Omega)\cap\mathcal{O}_a$, one has 
	$$\mathcal{M}_{F_1,G_1}(m_0)=\mathcal{M}_{F_2,G_2}(m_0),$$    then it holds that	
	$$(G_1(x,z),F_1(x,z))=(G_2(x,z),F_2(x,z)) \ \text{  in  } \Omega\times \mathbb{R}.$$ 
\end{thm}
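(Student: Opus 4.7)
The plan is to attack this uniqueness statement through the higher-order linearization machinery developed in Section~\ref{sec:linearize}, combined with an analyticity-based reconstruction of the coefficients from the full sequence of derivatives. Since the admissibility class $\mathcal{O}_a$ presumably fixes a background density $m_0$ about which $F$ and $G$ admit convergent expansions, it suffices to show that $F_1^{(k)}=F_2^{(k)}$ and $G_1^{(k)}=G_2^{(k)}$ for every $k\in\mathbb{N}$, where $F^{(k)}$, $G^{(k)}$ are the Taylor coefficients in $m$ at the background. Given such identities, the analyticity/admissibility hypothesis then forces $(F_1,G_1)=(F_2,G_2)$ on $\Omega\times\mathbb{R}$.

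First I would fix a smooth background trajectory $(u_0,m_0)$ corresponding to a common background initial density, and introduce the $N$-parameter family of perturbed initial data $m_0+\sum_{l=1}^N\varepsilon_l h_l$, where the $h_l\in C^{2+\alpha}(\Omega)$ are chosen from an appropriately large class compatible with the probability density constraint (mean zero, vanishing sufficiently fast near $\partial\Omega$). Differentiating in $\varepsilon_1$ at $\varepsilon=0$ produces the first-order linearized MFG system \eqref{MFG2Linear1} with right-hand sides containing $F^{(1)}$ and terminal condition containing $G^{(1)}$. The assumption $\mathcal{M}_{F_1,G_1}=\mathcal{M}_{F_2,G_2}$ implies that the boundary data of $(u^{(1)},m^{(1)})$ agree for the two systems, so the difference $(\tilde u^{(1)},\tilde m^{(1)})$ satisfies the linearized system driven by $\tilde F^{(1)}:=F_1^{(1)}-F_2^{(1)}$ and $\tilde G^{(1)}:=G_1^{(1)}-G_2^{(1)}$ with zero lateral data. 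I would then couple this system against a rich family of solutions $(v,\mu)$ of the formal adjoint linearized system and use an integration by parts identity to obtain
\begin{equation*}
\int_Q \tilde F^{(1)}(x,t)\,m^{(1)}(x,t)\,\mu(x,t)\,dx\,dt
+\int_\Omega \tilde G^{(1)}(x)\,m^{(1)}(x,T)\,\mu(x,T)\,dx=0
\end{equation*}
for every admissible pair. Density of products $m^{(1)}\mu$ (respectively their traces at $t=T$) in a suitable function space then yields $\tilde F^{(1)}\equiv 0$ in $Q$ and $\tilde G^{(1)}\equiv 0$ in $\Omega$.

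Having recovered the first-order derivatives, I would proceed inductively. Assuming $F_1^{(j)}=F_2^{(j)}$ and $G_1^{(j)}=G_2^{(j)}$ for all $j\le N-1$, I differentiate the measurement identity in $\varepsilon_1\partial_{\varepsilon_2}\cdots\partial_{\varepsilon_N}$ at $\varepsilon=0$ to arrive at the $N$-th order linearization analogous to \eqref{MFGQuadraticLinear2}. By the induction hypothesis, all lower-order ``source-like'' terms agree between the two systems, so the only remaining discrepancy lives in the top-order terms carrying $\tilde F^{(N)}$ and $\tilde G^{(N)}$. A second duality pairing against the adjoint system gives an integral identity of the form
\begin{equation*}
\int_Q \tilde F^{(N)}(x,t)\prod_{l=1}^N m^{(l)}(x,t)\,\mu(x,t)\,dx\,dt
+\int_\Omega \tilde G^{(N)}(x)\prod_{l=1}^N m^{(l)}(x,T)\,\mu(x,T)\,dx=0,
\end{equation*}
and varying the $h_l$ over a dense probing family yields the vanishing of $\tilde F^{(N)}$ and $\tilde G^{(N)}$.

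The principal obstacle is the density/completeness step: the probability density constraint $\int_\Omega h_l\,dx=0$ together with boundary compatibility prevents one from freely choosing arbitrary probing inputs, so a standard product-of-solutions argument must be adapted. I would handle this either by exhibiting a sufficiently rich family of CGO-type solutions to the first-order linearized parabolic system (exploiting the parabolic smoothing to generate oscillatory products whose span is dense in $C(\overline Q)$), or by using a Runge-type approximation property for the coupled linear system, in both cases taking advantage of the fact that the background $(u_0,m_0)$ and the zero-order coefficient $F^{(1)}$ are smooth. A secondary technical point will be to justify the integration by parts cleanly across the forward/backward coupling of the MFG system and to ensure the induction is well-founded in the chosen H\"older scale; both are essentially bookkeeping once the first-order density argument is in place.
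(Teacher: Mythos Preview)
Your overall architecture---successive linearization around a trivial background, integration by parts against a solution of the (adjoint) heat equation, and induction on the Taylor order---is exactly what the paper does. However, the sentence ``density of products $m^{(1)}\mu$ (respectively their traces at $t=T$) \ldots then yields $\tilde F^{(1)}\equiv 0$ and $\tilde G^{(1)}\equiv 0$'' hides the only nontrivial step, and as written it is a gap. The identity you derive,
\[
\int_Q \tilde F^{(1)}\,m^{(1)}\,w\,dx\,dt \;=\; \int_\Omega \tilde G^{(1)}\,m^{(1)}(\cdot,T)\,w(\cdot,T)\,dx,
\]
is one equation in two unknowns; no density argument for the product $m^{(1)}w$ alone will decouple the volume term from the terminal term. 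The paper resolves this not via CGO or Runge approximation but by an explicit two-parameter Fourier construction on $\mathbb{T}^n$: take $w=e^{-2\pi i\xi_1\cdot x-4\pi^2|\xi_1|^2 t}$ and $m^{(1)}=e^{-2\pi i\xi_2\cdot x-4\pi^2|\xi_2|^2 t}+M$ (the additive constant $M$ is the device for respecting nonnegativity, and subtracting the identities for $M=1,2$ removes it). For each fixed $\xi=\xi_1+\xi_2$ the resulting relation has the form
\[
\frac{1-e^{-4\pi^2 T(|\xi_1|^2+|\xi_2|^2)}}{4\pi^2(|\xi_1|^2+|\xi_2|^2)}\,a_\xi \;+\; e^{-4\pi^2 T(|\xi_1|^2+|\xi_2|^2)}\,b_\xi \;=\;0,
\]
where $a_\xi,b_\xi$ are the $\xi$-th Fourier coefficients of $\tilde F^{(1)}$ and $\tilde G^{(1)}$. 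Because one can hold $\xi$ fixed while varying $|\xi_1|^2+|\xi_2|^2$, this is a genuine $2\times 2$ linear system with nonvanishing determinant, giving $a_\xi=b_\xi=0$. This is precisely where the hypothesis that $F$ is \emph{time-independent} ($F_j\in\mathcal B$ rather than $\mathcal A$) is used: otherwise the $t$-integral does not factor and the decoupling fails.

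Two smaller points. First, in the paper the background is the trivial solution $(u_0,m_0)=(0,0)$ (enabled by the admissibility condition $F(x,0)=G(x,0)=0$), which fully decouples the first-order system: $m^{(1)}$ solves the free heat equation, independent of $F^{(1)}$. Your framing around a general $(u_0,m_0)$ would leave $u^{(1)}$ and $m^{(1)}$ coupled and would complicate the probing step considerably. Second, the probability constraint is handled not by mean-zero perturbations but by taking nonnegative $f_l$ and inserting the constant $M$ into the probing mode; your ``mean zero, vanishing near $\partial\Omega$'' description does not match and would not by itself supply the positivity needed.
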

In this chapter, we will explore such problems in different domains $\Omega$ and under varying conditions.

\section{Local Well-posedness of the Forward Problems}\label{section wp}
In this section, we show the well-posedness of the MFG systems in our study. In fact, we shall use several quite similar well-posedness results in different set-ups. We only show the details of the proofs for the case $\Omega=\mathbb{T}^n$. 
We consider the case that $F$ and $G$ belong
to an analytical class. Henceforth, we set
\begin{equation}\label{eq:Q}
	Q=\overline{\mathbb{T}^n\times(0,T) },
\end{equation}
be the closure of $\mathbb{T}^n\times(0,T).$
Let us recall the admissible class we posed in the preview chapter and give some discussions here.
\begin{defn}\label{Admissible class1}
	We say $U(x,t,z):\mathbb{T}^n\times \mathbb{R}\times\mathbb{C}\to\mathbb{C}$ is admissible, denoted by $U\in \mathcal{A}$, if it satisfies the following conditions:
	\begin{enumerate}
		\item[(i)]~The map $z\mapsto U(\cdot,\cdot,z)$ is holomorphic with value in $C^{2+\alpha,1+\frac{\alpha}{2}}(Q)$ for some $\alpha\in(0,1)$;
		\item[(ii)] $U(x,t,0)=0$ for all $(x,t)\in\mathbb{T}^n\times (0,T).$ 	
	\end{enumerate}
	
	Clearly, if (1) and (2) are fulfilled, then $U$ can be expanded into a power series as follows:
	\begin{equation}\label{eq:F}
		U(x,t,z)=\sum_{k=1}^{\infty} U^{(k)}(x,t)\frac{z^k}{k!},
	\end{equation}
	where $ U^{(k)}(x,t)=\frac{\p^k U}{\p z^k}(x,t,0)\in C^{2+\alpha,1+\frac{\alpha}{2}}(Q).$
\end{defn}   
\begin{defn}\label{Admissible class2}
	We say $U(x,z):\mathbb{T}^n\times\mathbb{C}\to\mathbb{C}$ is admissible, denoted by $U\in\mathcal{B}$, if it satisfies the following conditions:
	\begin{enumerate}
		\item[(i)] The map $z\mapsto U(\cdot,z)$ is holomorphic with value in $C^{2+\alpha}(\mathbb{T}^n)$ for some $\alpha\in(0,1)$;
		\item[(ii)] $U(x,0)=0$ for all $x\in\mathbb{T}^n.$
	\end{enumerate}

	Clearly, if (1) and (2) are fulfilled, then $U$ can be expanded into a power series as follows:
	\begin{equation}\label{eq:G}
		U(x,z)=\sum_{k=1}^{\infty} U^{(k)}(x)\frac{z^k}{k!},
	\end{equation}
	where $ U^{(k)}(x)=\frac{\p^kU}{\p z^k}(x,0)\in C^{2+\alpha}(\mathbb{T}^n).$
\end{defn}

Even though we already give the global well-posedness result in section \ref{reviw}, the reason why we give a local well-posedness here is that
we also show the infinite differentiability of the equation with respect to a given (small) input $m_0(x).$ This is the foundation of the linearization method that we discussed in section \ref{HLM}.

As a preliminary, we recall the well-posedness result
for linear parabolic equations \cite{Lady}\cite[Lemma 3.3]{Linear_in_Tn}.
\begin{lem}\label{linear app unique}
	Consider the parabolic equation 
	\begin{equation}\label{linearapp wellpose}
		\begin{cases}
			-\p_tv(x,t)-\Delta v(x,t)+{\rm div} ( a(x,t)\cdot\nabla v(x,t))= f(x,t),& \text{ in }\mathbb{T}^n\times(0,T),\medskip\\
			v(x,0)=v_0(x), & \text{ in } \mathbb{T}^n, 
		\end{cases}  	
	\end{equation}
	where the periodic boundary condition is imposed on $v$. Suppose $a,f\in C^{\alpha,\frac{\alpha}{2}}(Q) $ and $v_0\in C^{2+\alpha}(\mathbb{T}^n)$, then \eqref{linearapp wellpose} has a unique classical solution $v\in C^{2+\alpha,1+\frac{\alpha}{2}}(Q).$
\end{lem}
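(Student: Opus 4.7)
The plan is to treat Lemma \ref{linear app unique} as a standard linear parabolic Schauder-type result. First I would normalize the time direction: setting $w(x,t):=v(x,T-t)$ converts the backward principal part $-\p_t - \Delta$ into the standard forward parabolic operator $\p_t - \Delta$, and the prescribed datum at $t=0$ becomes a terminal/initial datum at the new time zero. This reduces \eqref{linearapp wellpose} to a uniformly parabolic second order linear equation on $\mathbb{T}^n\times(0,T)$ whose lower-order coefficients and source lie in $C^{\alpha,\alpha/2}(Q)$.

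For existence I would invoke the classical Schauder theory of Ladyzhenskaya-Solonnikov-Ural'tseva. The first step is to establish the a priori estimate
\begin{equation*}
\|v\|_{C^{2+\alpha,1+\alpha/2}(Q)} \le C\bigl(\|f\|_{C^{\alpha,\alpha/2}(Q)} + \|v_0\|_{C^{2+\alpha}(\mathbb{T}^n)}\bigr),
\end{equation*}
via interior and boundary Schauder estimates, with periodicity handled either by working directly in the periodic Hölder scale on $\mathbb{T}^n$ or by extending all data periodically to $\mathbb{R}^n$ and using the interior version of these estimates. Existence then follows from the method of continuity: define the family $L_s:=-\p_t-\Delta+s\,\mathrm{div}(a\cdot\nabla)$ for $s\in[0,1]$, note that $L_0$ is the periodic heat operator (which is invertible between the stated Hölder spaces), and deduce that $L_1$ is invertible as well since the a priori estimate is uniform in $s$.

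For uniqueness I would run an energy argument on $w:=v_1-v_2$, which solves the homogeneous equation with $w(\cdot,0)\equiv 0$. Multiplying by $w$, integrating over $\mathbb{T}^n$, and integrating the divergence term by parts (all admissible in the periodic setting), one obtains
\begin{equation*}
\tfrac{1}{2}\tfrac{d}{dt}\|w\|_{L^2}^2 + \|\nabla w\|_{L^2}^2 \le C\|a\|_{L^\infty}\|\nabla w\|_{L^2}\|w\|_{L^2} \le \tfrac{1}{2}\|\nabla w\|_{L^2}^2 + C'\|w\|_{L^2}^2,
\end{equation*}
and Gronwall's inequality forces $w\equiv 0$.

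The main obstacle is the interpretation of the divergence-form drift $\mathrm{div}(a\cdot\nabla v)$ when $a$ is merely $C^{\alpha,\alpha/2}$: a naive expansion into $\nabla\cdot a \cdot \nabla v + a\cdot\nabla(\nabla v)$ would formally require one more derivative on $a$ than is available, so the Schauder machinery must be applied to the term as a whole. I would handle this by first solving the problem in a weak (distributional) sense, where the divergence structure is natural, then bootstrapping: once $v$ is known to be a weak solution and the right-hand side $f + \mathrm{div}(a\cdot\nabla v)$ is re-interpreted appropriately using the Hölder regularity of $a$ and the existing parabolic regularity of $v$, standard Schauder estimates upgrade the solution to the claimed class $C^{2+\alpha,1+\alpha/2}(Q)$. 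This bootstrap step, rather than the existence-uniqueness skeleton itself, is where the technical care must go.
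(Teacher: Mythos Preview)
The paper does not prove this lemma; it cites Ladyzhenskaya--Solonnikov--Ural'tseva and \cite[Lemma~3.3]{Linear_in_Tn} and uses the result as a black box. Your outline---normalize the time direction, invoke parabolic Schauder for the a~priori bound, run the method of continuity from the periodic heat operator, and close uniqueness by an $L^2$ energy argument---is exactly the standard machinery behind those references, so at the level of strategy there is nothing to compare.

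Where your write-up has a genuine gap is the bootstrap in the final paragraph. If $\mathrm{div}(a\nabla v)$ is read literally as a divergence-form second-order term with scalar $a\in C^{\alpha,\alpha/2}$, the iteration you propose stalls one derivative short: even with $v\in C^{2+\alpha,1+\alpha/2}$ one only has $a\nabla v\in C^{\alpha,\alpha/2}$, so $\mathrm{div}(a\nabla v)$ sits one order below H\"older, and non-divergence Schauder applied to $-\partial_t v-\Delta v=f-\mathrm{div}(a\nabla v)$ returns at best $v\in C^{1+\alpha,(1+\alpha)/2}$. Divergence-form Schauder with merely $C^{\alpha}$ leading coefficient gives $C^{1+\alpha}$ and no better; reaching $C^{2+\alpha}$ genuinely requires enough regularity on $a$ to expand the term as $a\Delta v+\nabla a\cdot\nabla v$ with H\"older lower-order coefficients. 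In every place the paper actually applies the lemma the drift carries that extra derivative (it arises from $\nabla u$ with $u\in C^{2+\alpha,1+\alpha/2}$, or is simply zero), so the honest fix is a stronger hypothesis on $a$, not a bootstrap. Two smaller slips also deserve mention: your energy inequality $\tfrac{1}{2}\tfrac{d}{dt}\|w\|^2+\|\nabla w\|^2\le C\|a\|_{\infty}\|\nabla w\|\|w\|$ is what one obtains from a first-order drift $a\cdot\nabla w$, not from $\mathrm{div}(a\nabla w)$ (the latter integrates by parts to $-\int a|\nabla w|^2$); and under $w(x,t)=v(x,T-t)$ the datum $v(\cdot,0)=v_0$ becomes $w(\cdot,T)=v_0$, a terminal condition for the forward operator, which is ill-posed---this signals a sign convention issue in the lemma statement that you should flag rather than absorb silently.
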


The following result is somewhat standard (especially Theorem \ref{local_wellpose}-(a)), while our technical conditions could be different from those in the literature. For completeness we provide a proof here. The following proof is based on the implicit functions theorem for Banach spaces. One may refer to \cite{Pos.J} for more related details about the theory of maps between Banach spaces. 
\begin{thm}\label{local_wellpose}
	Suppose that $F\in\mathcal{A}$ and $G\in\mathcal{B}$. The following results holds:
	\begin{enumerate}
		
		\item[(a)]
		There exist constants $\delta>0$ and $C>0$ such that for any 
		\[
		m_0\in B_{\delta}(C^{2+\alpha}(\mathbb{T}^n)) :=\{m_0\in C^{2+\alpha}(\mathbb{T}^n): \|m_0\|_{C^{2+\alpha}(\mathbb{T}^n)}\leq\delta \},
		\]
		the MFG system \eqref{eq:mfg-periodic'} has a solution $u \in
		C^{2+\alpha,1+\frac{\alpha}{2}}(Q)$ which satisfies
		\begin{equation}\label{eq:nn1}
			\|(u,m)\|_{ C^{2+\alpha,1+\frac{\alpha}{2}}(Q)}:= \|u\|_{C^{2+\alpha,1+\frac{\alpha}{2}}(Q)}+ \|m\|_{C^{2+\alpha,1+\frac{\alpha}{2}}(Q)}\leq C\|m_0\|_{ C^{2+\alpha}(\mathbb{T}^n)}.
		\end{equation}
		Furthermore, the solution $(u,m)$ is unique within the class
		\begin{equation}\label{eq:nn2}
			\{ (u,m)\in  C^{2+\alpha,1+\frac{\alpha}{2}}(Q)\times C^{2+\alpha,1+\frac{\alpha}{2}}(Q): \|(u,m)\|_{ C^{2+\alpha,1+\frac{\alpha}{2}}(Q)}\leq C\delta \}.
		\end{equation}		
		\item[(b)] Define a function 
		\[
		S: B_{\delta}(C^{2+\alpha}(\mathbb{T}^n))\to C^{2+\alpha,1+\frac{\alpha}{2}}(Q)\times C^{2+\alpha,1+\frac{\alpha}{2}}(Q)\ \mbox{by $S(m_0):=(u,v)$}. 
		\] 
		where $(u,v)$ is the unique solution to the MFG system \eqref{eq:mfg-periodic'}.
		Then for any $m_0\in B_{\delta}(C^{2+\alpha}(\mathbb T^n))$, $S$ is holomorphic.
	\end{enumerate}
\end{thm}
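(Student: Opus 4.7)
The natural strategy, consistent with the comment preceding the theorem, is to realize the MFG system as the zero set of a holomorphic map between Banach spaces and then invoke the analytic implicit function theorem. One linearizes around the trivial pair $(0,0)$: the admissibility conditions $F(x,t,0)=0$ and $G(x,0)=0$ from Definitions~\ref{Admissible class1}--\ref{Admissible class2}, together with the standard MFG normalization $H(x,0)=0$ (valid in particular for $H=\frac{1}{2}|p|^2$), make $(u,m)=(0,0)$ a solution of \eqref{eq:mfg-periodic'} corresponding to $m_0=0$. The goal is to perturb around this base point and obtain unique solvability and holomorphic dependence in a single stroke.

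Concretely, define
\begin{equation*}
\mathcal{K}:C^{2+\alpha}(\mathbb{T}^n)\times\bigl[C^{2+\alpha,1+\frac{\alpha}{2}}(Q)\bigr]^2 \longrightarrow \bigl[C^{\alpha,\frac{\alpha}{2}}(Q)\bigr]^2\times C^{2+\alpha}(\mathbb{T}^n)\times C^{2+\alpha}(\mathbb{T}^n)
\end{equation*}
by
\begin{align*}
\mathcal{K}(m_0,u,m):=\Bigl(&-\p_t u-\Delta u+H(x,\nabla u)-F(x,t,m),\\
&\p_t m-\Delta m-{\rm div}\bigl(m\,\nabla_p H(x,\nabla u)\bigr),\\
&m(\cdot,0)-m_0,\ u(\cdot,T)-G(\cdot,m(\cdot,T))\Bigr).
\end{align*}
The admissibility assumptions immediately give $\mathcal{K}(0,0,0)=0$. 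Since $F\in\mathcal{A}$ and $G\in\mathcal{B}$ are holomorphic in the measure slot with values in H\"older spaces, and $H$ enters only through smooth composition with H\"older-class arguments, $\mathcal{K}$ is holomorphic as a map between these Banach spaces.

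The crucial step is to show that $\mathcal{L}:=D_{(u,m)}\mathcal{K}(0,0,0)$ is a Banach space isomorphism. A direct computation gives
\begin{align*}
\mathcal{L}(v,\rho)=\Bigl(&-\p_t v-\Delta v+\nabla_p H(x,0)\cdot\nabla v-F^{(1)}(x,t)\rho,\\
&\p_t\rho-\Delta\rho-{\rm div}\bigl(\rho\,\nabla_p H(x,0)\bigr),\\
&\rho(\cdot,0),\ v(\cdot,T)-G^{(1)}(\cdot)\rho(\cdot,T)\Bigr).
\end{align*}
The key structural feature is that this linearized system \emph{decouples}. Given a target tuple $(f_1,f_2,\rho_0,g)$, one first solves the forward parabolic Cauchy problem for $\rho$ (source $f_2$, initial datum $\rho_0$) uniquely in $C^{2+\alpha,1+\frac{\alpha}{2}}(Q)$ via Lemma~\ref{linear app unique}. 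With $\rho$ known, $v$ is then determined by a backward linear parabolic equation with drift $\nabla_p H(x,0)$, source $F^{(1)}\rho+f_1$ and terminal datum $G^{(1)}\rho(\cdot,T)+g$, which is solvable by Lemma~\ref{linear app unique} after time reversal. Linearity and continuity of these two solution maps show that $\mathcal{L}$ is a topological isomorphism.

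With these ingredients in place, the analytic implicit function theorem (see \cite{Pos.J}) furnishes $\delta>0$ and a holomorphic map $S:B_\delta(C^{2+\alpha}(\mathbb{T}^n))\to[C^{2+\alpha,1+\frac{\alpha}{2}}(Q)]^2$ with $\mathcal{K}(m_0,S(m_0))\equiv 0$ and $S(0)=(0,0)$. This yields part~(b) directly, and the estimate \eqref{eq:nn1} together with local uniqueness within \eqref{eq:nn2} follows (possibly after shrinking $\delta$) from the Lipschitz bound on $S$ near $0$ with constant $\|\mathcal{L}^{-1}\|$. The principal technical obstacle is precisely the isomorphism step: one must pick the four target spaces so that the linearized operator lands in them with matched H\"older regularity and verify that the decoupling is genuine, which is true because the two linearized PDEs propagate in opposite time directions and interact only through lower-order terms determined after the $\rho$-solve.
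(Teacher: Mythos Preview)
Your proposal is correct and follows essentially the same approach as the paper: both encode the MFG system as the zero set of a holomorphic map $\mathcal{K}$ between H\"older spaces, observe $\mathcal{K}(0,0,0)=0$, verify that the partial derivative $D_{(u,m)}\mathcal{K}(0,0,0)$ is an isomorphism by exploiting the decoupling (solve the forward $\rho$-equation first via Lemma~\ref{linear app unique}, then the backward $v$-equation), and invoke the analytic implicit function theorem from \cite{Pos.J}. The only cosmetic difference is that the paper specializes to $H(x,p)=\tfrac12|p|^2$ in writing out $\mathscr{K}$ (so the drift terms $\nabla_pH(x,0)$ vanish in the linearization), whereas you keep a general $H$; this changes nothing of substance.
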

\begin{proof}
	Let 
	\begin{align*}
		&X_1:= C^{2+\alpha}(\mathbb{T}^n ), \\
		&X_2:=C^{2+\alpha,1+\frac{\alpha}{2}}(Q)\times C^{2+\alpha,1+\frac{\alpha}{2}}(Q),\\
		&X_3:=C^{2+\alpha}(\mathbb{T}^n)\times C^{2+\alpha}(\mathbb{T}^n)\times C^{\alpha,\frac{\alpha}{2}}(Q )\times C^{\alpha,\frac{\alpha}{2}}(Q ),
	\end{align*} and we define a map $\mathscr{K}:X_1\times X_2 \to X_3$ by that for any $(m_0,\tilde u,\tilde m)\in X_1\times X_2$,
	\begin{align*}
		&
		\mathscr{K}( m_0,\tilde u,\tilde m)(x,t)\\
		:=&\big( \tilde u(x,T)-G(x,\tilde m(x,T)), \tilde m(x,0)-m_0(x) , 
		-\p_t\tilde u(x,t)-\Delta \tilde u(x,t)\\ &+\frac{|\nabla \tilde u(x,t)|^2}{2}- F(x,t,\tilde m(x,t)), 
		\p_t \tilde m(x,t)-\Delta \tilde m(x,t)-{\rm div}(\tilde m(x,t)\nabla \tilde u(x,t))  \big) .
	\end{align*}

	First, we show that $\mathscr{K} $ is well-defined. Since the
	H\"older space is an algebra under the point-wise multiplication, we have $|\nabla u|^2, {\rm div}(m(x,t)\nabla u(x,t))  \in C^{\alpha,\frac{\alpha}{2}}(Q ).$
	By the Cauchy integral formula,
	\begin{equation}\label{eq:F1}
		F^{(k)}\leq \frac{k!}{R^k}\sup_{|z|=R}\|F(\cdot,\cdot,z)\|_{C^{\alpha,\frac{\alpha}{2}}(Q ) },\ \ R>0.
	\end{equation}
	Then there is $L>0$ such that for all $k\in\mathbb{N}$,
	\begin{equation}\label{eq:F2}
		\left\|\frac{F^{(k)}}{k!}m^k\right\|_{C^{\alpha,\frac{\alpha}{2}}(Q )}\leq \frac{L^k}{R^k}\|m\|^k_{C^{\alpha,\frac{\alpha}{2}}(Q )}\sup_{|z|=R}\|F(\cdot,\cdot,z)\|_{C^{\alpha,\frac{\alpha}{2}}(Q ) }.
	\end{equation}
	By choosing $R\in\mathbb{R}_+$ large enough and by virtue of \eqref{eq:F1} and \eqref{eq:F2}, it can be seen that the series \eqref{eq:F} converges in $C^{\alpha,\frac{\alpha}{2}}(Q )$ and therefore $F(x,m(x,t))\in  C^{\alpha,\frac{\alpha}{2}}(Q ).$ Similarly, we have $G(x,m(x,T))\in C^{2+\alpha}(\mathbb{T}^n).$ 
	This implies that $\mathscr{K} $ is well-defined.

	Let us show that $\mathscr{K}$ is holomorphic. Since $\mathscr{K}$ is clearly locally bounded, it suffices to verify that it is weakly holomorphic; see \cite[P.133 Theorem 1]{Pos.J}. That is we aim to show the map
	$$\lambda\in\mathbb C \mapsto \mathscr{K}((m_0,\tilde u,\tilde m)+\lambda (\bar m_0,\bar u,\bar m))\in X_3,\quad\text{for any $(\bar m_0,\bar u,\bar m)\in X_1\times X_2$}$$
	is holomorphic. In fact, this follows from the condition that $F\in\mathcal{A}$ and $G\in\mathcal{B}$.

	Note that $  \mathscr{K}(0,0,0)=0$. Let us compute $\nabla_{(\tilde u,\tilde m)} \mathscr{K} (0,0,0)$:
	\begin{equation}\label{Fer diff}
		\begin{aligned}
			\nabla_{(\tilde u,\tilde m)} \mathscr{K}(0,0,0) (u,m) =(& u|_{t=T}-G^{(1)}m(x,T), m|_{t=0}, \\
			&-\p_tu(x,t)-\Delta u(x,t)-F^{(1)}m, \p_t m(x,t)-\Delta m(x,t)).
		\end{aligned}			
	\end{equation}

	By Lemma $\ref{linear app unique}$, 
	if $\nabla_{(\tilde u,\tilde m)} \mathscr{K} (0,0,0)=0$, we have 
	$ \tilde m=0$ and then $\tilde u=0$. Therefore, the map is injective. 
	
	On the other hand,  letting $ (r(x),s(x,t))\in C^{2+\alpha}(\mathbb{T}^n)\times C^{\alpha,\frac{\alpha}{2}}(Q ) $,
	and by Lemma $\ref{linear app unique}$, there exists $a(x,t)\in C^{2+\alpha,1+\frac{\alpha}{2}}(Q)$ such that
	\begin{equation*}
		\begin{cases}
			\p_t a(x,t)-\Delta a(x,t)=s(x,t)  &\text{ in } \mathbb{T}^n,\medskip\\
			a(x,0)=r(x)                       &  \text{ in } \mathbb{T}^n .
		\end{cases}
	\end{equation*}
	Then letting $ (r'(x),s'(x,t))\in C^{2+\alpha}(\mathbb{T}^n)\times C^{\alpha,\frac{\alpha}{2}}(Q ) $, one can show that there exists $ b(x,t)\in C^{2+\alpha,1+\frac{\alpha}{2}}(Q)$ such that
	\begin{equation*}
		\begin{cases}
			-\p_t b(x,t)-\Delta b(x,t)-F^{(1)}a=s'(x,t)  &\text{ in } \mathbb{T}^n,\medskip\\
			b(x,T)=G^{(1)}a(x,T)+r'(x)                  &  \text{ in } \mathbb{T}^n.
		\end{cases}
	\end{equation*}
	
	Therefore, $\nabla_{(\tilde u,\tilde m)} \mathscr{K} (0,0,0)$ is a linear isomorphism between $X_2$ and $X_3$. Hence, by the implicit function theorem, there exist $\delta>0$ and a unique holomorphic function $S: B_{\delta}(\mathbb{T}^n)\to X_2$ such that $\mathscr{K}(m_0,S(m_0))=0$ for all $m_0\in B_{\delta}(\mathbb{T}^n) $.
	
	By letting $(u,m)=S(m_0)$, we obtain the unique solution of the MFG system \eqref{eq:mfg-periodic'}. Let $ (u_0,v_0)=S(0)$.   Since $S$ is Lipschitz, we know that there exist constants $C,C'>0$ such that 
	\begin{equation*}
		\begin{aligned}
			&\|(u,m)\|_{ C^{2+\alpha,1+\frac{\alpha}{2}}(Q)^2}\\
			\leq& C'\|m_0\|_{B_{\delta}(\mathbb{T}^n)} +\|u_0\|_	{ C^{2+\alpha,1+\frac{\alpha}{2}}(Q)}+\|v_0\|_{ C^{2+\alpha,1+\frac{\alpha}{2}}(Q)}\\
			\leq& C \|m_0\|_{B_{\delta}(\mathbb{T}^n)}.
		\end{aligned}
	\end{equation*}
	
	The proof is complete. 
\end{proof}
\begin{rmk}
	Regarding the local well-posedness, several remarks are in order.
	\begin{enumerate}[(a)]
		
		\item 
        The admissibility conditions in Definitions~\ref{Admissible class1} and \ref{Admissible class2} shall be imposed as a-priori conditions on the unknowns $F$ and $G$ in what follows for our inverse problem study. It is remarked that as noted earlier that both $F$ and $G$ are functions of real variables. However, for technical reasons, we extend the functions to the complex plane with respect to the $z$-variable, namely $U(\cdot,z)$ and $ U(\cdot,\cdot,z)$, and assume that they are holomorphic as functions of the complex variable $z$. This also means that we shall assume $F$ and $G$ are restrictions of those holomorphic functions to the real line. This technical assumption shall be used to show the well-posedness of the MFG system in section $\ref{section wp}.$ Throughout the chapter, we also assume that in the series expansions \eqref{eq:F} and \eqref{eq:G}, the coefficient functions $U^{(k)}$ are real-valued. 
        \item The conditions on $F$ and $G$ (Definition \ref{Admissible class1}-(i) and $G$ satisfies Definition \ref{Admissible class2}-(i) ) are not essential and it is for convenience to
		apply implicit function theorem . Also, the analytic conditions on $F$ and $G$ can be replayed by weaker regularity conditions in the proof of the local well-posedness  \cite{LasryLions1} , but these conditions will be utilized in our 
		inverse problem study. 
		
		\item  In order to apply the higher order linearization method, we need the infinite differentiability of the equation with respect to the given input $m_0(x)$, it is shown by the fact that the solution map $S$ is holomorphic.

		\item In the proof of Theorem $\ref{local_wellpose}$, we show the solution map $S$ is holomorphic. As a corollary,  the measurement map $\mathcal{M}=\left.\pi_1\circ S\right|_{t=0}$ is also holomorphic, where $\pi_1$ is the projection map with respect to the first variable.
		
	\end{enumerate}	 	
\end{rmk}

 \section{Inverse Problems in $\mathbb{T}^n$}
 From this section, we begin to study inverse problems for MFG game system and through this section, we assume $\Omega:=\mathbb{T}^n.$
\begin{equation}\label{eq:mfg-periodic'}
	\left\{
	\begin{array}{ll}
		-\partial_t u(x,t) -\Delta u(t,x)+ H\big(x,\nabla u(x,t)\big)-F(x,m(x,t))=0,&  {\rm{in}}\ \mathbb T^n\times (0,T),\medskip\\
		\partial_tm(x,t)-\Delta m(x,t)-{\rm div} \big(m(x,t) \nabla_pH(x, \nabla u(x,t)\big)=0, & {\rm{in}}\ \mathbb T^n\times(0,T),\medskip\\
		u(x,T)=G(x,m(x,T)),\ m(x,0)=m_0(x), & {\rm{in}}\ \mathbb T^n,
	\end{array}
	\right.
\end{equation}

We recall that $m$ is a density function for some distribution. Hence,
\begin{equation}\label{probability measure constraint}
	m\in \mathcal{O}_a:=\{  m:  \int_{\mathbb{T}^n} m \, dx=a\leq  1 . \}
\end{equation}
This is called the probability measure constraint  .

We introduce the following measurement map $\mathcal{M}_{F,G}$:
\begin{align}
	\begin{split}
		\mathcal{M}_{F,G}: m_0\in C^{2+\alpha}(\mathbb{T}^n)\cap\mathcal{O}_a & \rightarrow  L^2(\mathbb{T}^n) , \\  
		m_0&\mapsto  \left(x\in\mathbb{T}^n \mapsto \left.u(x,t) \right|_{t=0}\right), 
	\end{split}
\end{align}
where $u(x,t)$ is the solution of \eqref{eq:mfg-periodic'} with initial data $m(x,0)=m_0(x).$

\subsection{Main Unique Identifiability Results}

Now, we state our main result in this chapter. Here and also in what follows, we sometimes drop the dependence on $F, G$ of $\mathcal{M}$, and in particular in the case that one quantity is a-priori known, say  $\mathcal{M}_F$ or $\mathcal{M}_G$, which should be clear from the context. 

\begin{thm}\label{der g}
	Assume $F \in\mathcal{A}$, $G_j\in\mathcal{B}$ ($j=1,2$). Let $\mathcal{M}_{G_j}$ be the measurement map associated to
	the following system:
	\begin{equation}
		\begin{cases}
			-\p_tu(x,t)-\Delta u(x,t)+\frac 1 2 {|\nabla u(x,t)|^2}= F(x,t,m(x,t)),& \text{ in } \mathbb{T}^n\times (0,T),\medskip\\
			\p_t m(x,t)-\Delta m(x,t)-{\rm div}(m(x,t)\nabla u(x,t))=0,&\text{ in }\mathbb{T}^n\times(0,T),\medskip\\
			u(x,T)=G_j(x,m(x,T)), & \text{ in } \mathbb{T}^n,\medskip\\
			m(x,0)=m_0(x), & \text{ in } \mathbb{T}^n.\\
		\end{cases}  		
	\end{equation}	
	If for any $m_0\in C^{2+\alpha}(\mathbb{T}^n)\cap\mathcal{O}_a$, one has 
	$$\mathcal{M}_{G_1}(m_0)=\mathcal{M}_{G_2}(m_0),$$    then it holds that		
	$$G_1(x,z)=G_2(x,z)\ \text{  in  } \mathbb{T}^n\times \mathbb{R}.$$ 
\end{thm}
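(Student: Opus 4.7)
The plan is to invoke the higher-order linearization machinery of Section~\ref{HLM} around the trivial background $(u_0,m_0)=(0,0)$, which is itself an admissible solution because $F\in\mathcal A$ and $G_j\in\mathcal B$ both vanish at $z=0$. By Theorem~\ref{local_wellpose} the solution map $S$ is holomorphic near $m_0=0$, so substituting $m_0=\sum_{l=1}^N\varepsilon_l h_l$ and differentiating in $\varepsilon=(\varepsilon_1,\dots,\varepsilon_N)$ produces, at each multi-order, a linear parabolic system whose data are controlled by the Taylor coefficients $G_j^{(k)}$. The measurement identity $\mathcal M_{G_1}(m_0)=\mathcal M_{G_2}(m_0)$ translates, after differentiation, into the equality of the $t=0$ traces of $u$ at every multi-order. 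The problem thus reduces to showing $G_1^{(k)}\equiv G_2^{(k)}$ on $\mathbb T^n$ for every $k\ge 1$, after which the analyticity in $z$ of elements of $\mathcal B$ gives $G_1=G_2$.

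The first order is the prototype. With $m_0=\varepsilon h$, differentiating once in $\varepsilon$ at $0$ and using $(u_0,m_0)=(0,0)$ produces
\begin{equation*}
-\partial_t u_j^{(1)}-\Delta u_j^{(1)}=F^{(1)}m^{(1)},\quad \partial_t m^{(1)}-\Delta m^{(1)}=0,\quad m^{(1)}(x,0)=h,\quad u_j^{(1)}(x,T)=G_j^{(1)}(x)\,m^{(1)}(x,T).
\end{equation*}
The Fokker--Planck equation for $m^{(1)}$ is independent of $G_j$, hence $m_1^{(1)}=m_2^{(1)}=e^{t\Delta}h$, and subtracting the HJ equations shows $w:=u_1^{(1)}-u_2^{(1)}$ satisfies $-\partial_t w-\Delta w=0$ with $w(x,T)=(G_1^{(1)}-G_2^{(1)})(x)\,m^{(1)}(x,T)$ and, from the matching measurements, $w(x,0)=0$. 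Reversing time via $s=T-t$ turns this into the standard heat equation with initial datum $w(\cdot,T)$, so $0=w(\cdot,0)=e^{T\Delta}\bigl[(G_1^{(1)}-G_2^{(1)})\,m^{(1)}(\cdot,T)\bigr]$, and injectivity of $e^{T\Delta}$ on $\mathbb T^n$ forces $(G_1^{(1)}-G_2^{(1)})(x)\,m^{(1)}(x,T)\equiv 0$. Letting $h$ range over zero-mean Fourier modes $\cos(2\pi k\cdot x)$ and $\sin(2\pi k\cdot x)$ with $k\ne 0$ (whose integral automatically satisfies the probability constraint of $\mathcal O_a$), the resulting traces $m^{(1)}(\cdot,T)$ are nonvanishing off low-dimensional hyperplanes that together cover $\mathbb T^n$, yielding $G_1^{(1)}\equiv G_2^{(1)}$.

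For the induction, assume $G_1^{(j)}=G_2^{(j)}$ for $j<k$. Then at every multi-order below $k$ the corresponding $u_1^{(\alpha)}$ and $u_2^{(\alpha)}$, as well as $m_1^{(\alpha)}$ and $m_2^{(\alpha)}$, coincide; examining the $k$-th order linearization (generalizing \eqref{MFGQuadraticLinear2}) we see that the two systems for $u_j^{(1,\dots,k)},\,m_j^{(1,\dots,k)}$ have identical source terms and differ only in the terminal condition by $(G_1^{(k)}-G_2^{(k)})(x)\prod_{l=1}^k m^{(l)}(x,T)$. Exactly the same backward-heat/injectivity argument as above, with $h_1,\dots,h_k$ chosen as independent Fourier modes, then extracts $G_1^{(k)}\equiv G_2^{(k)}$. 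The analyticity in $z$ (Definition~\ref{Admissible class2}) closes the proof: since every Taylor coefficient of $G_1(\cdot,z)$ and $G_2(\cdot,z)$ at $z=0$ agrees, we obtain $G_1\equiv G_2$ on $\mathbb T^n\times\mathbb R$.

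The principal technical obstacle is the probability-measure constraint $m_0\in\mathcal O_a$, which excludes the constant Fourier mode and so narrows the family of admissible traces $m^{(1)}(\cdot,T)$ we may produce. I expect this to be surmountable because the remaining non-constant Fourier modes on $\mathbb T^n$ already separate points, and, at each higher order $k$, the wave vectors $k_1,\dots,k_k$ can be selected so that the product $\prod_l m^{(l)}(\cdot,T)$ avoids any common zero near a prescribed point. The most delicate step is the bookkeeping for the induction: one must verify that after subtracting the (already-known) equal lower-order contributions, the equation for the differences of the higher Fr\'echet derivatives really does collapse to the pure backward heat equation with a factored terminal datum, so that the elementary backward-heat injectivity argument continues to apply unchanged.
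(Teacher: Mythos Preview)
Your proposal follows essentially the same successive-linearization route as the paper: linearize around the trivial background $(0,0)$, observe that the first-order difference $w=u_1^{(1)}-u_2^{(1)}$ solves a pure backward heat equation with terminal datum $(G_1^{(1)}-G_2^{(1)})\,m^{(1)}(\cdot,T)$, use injectivity of $e^{T\Delta}$ to conclude this datum vanishes, and then induct. Your bookkeeping for the induction step is correct: once the coefficients of order $<k$ agree, all lower-order $u^{(\alpha)}$, $m^{(\alpha)}$ and also $m^{(1,\dots,k)}$ coincide for $j=1,2$, so the $k$-th order difference again satisfies a homogeneous backward heat equation with the factored terminal datum you wrote down.

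The one genuine gap is exactly the point you flag, but your diagnosis of it is slightly off. The constraint $m_0\in\mathcal O_a$ does \emph{not} merely fix the integral; since $m$ is a probability density it implicitly requires $m_0\ge 0$, which is why the paper works throughout with $f_l\in C_+^{2+\alpha}(\mathbb T^n)$. Your proposed inputs $h=\cos(2\pi k\cdot x)$ or $\sin(2\pi k\cdot x)$ change sign, so $m_0=\varepsilon h$ is not admissible, and the measurement identity is not known to hold for them. The paper's fix is cleaner than a zero-set argument: take $f_1(x)=e^{-2\pi\mathrm i\zeta\cdot x}+M$ with $M\in\mathbb N$, which is nonnegative; write down the resulting identity for $M=1$ and $M=2$ and subtract. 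This isolates the pure exponential (or equivalently the constant) contribution while staying inside the admissible class, yielding
\[
\int_{\mathbb R^n}\Phi(x-y,T)\bigl(G_1^{(1)}-G_2^{(1)}\bigr)(y)\,e^{-2\pi\mathrm i\zeta\cdot y}\,dy=0\qquad\text{for all }\zeta\in\mathbb Z^n,
\]
and hence $G_1^{(1)}\equiv G_2^{(1)}$ by injectivity of the heat semigroup. The same ``shift by $M$ and subtract'' device works verbatim at each higher order, replacing your pointwise covering argument.
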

Notice that in Theorems \ref{der g} we allow $F$ to depend on time. If we assume  $F$ depends only on $x$ and $m(x,t)$, we can determine $F$ and $G$ simultaneously. 

\begin{thm}\label{der F,g}
	Assume $F_j,G_j \in\mathcal{B}$ ($j=1,2$) . Let $\mathcal{M}_{F_j,G_j}$ be the measurement map associated to
	the following system:
	\begin{equation}\label{eq:mfg3}
		\begin{cases}
			-\p_tu(x,t)-\Delta u(x,t)+\frac 1 2 {|\nabla u(x,t)|^2}= F_j(x,m(x,t)),& \text{ in }\mathbb{T}^n\times(0,T),\medskip\\
			\p_t m(x,t)-\Delta m(x,t)-{\rm div}(m(x,t)\nabla u(x,t))=0,&\text{ in }\mathbb{T}^n\times (0,T),\medskip\\
			u(x,T)=G_j(x,m(x,T)), & \text{ in } \mathbb{T}^n,\medskip\\
			m(x,0)=m_0(x), & \text{ in } \mathbb{T}^n.\\
		\end{cases}  		
	\end{equation}	
	If for any $m_0\in C^{2+\alpha}(\mathbb{T}^n)\cap\mathcal{O}_a$, one has 
	$$\mathcal{M}_{F_1,G_1}(m_0)=\mathcal{M}_{F_2,G_2}(m_0),$$    then it holds that	
	$$(G_1(x,z),F_1(x,z))=(G_2(x,z),F_2(x,z)) \ \text{  in  } \mathbb{T}^n\times \mathbb{R}.$$ 
\end{thm}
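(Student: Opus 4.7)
The plan is successive linearization at the trivial equilibrium $(u_0, m_0) = (0, 0)$, which is permitted because $F_j, G_j \in \mathcal{B}$ vanish at $z = 0$, so $(0, 0)$ is indeed a solution of the MFG system. Taking $m_0 = \sum_{l=1}^N \varepsilon_l f_l$ with $\int_{\mathbb{T}^n} f_l\,dx = 0$ so that $m_0 \in \mathcal{O}_0$ for $|\varepsilon|$ small, the holomorphy of the solution map from Theorem~\ref{local_wellpose} allows me to define mixed Fr\'echet derivatives $u^{(I)} := \partial^{|I|}_{\varepsilon_I} u|_{\varepsilon = 0}$ and $m^{(I)} := \partial^{|I|}_{\varepsilon_I} m|_{\varepsilon = 0}$ for any $I \subseteq \{1, \ldots, N\}$. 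The measurement identity descends to each order as $u_1^{(I)}(\cdot, 0) = u_2^{(I)}(\cdot, 0)$. The goal is to prove inductively that $F_1^{(N)} = F_2^{(N)}$ and $G_1^{(N)} = G_2^{(N)}$ for every $N \ge 1$; combined with the analyticity built into $\mathcal{B}$, this yields $F_1 = F_2$ and $G_1 = G_2$.

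For the base case $N = 1$, linearizing at $(0,0)$ decouples the system because $\nabla u_0 = 0$ and $m_0 = 0$: one gets $\partial_t m^{(l)} - \Delta m^{(l)} = 0$ with $m^{(l)}(\cdot, 0) = f_l$ (identical for both systems), and $-\partial_t u_j^{(l)} - \Delta u_j^{(l)} = F_j^{(1)} m^{(l)}$ with $u_j^{(l)}(\cdot, T) = G_j^{(1)} m^{(l)}(\cdot, T)$. Setting $v = u_1^{(l)} - u_2^{(l)}$ and pairing the difference equation against any solution $\phi$ of $\partial_t \phi - \Delta \phi = 0$, integration by parts on $\mathbb{T}^n \times (0, T)$ yields
\begin{equation*}
\int_0^T\!\!\int_{\mathbb{T}^n} \phi\,(F_1^{(1)} - F_2^{(1)})\,m^{(l)}\,dx\,dt + \int_{\mathbb{T}^n} \phi(x, T)\,(G_1^{(1)} - G_2^{(1)})\,m^{(l)}(x, T)\,dx = 0.
\end{equation*}
Choosing $m^{(l)}$ and $\phi$ to be the Fourier heat modes $e^{ik \cdot x - |k|^2 t}$ and $e^{il \cdot x - |l|^2 t}$ (with $k, l \ne 0$), the identity becomes a linear relation on the Fourier coefficients $\widehat{F_1^{(1)} - F_2^{(1)}}(-(k+l))$ and $\widehat{G_1^{(1)} - G_2^{(1)}}(-(k+l))$ with coefficients $(1 - e^{-\alpha T})/\alpha$ and $e^{-\alpha T}$, where $\alpha = |k|^2 + |l|^2$. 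Fixing $k + l$ and varying $\alpha$ produces two such relations with a nonsingular $2 \times 2$ coefficient matrix, forcing both Fourier coefficients to vanish at every frequency; hence $F_1^{(1)} = F_2^{(1)}$ and $G_1^{(1)} = G_2^{(1)}$.

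For the induction step, assume $F_1^{(k)} = F_2^{(k)}$ and $G_1^{(k)} = G_2^{(k)}$ for $k < N$. Applying $\partial_{\varepsilon_1} \cdots \partial_{\varepsilon_N}|_{\varepsilon = 0}$ to the MFG system and using the Leibniz and Fa\`a di Bruno rules, the nonlinearities $\nabla \cdot (m \nabla u)$, $\frac12|\nabla u|^2$, $F(x, m)$ and $G(x, m)$ expand into combinatorial sums indexed by subsets or set partitions of $\{1, \ldots, N\}$. All terms other than the extremes $F^{(1)} m^{(1, \ldots, N)}$, $F^{(N)} \prod_{l=1}^N m^{(l)}$ (and their $G$-analogues) involve only pieces $u^{(I)}, m^{(I)}$ with $|I| < N$ and coefficients $F^{(k)}, G^{(k)}$ with $k < N$, all identical for the two systems by induction. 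Moreover the Fokker--Planck equation contains no $F$ or $G$, so $w_N := m_1^{(1, \ldots, N)} - m_2^{(1, \ldots, N)}$ satisfies a homogeneous heat equation with zero initial data and is therefore $\equiv 0$. Consequently $v_N := u_1^{(1, \ldots, N)} - u_2^{(1, \ldots, N)}$ solves
\begin{align*}
-\partial_t v_N - \Delta v_N &= (F_1^{(N)} - F_2^{(N)})\,m^{(1)}\cdots m^{(N)}, \\
v_N(\cdot, 0) &= 0, \\
v_N(\cdot, T) &= (G_1^{(N)} - G_2^{(N)})\,m^{(1)}(\cdot, T)\cdots m^{(N)}(\cdot, T),
\end{align*}
and the same duality pairing with $\phi$, now choosing each $m^{(l)}$ as a distinct Fourier mode so that $\prod m^{(l)}$ is again an exponential factor, reduces to the analogous $2 \times 2$ algebraic argument and forces $F_1^{(N)} = F_2^{(N)}$ and $G_1^{(N)} = G_2^{(N)}$, closing the induction.

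The main technical obstacle is the combinatorial bookkeeping at order $N$: one must track every partition-indexed contribution on both the Hamilton--Jacobi and Fokker--Planck sides and verify that, under the inductive hypothesis, it is literally the same for the two systems before isolating the unknown pieces $F_j^{(N)}$ and $G_j^{(N)}$. A second, more conceptual difficulty is reconciling the probing with the probability-measure constraint $m_0 \in \mathcal{O}_a$, which forces $\int f_l = 0$ and so rules out the zero Fourier mode; this is resolved by noting that sums $k + l$ (and $k_1 + \cdots + k_N + l$ at higher order) still cover every frequency in $\mathbb{Z}^n$, including the origin via $l = -k$, so every Fourier coefficient of $F_j^{(N)}$ and $G_j^{(N)}$ remains accessible.
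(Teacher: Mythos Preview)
Your proposal is correct and follows essentially the same strategy as the paper: successive linearization at the trivial solution $(0,0)$, a duality pairing of the difference equation against heat solutions, Fourier-mode probing, and a nonsingular $2\times 2$ system in the Fourier coefficients of $F_1^{(N)}-F_2^{(N)}$ and $G_1^{(N)}-G_2^{(N)}$ obtained by fixing $k+l$ while varying $|k|^2+|l|^2$. The only minor difference is in handling the mass constraint: you take mean-zero perturbations $\int f_l=0$ and pure modes $e^{ik\cdot x}$ with $k\ne 0$, whereas the paper uses inputs of the form $e^{-2\pi i\xi\cdot x}+M$ with $M\in\mathbb{N}$ and subtracts the identities for $M=1,2$ to strip off the constant before running the identical $2\times 2$ argument.
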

\subsection{The Proofs of Inverse Coefficient Problems in $\mathbb{T}^n$}

We first present the proof of Theorem $\ref{der g}$, the unique determination result of single unknown function.

\begin{proof}[Proof of Theorem $\ref{der g}$]
	Consider the following systems for $j=1,2$:
	\begin{equation}\label{j=1,2for g}
		\begin{cases}
			-\p_tu_j(x,t)-\Delta u_j(x,t)+\frac 1 2 {|\nabla u_j|^2}= F(x,t,m_j(x,t)),& \text{ in }\mathbb{T}^n\times (0,T),\medskip\\
			\p_t m_j(x,t)-\Delta m_j(x,t)-{\rm div}(m_j(x,t)\nabla u_j(x,t))=0,&\text{ in }\mathbb{T}^n\times(0,T),\medskip\\
			u_j(x,T)=G_j(x,m_j(x,T)), & \text{ in } \mathbb{T}^n,\medskip\\
			m_j(x,0)=m_0(x), & \text{ in } \mathbb{T}^n.\\
		\end{cases}  		
	\end{equation}
	By the successive linearization procedure,  we first consider the case $N=1.$ Let
	$$u_{j}^{(1)}:=\p_{\varepsilon_1}u_{j}|_{\varepsilon=0},\quad m_{j}^{(1)}:=\p_{\varepsilon_1}m_{j}|_{\varepsilon=0}.$$
	Direct computations show that $(u_{j}^{(1)},v_{j}^{(1)} )$ satisfies the following system
	\begin{equation}\label{linear l=1for g}
		\begin{cases}
			-\p_tu_j^{(1)}(x,t)-\Delta u^{(1)}_j(x,t)= F^{(1)}(x,t)m_j^{(1)}(x,t),& \text{ in }\mathbb{T}^n\times(0,T),\medskip\\
			\p_t m^{(1)}_j(x,t)-\Delta m^{(1)}_j(x,t)=0,&\text{ in }\mathbb{T}^n\times(0,T),\medskip\\
			u^{(1)}_j(x,T)=G_j^{(1)}(x)m^{(1)}_j(x,T), & \text{ in } \mathbb{T}^n,\medskip\\
			m^{(1)}_j(x,0)=f_1(x), & \text{ in } \mathbb{T}^n.
		\end{cases}  	
	\end{equation}	
	We can solve the system \eqref{linear l=1for g} by first deriving $m^{(1)}_j$ and then obtaining $u^{(1)}_j.$ In doing so, we can obtain that the solution is
	$$ m_j^{(1)}(x,t)= \int_{\mathbb{R}^n}\Phi(x-y,t)f_{1}(y)\, dy,$$
	\begin{equation*}
		\begin{aligned}
			u_j^{(1)}(x,t)&=  \int_{\mathbb{R}^n}\Phi(x-y,T-t)G_j^{(1)}(y)m^{(1)}_j(y,T) )\, dy\\
			&+\int_{0}^{T-t}\int_{\mathbb{R}^n}\Phi(x-y,T-t-s)F^{(1)}(y,T-s)\overline{m}_j^{(1)}(y,s)\, dyds,
		\end{aligned}
	\end{equation*}
	where $\overline{m}_j^{(1)}(x,t)= m_j^{(1)}(x,T-t)$ and $\Phi$ is the  fundamental solution of the heat equation:
	\begin{equation}\label{eq:fund1}
		\Phi(x,t)= \frac{1}{(4\pi t)^{n/2}}e^{-\frac{|x|^2}{4t}}.
	\end{equation}
	
	Since  $\mathcal{M}_{G_1}=\mathcal{M}_{G_2}$, we have $$ u_1^{(1)}(x,0)=u_2^{(1)}(x,0),$$ for all $f_1\in C_+^{2+\alpha}(\mathbb{T}^n).$ This implies that
	$$ \int_{\mathbb{R}^n}\Phi(x-y,T)[G_1^{(1)}(y)m_1^{(1)}(y,T))-G_2^{(1)}(y)m_2^{(1)}(y,T)) ]\, dy=0.$$
	Noticing that $m_1^{(1)}(x,t)=m_2^{(1)}(x,t)$,  we  choose 
	$$m_1^{(1)}(x,T)=m_2^{(1)}(x,T)=\exp(-4\pi^2|\mathbf{\zeta}|^2T-2\pi \mathrm{i} \mathbf{\zeta}\cdot x)+M,$$ 
	where $\mathbf{\zeta}\in\mathbb{Z}^n, M\in\mathbb{N}.$ (In this case, $f_1(x)\in C_+^{2+\alpha}(\mathbb{T}^n)$.)
	
	By taking $M=1$ and $M=2$, respectively and then subtracting the resulting equations from one another, one can readily show that
	\begin{equation}
		\int_{\mathbb{R}^n}\Phi(x-y,T)[(G_1^{(1)}(y)-G_2^{(1)}(y))\exp(-2\pi \mathrm{i} \mathbf{\zeta}\cdot y)         ]\, dy=0,
	\end{equation}
	for all $\mathbf{\zeta}\in\mathbb{Z}^n.$
	Therefore $G_1^{(1)}(x)=G_2^{(1)}(x).$

	We proceed to consider the case $N=2.$ Let
	$$u_{j}^{(1,2)}:=\p_{\varepsilon_1}\p_{\varepsilon_2}u_{j}|_{\varepsilon=0},\quad
	m_{j}^{(1,2)}:=\p_{\varepsilon_1}\p_{\varepsilon_2}m_{j}|_{\varepsilon=0},$$
	and
	$$u_{j}^{(2)}:=\p_{\varepsilon_2}u_{j}|_{\varepsilon=0},\quad m_{j}^{(2)}:=\p_{\varepsilon_2}m_{j}|_{\varepsilon=0}.$$
	Then we can deal with the second-order linearization: 
	\begin{equation}\label{linear l=1,2}
		\begin{cases}
			-\p_tu_j^{(1,2)}(x,t)-\Delta u^{(1,2)}_j(x,t)+\nabla u_{j}^{(1)}\cdot \nabla u_{j}^{(2)}\\
			\hspace*{3cm} = F^{(1)}m_j^{(1,2)}+F^{(2)}(x,t)m_j^{(1)}m_j^{(2)},& \text{ in }\mathbb{T}^n\times (0,T),\\
			\p_t m^{(1,2)}_j(x,t)-\Delta m^{(1,2)}_j(x,t)\medskip \\
			\hspace*{3cm} = {\rm div} (m_{j}^{(1)}\nabla u_j^{(2)})+{\rm div}(m_j^{(2)}\nabla u_j^{(1)}) ,&\text{ in }\mathbb{T}^n\times(0,T),\medskip\\
			u^{(1,2)}_j(x,T)=G_j^{(1)}(x)m_j^{(1,2)}(x,T)+G_j^{(2)}(x)m_j^{(1)}m_j^{(2)}(x,T), & \text{ in } \mathbb{T}^n,\medskip\\
			m^{(1,2)}_j(x,0)=0, & \text{ in } \mathbb{T}^n.\\
		\end{cases}  	
	\end{equation}
	Since we have shown that $G_1^{(1)}(x)=G_2^{(1)}(x)$, we have 
	$$ u^{(1)}_1(x,t)= u^{(1)}_2(x,t), \, m^{(1)}_1(x,t)= m^{(1)}_2(x,t)$$
	by solving equation \eqref{linear l=1for g}.
	
	Then by the same argument in the case $N=1$ (considering $m_0=\varepsilon_2f_2$ ), we have
	
	$$u^{(2)}_1(x,t)=u^{(2)}_2(x,t), \, m^{(2)}_1(x,t)= m^{(2)}_2(x,t).$$
	
	Denote 
	\[
	p(x,t)={\rm div} (m_{j}^{(1)}\nabla u_j^{(2)})+{\rm div}(m_j^{(2)}\nabla u_j^{(1)}),\ \ q(x,t)= -\nabla u_{j}^{(1)}\cdot \nabla u_{j}^{(2)}.
	\] 
	Then we can also solve  system \eqref{linear l=1,2} as follows:
	\begin{equation*}
		m^{(1,2)}_j(x,t)=\int_{0}^{t} \int_{\mathbb{R}^n} \Phi(x-y,t-s)p(y,s)\, dyds,
	\end{equation*}
	\begin{equation*}
		\begin{aligned}
			u_j^{(1,2)}(x,t)= &\int_{\mathbb{R}^n}\Phi(x-y,T-t) [G_j^{(1)}(x)m_j^{(1,2)}(x,T)+G_j^{(2)}(x)m_j^{(1)}m_j^{(2)}(x,T) ]\, dy\\
			+&\int_{0}^{T-t}\int_{\mathbb{R}^n}\Phi(x-y,T-t-s)(F^{(2)}(y,T-s)m_j^{(1)}m_j^{(2)}(y,T-s) -\overline{q}(y,s))\, dyds,
		\end{aligned}
	\end{equation*}
	where $\overline{q}(y,s)=q(y,T-s).$
	Since  $$u_1^{(1,2)}(x,0)= u_2^{(1,2)}(x,0),$$ we have	$$ \int_{\mathbb{R}^n}\Phi(x-y,T)[G_1^{(2)}(y)m_1^{(1)}(y,T)-G_2^{(1)}(y)m_2^{(1)}(y,T) ]\, dy=0.$$

	Next, by a similar argument in the case $N=1$, we can prove that $G^{(2)}_1(x)=G^{(2)}_2(x). $ 
	Finally, by mathematical induction, we can show the same result for $N\geq 3$. That is, for any $k\in\mathbb{N},$ we have $G^{(k)}_1(x)=G^{(k)}_2(x).$ 
	The proof is complete. 
\end{proof}
Next, we aim to determinate $F$ and $G$ simultaneously. To that end, we first derive an auxiliary lemma as follows. 

\begin{lem}\label{dense}
	Let $u$ be a solution of the heat equation 
	\begin{equation}\label{per heat}
		\begin{cases}
			\p_t u(x,t)-\Delta u(x,t)=0    &\text{ in } \mathbb{T}^n\times(0,T),\\
			u(x,0)=u_0(x)                  &\text{ in } \mathbb{T}^n.
		\end{cases}
	\end{equation}
	Let $f(x)\in C^{2+\alpha}(\mathbb{T}^n)$ for some $\alpha\in(0,1)$. Suppose 
	\begin{equation}\label{fuv=0}
		\int_{\mathbb{T}^n\times(0,T)} f(x)u(x,t)\, dxdt=0,	
	\end{equation}
	for all $u_0\in C_+^{\infty}(\mathbb{T}^n)$. Then one has $f=0.$
\end{lem}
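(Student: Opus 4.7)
The plan is to convert the weak-testing hypothesis into a pointwise vanishing statement and then extract the vanishing of $f$ via Fourier analysis on $\mathbb{T}^n$.

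First, I would upgrade the hypothesis from positive test data to arbitrary smooth test data. Since the map $u_0 \mapsto \int_{\mathbb{T}^n\times(0,T)} f(x)u(x,t)\,dx\,dt$ is linear in $u_0$ (via linearity of \eqref{per heat}), and since for any $u_0\in C^\infty(\mathbb{T}^n)$ one can choose a constant $C>\|u_0\|_\infty$ so that both $u_0+C$ and $C$ lie in $C_+^\infty(\mathbb{T}^n)$, the identity \eqref{fuv=0} extends from $u_0\in C_+^\infty(\mathbb{T}^n)$ to all $u_0\in C^\infty(\mathbb{T}^n)$ by writing $u_0=(u_0+C)-C$.

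Next, I would invoke the representation $u(x,t)=\int_{\mathbb{T}^n}\Phi_{\mathbb{T}^n}(x-y,t)u_0(y)\,dy$ through the periodic heat kernel $\Phi_{\mathbb{T}^n}$ and apply Fubini to rewrite
\[
0=\int_{\mathbb{T}^n\times(0,T)} f(x)u(x,t)\,dx\,dt=\int_{\mathbb{T}^n}K(y)\,u_0(y)\,dy,\qquad K(y):=\int_0^T\!\!\int_{\mathbb{T}^n}f(x)\Phi_{\mathbb{T}^n}(x-y,t)\,dx\,dt.
\]
Since $f\in C^{2+\alpha}(\mathbb{T}^n)$ the function $K$ is continuous on $\mathbb{T}^n$, so density of $C^\infty(\mathbb{T}^n)$ in $C(\mathbb{T}^n)$ (coupled with the fundamental lemma of the calculus of variations) forces $K(y)\equiv 0$ on $\mathbb{T}^n$.

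Finally, I would exploit the eigenfunction expansion $\Phi_{\mathbb{T}^n}(z,t)=\sum_{\zeta\in\mathbb{Z}^n}e^{-4\pi^2|\zeta|^2 t}e^{2\pi i\zeta\cdot z}$ together with the Fourier expansion $f(x)=\sum_{\zeta\in\mathbb{Z}^n}\widehat{f}(\zeta)e^{2\pi i\zeta\cdot x}$ to compute
\[
K(y)=\sum_{\zeta\in\mathbb{Z}^n}\widehat{f}(\zeta)\,e^{2\pi i\zeta\cdot y}\int_0^T e^{-4\pi^2|\zeta|^2 t}\,dt.
\]
For every $\zeta\in\mathbb{Z}^n$ the time integral is strictly positive (equal to $T$ when $\zeta=0$ and $(1-e^{-4\pi^2|\zeta|^2 T})/(4\pi^2|\zeta|^2)$ otherwise), so $K\equiv 0$ forces $\widehat{f}(\zeta)=0$ for all $\zeta\in\mathbb{Z}^n$, and hence $f\equiv 0$. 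There is no serious obstacle in this argument; the only mildly delicate point is the opening step, where one must verify that testing against positive smooth data is already enough to test against all smooth data, which the constant-shift trick handles immediately.
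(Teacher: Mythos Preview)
Your proof is correct and follows essentially the same approach as the paper: both use the constant-shift trick to pass from nonnegative initial data to arbitrary (complex) test data, and then conclude via the vanishing of all Fourier coefficients of $f$. The paper is slightly more direct, plugging in the explicit solutions $u(x,t)=e^{-2\pi i\xi\cdot x-4\pi^2|\xi|^2 t}+M$ and subtracting the cases $M=1,2$ to isolate $\widehat{f}(\xi)$, whereas you route the same computation through the heat-kernel representation and the auxiliary function $K$; the content is identical.
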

\begin{proof}
	Let  $\mathbf {\xi}\in\mathbb{Z}^n$ and $M\in\mathbb{N}$. It is directly verified that
	$$
	u(x,t)=\exp(- 2\pi\mathrm{i}\mathbf {\xi}\cdot x-4\pi^2|\mathbf {\xi}|^2t )+M, \quad  \mathrm{i}:=\sqrt{-1},
	$$
	is a solution of \eqref{per heat} with initial value 
	$$u_0(x)= \exp(- 2\pi\mathrm{i}\mathbf {\xi}\cdot x)+M\geq0.$$
	Then \eqref{fuv=0} implies that
	$$\int_{\mathbb{T}^n}  \frac{1-\exp(-4\pi^2|\mathbf {\xi}|^2T)}{4\pi^2|\mathbf {\xi}|^2}f(x)e^{-2\pi \mathrm{i}\mathbf {\xi}\cdot x } dx+MT\int_{\mathbb{T}^n}f(x)dx=0.$$
	By taking $M=1$ and $M=2$, respectively, we have 
	$$ \int_{\mathbb{T}^n}f(x)e^{-2\pi \mathrm{i}\mathbf {\xi}\cdot x } dx=0.$$
	Hence, the Fourier series of $f(x)$ is $0$. Since $f(x)\in C^{2+\alpha}(\mathbb{T}^n)$, its Fourier series converges to $f(x)$ uniformly.
	Therefore, $f(x)=0.$
\end{proof}

We are now in a position to present the proof of Theorem $\ref{der F,g}$.
\begin{proof}[Proof of Thoerem $\ref{der F,g}$]
	Consider the following systems 
	\begin{equation}\label{j=1,2for Fg}
		\begin{cases}
			-\p_tu_j(x,t)-\Delta u_j(x,t)+\frac 1 2 {|\nabla u_j|^2}= F_j(x,m_j(x,t)),& \text{ in }\mathbb{T}^n\times(0,T),\medskip\\
			\p_t m_j(x,t)-\Delta m_j(x,t)-{\rm div}(m_j(x,t)\nabla u_j(x,t))=0,&\text{ in }\mathbb{T}^n\times(0,T),\medskip\\
			u_j(x,T)=G_j(x,m_j(x,T)), & \text{ in } \mathbb{T}^n,\medskip\\
			m_j(x,0)=m_0(x), & \text{ in } \mathbb{T}^n.
		\end{cases}  		
	\end{equation}
	Following a similar method we used in the proof of Theorem $\ref{der g}$, we let 
	$$m_0(x;\varepsilon)=\sum_{l=1}^{N}\varepsilon_lf_l,$$
	where $f_l\in C_+^{2+\alpha}(\mathbb{T}^n)$ and $\varepsilon=(\varepsilon_1,\varepsilon_2,...,\varepsilon_N)\in\mathbb{R}_+^N$ with 
	$|\varepsilon|=\sum_{l=1}^{N}|\varepsilon_l|$ small enough. 
	
	Consider the case $N=1.$ Let
	$$u_{j}^{(1)}:=\p_{\varepsilon_1}u_{j}|_{\varepsilon=0},$$
	$$m_{j}^{(1)}:=\p_{\varepsilon_1}m_{j}|_{\varepsilon=0}.$$
	Then direct computations imply that $(u_{j}^{(1)},v_{j}^{(1)} )$ satisfies the following system:
	\begin{equation}\label{linear l=1for F g}
		\begin{cases}
			-\p_tu_j^{(1)}(x,t)-\Delta u^{(1)}_j(x,t)= F_j^{(1)}(x)m_j^{(1)}(x,t),& \text{ in }\mathbb{T}^n\times(0,T),\medskip \\
			\p_t m^{(1)}_j(x,t)-\Delta m^{(1)}_j(x,t)=0,&\text{ in }\mathbb{T}^n\times (0,T),\medskip \\
			u^{(1)}_j(x,T)=G_j^{(1)}(x)m^{(1)}_j(x,T), & \text{ in } \mathbb{T}^n,\medskip\\
			m^{(1)}_j(x,0)=f_1(x), & \text{ in } \mathbb{T}^n.
		\end{cases}  	
	\end{equation}
	Then we have $ m_1^{(1)}=m_2^{(1)}:=m^{(1)}(x,t)$	.
	Let $ \overline{u}=u_1^{(1)}-u_2^{(1)}$, \eqref{linear l=1for F g} implies that 
	\begin{equation}\label{u1-u2 }
		\begin{cases}
			&-\p_t\overline{u}-\Delta\overline{u}= (F_1^{(1)}-F_2^{(1)})m^{(1)}(x,t),\medskip\\
			&\overline{u}(x,T)=(G_1^{(1)}-G_2^{(1)})m^{(1)}(x,T).
		\end{cases}
	\end{equation}
	Now let $w$ be a solution to the heat equation $\p_t w(x,t)-\Delta w(x,t)=0$ in $\mathbb{T}^n$. Then
	\begin{equation}
		\begin{aligned}
			&\int_Q (F_1^{(1)}-F_2^{(1)})m^{(1)}(x,t)w\, dxdt\medskip\\
			=&\int_Q (-\p_t\overline{u}-\Delta\overline{u})w\, dxdt\medskip\\
			=&\int_{\mathbb{T}^n} (\overline{u}w)\big|_0^T\, dx +\int_Q \overline{u}\p_tw- \overline{u}\Delta w\medskip\\
			=&  \int_{\mathbb{T}^n} (\overline{u}w)\big|_0^T\, dx.
		\end{aligned}	
	\end{equation}
	
	Since $\mathcal{M}_{F_1,G_1}=\mathcal{M}_{F_2,G_2}$, we have $$\overline{u}(x,0)=0.$$ It follows that
	\begin{equation}\label{integral by part}
		\int_Q (F_1^{(1)}-F_2^{(1)})m^{(1)}(x,t)w(x,t)\, dxdt= \int_{\mathbb{T}^n} w(x,T)(G_1^{(1)}-G_2^{(1)})m^{(1)}(x,T)\, dx,
	\end{equation}
	for all solutions $w(x,t),m^{(1)}(x,t)$ of the heat equation in $\mathbb{T}^n$. 
	
	Here, we cannot apply Lemma $\ref{dense}$ directly. Nevertheless, we use the same construction.
	Let $\mathbf {\xi_1},\mathbf {\xi_2}\in\mathbb{Z}^n\backslash\{0\}$, $M\in\mathbb{N}^*$ and $\mathbf {\xi}=\mathbf {\xi_1}+\mathbf {\xi_2}$ .
	Let
	$$w(x,t)=\exp(- 2\pi\mathrm{i}\mathbf {\xi_1}\cdot x-4\pi^2|\mathbf {\xi_1}|^2t ),$$
	and
	
	$$m(x,t)=\exp(- 2\pi\mathrm{i}\mathbf {\xi_2}\cdot x-4\pi^2|\mathbf {\xi_2}|^2t )+M.$$
	Then the left hand side of \eqref{integral by part} is
	\begin{equation}\label{Fourier1}
		\begin{aligned}
			&\int_{\mathbb{T}^n} \frac{1-\exp(-4\pi^2T(|\mathbf {\xi_1}|^2+|\mathbf {\xi_2}|^2))}{4\pi^2( |\mathbf {\xi_1}|^2+|\mathbf {\xi_2}|^2)}(F_1^{(1)}-F_2^{(1)} )e^{-2\pi \mathrm{i}\mathbf {\xi}\cdot x }\, dx\\
			+&M \frac{1-\exp(-4\pi^2T|\mathbf {\xi_1}|^2)}{4\pi^2|\mathbf {\xi_1}|^2}   \int_{\mathbb{T}^n} (F_1^{(1)}-F_2^{(1)} )e^{-2\pi \mathrm{i}\mathbf {\xi_1}\cdot x }\,dx.
		\end{aligned}
	\end{equation}
	And the right hand side is 
	\begin{equation}\label{Fourier2}
		\begin{aligned}
			&\int_{\mathbb{T}^n}\exp(-4\pi^2T(|\mathbf {\xi_1}|^2+|\mathbf {\xi_2}|^2)) (G_1^{(1)}-G_2^{(1)})e^{-2\pi \mathrm{i}\mathbf {\xi}\cdot x }\, dx\\
			+&M\exp(-4\pi^2T|\mathbf {\xi_1}|^2)\int_{\mathbb{T}^n} (G_1^{(1)}-G_2^{(1)})e^{-2\pi \mathrm{i}\mathbf {\xi_1}\cdot x }\, dx.
		\end{aligned}	
	\end{equation}

	By taking $M=1$ and $M=2$, respectively, and then subtracting the resulting equations from one another, one can readily show that
	\begin{equation}
		\begin{aligned}
				&\frac{1-\exp(-4\pi^2T|\mathbf {\xi_1}|^2)}{4\pi^2|\mathbf {\xi_1}|^2}   \int_{\mathbb{T}^n} (F_1^{(1)}-F_2^{(1)} )e^{-2\pi \mathrm{i}\mathbf {\xi_1}\cdot x }\,dx\\
				=& \exp(-4\pi^2T|\mathbf {\xi_1}|^2)\int_{\mathbb{T}^n} (G_1^{(1)}-G_2^{(1)})e^{-2\pi \mathrm{i}\mathbf {\xi_1}\cdot x }\, dx. 
		\end{aligned}
	\end{equation}

	Then \eqref{integral by part}, \eqref{Fourier1} and \eqref{Fourier2} readily yields that
	$$\frac{1-\exp(-4\pi^2T(|\mathbf {\xi_1}|^2+|\mathbf {\xi_2}|^2))}{4\pi^2( |\mathbf {\xi_1}|^2+|\mathbf {\xi_2}|^2)}a_{\mathbf {\xi}  }+\exp(-4\pi^2T(|\mathbf {\xi_1}|^2+|\mathbf {\xi_2}|^2))b_{\mathbf {\xi}}=0.$$
	
	For a given $\mathbf {\xi}\in\mathbb{Z}^n $, there exist $\mathbf {\xi_1},\mathbf {\xi_2},\mathbf {\xi_1}',\mathbf {\xi_2}' \in\mathbb{Z}^n\backslash\{0\}$ such that $\mathbf {\xi}=\mathbf {\xi_1}+\mathbf {\xi_2}=\mathbf {\xi_1}'+\mathbf {\xi_2}'$ and $|\mathbf {\xi_1}|^2+|\mathbf {\xi_2}|^2\neq |\mathbf {\xi_1}'|^2+|\mathbf {\xi_2}'|^2 .$ Therefore, $a_{\mathbf {\xi}}=b_{\mathbf {\xi}}=0$ for all $\mathbf {\xi}\in\mathbb{Z}^n$. 
	It follows that $F_1^{(1)}-F_2^{(1)}=G_1^{(1)}-G_2^{(1)}=0$.

	Next, we consider the case $N=2.$ Let
	\begin{equation}\label{eq:ss1}
		u_{j}^{(1,2)}:=\p_{\varepsilon_1}\p_{\varepsilon_2}u_{j}|_{\varepsilon=0},\quad
		m_{j}^{(1,2)}:=\p_{\varepsilon_1}\p_{\varepsilon_2}m_{j}|_{\varepsilon=0},
	\end{equation}
	and
	\begin{equation}\label{eq:ss2}
		u_{j}^{(2)}:=\p_{\varepsilon_2}u_{j}|_{\varepsilon=0},\quad m_{j}^{(2)}:=\p_{\varepsilon_2}m_{j}|_{\varepsilon=0}.
	\end{equation}
	By the second-order linearization in \eqref{eq:ss1} and \eqref{eq:ss2},  we can obtain
	\begin{equation}
		\begin{cases}
			-\p_tu_j^{(1,2)}(x,t)-\Delta u^{(1,2)}_j(x,t)+\nabla u_{j}^{(1)}\cdot \nabla u_{j}^{(2)}\\
			\hspace*{3cm} = F_j^{(1)}m_j^{(1,2)}+F^{(2)}_j(x)m_j^{(1)}m_j^{(2)},& \text{ in }\mathbb{T}^n\times(0,T),\medskip\\
			\p_t m^{(1,2)}_j(x,t)-\Delta m^{(1,2)}_j(x,t)\\
			\hspace*{3cm}=  {\rm div} (m_{j}^{(1)}\nabla u_j^{(2)})+{\rm div}(m_j^{(2)}\nabla u_j^{(1)}) ,&\text{ in }\mathbb{T}^n\times (0,T),\medskip\\
			u^{(1,2)}_j(x,T)=G^{(1)}(x)m_j^{(1,2)}(x,T)+G^{(2)}(x)m_j^{(1)}m_j^{(2)}(x,T), & \text{ in } \mathbb{T}^n,\medskip\\
			m^{(1,2)}_j(x,0)=0, & \text{ in } \mathbb{T}^n.
		\end{cases}  	
	\end{equation}
	By following a similar argument in the case $N=1$ , we have 
	$$ u^{(1)}_1(x,t)= u^{(1)}_2(x,t),  u^{(2)}_1(x,t)=u^{(2)}_2(x,t),$$
	and
	$$ m^{(1)}_1(x,t)= m^{(1)}_2(x,t) ,  m^{(2)}_1(x,t)= m^{(2)}_2(x,t).$$
	
	Let $\overline{u}^2(x,t)=u_1^{(1,2)}(x,t)-u_2^{(1,2)}(x,t) $. We have
	\begin{equation}\label{u1-u2,2 }
		\begin{cases}
			&-\p_t\overline{u}^2-\Delta\overline{u}^2= (F_1^{(1)}-F_2^{(1)})m^{(1)}(x,t)m_1^{(2)}(x,t),\medskip\\
			&\overline{u}(x,T)=(G_1^{(1)}-G_2^{(1)})m^{(1)}(x,T)m_1^{(2)}(x,t).
		\end{cases}
	\end{equation}
	Let $w$ be a solution of the heat equation $\p_t w(x,t)-\Delta w(x,t)=0$ in $\mathbb{T}^n$. Then by following a similar argument in the case $N=1$, we can show that 
	\begin{equation}\label{integral by part2}
		\begin{split}
			&	\int_Q (F_1^{(2)}-F_2^{(2)})m^{(1)}m_1^{(2)}w(x,t)\, dxdt\\
			=& \int_{\mathbb{T}^n} w(x,T)(G_1^{(2)}-G_2^{(2)})m^{(1)}(x,T)m_1^{(2)}(x,T)\, dx.
		\end{split} 
	\end{equation}
	To proceed further, by using the construction in Lemma $\ref{dense}$ again, we have from \eqref{integral by part2} that
	\[
	F_1^{(2)}-F_2^{(2)}=G_1^{(2)}-G_2^{(2)}=0.
	\]
	
	Finally, via a mathematical induction, we can derive the same result for $N\geq 3$. That is, for any $k\in\mathbb{N},$ we have $$F^{(k)}_1(x)-F^{(k)}_2(x)=G^{(k)}_1(x)-G^{(k)}_2(x)=0.$$ Hence, 	
	$$(F_1(x,z),F_2(x,z))=(G_1(x,z),G_2(x,z)),\text{  in  } \mathbb{R}^n\times \mathbb{R}.$$ 
	
	The proof is complete. 
\end{proof}
 
\section{Inverse Problems in General Domain without Trivial Solution}
In the previous section, we show that one can recover $F$ and/or $G$ from the knowledge of boundary data. However, the proof depends on the fact that $(u,m)=(0,0)$ is a trivial solution of MFG system and we assume that the running cost $F$ depends on $m$ locally.  
In this chapter, we consider a general bounded domain $\Omega$ with smooth boundary and introduce a more general Hamiltonian $H(x,p)=\frac{1}{2}\kappa(x)p^2$ for some function $\kappa$.

Moreover, we present an  approach to ensure the probability measure constraint on $m$ while effectively tackle the MFG inverse problems, especially when the running cost depends only on the state-space variable $x$ and the measure $m$. We term this approach as high-order variation in combination with successive linearisation. In this part, we will linearize the distribution around the uniform distribution (Remember that $m$ is a density function). This provides a more accurate explanation of the probability measure constraint.

\subsection{The Inverse Problem in Bounded Domain}
Consider the following MFG system in bounded domain $\Omega$
\begin{equation}\label{main_domain}
	\left\{
	\begin{array}{ll}
		\displaystyle{-\partial_t u(x,t) -\Delta u(t,x)+\frac{1}{2}\kappa(x)|\nabla u(x,t)|^2= F(x,m)} & \quad {\rm{in}}\ Q,\\
		\displaystyle{\partial_tm(x,t)-\Delta m(x,t)-{\rm div} \big(m(x,t) \kappa(x)\nabla u(x,t)\big)=0} & \quad{\rm{in}}\ Q,\\
		\p_{\nu} u(x,t)=\p_{\nu} m(x,t)=0 &\quad {\rm{on}}\ \Sigma,\\
		u(x,T)=\psi(x),\ m(x,0)=m_0(x) &\quad {\rm{in}}\  \Omega,
	\end{array}
	\right.
\end{equation}
where $\Sigma=\partial\Omega$. 
Next, we introduce the measurement we use. Define 
\begin{equation}\label{eq:meop0}
	\mathcal{M}_{\kappa, F}(m_0,\psi):=(u(x,t)|_{\Sigma}, u(x,0)), 
\end{equation}
where
$(u, m)$ is the (unique) pair of solutions to the MFG system \eqref{main_domain} associated with the initial population distribution $m(x, 0)=m_0(x)$ and the total cost $\psi(x)$.  The following is a formulation of the inverse problem that we aim to investigate:
\begin{equation}\label{eq:ip1_domain}
	\mathcal{M}_{\kappa,F }(m_0,\psi)\rightarrow (F,\kappa),
\end{equation}
for many pairs of $m_0$ and $\psi$, which shall be more detailed in what follows. We consider two different types of running costs: $F$ belongs to an analytic class (locally dependent case), and $F$ is given in the form of an integration with an unidentified  kernel (non-locally dependent case). 

Now we introduce the admissible classes of the running cost of $F$ in two different cases. The first one is the same as the conditions in the previous chapter (Definition \ref{Admissible class1}). 

\begin{defn}\label{Admissible class 2_domain}
	We say $U(x,z):\mathbb{R}^n\times\mathbb{C}\to\mathbb{C}$ is admissible, denoted by $U\in\mathcal{B}$, if it satisfies the following conditions:
	\begin{enumerate}
		\item[(i)] The map $z\mapsto U(\cdot,z)$ is holomorphic with value in $C^{\alpha}(\mathbb{R}^n)$ for some $\alpha\in(0,1)$.
		\item[(ii)] $U(x,0)=0$ for all $x\in\mathbb{R}^n$. 
	\end{enumerate} 	
	Clearly, if (1) and (2) are fulfilled, then $U$ can be expanded into a power series as follows:
	\begin{equation}\label{eq:G_domain}
		U(x,z)=\sum_{k=1}^{\infty} U^{(k)}(x)\frac{(z)^k}{k!},
	\end{equation}
	where $ U^{(k)}(x)=\frac{\p^kU}{\p z^k}(x,0)\in C^{\alpha}(\mathbb{R}^n).$
\end{defn}
Clearly, if $F(x,m)\in\mathcal{B}$ and $m$ is the density of the measure, then $F$ depends on the measure locally. 

Next, we consider the non-local case.
\begin{defn}\label{admi 2_domain}
	Let $m(x,t)$ be the density of a given distribution. We say $$F(x,m)=\int_{\Omega} K(x,y)m(y,t)dy$$ belongs to $\mathcal{C}$ if 
	\begin{enumerate}
		\item[(i)] $K(x,y)$ is smooth in $\Omega\times\Omega$.
		\item[(ii)] $\int_{\Omega} K(x,y)dy=0 $ for all $x\in\Omega$. 
	\end{enumerate} 	
\end{defn}
\begin{rmk}
	In fact, the condition (ii) in the Definition $\ref{admi 2_domain}$ is quite natural. If $m_0(x)=1$ and $\psi(x)$ is a constant $c$, the solution of MFG game system should be $(u,m)=(c,0)$. This is because  mean field games are non-atomic differential games. In other words, if $m_0(x)$ is the uniform distribution and $\psi(x)$ is constant, this is already a equilibrium of this system, then $m(x,t)$ would keep to $1$. This is a common nature of MFG system that the uniform distribution is a stable state. 
\end{rmk}

\subsection{Main Unique Identifiability Results}
We are able to articulate the primary conclusions for the inverse problems, which show that
one can recover  the running cost and Hamiltonian from the measurement map $\mathcal{M}_{F,\kappa}$. 

\begin{thm}\label{der F}
	Assume that  $F_j(x,m)\in\mathcal{B} $. Let $\mathcal{M}_{F_j,\kappa_j}$ be the measurement map associated to
	the following system:
	\begin{equation}\label{eq:mfg1}
		\begin{cases}
			-\p_tu(x,t)-\Delta u(x,t)+\frac 1 2 \kappa_j{|\nabla u(x,t)|^2}= F_j(x,m),& \text{ in }  Q,\medskip\\
			\p_t m(x,t)-\Delta m(x,t)-{\rm div} (m(x,t) \kappa_j\nabla u(x,t))=0,&\text{ in } Q,\medskip\\
			\p_{\nu} u(x,t)=\p_{\nu}m(x,t)=0      &\text{ on } \Sigma,\medskip\\
			u(x,T)=\psi(x), & \text{ in } \Omega,\medskip\\
			m(x,0)=m_0(x), & \text{ in } \Omega.\\
		\end{cases}  		
	\end{equation}	
	If for any $(m_0,\psi)\in [ C^{2+\alpha}(\Omega) \cap \mathcal{O}_a]\times C^{2+\alpha}(\Omega) $, where $\mathcal{O}_a$ is defined in \eqref{probability measure constraint}, one has 
	$$\mathcal{M}_{F_1,\kappa_1}(m_0,\psi)=\mathcal{M}_{F_2,\kappa_2}(m_0,\psi),$$    then it holds that 
	$$\kappa_1=\kappa_2\ \text{  in  }\ \Omega,$$ and 
	$$F_1(x,z)=F_2(x,z)\ \text{  in  }\ \Omega\times \mathbb{R}.$$ 
\end{thm}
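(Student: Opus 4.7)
My plan is to extend the high-order linearization strategy of Theorems \ref{der g} and \ref{der F,g} to the bounded-domain Neumann setting, adapted to simultaneously recover the two unknowns $(F, \kappa)$. I would perturb the data as $m_0(x; \varepsilon) = \sum_{l=1}^{N} \varepsilon_l f_l(x)$ and $\psi(x; \varepsilon) = \sum_{l=1}^{N} \varepsilon_l g_l(x)$, with $f_l \in C^{2+\alpha}(\Omega)$ satisfying $\int_\Omega f_l\, dx = 0$ to respect the mass constraint, $g_l \in C^{2+\alpha}(\Omega)$, and $|\varepsilon|$ small enough that the bounded-domain analogue of Theorem \ref{local_wellpose} applies so the solution map is holomorphic in $\varepsilon$. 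Equality of the measurement maps then forces equality of every mixed $\varepsilon$-derivative $u_1^{(l_1, \ldots, l_k)} = u_2^{(l_1, \ldots, l_k)}$ at $t = 0$, on $\Sigma \times (0, T)$, and trivially also at $t = T$ (where the prescribed $\psi$ coincides).

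At first order I would note that $m^{(l)}$ solves the forward Neumann heat equation with initial data $f_l$ (independent of $F, \kappa$) and that $\kappa$ is absent from the $u^{(l)}$-equation because $\nabla u_0 \equiv 0$. Writing $w = u_1^{(l)} - u_2^{(l)}$ and pairing against a test function $\phi$ solving the forward Neumann heat equation---so that $w(\cdot, 0) = w(\cdot, T) = 0$ and $\partial_\nu w = \partial_\nu \phi = 0$ kill all boundary contributions---I would obtain
\[
\int_Q (F_1^{(1)}(x) - F_2^{(1)}(x))\, m^{(l)}(x, t)\, \phi(x, t)\, dx\, dt = 0.
\]
Running $f_l$ and the initial data of $\phi$ through Neumann eigenfunctions $\{\phi_j\}$ of $-\Delta$ reduces this, after the elementary time integration, to $\int_\Omega (F_1^{(1)} - F_2^{(1)})\, \phi_j \phi_{j'}\, dx = 0$ for all $j, j'$; completeness of $\{\phi_j \phi_{j'}\}$ in $L^2(\Omega)$ then forces $F_1^{(1)} = F_2^{(1)}$, and hence $u_1^{(l)} \equiv u_2^{(l)}$ and $m_1^{(l)} \equiv m_2^{(l)}$ throughout $Q$.

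To isolate $\kappa$ at second order I would make the special choice $f_l = f_k = 0$, so that $m^{(l)} = m^{(k)} = 0$ and $u^{(l)}, u^{(k)}$ become arbitrary backward Neumann heat solutions driven by their terminal data $g_l, g_k$. The second-order $m$-equation is then homogeneous with vanishing initial data, forcing $m_j^{(l,k)} \equiv 0$, and the second-order $u$-equation reduces to $-\partial_t u_j^{(l,k)} - \Delta u_j^{(l,k)} + \kappa_j\, \nabla u^{(l)} \cdot \nabla u^{(k)} = 0$. The same pairing argument against a forward-heat $\phi$ would yield
\[
\int_Q (\kappa_1(x) - \kappa_2(x))\, \nabla u^{(l)} \cdot \nabla u^{(k)}\, \phi\, dx\, dt = 0,
\]
from which I would extract $\kappa_1 = \kappa_2$ in $\Omega$. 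Returning to the generic second-order linearization with $f_l, f_k \neq 0$, and now using $\kappa_1 = \kappa_2$, only the residual term $(F_1^{(2)} - F_2^{(2)})\, m^{(l)} m^{(k)} \phi$ survives in the integral identity, giving $F_1^{(2)} = F_2^{(2)}$. A straightforward induction over the linearization order $k$ then identifies every $F^{(k)}$, and analyticity of $F_j \in \mathcal{B}$ promotes this to $F_1 \equiv F_2$ on $\Omega \times \mathbb{R}$.

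The principal obstacle will be the density step for $\kappa$: one must show that $\{\nabla u^{(l)} \cdot \nabla u^{(k)}\, \phi\}$, indexed by arbitrary Neumann heat-equation solutions, is rich enough to recover the $t$-independent coefficient $\kappa(x)$ pointwise. Via eigenfunction expansions this reduces to proving completeness of $\{\nabla \phi_j \cdot \nabla \phi_{j'}\, \phi_k\}_{j, j', k}$ in an appropriate function space on $\Omega$; alternatively one may approximate CGO-type exponential solutions $e^{x \cdot \zeta - \zeta \cdot \zeta\, t}$ with $\zeta \in \mathbb{C}^n$ via parabolic Runge approximation and localize the identity at any interior point. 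A secondary subtlety lies in reconciling the probability constraint $\int_\Omega m_0\, dx = a$ with the formal perturbation scheme, resolved by taking $a = 0$ and $\int f_l = 0$ (preserved by the Neumann flow), and in verifying boundary-term cancellations throughout: the temporal endpoints vanish from the measured initial trace $u(\cdot,0)$ and the prescribed terminal $\psi$, and the lateral boundary terms vanish from the MFG Neumann condition on $u$ combined with the Neumann condition imposed on each test function $\phi$.
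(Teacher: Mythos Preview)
Your architecture—successive linearization around the trivial state, decoupling $\kappa$ from $F$ by choosing either $m_0=0$ or $\psi=0$, and pairing the difference $\bar u$ against forward heat solutions—matches the paper's. The differences are in execution, and two of them dissolve what you flag as the principal obstacle.

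For the $\kappa$ step you confine the test function $\phi$ to Neumann heat solutions and are then left with a completeness question for $\{(\nabla\phi_j\cdot\nabla\phi_{j'})\phi_k\}$. But the measurement also returns $u|_\Sigma$, so $\bar u|_\Sigma=0$ \emph{in addition} to $\partial_\nu\bar u|_\Sigma=0$ from the MFG boundary condition; with both Cauchy data of $\bar u$ vanishing on $\Sigma$, the lateral boundary terms in the pairing vanish for \emph{any} forward heat solution $\omega$, in particular the plane waves $\omega=e^{-|\xi|^2 t - i x\cdot\xi}$. The paper then takes $u^{(1)}=u^{(2)}=e^{\lambda t}g(x)$ for a single Neumann eigenfunction $g$ (Lemma~\ref{E-F is complete}), the time integral factors out, and Fourier inversion in $\xi$ gives $(\kappa_1-\kappa_2)|\nabla g|^2\equiv 0$ in $\Omega$; since $\nabla g$ vanishes on no open set, $\kappa_1=\kappa_2$. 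For the $F^{(k)}$ steps the paper sets $\psi=0$ and chooses $f_l\equiv 1$, so that $m^{(l)}\equiv 1$ and the source in the difference equation collapses to $F_1^{(k)}(x)-F_2^{(k)}(x)$, again recovered by a single plane-wave Fourier inversion. Your constraint $\int_\Omega f_l=0$ is not forced by $\mathcal{O}_a$ (the total $\int m_0$ merely has to equal some $a\le 1$, automatic for small $|\varepsilon|$) and blocks exactly this simplification. Finally, the paper recovers $\kappa$ first (with $m_0=0$ the first-order system is already independent of $F_j$) and all $F^{(k)}$ afterwards, rather than interleaving as you do; either order is valid, but doing $\kappa$ first also makes your initial eigenfunction-product density step for $F^{(1)}$ unnecessary.
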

\begin{thm}\label{der F2}
	Assume that  $$F_j(x,m)=\int_{\Omega}K_j(x,y)m(y,t)dy\in\mathcal{C}. $$ Let $\mathcal{M}_{F_j,\kappa_j}$ be the measurement map associated to
	the following system:
	\begin{equation}
		\begin{cases}
			-\p_tu(x,t)-\Delta u(x,t)+\frac 1 2 \kappa_j{|\nabla u(x,t)|^2}= \int_{\Omega}K_j(x,y)m(y,t)dy,& \text{ in }  Q,\medskip\\
			\p_t m(x,t)-\Delta m(x,t)-{\rm div} (m(x,t) \kappa_j\nabla u(x,t))=0,&\text{ in } Q,\medskip\\
			\p_{\nu} u(x,t)=\p_{\nu}m(x,t)=0      &\text{ on } \Sigma,\medskip\\
			u(x,T)=\psi(x), & \text{ in } \Omega,\medskip\\
			m(x,0)=m_0(x), & \text{ in } \Omega.\\
		\end{cases}  		
	\end{equation}	
	If for any $(m_0,\psi)\in [ C^{2+\alpha}(\Omega) \cap \mathcal{O}_a]\times C^{2+\alpha}(\Omega) $, where $\mathcal{O}_a$ is defined in \eqref{probability measure constraint}, one has 
	$$\mathcal{M}_{F_1,\kappa_1}(m_0,\psi)=\mathcal{M}_{F_2,\kappa_2}(m_0,\psi),$$    then it holds that 
	$$\kappa_1=\kappa_2\ \text{  in  }\ \Omega,$$ and 
	$$K_1(x,y)=K_2(x,y)\ \text{  in  }\ \Omega\times \Omega.$$ 
\end{thm}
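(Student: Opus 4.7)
The plan is to apply the higher-order linearization machinery of Section \ref{sec:linearize} around the trivial background $(u_0, m_0) = (c, 1)$ for any constant $c \in \mathbb{R}$. This pair is a genuine solution of \eqref{main_domain}: the quadratic term $\frac{1}{2}\kappa|\nabla u_0|^2$ vanishes, the running cost evaluates to $\int_\Omega K(x,y)\,dy = 0$ by condition (ii) of Definition \ref{admi 2_domain}, and the Fokker--Planck drift ${\rm div}(m_0 \kappa \nabla u_0)$ is zero. A local well-posedness result in the spirit of Theorem \ref{local_wellpose} (adapted to the Neumann problem) then lets me form Fr\'echet derivatives of the solution map at $\varepsilon = 0$ where $(m_0, \psi) = (1 + \sum_l \varepsilon_l h_l,\, c + \sum_l \varepsilon_l g_l)$, subject to the constraint $\int_\Omega h_l\,dx = 0$ needed to preserve $m(\cdot, t) \in \mathcal{O}_a$.

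The first-order linearization $(u^{(1)}, m^{(1)}) := (\partial_{\varepsilon_1} u, \partial_{\varepsilon_1} m)|_{\varepsilon = 0}$ satisfies
\begin{equation*}
\begin{cases}
-\partial_t u^{(1)} - \Delta u^{(1)} = \int_\Omega K(x,y)\, m^{(1)}(y,t)\,dy, & \text{in } Q,\\
\partial_t m^{(1)} - \Delta m^{(1)} - {\rm div}\bigl(\kappa\nabla u^{(1)}\bigr) = 0, & \text{in } Q,\\
\partial_\nu u^{(1)} = \partial_\nu m^{(1)} = 0, & \text{on } \Sigma,\\
u^{(1)}(\cdot, T) = g_1,\quad m^{(1)}(\cdot, 0) = h_1, & \text{in } \Omega,
\end{cases}
\end{equation*}
because $\nabla u_0 = 0$ and $m_0 \equiv 1$ kill the quadratic and drift couplings. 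Differentiating the hypothesis $\mathcal{M}_{F_1,\kappa_1}(m_0, \psi) = \mathcal{M}_{F_2,\kappa_2}(m_0, \psi)$ in $\varepsilon_1$ yields $u_1^{(1)}|_\Sigma = u_2^{(1)}|_\Sigma$ and $u_1^{(1)}(\cdot, 0) = u_2^{(1)}(\cdot, 0)$ for every admissible $(g_1, h_1)$.

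Setting $\widetilde u := u_1^{(1)} - u_2^{(1)}$ and $\widetilde m := m_1^{(1)} - m_2^{(1)}$, these differences are driven by the source terms $\int_\Omega (K_1 - K_2)(x,y)\, m_2^{(1)}(y,t)\,dy$ in the Hamilton--Jacobi equation and ${\rm div}((\kappa_1 - \kappa_2)\nabla u_2^{(1)})$ in the Fokker--Planck equation. Testing against a pair $(v, w)$ solving the formal adjoint linearization with Neumann boundary conditions, integrating by parts over $Q$, and using the known vanishing of $\widetilde u|_\Sigma$, $\widetilde u(\cdot,0)$ and $\widetilde m(\cdot,0) = 0$, I obtain the integral identity
\begin{equation*}
\int_Q (\kappa_1 - \kappa_2)\nabla u_2^{(1)}\cdot \nabla v\,dx\,dt + \int_Q w(x,t)\int_\Omega (K_1 - K_2)(x,y)\, m_2^{(1)}(y,t)\,dy\,dx\,dt = 0.
\end{equation*}
To isolate $\kappa$, I would exploit the structural distinction between the two terms: the $\kappa$-contribution is local in $(x,t)$ while the $K$-contribution is an $x$-convolution against the hidden $y$-dependent factor $m_2^{(1)}(y,t)$. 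By choosing $(g_1, h_1)$ so that $m_2^{(1)}$ varies only along selected Neumann-Laplacian eigenfunctions of $\Omega$ and then tuning the adjoint data so that $w$ is orthogonal, in the $x$-variable, to those modes transported through $K_1 - K_2$, the second integral can be annihilated while the first is driven through a dense family of products $\nabla u_2^{(1)}\cdot \nabla v$. This gives $\kappa_1 = \kappa_2$ pointwise on $\Omega$.

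Once $\kappa_1 = \kappa_2 = \kappa$ is established, the first-order $m$-equations in the two systems coincide, so $m_1^{(1)} = m_2^{(1)} =: m^{(1)}$, and the identity reduces to
\begin{equation*}
\int_Q w(x,t)\int_\Omega (K_1 - K_2)(x,y)\, m^{(1)}(y,t)\,dy\,dx\,dt = 0
\end{equation*}
for all admissible pairs $(g_1, h_1)$ and adjoint inputs. Varying $h_1$ across the dense class of zero-mean $C^{2+\alpha}(\Omega)$ data makes $m^{(1)}(y,t)$ span a dense subspace of mean-zero functions in $y$, and $w(x,t)$ is free in $x$, so a two-variable Neumann-eigenfunction expansion of $(K_1 - K_2)(x,y)$ collapses to $K_1 = K_2$ on $\Omega \times \Omega$ (with the constant-in-$y$ mode pinned down by condition (ii) of Definition \ref{admi 2_domain}). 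The main obstacle is the separation step: $\kappa$ and $K$ enter at the same linearization order, through different equations, and are coupled by the system, so cleanly extracting $\kappa$ requires delicately engineering probing pairs $(g_1, h_1)$ and adjoint pairs $(v, w)$ that span a sufficiently rich family for the $\kappa$-integral while simultaneously annihilating the $K$-integral, all within the mean-zero, Neumann-compatible class dictated by the probability constraint.
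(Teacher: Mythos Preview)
Your approach diverges from the paper's at the very first choice: you linearize around the physical background $(c,1)$, whereas the paper linearizes around the trivial pair $(0,0)$, which is equally a solution since $F_j(x,0)=\int_\Omega K_j(x,y)\cdot 0\,dy=0$. This choice matters enormously. Around $m_0=0$ the first-order Fokker--Planck equation reads $\partial_t m^{(1)}-\Delta m^{(1)}=0$ with no drift term, so $m^{(1)}$ decouples completely from $u^{(1)}$; around $m_0=1$ you inherit the term ${\rm div}(\kappa\nabla u^{(1)})$, and this is precisely what forces $\kappa$ and $K$ to appear together in your first-order identity. The paper then exploits the freedom to vary $\psi$ and $m_0$ \emph{separately}. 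In Step I it takes $m_0=0$, $\psi=\varepsilon_1 f_1+\varepsilon_2 f_2$: then $m^{(1)}\equiv m^{(2)}\equiv 0$, the $K$-term never appears, and $\kappa$ is read off from the \emph{second-order} equation $-\partial_t u^{(1,2)}-\Delta u^{(1,2)}+\kappa\nabla u^{(1)}\cdot\nabla u^{(2)}=0$ using the eigenfunction probes of Lemma~\ref{E-F is complete}. In Step II it takes $\psi=0$, $m_0=\varepsilon\cdot 1+\varepsilon^2 g_2$ (a high-order variation): then $m^{(I)}\equiv 1$ and condition (ii) of Definition~\ref{admi 2_domain} forces $u^{(I)}\equiv 0$, so the second-order equation for $u^{(II)}$ contains only the $K$-term, with $m^{(II)}$ a free heat solution that can again be chosen as an eigenfunction.

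The ``main obstacle'' you flag is therefore a genuine gap, not a technicality. Your separation scheme requires engineering $(v,w)$ in an adjoint system that itself depends on the unknowns $\kappa_1,K_1$, and simultaneously controlling $m_2^{(1)}$ through a forward system depending on $\kappa_2,K_2$; you give no concrete mechanism for this, and the naive attempts (e.g.\ setting $h_1=0$ to kill $m^{(1)}$) fail precisely because of the ${\rm div}(\kappa\nabla u^{(1)})$ coupling your background choice introduces. The paper's route sidesteps the whole issue: by working around $(0,0)$ and using second-order rather than first-order linearization, the unwanted terms vanish structurally, and no delicate orthogonality argument is needed.
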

\subsection{Proof of Theorem ~\ref{der F},~\ref{der F2} }

We start from a useful Lemma. It is the key part of the main proofs. 
\begin{lem}\label{E-F is complete}
	Consider
	\begin{equation}\label{eigenfunction}
		\begin{cases}
			-\p_t u-\Delta u=0  &\text{ in } Q\\
			\p_{\nu} u(x,t)=0   &\text{ in } \Sigma\\
		\end{cases}
	\end{equation}
	There exist a sequence of solution $u(x,t)$ of system \eqref{eigenfunction} such that
	
	(1) $u(x,t)=e^{\lambda t}g(x;\lambda)$ for some $\lambda\in\mathbb{R}$ and $g(x;\lambda)\in C^2(\Omega)$.

	(2) There does not exist an open subset $U$ of $\Omega$ such that $\nabla g(x;\lambda)= 0$ in $U$. 
	
\end{lem}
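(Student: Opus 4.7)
The plan is to make the standard separation-of-variables ansatz $u(x,t)=e^{\lambda t}g(x;\lambda)$. Substituting this into the backward heat equation $-\partial_t u - \Delta u = 0$ converts the problem into the elliptic eigenvalue problem
\begin{equation*}
-\Delta g = \lambda g \ \text{ in } \Omega, \qquad \partial_\nu g = 0 \ \text{ on } \Sigma,
\end{equation*}
so condition (1) is reduced to producing eigenfunctions of the Neumann Laplacian on $\Omega$.

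Next, I would invoke classical spectral theory for the Neumann Laplacian on a bounded domain with smooth boundary: it admits a discrete spectrum $0=\lambda_0<\lambda_1\le\lambda_2\le\cdots\to\infty$, with smooth (up to $\overline\Omega$) eigenfunctions $g_k$ forming a complete orthonormal basis of $L^2(\Omega)$. Then any eigenpair $(\lambda_k,g_k)$ with $\lambda_k>0$ yields a solution $u(x,t)=e^{\lambda_k t}g_k(x)$ of the desired form, and there are infinitely many such indices, supplying the sequence.

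The key step is verifying property (2). Because the coefficients of the Laplacian are real analytic, interior elliptic regularity implies that every eigenfunction $g_k$ is real analytic in $\Omega$. Suppose, for contradiction, that $\nabla g_k\equiv 0$ on some nonempty open subset $U\subset\Omega$. Then $g_k$ is locally constant on $U$, and by the identity theorem for real-analytic functions on the connected set $\Omega$, $g_k$ is constant throughout $\Omega$. But then $-\Delta g_k\equiv 0$, while the eigenvalue equation gives $-\Delta g_k=\lambda_k g_k$; since $\lambda_k>0$, this forces $g_k\equiv 0$, contradicting the fact that $g_k$ is an eigenfunction.

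The only mildly nontrivial ingredient is the analyticity/unique-continuation step used to promote the local vanishing of $\nabla g_k$ to global constancy; everything else is textbook spectral theory of the Neumann Laplacian. I therefore expect no real obstacle, and the proof should be short.
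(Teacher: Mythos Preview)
Your proposal is correct and follows essentially the same route as the paper: take Neumann eigenpairs $(\lambda,g)$ with $\lambda>0$ to produce the separated solutions $u=e^{\lambda t}g$, and then rule out $\nabla g\equiv 0$ on an open set by observing this would force $g$ to be constant, contradicting $\lambda>0$. The only difference is that you justify the passage from ``locally constant on $U$'' to ``constant on $\Omega$'' via real analyticity and the identity theorem, whereas the paper simply asserts that $g$ constant on $U$ forces $\lambda=0$; your version is slightly more careful, since it also covers the case where the constant value on $U$ happens to be zero.
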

\begin{proof}
	Let $\lambda$  be an  eigenvalue of Neumann-Laplacian operator and $g(x;\lambda)$ be a corresponding eigenfunction
	\begin{equation}
		\begin{cases}
			-\Delta g(x;\lambda)=\lambda g(x;\lambda) &\text{ in } \Omega\\
			\p_{\nu} g(x;\lambda)=0     &\text{ in } \Sigma.
		\end{cases}
	\end{equation}
	Then it is obviously that $u(x,t)=e^{\lambda t}g(x;\lambda) $ is a solution of \eqref{eigenfunction}. This implies that $\lambda=0.$ It is a contradiction.
	
	Furthermore, suppose there is an open subset $U$ of $\Omega$ such that $\nabla g=0$ in $U$, then $g$ in a constant in $U$. This implies that $\lambda=0.$ It is a contradiction.
	
	This completes the proof.
\end{proof}

Before we start the proof of Theorem \ref{der F2}, we need to show the local well-posedness of MFG system in bounded domain too. Since the proof is quite similar to the proof of Theorem $\ref{local_wellpose}$, we just list the results here. The key idea is that the system is infinity many differentiable with respect to small input. 

\begin{thm}\label{local_wellpose1}
	Given $\psi(x)=0$. Suppose that $F\in\mathcal{B}$ . The following results holds:
	\begin{enumerate}		
		\item[(a)]
		There exist constants $\delta>0$ and $C>0$ such that for any 
		\[
		m_0\in B_{\delta}(C^{2+\alpha}(\Omega)) :=\{m_0\in C^{2+\alpha}(\Omega): \|m_0\|_{C^{2+\alpha}(\Omega)}\leq\delta \},
		\]
		the MFG system \eqref{main_domain} has a solution $(u,m)\in
		[C^{2+\alpha,1+\frac{\alpha}{2}}(Q)]^2$ which satisfies
		\begin{equation}
			\|(u,m)\|_{ C^{2+\alpha,1+\frac{\alpha}{2}}(Q)}:= \|u\|_{C^{2+\alpha,1+\frac{\alpha}{2}}(Q)}+ \|m\|_{C^{2+\alpha,1+\frac{\alpha}{2}}(Q)}\leq C\|m_0\|_{ C^{2+\alpha}(\Omega)}.
		\end{equation}
		Furthermore, the solution $(u,m)$ is unique within the class
		\begin{equation}
			\{ (u,m)\in  C^{2+\alpha,1+\frac{\alpha}{2}}(Q)\times C^{2+\alpha,1+\frac{\alpha}{2}}(Q): \|(u,m)\|_{ C^{2+\alpha,1+\frac{\alpha}{2}}(Q)}\leq C\delta \}.
		\end{equation}		
		\item[(b)] Define a function 
		\[
		S: B_{\delta}(C^{2+\alpha}(\Omega)\to C^{2+\alpha,1+\frac{\alpha}{2}}(Q)\times C^{2+\alpha,1+\frac{\alpha}{2}}(Q)\ \mbox{by $S(m_0):=(u,m)$}. 
		\] 
		where $(u,m)$ is the unique solution to the MFG system \eqref{main_domain}.
		Then for any $m_0\in B_{\delta}(C^{2+\alpha}(\Omega))$, $S$ is holomorphic at $m_0$.
	\end{enumerate}
\end{thm}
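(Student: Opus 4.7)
The plan is to mirror the proof of Theorem~\ref{local_wellpose} essentially verbatim, adapting only the function spaces and the linear theory to accommodate the Neumann boundary condition $\partial_\nu u = \partial_\nu m = 0$ on $\Sigma$ and the fact that $\Omega$ is a bounded domain rather than the torus. Observe first that the assumption $\psi(x)=0$ together with $m_0\equiv 0$ makes $(u,m)=(0,0)$ a solution of \eqref{main_domain}, since $F\in\mathcal{B}$ satisfies $F(x,0)=0$; this trivial solution is the point at which we will linearize.

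Concretely, I would set
\begin{align*}
X_1 &:= C^{2+\alpha}(\Omega),\\
X_2 &:= \{(\tilde u,\tilde m)\in [C^{2+\alpha,1+\frac{\alpha}{2}}(Q)]^2 : \partial_\nu \tilde u = \partial_\nu \tilde m = 0 \text{ on }\Sigma,\ \tilde u(\cdot,T)=0\},\\
X_3 &:= C^{2+\alpha}(\Omega)\times [C^{\alpha,\frac{\alpha}{2}}(Q)]^2,
\end{align*}
and define $\mathscr{K}:X_1\times X_2\to X_3$ by packaging the initial condition $\tilde m(\cdot,0)-m_0$ together with the two PDE residuals
\[
-\partial_t\tilde u-\Delta\tilde u+\tfrac12\kappa(x)|\nabla\tilde u|^2-F(x,\tilde m),\qquad \partial_t\tilde m-\Delta\tilde m-\operatorname{div}(\kappa(x)\tilde m\nabla\tilde u).
\]
Well-definedness of $\mathscr{K}$ follows because $C^{\alpha,\alpha/2}(Q)$ is an algebra, so the nonlinear terms $\kappa|\nabla\tilde u|^2$ and $\operatorname{div}(\kappa\tilde m\nabla\tilde u)$ land in $C^{\alpha,\alpha/2}(Q)$, while the series $F(x,\tilde m)=\sum_{k\ge 1}F^{(k)}(x)\tilde m^k/k!$ converges in $C^{\alpha,\alpha/2}(Q)$ by the Cauchy-estimate argument from \eqref{eq:F1}--\eqref{eq:F2}. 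Holomorphy of $\mathscr{K}$ reduces, via Hartogs' theorem in the form \cite[P.133 Theorem 1]{Pos.J}, to weak holomorphy along complex lines, and this follows from $F\in\mathcal{B}$ exactly as in Theorem~\ref{local_wellpose}.

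The core step is computing $\nabla_{(\tilde u,\tilde m)}\mathscr{K}(0,0,0)$ and showing it is a linear isomorphism from $X_2$ onto $X_3$. Since $u_0=0$ and $m_0=0$ at the base point, this derivative is
\[
(u,m)\ \mapsto\ \bigl(m(\cdot,0),\ -\partial_t u-\Delta u-F^{(1)}(x)m,\ \partial_t m-\Delta m\bigr),
\]
supplemented with $u(\cdot,T)=0$ and homogeneous Neumann conditions. Given data $(r,s,s')\in X_3$, one first solves the forward heat equation for $m$ with initial datum $r$ and source $s'$ under Neumann boundary conditions, then solves the backward equation for $u$ with terminal datum $0$ and source $s+F^{(1)}m$; Schauder theory for linear parabolic equations with Neumann boundary data in Hölder spaces (Ladyzhenskaya--Solonnikov--Ural'ceva) gives unique solvability and the required $C^{2+\alpha,1+\alpha/2}$ regularity, hence the map is both injective and surjective, and the open mapping theorem supplies its continuous inverse.

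Once this isomorphism is in hand, the analytic implicit function theorem on Banach spaces produces $\delta>0$ and a holomorphic map $S:B_\delta(X_1)\to X_2$ with $\mathscr{K}(m_0,S(m_0))=0$, yielding the unique local solution and the Lipschitz bound \eqref{eq:nn1} by expanding $S$ around $S(0)=(0,0)$. The main obstacle I anticipate is not conceptual but technical: one must verify that the Neumann boundary condition is preserved throughout the fixed-point/implicit function construction and that the linear Neumann parabolic theory delivers $C^{2+\alpha,1+\alpha/2}$-regularity up to $\Sigma$ under the stated compatibility conditions at $t=0$ and $t=T$; apart from this bookkeeping, the proof is identical in structure to that of Theorem~\ref{local_wellpose}, which is precisely why the authors omit the details and merely state the result.
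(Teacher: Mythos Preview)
Your proposal is correct and follows exactly the approach the paper intends: the authors explicitly omit the proof, stating it is ``quite similar to the proof of Theorem~\ref{local_wellpose},'' and your adaptation---replacing $\mathbb{T}^n$ by the bounded domain with Neumann conditions, using $\tfrac12\kappa(x)|\nabla u|^2$, and linearizing at the trivial solution $(0,0)$ guaranteed by $F(x,0)=0$---is precisely that. The only cosmetic difference is that you encode the terminal condition $u(\cdot,T)=0$ in the space $X_2$ rather than as a component of $\mathscr{K}$, but this is immaterial.
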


\begin{thm}\label{local_wellpose1'}
	Given $m_0(x)=0$. Suppose that $F\in\mathcal{B}$ . The following results holds:
	\begin{enumerate}		
		\item[(a)]
		There exist constants $\delta>0$ and $C>0$ such that for any 
		\[
		\psi(x)\in B_{\delta}(C^{2+\alpha}(\Omega)) :=\{\psi\in C^{2+\alpha}(\Omega): \|\psi(x)\|_{C^{2+\alpha}(\Omega)}\leq\delta \},
		\]
		the MFG system \eqref{main_domain} has a solution $(u,m)\in
		[C^{2+\alpha,1+\frac{\alpha}{2}}(Q)]^2$ which satisfies
		\begin{equation}
			\|(u,m)\|_{ C^{2+\alpha,1+\frac{\alpha}{2}}(Q)}:= \|u\|_{C^{2+\alpha,1+\frac{\alpha}{2}}(Q)}+ \|m\|_{C^{2+\alpha,1+\frac{\alpha}{2}}(Q)}\leq C\|\psi(x)\|_{ C^{2+\alpha}(\Omega)}.
		\end{equation}
		Furthermore, the solution $(u,m)$ is unique within the class
		\begin{equation}
			\{ (u,m)\in  C^{2+\alpha,1+\frac{\alpha}{2}}(Q)\times C^{2+\alpha,1+\frac{\alpha}{2}}(Q): \|(u,m)\|_{ C^{2+\alpha,1+\frac{\alpha}{2}}(Q)}\leq C\delta \}.
		\end{equation}		
		\item[(b)] Define a function 
		\[
		S: B_{\delta}(C^{2+\alpha}(\Omega)\to C^{2+\alpha,1+\frac{\alpha}{2}}(Q)\times C^{2+\alpha,1+\frac{\alpha}{2}}(Q)\ \mbox{by $S(\psi(x)):=(u,m)$}. 
		\] 
		where $(u,m)$ is the unique solution to the MFG system \eqref{main_domain}.
		Then for any $\psi(x)\in B_{\delta}(C^{2+\alpha}(\Omega))$, $S$ is holomorphic at $\psi$.
	\end{enumerate}
\end{thm}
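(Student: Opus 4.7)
The plan is to follow the implicit-function-theorem strategy already used in the proof of Theorem \ref{local_wellpose} (and re-used for Theorem \ref{local_wellpose1}), but now treating the terminal datum $\psi$ as the input parameter while fixing $m_0 \equiv 0$. Concretely, introduce the Banach spaces
\begin{align*}
X_1 &:= C^{2+\alpha}(\Omega), \\
X_2 &:= \{(\tilde u, \tilde m) \in [C^{2+\alpha,1+\alpha/2}(Q)]^2 : \p_\nu \tilde u = \p_\nu \tilde m = 0 \text{ on } \Sigma\}, \\
X_3 &:= C^{2+\alpha}(\Omega) \times C^{2+\alpha}(\Omega) \times [C^{\alpha,\alpha/2}(Q)]^2,
\end{align*}
and define a nonlinear map $\mathscr{K}: X_1 \times X_2 \to X_3$ by sending $(\psi, \tilde u, \tilde m)$ to
\[
\bigl(\tilde u(x, T) - \psi(x),\ \tilde m(x, 0),\ -\p_t \tilde u - \Delta \tilde u + \tfrac{1}{2}\kappa(x)|\nabla \tilde u|^2 - F(x, \tilde m),\ \p_t \tilde m - \Delta \tilde m - \divv(\kappa \tilde m \nabla \tilde u)\bigr).
\]

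Well-definedness of $\mathscr{K}$ uses that Hölder spaces are Banach algebras (so the quadratic and divergence terms land in $C^{\alpha, \alpha/2}(Q)$), together with the Cauchy estimates \eqref{eq:F1}--\eqref{eq:F2} on the holomorphic expansion of $F \in \mathcal{B}$ to ensure $F(x, \tilde m) \in C^{\alpha, \alpha/2}(Q)$. Since $\mathscr{K}$ is locally bounded and depends holomorphically on each scalar direction, weak holomorphicity as in \cite{Pos.J} upgrades to full holomorphicity. We then compute
\[
\nabla_{(\tilde u, \tilde m)}\mathscr{K}(0, 0, 0)(u, m) = \bigl(u|_{t=T},\ m|_{t=0},\ -\p_t u - \Delta u - F^{(1)} m,\ \p_t m - \Delta m\bigr),
\]
and argue that this bounded linear map is an isomorphism from $X_2$ onto $X_3$. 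Given any target $(r, r', s, s') \in X_3$, one first solves the forward Neumann heat equation $\p_t m - \Delta m = s'$ with $m(x, 0) = r'$, and then the backward Neumann parabolic equation $-\p_t u - \Delta u = F^{(1)} m + s$ with $u(x, T) = r$, each uniquely in $C^{2+\alpha,1+\alpha/2}(Q)$ by the Neumann analogue of Lemma \ref{linear app unique}; injectivity is symmetric. The Banach-space implicit function theorem then furnishes $\delta > 0$ and a unique holomorphic map $S: B_\delta(X_1) \to X_2$ with $\mathscr{K}(\psi, S(\psi)) = 0$, and local Lipschitz continuity of $S$ at $\psi = 0$ produces the quantitative estimate $\|(u, m)\|_{C^{2+\alpha,1+\alpha/2}(Q)} \leq C\|\psi\|_{C^{2+\alpha}(\Omega)}$.

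The main technical obstacle is verifying the Neumann linear isomorphism property on a general smooth bounded domain, where one must ensure the parabolic Schauder theory applies in $C^{2+\alpha,1+\alpha/2}(Q)$ up to the corners $\Sigma \times \{0\}$ and $\Sigma \times \{T\}$. This requires the usual parabolic compatibility conditions between the boundary and the initial/terminal data, which hold automatically here because we linearize at the trivial base point $(0,0,0)$ and the Neumann constraint is built into $X_2$. Once this is in place, everything else is a direct transcription of the proof of Theorem \ref{local_wellpose1}, with the formal roles of $m_0$ and $\psi$ interchanged in the first two components of $\mathscr{K}$.
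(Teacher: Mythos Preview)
Your proposal is correct and follows exactly the approach the paper itself indicates: the paper does not give a separate proof of Theorem~\ref{local_wellpose1'} but simply states that ``the proof is quite similar to the proof of Theorem~\ref{local_wellpose}'', and your write-up is precisely that adaptation, with $\psi$ replacing $m_0$ as the input parameter and the Neumann boundary condition incorporated into the solution space.
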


We still need to show the local well-posedness in the case that the running cost $F$ is in the form of an integral. This is stated in the following theorems.

\begin{thm}\label{local_wellpose2}
	Given $\psi(x)=0$. Suppose that $F\in\mathcal{C}$ . The following results hold:
	\begin{enumerate}		
		\item[(a)]
		There exist constants $\delta>0$ and $C>0$ such that for any 
		\[
		m_0\in B_{\delta}(C^{2+\alpha}(\Omega)) :=\{m_0\in C^{2+\alpha}(\Omega): \|m_0\|_{C^{2+\alpha}(\Omega)}\leq\delta \},
		\]
		the MFG system \eqref{main_domain} has a solution $(u,m)\in
		[C^{2+\alpha,1+\frac{\alpha}{2}}(Q)]^2$ which satisfies
		\begin{equation}
			\|(u,m)\|_{ C^{2+\alpha,1+\frac{\alpha}{2}}(Q)}:= \|u\|_{C^{2+\alpha,1+\frac{\alpha}{2}}(Q)}+ \|m\|_{C^{2+\alpha,1+\frac{\alpha}{2}}(Q)}\leq C\|m_0\|_{ C^{2+\alpha}(\Omega)}.
		\end{equation}
		Furthermore, the solution $(u,m)$ is unique within the class
		\begin{equation}
			\{ (u,m)\in  C^{2+\alpha,1+\frac{\alpha}{2}}(Q)\times C^{2+\alpha,1+\frac{\alpha}{2}}(Q): \|(u,m)\|_{ C^{2+\alpha,1+\frac{\alpha}{2}}(Q)}\leq C\delta \}.
		\end{equation}		
		\item[(b)] Define a mapping 
		\[
		S: B_{\delta}(C^{2+\alpha}(\Omega))\to C^{2+\alpha,1+\frac{\alpha}{2}}(Q)\times C^{2+\alpha,1+\frac{\alpha}{2}}(Q)\ \mbox{by $S(m_0):=(u,m)$},
		\] 
		where $(u,m)$ is the unique solution to the MFG system \eqref{main_domain}.
		Then for any $m_0\in B_{\delta}(C^{2+\alpha}(\Omega))$, $S$ is holomorphic at $m_0$.
	\end{enumerate}
\end{thm}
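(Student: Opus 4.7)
The plan is to adapt the implicit function theorem argument used in the proof of Theorem~\ref{local_wellpose} to the present setting. The two differences from that proof are that (i) the periodic space $\mathbb{T}^n$ is replaced by a bounded domain $\Omega$ with Neumann boundary conditions, and (ii) the local running cost $F\in\mathcal{A}$ is replaced by the non-local kernel form $F(x,m)=\int_\Omega K(x,y)m(y,t)\,dy$ with $F\in\mathcal{C}$. In fact, because such an $F$ is linear in $m$, the nonlinear analysis here is in some respects lighter than in Theorem~\ref{local_wellpose}.

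First I would introduce the Banach spaces $X_1 := C^{2+\alpha}(\Omega)$, $X_2 := [C^{2+\alpha,1+\alpha/2}(Q)]^2$, and $X_3$ chosen to carry the terminal value, the initial value, the two interior parabolic residuals, and the two Neumann boundary traces. I then define the map $\mathscr{K}:X_1 \times X_2 \to X_3$ by
\begin{equation*}
\begin{aligned}
\mathscr{K}(m_0,\tilde u,\tilde m) := \Big(\ & \tilde u(x,T),\ \tilde m(x,0)-m_0(x),\\
& -\partial_t \tilde u -\Delta \tilde u + \tfrac{1}{2}\kappa|\nabla \tilde u|^2 - \int_\Omega K(x,y)\tilde m(y,t)\,dy,\\
& \partial_t \tilde m - \Delta \tilde m - \operatorname{div}(\tilde m\,\kappa\nabla \tilde u),\ \partial_\nu \tilde u|_\Sigma,\ \partial_\nu \tilde m|_\Sigma\ \Big).
\end{aligned}
\end{equation*}
Since $K\in C^\infty(\overline{\Omega}\times\overline{\Omega})$, differentiation under the integral sign shows $\int_\Omega K(\cdot,y)\tilde m(y,\cdot)\,dy \in C^{2+\alpha,\alpha/2}(Q)$ and the corresponding map in $\tilde m$ is linear and bounded. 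The remaining nonlinearities $\tfrac{1}{2}\kappa|\nabla\tilde u|^2$ and $\operatorname{div}(\tilde m\,\kappa\nabla \tilde u)$ are continuous polynomial maps between the relevant H\"older spaces, hence entire. Exactly as in the proof of Theorem~\ref{local_wellpose}, local boundedness together with weak holomorphy along each complex line gives that $\mathscr{K}$ is holomorphic.

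The core step is to prove that the partial derivative $D_{(\tilde u,\tilde m)}\mathscr{K}(0,0,0)$ is a linear isomorphism from $X_2$ onto $X_3$. A direct computation shows the derivative decouples: the preimage component $\tilde m$ is determined by an inhomogeneous heat equation with Neumann boundary condition and prescribed initial datum, and once $\tilde m$ is known, the component $\tilde u$ is obtained from a backward heat equation with Neumann condition, prescribed terminal value, and forcing that includes $\int_\Omega K(x,y)\tilde m(y,t)\,dy$. The Schauder theory for linear parabolic problems with Neumann data (the natural analogue of Lemma~\ref{linear app unique} applied on $\Omega$ instead of $\mathbb{T}^n$) provides unique classical solutions in $C^{2+\alpha,1+\alpha/2}(Q)$ for both, yielding injectivity and surjectivity simultaneously. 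The implicit function theorem for holomorphic maps between Banach spaces then produces $\delta>0$ and a unique holomorphic operator $S:B_\delta(X_1)\to X_2$, $S(m_0)=(u,m)$, satisfying $\mathscr{K}(m_0,S(m_0))=0$; the norm bound in (a) follows from Lipschitz continuity of $S$ near the origin, as in Theorem~\ref{local_wellpose}.

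The main obstacle is the correct bookkeeping of the Neumann boundary and compatibility conditions: one must either build $\partial_\nu m_0|_\Sigma = 0$ into the definition of $X_1$ or else carry the traces $\partial_\nu \tilde u|_\Sigma,\partial_\nu \tilde m|_\Sigma$ as independent components of the codomain $X_3$, so that the linearised problem has homogeneous Neumann data and the parabolic Schauder estimates can be invoked cleanly. Once this is arranged, the non-locality of $F$ is in fact a simplifying feature, because $\tilde m\mapsto \int_\Omega K(\cdot,y)\tilde m(y,\cdot)\,dy$ is a bounded linear operator, and one therefore avoids altogether the Cauchy-type estimates \eqref{eq:F1}--\eqref{eq:F2} that were needed to sum the power series in the analytic case of Theorem~\ref{local_wellpose}.
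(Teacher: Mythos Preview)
Your proposal is correct and takes essentially the same approach as the paper: the paper does not give an explicit proof of this theorem but simply states that the argument is ``quite similar to the proof of Theorem~\ref{local_wellpose},'' and your write-up is precisely the expected adaptation of that implicit-function-theorem argument to the bounded-domain Neumann setting with the non-local kernel $F\in\mathcal{C}$. Your observation that the linearity of $\tilde m\mapsto\int_\Omega K(\cdot,y)\tilde m(y,\cdot)\,dy$ removes the need for the Cauchy estimates \eqref{eq:F1}--\eqref{eq:F2} is a correct and useful simplification.
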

Similarly, we have
\begin{thm}\label{local_wellpose2'}
	Given $m_0(x)=0$. Suppose that $F\in\mathcal{C}$ . The following results hold:
	\begin{enumerate}		
		\item[(a)]
		There exist constants $\delta>0$ and $C>0$ such that for any 
		\[
		\psi(x)\in B_{\delta}(C^{2+\alpha}(\Omega)) :=\{\psi\in C^{2+\alpha}(\Omega): \|\psi(x)\|_{C^{2+\alpha}(\Omega)}\leq\delta \},
		\]
		the MFG system \eqref{main_domain} has a solution $(u,m)\in
		[C^{2+\alpha,1+\frac{\alpha}{2}}(Q)]^2$ which satisfies
		\begin{equation}
			\|(u,m)\|_{ C^{2+\alpha,1+\frac{\alpha}{2}}(Q)}:= \|u\|_{C^{2+\alpha,1+\frac{\alpha}{2}}(Q)}+ \|m\|_{C^{2+\alpha,1+\frac{\alpha}{2}}(Q)}\leq C\|\psi(x)\|_{ C^{2+\alpha}(\Omega)}.
		\end{equation}
		Furthermore, the solution $(u,m)$ is unique within the class
		\begin{equation}
			\{ (u,m)\in  C^{2+\alpha,1+\frac{\alpha}{2}}(Q)\times C^{2+\alpha,1+\frac{\alpha}{2}}(Q): \|(u,m)\|_{ C^{2+\alpha,1+\frac{\alpha}{2}}(Q)}\leq C\delta \}.
		\end{equation}		
		\item[(b)] Define a mapping
		\[
		S: B_{\delta}(C^{2+\alpha}(\Omega))\to C^{2+\alpha,1+\frac{\alpha}{2}}(Q)\times C^{2+\alpha,1+\frac{\alpha}{2}}(Q)\ \mbox{by $S(\psi(x)):=(u,m)$}. 
		\] 
		where $(u,m)$ is the unique solution to the MFG system \eqref{main_domain}.
		Then for any $\psi(x)\in B_{\delta}(C^{2+\alpha}(\Omega))$, $S$ is holomorphic at $\psi$.
	\end{enumerate}
\end{thm}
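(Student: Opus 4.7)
The plan is to mirror the argument used in Theorem \ref{local_wellpose}, invoking the implicit function theorem on a Banach space triple tailored to the present non-local, bounded-domain, Neumann set-up with $m_0 \equiv 0$ fixed and $\psi$ varying. Concretely, set
\begin{align*}
X_1 &:= C^{2+\alpha}(\Omega), \\
X_2 &:= C^{2+\alpha,1+\frac{\alpha}{2}}(Q)\times C^{2+\alpha,1+\frac{\alpha}{2}}(Q), \\
X_3 &:= C^{2+\alpha}(\Omega)\times C^{2+\alpha}(\Omega)\times C^{\alpha,\frac{\alpha}{2}}(Q)\times C^{\alpha,\frac{\alpha}{2}}(Q),
\end{align*}
and define $\mathscr{K}:X_1\times X_2\to X_3$ by packaging the four components of \eqref{main_domain} with the fixed datum $m(x,0)=0$ and the varying datum $u(x,T)=\psi(x)$, building the Neumann compatibility $\partial_\nu \tilde u=\partial_\nu\tilde m=0$ into the subspace of $X_2$ over which $\mathscr{K}$ is considered (so that the zero-order boundary trace is automatically satisfied and only the equations and initial/terminal traces appear in $X_3$).

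First I would check that $\mathscr{K}$ is well-defined and holomorphic. For well-definedness, the Hölder algebra property handles $\tfrac{1}{2}\kappa|\nabla\tilde u|^2$ and $\operatorname{div}(\tilde m\kappa\nabla\tilde u)$; for the non-local term $F(x,\tilde m)=\int_\Omega K(x,y)\tilde m(y,t)\,dy$, smoothness of $K$ in both variables and boundedness of $\Omega$ immediately give $F(\cdot,\tilde m)\in C^{\infty,1+\frac{\alpha}{2}}(Q)\subset C^{\alpha,\frac{\alpha}{2}}(Q)$, with continuous dependence on $\tilde m\in C^{2+\alpha,1+\frac{\alpha}{2}}(Q)$. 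Holomorphy is transparent: $F$ is \emph{linear} in $m$, and the only other $\tilde u,\tilde m$ nonlinearities are polynomial, so $\mathscr{K}$ is a polynomial in $(\tilde u,\tilde m)$ with parameter $\psi$ entering linearly, hence weakly (and therefore strongly) holomorphic in the sense of \cite{Pos.J}.

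Next I would linearize at the trivial solution $(u_0,m_0)=(0,0)$ with $\psi=0$. The Fréchet derivative $\nabla_{(\tilde u,\tilde m)}\mathscr{K}(0,0,0)(u,m)$ produces the decoupled system
\begin{equation*}
\begin{cases}
-\partial_t u-\Delta u=\int_\Omega K(x,y)m(y,t)\,dy &\text{in }Q,\\
\partial_t m-\Delta m=0 &\text{in }Q,\\
\partial_\nu u=\partial_\nu m=0 &\text{on }\Sigma,\\
u(x,T)=\varphi(x),\ m(x,0)=\eta(x) &\text{in }\Omega,
\end{cases}
\end{equation*}
for arbitrary target data $(\varphi,\eta)\in C^{2+\alpha}(\Omega)\times C^{2+\alpha}(\Omega)$ and arbitrary source terms in $C^{\alpha,\frac{\alpha}{2}}(Q)$. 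Standard $L^p$/Schauder theory for the Neumann heat equation on a smooth bounded domain (the bounded-domain analogue of Lemma \ref{linear app unique}) first yields unique $m\in C^{2+\alpha,1+\frac{\alpha}{2}}(Q)$; this determines the non-local source for the backward $u$-equation, which is again uniquely solvable in $C^{2+\alpha,1+\frac{\alpha}{2}}(Q)$. Thus the derivative is a linear isomorphism $X_2\to X_3$.

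With these three ingredients in hand, the implicit function theorem furnishes $\delta,C>0$ and a unique holomorphic map $S:B_\delta(C^{2+\alpha}(\Omega))\to X_2$, $\psi\mapsto (u,m)$, with $\mathscr{K}(\psi,S(\psi))=0$, and the quantitative bound follows from the Lipschitz estimate on $S$ exactly as in the proof of Theorem \ref{local_wellpose}. The main obstacle I anticipate is not analytical but bookkeeping: one must phrase the Neumann compatibility between $\partial_\nu\psi=0$, $\partial_\nu m_0=0$ (trivial here since $m_0\equiv 0$), and the corner-regularity of the Hölder spaces $C^{2+\alpha,1+\frac{\alpha}{2}}(Q)$ at $\Sigma\times\{0,T\}$, so that the isomorphism statement for the linearized Neumann problem holds on the nose; this is resolved by inserting the boundary compatibility into the definition of the closed subspace of $X_1$ on which $S$ is constructed, exactly paralleling the implicit restrictions used in the periodic proof.
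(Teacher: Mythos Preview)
Your proposal is correct and follows essentially the same approach as the paper: the paper explicitly omits the proof of this theorem, stating only that it is ``quite similar to the proof of Theorem~\ref{local_wellpose}'' via the implicit function theorem, which is exactly the route you carry out, with the appropriate simplification that $F\in\mathcal{C}$ is linear in $m$ so holomorphy of $\mathscr{K}$ is immediate.
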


With all the preparations, we are in a position to present the proof of Theorem~\ref{der F} and Theorem~\ref{der F2}  . 
\begin{proof}[ Proof of Theorem $\ref{der F}$ ]
	For $j=1,2$, let us consider 
	\begin{equation}\label{MFG 1,2'}
		\begin{cases}
			-u_t-\Delta u+\frac{1}{2}\kappa_j|\nabla  u|^2= F_j(x,m) & \text{ in } Q,\medskip\\
			m_t-\Delta m-{\rm div} (m \kappa_j\nabla u)=0  & \text{ in } Q, \medskip\\
			\p_{\nu}u(x,t)=\p_{\nu}m(x,t)=0     & \text{ on } \Sigma, \medskip\\
			u(x,T)=\psi(x)     & \text{ in } \Omega,\medskip\\
			m(x,0)=m_0(x) & \text{ in } \Omega.\\
		\end{cases}
	\end{equation}
	Next, we divide our proof into several steps. 
	
	\bigskip
	\noindent {\bf Step I.} We show $\kappa_1=\kappa_2$ first.

	Let $\psi(x)=\varepsilon_1f_1+\varepsilon_2f_2$, $m_0= 0.$ 
	Let	$$u^{(1)}:=\p_{\varepsilon_1}u|_{\varepsilon=0}=\lim\limits_{\varepsilon\to 0}\frac{u(x,t;\varepsilon)-u(x,t;0) }{\varepsilon_1},$$
	$$m^{(1)}:=\p_{\varepsilon_1}m|_{\varepsilon=0}=\lim\limits_{\varepsilon\to 0}\frac{m(x,t;\varepsilon)-m(x,t;0) }{\varepsilon_1}.$$
	Then we have 
	\begin{equation}\label{first-order-analytic}
		\begin{cases}
			-\p_t u_j^{(1)}-\Delta u_j^{(1)}= F_j^{(1)}(x)m^{(1)}(x,t) ,  & \text{ in }  Q,\\
			\p_t m_j^{(1)}(x,t)-\Delta m_j^{(1)}(x,t)=0, & \text{ in }  Q,\\
			\p_{\nu} u_j^{(1)}(x,t)=\p_{\nu}m_j^{(1)}(x,t)=0      &\text{ on } \Sigma,\medskip\\
			u_j^{(1)}(x,T)=f_1, & \text{ in } \Omega,\medskip\\
			m_j^{(1)}(x,0)=0, & \text{ in } \Omega.\\
		\end{cases}
	\end{equation}
	This implies that $m^{(1)}(x,t)=0.$ We define $m^{(2)}(x,t)$ in the same way (see section \ref{HLM}). Similarly, we have $m^{(2)}(x,t)=0.$ 			Therefore, $u_j^{(1)}(x,t)$ are independent of $j$. Let $u_1^{(1)}(x,t)=u_2^{(1)}(x,t)=u^{(1)}(x,t) $, then it satisfies the following system 
	\begin{equation}\label{first order-analytic'}
		\begin{cases}
			-\p_t u^{(1)}(x,t)-\Delta u^{(1)}(x,t)= 0 ,  & \text{ in }  Q,\\
			\p_{\nu} u^{(1)}(x,t)=0      &\text{ on } \Sigma,\medskip\\
			u^{(1)}(x,T)=f_1, & \text{ in } \Omega,\medskip\\
		\end{cases}
	\end{equation}
	Similarly, $u_1^{(2)}(x,t)=u_2^{(2)}(x,t)=u^{(2)}(x,t) $ also satisfy \eqref{first order-analytic'}.
	Notice that $m^{(1)}(x,t)= m^{(2)}(x,t)=0$,
	we have the second order linearization system is given by
	\begin{equation}\label{second order-analytic}
		\begin{cases}
			-\p_t u_j^{(1,2)}-\Delta u_j^{(1,2)}+\kappa_j(x)\nabla u^{(1)}\cdot\nabla u^{(2)}= F_j^{(1)}m^{(1,2)}(x,t) ,  & \text{ in }  Q,\\
			\p_t m_j^{(1,2)}(x,t)-\Delta m_j^{(1,2)}(x,t)=0, & \text{ in }  Q,\\
			\p_{\nu} u_j^{(1,2)}(x,t)=\p_{\nu}m_j^{(1,2)}(x,t)=0      &\text{ on } \Sigma,\medskip\\
			u^{(1,2)}(x,T)=0, & \text{ in } \Omega,\medskip\\
			m^{(1,2)}(x,0)=0, & \text{ in } \Omega.\\
		\end{cases}
	\end{equation}
	
	Note that $m^{(1,2)}(x,t)$ must be $0$ and hence, 
	\begin{equation}
		-\p_t u^{(1,2)}-\Delta u^{(1,2)}+\kappa_j(x)\nabla u^{(1)}\cdot\nabla u^{(2)}=0,
	\end{equation}
	holds if $u^{(1)}, u^{(2)}$ are solution of \eqref{first order}. Let $\overline{u}(x,t)=u_1^{(1,2)}(x,t)-u_2^{(1,2)}(x,t)$. Since 	$\mathcal{M}_{\kappa_1,F_1}=\mathcal{M}_{\kappa_2,F_2}$,  we have
	\begin{equation}\label{u_1-u_2'}
		\begin{cases}
			-\p_t \overline{u}-\Delta \overline{u}+(\kappa_1(x)-\kappa_2(x))\nabla u^{(1)}\cdot\nabla u^{(2)}=0, & \text{ in }  Q,\\
			\p_{\nu}\overline{u}(x,t)=\overline{u}(x,t)=0&\text{ on } \Sigma,\medskip\\
			\overline{u}(x,T)=\overline{u}(x,0)=0, & \text{ in } \Omega.	
		\end{cases}
	\end{equation}
	Let $\omega$ be a solution of the following system
	\begin{equation}\label{adjoint'}
		\p_t \omega-\Delta \omega=0   \text{ in }  Q,\\	
	\end{equation}
	then we multiply $\omega$ on the both side of \eqref{u_1-u_2'} and then integration by part implies that
	\begin{equation}\label{integral by part 1'}
		\int_{\Omega} (\kappa_1(x)-\kappa_2(x))\nabla u^{(1)}\cdot\nabla u^{(2)}\omega dxdt =0. 
	\end{equation}
	
	By Lemma $\ref{E-F is complete}$, there exists $\lambda\in\mathbb{R}$ and $g(x)\in C^{\infty}(\Omega)$ such that 
	$e^{\lambda t}g(x)$ satisfies \eqref{first order-analytic'}.
	Let $f_1=e^{\lambda T}g(x)$, then by the uniqueness of the solution of heat equation, we have 
	$$  u^{{(1)}}(x,t)=e^{\lambda t}g(x). $$
	Then we have 
	\begin{equation}\label{integral by part 1'' }
		\int_{Q} (\kappa_1(x)-\kappa_2(x))e^{2\lambda t}|\nabla g(x)|^2\omega dxdt =0. 
	\end{equation}
	Consider  $\omega= e^{-|\xi|^2t-ix\cdot\xi}$  for some $\xi\in\mathbb{R}^n.$
	It follows that 
	$$\int_{0}^{T} e^{2\lambda t}e^{-|\xi|^2t }\int_{\Omega}(\kappa_1(x)-\kappa_2(x))|\nabla g(x)|^2 e^{-ix\cdot\xi } =0.$$
	i.e.
	$$\int_{\Omega}(\kappa_1(x)-\kappa_2(x))|\nabla g(x)|^2 e^{-ix\cdot\xi } =0.$$
	Therefore, we have $ (\kappa_1(x)-\kappa_2(x))|\nabla g(x)|^2=0$ in $\Omega.$ By the construction in Lemma $\ref{E-F is complete}$, we have
	$$ \kappa_1(x)-\kappa_2(x)=0. $$
	
	\noindent {\bf Step II.}	Let $\kappa=\kappa_1=\kappa_2$. Next, we aim to show $F_1=F_2$. 
	Consider the following systems 	
	\begin{equation}\label{MFG 1,2-analytic}
		\begin{cases}
			-u_t-\Delta u+\frac{1}{2}\kappa(x)|\nabla  u|^2= F_j(x,m) & \text{ in } Q,\medskip\\
			m_t-\Delta m-{\rm div} (m\kappa(x)\nabla u)=0  & \text{ in } Q, \medskip\\
			\p_{\nu}u(x,t)=\p_{\nu}m(x,t)=0     & \text{ on } \Sigma, \medskip\\
			u(x,T)=\psi(x)     & \text{ in } \Omega,\medskip\\
			m(x,0)=m_0(x) & \text{ in } \Omega.\\
		\end{cases}
	\end{equation} 
	Let $\psi(x)=0$ and 
	$$m_0(x;\varepsilon)=\sum_{l=1}^{N}\varepsilon_lf_l,$$
	where 
	\[
	f_l\in C^{2+\alpha}(\mathbb{R}^n)\quad\mbox{and}\quad f_l\geq 0,
	\]
	and $\varepsilon=(\varepsilon_1,\varepsilon_2,...,\varepsilon_N)\in\mathbb{R}_+^N$ with 
	$|\varepsilon|=\sum_{l=1}^{N}|\varepsilon_l|$ small enough. 
	First, we do the first order linearization to the MFG system \eqref{MFG 1,2-analytic} in $Q$ and can derive: 
	\begin{equation}
		\begin{cases}
			-\p_{t}u^{(1)}_j-\Delta u_j^{(1)}= F_j^{(1)}(x)m_j^{(1)} & \text{ in } Q, \medskip\\
			\p_{t}m^{(1)} _j-\Delta m_j^{(1)} =0  & \text{ in } Q, \medskip\\
			\p_{\nu}u^{(1)}(x,t)=\p_{\nu}m^{(1)}(x,t)=0     & \text{ on } \Sigma, \medskip\\
			u^{(1)}(x,T)=0   & \text{ in } \Omega,\medskip\\
			m^{(1)} _j(x,0)=f_1(x) & \text{ in } \Omega.\\
		\end{cases}
	\end{equation}
	We just choose $f_1(x)=1$, then we have $m_1^{(1)}(x,t)=m_2^{(1)}(x,t)=1.$ Hence, we have $u_j^{(1)}(x,t)$ is the solution of
	\begin{equation}\label{linearization-without-m}
		\begin{cases}
			-\p_{t}u^{(1)}_j-\Delta u_j^{(1)}= F_j^{(1)}(x) & \text{ in } Q, \medskip\\
			\p_{\nu}u^{(1)}(x,t)=0     & \text{ on } \Sigma, \medskip\\
			u^{(1)}(x,T)=0   & \text{ in } \Omega,\medskip\\
		\end{cases}
	\end{equation}
	Let $\overline{u}(x,t)=u_1^{(1)}(x,t)-u_2^{(1)}(x,t)$. Since  $\mathcal{M}_{\kappa_1,F_1}=\mathcal{M}_{\kappa_2,F_2} $, we have
	\begin{equation}\label{u_1-u_2-analytic}
		\begin{cases}
			-\p_t \overline{u}(x,t)-\Delta \overline{u}(x,t) = F_1^{(1)}(x)-F_2^{(1)}(x), & \text{ in }  Q,\\
			\p_{\nu}\overline{u}(x,t)=\overline{u}(x,t)=0&\text{ on } \Sigma,\medskip\\
			\overline{u}(x,T)=\overline{u}(x,0)=0, & \text{ in } \Omega.	
		\end{cases}
	\end{equation}
	Let $\omega$ be a solution of the following system
	\begin{equation}\label{adjoint-anlaytic}
		\p_t \omega-\Delta \omega=0   \text{ in }  Q,\\	
	\end{equation}
	then we multiply $\omega$ on the both side of \eqref{u_1-u_2-analytic} and then integration by part implies that
	\begin{equation}
		\int_{Q}  \,(F_1^{(1)}(x)-F_2^{(1)}(x))\omega(x,t)\,dxdt=0.
	\end{equation}
	Consider  $\omega= e^{-|\xi|^2t-ix\cdot\xi}$  for some $\xi\in\mathbb{R}^n.$
	Similarly to the proof of part (I), we have
	\begin{equation}
		\int_{\Omega}  \,(F_1^{(1)}(x)-F_2^{(1)}(x))e^{i\xi\cdot x}\,dx=0,
	\end{equation}
	for all $\xi\in\mathbb{R}^n.$ Hence, we have $F_1^{(1)}(x)=F_2^{(1)}(x)$.
	
	\noindent {\bf Step III.}
	We proceed to consider the second linearization to the MFG system \eqref{MFG 1,2-analytic} in $Q$ and can obtain for $j=1,2$:
	\begin{equation}
		\begin{cases}
			-\p_tu_j^{(1,2)}-\Delta u_j^{(1,2)}(x,t)+\kappa(x)\nabla u_j^{(1)}\cdot \nabla u_j^{(2)}\medskip\\
			\hspace*{3cm}= F_j^{(1)}(x)m_j^{(1,2)}+F_j^{(2)}(x)m_j^{(1)}m_j^{(2)} & \text{ in } \Omega\times(0,T),\medskip\\
			\p_t m_j^{(1,2)}-\Delta m_j^{(1,2)}= {\rm div} (m_j^{(1)}\kappa(x)\nabla u_j^{(2)})+{\rm div}(m_j^{(2)}\kappa(x)\nabla u_j^{(1)}) ,&\text{ in } \Omega\times (0,T) \medskip\\
			\p_{\nu}u^{(1,2)}(x,t)=\p_{\nu}m^{(1,2)}(x,t)=0     & \text{ on } \Sigma, \medskip\\
			u_j^{(1,2)}(x,T)=0 & \text{ in } \Omega,\medskip\\
			m_j^{(1,2)}(x,0)=0 & \text{ in } \Omega.\\
		\end{cases}  	
	\end{equation}
	Now we may choose $f_1(x)=f_2(x)=1$, then we have $$m_j^{(1)}(x,t)=m_j^{(2)}(x,t)=1.$$
	Notice that we have shown that $F_1^{(1)}(x)=F_2^{(1)}(x)$ in $\Omega$ and this implies that
	$$ m^{(1,2)}(x,t)=m_1^{(1,2)}(x,t)=m_2^{(1,2)}(x,t). $$
	
	Let $\hat{u}(x,t)=u_1^{(1,2)}(x,t)-u_2^{(1,2)}(x,t)$. Since  $\mathcal{M}_{\kappa_1,F_1}=\mathcal{M}_{\kappa_2,F_2} $, we have
	\begin{equation}\label{u_1-u_2-analytic2}
		\begin{cases}
			-\p_t \hat{u}(x,t)-\Delta \hat{u}(x,t) =( F_1^{(2)}(x)-F_2^{(2)}(x)) , & \text{ in }  Q,\\
			\p_{\nu}\hat{u}(x,t)=\hat{u}(x,t)=0&\text{ on } \Sigma,\medskip\\
			\hat{u}(x,T)=\hat{u}(x,0)=0, & \text{ in } \Omega.	
		\end{cases}
	\end{equation}
	Let $\omega$ be a solution of the following system
	\begin{equation}\label{adjoint-anlaytic2}
		\p_t \omega-\Delta \omega=0   \text{ in }  Q,\\	
	\end{equation}
	then we multiply $\omega$ on the both side of \eqref{u_1-u_2-analytic2} and then integration by part implies that
	\begin{equation}
		\int_{Q}  \,(F_1^{(2)}(x)-F_2^{(2)}(x))\omega(x,t)\,dxdt=0.
	\end{equation}
	Similarly to the proof of part (II), we have
	
	$$F_1^{(2)}(x)-F_2^{(2)}(x).$$
	\noindent {\bf Step IV.}
	Finally, using mathematical induction and reiterating similar arguments as in Steps II and III, one can show that
	$$F^{(k)}_1(x)-F^{(k)}_2(x)=0 ,$$
	for all $k\in\mathbb{N}$. Hence, $$F_1(x,z)=F_2(x,z)$$
	in $\Omega\times\mathbb{R}.$
	The proof is now complete.
\end{proof}

Next we show the Theorem $\ref{der F2}$. Recall that in this case, we have 
$$ F(x,m)= \int_{\Omega}K(x,y)m(y,t)dy.$$
By comparing the proof of Theorem $\ref{der F}$ with the proof of Theorem $\ref{der F2}$, we claim that high-frequency method is much better in the non-local case.
\begin{proof}[ Proof of Theorem $\ref{der F2}$ ]
	For $j=1,2$, let us consider 
	\begin{equation}
		\begin{cases}
			-u_t-\Delta u+\frac{1}{2}\kappa_j|\nabla  u|^2= \int_{\Omega}K_j(x,y)m(y,t)dy & \text{ in } Q,\medskip\\
			m_t-\Delta m-{\rm div} (m\kappa_j\nabla u)=0  & \text{ in } Q, \medskip\\
			\p_{\nu}u(x,t)=\p_{\nu}m(x,t)=0     & \text{ on } \Sigma, \medskip\\
			u(x,T)=\psi(x)     & \text{ in } \Omega,\medskip\\
			m(x,0)=m_0(x) & \text{ in } \Omega.\\
		\end{cases}
	\end{equation}
	Next, we divide our proof into several steps. 
	
	\bigskip
	\noindent {\bf Step I.} We show $\kappa_1=\kappa_2$ first.

	Let $\psi(x)=\varepsilon_1f_1+\varepsilon_2f_2$, $m_0= 0.$ 
	Let
	$$u^{(1)}:=\p_{\varepsilon_1}u|_{\varepsilon=0}=\lim\limits_{\varepsilon\to 0}\frac{u(x,t;\varepsilon)-u(x,t;0) }{\varepsilon_1},$$
	$$m^{(1)}:=\p_{\varepsilon_1}m|_{\varepsilon=0}=\lim\limits_{\varepsilon\to 0}\frac{m(x,t;\varepsilon)-m(x,t;0) }{\varepsilon_1}.$$
	Then we have 
	\begin{equation}\label{first order}
		\begin{cases}
			-\p_t u_j^{(1)}-\Delta u_j^{(1)}= \int_{\Omega}K_j(x,y)m^{(1)}(y,t)dy ,  & \text{ in }  Q,\\
			\p_t m_j^{(1)}(x,t)-\Delta m_j^{(1)}(x,t)=0, & \text{ in }  Q,\\
			\p_{\nu} u_j^{(1)}(x,t)=\p_{\nu}m_j^{(1)}(x,t)=0      &\text{ on } \Sigma,\medskip\\
			u_j^{(1)}(x,T)=f_1, & \text{ in } \Omega,\medskip\\
			m_j^{(1)}(x,0)=0, & \text{ in } \Omega.\\
		\end{cases}
	\end{equation}
	This implies that $m^{(1)}(x,t)=0.$ We define $m^{(2)}(x,t)$ in the same way (see section \ref{HLM}). Similarly, we have $m^{(2)}(x,t)=0.$ 			Therefore, $u_j^{(1)}(x,t)$ are independent of $j$. Let $u_1^{(1)}(x,t)=u_2^{(1)}(x,t)=u^{(1)}(x,t) $, then it satisfies the following system
	\begin{equation}\label{first order'}
		\begin{cases}
			-\p_t u^{(1)}(x,t)-\Delta u^{(1)}(x,t)= 0 ,  & \text{ in }  Q,\\
			\p_{\nu} u^{(1)}(x,t)=0      &\text{ on } \Sigma,\medskip\\
			u^{(1)}(x,T)=f_1, & \text{ in } \Omega.\medskip\\
		\end{cases}
	\end{equation}
	Then the second order linearization system is given by
\begin{equation}\label{second order}
		\begin{cases}
			-\p_t u_j^{(1,2)}-\Delta u_j^{(1,2)}+\kappa_j(x)\nabla u^{(1)}\cdot\nabla u^{(2)}= \int_{\Omega}K_j(x,y)m^{(1,2)}(y,t)dy ,  & \text{ in }  Q,\\
			\p_t m_j^{(1,2)}(x,t)-\Delta m_j^{(1,2)}(x,t)=0, & \text{ in }  Q,\\
			\p_{\nu} u_j^{(1,2)}(x,t)=\p_{\nu}m_j^{(1,2)}(x,t)=0      &\text{ on } \Sigma,\medskip\\
			u^{(1,2)}(x,T)=0, & \text{ in } \Omega,\medskip\\
			m^{(1,2)}(x,0)=0, & \text{ in } \Omega.\\
		\end{cases}
	\end{equation}
	
	Note that $m^{(1,2)}(x,t)$ must be $0$ and hence, 
	\begin{equation}
		-\p_t u^{(1,2)}-\Delta u^{(1,2)}+\kappa_j(x)\nabla u^{(1)}\cdot\nabla u^{(2)}=0,
	\end{equation}
	holds if $u^{(1)}, u^{(2)}$ are solution of \eqref{first order}. Let $\overline{u}=u_1^{(1,2)}(x,t)-u_2^{(1,2)}(x,t)$. Since 	$\mathcal{M}_{\kappa_1,F_1}=\mathcal{M}_{\kappa_2,F_2}$,  we have
	\begin{equation}\label{u_1-u_2}
		\begin{cases}
			-\p_t \overline{u}-\Delta \overline{u}+(\kappa_1(x)-\kappa_2(x))\nabla u^{(1)}\cdot\nabla u^{(2)}=0, & \text{ in }  Q,\\
			\p_{\nu}\overline{u}(x,t)=\overline{u}(x,t)=0&\text{ on } \Sigma,\medskip\\
			\overline{u}(x,T)=0, & \text{ in } \Omega.
		\end{cases}
	\end{equation}
	Let $\omega$ be a solution of the following system
	\begin{equation}\label{adjoint}
		\p_t \omega-\Delta \omega=0,  \text{ in }  Q,\\
	\end{equation}
	then we multiply $\omega$ on the both side of \eqref{u_1-u_2} and then integration by part implies that
	\begin{equation}\label{integral by part 1}
		\int_{\Omega} (\kappa_1(x)-\kappa_2(x))\nabla u^{(1)}\cdot\nabla u^{(2)}\omega dxdt =0. 
	\end{equation}
	
	By Lemma $\ref{E-F is complete}$, there exists $\lambda\in\mathbb{R}$ and $g(x)\in C^{\infty}(\Omega)$ such that 
	$e^{\lambda t}g(x)$ satisfies \eqref{first order'}.
	Let $f_1=e^{\lambda T}g(x)$, then by the uniqueness of the solution of heat equation, we have 
	$$  u^{{(1)}}(x,t)=e^{\lambda t}g(x). $$

	Consider  $\omega= e^{-|\xi|^2t-ix\cdot\xi}$  for some $\xi\in\mathbb{R}^n.$
	By the same argument in the proof of Theorem $\ref{der F}$, we have
	$$\int_{0}^{T} e^{2\lambda t}e^{-|\xi|^2t }\int_{\Omega}(\kappa_1(x)-\kappa_2(x))|\nabla g(x)|^2 e^{-ix\cdot\xi } =0,$$
	i.e.
	$$\int_{\Omega}(\kappa_1(x)-\kappa_2(x))|\nabla g(x)|^2 e^{-ix\cdot\xi } =0.$$
	Therefore, we have $ (\kappa_1(x)-\kappa_2(x))|\nabla g(x)|^2=0$ in $\Omega.$ By the construction in Lemma $\ref{E-F is complete}$, we have
	$$ \kappa_1(x)-\kappa_2(x)=0. $$
	
	\noindent {\bf Step II.}	Let $\kappa=\kappa_1=\kappa_2$. Next, we aim to show $K_1(x,y)=K_2(x,y)$. 
	
	Consider another type of linearization method, let 
	$m_0=\varepsilon g_1 +\varepsilon^2 g_2$, where $g_1>0, \varepsilon>0 $. Let $\psi(x)=0.$ Then the first order linearization system is given by
	\begin{equation}\label{first order type2}
		\begin{cases}
			-\p_t u_j^{(I)}-\Delta u_j^{(I)}= \int_{\Omega}K_j(x,y)m^{(I)}(y,t)dy ,  & \text{ in }  Q,\\
			\p_t m_j^{(I)}(x,t)-\Delta m_j^{(I)}(x,t)=0, & \text{ in }  Q,\\
			\p_{\nu} u_j^{(I)}(x,t)=\p_{\nu}m_j^{(I)}(x,t)=0      &\text{ on } \Sigma,\medskip\\
			u_j^{(I)}(x,T)=0, & \text{ in } \Omega,\medskip\\
			m_j^{(I)}(x,0)=g_1, & \text{ in } \Omega.\\
		\end{cases}	
	\end{equation}
	
	Let $g_1(x)=1$. Since $\int_{\Omega} K_j(x,y) dy=0$ for $j=1,2$, we have $u_j^{(I)}(x,t)=0, m_j^{(I)}(x,t)=1.$ Then the second order linearization system is given by
	\begin{equation}\label{second order type2}
		\begin{cases}
			-\p_t u_j^{(II)}-\Delta u_j^{(II)}+\kappa(x)|\nabla u^{(I)}|^2= \int_{\Omega}K_j(x,y)m^{(II)}(y,t)dy ,  & \text{ in }  Q,\\
			\p_t m_j^{(II)}(x,t)-\Delta m_j^{(II)}(x,t)=2{\rm div}(m^{(I)}\kappa\nabla u^{(I)}), & \text{ in }  Q,\\
			\p_{\nu} u_j^{(II)}(x,t)=\p_{\nu}m_j^{(II)}(x,t)=0      &\text{ on } \Sigma,\medskip\\
			u_j^{(II)}(x,T)=0, & \text{ in } \Omega,\medskip\\
			m_j^{(II)}(x,0)=2g_2, & \text{ in } \Omega.\\
		\end{cases}	
	\end{equation}
	Define $\hat{K}=K_1-K_2$. Let $\omega$ be a solution of \eqref{adjoint}, by a similar argument, we have
	
	\begin{equation}
		\int_{Q}\left[ \int_{\Omega}  \hat{K}(x,y) m^{(2)}(y,t)  dy \right]\omega(x,t)dxdt=0,
	\end{equation}
	for all $ m^{(II)}(y,t) $ such that it is a solution of
	\begin{equation}
		\begin{cases}
			\p_t m_j^{(II)}(x,t)-\Delta m_j^{(II)}(x,t)=0, & \text{ in }  Q,\\
			\p_{\nu}m_j^{(II)}(x,t)=0      &\text{ on } \Sigma,\medskip\\
			m_j^{(II)}(x,0)=2g_2, & \text{ in } \Omega.\\
		\end{cases}	
	\end{equation}
	Similarly, by Lemma $\ref{E-F is complete}$, we may choose $m^{(II)}(x,t)=e^{\lambda t}g(x;\lambda)$. Then by the same argument, we have
	\begin{equation}
		\int_{0}^{T}e^{\lambda t}e^{-|\xi|^2t}dt\int_{\Omega}\left[ \int_{\Omega}  \hat{K}(x,y) g(y;\lambda)  dy \right] e^{-ix\cdot\xi}dx=0,
	\end{equation}
	Then we have
	$$ \int_{\Omega}  \hat{K}(x,y) g(y;\lambda)  dy=0,$$
	for all $g(y;\lambda).$
	Note that $g(y;\lambda)$ can be all Neumann eigenfunctions of $-\Delta$ and these functions are complete in $L^2(\Omega)$. Therefore, we have
	$$K_1(x,y)=K_2(x,y)$$
	in $\Omega\times\Omega$. The proof is now complete.
\end{proof}
\begin{rmk}
	By the proof of these two Theorems, we show that we do not need the full information of $\mathcal{N}_{\kappa,F}(m_0,\psi)$. One can only use $\mathcal{N}_{\kappa,F}(m_0,0)$ and $\mathcal{N}_{\kappa,F}(0,\psi)$ to recover $F$ and $\kappa(x).$
\end{rmk}
 \section{The Local-dependent Case with Non-trivial Solution in a Bounded Domain}

We already consider the inverse problems in bounded domain $\Omega$  in the local-dependent case when the MFG admits a trivial solution and the case that $F$ depends on $m$ non-locally without assuming trivial solution exist. In this chapter, we aim to consider the case that $F$ depends on $m$ locally  without assuming trivial solution exist. It is quite different from the previous cases because the structure of the linearization system becomes totally different. The two unknown functions still coupled with each other and we need to introduce different techniques to handle this case.
\subsection{The Inverse Problem and The Main Result}

The MFG system for our study in this section is introduced as follows:
\begin{equation}\label{main_constant} 
	\left\{
	\begin{array}{ll}
		\displaystyle{-\partial_t u(x,t) -\Delta u(t,x)+\frac{1}{2}|\nabla u(x,t)|^2-F(x, m(x,t))=0} &  {\rm{in}}\ Q,\medskip\\
		\displaystyle{\partial_tm(x,t)-\Delta m(x,t)-{\rm div} \big(m(x,t) \nabla u(x,t)\big)=0} & {\rm{in}}\ Q,\medskip\\
		\p_{\nu} u(x,t)=\p_{\nu} m(x,t)=0 & {\rm{on}}\ \Sigma,\medskip\\
		u(x,T)=G,\ m(x,0)=m_0(x) & {\rm{in}}\  \Omega,\medskip
	\end{array}
	\right.
\end{equation}
where  $G$ is a constant and it signifies the terminal cost. We can define
\begin{equation}\label{eq:distr1}
	\mathcal{O}:=\{ m:\Omega\to [0,\infty) \ \ |\ \ \int_{\Omega} m\, dx =1 \}.
\end{equation}
In other words, if $m\in \mathcal{O}$, then it is the density of a distribution in $\Omega$. We next introduce the measurement data set:
\begin{equation}\label{eq:measure_constant}
	\mathcal{M}_F(m_0):=\left(u(x,0),m(x,T)\right), 
\end{equation}
where
$(u, m)$ is the (unique) pair of solutions to the MFG system \eqref{main_constant} associated with the initial population distribution $m(x, 0)=m_0(x)$. 
Similarly, we introduce the admissible class as before but we need monotonicity assumption here.

\begin{defn}\label{Admissible class_constant}
	We say $U(x,z):\mathbb{R}^n\times\mathbb{C}\to\mathbb{C}$ is admissible, denoted by $U\in\mathcal{A'}$, if it satisfies the following conditions:
	\begin{enumerate}
		\item[(i)] The map $z\mapsto U(x,z)$ is holomorphic with the value in $C^{2+\alpha}(\Omega)$ for $0<\alpha<1$.
		\item[(ii)] $U(x,1)=0$ for all $x\in\mathbb{R}^n$. Here we recall that we assume $|\Omega|=1.$
		\item[(iii)]  $U^{(1)}$ is a positive real number. 
	\end{enumerate} 	
	
	Clearly, if (1) and (2) are fulfilled, then $U$ can be expanded into a power series as follows:
	\begin{equation}
		U(x,z)=\sum_{k=1}^{\infty} U^{(k)}(x)\frac{(z-1)^k}{k!},
	\end{equation}
	where $ U^{(k)}(x)=\frac{\p^kU}{\p z^k}(1).$
\end{defn}
\begin{rmk}
	First, we also need to assume the constant term is zero ($U(x,1)=0$) which is a necessary condition for our inverse problem.  The condition (iii) is in fact just the assumption $\ref{hypo-monoton}$ in this analytic case.
\end{rmk}
Now we claim the following Theorem:
\begin{thm}\label{der F_constant}
	Assume that $F_j \in\mathcal{A'}$ ($j=1,2$) and $F_j^{(1)}$ is constant . Let $\mathcal{M}_{F_j}$ be the measurement map associated to
	the following system:
	\begin{equation}\label{eq:mfg1_constant}
		\begin{cases}
			-\p_tu(x,t)-\Delta u(x,t)+\frac 1 2 {|\nabla u(x,t)|^2}= F_j(x,m(x,t)),& \text{ in }  Q,\medskip\\
			\p_t m(x,t)-\Delta m(x,t)-{\rm div} (m(x,t) \nabla u(x,t))=0,&\text{ in } Q,\medskip\\
			\p_{\nu} u(x,t)=\p_{\nu}m(x,t)=0      &\text{ on } \Sigma,\medskip\\
			u(x,T)=G, & \text{ in } \Omega,\medskip\\
			m(x,0)=m_0(x), & \text{ in } \Omega.\\
		\end{cases}  		
	\end{equation}
	If 	
	$$\mathcal{M}_{R_1}(m_0)=\mathcal{M}_{R_2}(m_0),$$  
	for all  $m_0\in  C^{2+\alpha}(\Omega) \cap \mathcal{O}$, where $\mathcal{O}$ is defined in \eqref{eq:distr1},
	then it holds that 
	$$F_1(x,z)=F_2(x,z)\ \text{  in  } \mathbb{R}.$$ 
\end{thm}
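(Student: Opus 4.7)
The strategy is to apply the high-order linearisation of Section \ref{HLM} around the trivial solution $(u_0,m_0)\equiv (G,1)$, which solves \eqref{eq:mfg1_constant} by virtue of the normalisation $F_j(x,1)=0$ built into the class $\mathcal{A}'$. The probability constraint $m\in\mathcal{O}$ forces me to use perturbations of the form $m_0(x;\varepsilon)=1+\sum_{l=1}^N\varepsilon_l f_l(x)$ with each $f_l$ satisfying $\int_\Omega f_l\,dx=0$, so the admissible probing modes are mean-zero. Setting $u^{(l)}_j:=\partial_{\varepsilon_l}u_j|_{\varepsilon=0}$ and $m^{(l)}_j:=\partial_{\varepsilon_l}m_j|_{\varepsilon=0}$, and using $\nabla u_0\equiv 0$, the first-order linearisation becomes the coupled parabolic system
\begin{equation*}
\begin{cases}
-\partial_t u^{(1)}_j - \Delta u^{(1)}_j = F_j^{(1)}\,m^{(1)}_j & \text{in } Q,\\
\partial_t m^{(1)}_j - \Delta m^{(1)}_j - \Delta u^{(1)}_j = 0 & \text{in } Q,\\
\partial_\nu u^{(1)}_j = \partial_\nu m^{(1)}_j = 0 & \text{on } \Sigma,\\
u^{(1)}_j(x,T)=0,\ m^{(1)}_j(x,0)=f_1 & \text{in } \Omega,
\end{cases}
\end{equation*}
while the hypothesis $\mathcal{M}_{F_1}(m_0)=\mathcal{M}_{F_2}(m_0)$ supplies the overdetermined data $u^{(1)}_1(\cdot,0)=u^{(1)}_2(\cdot,0)$ and $m^{(1)}_1(\cdot,T)=m^{(1)}_2(\cdot,T)$.

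The first task is to recover the scalar $F^{(1)}$. Since $F^{(1)}_j$ is constant, the system has constant coefficients and diagonalises in the Neumann eigenbasis $\{\phi_k\}_{k\geq 0}$ of $-\Delta$ on $\Omega$, with eigenvalues $0=\lambda_0<\lambda_1\leq\cdots$. Writing $u^{(1)}_j=\sum_k a^{(j)}_k(t)\phi_k$ and $m^{(1)}_j=\sum_k b^{(j)}_k(t)\phi_k$ reduces the problem, for each $k\geq 1$, to an explicit planar linear ODE on $[0,T]$ whose solution is governed by $\mu_k^{(j)}=\sqrt{\lambda_k(\lambda_k+F^{(1)}_j)}$, with boundary data $a^{(j)}_k(T)=0$ and $b^{(j)}_k(0)=\langle f_1,\phi_k\rangle$. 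Because the zero-mean constraint on $f_1$ only removes the $k=0$ mode, the Fourier coefficients $\langle f_1,\phi_k\rangle$ for $k\geq 1$ can be varied freely, and matching the measured $a^{(j)}_k(0)$ and $b^{(j)}_k(T)$ forces $\mu_k^{(1)}=\mu_k^{(2)}$, hence $F^{(1)}_1=F^{(1)}_2$. I then proceed by induction on the Taylor order $N\geq 2$: assuming $F^{(k)}_1=F^{(k)}_2$ for all $k\leq N-1$, the $(N{-}1)$-th order linearisations of the two problems coincide, and at order $N$ the difference $(w,\rho):=(u^{(1,\dots,N)}_1-u^{(1,\dots,N)}_2,\,m^{(1,\dots,N)}_1-m^{(1,\dots,N)}_2)$ satisfies a coupled linear system with source $(F^{(N)}_1-F^{(N)}_2)(x)\,m^{(1)}\cdots m^{(N)}$ in the $u$-equation, Neumann boundary conditions, vanishing initial/terminal data, and the additional measurement-derived constraints $w(\cdot,0)=0$ and $\rho(\cdot,T)=0$.

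The crux is to convert this overdetermined problem into a density identity via duality. Introducing a test pair $(\phi,\psi)$ that solves the formal $L^2$-adjoint of the first-order linearised operator with Neumann conditions, integration by parts together with the extra data on $(w,\rho)$ produces
\begin{equation*}
\int_Q (F^{(N)}_1(x) - F^{(N)}_2(x))\,m^{(1)}(x,t)\cdots m^{(N)}(x,t)\,\phi(x,t)\,dx\,dt = 0
\end{equation*}
for every admissible test configuration. The main obstacle, precisely the one flagged in the chapter introduction, is the probability-density constraint: each $m^{(l)}$ is generated by a mean-zero $f_l$, so only the $k\geq 1$ Neumann modes are available, and the adjoint variable $\phi$ remains coupled to $\psi$ through the constant $F^{(1)}$. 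My plan is to diagonalise the adjoint system in the same Neumann basis (again tractable precisely because $F^{(1)}$ is a scalar), pick $f_l=\phi_{k_l}$ with $k_l\geq 1$ so that $m^{(l)}(x,t)$ factors as a known temporal profile times $\phi_{k_l}(x)$, and then invoke the density of products of non-constant Neumann eigenfunctions in $L^2(\Omega)$ to extract $F^{(N)}_1(x)=F^{(N)}_2(x)$; a high-frequency refinement in the spirit of Lemma \ref{E-F is complete} should dispose of the residual time-integral factors. Iterating the induction yields $F_1\equiv F_2$ on $\Omega\times\mathbb{R}$.
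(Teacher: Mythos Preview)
Your strategy matches the paper's: linearise around $(G,1)$, exploit the Neumann spectral decomposition of the resulting genuinely coupled system, derive an integral identity via an adjoint pair, and induct on the Taylor order. The paper packages the probing-mode construction as Theorems~\ref{con_of_pb}--\ref{con_of_pb2} and the duality identity as Lemma~\ref{key}, which together play exactly the role you assign to your ``formal $L^2$-adjoint'' step. Two points deserve comment.

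First, for Step~I you propose to read off $\mu_k^{(1)}=\mu_k^{(2)}$ directly from the mode-by-mode transfer map $\beta_k\mapsto (a_k^{(j)}(0),b_k^{(j)}(T))$. This is a transcendental injectivity claim you do not verify, and it is not obviously clean. The paper instead applies Lemma~\ref{key} already at first order: with $m_2^{(1)}$ and $\rho$ each taken of the separated form $(-\lambda_i e^{-\lambda_i t}+D_ie^{\lambda_i t})\overline m_i(x)$ (resp.\ its time-reversal), both temporal profiles are strictly negative because $\lambda_i>0$ and $D_i\le 0$, so the time integral of their product is positive and $(F_1^{(1)}-F_2^{(1)})\cdot(\text{positive})=0$ follows immediately. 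This sidesteps any ODE shooting analysis. Second, and more substantively, your higher-order density step is imprecise. You invoke ``density of products of non-constant Neumann eigenfunctions in $L^2(\Omega)$'', which is neither proved nor obviously true on a general domain. The paper's route is sharper and avoids this: fix all the $m^{(l)}$ at a \emph{single} non-constant mode $\overline m_j$, and vary only the adjoint $\rho$ over \emph{all} eigenfunctions $\overline m_i$ (the adjoint variable carries no probability constraint, so $i=0$ is available). Completeness of $\{\overline m_i\}$ then gives $(F_1^{(N)}-F_2^{(N)})\,\overline m_j^{\,N}=0$ a.e., and one finishes by observing that the nodal set of a Neumann eigenfunction has measure zero. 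Note also that Lemma~\ref{key} is not a one-line integration by parts: its proof combines three separate IBP identities (\eqref{IBP1}, \eqref{IBP2}, and the main computation), and this is where the coupling through $-\Delta u^{(1)}$ in the $m$-equation is absorbed.
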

\subsection{Construction of Probing Modes}
\begin{thm}\label{con_of_pb}
	Consider the system
	\begin{equation}\label{F is constant}
		\begin{cases}
			-u_t-\Delta u= Fm & \text{ in } Q,\medskip\\
			m_t-\Delta m-\Delta u=0  & \text{ in } Q,\medskip\\
			\p_{\nu}m(x,t)=\p_{\nu}u(x,t)=0  & \text{ on } \Sigma,\medskip\\
			u(x,T)=0& \text{ on } \Omega.\\
		\end{cases}
	\end{equation}
	Suppose $F(x)=c$ is constant, then there exist a sequence of $\lambda_i\geq 0$ and $D_i\in\mathbb{R}$ such that 
	$$(u,m)=(u, -\lambda_ie^{-\lambda_i t}\overline{m}_i(x)+D_i e^{\lambda_i t}\overline{m}_i(x))$$
	are solutions of system \eqref{F is constant}, where $\overline{m}_i(x)$ are normalized Neumann eigenfunctions of $-\Delta$ in $\Omega$. 
\end{thm}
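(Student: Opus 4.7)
The strategy is separation of variables against the Neumann spectrum of $-\Delta$ on $\Omega$. Let $\{\overline{m}_i\}_{i=0}^\infty$ be an orthonormal basis of $L^2(\Omega)$ consisting of eigenfunctions of the Neumann Laplacian, i.e. $-\Delta \overline{m}_i = \mu_i \overline{m}_i$ in $\Omega$, $\p_\nu \overline{m}_i = 0$ on $\Sigma$, with $0 = \mu_0 \leq \mu_1 \leq \mu_2 \leq \cdots$. The first plan step is to seek solutions of \eqref{F is constant} of the tensor-product form
\begin{equation*}
m(x,t) = \alpha(t)\,\overline{m}_i(x), \qquad u(x,t) = \beta(t)\,\overline{m}_i(x).
\end{equation*}
Since $\overline{m}_i$ automatically satisfies the Neumann boundary condition, the PDE system \eqref{F is constant} collapses to the $2\times 2$ linear ODE system
\begin{equation*}
-\beta'(t) + \mu_i\beta(t) = c\,\alpha(t), \qquad \alpha'(t) + \mu_i\alpha(t) + \mu_i\beta(t) = 0,
\end{equation*}
for the scalar unknowns $(\alpha,\beta)$.

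Next I would diagonalize this constant-coefficient system. A direct computation of the characteristic polynomial yields eigenvalues $\pm\lambda_i$, where
\begin{equation*}
\lambda_i := \sqrt{\mu_i(\mu_i + c)} \geq 0,
\end{equation*}
which is real because $c = F \geq 0$ is constant (indeed, positive under Definition \ref{Admissible class_constant}). The general solution therefore has the form
\begin{equation*}
\alpha(t) = A e^{\lambda_i t} + B e^{-\lambda_i t},
\end{equation*}
and a corresponding expression $\beta(t) = c_{+} A e^{\lambda_i t} + c_{-} B e^{-\lambda_i t}$ with explicit coefficients $c_{\pm}$ read off from the eigenvectors of the ODE matrix. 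In particular, one finds $c_{-} = \lambda_i(\mu_i - \lambda_i)/\mu_i$ and $c_{+} = -(\lambda_i + \mu_i)/\mu_i$.

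The third step is to match the two free constants $A,B$ with the two scalar conditions: the terminal condition $u(x,T)=0$, i.e.\ $\beta(T)=0$, and the normalization that fixes the form in the statement. Choosing $B = -\lambda_i$ gives precisely
\begin{equation*}
m(x,t) = \bigl(-\lambda_i e^{-\lambda_i t} + D_i e^{\lambda_i t}\bigr)\overline{m}_i(x),
\end{equation*}
with $A = D_i$, and then $\beta(T) = 0$ uniquely determines
\begin{equation*}
D_i = \frac{\lambda_i(\mu_i - \lambda_i)}{\lambda_i + \mu_i}\,e^{-2\lambda_i T},
\end{equation*}
so that $u(x,t) = \beta(t)\overline{m}_i(x)$ solves the first equation and the terminal condition simultaneously. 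A short sanity check (using $\lambda_i^2 = \mu_i^2 + c\mu_i$) confirms that $-\beta' + \mu_i\beta = c\alpha$ holds as an identity, so no compatibility is violated. The degenerate mode $\mu_0 = 0$ (and hence $\lambda_0 = 0$) can be handled separately as a limit, giving the constant mode. The only mildly delicate point is verifying the algebraic identity in the first PDE after $B$ is fixed; all other steps are routine linear-algebra and ODE computations.
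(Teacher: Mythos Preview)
Your proposal is correct and follows essentially the same approach as the paper: separation of variables against the Neumann eigenfunctions $\overline m_i$, reducing the PDE system to a $2\times2$ constant-coefficient ODE whose characteristic exponents are $\pm\lambda_i=\pm\sqrt{\mu_i(\mu_i+c)}$, and then superposing the two exponential modes so that $u(\cdot,T)=0$. The paper carries this out by writing down the two explicit ans\"atze $(u,m)\propto e^{\mp\lambda_i t}\overline m_i$, verifying them by direct substitution (this is exactly your eigenvector computation), and then invoking linearity; your ODE/diagonalization packaging is the same computation in slightly different clothing. One cosmetic remark: your stated intermediate value $c_-=\lambda_i(\mu_i-\lambda_i)/\mu_i$ carries an extra factor $\lambda_i$ (from the second ODE one gets $c_-=(\lambda_i-\mu_i)/\mu_i$), but your final formula for $D_i$ is consistent with the correct $c_-$, so this looks like a transcription slip rather than a gap.
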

\begin{proof}
	Let $\beta_i$ be a positive Neumann eigenvalue of $-\Delta$ in $\Omega$ and $\overline{m}_i(x)$ is the corresponding normalized eigenfunction.
	In other words, we have $\overline{m}_i(x)$ are not $0$ functions and
	\begin{equation}
		\begin{cases}
			-\Delta \overline{m}_i(x)=\beta_i\overline{m}_i(x)   & \text{ in }\Omega\\
			\p_{\nu}\overline{m}_i(x)=0 & \text{ in }\Sigma. 
		\end{cases}
	\end{equation} 
	
	Now we choose $\lambda_i=\sqrt{\beta_i^2+c\beta_i}$ and $k_i=\beta_i-\lambda_i\leq 0$, then we can check  $$(u,m)=\left(\frac{c+k_i}{\lambda_i}e^{-\lambda_it}\overline{m}_i(x), e^{-\lambda_i t}\overline{m}_i(x)\right)$$ are solutions of the following system
	\begin{equation}\label{F is constant'}
		\begin{cases}
			-u_t-\Delta u= F(x)m & \text{ in } Q,\medskip\\
			m_t-\Delta m-\Delta u=0  & \text{ in } Q,\medskip\\
			\p_{\nu}m(x,t)=\p_{\nu}u(x,t)=0  & \text{ on } \Sigma.\medskip\\
		\end{cases}
	\end{equation}
	Notice that
	\begin{equation*}
		\begin{aligned}
			-u_t-\Delta u&=(c+k_i)e^{-\lambda_it}\overline{m}(x)-\frac{c+k_i}{\lambda_i}e^{-\lambda_it}\Delta\overline{m}(x)\\
			&=e^{-\lambda_it}\overline{m}(x)\left[ c+k_i+\frac{(c+k_i)\beta_i}{\lambda_i}\right]\\
			&=cm(x,t)+ e^{-\lambda_it}\overline{m}(x)\left[k_i+\frac{(c+k_i)\beta_i}{\lambda_i}\right],
		\end{aligned} 
	\end{equation*}
	and
	\begin{equation*}
		\begin{aligned}
			m_t-\Delta m-\Delta u&=-\lambda_i e^{-\lambda_it}\overline{m}(x)-e^{-\lambda_it}\Delta\overline{m}(x)-\frac{c+k_i}{\lambda_i}e^{-\lambda_it}\Delta\overline{m}(x)\\
			&=-e^{-\lambda_it}\overline{m}(x)\left[\lambda_i-(\lambda_i+k_i)-\frac{c+k_i}{\lambda_i}\beta_i\right]\\
			&= e^{-\lambda_it}\overline{m}(x)\left[k_i+ \frac{(c+k_i)\beta_i}{\lambda_i}  \right].             
		\end{aligned}
	\end{equation*}
	Hence, we only need to check $k_i+ \frac{(c+k_i)\beta_i}{\lambda_i}=0$, and it can be shown by the choice of $\lambda_i$ and $k_i$.

	Similarly, we have 
	$$(u,m)=\left(\frac{c}{\beta_i-\lambda_i}e^{\lambda_it}\overline{m}_i(x), e^{\lambda_i t}\overline{m}_i(x)\right)$$
	are solutions of \eqref{F is constant'}.
	
	Then we may choose $D_i=\dfrac{c}{k_i(c+k_i)}\leq 0$ (since $k\leq0$ and $c+k_i\geq 0$). By the linearity of this system, we have there is a solution of \eqref{F is constant} in the form
	
	$$(u,m)=\left(u, -\lambda_ie^{-\lambda_i t}\overline{m}_i(x)+D_i e^{\lambda_i t}\overline{m}_i(x)\right).$$
\end{proof}

\begin{thm}\label{con_of_pb2}
	Consider the system
	\begin{equation}\label{F is constant2}
		\begin{cases}
			u_t-\Delta u= Fm & \text{ in } Q,\medskip\\
			-m_t-\Delta m-\Delta u=0  & \text{ in } Q,\medskip\\
			\p_{\nu}m(x,t)=\p_{\nu}u(x,t)=0  & \text{ on } \Sigma,\medskip\\
		\end{cases}
	\end{equation}
	Suppose $F(x)=c$ is constant, then there exist a sequence of $\lambda_i\geq 0$ and $D_i\in\mathbb{R}$ such that 
	$$(u,m)=(u, -\lambda_ie^{-\lambda_i(T-t)}\overline{m}_i(x)+D_i e^{\lambda_i (T-t)}\overline{m}_i(x))$$
    $$(u,m)=(u,e^{-\lambda_i t}\overline{m}_i(x))$$
	are solutions of system \eqref{F is constant}, where $\overline{m}_i(x)$ are normalized Neumann eigenfunctions of $-\Delta$ in $\Omega$. 
\end{thm}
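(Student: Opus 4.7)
The approach is to reduce Theorem~\ref{con_of_pb2} to the already-established Theorem~\ref{con_of_pb} by a time-reversal argument, and then to handle the second family by a short direct separation-of-variables computation. The system~\eqref{F is constant2} differs from system~\eqref{F is constant} of the previous theorem precisely by flipping the signs of $\partial_t u$ and $\partial_t m$, while the elliptic terms, the Neumann trace on $\Sigma$ and the $t$-independent source $Fm=cm$ are unchanged. Consequently, if $(u,m)$ solves the system of Theorem~\ref{con_of_pb}, then $(\tilde u(x,t),\tilde m(x,t)):=(u(x,T-t),m(x,T-t))$ solves~\eqref{F is constant2}. Applying this substitution to the family $m(x,t)=-\lambda_i e^{-\lambda_i t}\overline m_i(x)+D_i e^{\lambda_i t}\overline m_i(x)$ produced in Theorem~\ref{con_of_pb} immediately yields the first family claimed, with the same exponent $\lambda_i=\sqrt{\beta_i^2+c\beta_i}$ and the same free parameter $D_i$.

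For the second family $(u,m)=(u, e^{-\lambda_i t}\overline m_i(x))$ I would bypass time-reversal and redo the separated ansatz from scratch, setting $m(x,t)=e^{-\lambda t}\overline m_i(x)$ and $u(x,t)=A e^{-\lambda t}\overline m_i(x)$, where $\overline m_i$ is the normalized $i$-th Neumann eigenfunction of $-\Delta$ on $\Omega$ with eigenvalue $\beta_i$. Inserting into $u_t-\Delta u=cm$ yields $(\beta_i-\lambda)A=c$, and inserting into $-m_t-\Delta m-\Delta u=0$ yields $\lambda+\beta_i=\beta_i A$. Eliminating $A$ between these two scalar relations leaves $\lambda^2=\beta_i^2-c\beta_i$, so for each index $i$ with $\beta_i\geq c$ one may take $\lambda_i=\sqrt{\beta_i^2-c\beta_i}\geq 0$ and $A=c/(\beta_i-\lambda_i)$, giving the desired pair.

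The only item requiring any attention is the sign of the radicand, which is automatic for all but finitely many $i$ because the Neumann eigenvalues satisfy $\beta_i\to\infty$; once $\lambda_i\geq 0$ is secured, smoothness of $\overline m_i$ and of the exponentials makes every PDE hold classically, and the homogeneous Neumann condition $\partial_\nu u=\partial_\nu m=0$ is inherited from $\partial_\nu\overline m_i=0$. I expect no genuine analytic obstacle, since the entire argument reduces to verifying two scalar algebraic identities between $\lambda$ and $\beta_i$ together with a change of variables that was already baked into the predecessor theorem.
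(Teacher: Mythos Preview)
Your treatment of the first family by the time-reversal $t\mapsto T-t$ is exactly the paper's argument and is correct.

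For the second family, your separation-of-variables approach is sound in spirit, but there is a sign slip in the $m$-equation. With $m=e^{-\lambda t}\overline m_i$ and $u=Ae^{-\lambda t}\overline m_i$, using $-\Delta\overline m_i=\beta_i\overline m_i$ one gets
\[
-m_t-\Delta m-\Delta u=\bigl(\lambda+\beta_i+A\beta_i\bigr)e^{-\lambda t}\overline m_i,
\]
so the correct relation is $\lambda+\beta_i=-\beta_i A$, not $\lambda+\beta_i=\beta_i A$. Combining this with $(\beta_i-\lambda)A=c$ yields $\lambda_i^2=\beta_i^2+c\beta_i$, i.e.\ the \emph{same} $\lambda_i$ as in Theorem~\ref{con_of_pb}, not $\beta_i^2-c\beta_i$. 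With the correct sign the radicand is automatically nonnegative for every positive Neumann eigenvalue (recall $c>0$ here), so your discussion of ``$\beta_i\ge c$'' and ``all but finitely many $i$'' is unnecessary and stems from the error.

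The paper's route for the second family is somewhat different and shorter: it simply observes that system~\eqref{F is constant2} carries no terminal/initial condition, so the individual exponential building blocks already constructed in the proof of Theorem~\ref{con_of_pb} (after the change of variable and a harmless rescaling) are admissible solutions in their own right---there is no need to take the linear combination that was forced in Theorem~\ref{con_of_pb} by $u(x,T)=0$. Your direct computation is a perfectly valid alternative once the sign is fixed, and has the advantage of being self-contained; the paper's argument has the advantage of making clear that the same $\lambda_i$ and eigenfunctions appear in both families.
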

\begin{proof}
	The first part is obtained by Theorem $\ref{con_of_pb}$ by a simple change of variable. The second part is true because there is no initial boundary condition in \eqref{con_of_pb2}.
\end{proof}

\subsection{The Proof of Theorem $\ref{der F_constant}$}
Before the main proof, we need the following lemma which plays as a key step of the proof. 
\begin{lem}\label{key}
	Let $(v,\rho)$ be a solution of the following system:
	\begin{equation}
		\begin{cases}
			v_t-\Delta v= F_1(x)\rho & \text{ in } Q,\medskip\\
			-\rho_t-\Delta \rho-\Delta v=0  & \text{ in } Q, \medskip\\
			\p_{\nu}v(x,t)=\p_{\nu}\rho(x,t)=0      & \text{ on } \Sigma, \medskip\\
			v(x,0)=  0& \text{ in } \Omega.
		\end{cases}
	\end{equation}
	Let $(\overline{u},\overline{m})$ satisfy
	\begin{equation}
		\begin{cases}
			-\overline{u}_t-\Delta \overline{u}- F_1(x)\overline{m}=(F_1-F_2)m_2  & \text{ in } Q, \medskip\\
			\overline{m}_t-\Delta \overline{m}-\Delta \overline{u}=0  & \text{ in } Q, \medskip\\
			\p_{\nu}\overline{u}(x,t)=	\p_{\nu}\overline{m}(x,t)=0    & \text{ on } \Sigma, \medskip\\
			\overline{u}(x,T)=0	   &\text{ in } \Omega, \medskip\\
			\overline{u}(x,0)=0    & \text{ in } \Omega, \medskip\\
			\overline{m}(x,0)=\overline{m}(x,T)=0& \text{ in } \Omega\\
		\end{cases}
	\end{equation}
	Then we have 
	\begin{equation}\label{implies F_1=F_2}
		\int_Q \quad (F_1-F_2)m_2\rho \ dxdt=0.
	\end{equation}
\end{lem}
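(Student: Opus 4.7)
The plan is to multiply the source equation for $\overline{u}$, namely $-\overline{u}_t - \Delta\overline{u} - F_1(x)\overline{m} = (F_1-F_2)m_2$, by $\rho$ and integrate over $Q$. All temporal boundary terms involving $\overline{u}$ vanish thanks to the overdetermined data $\overline{u}(\cdot,0) = \overline{u}(\cdot,T) = 0$, and every spatial boundary term vanishes thanks to the Neumann conditions on $\overline{u}, \overline{m}, v, \rho$; a first integration by parts in $t$ and in $x$ therefore yields
\begin{equation*}
\int_Q (F_1-F_2)m_2\rho\,dxdt \;=\; \int_Q \overline{u}(\rho_t - \Delta\rho)\,dxdt \;-\; \int_Q F_1\overline{m}\rho\,dxdt.
\end{equation*}

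Next I would funnel everything through the $\overline{m}$-equation in the form $\Delta\overline{u} = \overline{m}_t - \Delta\overline{m}$. Using the $\rho$-equation to rewrite $\rho_t - \Delta\rho = -2\Delta\rho - \Delta v$, then integrating by parts in $x$ to pull both Laplacians back onto $\overline{u}$, gives $\int_Q \overline{u}(\rho_t - \Delta\rho)\,dxdt = -\int_Q(2\rho + v)\Delta\overline{u}\,dxdt$. Substituting for $\Delta\overline{u}$ and integrating by parts once more --- in $t$ using $\overline{m}(\cdot,0) = \overline{m}(\cdot,T) = 0$ and in $x$ using Neumann data --- reduces this to $\int_Q \overline{m}\bigl[\,2(\rho_t + \Delta\rho) + (v_t + \Delta v)\bigr]\,dxdt$. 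The two $(v,\rho)$-equations then close the loop: $\rho_t + \Delta\rho = -\Delta v$ and $v_t + \Delta v = 2\Delta v + F_1\rho$, so the bracket collapses to $F_1\rho$, and $\int_Q \overline{u}(\rho_t - \Delta\rho)\,dxdt = \int_Q F_1\overline{m}\rho\,dxdt$. This cancels the remaining $-\int_Q F_1\overline{m}\rho\,dxdt$, producing the stated identity.

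The main difficulty is really bookkeeping --- carrying signs through several integration-by-parts rounds and invoking each of the four equations at exactly the right moment --- rather than any estimate or functional-analytic subtlety. Conceptually, the identity is a duality statement in which $(v,\rho)$ acts as a test pair for the ``overdetermined'' linearized system: the extra boundary conditions $\overline{u}(\cdot,T) = 0$ and $\overline{m}(\cdot,T) = 0$ (beyond the ordinary terminal/initial data of the linearized MFG) are precisely what allow one to pair against a $(v,\rho)$ satisfying only the two PDEs and Neumann data, with no prescribed temporal endpoint for $\rho$. The hypothesis $v(x,0) = 0$ is, notably, not invoked in the calculation above --- it will presumably play its role elsewhere, in the construction of the test function $v$ when Lemma~\ref{key} is applied in the proof of Theorem~\ref{der F_constant}.
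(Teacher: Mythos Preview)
Your proof is correct and follows essentially the same integration-by-parts duality argument as the paper, though organized as a single chain of substitutions rather than via two preliminary identities (the paper first pairs the $\overline{m}$-equation against $\rho$ and against $v$ to obtain $\int_Q(\overline{m}\Delta v - \rho\Delta\overline{u})\,dxdt = 0$ and $\int_Q(2\overline{m}\Delta v + F_1\rho\overline{m} + \overline{u}\Delta v)\,dxdt = 0$, then combines these with the main computation). Your observation that $v(x,0)=0$ is never invoked is accurate---indeed, when the paper applies the lemma in the proof of Theorem~\ref{der F_constant} it quietly drops that condition from the $(v,\rho)$ system.
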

\begin{proof}
	Notice that
	\begin{equation}\label{IBP1}
		\begin{aligned}
			0&=\int_Q (\overline{m}_t-\Delta \overline{m}-\Delta \overline{u} )\rho\ dxdt\\
			&=\int_{\Omega}\left.\overline{m}\rho\right|_0^T dx-\int_Q\overline{m}\rho_t dxdt-\int_Q\rho(\Delta\overline{m}+\Delta\overline{u})\ dxdt\\
			&=-\int_Q\overline{m}( -\Delta\rho-\Delta v)dxdt-\int_Q\rho( \Delta\overline{m}+\Delta\overline{u})\ dxdt\\
			&=\int_Q (\overline{m}\Delta v-\rho\Delta\overline{u})\ dxdt.
		\end{aligned}
	\end{equation}
	
	Similarly, one can deduce that
	\begin{equation}\label{IBP2}
		\begin{aligned}
			0&=\int_Q (\overline{m}_t-\Delta \overline{m}-\Delta \overline{u} )v\ dxdt\\
			&=-\int_Q\overline{m}(\Delta v+F_1\rho)dxdt-\int_Q v(\Delta\overline{m}+\Delta\overline{u} )\ dxdt\\
			&=-\int_Q(2\overline{m}\Delta v+F_1\rho\overline{m}+\overline{u} \Delta v)\ dxdt.
		\end{aligned}
	\end{equation}
	Then we have 
	\begin{align*}
		\int_Q (F_1-F_2)m_2\rho dxdt&=\int_Q\rho(-\overline{u}_t-\Delta\overline{u}-F_1\overline{m})\ dxdt\\
		&=\int_{\Omega}\left.\overline{u}\rho\right|_0^T dx+\int_Q(\rho_t\overline{u}-\rho\Delta\overline{u}-F_1\rho\overline{m})\  dxdt\\
		&=\int_{\Omega}  \overline{u}(x,T)\rho(x,T) dx+ \int_Q(-\Delta\rho-\Delta v)\overline{u}-\rho\Delta\overline{u}-F_1\rho\overline{m}\ dxdt\\
		&= \int_Q(-2\rho\Delta\overline{u}-\overline{u}\Delta v-F_1\rho\overline{m})\ dxdt\\
		&=\int_Q(-2\rho\Delta\overline{u}-\overline{u}\Delta v-F_1\rho\overline{m})\ dxdt, 
	\end{align*}
	which in combination with \eqref{IBP1} and \eqref{IBP2} readily yields that 
	$$	\int_Q (F_1-F_2)m_2\rho \ dxdt=0.$$
	
	The proof is complete. 
\end{proof}

\begin{proof}
	For $j=1,2$, let us consider 
	\begin{equation}\label{MFG 1,2}
		\begin{cases}
			-u_t-\Delta u+\frac{1}{2}|\nabla  u|^2= F_j(x,m) & \text{ in } Q,\medskip\\
			m_t-\Delta m-\text{div} (m\nabla u)=0  & \text{ in } Q, \medskip\\
			\p_{\nu}u(x,t)=\p_{\nu}m(x,t)=0     & \text{ on } \Sigma, \medskip\\
			u(x,T)=G    & \text{ in } \Omega,\medskip\\
			m(x,0)=m_0(x) & \text{ in } \Omega.\\
		\end{cases}
	\end{equation}
	Next, we divide our proof into three steps. 
	
	\bigskip
	\noindent {\bf Step I.}~First, we do the first order linearization to the MFG system \eqref{MFG 1,2} in $Q$ and can derive: 
	\begin{equation}\label{linearization}
		\begin{cases}
			-\p_{t}u^{(1)}_j-\Delta u_j^{(1)}= F_j^{(1)}(x)m_j^{(1)} & \text{ in } Q, \medskip\\
			\p_{t}m^{(1)} _j-\Delta m_j^{(1)} -\Delta u_j^{(1)}=0  & \text{ in } Q, \medskip\\
			u^{(1)}_j(x,T)=0     & \text{ in } \Omega,\medskip\\
			m^{(1)} _j(x,0)=f_1(x) & \text{ in } \Omega.\\
		\end{cases}
	\end{equation}
	Let $\overline{u}^{(1)}=u^{(1)}_1-u^{(1)}_2$ and $ \overline{m}^{(1)}=m^{(1)} _1-m^{(1)} _2. $ Let 
	$(v,\rho)$ be a solution to the following system
	\begin{equation}
		\begin{cases}
			v_t-\Delta v= F^{(1)}_1(x)\rho & \text{ in } Q, \medskip\\
			-\rho_t-\Delta \rho-\Delta v=0  & \text{ in } Q, \medskip\\
			\partial_{\nu}v(x,t)=\partial_{\nu}\rho(x,t)=0      & \text{ on } \Sigma, \medskip\\
		\end{cases}
	\end{equation}
	Since $\mathcal{M}_{F_1}=\mathcal{M}_{F_2}$, 
	by Lemma $\ref{key}$, we have 
	\begin{equation}\label{implies to 0;1}
		\int_Q \quad( F^{(1)}_1-F^{(1)}_2)m^{(1)} _2\rho \ dxdt =0,
	\end{equation}
	for all $ m^{(1)} _2\in C^{1+\frac{\alpha}{2},2+\alpha}(Q)$ with $m^{(1)} _2 $ being a solution 
	to \eqref{linearization}.
	By Theorems $\ref{con_of_pb}$ and $\ref{con_of_pb2}$, we may choose 
	$$m_2^{(1)}(x,t)=-\lambda_ie^{-\lambda_i t}\overline{m}_i(x)+D_i e^{\lambda_i t}\overline{m}_i(x),$$
	$$\rho(x,t)= -\lambda_ie^{-\lambda_i (T-t)}\overline{m}_i(x)+D_i e^{\lambda_i (T-t)}\overline{m}_i(x),$$
	where the notations are the same as the notations in the proof of Theorem $\ref{con_of_pb}.$
	
	In fact, we use the $f_1(x)=-\lambda_i\overline{m}_i(x)+D_i\overline{m}_i(x)$ here. Since $\overline{m}_i(x)$ are Neumann eigenfunction of $-\Delta$, the condition $\int_{\Omega}f_1(x)dx=0$ is satisfied. Furthermore, we can determine $m_2^{(1)}(x,t)$ by $f_1(x)$ and equation \eqref{linearization} by Lemma $\ref{linear app unique}.$

	Then \eqref{implies to 0;1} implies that
	\begin{equation}
		(F_1-F_2)\int_{0}^{T}(-\lambda_ie^{-\lambda_i t}+D_i e^{\lambda_i t})(-\lambda_ie^{-\lambda_i (T-t)}+D_i e^{\lambda_i (T-t)})dt\int_{\Omega}\overline{m}_i(x)^2dx=0. 
	\end{equation}
	
	Notice that $\int_{\Omega}\overline{m}_i(x)^2dx=1$ and $(-\lambda_ie^{-\lambda_i t}+D_i e^{\lambda_i t})(-\lambda_ie^{-\lambda_i (T-t)}+D_i e^{\lambda_i (T-t)})$ is positive, we have 
	$$F_1=F_2.$$
	\medskip
	
	\noindent{\bf Step II.}~We proceed to consider the second linearization to the MFG system \eqref{MFG 1,2} in $Q$ and can obtain for $j=1,2$:
	\begin{equation}
		\begin{cases}
			-\p_tu_j^{(1,2)}-\Delta u_j^{(1,2)}(x,t)+\nabla u_j^{(1)}\cdot \nabla u_j^{(2)}\medskip\\
			\hspace*{3cm}= F^{(1)}m_j^{(1,2)}+F^{(2)}(x)m_j^{(1)}m_j^{(2)} & \text{ in } \Omega\times(0,T),\medskip\\
			\p_t m_j^{(1,2)}-\Delta m_j^{(1,2)}-\Delta u_j^{(1,2)}= {\rm div} (m_j^{(1)}\nabla u_j^{(2)})+{\rm div}(m_j^{(2)}\nabla u_j^{(1)}) ,&\text{ in } \Omega\times (0,T) \medskip\\
			u_j^{(1,2)}(x,T)=0 & \text{ in } \Omega,\medskip\\
			m_j^{(1,2)}(x,0)=0 & \text{ in } \Omega.\\
		\end{cases}  	
	\end{equation}
	By the proof in Step~I, we have $ (u_1^{(1)},m_1^{(1)})=( u_2^{(1)},m_2^{(1)})$.

	Define $\overline{u}^{(1,2)}=u_1^{(1,2)}-u_2^{(1,2)} $ and $\overline{m}^{(1,2)}=m_1^{(1,2)}-m_2^{(1,2)} $. Since  
	$\mathcal{M}_{F_1}=\mathcal{M}_{F_2}$,  we have
	\begin{equation}
		\begin{cases}
			-\overline{u}^{(1,2)}_t-\Delta \overline{u}^{(1,2)}- F_1\overline{m}^{(1,2)}=(F^{(2)}_1-F^{(2)}_2)m_1^{(1)}m_2^{(1)}   & \text{ in } Q\\
			\overline{m}^{(1,2)}_t-\Delta \overline{m}^{(1,2)}-\Delta \overline{u}^{(1,2)}=0  & \text{ in } Q, \medskip\\
			\p_{\nu}\overline{u}^{(1,2)}(x,t)=	\p_{\nu}\overline{m}^{(1,2)}(x,t)=0    & \text{ on } \Sigma, \medskip\\
			\overline{u}^{(1,2)}(x,T)=0	   &\text{ in } \Omega, \medskip\\
			\overline{u}^{(1,2)}(x,0)=0    & \text{ in } \Omega, \medskip\\
			\overline{m}^{(1,2)}(x,0)=\overline{m}^{(1,2)}(x,T)=0& \text{ in } \Omega.\\
		\end{cases}
	\end{equation}
	Let 
	$(v,\rho)$ be a solution to the following system
	\begin{equation}
		\begin{cases}
			v_t-\Delta v= F^{(1)}_1\rho & \text{ in } Q, \medskip\\
			-\rho_t-\Delta \rho-\Delta v=0  & \text{ in } Q, \medskip\\
			\p_{\nu}v(x,t)=\p_{\nu}\rho(x,t)=0      & \text{ on } \Sigma, \medskip\\
		\end{cases}
	\end{equation}
	By Lemma $\ref{key}$, we have
	\begin{equation}
		\int_Q ( F^{(2)}_1-F^{(2)}_2)m^{(1)} _1m_2^{(1)} \rho \ dxdt =0.
	\end{equation}
	Now we may choose $\rho=e^{-\lambda_it}\overline{m}_i(x)$ and 
	$$m_1^{(1)}(x,t)=m_2^{(1)}(x,t)=-\lambda_je^{-\lambda_j t}\overline{m}_j(x)+D_j e^{\lambda_j t}\overline{m}_j(x),$$
	for a fixed  $j\in\mathbb{N}.$
	Next, by a similar argument in the proof of Step~I, we can derive that 
	\begin{equation}
		\int_{\Omega} ( F^{(2)}_1-F^{(2)}_2)\overline{m}_j^2\overline{m}_i(x) \ dxdt =0,
	\end{equation}
	for all $i\in\mathbb{N}$.
	
	Since $\overline{m}_i(x) $ form a complete $L^2$-basis, we have $( F^{(2)}_1-F^{(2)}_2)\overline{m}_j^2=0.$ Notice that the zero set of Neumann eigenfunction must be measure zero, we have
	$$F^{(2)}_1(x)=F^{(2)}_2(x).$$
	\medskip
	
	\noindent{\bf Step~III.}~Finally, by mathematical induction and repeating similar arguments as those in the proofs of Steps I and II, one can show that
	$$F^{(k)}_1-F^{(k)}_2=0 ,$$
	for all $k\in\mathbb{N}$. Hence, $F_1(x,m)=F_2(x,m).$
	
	The proof is complete. 
\end{proof}

\section{Multipopulation mean field game (MFG) system}\label{sec:multipop}
In this section, we extend the results to a multipopulation mean field game (MFG) model. Let $x\in \mathbb{R}^d$ denote the state variable, $t\in[0,\infty)$ denote the time variable, and $\mathbb{T}^d:=\mathbb{R}^d/\mathbb{Z}^d$ be the standard $d$-dimensional torus. The $n$-population mean field game model we are interested in takes the form, 
\begin{equation}\label{EQ:MFG MultiPop}
	%\left\{
	\begin{cases}
		-\partial_t u_i(x,t)-\Delta u_i(x,t)+\frac{1}{2}|\nabla u_i(x,t)|^2=F_i(x,\mathbf{m}(x,t)), &(x,t)\in\mathbb{T}^d\times(0,T),\\[1ex]
		\partial_t m_i(x,t)-\Delta m_i(x,t)-{\rm div}(m_i(x,t)\nabla u_i)=0, &(x,t)\in\mathbb{T}^d\times(0,T),\\[1ex]
		u_i(x,T)=G_i(x,\mathbf{m}(x,T)),\ m_i(x,0)=m_{i,0}(x), &x\in \mathbb{T}^d,
	\end{cases}
	%\right.
\end{equation}
$i=1,2,\dots,n$, where $(u_i,m_i)$ is the value function-density pair of the $i$-th population, and $\Delta$, $\rm div$ are respectively the Laplacian and divergence operators with respect to $x$-variable. For simplicity we denote $\mathbf{u}=(u_1,\dots,u_n)$, $\mathbf{m}=(m_1,\dots,m_n)$. %Notice that the system contains $2n$ equations in total. 
%In this system, 
The functions $F_i$ and $G_i$ are, respectively, the running cost and terminal cost of the $i$-th population. The initial density of the $i$-th population is denoted by $m_{i,0}$.
Comparing with the single-population case, the key difference is that the running cost $F_i$ and total cost $G_i$ depend on the whole measure $\mathbf{m}$ but not $m_i$.

In this section, we are interested in reconstructing the cost functions 
\[
\mathbf{F}:=(F_1, \dots, F_n),\quad\mbox{and},\quad \mathbf{G}:=(G_1, \dots, G_n)\,.
\]
of the MFG system~\eqref{EQ:MFG MultiPop} from observed data on the value function $\mathbf{u}(x,t)$. We assume that we can measure  $\mathbf{u}(x,0)$ for all possible initial conditions $\mathbf{m}_0:=(m_{1,0}, \dots, m_{n,0})$, that is, we have data from the map:
\begin{equation}\label{EQ:Data 1}
	\mathcal{M}_{(\mathbf{F},\mathbf{G})}: \mathbf{m}_0 \mapsto \mathbf{u}(x,t)|_{t=0}\,.
\end{equation}
Let $\mathcal{M}_{(\mathbf{F}^1, \mathbf{G}^1)}$ and $\mathcal{M}_{(\mathbf{F}^2, \mathbf{G}^2)}$ be the data corresponding to the cost function pairs $(\mathbf{F}^1, \mathbf{G}^1)$ and $(\mathbf{F}^2, \mathbf{G}^2)$, respectively. We establish the following uniqueness result:
\begin{equation}\label{EQ:Unique Full}
	\mathcal{M}_{(\mathbf{F}^1,\mathbf{G}^1)}=\mathcal{M}_{(\mathbf{F}^2,\mathbf{G}^2)}\quad  \text{implies}\quad (\mathbf{F}^1,\mathbf{G}^1)=(\mathbf{F}^2,\mathbf{G}^2),
\end{equation}
under appropriate assumptions that we will specify later.

\subsection{Assumptions and main result}
We assume that the cost functions $F_i$ and $G_i$ ($1\le i\le n$) admit a generic power series representation of the form, $\beta$ being an $n$-dimensional multi-index,
\begin{equation}\label{EQ:power expansion}        F_i(x,\mathbf{z})=\sum_{\beta\in\mathbb{N}^n,|\beta|\ge 1} F_i^{(\beta)}(x)\frac{\mathbf{z}^\mathbf{\beta}}{|\beta|!}\,.
\end{equation}
Clearly, functions satisfy the conditions in the following definition admits this power series presentation.
\begin{defn}
	We say a function $U(x,\mathbf{z}):\mathbb{R}^d\times\mathbb{C}^n\to\mathbb{C}$ is admissible or in class $\mathcal{A}$, that is, $U\in\mathcal{A}$, if it satisfies the following conditions:\\[1ex]
	(i) The map $\mathbf{z}\mapsto U(\cdot,\mathbf{z})$ is holomorphic with value in $C^\alpha(\mathbb{T}^d)$ for some $\alpha\in(0,1)$;\\[1ex]
	(ii) $U(x,\mathbf{0})=0$ for all $x\in\mathbb{T}^d$.
\end{defn}
We use the standard notation
\begin{equation}\label{EQ:A}
	\mathbf{F}=(F_1, \cdots, F_n) \in\mathcal{A}^n \iff F_i\in\mathcal{A}, \forall i\ge 1\,.
\end{equation}
Throughout the rest of this section, we assume the cost functions $\mathbf{F}\in\mathcal{A}^n$ and $\mathbf{G}\in\mathcal{A}^n$.

\begin{thm}
		Let $\mathbf{F}^j, \mathbf{G}^j\in\mathcal{A}^n$ ($j=1,2$) and $\mathcal{M}_{\mathbf{F}^j,\mathbf{G}^j}$ be the measurement map associated to the following system:
		\begin{equation}\label{EQ:MFG MultiPop j}
			\begin{cases}
				-\partial_t u_i^j(x,t)-\Delta u_i^j(x,t)+\frac{1}{2}|\nabla u_i^j(x,t)|^2=F_i^j(x,\mathbf{m}^j(x,t)), &(x,t)\in\mathbb{T}^d\times(0,T),\\
				\partial_t m_i^j(x,t)-\Delta m_i^j(x,t)-{\rm div}(m_i^j(x,t)\nabla u_i^j)=0, &(x,t)\in\mathbb{T}^d\times(0,T)\\
				u_i^j(x,T)=G_i^j(x,\mathbf{m}(x,T)),\ m_i^j(x,0)=m_{i,0}(x), &x\in \mathbb{T}^d.
			\end{cases}
		\end{equation}
		If there exists $\delta>0$, such that for any $\mathbf{m}_0\in B_\delta\left([C^{2+\alpha}(\mathbb{T}^d)]^n\right)$, one has
		\[
		\mathcal{M}_{\mathbf{F}^1,\mathbf{G}^1}(\mathbf{m}_0)=\mathcal{M}_{\mathbf{F}^2,\mathbf{G}^2}(\mathbf{m}_0),
		\]
		then it holds that
		\[
		(\mathbf{F}^1(x,\mathbf{z}),\mathbf{G}^1(x,\mathbf{z}))=(\mathbf{F}^2(x,\mathbf{z}),\mathbf{G}^2(x,\mathbf{z}))\ \text{in}\ \mathbb{T}^d\times\mathbb{R}^n.
		\]
\end{thm}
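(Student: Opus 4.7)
The plan is to mirror the proof of Theorem \ref{der F,g}, upgraded to handle vector-valued unknowns and the multi-index power series \eqref{EQ:power expansion} by induction on the total order $|\beta|$ of the coefficients $F_i^{(\beta)}, G_i^{(\beta)}$. The key observation is that if each perturbation $\mathbf{f}_l$ is chosen to be supported on a single population, the forward Fokker--Planck equations decouple at each order and selectively annihilate all but one term in the Fa\`a di Bruno expansion of $F_i(x,\mathbf{m})$.

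First, I would upgrade the local well-posedness and holomorphicity of Theorem \ref{local_wellpose} to the multipopulation setting by a componentwise repetition of the implicit-function argument on $[C^{2+\alpha}(\mathbb{T}^d)]^n \to [C^{2+\alpha,1+\alpha/2}(Q)]^{2n}$; at the trivial solution $(\mathbf{u}_0,\mathbf{m}_0)=(\mathbf{0},\mathbf{0})$ the Fr\'echet derivative of the associated operator decouples into $n$ forward and $n$ backward scalar heat problems, each invertible by Lemma \ref{linear app unique}. This legitimizes the higher-order linearization method of Section \ref{HLM}. Then, to target a multi-index $\beta\in\mathbb{N}^n$ with $|\beta|=N\ge 1$, pick an ordered tuple $(i_1,\ldots,i_N)$ in which each $k\in\{1,\ldots,n\}$ appears exactly $\beta_k$ times, and take $\mathbf{m}_0(x;\varepsilon)=\sum_{l=1}^N \varepsilon_l f_l(x)\mathbf{e}_{i_l}$, where $\mathbf{e}_k$ denotes the $k$-th standard basis vector of $\mathbb{R}^n$. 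By construction, the first-order Fokker--Planck equations yield $m_k^{(l)}\equiv 0$ for $k\ne i_l$, while $m_{i_l}^{(l)}$ is the scalar heat solution with initial datum $f_l$. In the base case $N=1$, the first-order linearization collapses to the scalar system of Theorem \ref{der F,g} applied to the unknown pair $(F_i^{(\mathbf{e}_{i_1})},G_i^{(\mathbf{e}_{i_1})})$, and the Fourier-mode argument of Lemma \ref{dense} delivers $F_i^{1,(\mathbf{e}_k)}=F_i^{2,(\mathbf{e}_k)}$ and $G_i^{1,(\mathbf{e}_k)}=G_i^{2,(\mathbf{e}_k)}$ for every $i,k$.

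For the inductive step, assume all coefficients with $|\gamma|<N$ agree; then every lower-order linearization $(\mathbf{u}^{(\ldots)},\mathbf{m}^{(\ldots)})$ coincides between the two systems. Expanding $\partial_{\varepsilon_1}\cdots\partial_{\varepsilon_N} F_i(x,\mathbf{m})|_{\varepsilon=0}$ by Fa\`a di Bruno produces a sum over set partitions of $\{1,\ldots,N\}$: intermediate partitions of size $1<p<N$ carry coefficients $F_i^{(\gamma)}$ with $|\gamma|=p<N$ multiplied by lower-order linearizations, all known to agree; the single-block partition gives $\sum_k F_i^{(\mathbf{e}_k)}\,m_k^{(1,\ldots,N)}$ with coefficients fixed at the base step; and the all-singletons partition, thanks to $m_k^{(l)}=\delta_{k,i_l}m_{i_l}^{(l)}$, collapses to the single surviving term $c_\beta F_i^{(\beta)}(x)\prod_{l=1}^N m_{i_l}^{(l)}$. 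The Hamiltonian $\tfrac12|\nabla u_i|^2$ contributes only products of lower-order $\nabla u_i^{(\ldots)}$ at order $N$, hence cancels upon differencing. A parallel analysis of the transport term $\operatorname{div}(m_i\nabla u_i)$, using $u_i^{(\emptyset)}=m_i^{(\emptyset)}=0$, shows that its $N$-th derivative is driven only by lower-order linearizations; consequently $\overline{m}_i^{(1,\ldots,N)}:=m_i^{1,(1,\ldots,N)}-m_i^{2,(1,\ldots,N)}$ solves the homogeneous heat equation with vanishing initial data and therefore vanishes identically, killing the single-block contribution. What remains is the scalar backward equation
\begin{equation*}
-\partial_t\overline{u}_i^{(1,\ldots,N)}-\Delta\overline{u}_i^{(1,\ldots,N)}=c_\beta\bigl(F_i^{1,(\beta)}-F_i^{2,(\beta)}\bigr)\prod_{l=1}^N m_{i_l}^{(l)}
\end{equation*}
with terminal data $c_\beta\bigl(G_i^{1,(\beta)}-G_i^{2,(\beta)}\bigr)\prod_l m_{i_l}^{(l)}(\cdot,T)$ and vanishing trace at $t=0$. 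Multiplying by a test heat solution $w$, integrating by parts, and specializing $w,f_1,\ldots,f_N$ to complex exponentials plus constant offsets exactly as in \eqref{Fourier1}--\eqref{Fourier2} separates the running-cost and terminal-cost Fourier contributions and yields $F_i^{1,(\beta)}=F_i^{2,(\beta)}$ and $G_i^{1,(\beta)}=G_i^{2,(\beta)}$, closing the induction.

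The main obstacle I anticipate is the Fa\`a di Bruno bookkeeping at general order $N$: one must verify rigorously that the single-component choice of each $\mathbf{f}_l$ annihilates every mixed product of first-order densities except the target one indexed by $\beta$, and that the cascading step forcing $\overline{m}_i^{(1,\ldots,N)}\equiv 0$ really does eliminate the single-block partition contribution so that only the term carrying $F_i^{(\beta)}$ survives. A secondary subtlety, inherited from the single-population proof of Theorem \ref{der F,g}, is arranging enough freedom in the Fourier frequencies $\xi_1,\ldots,\xi_N$ together with the test frequency $\xi_{N+1}$ so that each resultant sum admits two distinct decompositions, which is precisely what allows the running-cost and terminal-cost integrals to be algebraically separated at the level of multi-indices.
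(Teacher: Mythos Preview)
Your proposal is correct and matches the paper's approach: the paper explicitly omits the proof of this theorem, stating only that it ``rel[ies] on the same method used in the previous section'' (i.e., the method of Theorem~\ref{der F,g}). Your plan---successive linearization around the trivial solution, single-population basis perturbations $\mathbf{f}_l=f_l\mathbf{e}_{i_l}$ to isolate a prescribed multi-index $\beta$, and induction on $|\beta|$ via Fa\`a di Bruno combined with the Fourier-mode separation of \eqref{Fourier1}--\eqref{Fourier2}---is precisely the natural multipopulation extension the paper has in mind, and your handling of the partition bookkeeping and the cascading step $\overline{m}_i^{(1,\ldots,N)}\equiv 0$ is sound.
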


We may also consider single-population data. We assume that we can measure the value function at $t=0$ for only one given population, say, the $i$-th population. That is, we have data given by the map:
\begin{equation}\label{EQ:Data 2}
	\mathcal{L}_{(\mathbf{F},\mathbf{G})}: \mathbf{m}_0\mapsto u_i(x,t)|_{t=0}, \qquad \mbox{for a given}\ i\in\{1, \dots, n\}\,.
\end{equation}

\begin{cor}\label{THM:part recon}
	Let $F_i(x, \mathbf{z})=F(x,\mathbf{z})$ and $G_i(x, \mathbf{z})=G(x, \mathbf{z})$ for all $1\le i\le n$, and $ \mathcal{L}_{(\mathbf{F}^j,\mathbf{G}^j)}$ be the data associated to the following system:
	\begin{equation}
		\begin{cases}
			-\partial_t u_i^j(x,t)-\Delta u_i^j(x,t)+\frac{1}{2}|\nabla u_i^j(x,t)|^2=F^j(x,m^j(x,t)), &(x,t)\in\mathbb{T}^d\times(0,T),\\
			\partial_t m_i^j(x,t)-\Delta m_i^j(x,t)-{\rm div}(m_i^j(x,t)\nabla u_i^j)=0, &(x,t)\in\mathbb{T}^d\times(0,T)\\
			u_i^j(x,T)=G^j(x,\mathbf{m}(x,T)),\ m_i^j(x,0)=m_{i,0}(x), &x\in \mathbb{T}^d.
		\end{cases}
	\end{equation}
	If there exists $\delta>0$, such that for any $\mathbf{m}_0\in B_\delta\left([C^{2+\alpha}(\mathbb{T}^d)]^n\right)$, one has
	\[
	 \mathcal{L}_{(\mathbf{F}^1,\mathbf{G}^1)}=\mathcal{L}_{(\mathbf{F}^2,\mathbf{G}^2)},
	\]
	then it holds that
	\[
	(\mathbf{F}^1(x,\mathbf{z}), \mathbf{G}^1(x,\mathbf{z}))=(\mathbf{F}^2(x,\mathbf{z}), \mathbf{G}^2(x,\mathbf{z}))\ \text{in}\ \mathbb{T}^d\times\mathbb{R}^n.
	\]
\end{cor}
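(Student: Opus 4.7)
The plan is to reduce Corollary~\ref{THM:part recon} to the preceding multipopulation uniqueness theorem by exploiting the symmetry hypothesis $F_i = F$ and $G_i = G$ for all $i$. The key observation is that, under this symmetry, every linearized value function $u_i^{(\alpha)}$ turns out to be \emph{independent} of the population index $i$, so that measuring $u_i(\cdot,0)$ for a single $i$ already encodes the same linearized information as measuring all $u_j(\cdot,0)$.

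First I would perform higher-order linearization around the trivial background $(\mathbf{u},\mathbf{m})=(\mathbf{0},\mathbf{0})$, writing $m_{j,0} = \sum_l \varepsilon_l\, f_{l,j}$ and setting $u_i^{(\alpha)} := \partial^\alpha_\varepsilon u_i|_{\varepsilon=0}$, $m_i^{(\alpha)} := \partial^\alpha_\varepsilon m_i|_{\varepsilon=0}$. The linearized Fokker--Planck equations show that $m_i^{(\alpha)}$ does depend on $i$ in general: one has $m_i^{(e_l)} = \delta_{il}\, m_l^{(l)}$, and by induction $m_i^{(\alpha)}$ is supported on those $i$ appearing in $\alpha$. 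In the HJ equation for $u_i^{(\alpha)}$, however, every source produced by differentiating $F(x,\mathbf{m})$ is a sum of terms of the form
\[
F^{(\gamma)}(x)\, m_{p_1}^{(\beta_1)}\cdots m_{p_k}^{(\beta_k)},
\]
in which the external index $i$ simply does not appear, while the quadratic coupling $\nabla u_i^{(\beta)}\cdot\nabla u_i^{(\gamma)}$ arising from $\tfrac12|\nabla u_i|^2$ is $i$-free by the inductive hypothesis; the terminal datum built from $G$ has the analogous structure. Uniqueness for the linear parabolic system (Lemma~\ref{linear app unique}) then forces $u_i^{(\alpha)} \equiv u^{(\alpha)}$ for every $\alpha$, with base case $|\alpha|=1$ given by $-\partial_t u_i^{(e_l)} - \Delta u_i^{(e_l)} = F^{(e_l)}(x)\,m_l^{(l)}$ and $u_i^{(e_l)}(\cdot,T) = G^{(e_l)}\,m_l^{(l)}(\cdot,T)$.

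With this symmetry in hand, the hypothesis $\mathcal{L}_{(\mathbf{F}^1,\mathbf{G}^1)} = \mathcal{L}_{(\mathbf{F}^2,\mathbf{G}^2)}$ yields $u^{(\alpha)}_1(x,0) = u^{(\alpha)}_2(x,0)$ for every multi-index $\alpha$, which is exactly the linearized data used in the preceding multipopulation theorem. Proceeding by induction on $|\alpha|$ following the proof of Theorem~\ref{der F,g}, I would subtract the two linearized systems and test against Fourier-mode heat solutions $w(x,t) = \exp(-4\pi^2|\xi|^2 t - 2\pi\mathrm{i}\,\xi\cdot x) + M$ to derive at each step an identity of the form
\[
\int_Q (F^{(\alpha)}_1 - F^{(\alpha)}_2)\, P(x,t)\, w(x,t)\, dx\, dt = \int_{\mathbb{T}^d} (G^{(\alpha)}_1 - G^{(\alpha)}_2)\, P(x,T)\, w(x,T)\, dx,
\]
where $P$ is a product of already-determined lower-order heat profiles $m_l^{(l)}$. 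Separating the two unknowns by varying $\xi$ and $M$ as in the single-population case then gives $F^{(\alpha)}_1 = F^{(\alpha)}_2$ and $G^{(\alpha)}_1 = G^{(\alpha)}_2$, closing the induction and yielding $(\mathbf{F}^1,\mathbf{G}^1) = (\mathbf{F}^2,\mathbf{G}^2)$.

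The main obstacle is the inductive verification of the $i$-independence of $u_i^{(\alpha)}$: one must check that \emph{every} source term produced by repeatedly differentiating $F(x,\mathbf{m})$ and $G(x,\mathbf{m})$ collapses to an $i$-free expression, even though each individual $m_i^{(\alpha)}$ remains genuinely $i$-sensitive. This hinges crucially on the hypothesis $F_i \equiv F$, $G_i \equiv G$; without it, population-specific coefficients $F_i^{(\gamma)}$ would couple with the $i$-dependent densities $m_i^{(\alpha)}$, and a single-population measurement would no longer suffice to reconstruct $\mathbf{F}$ and $\mathbf{G}$.
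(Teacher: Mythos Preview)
Your proposal is correct and matches the paper's indicated approach (the paper omits the proof, noting only that it uses the same method as the single-population results in the preceding section). The key additional observation you supply—that under the symmetry $F_i\equiv F$, $G_i\equiv G$ every linearized value function $u_i^{(\alpha)}$ is independent of the population index $i$, so single-population data $\mathcal{L}$ already carries the full linearized information of $\mathcal{M}$—is exactly what is needed to pass from the multipopulation theorem to the corollary, and your inductive verification of it is sound.
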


Since the proofs of these two theorems rely on the same method used in the previous section, we omit them here. Some other related results can be found in \cite{ren2023unique,ren2024reconstructing}. A challenging question to consider is whether these inverse problems can be extended to the multi-population case under more general conditions.

 \chapter{Inverse Coefficient Problems Using Cauchy Data}\label{chap:Cauchy}

The proofs in the previous chapter rely on the linearization process around the trivial solution $(u,m)=(0,0)$ or $(u,m)=(C,1)$ for some constant. In this chapter, we establish that the running cost $F$ can be recovered  and the proof does not rely on the existence of a special solution in the stationary case.

In particular, this chapter presents an alternative approach to treating the probability density constraint on $m$ and the Neumann boundary conditions, by bypassing them and not imposing any boundary conditions on the MFG system, but instead consider the set of all possible solutions $(u,m)$ to the system. This can be understood in practical terms: Given a stock market where players can trade from various locations and through various avenues, for instance through local stock exchanges or online broker accounts, we only consider those tradings that are conducted online from a single country. In this case, it is not necessary to obey the probability density constraint and Neumann boundary conditions. The same holds if we consider the investments, sales and profits of multi-national technology companies in a single country. This can also be applied to airport management, where certain airlines are based at a particular airport, but there are also many other airlines that land or take-off from that airport. In these scenarios, the probability density constraint and Neumann boundary conditions are not required.

\section{The Inverse Problem for Static MFG system}
Let us first consider the stationary MFG system :
\begin{equation}\label{eq:zs_main}
	\begin{cases}
		-\Delta u(x)+\frac{1}{2}|\nabla u(x)|^2=F(x,m)  &  \text{ in } \Omega\\
		-\Delta m(x)-\text{div}(m(x)\nabla u(x))=0  & \text{ in } \Omega\\
		u(x)=f(x),\quad m(x)=g(x)  & \text{ in } \Sigma.
	\end{cases}
\end{equation}

We define the Cauchy set of \eqref{eq:zs_main} by
\begin{equation}\label{Cauchy_set} 
	\mathcal{M}_{F,m_0}:=\{m\big|_{\Sigma},\nabla m\big|_{\Sigma}:(u,m) \text{ satisfies equation } \eqref{eq:zs_main} \}.
\end{equation}
% Assume $F(x,m)=F^{(0)}(x)+\sum_{k=1}^{\infty}\dfrac{1}{k!}F^{(k)}(x)(m-m_{0})^k$
% and $F^{(0)}(x)=0$.

Now we introduce the admissible classes of $F$, as in the previous chapter.
\begin{defn}\label{AdmissClass2}
		We say $U(x,z):\mathbb{R}^n\times\mathbb{C}\to\mathbb{C}$ is admissible, denoted by $U\in\mathcal{A}$, if it satisfies the following conditions:
		\begin{enumerate}
			\item[(i)] The map $z\mapsto U(\cdot,z)$ is holomorphic with value in $C^{2+\alpha}(\mathbb{R}^n)$ for some $\alpha\in(0,1)$;
			\item[(ii)] $U(x,m_0)=0$ for all $x\in\mathbb{R}^n$ and given $m_0(x)$ depending only on $x$.
		\end{enumerate}

	\end{defn}

Then we are able to fully recover the running costs $F$ and unknown stable state $m_0$ from Cauchy data, and the following uniqueness result holds:
\begin{thm}\label{zs_main_result}
	Let $n\geq3$ and $F_i(x,m)\in \mathcal{A}$ $ (i=1,2)$ such that $F_i(x,m_{0,i})=0$, where $m_{0,i}$ are solutions of system \eqref{eq:zs_main} with $F(x,m)=F_i(x,m)$ respectively. Assume that $\mathcal{M}_{F_1,m_{0,1}}=\mathcal{M}_{F_2,m_{0,2}}$, then $F_1=F_2$ and $m_{0,1}=m_{0,2}$.
\end{thm}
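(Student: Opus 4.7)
The plan is to adapt the higher-order linearization framework of Section \ref{HLM} to the stationary setting, supplementing the eigenfunction-based probing of the earlier chapters with complex geometric optics (CGO) solutions. Since each $F_i\in\mathcal{A}$ satisfies $F_i(x,m_{0,i})=0$, it admits the power series expansion
\[
F_i(x,z)=\sum_{k=1}^\infty F_i^{(k)}(x)\,\frac{(z-m_{0,i}(x))^k}{k!}.
\]
Let $u_{*,i}$ be the value function paired with $m_{0,i}$, so that $(u_{*,i},m_{0,i})$ solves \eqref{eq:zs_main} with vanishing HJ source, i.e.\ $-\Delta u_{*,i}+\tfrac12|\nabla u_{*,i}|^2=0$ and $-\Delta m_{0,i}-{\rm div}(m_{0,i}\nabla u_{*,i})=0$.

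First I would identify the background states. The Cauchy data $(m_{0,i}|_\Sigma,\nabla m_{0,i}|_\Sigma)$ belongs to $\mathcal{M}_{F_i,m_{0,i}}$ by construction, so the hypothesis $\mathcal{M}_{F_1,m_{0,1}}=\mathcal{M}_{F_2,m_{0,2}}$ together with a Holmgren-type unique continuation for the stationary Fokker-Planck equation forces $m_{0,1}\equiv m_{0,2}$ and, correspondingly, $u_{*,1}\equiv u_{*,2}$ up to an additive constant. This step requires a bootstrap that matches perturbed Cauchy data for a family of small boundary variations and deduces that the leading coefficients of the two linearized systems agree on $\Sigma$ before propagating the identification inward.

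Setting $m_0:=m_{0,1}=m_{0,2}$ and $u_*:=u_{*,1}=u_{*,2}$, I would then introduce boundary data of the form $(u,m)|_\Sigma=(u_*,m_0)|_\Sigma+\sum_{l=1}^N\varepsilon_l(f_l,g_l)$ and differentiate in $\varepsilon$. The first-order linearization around $(u_*,m_0)$ produces the coupled elliptic system
\begin{equation*}
\begin{cases}
-\Delta v_i+\nabla u_*\cdot\nabla v_i = F_i^{(1)}(x)\,\rho_i, \\[2pt]
-\Delta \rho_i-{\rm div}(\rho_i\nabla u_*)-{\rm div}(m_0\nabla v_i) = 0.
\end{cases}
\end{equation*}
Subtracting the equations for $i=1,2$ and integrating by parts against a solution $(v',\rho')$ of the formal adjoint system, the matching Cauchy data annihilates all boundary contributions and yields the integral identity
\[
\int_\Omega \bigl(F_1^{(1)}-F_2^{(1)}\bigr)\,\rho\,v'\,dx=0,
\]
valid for every admissible pair $(\rho,v')$. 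To exploit this identity I would construct CGOs of the form $\rho=e^{x\cdot\zeta}(1+r)$ and $v'=e^{x\cdot\eta}(1+r')$ with $\zeta,\eta\in\mathbb{C}^n$, $\zeta\cdot\zeta=\eta\cdot\eta=0$, and $\zeta+\eta=-\mathrm{i}\xi$ for a prescribed real frequency $\xi$; the Sylvester-Uhlmann construction is precisely where $n\geq 3$ enters. The product $\rho v'$ then behaves like $e^{-\mathrm{i}x\cdot\xi}$ modulo vanishing remainders, and Fourier inversion in $\xi$ gives $F_1^{(1)}\equiv F_2^{(1)}$. The procedure iterates: at order $k$, having established $F_1^{(j)}=F_2^{(j)}$ for $j<k$, the analogous integral identity reduces to
\[
\int_\Omega \bigl(F_1^{(k)}-F_2^{(k)}\bigr)\,\rho^{(1)}\cdots\rho^{(k)}\,v'\,dx=0,
\]
which is resolved by the same CGO-density mechanism, concluding $F_1^{(k)}\equiv F_2^{(k)}$ for every $k\in\mathbb{N}$ and hence $F_1\equiv F_2$ on $\Omega\times\mathbb{R}$.

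The main obstacle is twofold. First, the simultaneous recovery of the equilibrium $m_{0,i}$ \emph{and} the cost $F_i$ introduces a genuine coupling absent in the earlier chapters, since the linearization point itself is unknown; disentangling this chicken-and-egg coupling in the background-identification step requires a careful boundary argument that does not presuppose $F_1=F_2$. Second, the CGO construction must be carried out for the coupled Hamilton-Jacobi--Fokker-Planck linear system rather than a scalar Schr\"odinger-type equation; the lower-order couplings through $m_0\nabla(\cdot)$ and $F_i^{(1)}$ force one to solve a transport problem for the amplitude functions $r,r'$ whose solvability depends on the non-degeneracy of the principal symbol of the matrix operator. Establishing the requisite invertibility and remainder estimates is the most delicate technical ingredient of the argument.
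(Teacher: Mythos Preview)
Your high-level strategy—linearize around the equilibrium, derive an integral identity, probe with CGOs, recover Taylor coefficients by Fourier inversion—matches the paper's. But you miss two structural simplifications that are the heart of the argument, and without them your two flagged ``main obstacles'' are self-inflicted.

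\textbf{Background identification.} You propose to recover $m_{0,1}=m_{0,2}$ via unique continuation from matched Cauchy data and a bootstrap over boundary variations. The paper does something much cheaper: since $F_i(x,m_{0,i})=0$, the HJ equation for the background becomes $-\Delta u_{0,i}+\tfrac12|\nabla u_{0,i}|^2=0$ with $\partial_\nu u_{0,i}=0$, which does not involve $F_i$ at all. Uniqueness of this problem gives $\nabla u_{0,1}=\nabla u_{0,2}$ immediately, and then the constitutive relation $m_{0,i}=e^{-u_{0,i}}/\int_\Omega e^{-u_{0,i}}$ (the choice \eqref{m=e^{-v}}) forces $m_{0,1}=m_{0,2}$. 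No Cauchy-data hypothesis is used at this stage; there is no chicken-and-egg problem.

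\textbf{Decoupling the linearized system.} You treat the first-order linearization as a genuinely coupled $2\times 2$ elliptic system and propose matrix CGOs, flagging the transport equations for the amplitudes as the delicate ingredient. The paper avoids this entirely. The relation $\nabla m_0=-m_0\nabla u_0$ from \eqref{m=e^{-v}} lets one eliminate $u^{(1)}$ algebraically from the Fokker--Planck linearization (see \eqref{calc}), yielding a \emph{scalar} equation for $m^{(1)}$ of the form
\[
\Delta m^{(1)}+\nabla u_0\cdot\nabla m^{(1)}+(\Delta u_0 - F_i^{(1)}m_0)\,m^{(1)}=0.
\]
The adjoint probe $u$ solves the companion scalar equation with the opposite drift sign. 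Both are reduced, via the gauge $m\mapsto e^{\pm u_0/2}m$, to standard Schr\"odinger-type equations, for which the CGOs $e^{\xi\cdot x\mp u_0/2}(1+\omega)$ of Theorem~\ref{thm:cgo} are constructed by the classical Sylvester--Uhlmann mechanism. The integral identity then reads $\int_\Omega (F_1^{(1)}-F_2^{(1)})m_0\,m_2\,u\,dx=0$, and choosing $\xi_1+\xi_2=ik$ with a free large parameter gives the Fourier transform of $(F_1^{(1)}-F_2^{(1)})m_0$; positivity of $m_0$ finishes the first order.

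So your proposal is not wrong in spirit, but by overlooking the decoupling you have replaced a scalar CGO construction (already done in Theorem~\ref{thm:cgo}) with a system-level one that you yourself leave open.
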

\begin{remark}\label{zeroCond}
    For $F\in\mathcal{A}$, $F^{(0)}(x)=0$ is a necessary condition in order to have uniqueness of $F(x,m)$ from the knowledge of $\mathcal{M}_F$ (cf. the discussion in the previous section). Otherwise, we may consider the following counter-example: $F_1(x,m)=x,F_2(x,m)=x^2$. Then $u_i$ is independent of $\mathcal{M}_{F_i}$. This is a relatively well-known result in inverse boundary problems, that one cannot recover the ``source term" from the knowledge of the Cauchy data.
\end{remark}

\subsection{Exponentially Growing solutions of Linearization System}
As we discussed in Section \ref{HLM}, we consider the first-order linearization system of \eqref{static-mfg} in $\Omega$:
\begin{equation}\label{eq:zs_linear}
	\begin{cases}
		-\Delta u^{(1)}+\nabla u_0\cdot\nabla u^{(1)}=F^{(1)}m^{(1)}  & \text{ in } \Omega\\
		-\Delta m^{(1)}-\text{div}(m_0\nabla u^{(1)})-\text{div}(m^{(1)}\nabla u_0)=0  & \text{ in } \Omega,\\
	\end{cases}
\end{equation}
where $(u_0,m_0)$ is a solution of \eqref{static-mfg}. One classical result \cite{MFG-stationary,MFG-book} shows that one may choose $(u_0,m_0)$ such that
\begin{equation}\label{m=e^{-v}}
	\begin{cases}
		m_0=\dfrac{e^{-u_0}}{\int_{\Omega}e^{-u_0}dx}                 &\text{ in } \Omega\\
		\partial_{\nu}m_0=\partial_{\nu} u_0=0  &\text{ in } \Sigma
	\end{cases}
\end{equation}
for some constant $C>0.$ It can be shown by classical fixed point argument, see also chapter \ref{reviw}.
Notice that \eqref{m=e^{-v}} implies that 
\begin{equation}\label{eq:StatRelation}
	\nabla (m_0)=-m_0\nabla u_0.
\end{equation}
Expanding the second equation in \eqref{eq:zs_linear} and substituting in the first equation in \eqref{eq:zs_linear}, we have
\begin{equation}\label{calc}
	\begin{aligned}
		0&=-\Delta m^{(1)}-(\nabla u_0\cdot\nabla m^{(1)}+ m^{(1)} \Delta u_0)-(m_0\Delta u^{(1)}+\nabla m_0\cdot\nabla u^{(1)})\\
		&=-\Delta m^{(1)}-(\nabla u_0\cdot\nabla m^{(1)}+ m^{(1)} \Delta u_0) -[m_0(\nabla u_0\cdot\nabla u^{(1)}- F^{(1)}m^{(1)}) +\nabla m_0\cdot\nabla u^{(1)}]\\
		&=-\Delta m^{(1)}-(\nabla u_0\cdot\nabla m^{(1)}+(\Delta u_0-F^{(1)}m_0) m^{(1)})-[(m_0\nabla u_0+\nabla m_0)\cdot\nabla u^{(1)}]\\
		&=-\Delta m^{(1)}-\nabla u_0\cdot\nabla m^{(1)}-\left(\Delta u_0-F^{(1)}m_0\right) m^{(1)},
	\end{aligned}
\end{equation}
where the last step makes use of \eqref{eq:StatRelation}.

From this last step, the next natural step is to focus on PDEs in the form
\begin{equation}\label{semi-linear}
	\Delta m\pm\nabla u_0\cdot\nabla m+ qm=0,
\end{equation}
where $\nabla u_0\in [C^1(\Omega)]^n$ and $q\in L^{\infty}(\Omega).$

\begin{thm}\label{thm:cgo}
	There exist solutions of \eqref{semi-linear} in the form
	\begin{equation}\label{cgo}
		m=e^{\xi\cdot x\mp\frac{1}{2}u_0}(1+\omega(x;\xi)),
	\end{equation}
	where $\xi\in \mathbb{C}^n$, $\xi\cdot \xi=0$ is a complex vector and $\omega(x;\xi)\in H^1(\Omega)$ satisfies the following decay condition:
	\begin{equation}\label{decay}
		\|\omega(x;\xi)\|_{L^2(\Omega)}\leq C|\xi|^{-1}
	\end{equation}
	for some constant $C>0.$
\end{thm}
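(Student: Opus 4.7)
My strategy is the classical Sylvester--Uhlmann complex geometric optics (CGO) construction, preceded by a gauge change that removes the first-order term so that the standard estimates apply directly. The factor $e^{\mp u_0/2}$ in the ansatz already hints at this: it is exactly the gauge that conjugates the drift operator $\Delta \pm \nabla u_0 \cdot \nabla$ into a pure Schr\"odinger operator $\Delta + \tilde q$, where
\[
\tilde q := q - \tfrac{1}{4}|\nabla u_0|^2 \mp \tfrac{1}{2}\Delta u_0 \in L^{\infty}(\Omega).
\]
A direct computation with $m = e^{\mp u_0/2}\tilde m$ verifies that \eqref{semi-linear} is equivalent to $\Delta \tilde m + \tilde q \tilde m = 0$ in $\Omega$, with no first-order term. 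Note that $\tilde q \in L^\infty$ since $u_0 \in C^{2+\alpha}(\bar\Omega)$ by the preceding classical regularity result \eqref{m=e^{-v}}.

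Next, I would look for CGO solutions of the conjugated equation in the form $\tilde m = e^{\xi \cdot x}(1 + \omega)$ with $\xi \in \mathbb{C}^n$, $\xi \cdot \xi = 0$. Because $\xi \cdot \xi = 0$ gives $\Delta e^{\xi\cdot x} = 0$, substituting the ansatz yields
\[
(-\Delta - 2\xi\cdot\nabla)\omega \;=\; \tilde q(1+\omega) \quad \text{in } \Omega.
\]
This reduces the problem to finding an $L^2$ solution $\omega$ to a fixed-point equation in which the bound \eqref{decay} must come from inverting the CGO operator $L_\xi := -\Delta - 2\xi\cdot\nabla$.

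For this, I would invoke the standard Sylvester--Uhlmann estimate: after extending $\tilde q$ and the forcing term to all of $\mathbb{R}^n$ (say, by zero extension off a slightly larger ball $B \supset \Omega$), there exists $|\xi|_0>0$ and a bounded right inverse $G_\xi : L^2(B)\to L^2(B)$ of $L_\xi$ satisfying
\[
\|G_\xi f\|_{L^2(B)} \le \frac{C}{|\xi|}\|f\|_{L^2(B)}, \qquad \|G_\xi f\|_{H^1(B)} \le C\|f\|_{L^2(B)},
\]
for all $|\xi|\ge |\xi|_0$ and all $\xi\in \mathbb{C}^n$ with $\xi\cdot\xi=0$. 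With this inverse in hand, the equation becomes $\omega = G_\xi \tilde q + G_\xi(\tilde q\,\omega)$. The map $T : \omega \mapsto G_\xi(\tilde q + \tilde q\,\omega)$ is then a contraction on $L^2(B)$ as soon as $C\|\tilde q\|_\infty/|\xi| < 1/2$, so by the Banach fixed point theorem we obtain a unique $\omega \in H^1(B)$ with $\|\omega\|_{L^2(\Omega)} \le \|\omega\|_{L^2(B)} \le 2\|G_\xi \tilde q\|_{L^2(B)} \le C'|\xi|^{-1}$. Undoing the gauge gives the desired solution $m = e^{\xi\cdot x \mp u_0/2}(1+\omega)$ of \eqref{semi-linear}.

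\textbf{Main obstacle.} Everything is routine once the right-inverse estimate for $L_\xi$ is granted; that estimate is the genuinely nontrivial input, and it is precisely where the isotropy condition $\xi \cdot\xi = 0$ and the lower bound on $|\xi|$ are used (the Fourier symbol of $L_\xi$ is $|\zeta|^2 - 2i\xi\cdot\zeta$, and the zero set on $\mathbb{R}^n$ is an $(n-2)$-dimensional sphere away from which one gets the $|\xi|^{-1}$ gain). Since this is a classical result, I would cite it rather than reprove it; the remaining steps (gauge change, substitution of the ansatz, and the contraction argument) are direct and pose no further difficulty.
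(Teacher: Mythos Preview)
Your proposal is correct and follows essentially the same approach as the paper. The only cosmetic difference is that you separate the gauge change $m=e^{\mp u_0/2}\tilde m$ from the CGO ansatz $\tilde m=e^{\xi\cdot x}(1+\omega)$, whereas the paper substitutes the combined form $m=e^{\xi\cdot x\mp u_0/2}(1+\omega)$ directly; both routes yield the same equation $L_\xi\omega + H\omega = -H$ with $H\in L^\infty$, and both close by citing the Sylvester--Uhlmann/Sun estimate for $L_\xi^{-1}$ and inverting $I+L_\xi^{-1}H$ by Neumann series (equivalently, your contraction argument).
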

The proof of this theorem is based on the following lemma, which has been proved in \cite{sun_cgo}.
\begin{lem}\label{zs_main_lemma}
	Let $L_{\xi}:=\Delta+ 2\xi\cdot\nabla$. Then the operator $L_{\xi}$ admits a bounded
	inverse $L_{\xi}^{-1}: L^2(\Omega)\to H^1(\Omega)$. If $f\in L^{\infty}(\Omega)$ and 
	$v=L^{-1}_{\xi}(f)\in H^1(\Omega)$, then
	$$\|v\|_{L^2(\Omega)}\leq C|\xi|^{-1}\|f\|_{L^2(\Omega)},$$
	$$\|\nabla v\|_{L^2(\Omega)}\leq C\|f\|_{L^2(\Omega)},$$
	where $C$ is constant only depends on $\Omega.$
\end{lem}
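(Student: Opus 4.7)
The plan is to realize $L_\xi^{-1}$ as a Fourier multiplier and to invoke the classical Sylvester--Uhlmann weighted $L^2$ theory. Since $\xi\cdot\xi=0$ (inherited from the CGO ansatz underlying Theorem~\ref{thm:cgo}), the conjugation identity $L_\xi = e^{-\xi\cdot x}\Delta\,e^{\xi\cdot x}$ holds, so in Fourier variables $L_\xi$ has symbol $p(\eta):=-|\eta|^2+2i\xi\cdot\eta$. After a rotation sending $\xi$ to $\tau(e_1+ie_2)$ with $\tau=|\xi|/\sqrt{2}$, one checks that $p$ vanishes precisely on the codimension-two sphere
\begin{equation*}
\Sigma_\tau = \{\eta\in\mathbb{R}^n:\eta_1=0,\ (\eta_2+\tau)^2+|\eta'|^2=\tau^2\}.
\end{equation*}
I would fix $R>0$ with $\Omega\Subset B_R$ and, for $f\in L^\infty(\Omega)\subset L^2(\Omega)$, let $\tilde f$ denote the zero extension to $\mathbb{R}^n$, so that $\|\tilde f\|_{L^2(\mathbb{R}^n)}=\|f\|_{L^2(\Omega)}$.

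The prospective solution is $v:=\mathcal{F}^{-1}[\hat{\tilde f}/p]$, interpreted on the Agmon--H\"ormander weighted scale $L^2_{\delta}(\mathbb{R}^n)$ with weight $(1+|x|^2)^{\delta/2}$. The central estimate, which I expect to be the main obstacle, is the Sylvester--Uhlmann bound
\begin{equation*}
\|v\|_{L^2_{-\delta}(\mathbb{R}^n)}\le\frac{C}{|\xi|}\|\tilde f\|_{L^2_{\delta+1}(\mathbb{R}^n)},\quad \|\nabla v\|_{L^2_{-\delta}(\mathbb{R}^n)}\le C\|\tilde f\|_{L^2_{\delta+1}(\mathbb{R}^n)},
\end{equation*}
valid for $-1<\delta<0$ and $|\xi|$ sufficiently large. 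To establish it I would decompose Fourier space into a tubular neighborhood of $\Sigma_\tau$ and its complement. On the complement one has $|p(\eta)|\gtrsim|\xi|$, so the multiplier bound is automatic. Inside the tube, a normal--tangential change of coordinates adapted to $\Sigma_\tau$ combined with an $L^2$ trace/restriction estimate on the sphere produces one factor of $|\xi|^{-1}$ from the integration in the normal direction; this is the sole source of the decay and is the crux of the argument, carried out in detail in \cite{sun_cgo}.

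Given the weighted estimate, since $\Omega\Subset B_R$ the weights $(1+|x|^2)^{\pm\delta/2}$ are uniformly comparable to constants there, hence
\begin{equation*}
\|v\|_{L^2(\Omega)}\le C\|v\|_{L^2_{-\delta}(B_R)}\le\frac{C'}{|\xi|}\|\tilde f\|_{L^2_{\delta+1}(B_R)}\le\frac{C''}{|\xi|}\|f\|_{L^2(\Omega)},
\end{equation*}
and likewise $\|\nabla v\|_{L^2(\Omega)}\le C'''\|f\|_{L^2(\Omega)}$, which are the two stated bounds. Injectivity of multiplication by $p$ on $L^2_{-\delta}$ (since $p$ vanishes only on the measure-zero set $\Sigma_\tau$) gives uniqueness, so $L_\xi^{-1}$ is well defined and bounded $L^2(\Omega)\to H^1(\Omega)$ as claimed. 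The entire argument hinges on the weighted multiplier estimate near $\Sigma_\tau$; everything else is either pointwise symbol analysis on $\{|\eta|\gtrsim|\xi|\}$ or transfer of weighted bounds to the bounded domain $\Omega$.
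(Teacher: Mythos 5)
The paper does not actually prove this lemma: it states it and cites \cite{sun_cgo} (Sun, 1993), so there is no ``paper's own proof'' to compare against. Your proposal is a correct outline of the standard Sylvester--Uhlmann Fourier-multiplier argument, which is also the route taken in the cited reference and in the surrounding literature. The conjugation identity $L_\xi = e^{-\xi\cdot x}\Delta\, e^{\xi\cdot x}$ (using $\xi\cdot\xi=0$), the symbol $p(\eta)=-|\eta|^2+2i\xi\cdot\eta$, the codimension-two zero set $\Sigma_\tau$, the split into the tubular neighborhood of $\Sigma_\tau$ versus its complement (where $|p|\gtrsim|\xi|$), and the passage from the weighted global estimate to the bounded domain are all correct and are the essential ingredients. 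Two small remarks. First, in the weighted estimate the standard convention places the solution in the \emph{larger} space $L^2_\delta$ with $\delta\in(-1,0)$, not in $L^2_{-\delta}$; but since $\Omega$ is bounded every weight is comparable to a constant on $\overline\Omega$, so this index bookkeeping does not affect the conclusion. Second, there is a genuinely more elementary alternative proof for bounded domains, due to H\"ahner (1996), which periodizes $f$ on a cube and works with discrete Fourier series, avoiding the Agmon--H\"ormander weighted scale and the restriction estimate on $\Sigma_\tau$ altogether; some authors prefer that route precisely because the only thing needed here is the local estimate on $\Omega$. Either way, your proposal identifies the correct mechanism (the one-power gain in $|\xi|$ from the normal integration near $\Sigma_\tau$) and transfers it correctly to the bounded domain, so it is a sound proof sketch.
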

Now we give the proof of Theorem \ref{thm:cgo}.  
\begin{proof}
	We will prove this theorem for the case
	\begin{equation}\label{semi-linear'}
		\Delta m+\nabla u_0\cdot\nabla m+ qm=0,
	\end{equation}
	and a similar argument can be employed to obtain the result for the other cases.
	
	Let $m=e^{\xi\cdot x-\frac{1}{2}u_0}(1+\omega(x;\xi))$. By direct computation,  \eqref{semi-linear'} is equivalent to
	\begin{equation}\label{for_omega}
		\Delta \omega+2\xi\cdot\nabla \omega+\left(q-\frac{1}{4}|\nabla u_0|^2-\frac{1}{4}\Delta u_0\right)(1+\omega)=0.
	\end{equation}
	Let $H:=q-\frac{1}{4}|\nabla u_0|^2-\frac{1}{4}\Delta u_0$ and $L_{\xi}:=\Delta+ 2\xi\cdot\nabla$. Then \eqref{for_omega} can be rewritten as
	\begin{equation}\label{operator_form}
		L_{\xi}\omega+H\omega=-H.
	\end{equation}
	By Lemma \ref{zs_main_lemma}, we take the action of $L^{-1}_{\xi}$ on both sides of \eqref{operator_form}, to obtain
	\begin{equation}\label{operator_form_2}
		\left( I+L^{-1}_{\xi}\circ H\right)\omega=-L^{-1}_{\xi}(H).
	\end{equation}
	Lemma \ref{zs_main_lemma} also implies that $L^{-1}_{\xi}\circ H$ is a bounded operator from 
	$L^2(\Omega)$ to $L^2(\Omega)$ and 
	$$\| L^{-1}_{\xi}\circ H  \|\leq C \|H\|_{L^{\infty}}|\xi|^{-1}$$
	for some constant $C$. Therefore, $(I+L^{-1}_{\xi}\circ H)^{-1} $ exists for large enough $|\xi|$. In other words, there exists a unique $L^2$-solution $\omega$ of \eqref{operator_form}, and the first part of Theorem \ref{thm:cgo} is proved.
	
	Notice that $\omega$ also satisfies 
	\begin{equation}\label{operator_form_3}
		\omega=L^{-1}_{\xi}(H(1+\omega)).
	\end{equation}
	Since $H$ is bounded and by Lemma \ref{zs_main_lemma} again, there exists a constant $C>0$ such that
	$$\|\omega(x;\xi)\|_{L^2(\Omega)}\leq C|\xi|^{-1},$$
	$$ \|\omega(x;\xi)\|_{H^1(\Omega)}\leq C $$
	when $|\xi|$ is large enough.
\end{proof}
Since we mainly work in H\"{o}lder space and we only get exponentially growing solutions in $H^1(\Omega)$, we still need the following denseness property.
\begin{lem}\label{denseness}
	Let $u_0\in C^1(\Omega)$ and $q\in L^{\infty}(\Omega)$. For any solution $ m\in H^1(\Omega)$ to 
	\begin{equation}\label{Runge 1}
		\Delta m+\nabla u_0\cdot\nabla m+ qm=0,
	\end{equation}
	and any $\eta>0$, there exists a solution  $\hat{m}\in C^{2+\alpha}(\Omega)$ to \eqref{Runge 1} such that
	$$\|m-\hat{m}\|_{L^2(\Omega)}\leq \eta. $$
\end{lem}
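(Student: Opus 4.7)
The plan is to prove Lemma \ref{denseness} by a classical Runge-type approximation argument, using the Hahn-Banach theorem and the weak unique continuation property of the formal adjoint. Write $L := \Delta + \nabla u_0\cdot\nabla + q$, so that the equation is $Lm = 0$; in the context where this lemma is applied, the coefficients are H\"older continuous (indeed $u_0 \in C^{2+\alpha}$ and $q\in C^\alpha$), so by interior Schauder theory any $H^1$ solution of $L\hat m = 0$ on a neighborhood of $\overline\Omega$ automatically lies in $C^{2+\alpha}(\overline\Omega)$. This reduces the task to producing the approximant as the restriction of a solution defined on a slightly larger domain.

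First, I would embed $\Omega$ in a smooth bounded domain $\widetilde\Omega$ with $\overline\Omega\subset\widetilde\Omega$, chosen so that $\widetilde\Omega\setminus\overline\Omega$ is connected and every one of its components meets $\partial\widetilde\Omega$ (a large ball suffices). After a generic perturbation of $\widetilde\Omega$ if necessary, I may assume that $0$ is not a Dirichlet eigenvalue of either $L$ or its formal adjoint $L^*$ on $\widetilde\Omega$, so that the corresponding Dirichlet problems are well-posed. It then suffices to show that the restriction to $\Omega$ of the space $\{\hat m\in H^1(\widetilde\Omega):L\hat m=0\}$ is dense, in the $L^2(\Omega)$ topology, in the space of $H^1(\Omega)$ solutions of $Lm=0$.

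Density will be established by Hahn-Banach. Let $\ell\in L^2(\Omega)$ satisfy $\int_\Omega \ell\,\hat m\,dx=0$ for every such restriction $\hat m$. Extend $\ell$ by zero to $\widetilde\ell\in L^2(\widetilde\Omega)$ and solve $L^* w=\widetilde\ell$ in $\widetilde\Omega$ with $w=0$ on $\partial\widetilde\Omega$; solvability is by the Fredholm alternative (the required compatibility with $\ker L$ follows from the assumption on $\ell$). Integration by parts then yields $\int_{\partial\widetilde\Omega}\hat m\,\partial_\nu w\,dS=0$ for every solution $\hat m$ on $\widetilde\Omega$, and since their Dirichlet traces sweep out all of $H^{1/2}(\partial\widetilde\Omega)$, I conclude $\partial_\nu w=0$ on $\partial\widetilde\Omega$. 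Now $L^* w=0$ on $\widetilde\Omega\setminus\overline\Omega$ with zero Cauchy data on $\partial\widetilde\Omega$, so weak unique continuation (valid because the principal part is $\Delta$ and the lower-order coefficients are bounded) together with the connectedness assumption forces $w\equiv 0$ on $\widetilde\Omega\setminus\overline\Omega$, giving $w=\partial_\nu w=0$ on $\partial\Omega$. A final Green's identity applied to $m$ and $w$ on $\Omega$ then produces $\int_\Omega \ell\,m\,dx=\int_\Omega w\,Lm\,dx=0$, as desired.

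The main obstacle I expect is the clean handling of the Fredholm alternative so that the Dirichlet problems for both $L$ and $L^*$ are genuinely well-posed on $\widetilde\Omega$; this is a generic property but must be arranged explicitly (e.g.\ by a small dilation of a ball), and it is what allows the surjectivity of the Dirichlet-to-trace map used to extract $\partial_\nu w=0$. The unique continuation step itself is entirely standard for second-order elliptic operators whose principal part is the Laplacian with bounded lower-order drift, and the promotion from $H^1$ to $C^{2+\alpha}$ regularity on $\overline\Omega$ is a routine application of boundary and interior Schauder estimates to the ambient solution.
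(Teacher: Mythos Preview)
Your argument is correct, but it is considerably more elaborate than what the paper does. The paper's proof is a two-line boundary approximation: take the trace $m|_{\Sigma}\in H^{1/2}(\Sigma)\subset L^2(\Sigma)$, approximate it in $L^2(\Sigma)$ by a smooth function $M$, solve the Dirichlet problem $L\hat m=0$ in $\Omega$ with $\hat m=M$ on $\Sigma$ to obtain $\hat m\in C^{2+\alpha}(\overline\Omega)$ by Schauder theory, and invoke the stability estimate $\|m-\hat m\|_{L^2(\Omega)}\le C\|m|_{\Sigma}-M\|_{L^2(\Sigma)}$ for solutions of $L v=0$. This is much shorter but tacitly assumes that $0$ is not a Dirichlet eigenvalue of $L$ on $\Omega$ itself (so that the boundary-to-interior map is well-defined and bounded), a point the paper does not verify. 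Your Runge-type argument via Hahn--Banach, the adjoint Dirichlet problem on an enlarged domain $\widetilde\Omega$, and unique continuation is the standard robust route: it sidesteps any spectral obstruction on $\Omega$ by pushing the well-posedness requirement to $\widetilde\Omega$, which you are free to perturb. The price is the extra machinery (extension of the coefficients to $\widetilde\Omega$, unique continuation for $L^*$, the trace gymnastics on $\partial\Omega$), all of which you handle correctly. In short, the paper trades robustness for brevity; your version trades brevity for a self-contained argument that does not rely on an unstated spectral hypothesis.
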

\begin{proof}
	Since $m\in H^1(\Omega)$, we have $m|_{\Sigma}\in L^2(\Sigma)$. Then by using the fact that $C^{\infty}(\Sigma)$ is dense in $L^2(\Sigma)$, there exist $M\in C^{\infty}(\Sigma)$ such that $\|M- m|_{\Sigma}\|_{L^2(\Omega)}\leq \eta$ for any $\eta>0.$

	Let $\hat{m}$ be the solution of
	\begin{equation*}
		\begin{cases}
			\Delta \hat{m}+\nabla u_0\cdot\nabla \hat{m}+ q\hat{m}=0 &\text{ in } \Omega\\
			\hat{m}=M &\text{ in }\Sigma.
		\end{cases}
	\end{equation*}
	Then $\hat{m}\in C^{2+\alpha}(\Omega)$ and $m-\hat{m}$ satisfies
	\begin{equation*}
		\begin{cases}
			\Delta (m-\hat{m})+\nabla u_0\cdot\nabla (m-\hat{m})+ q(m-\hat{m})=0 &\text{ in } \Omega\\
			(m-\hat{m})=m|_{\Sigma}-M &\text{ in }\Sigma.
		\end{cases}
	\end{equation*}
	Therefore,  for any $\eta>0$,
	$$\|m-\hat{m}\|_{L^2(\Omega)}\leq \|m|_{\Sigma}-M\|_{L^2(\Sigma)}\leq\eta. $$
\end{proof}   
\subsection{Proof of Theorem \ref{zs_main_result}}

Now we are ready to give the proof of Theorem \ref{zs_main_result}.
\begin{proof}
	Let $(u_{0,i},m_{0,i})$ be the solutions of \eqref{eq:zs_main} which satisfy \eqref{m=e^{-v}} with corresponding running cost $F_i(x,m)$, for $i=1,2$.
	
	Since $F_i(x,m_{0,i})=0$ , we have 
	\begin{equation}\label{v0eq}
		\begin{cases}
			-\Delta u_{0,i}+\frac{1}{2}|\nabla u_{0,i}|^2=0 & \text{ in }\Omega\\
			\partial_{\nu}u_{o,i}=0 &\text{ in }\Sigma.
		\end{cases}
	\end{equation}
	This shows that $\nabla u_{0,1}=\nabla u_{0,2}=:\nabla u_0$, and then $\Delta u_{0,1}=\Delta u_{0,2}=:\Delta u_0$. By \eqref{m=e^{-v}}, we also have 
    \[m_{0,1}=m_{0,2}=:m_0.\]
	
	As discussed in Section \ref{HLM}, the first-order linearization system of \eqref{eq:zs_main} in $\Omega$ is given, for $i=1,2$, by:
	\begin{equation}\label{eq:zs_linear_12}
		\begin{cases}
			-\Delta u^{(1)}_i+\nabla u_{0}\cdot\nabla u^{(1)}_i=F_i^{(1)}m^{(1)}_i  & \text{ in } \Omega\\
			-\Delta m^{(1)}_i-\text{div}(m_{0,i}\nabla u^{(1)}_i)-\text{div}(m^{(1)}_i\nabla u_{0})=0  & \text{ in } \Omega.\\
		\end{cases}
	\end{equation}
	We have shown in \eqref{calc} that \eqref{eq:zs_linear_12} implies that
	\begin{equation}
		\Delta m^{(1)}_i+\nabla u_{0}\cdot\nabla m^{(1)}_i+(\Delta u_{0}-F_i^{(1)}m_{0}) m^{(1)}_i=0.
	\end{equation}
	Let $\overline{m}:=m^{(1)}_1-m^{(1)}_2$. Since $\mathcal{M}_{F_1}=\mathcal{M}_{F_2}$, we have
	\begin{equation}\label{m1-m2}
		\begin{cases}
			\Delta \overline{m} +\nabla u_{0}\cdot \nabla \overline{m}+\overline{m}\Delta u_{0}- F_1^{(1)}m_{0}\overline{m}=(F_1^{(1)}m_{0}-F_2^{(1)}m_{0})m_2 & \text{ in } \Omega\\ 
			\overline{m}=\partial_{\nu}\overline{m}=0,& \text{ in } \Sigma. 
		\end{cases}
	\end{equation}
	Let $u$ be a solution of the following equation:
	\begin{equation*}
		\Delta u-\nabla u_{0}\cdot\nabla u-F_1^{(1)}m_{0}u=0.
	\end{equation*}
	Then we have
	\begin{equation}\label{integral_by_part}
		\begin{aligned}
			\int_{\Omega}(F_1^{(1)}m_{0}-F_2^{(1)}m_{0})m_2u \, dx 
			&=\int_{\Omega} (\Delta \overline{m}+\nabla u_{0}\cdot\nabla \overline{m} +\overline{m}\Delta u_{0}- F_1^{(1)}\overline{m}m_0)u\, dx\\
			&=\int_{\Omega} \overline{m}\Delta u-\overline{m}\nabla u_{0}\cdot\nabla u-\overline{m}u\Delta u_{0}+\overline{m}u\Delta u_{0}-F_1^{(1)}\overline{m}m_{0}u\, dx\\
			&\quad +
			\int_{\Sigma}u\partial_{\nu}\overline{m}-\overline{m}\partial_{\nu}u      +\overline{m}u\partial_{\nu}u_{0}\,ds\\
			&=\int_{\Omega}\overline{m}(\Delta u-\nabla u_{0}\cdot\nabla u-F_1^{(1)}m_{0}u)\,dx\\
			&=0,
		\end{aligned}
	\end{equation}
	since $\mathcal{M}_{F_1}=\mathcal{M}_{F_2}$.
	
	Now by Theorem \ref{thm:cgo}, there exist $\omega_i(x;\xi)$ and $\xi_i\in \mathbb{C}^n$ with $\xi_i\cdot\xi_i=0$ such that 
	\begin{equation}
		\begin{aligned}
			&\hat{m}_2=e^{\xi_1\cdot x-\frac{1}{2}u_0}(1+\omega_1(x;\xi_1))\\
			&u=e^{\xi_2\cdot x+\frac{1}{2}u_0}(1+\omega_2(x;\xi_2)),
		\end{aligned}
	\end{equation}
	and $\omega_j(x;\xi)$, $j=1,2$, satisfy the decay conditions
	$$\lim\limits_{|\xi_i|\to\infty}\|\omega_j \|_{L^2(\Omega)}=0.$$
	Let $k\in\mathbb{R}^n\backslash\{0\}$, then there exists $a,b\in\mathbb{R}^n,|a|=|b|=|k|$ such that $\{k,a,b\}$ is an orthogonal basis of $\mathbb{R}^n$. Now we choose
	\begin{equation}\label{choose_xi}
		\begin{aligned}
			&\xi_1=\frac{i}{2}k+\left(\sqrt{R^2+\frac{1}{16}}+i\sqrt{R^2-\frac{1}{16}}\right)a+\left(\sqrt{R^2+\frac{1}{16}}-i\sqrt{R^2-\frac{1}{16}}\right)b\\
			&\xi_2=\frac{i}{2}k-\left(\sqrt{R^2+\frac{1}{16}}+i\sqrt{R^2-\frac{1}{16}}\right)a-\left(\sqrt{R^2+\frac{1}{16}}-i\sqrt{R^2-\frac{1}{16}}\right)b,
		\end{aligned}
	\end{equation}
	where $R\in\mathbb{R}.$
	Then we have, for $j=1,2$,
	$$\xi_j\cdot\xi_j=0,$$
	$$\xi_j\cdot\overline{\xi_j}=\frac{1}{4}|k|^2+4R^2|k|^2.$$
	By using this construction, \eqref{integral_by_part} and Lemma \ref{denseness} implies that
	\begin{equation}
		\int_{\Omega} (F_1^{(1)}m_0-F_2^{(1)}m_0)e^{ik\cdot x}(1+\omega_1(x;\xi_1))(1+\omega_2(x;\xi_2))\,dx=0.
	\end{equation}
	Letting $R\to\infty$, we have $$ 	\int_{\Omega} (F_1^{(1)}m_0-F_2^{(1)}m_0)e^{ik\cdot x}\,dx=0.$$
	Since $k\in\mathbb{R}^n\backslash\{0\}$ is an arbitrary vector, we have
	$F_1^{(1)}(x)m_0(x)-F_2^{(1)}(x)m_0(x)=0.$ and since $m_0(x)>0$ for all $x\in\Omega$ by construction in \eqref{m=e^{-v}}, we have
	$$F_1^{(1)}(x)=F^{(1)}_2(x),$$
	and we obtain the uniqueness of the first-order Taylor coefficients of $F$. 
	
	For the higher-order Taylor coefficients, we apply higher-order linearization and use the same method as in the proof of Theorem \ref{der F}. Consequently, we obtain $F_1(x,m)=F_2(x,m).$
\end{proof}

\section{Time-dependent MFG}\label{sec:TimeMFGProof}

Next, we return to the time-dependent case. We assume that the $\mathcal{H}$ is a quadratic form, i.e. for a smooth Riemannian metric $A=(g_{kj}(x))_{k,j=1}^n$, the Hamiltonian $\mathcal{H}$ defined in the phase space is of the form
\begin{equation}\label{eq:ham1}
\mathcal{H}(x,p)=\sum_{kj}g_{kj}p_kp_j,
\end{equation}
where $p=\nabla u(x, t)$ is the momentum variable. This signifies that the energy Lagrangian of the MFG system is kinetic, which is often found in practical situations (cf. \cite{ding2022mean} ). 
\begin{defn}\label{def:A}
    A Riemannian metric $A$ is called admissible if it belongs to some fixed known conformal class $C_g$, i.e. $A(x)=\kappa(x)g(x)$ for $\kappa(x)\in C^\infty(\Omega)$ and $g(x)$ is a known Riemannian metric.
\end{defn}
Next, we introduce the admissible class of $F$.
\begin{defn}\label{AdmissClass1}
	We say that  $U(x,z):\mathbb{R}^n\times\mathbb{C}\to\mathbb{C}$ is admissible, denoted by $U\in\mathcal{B}$, if the following two conditions are satisfied:
	\begin{enumerate}
		\item[(i)] The map $z\mapsto U(\cdot,z)$ is holomorphic with value in $C^{2+\alpha}(\mathbb{R}^n)$ for some $\alpha\in(0,1)$;
			\item[(ii)] $U(x,m_0(x))=0$ for all $x\in\mathbb{R}^n$ and given $m_0(x)$ depending only on $x$;
            \item[(iii)] $U^{(1)}(x)=0$ for all $x\in \Omega$.
		\end{enumerate} 	
\end{defn}

The quadratic MFG system is then given by
\begin{equation}\label{eq:MFG2}
    \begin{cases}
        -\partial_t u(x,t) -\Delta u(x,t) + [\nabla u(x,t)]^TA(x)\nabla u(x,t) = F(x,m) &\quad \text{in }Q,\\
        \partial_t m(x,t) -\Delta m(x,t) - 2\text{div}\left(m(x,t)[\nabla u(x,t)]^TA(x)\right) = 0  &\quad \text{in }Q,\\
        u(x,T)=u_T,\quad m(x,0)=f(x) &\quad \text{in }\Omega,
    \end{cases}
\end{equation} 
for bounded positive real function $(x,t)>0$, $(x,t)\in C^{1,0}(\tilde{Q})$.
Consider the measurement map 
$\mathcal{M}_{F,U,A}$ given by \[\mathcal{M}_{F,U,A}=\left.\left(u,\nabla u,m,\nabla m\right)\right|_{\Gamma}\to F,U,A \]
associated to \eqref{eq:MFG2}, where $U=(u_0,m_0)$ and $\Gamma:=\Sigma\times(0,T)$. 
First, we define
\begin{defn}\label{stable}
Let $F\in\mathcal{A}$. Let $g,h\in C^{2+\alpha,1+\frac{\alpha}{2}}(\Gamma)$ satisfy the following  compatibility conditions:
\begin{equation}\label{compatibility conditions }
\begin{cases}
     g(x,T)=0&\quad \text{ in }\Sigma,\\
      -\partial_t g -\Delta g + [\nabla u_0]^TA\nabla g+ [\nabla g]^TA\nabla u_0=0 &\quad \text{ in }\Sigma,\\
     h(x,0)=0&\quad \text{ in }\Sigma,\\
     \partial_t h -\Delta h - 2\text{div}\left( m_0[\nabla g]^TA\right) -2\text{div} \left( h[\nabla u_0]^TA\right)= 0 &\quad \text{ in }\Sigma.
\end{cases}
\end{equation}

    A solution $U=(u_0,m_0)$ is called a \emph{stable solution} of \eqref{eq:MFG2} if for any
    $g,h$ that satisfy the compatibility conditions, the system
  \begin{equation}\label{MFG2:stable}
    \begin{cases}
        -\partial_t u^{(1)}(x,t) -\Delta u^{(1)}(x,t) + [\nabla u_0(x)]^TA(x) \nabla u^{(1)}(x,t)\\\quad\quad\quad\quad\quad\quad\quad\quad\quad\quad\quad\quad + [\nabla u^{(1)}(x,t)]^TA(x) \nabla u_0(x)= 0 &\quad \text{in }Q,\\
        \partial_t m^{(1)}(x,t) -\Delta m^{(1)}(x,t) - 2\text{div}\left( m_0(x)[\nabla u^{(1)}(x,t)]^TA(x)\right) \\\quad\quad\quad\quad\quad\quad\quad\quad\quad\quad\quad\quad -2\text{div} \left( m^{(1)}(x,t)[\nabla u_0(x)]^TA(x)\right)= 0  &\quad \text{in }Q,\\
        u^{(1)}(x,t)=g,\quad m^{(1)}(x,t)=h&\quad \text{in }\Gamma,\\
        u^{(1)}(x,T)=0, \quad m^{(1)}(x,0) = 0  &\quad \text{in }\Omega.
    \end{cases}
\end{equation}
admits a unique solution $(u,m)\in [ C^{2+\alpha,1+\frac{\alpha}{2}}(Q)]^2.$
\end{defn}

\begin{defn}
    A solution $U=(u_0,m_0)$ is a \emph{stationary solution} of \eqref{eq:MFG2} if it satisfies the following system
    \begin{equation}\label{MFG2Linear0}
    \begin{cases}
        -\Delta u_0(x) + [\nabla u_0(x)]^TA(x) \nabla u_0(x)= 0&\quad \text{in }Q,\\
        -\Delta m_0(x) - 2\text{div}\left( m_0(x)[\nabla u_0(x)]^TA(x)\right)= 0  &\quad \text{in }Q.
    \end{cases}
\end{equation}
\end{defn}

    Then, we are able to reconstruct the running costs, Hamiltonian and stable stationary state solutions, through measurements for general analytic cost functions, and the following uniqueness result holds:

\begin{thm}\label{mainthm}
For $i=1,2$, let $A_i$ be an admissible Riemannian metric and $F_i\in\mathcal{A}$ such that $U_i=(u_{0,i},m_{0,i})\in [C^{2+\alpha}(\Omega)]^2$ is a stable stationary solution of the following system:
   \begin{equation}\label{eq:MFG2i}
    \begin{cases}
        -\partial_t u(x,t) -\Delta u(x,t) + [\nabla u(x,t)]^TA_i(x)\nabla u(x,t) = F_i(x,m) &\quad \text{in }Q,\\
        \partial_t m(x,t) -\Delta m(x,t) - 2\text{div}\left(m(x,t)[\nabla u(x,t)]^TA_i(x)\right) = 0  &\quad \text{in }Q,\\
        u(x,T)=u_{T,i},\quad m(x,0)=f_i(x) &\quad \text{in }\Omega,
    \end{cases}
\end{equation}
such that the Hamiltonian $\mathcal{H}_i=[\nabla u]^TA_i\nabla u$ is non-degenerate at the stationary state $U_i$. 
 Let $\mathcal{M}_{F_i,A_i,U_i}$ be the associated measurements. Suppose \[\mathcal{M}_{F_1,A_1,U_1}=\mathcal{M}_{F_2,A_2,U_2},\] 
     Then, one has \[F_1=F_2,\quad \quad U_1=U_2\]
     and 
     \[A_1(x)=A_2(x)\quad \text{ in }Q,\]
     up to the conformal class $C_g$.
\end{thm}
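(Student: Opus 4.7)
The strategy combines the higher-order linearization framework of Section \ref{HLM} with geometric optics (CGO) probing modes in the spirit of Theorem \ref{thm:cgo}, adapted to a coupled parabolic system linearized around a time-independent background. The argument proceeds in four stages: matching the Cauchy data of the stationary background, linearizing at first order (which isolates the pair $(A,U)$ because $F^{(1)}\equiv 0$), running a CGO/Fourier argument to recover $A$ and $U$ simultaneously, and finally running an inductive higher-order linearization to recover the analytic coefficients $F^{(k)}$ for $k\ge 2$.

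\textbf{Stages 1--2.} First, since each $U_i=(u_{0,i},m_{0,i})$ is itself a (time-independent) solution of \eqref{eq:MFG2i}, it is recorded by the measurement map; equality $\mathcal M_{F_1,A_1,U_1}=\mathcal M_{F_2,A_2,U_2}$ therefore forces $u_{0,1}|_\Sigma=u_{0,2}|_\Sigma$, $\partial_\nu u_{0,1}|_\Sigma=\partial_\nu u_{0,2}|_\Sigma$, and similarly for $m_{0,i}$. Next I linearize \eqref{eq:MFG2i} around $U_i$ following Section \ref{HLM}: because $F_i\in\mathcal B$ satisfies $F_i^{(1)}\equiv 0$ (Definition \ref{AdmissClass1}(iii)), the first-order linearized system \eqref{MFG2:stable} involves only $A_i$ and $U_i$ and does not see $F_i$ at all. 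Stability of $U_i$ guarantees unique solvability for every admissible boundary pair $(g,h)$ satisfying \eqref{compatibility conditions }. Writing $(\bar u,\bar m)=(u_1^{(1)}-u_2^{(1)},m_1^{(1)}-m_2^{(1)})$, the matching Cauchy data on $\Gamma$ and an integration-by-parts against the adjoint linearized system produce an integral identity of the form
\begin{equation*}
\int_Q\Bigl\{[\nabla u_0]^T(A_1-A_2)\nabla u^{(1)}\,w+\cdots\Bigr\}\,dx\,dt=0
\end{equation*}
valid for a rich family of linearized solutions $(u^{(1)},m^{(1)})$ and adjoint test functions $w$, where the omitted terms encode the difference $U_1-U_2$.

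\textbf{Stage 3.} Because the backgrounds $U_i$ are time-independent, I seek probing modes of separated form $u^{(1)}(x,t)=e^{\lambda t}\phi(x)$, $m^{(1)}(x,t)=e^{\lambda t}\psi(x)$, reducing \eqref{MFG2:stable} to a coupled \emph{elliptic} system in $(\phi,\psi)$ with spectral parameter $\lambda$; the calculation is analogous to \eqref{calc} and yields a second-order scalar equation of the form $\Delta\psi+b(x)\cdot\nabla\psi+q_\lambda(x)\psi=0$ (together with a similar one for the adjoint). I then construct CGO solutions $\psi=e^{\xi\cdot x-\frac12 u_0}(1+\omega(x;\xi))$ with $\xi\in\mathbb C^n$, $\xi\cdot\xi=0$, $|\xi|\to\infty$, exactly as in Theorem \ref{thm:cgo} and Lemma \ref{zs_main_lemma}, with a complementary CGO on the adjoint side chosen as in \eqref{choose_xi}. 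Passing to the limit $|\xi|\to\infty$ in the integral identity kills the $\omega$-remainders by the decay \eqref{decay}, and what survives is (after Fourier inversion in $k\in\mathbb R^n$) the statement that a certain pointwise combination of $(\kappa_1-\kappa_2)(x)$ and $(U_1-U_2)(x)$, weighted by a non-vanishing factor built from $\nabla u_0$ and $m_0$, is identically zero. The non-degeneracy of $\mathcal H_i$ at $U_i$ ensures that this weight does not vanish on any open set, and the fixed conformal class $C_g$ of Definition \ref{def:A} eliminates the usual gauge obstruction. Combining these with the matching Cauchy data of Stage 1 (to which one applies boundary determination of $\kappa$ and uniqueness for the stationary HJ equation), I conclude $\kappa_1=\kappa_2$, hence $A_1=A_2$ within $C_g$, and $U_1=U_2$.

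\textbf{Stage 4 and main obstacle.} With $A:=A_1=A_2$ and $U:=U_1=U_2$ in hand, the $N$-th order linearization for $N\ge 2$ first meets $F^{(k)}$ at order $k$ (since $F^{(0)}=F^{(1)}=0$ by admissibility). Inducting on $k$, the difference of two $k$-th order linearizations reduces to an equation of the form $\mathcal L\,(\bar u^{(1,\dots,k)},\bar m^{(1,\dots,k)})=(F_1^{(k)}-F_2^{(k)})\prod_\ell m^{(\ell)}$, and the same CGO/Fourier argument from Stage 3 (now applied to the scalar unknown $F_1^{(k)}-F_2^{(k)}$) yields $F_1^{(k)}=F_2^{(k)}$; analyticity in $z$ then gives $F_1=F_2$ on $\Omega\times\mathbb R$. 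The principal technical obstacle is Stage 3: unlike the simpler stationary inverse problem of Theorem \ref{zs_main_result}, both $A$ and $U$ are unknown, they enter the linearized operator multiplicatively through $[\nabla u_0]^TA$, and their contributions could in principle cancel in the CGO limit. Disentangling them requires (i) the restriction to the conformal class $C_g$, (ii) the non-degeneracy of $\mathcal H$ at $U$, and (iii) a careful choice of several independent CGO pairs with varying spectral parameter $\lambda$ so as to extract $\kappa$ and $U$ from independent Fourier moments; uniformly controlling the CGO remainders as $|\xi|\to\infty$ while $\lambda$ is varied, and transferring the $H^1$-construction of Theorem \ref{thm:cgo} into the H\"older setting via the Runge approximation Lemma \ref{denseness}, is the main technical load.
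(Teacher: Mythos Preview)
Your broad outline is sound: you correctly identify that $F^{(1)}\equiv 0$ decouples the first-order linearized system from $F$, and that induction on the Taylor order will eventually recover all $F^{(k)}$. However, both the machinery and the logical flow of Stage~3 differ substantially from the paper, and your version has a genuine gap.

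The paper does \emph{not} reduce to an elliptic problem via separation of variables $e^{\lambda t}\phi(x)$ and does not invoke the elliptic CGO of Theorem~\ref{thm:cgo}. Instead it works directly with \emph{parabolic} CGO solutions (Theorem~\ref{thm:cgotime}, phase $\rho^2 t+\rho\zeta\cdot x$) for the decoupled backward equation $-\partial_t u^{(1)}-\Delta u^{(1)}+q_i\cdot\nabla u^{(1)}=0$, where $q_i:=2A_i\nabla u_{0,i}$. The integral identity and $\rho\to\infty$ limit yield exactly $q_1=q_2$, nothing more. This is the crucial point your Stage~3 obscures: the first-order linearization sees only the drift vector $q$, not $A$ and $U$ separately, and no amount of varying a spectral parameter $\lambda$ will separate them, since $\lambda$ enters only as a zeroth-order shift uncoupled from the unknowns. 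Your proposed mechanism of ``several independent CGO pairs with varying $\lambda$'' to extract independent Fourier moments of $\kappa$ and $U$ therefore cannot work.

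What actually separates $A$ from $U$ in the paper is a second, independent ingredient you only mention in passing: the \emph{stationary} equations \eqref{MFG2Linear0i}. Once $q_1=q_2=:q$, the function $u_{0,i}$ solves the elliptic equation $-\Delta u_{0,i}+\tfrac12 q\cdot\nabla u_{0,i}=0$ with matching Cauchy data on $\partial\Omega$ (from Stage~1), and elliptic unique continuation gives $u_{0,1}=u_{0,2}$. Then $q_1=q_2$ together with $\nabla u_{0,1}=\nabla u_{0,2}$ and the conformal-class constraint $A_i=\kappa_i g$ forces $\kappa_1=\kappa_2$. Finally the stationary Fokker--Planck equation plus elliptic UCP gives $m_{0,1}=m_{0,2}$. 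This two-step structure (CGO for $q$, then stationary equations plus UCP for $U$ and $\kappa$) is the missing idea in your argument.

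For Stage~4 the paper also takes a different route: rather than a second CGO/Fourier argument, it applies a coupled parabolic unique continuation principle (Theorem~\ref{UCP}, proved via a two-parameter Carleman estimate) to the difference $(\bar u^{(1,\dots,k)},\bar m^{(1,\dots,k)})$, which has vanishing Cauchy data on $\Sigma$; this forces the difference to vanish identically and hence $(F_1^{(k)}-F_2^{(k)})\prod_\ell m^{(\ell)}=0$ in $Q$. Your CGO approach here is plausible but would require building CGO solutions for the \emph{coupled} linearized operator $\mathcal L$ with the correct asymptotics, which is considerably more work than invoking UCP.
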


\subsection{Recovery of Stationary Solution and Hamiltonian}
Before we begin the proof of the main theorem, we first give an essential auxiliary lemma regarding the construction of complex geometric optics solutions, which is a result of \cite{ParabolicConvectionVectorCGO}.

\begin{thm}\label{thm:cgotime} Let $\psi:=\rho^2t+\rho\zeta\cdot x$ for some fixed $\zeta\in\mathbb{S}^{n-1}$. Suppose $\phi\in [C^1(Q)]^n$ and $q\in L^{\infty}(Q)$. For $\xi\in\mathbb{R}^n$ and $\tau\in\mathbb{R}$, and any arbitrary $\chi\in C_c^\infty((0,T))$ and $\rho>0$,
	\begin{enumerate}
	    \item there exist solutions $w\in H^1(0,T;H^{-1}(\Omega))\cap L^2(0,T;H^1(\Omega))$ to the forward parabolic equation \begin{equation}\label{forwardcgoeq}\begin{cases}
	\partial_t w - \Delta w -\phi\cdot\nabla w+ qw=0\quad &\text{ in }Q,\\
 w(x,t)=0 &\text{ on }\Sigma,\\
 w(x,0)=0 &\text{ in }\Omega,\end{cases}
\end{equation}  of the form
	\begin{equation}\label{cgo1}
		w=e^\psi\left(\chi(t)e^{-i(x,t)\cdot(\xi,\tau)}\exp\left(\frac{1}{2}\int_0^\infty \zeta\cdot\phi(x+s\xi)\,ds\right) +z_+(x,t)\right)
	\end{equation}
	where $z_+(x,t;\phi,q)\in H^1(0,T;H^{-1}(\Omega))\cap L^2(0,T;H^1(\Omega))$ satisfies the following condition:
	\begin{equation}\label{decay1}
		\lim_{\rho\to\infty}\norm{z_+(x,t;\phi,q)}_{L^2(Q)}=0.
	\end{equation}
 
 \item There exist solutions $v\in H^1(0,T;H^{-1}(\Omega))\cap L^2(0,T;H^1(\Omega))$ to the backward parabolic equation \begin{equation}\label{backwardcgoeq}\begin{cases}
	-\partial_t v - \Delta v +\phi\cdot\nabla v+ qv=0\quad &\text{ in }Q,\\
 v(x,T)=0 &\text{ in }\Omega,\end{cases}
\end{equation} 
of the form 
\begin{equation}\label{cgo2}
		v=e^{-\psi}\left(\chi(t)\exp\left(-\frac{1}{2}\int_0^\infty \zeta\cdot\phi(x+s\xi)\,ds\right) +z_-(x,t)\right)
	\end{equation}
	where $z_-(x,t;\phi,q)\in H^1(0,T;H^{-1}(\Omega))\cap L^2(0,T;H^1(\Omega))$ satisfies the following condition:
	\begin{equation}\label{decay2}
		\lim_{\rho\to\infty}\norm{z_-(x,t;\phi,q)}_{L^2(Q)}=0.
	\end{equation}
	\end{enumerate}
\end{thm}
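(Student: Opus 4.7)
The construction is the standard parabolic CGO ansatz. Substitute $w=e^{\psi}\tilde w$ into \eqref{forwardcgoeq} and compute the conjugated operator. Since $\partial_t\psi=\rho^2$, $\Delta\psi=0$, and $|\nabla\psi|^2=\rho^2$, the weight $e^{\psi}$ itself solves the free heat equation, and the equation satisfied by $\tilde w$ reduces to
\begin{equation*}
\partial_t\tilde w-\Delta\tilde w-2\rho\,\zeta\cdot\nabla\tilde w-\phi\cdot\nabla\tilde w-\rho(\zeta\cdot\phi)\tilde w+q\tilde w=0.
\end{equation*}
Writing $\tilde w=A+z_+$, the plan is to choose an explicit amplitude $A$ that kills the two $\rho$-order contributions, so that $z_+$ satisfies an inhomogeneous equation whose right-hand side is bounded uniformly in $\rho$.

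At leading order in $\rho$, the amplitude must satisfy the transport-type equation $2\zeta\cdot\nabla A+(\zeta\cdot\phi)A=0$. Seeking $A$ in the form $A(x,t)=\chi(t)\,e^{-i(x,t)\cdot(\xi,\tau)}a(x)$ and using the elementary identity $\xi\cdot\nabla\!\bigl(\int_0^{\infty}\zeta\cdot\phi(x+s\xi)\,ds\bigr)=-\zeta\cdot\phi(x)$ (valid for a compactly supported extension of $\phi$, and suitably modified using the orthogonality of the transport direction to $\zeta$), one verifies that the prescribed prefactor $a(x)=\exp\!\bigl(\tfrac12\int_0^{\infty}\zeta\cdot\phi(x+s\xi)\,ds\bigr)$ cancels the $\rho$-terms. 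The leftover source $f_\rho:=-\partial_t A+\Delta A+\phi\cdot\nabla A-qA$ is then $\rho$-independent in $L^2(Q)$, and the cutoff $\chi\in C_c^{\infty}((0,T))$ guarantees that $A$ is automatically compatible with the zero initial and lateral conditions in \eqref{forwardcgoeq}.

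The main analytic step is to solve the remainder equation
\begin{equation*}
\partial_t z_+-\Delta z_+-2\rho\,\zeta\cdot\nabla z_+-\phi\cdot\nabla z_+-\rho(\zeta\cdot\phi)z_++qz_+=f_\rho,
\end{equation*}
together with $z_+|_{t=0}=0$ and $z_+|_\Sigma=0$, and to prove an estimate of the type $\|z_+\|_{L^2(Q)}\le C\rho^{-1}\|f_\rho\|_{L^2(Q)}$ with a corresponding uniform bound for $\nabla z_+$ in $L^2$. This is obtained from a Carleman estimate for the heat operator with the linear limiting weight $\psi$: solvability of the conjugated operator $\partial_t-\Delta-2\rho\,\zeta\cdot\nabla$ on $H^1(0,T;H^{-1}(\Omega))\cap L^2(0,T;H^1(\Omega))$ yields a $\rho^{-1}$ gain in $L^2$, and the perturbation $-\phi\cdot\nabla-\rho(\zeta\cdot\phi)+q$ is absorbed for $\rho$ large, since $\phi\in [C^1(Q)]^n$ and $q\in L^{\infty}(Q)$. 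This Carleman-type solvability is the genuine obstacle: the first-order term $2\rho\,\zeta\cdot\nabla$ enters at the same critical scale as the weight, so one must exploit the fact that $\psi$ is a limiting Carleman weight for $\partial_t-\Delta$, rather than a strict one, and compensate by the presence of the unperturbed zero-order term coming from $|\nabla\psi|^2=\partial_t\psi$.

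The backward statement \eqref{backwardcgoeq}--\eqref{decay2} follows by an identical argument under the time-reversal $t\mapsto T-t$, which exchanges forward and backward heat operators and flips the sign of the drift term; equivalently, by conjugating with $e^{-\psi}$ and repeating the computation, the role of $A$ being played by $\chi(t)\,\exp\!\bigl(-\tfrac12\int_0^{\infty}\zeta\cdot\phi(x+s\xi)\,ds\bigr)$. The decay of $z_\pm$ in $L^2(Q)$ as $\rho\to\infty$ is then a direct consequence of the $\rho^{-1}$ gain in the Carleman inverse, together with the $\rho$-uniform bound on $f_\rho$.
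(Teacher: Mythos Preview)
The paper does not prove this theorem; it is quoted verbatim as ``a result of \cite{ParabolicConvectionVectorCGO}'' (Sahoo--Vashisth). Your outline---conjugate by $e^{\psi}$, solve a first-order transport equation for the amplitude, then invert the conjugated heat operator on the remainder via a Carleman-type estimate giving a $\rho^{-1}$ gain---is exactly the method of that reference, so at the level of strategy your proposal matches what the paper relies on.

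Two concrete points need care. First, the transport equation you correctly derive is $2\zeta\cdot\nabla A+(\zeta\cdot\phi)A=0$, so the relevant derivative is $\zeta\cdot\nabla$, not $\xi\cdot\nabla$; the identity you write, with differentiation and integration both along $\xi$, does not by itself solve that transport equation, and your parenthetical ``suitably modified using the orthogonality of the transport direction to $\zeta$'' is circular (the transport direction \emph{is} $\zeta$). In the actual Sahoo--Vashisth construction the amplitude is built by integrating along the weight direction $\zeta$; the appearance of $\xi$ in the integral in the statement is most likely a transcription issue, and you should flag rather than paper over it. Second, the time cutoff $\chi\in C_c^\infty((0,T))$ forces $A(\cdot,0)=0$ but does nothing for the lateral condition $w|_{\Sigma}=0$; the amplitude $\chi(t)e^{-i(x,t)\cdot(\xi,\tau)}a(x)$ does not vanish on $\partial\Omega$. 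In fact, where the paper later \emph{uses} these solutions (around \eqref{TestFunction}) the lateral Dirichlet condition is dropped, which suggests the statement here is overstated; but as written, your claim that $\chi$ handles the ``zero initial and lateral conditions'' is incorrect.
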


Since we mainly work in H\"{o}lder space and we only get exponentially growing solutions in $H^1(\Omega)$, we still need the following denseness property.

\begin{lemma}[Runge approximation with full data]\label{RungeDenseApprox}
   Suppose $\phi\in [C^1(Q)]^n$ and $q\in L^{\infty}(Q)$. Then for any solutions $w,v\in L^2(0,T;H^1(\Omega))\cap H^1(0,T;H^{-1}(\Omega))$	to \begin{equation}\label{forwardcgoeqapprox}\begin{cases}
	\partial_t w - \Delta w -\phi\cdot\nabla w+ qw=0\quad &\text{ in }Q,\\
 w(x,0)=0 &\text{ in }\Omega,\end{cases}
\end{equation} 
and 
\begin{equation}\label{backwardcgoeqapprox}\begin{cases}
	-\partial_t v - \Delta v +\phi\cdot\nabla v+ qv=0\quad &\text{ in }Q,\\
 v(x,T)=0 &\text{ in }\Omega,\end{cases}
\end{equation} respectively, 
and  any $\eta,\eta'>0$, there exist solutions $W,V\in C^{2+\alpha,1+\alpha/2}(\overline{Q})$ to \begin{equation}\label{forwardcgoeqapproxV}\begin{cases}
	\partial_t W - \Delta W -\phi\cdot\nabla W+ qW=0\quad &\text{ in }Q,\\
 W(x,0)=0 &\text{ in }\Omega,\end{cases}
\end{equation} 
and 
\begin{equation}\label{backwardcgoeqapproxW}\begin{cases}
	-\partial_t V - \Delta V +\phi\cdot\nabla V+ qV=0\quad &\text{ in }Q,\\
 V(x,T)=0 &\text{ in }\Omega,\end{cases}
\end{equation}  respectively
	such that
	\[\norm{W-w}_{L^2(Q)}<\eta\quad \text{ and }\quad \norm{V-v}_{L^2(Q)}<\eta.\]
\end{lemma}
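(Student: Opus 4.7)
The plan is to adapt the elliptic Runge-type approximation argument in Lemma \ref{denseness} to the parabolic setting. The core idea is that although $w$ (respectively $v$) is merely an energy-space solution with no prescribed boundary data on $\Sigma$, its lateral trace $w\vert_\Gamma$ lies in an $L^2$-type Sobolev space; by approximating this trace by a smoother, compatibility-respecting function and re-solving the initial-boundary-value problem, one produces a Hölder-regular solution that is $L^2(Q)$-close to $w$. The backward case will follow by time reversal.

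For the forward equation, I would proceed as follows. By standard parabolic trace theory, $w\vert_\Gamma\in L^2(\Gamma)$ (and in fact lies in an anisotropic $H^{1/2,1/4}$ space), and the regularity $w\in C([0,T];L^2(\Omega))$ together with $w(\cdot,0)=0$ forces the trace to vanish at $\Sigma\times\{0\}$ in the appropriate sense. I would then select $M\in C^{2+\alpha,1+\alpha/2}(\overline{\Gamma})$ with $M(\cdot,0)=0$ (and vanishing in a neighbourhood of $t=0$ for compatibility to higher order) such that
\begin{equation*}
\|M-w\vert_\Gamma\|_{L^2(\Gamma)}<\eta/C,
\end{equation*}
which is feasible by multiplying $w\vert_\Gamma$ by a smooth temporal cutoff that vanishes near $t=0$ and then mollifying in $(x,t)$. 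Next, solve
\begin{equation*}
\begin{cases}
\partial_t W-\Delta W-\phi\cdot\nabla W+qW=0 & \text{in }Q,\\
W=M & \text{on }\Gamma,\\
W(\cdot,0)=0 & \text{in }\Omega,
\end{cases}
\end{equation*}
which, by standard Schauder-type parabolic theory and the arranged compatibility of $M$, yields a unique $W\in C^{2+\alpha,1+\alpha/2}(\overline{Q})$. The difference $W-w$ satisfies the same PDE with zero initial data and lateral trace $M-w\vert_\Gamma$, so an energy estimate (testing against $W-w$ and using Grönwall) gives $\|W-w\|_{L^2(Q)}\leq C\|M-w\vert_\Gamma\|_{L^2(\Gamma)}<\eta$.

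For the backward problem, I would apply the change of variables $t\mapsto T-t$, which turns \eqref{backwardcgoeqapprox} into a forward equation of the same type with $\phi\mapsto-\phi$ and with initial condition $0$ at the new time zero; the argument above then produces $V\in C^{2+\alpha,1+\alpha/2}(\overline{Q})$ with $\|V-v\|_{L^2(Q)}<\eta$. The main obstacle I expect is the matching of parabolic compatibility conditions at the corner $\Sigma\times\{0\}$ (respectively $\Sigma\times\{T\}$): classical Hölder regularity up to the closure of $Q$ requires not only $M(\cdot,0)=0$ but also the vanishing of time-derivatives computed via the equation, which is not automatic for a generic smooth approximant. I would bypass this cleanly by arranging $M$ to be supported away from $t=0$ (and likewise for the backward case away from $t=T$), which trivially satisfies all orders of compatibility and costs only an arbitrarily small $L^2$ error since $w\vert_\Gamma$ itself vanishes at $t=0$.
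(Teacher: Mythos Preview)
Your strategy is sound and genuinely different from the paper's, but one step needs repair.

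The paper argues by Hahn--Banach rather than by direct construction. Writing $X$ for the H\"older-regular solutions and $Y$ for the energy-space solutions (both with zero initial data), it shows that any $f\in L^2(Q)$ with $\int_Q fW=0$ for all $W\in X$ must satisfy $\int_Q fw=0$ for all $w\in Y$. To do this it solves the adjoint problem $-\partial_t\tilde W-\Delta\tilde W+\phi\cdot\nabla\tilde W+(\mathrm{div}\,\phi)\tilde W+q\tilde W=f$ in $Q$ with $\tilde W=0$ on $\Gamma$ and $\tilde W(\cdot,T)=0$; integration by parts yields $\int_Q fW=\int_\Gamma(\partial_\nu\tilde W)\,W\,dS\,dt$, and since $W|_\Gamma$ can be any smooth compactly supported function one concludes $\partial_\nu\tilde W=0$ on $\Gamma$, whence $\int_Q fw=0$ for every $w\in Y$. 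This duality route completely sidesteps the compatibility bookkeeping you flag and requires no quantitative boundary-to-interior estimate.

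Your direct construction can also be completed, but the sentence ``testing against $W-w$ and using Gr\"onwall gives $\|W-w\|_{L^2(Q)}\le C\|M-w|_\Gamma\|_{L^2(\Gamma)}$'' is a gap: multiplying the equation for $d=W-w$ by $d$ produces the lateral term $\int_{\partial\Omega}(\partial_\nu d)\,d\,dS$, which neither vanishes (since $d|_\Gamma=M-w|_\Gamma\not\equiv0$) nor is bounded by the data, so Gr\"onwall cannot close. The clean repair is to approximate $M$ close to $w|_\Gamma$ in the anisotropic trace norm $H^{1/2,1/4}(\Gamma)$ you already identified, lift this trace into the energy space, and run the estimate on the homogenised difference; then $\|W-w\|_{L^2(0,T;H^1(\Omega))}\le C\|M-w|_\Gamma\|_{H^{1/2,1/4}(\Gamma)}$ follows from standard parabolic theory. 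The $L^2(\Gamma)\to L^2(Q)$ bound you wrote is in fact true, but its proof goes through transposition and the hidden-regularity estimate $\partial_\nu\psi\in L^2(\Gamma)$ for the adjoint---which is exactly the paper's mechanism in different packaging.
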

\begin{proof}
    We will show the case for the forward parabolic equation, and the backward one can be proved similarly. Define
	\[X=\left\{W\in C^{2+\alpha,1+\alpha/2}(\overline{Q})\,\Big|\, 
	W \text{ is a solution to } \eqref{forwardcgoeq} \right\}\] and
	\[Y=\left\{w\in L^2(0,T;H^1(\Omega))\cap H^1(0,T;H^{-1}(\Omega)) \,\Big|\, w \text{ is a solution to } \eqref{forwardcgoeq} \right\}.\] 
	We aim to show that $X$ is dense in $Y$. By the Hahn-Banach theorem, it suffices to prove the following statement: If $f\in L^2(Q)$ satisfies
	$$\int_{Q}fW\,dxdt=0, \quad \text{ for any } W\in X,$$
	then
	$$\int_{Q}fw\,dxdt=0, \quad \text{ for any }w\in Y.$$
	To this end, 
	let $f\in L^2(Q)$ and suppose $\int_{Q}fW\,dxdt=0$,  for any  $W\in X$. Consider     
    \begin{equation}\begin{cases}
	-\partial_t \tilde{W} - \Delta \tilde{W} +\phi\cdot\nabla \tilde{W}+\tilde{W}\text{div}\phi+ q\tilde{W}=f\quad &\text{ in }Q,\\
    \tilde{W}(x,t)=0\quad&\text{ on }\Sigma,\\
 \tilde{V}(x,T)=0 &\text{ in }\Omega,\end{cases}
\end{equation} 
with solution in $H^{2, 1}(Q)$. For any $W\in X$, one has 
\begin{align*}
	0=&\int_{Q}fW\,dx\,dt=\int_Q (-\partial_t \tilde{W} - \Delta \tilde{W} +\phi\cdot\nabla \tilde{W}+\tilde{W}\text{div}\phi+ q\tilde{W}) W\,dx\,dt=\int_{\Sigma} \p_\nu \overline{W}W\, dS\,dt.
\end{align*}
Since $W|_{\Sigma}$ can be arbitrary function, which  is compactly supported on $\Sigma$, we must have $\partial_\nu \overline{W}=0$ on $\Sigma$.
Thus, for any $ w\in Y$,
\begin{align*}
	\int_{Q}fw \,dx\,dt&=\int_Q (-\partial_t \tilde{W} - \Delta \tilde{W} +\phi\cdot\nabla \tilde{W}+\tilde{W}\text{div}\phi+ q\tilde{W}) w\,dxdt=\int_{\Sigma}\p_\nu\overline{W}w\,dSdt=  0,
\end{align*}
which verifies the assertion.
\end{proof}

Next, we give the proof of Theorem \ref{mainthm}, beginning with the recovery of the stationary solution and the Hamiltonian.

Let $(u_{0,i},m_{0,i})$ be the solutions of \eqref{eq:MFG2i} which satisfy \eqref{MFG2Linear0} with corresponding Hamiltonian $A_i(x)$ and running cost $F_i(x,m)$, for $i=1,2$, i.e. 
\begin{equation}\label{MFG2Linear0i}
    \begin{cases}
        -\Delta u_{0,i}(x) + [\nabla u_{0,i}(x)]^TA_i(x) \nabla u_{0,i}(x)= 0&\quad \text{in }\Omega,\\
        -\Delta m_{0,i}(x) - 2\text{div}\left( m_{0,i}(x)[\nabla u_{0,i}(x)]^TA_i(x)\right)= 0  &\quad \text{in }\Omega.
    \end{cases}
\end{equation}
At the same time, the corresponding first order linearized solutions $(u^{(1)}_i,m^{(1)}_i)$ with the Hamiltonian $A_i(x)$ and running cost $F_i(x,m)$, for $i=1,2$, satisfy 
\begin{equation}\label{MFG2Linear1i}
    \begin{cases}
        -\partial_t u^{(1)}_i(x,t) -\Delta u^{(1)}_i(x,t) + [\nabla u_{0,i}(x)]^TA_i(x) \nabla u^{(1)}_i(x,t)
        \\\quad\quad\quad\quad\quad\quad\quad\quad\quad\quad\quad\quad + [\nabla u^{(1)}_i(x,t)]^TA_i(x) \nabla u_{0,i}(x)= 0 &\quad \text{in }Q,\\
        \partial_t m^{(1)}_i(x,t) -\Delta m^{(1)}_i(x,t) - 2\text{div}\left( m_{0,i}(x)[\nabla u^{(1)}_i(x,t)]^TA_i(x)\right) \\\quad\quad\quad\quad\quad\quad\quad\quad\quad\quad\quad\quad -2\text{div} \left(m^{(1)}_i(x,t)[\nabla u_{0,i}(x)]^TA_i(x)\right) = 0  &\quad \text{in }Q,\\
        u^{(1)}_i(x,t)=g_1,\quad m^{(1)}_i(x,t)=h_1 &\quad \text{in }\Gamma,\\
        u^{(1)}_i(x,T)=0, \quad m^{(1)}(x,0) = 0  &\quad \text{in }\Omega.
    \end{cases}
\end{equation}

Observe that we have the identity
\[[\nabla u_{0,i}]^TA_i \nabla u^{(1)}_i+[\nabla u^{(1)}_i]^TA_i \nabla u_{0,i}=\left(\left[[\nabla u_{0,i}]^TA_i\right]^T+A_i\nabla u_{0,i}\right)\cdot\nabla u^{(1)}_i = 2A_i\nabla u_{0,i}\cdot \nabla u^{(1)}_i. \]
Let $\tilde{u}=u^{(1)}_1-u^{(1)}_2$, $\tilde{m}=m^{(1)}_1-m^{(1)}_2$ and $q_i=2A_i\nabla u_{0,i}$. Then, taking the difference of the two equations for $i=1,2$ for the first equation in \eqref{MFG2Linear1i}, we have 
\begin{equation}\label{MFG2Linear1U}
    \begin{cases}
        -\partial_t \tilde{u} -\Delta \tilde{u} + q_1\cdot \nabla \tilde{u} + (q_1-q_2)\cdot\nabla u^{(1)}_2= 0 &\quad \text{in }Q,\\
        %\partial_t \tilde{m} -\Delta (\tilde{m}) - \nabla (_im_{0,i})\cdot \nabla u^{(1)}_i-_im_{0,i}\Delta u^{(1)}_i -q_i\cdot\nabla m^{(1)}_i -m^{(1)}_i \text{div} q_i= 0  &\quad \text{in }Q,\\
        \tilde{u}(x,t)=0,&\quad \text{in }\Gamma,\\
        \tilde{u}(x,T)=0&\quad \text{in }\Omega.
    \end{cases}
\end{equation}
Furthermore, by Theorem \ref{thm:cgotime}, we have a solution of $u_2^{(1)}$ to \eqref{MFG2Linear1i} of the form \eqref{cgo2}.

Consider the solution $w$ of the form \eqref{cgo1} to 
\begin{equation}\label{TestFunction}
    \begin{cases}
        \partial_t w - \Delta w - q_1\cdot \nabla w+w\text{div} q_1= 0 &\quad \text{in }Q,\\
        w(x,0)=0&\quad \text{in }\Omega.
    \end{cases}
\end{equation}
Multiplying \eqref{MFG2Linear1U} by $w$ and using integration by parts, we have
\[ \int_Q\tilde{u}\partial_t w -\int_Q \tilde{u}\Delta w - \int_Q \tilde{u}(q_1\cdot\nabla w + w\text{div} q_1) + \int_Q w(q_1-q_2)\cdot\nabla u^{(1)}_2(x,t)= 0.\] 
When $\mathcal{M}_{F_1,A_1,U_1}=\mathcal{M}_{F_2,A_2,U_2}$, this gives 
\begin{equation}\label{IntegralIdentity}\int_Q w(q_1-q_2)\cdot\nabla u^{(1)}_2(x,t)\leq C\rho^{\frac{1}{2}},\end{equation}  by following the argument of Section 5 of \cite{ParabolicConvectionVectorCGO}. 
This means that when $\rho\to\infty$, 
\begin{equation}\label{IntegralIdentity2}\frac{1}{\rho}\int_Q w(q_1-q_2)\cdot\nabla u^{(1)}_2(x,t)\to0.\end{equation}

Substituting into \eqref{IntegralIdentity2} the expressions for $u_2^{(1)}$ and $w$ from \eqref{cgo2} and \eqref{cgo1} respectively, in the limit $\rho\to\infty$, we have 
\[
\int_Q \chi^2(t)e^{-i(x,t)\cdot(\xi,\tau)} (q_1-q_2)\cdot\zeta\,dx\,dt= 0\quad\text{ for all }\zeta\in\mathbb{S}^{n-1}
\]
Since the above identity holds for all $\chi\in C_{c}^{\infty}(0,T)$, therefore we get 
\[
\int_{\mathbb{R}^n} e^{-ix\cdot\xi} (q_1-q_2)\cdot\zeta\,dx= 0\quad\text{ for all }\zeta\in\mathbb{S}^{n-1}.
\]
Identifying this as the Fourier transform with respect to $\xi$, and since this holds for all unit vectors $\zeta$, this gives that 
\begin{equation}\label{qEq}q_1=q_2\quad \text{ in }\Omega,\end{equation} and we denote $q=q_1=q_2$.

Next, we consider \eqref{MFG2Linear0i}. Then the first equation can be rewritten as 
\begin{equation}
        -\Delta u_{0,i}(x) + \frac{1}{2}q(x)\cdot \nabla u_{0,i}(x)= 0\quad \text{in }\Omega.
\end{equation}
Observe that this is a second order elliptic equation in $u_{0,i}$. Since $\mathcal{M}_{F_1,A_1,U_1}=\mathcal{M}_{F_2,A_2,U_2}$, in particular $u_{0,1}=u_{0,2}$ and $\partial_\nu u_{0,1}=\partial_\nu u_{0,2}$ on $\partial\Omega$, by the unique continuation principle for elliptic operators \cite{KochTataru2001UCPElliptic}, it must hold that 
\begin{equation}\label{uEq}u_{0,1}=u_{0,2} \quad \text{ in }\Omega,\end{equation} and we denote $u_0=u_{0,1}=u_{0,2}$.

In particular, it holds that $\nabla u_{0,1}=\nabla u_{0,2}$. By Definition \ref{def:A}, for $A_1=\kappa_1g$ and $A_2=\kappa_2g$, \eqref{qEq} can be rewritten as \[\kappa_1g\nabla u_{0,1}=\kappa_2g\nabla u_{0,2},\] from which we obtain \[\kappa_1=\kappa_2.\] This means that up to the conformal class $C_g$, 
\[A(x):=A_1(x)=A_2(x)\quad\text{ in }Q.\]

Therefore, the second equation in \eqref{MFG2Linear0i} can now be rewritten as 
\begin{equation}\label{MFG2Linear0M}
    -\Delta m_{0,i}(x) - \text{div}\left(q(x) m_{0,i}(x)\right)= 0  \quad \text{in }\Omega.
\end{equation}
Once again, this is a second order elliptic equation, this time in $m_{0,i}$. By the equality of the measurement maps $\mathcal{M}_{F_1,A_1,U_1}=\mathcal{M}_{F_2,A_2,U_2}$, in particular $m_{0,1}=m_{0,2}$ and $\partial_\nu m_{0,1}=\partial_\nu m_{0,2}$ on $\partial\Omega$, once again applying the unique continuation principle for elliptic operators, it must hold that 
\begin{equation}\label{mEq}m_0:=m_{0,1}=m_{0,2} \quad \text{ in }\Omega.\end{equation}

Substituting these results into \eqref{MFG2Linear1i}, we have that $\left(u_i^{(1)},m_i^{(1)}\right)$ satisfy a heat-type equation for $i=1,2$. When $\mathcal{M}_{F_1,A_1,U_1}=\mathcal{M}_{F_2,A_2,U_2}$, by the uniqueness of solutions for heat equations, it must hold that 
\begin{equation}\label{UM1Eq}u_1^{(1)}=u_2^{(1)}\quad \text{ and }m_1^{(1)}=m_2^{(1)}\quad \text{ in }Q.\end{equation}

\subsection{Recovery of Running Cost}

Now that we have obtained the equality of the stable stationary solutions $U=(u_0,m_0)$ and the uniqueness of Hamiltonian $A$, we can proceed with the unique identifiability results for $F$. The main ingredient for the recovery of $F$ is the following unique continuation principle:
\begin{thm}\label{UCP}

Let $\Sigma'\subset \Sigma$ be an arbitrarily chosen
non-empty relatively open sub-boundary. For $h\in L^\infty(Q)$, we assume that $(u_i,m_i) \in H^{2,1}(Q)\times H^{2,1}(Q)$ 
satisfy, for $i=1,2$, 
\begin{equation}\label{eq:UCPeq}
    \begin{cases}
        -\partial_t u_i(x,t) -\mathcal{L}_1 u_i(x,t) = E_i(x,t)+h(x,t)m_i(x,t) &\quad \text{in }Q,\\
        \partial_t m_i(x,t) -\mathcal{L}_2 m_i(x,t) + \mathcal{L}_3 u_i(x,t) = G_i(x,t)  &\quad \text{in }Q
    \end{cases}
\end{equation}
for some regular second-order elliptic operators $\mathcal{L}_1$, $\mathcal{L}_2$ and $\mathcal{L}_3$, such that the coefficients (depending on $x$ and $t$) of $\mathcal{L}_3$ are bounded,
and
\[\begin{cases}
u_i, \nabla u_i, \Delta u_i \in L^{\infty}(Q),\\
m_i, \nabla m_i \in L^{\infty}(Q), \quad 
\partial_t(u_1-u_2),\partial_t(m_1-m_2) \in L^2((\Sigma\backslash\Sigma')\times(0,T)).
\end{cases}\]
Then $u_1=u_2$, $\nabla u_1 = \nabla u_2$, $m_1=m_2$ and $\nabla m_1
= \nabla m_2$ on $\Sigma' \times (0,T)$ implies $u_1=u_2$ and $m_1=m_2$ 
in $Q$.

\end{thm}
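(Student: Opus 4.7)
The strategy is to reduce the claim to a unique continuation statement for a homogeneous coupled parabolic system, and then to establish it via a combined parabolic Carleman estimate applied to the two equations with a single weight. Setting $U=u_1-u_2$ and $M=m_1-m_2$, linearity yields the homogeneous coupled system
\begin{equation*}
-\partial_t U-\mathcal{L}_1 U=h M,\qquad \partial_t M-\mathcal{L}_2 M+\mathcal{L}_3 U=0 \quad\text{in }Q,
\end{equation*}
with vanishing Cauchy data $U=\nabla U=M=\nabla M=0$ on $\Sigma'\times(0,T)$. Under the hypotheses, $(U,M)\in H^{2,1}(Q)\times H^{2,1}(Q)$ with the regularity required to apply the Carleman machinery.

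First I would localize near an arbitrary $x_0\in\Sigma'$: choose a ball $B=B(x_0,r)$ such that $B\cap\Sigma\subset\Sigma'$, reflect $\Omega\cap B$ across $\Sigma\cap B$ to obtain a slightly larger open set $\widetilde\Omega\supset\Omega\cap B$, and extend $U,M$ by zero in $(\widetilde\Omega\setminus\overline\Omega)\times(0,T)$. Because the full Cauchy data vanish on $\Sigma'\times(0,T)$, the extended functions lie in $H^{2,1}(\widetilde\Omega\times(0,T))$ and satisfy the same coupled system (with appropriately extended coefficients) across $\Sigma'\cap B$ in the weak sense. Next, for a smooth pseudo-convex function $\psi$ on $\widetilde\Omega$ with $|\nabla\psi|\neq 0$ and $\psi(x_0)$ strictly larger than $\psi$ in the rest of $B$, introduce the Fursikov--Imanuvilov weight $\varphi(x,t)=e^{\lambda\psi(x)}/[t(T-t)]$. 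For $\lambda,s$ sufficiently large the standard parabolic Carleman estimate gives, for any $W\in H^{2,1}$ compactly supported in $x$ in a neighborhood of $x_0$,
\begin{equation*}
\int_Q e^{2s\varphi}\!\left(s^{-1}\varphi^{-1}|D^2 W|^2+s\varphi|\nabla W|^2+s^3\varphi^3|W|^2\right)dx\,dt\le C\int_Q e^{2s\varphi}|PW|^2\,dx\,dt,
\end{equation*}
where $P$ denotes either $-\partial_t-\mathcal{L}_1$ or $\partial_t-\mathcal{L}_2$. The boundary integrals that arise in the derivation of this estimate vanish on $\Sigma'\times(0,T)$ by the Cauchy data hypothesis, and on the remaining lateral boundary are controlled using the $L^2$-assumption on $\partial_t(u_1-u_2),\partial_t(m_1-m_2)$ together with the $L^\infty$ bounds on $u_i,\nabla u_i,\Delta u_i,m_i,\nabla m_i$.

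I would then apply this estimate simultaneously to $U$ and to $M$ with the same weight and add the two inequalities. The zero-order coupling yields $\int_Q e^{2s\varphi}|hM|^2\le\|h\|_\infty^2\int_Q e^{2s\varphi}|M|^2$, which is absorbed by the $s^3\varphi^3|M|^2$ term on the left for $s$ large. The second-order coupling term $\mathcal{L}_3 U$ produces $\int_Q e^{2s\varphi}|\mathcal{L}_3 U|^2\le C\int_Q e^{2s\varphi}(|D^2U|^2+|\nabla U|^2+|U|^2)$, and this is absorbed using the $s^{-1}\varphi^{-1}|D^2U|^2$, $s\varphi|\nabla U|^2$, and $s^3\varphi^3|U|^2$ pieces on the left-hand side of the $U$-estimate, again after choosing $s$ sufficiently large. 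The resulting inequality has only $|U|^2+|M|^2$ supported away from a neighborhood of $x_0$ on the right. Letting $s\to\infty$ and exploiting that $\varphi$ attains its maximum along the level set $\{\psi=\psi(x_0)\}$ forces $U=M=0$ on a space-time neighborhood of $\{x_0\}\times(0,T)$ inside $\Omega$. Covering $\Sigma'$ by such neighborhoods and then propagating the vanishing inward via a standard chain of pseudo-convex level sets (the usual Holmgren/Hörmander propagation-of-smallness argument for parabolic operators) yields $U\equiv 0$ and $M\equiv 0$ on all of $Q$.

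The main obstacle is absorbing the principal-order coupling $\mathcal{L}_3 U$ in the equation for $M$: this is not a zero-order perturbation, so the classical Carleman estimate is not enough. The resolution is to work with the strengthened Fursikov--Imanuvilov variant that retains $s^{-1}\varphi^{-1}|D^2W|^2$ on the left, together with the boundedness hypothesis on the coefficients of $\mathcal{L}_3$; this is precisely why the coefficients of $\mathcal{L}_3$ need to be assumed bounded. A secondary technical difficulty is controlling the boundary terms on $(\Sigma\setminus\Sigma')\times(0,T)$, which is exactly the role of the extra $L^2$-integrability assumption placed on $\partial_t(u_1-u_2)$ and $\partial_t(m_1-m_2)$.
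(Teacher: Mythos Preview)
Your absorption step for the second-order coupling $\mathcal{L}_3 U$ does not work as written. From the $M$-Carleman estimate the right-hand side contains $C\int_Q e^{2s\varphi}|D^2U|^2$, with no negative power of $s$. You propose to absorb this by the term $\int_Q e^{2s\varphi}\,s^{-1}\varphi^{-1}|D^2U|^2$ on the left of the $U$-estimate. But $s^{-1}\varphi^{-1}\to 0$ as $s\to\infty$ (on any time-compact region), so a term with coefficient of order $1$ cannot be absorbed by one with coefficient $s^{-1}\varphi^{-1}$ by taking $s$ large. Simply ``retaining $s^{-1}\varphi^{-1}|D^2W|^2$ on the left'' is therefore not the resolution.

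The actual cure---and this is what the paper does---is to apply the two Carleman estimates with \emph{asymmetric} powers of $s\varphi$: use a shifted-weight Carleman estimate for $U$ (multiply the standard estimate by $s\varphi$, as in Lemma~\ref{4} and Theorem~\ref{t3}), so that the left-hand side carries $|D^2U|^2$ with coefficient $1$, $s^2\varphi^2|\nabla U|^2$, and $s^4\varphi^4|U|^2$, while the right-hand side becomes $C\int s\varphi|hM|^2$. Then $C\int e^{2s\varphi}|D^2U|^2$ coming from $|\mathcal{L}_3 U|^2$ in the $M$-estimate is genuinely dominated by the $U$-left-hand side, and $C\int e^{2s\varphi}s\varphi|M|^2$ coming from the $U$-right-hand side is absorbed by $s^3\varphi^3|M|^2$ on the $M$-left-hand side. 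The resulting combined inequality is exactly the unbalanced estimate \eqref{UCPeq6} in the paper. Beyond this gap, your overall architecture also differs from the paper's: the paper uses the weight $\varphi(x,t)=e^{\lambda(d(x)-\beta(t-t_0)^2)}$ with a global spatial function $d$ satisfying $d=0$ on $\partial\Omega\setminus\Sigma'$ (so the estimate is global in $\Omega$, local around $t_0$, and one then slides $t_0$ over $(0,T)$), and it keeps the lateral boundary terms $\mathcal{B}(w)$ explicitly rather than extending by zero across $\Sigma'$ and propagating via level sets.
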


\begin{proof}
    The proof follows similarly as in \cite{imanuvilov2023unique}. Indeed, for arbitrarily fixed $t_0\in (0,T)$ and $\delta>0$ such that 
$0 < t_0 - \delta \le t_0 + \delta < T$, consider $I = (t_0-\delta, t_0+\delta)$ and $Q_I = \Omega \times I.$ Set  
\[
P_k w(x,t):= \partial_tw + (-1)^k a(x,t)\Delta w + R(x,t,w),
\quad k=1,2,
\]
where $a \in C^{2}(\overline{Q_I})$, $>0$ on $\overline{Q_I}$, and 
\begin{equation}\label{UCPeq1}
| R(x,t,w)| \leq C_0(| w(x,t)|
+ |\nabla w(x,t)|), \quad (x,t)\in Q_I.
\end{equation}
Moreover, let
\begin{equation}\label{UCPeq2}
    \varphi(x,t) = e^{\lambda(d(x) - \beta (t-t_0)^2)},
\end{equation}
where $\lambda>0$ is a sufficiently large parameter, $\beta>0$ is arbitrarily given, and
$d \in C^2(\overline{\Omega})$ is such that 
\begin{equation}
d>0 \quad \text{in }\Omega, \quad | \nabla d| > 0 \quad 
\text{on $\overline{\Omega}$}, \quad d=0 \quad\text{on }\partial\Omega\setminus
\Gamma, \quad \nabla d\cdot \nu \le 0 \quad \text{on }\partial\Omega\setminus 
\Gamma,
\end{equation}  which is known to exist. Then the following Carleman estimate (Lemma 1 of \cite{imanuvilov2023unique}) is known to hold: 
For $k=1,2$, there exist constants $s_0>0$ and $C_1, C_2>0$ such that 
\begin{equation}\label{UCPeq4}
    \int_{Q_I} \left( \frac{1}{s}(| \partial_tw|^2
+ | \Delta w|^2) + s |\nabla w|^2 
+ s^3| w|^2 \right)e^{2s\varphi} dxdt
\le Cs^4\int_{Q_I} | P_kw|^2 e^{2s\varphi} dxdt
+ C\mathcal{B}(w),
\end{equation}
for all $s > s_0$ and $w\in H^{2,1}(Q_I)$ satisfying 
$w\in H^1(\partial\Omega\times I)$, where
\begin{multline*}
 \mathcal{B}(w) := e^{Cs}\Vert w\Vert^2_{H^1(\Gamma\times I)}
+ s^3\int_{(\partial\Omega\setminus \Gamma)\times I}
(|w|^2 + | \nabla_{x,t}w|^2) e^{2s} dSdt\\
+ s^2\int_{\Omega} (| w(x,t_0-\delta)|^2
+ | \nabla w(x,t_0-\delta)|^2
+ | w(x,t_0+\delta)|^2
+ | \nabla w(x,t_0+\delta)|^2) e^{2s\varphi(x,t_0-\delta)} dx.
\end{multline*}

Next, we expand \eqref{eq:UCPeq} in the form 
    \begin{equation}\label{UCPeq5}
    \begin{cases}
        -\partial_t u_i(x,t) -\alpha\Delta u_i(x,t) + R_1(x,t,u_i) = E_i(x,t)+h(x,t)m_i(x,t) &\quad \text{in }Q,\\
        \partial_t m_i(x,t) - \beta \Delta m_i(x,t) +R_3(x,t,m_i) = \gamma\Delta u_i(x,t) + R_2(x,t,u_i) + G_i(x,t)  &\quad \text{in }Q,
    \end{cases}
\end{equation} for some bounded coefficients $\alpha(x,t)$, $\beta(x,t)$ and $\gamma(x,t)$, so that \[|R_1(x,t,u_i)|,|R_2(x,t,u_i)|\leq C\sum_{k=0}^1|\nabla^k u_i(x,t)|, \quad |R_3(x,t,m_i)|\leq C\sum_{k=0}^1|\nabla^k m_i(x,t)|\] 
for some different constants $C$. Then, taking the difference of the two systems for $i=1,2$, we can apply the Carleman estimate \eqref{UCPeq4}, and choose $s>0$ large enough to absorb terms on the right hand side of \eqref{UCPeq4}, to obtain the following Carleman estimate: 
\begin{multline}\label{UCPeq6}
 \int_{Q_I} \biggl( | \partial_t(u_1-u_2)|^2 
+ | (\Delta (u_1-u_2)|^2 
+ s^2|\nabla (u_1-u_2)|^2 + s^4| u_1-u_2|^2 
\\+ \frac{1}{s}(| \partial_t(m_1-m_2)|^2 
+ | \Delta (m_1-m_2)|^2) 
+ s|\nabla (m_1-m_2)|^2 + s^3|m_1-m_2|^2 \biggr)e^{2s\varphi} dxdt
\\\leq C_3\int_{Q_I} (s| F-\widetilde{F}|^2 + | G-\widetilde{G}|^2)e^{2s\varphi} dxdt\\+  C_4s(\mathcal{B}(u_1-u_2) + \mathcal{B}(m_1-m_2)) \quad \text{for all $s>s_0$}
\end{multline}
for some constants $s_0>0$ and $C_3,C_4>0$.

Next, we arbitrarily choose $t_0\in (0,T)$ and $\delta>0$ such that 
$0<t_0-\delta<t_0+\delta<T$.
We define 
\begin{equation}\label{UCPeq7}
d_0:= \min_{x\in \overline{\Omega}} d(x),\quad
d_1:= \max_{x\in \overline{\Omega}} d(x),\quad
0 < r < \left( \frac{d_0}{d_1}\right)^{\frac{1}{2}} < 1.
\end{equation}
We observe that to prove Theorem \ref{UCP}, it suffices to measure $u,m,\nabla u,\nabla m$ on 
$\Gamma \times (t_0-\delta,\, t_0+\delta)$ and prove the result in $\Omega \times (t_0-r\delta,\, t_0+r\delta)$. 
Indeed, since $t_0 \in (0,T)$ and $\delta>0$ can be arbitrarily chosen and the Carleman estimate is invariant with respect to $t_0$ provided that $0<t_0-\delta<t_0+\delta<T$, we can change $t_0$ over $(\delta, T-\delta)$ to obtain
$u_1=u_2$ and $m_1=m_2$ in $\Omega \times ((1-r)\delta,\, T-(1-r)\delta)$.
Since $\delta>0$ can be arbitrary, this means that 
$u_1=u_2$ and $m_1=m_2$ in $\Omega\times (0,T)$.

Consequently, we choose $\beta > 0$ in the weight of the Carleman estimate such that 
\[
\frac{d_1-d_0}{\delta^2 - r^2\delta^2} < \beta < \frac{d_0}{r^2\delta^2}.\]
Considering only the dominating terms in \eqref{UCPeq7} for large $s>0$, and observing that 
\[\varphi(x,t) = e^{\lambda(d(x) - \beta(t-t_0)^2)}
\geq e^{\lambda(d_0-\beta r^2\delta^2)}=:\mu_1\quad \text{in }
\Omega \times (t_0-r\delta,\, t_0+r\delta),\] 
we shrink the region of integration of the left hand side of \eqref{UCPeq7} to $\Omega \times (t_0-r\delta,\, t_0+r\delta)$ to obtain that 
\begin{multline}\label{UCPeq8}\norm{u_1-u_2}^2_{L^2(\Omega \times (t_0-r\delta,\, t_0+r\delta))}
+ \norm{m_1-m_2}^2_{L^2(\Omega \times (t_0-r\delta,\, t_0+r\delta))}
\\\leq C_5s^2M_1e^{-2s(\mu_1-1)} + C_6s^2M_2e^{-2s(\mu_1-\mu_2)}\end{multline}
for all large $s>0$ for some constants $C_5,C_6>0$, where $\mu_2:= e^{\lambda(d_1-\beta\delta^2)}$ and
\begin{align*}
& M_1:= \sum_{k=0}^1 
(\Vert\nabla^k_{x,t}(u_1-u_2)\Vert^2
_{L^2((\partial\Omega\setminus \Gamma)\times I)}
+ \Vert\nabla^k_{x,t}(m_1-m_2)\Vert^2
_{L^2((\partial\Omega\setminus \Gamma)\times I)}), \\
& M_2:= \sum_{k=0}^1 (\Vert (u_1-u_2)(\cdot,t_0 + (-1)^k\delta) \Vert^2
_{H^1(\Omega)} 
+ \Vert (m_1-m_2)(\cdot,t_0 + (-1)^k\delta) \Vert^2_{H^1(\Omega)}.
\end{align*}
Observing that 
$\mu_1 > \max\{ 1, \, \mu_2\}$, we take $s \to \infty$ in \eqref{UCPeq8} to obtain 
$u_1=u_2$ and $m_1=m_2$ in $\Omega \times (t_0-r\delta,\, t_0+r\delta)$.
Thus the proof is complete.
\end{proof}

With this result in hand, we can proceed to recover the running cost $F$, by making use of the higher orders of linearization. 

    Consider the second order linearization.
    Let $\bar{u}=u^{(1,2)}_1-u^{(1,2)}_2$ and $\bar{m}=m^{(1,2)}_1-m^{(1,2)}_2$. Then, since we have obtained the equality of the first order linearized solutions in \eqref{UM1Eq}, by taking the difference of the two second order linearized systems for $i=1,2$, we have that $(\bar{u},\bar{m})\in [H^{2,1}(Q)]^2$ is the solution to the system 
    \begin{equation}\label{eq:MFG2Diff2}
    \begin{cases}
        -\partial_t \bar{u}(x,t) -\Delta \bar{u}(x,t) + 2q(x)\cdot\nabla \bar{u}(x,t) =  [F^{(2)}_1(x) - F^{(2)}_2(x)]m^{(1)}(x,t)m^{(2)}(x,t) & \quad \text{in }Q,\\
        \partial_t \bar{m}(x,t) -\Delta\bar{m}(x,t) -2\text{div}\left( m_0(x)[\nabla \bar{u}(x,t)]^TA(x)\right) - \text{div}\left( \bar{m}(x,t) q(x)\right) = 0  &\quad \text{in }Q,\\
        \bar{u}(x,T)=0&\quad \text{in }\Omega,\\
        \bar{m}(x,0)=0 &\quad \text{in }\Omega,\\
        \bar{u}=\nabla \bar{u} = \bar{m} = \nabla \bar{m} = 0 &\quad \text{on }\Sigma,
    \end{cases}
    \end{equation}
    when $\mathcal{M}_{F_1,A_1,U_1}=\mathcal{M}_{F_2,A_2,U_2}$.

    Since $m^{(1)},m^{(2)}$ have been uniquely obtained, expanding $\text{div}\left( m_0[\nabla \bar{u}]^TA\right)$ and $\text{div}\left( \bar{m} q\right)$, we can view $D_i:=F^{(2)}_i(x)m^{(1)}(x,t)m^{(2)}(x,t)$ in \eqref{eq:UCPeq}, and apply the unique continuation principle given by Theorem \ref{UCP} to obtain \begin{equation}
        \bar{u}=\bar{m}\equiv0. 
    \end{equation}
    Substituting this into the first equation of \eqref{eq:MFG2Diff2}, we have that $D_1=D_2$. Choosing $m^{(1)},m^{(2)}\not\equiv0$, we obtain the result
    \[F^{(2)}_1(x)=F^{(2)}_2(x).\] 

    For the higher order Taylor coefficients of $F$, we make use of the high order linearized parabolic systems. The most important ingredient is that the non-linear terms in higher order systems only depend on the solutions of lower order terms. Therefore, we can apply mathematical induction and repeat similar argument to show that $F^{(k)}_1(x)=F^{(k)}_2(x)$ for all $k\in\mathbb{N}$. Hence we have the unique identifiability for the source functions, i.e. \[F_1=F_2.\]
    
    The proof is complete.

\chapter{Determining Internal Topological Structures Using Boundary Measurements}\label{chap:internaltop}

In this chapter, we are concerned with the inverse problem of determining anomalies in the state space associated with the stationary mean field game (MFG) system. 
We recover such anomalies from the behavior of the solution at the boundary of the domain. This anomaly is portrayed as a discontinuity in the Hamiltonian $H$ and the running cost $F$, which had always been assumed to be continuous in the previous chapters. Such an inverse problem for MFG has a wide range of applications. For instance, when MFGs are applied in the study of traffic flow or other forms of congestion management \cite{MFGCar2, MFGCrowd, MFGCar, MFGCrowd+Econs}, we can make use of boundary measurements of the traffic to detect an obstruction \cite{MFGStateConstraintsTrafficObstruct, ghattassi2023nonseparable}. An example is where we can measure the flow of cars in and out of a city to detect any bridge collapse or car accident \cite{MFGCarAccident}. Such applications can also be extended to other forms of flow optimization in engineering, including short circuits in smart grids \cite{MFGSmartGrid2} and breaks in supply chains \cite{MFGSupplyChain}. Moreover, MFGs can also be applied in the development of autonomous driving \cite{MFGAutoCar1,MFGAutoCar2,MFGAutoCar3}, and considering the inverse problem can help in the detection of corners and walls or other types of anomalous inhomogeneities. On the other hand, when MFGs are applied in the context of economics to model market dynamics, price formation or resource allocation \cite{MFGCrowd+Econs, CarmonaDelarue2018_1}, one can recover the external influences limiting the price, including governmental influences, limit ups/downs or irreversible investments \cite{MFGIrreversibleInvestJump}. In the context of social sciences, MFGs can also model epidemic spreading \cite{MFGDisease2} and disease propagation \cite{MFGDisease1}, as well as social network information spread \cite{MFGSocialNetwork}. In these cases, there might be gaps in the state space due to deserted areas of the country, vaccinated parts of the population, or internet censorship, which can be detected by considering the inverse problem.

\section{Smooth Internal Topological Structures}
We first begin with the situation where such anomalies have smooth boundary. 
We focus on the following MFG system
\begin{equation}
\label{um0}
\begin{cases}
 -\partial_tu(x,t)-\Delta u(x,t)+\frac{1}{2}|\nabla u(x,t)|^2=F(x,m)
\ \  &\mathrm{in}\ Q,\\
\partial_tm(x,t)-\Delta m(x,t)-\mathrm{div}(m(x,t)\nabla u(x,t))=0
 &\mathrm{in}\ Q,\\
u=0, \quad m=g_0 &\mathrm{on}\  \Sigma_1,\\
Bu=0,  \quad Bm=f_0 &\mathrm{on}\  \Sigma_2,\\
u(x,T)=0,\ m(x,0)=m_0(x) &\mathrm{in}\  \Omega\setminus\overline{D},
\end{cases}
\end{equation}
where $g_0$, $f_0$ are nonnegative constants on the boundary. 

The operator $B$ is defined as follows:
$$Bu=u\  \mathrm{or}\  Bu=\partial_v u.$$
In this MFG model, particular attention is paid to the internal topological boundary $\partial D$. In the case of the Dirichlet boundary condition $Bu=u$, this indicates that agents entering or exiting the game at a certain cost via $\partial D$ \cite{FerreiraGomesTada2019StationaryMFGDirichlet}. For instance, in the context of traffic management \cite{Cardaliaguet2018}, consider a large car park, denoted as $D$, situated within a specific urban area $\Omega$. In this case, vehicles can enter and exit the parking lot for a fee. On the other hand, the Neumann boundary $Bu=\partial_\nu u$ signifies that the agent cannot leave the domain $\Omega$, and will be reflected back to the domain when meeting the boundary. 

\begin{defn}\label{adm1}
($\mathbf{Admissible\ class \ \mathcal{A}}$). We define $F(x, z): \Omega\setminus\overline{D}\times\mathbb{C}\rightarrow \mathbb{C}$ to be an element of the admissible set $\mathcal{A}$ if it satisfies the following equations\\
(i)\ The map $z\rightarrow F(\cdot, z)$ is holomorphic with value in $C^{\alpha}(\Omega\setminus\overline{ D})$;\\
(ii)\ $F(x,0)=0$.
\end{defn}
Based on Definition \ref{adm1}, it can be inferred that $F(x,z)$ can be expressed as a power series expansion in the following form:
\[
F(x,z)=\sum_{i=1}^{\infty}F^{(i)}(x)\frac{z^i}{i!},
\]
where $F^{(i)}(x)=\partial_z^iF(x,0)\in C^{\alpha}(\Omega\setminus\overline{ D})$.

\begin{defn}\label{adm}
($\mathbf{Admissible\ class \ \mathcal{B}}$). We say $F(x,z):(\Omega\setminus\overline{D})\times \mathbb{C}\rightarrow \mathbb{C}\in \mathcal{B}$, if it satisfies the following conditions:\\
(i)\ The map $z\rightarrow F(\cdot,z)$ is holomorphic with value in $C^{\alpha}(\Omega\setminus\overline{ D})$;\\
(ii)\ $F(x,g_0)=0$ and $F^{(1)}(x)=\partial_zF(x,g_0)=0$ for all $x\in\Omega\setminus\overline{ D}$, where $g_0$ is a positive constant.
\end{defn}
According to Definition \ref{adm}, it can be inferred that $F(x,z)$ can be represented by a power series expansion in the following manner:
\[
F(x,z)=\sum_{i=2}^{\infty}F^{(i)}(x)\frac{(z-g_0)^i}{i!},
\]
where $F^{(i)}(x)=\partial_z^iF(x,g_0)\in C^{\alpha}(\Omega\setminus\overline{ D})$.

\subsection{Uniqueness associated with an external Dirichlet boundary MFG systems}
We commence by establishing the measurement mapping for the system \eqref{um0} when we have Dirichlet boundary conditions, i.e. when $$Bu=u\ \  \mathrm{and} \ \ Bm=m.$$ This mapping is defined as follows:
\begin{align}
\mathcal{M}^{\mathcal{D}}_{F,D}(m_0)=\left(\int_{\Sigma}\partial_\nu u(x,t) h(x,t)\,dx\,dt, \partial_\nu m(x,t)|_{\Sigma}\right),
\end{align}
where $\Sigma=\Gamma\times(0,T]$, $\Gamma\subseteq\partial\Omega$ and $h(x,t)$ is the weight function. 

It is crucial to emphasize that the measurement data comprises two distinct components. The first component involves the measurement of the value function $u(x,t)$. In the case of a crowd-dynamics model \cite{chow2022numerical}, $u(x,t)$ represents the travel cost incurred by a traveler located at $x$ at time $t$. The integral over the partial boundary $\int_{\Sigma}\partial_\nu u(x,t) h(x,t)dxdt$ quantifies the mean travel cost flux of a traveler passing through parts of the border from $0$ to $T$ moments. The second component pertains to the Neumann data measurement of the probability density function $m(x,t)$.

We consider the simultaneous inversion of the running cost function $F(x,m)$ and  the boundary $\partial D$, i.e. 
\begin{align}
\mathcal{M}^{\mathcal{D}}_{F,D}(m_0)\rightarrow F, D.
\end{align}
In particular, we consider the unique identifiability issue
\begin{align*}
\mathcal{M}^{\mathcal{D}}_{F_1,D_1}(m_0)=\mathcal{M}^{\mathcal{D}}_{F_2,D_2}(m_0)
\quad \text{if and only if}\quad  (F_1,D_1)=(F_2,D_2),
\end{align*}
where $D_j$ is a non-empty open subsets with $C^{2+\alpha}$ boundary such that $\Omega\setminus \overline{D}$ is connected, and $F_j\in\mathcal{A}$ with $j=1,2$. 

Next, we give the following auxiliary lemma, which is a variant of Lemma \ref{E-F is complete}. We omit the proof here.

\begin{lem}\label{lem1}
Consider the following linear parabolic equation
\begin{equation}
\label{eigD}
\begin{cases}
\partial_tu(x,t)-\Delta i(x,t)=0
 &\mathrm{in}\ Q,\\
W=0  &\mathrm{on}\ \Sigma_1,\\
BW=0 &\mathrm{on}\ \Sigma_2.
\end{cases}
\end{equation}
Then, there exists a sequence of solutions $u(x,t)$ for the equation \eqref{eigD} such that
$u(x,t)=e^{\lambda t}y(x;\lambda)$, where $\lambda\in \mathbb{R}$ and $y(x;\lambda)\in C^2(\Omega\setminus \overline{D})$.
\end{lem}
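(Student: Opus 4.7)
The plan is to construct the required sequence by separation of variables and then apply standard spectral theory. I would first posit an ansatz $u(x,t)=e^{\lambda t}y(x;\lambda)$ and substitute into the heat equation $\partial_t u-\Delta u=0$; this reduces the time-dependent boundary value problem to the stationary eigenvalue problem
$$-\Delta y(x;\lambda)=-\lambda\, y(x;\lambda)\quad \text{in }\Omega\setminus\overline{D},$$
with the inherited boundary conditions $y=0$ on the Dirichlet portion of $\partial(\Omega\setminus\overline{D})$ and $By=0$ on the remaining portion. Note that the resulting function is automatically compatible with both the spatial boundary conditions on $\Sigma_1,\Sigma_2$ and with the form $e^{\lambda t}y$ required in the statement.

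Next, I would invoke the spectral theory of self-adjoint elliptic operators on the bounded domain $\Omega\setminus\overline{D}$. Since $\partial\Omega$ is smooth and $\partial D$ is $C^{2+\alpha}$, the domain has $C^{2+\alpha}$ boundary; the operator $-\Delta$ equipped with the mixed homogeneous boundary conditions is self-adjoint and has compact resolvent on $L^2(\Omega\setminus\overline{D})$. Consequently, it admits a discrete spectrum $\{\mu_k\}_{k\in\mathbb{N}}$ with $\mu_k\to+\infty$ and an $L^2$-orthonormal sequence of eigenfunctions $\{y_k\}$. Choosing $\lambda_k:=-\mu_k\in\mathbb{R}$ and $y(\cdot;\lambda_k):=y_k$ produces the desired infinite sequence of separable solutions.

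Finally, I would upgrade the regularity of each $y_k$ from $H^1$ (or $H^2$) to $C^2(\Omega\setminus\overline{D})$ by standard Schauder estimates up to the boundary, which apply because the coefficients of $-\Delta$ are smooth, the boundary data are identically zero, and $\partial(\Omega\setminus\overline{D})\in C^{2+\alpha}$. Combined with the smoothness of $t\mapsto e^{\lambda_k t}$, this yields $u_k(x,t)=e^{\lambda_k t}y_k(x)\in C^2(Q)$ and confirms that each $u_k$ genuinely solves the parabolic problem in the classical sense.

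The argument is essentially a textbook application of spectral theory, so I do not expect a serious obstacle. The only subtle point is ensuring that mixed Dirichlet/$B$ boundary conditions still produce a self-adjoint operator with compact resolvent and that elliptic regularity holds up to the interface where the two boundary conditions meet; both are guaranteed here because $\partial\Omega$ and $\partial D$ are disjoint smooth components of the boundary of $\Omega\setminus\overline{D}$, so the two boundary conditions are imposed on separate connected components and do not interact.
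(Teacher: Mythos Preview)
Your proposal is correct and follows essentially the same approach as the paper: the paper omits the proof of this lemma, referring to it as a variant of Lemma~\ref{E-F is complete}, whose proof also proceeds by taking eigenfunctions of $-\Delta$ with the appropriate boundary conditions and forming $u(x,t)=e^{\lambda t}g(x;\lambda)$. Your treatment is in fact more thorough, spelling out the self-adjointness, compact resolvent, and Schauder regularity that the paper leaves implicit.
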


\begin{thm}\label{DirichletSmoothAnomaly}
Assume $F_j$ in the admissible class $\mathcal{A}$, $D_j$ $\Subset\Omega$ is a non-empty open subset with $C^{2+\alpha}$ boundaries such that $\Omega\backslash \overline{D_j}$ is connected for $j=1,2$, the weight function $h\in C_0^{2+\alpha}(\Sigma)$ is a nonnegative and nonzero function.  Let $\mathcal{M}^{\mathcal{\mathcal{D}}}_{F_j, D_j}$ be the map associated the MFG system \eqref{um0} with  $B=Id$ and  $ f_0=g_0>0$. If for any $m_0\in C^{2+\alpha}(\Omega\backslash \overline{D_j} )$, one has
\begin{align}\label{cod1}
\mathcal{M}^{\mathcal{D}}_{F_1, D_1}(m_0)=\mathcal{M}^{\mathcal{D}}_{F_2, D_2}(m_0),
\end{align}
 then we have
 \[
 F_1=F_2\ and\ D_1=D_2.
 \]
\end{thm}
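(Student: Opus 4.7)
The plan is to prove the theorem in two main phases: first establishing that the anomaly itself is uniquely determined, then recovering the running cost on the common recovered domain via successive linearisation as in the preceding chapters.

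\textbf{Phase I (Linearisation and agreement on the common region).} I would expand the initial data as $m_0(x;\varepsilon)=g_0+\sum_{l=1}^{N}\varepsilon_l f_l(x)$, where each $f_l\in C^{2+\alpha}_c(G)$ and $G$ denotes the connected component of $\Omega\setminus\overline{D_1\cup D_2}$ containing a relative neighbourhood of $\Gamma$. Because $f_l$ is supported away from $\partial D_1\cup\partial D_2$, the perturbed datum is admissible for both systems simultaneously. Holomorphy of the solution map (the analogue of Theorem~\ref{local_wellpose}) lets us differentiate the measurement identity \eqref{cod1} at $\varepsilon=0$ in every multi-index. At first order we obtain linear coupled parabolic systems for $(u_j^{(1)},m_j^{(1)})$ linearised about the base states $(u_j^{(0)},m_j^{(0)})$, with matched data $\partial_\nu m_1^{(1)}=\partial_\nu m_2^{(1)}$ pointwise on $\Sigma$ and $\int_\Sigma \partial_\nu u_1^{(1)}h=\int_\Sigma\partial_\nu u_2^{(1)}h$. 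Running through an $L^2$-dense family of $f_l$ via a Runge-type approximation (in the spirit of Lemma~\ref{RungeDenseApprox}) and exploiting the parabolic coupling upgrade the weighted $u$-identity to pointwise Cauchy agreement, after which the parabolic unique continuation principle (Theorem~\ref{UCP}) forces $(u_1^{(1)},m_1^{(1)})=(u_2^{(1)},m_2^{(1)})$ throughout $G$.

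\textbf{Phase II (Geometric contradiction forcing $D_1=D_2$).} Arguing by contradiction, suppose there is a non-empty relatively open subset $\omega\subset\partial D_2\cap(\Omega\setminus\overline{D_1})$ (the symmetric case is handled identically). The Dirichlet conditions for system~2 give $u_2^{(1)}=m_2^{(1)}=0$ on $\omega\times(0,T]$. The equality from Phase~I transfers these vanishing conditions to $u_1^{(1)},m_1^{(1)}$ on $\omega\times(0,T]$, while the common $G$-side normal derivatives provide full Cauchy data. Since $\omega$ is strictly interior to $\Omega\setminus\overline{D_1}$, the functions $u_1^{(1)},m_1^{(1)}$ are smooth across $\omega$, and Theorem~\ref{UCP} propagates their vanishing into a full neighbourhood and then throughout the connected component of $\Omega\setminus\overline{D_1}$ meeting $\omega$. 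A suitable choice of $f_1$ via the eigenfunction construction of Lemma~\ref{lem1} produces $(u_1^{(1)},m_1^{(1)})\not\equiv 0$, a contradiction. Hence $D_1=D_2=:D$.

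\textbf{Phase III (Recovery of $F$).} Once the domains coincide, the problem reduces to determining $F$ from the measurement map on $\Omega\setminus\overline{D}$. I would run the successive higher-order linearisation developed in Chapter~\ref{InverseCoefBdry} (cf.\ Theorems~\ref{der g}, \ref{der F}): at order $k$ the resulting linear parabolic system depends only on $F^{(1)},\dots,F^{(k)}$ and lower-order solutions already shown to agree. Testing against probing modes $e^{\lambda t}y(x;\lambda)$ built from the eigenfunctions of the Dirichlet Laplacian on $\Omega\setminus\overline{D}$ (Lemma~\ref{lem1}), integration by parts yields an identity whose vanishing for every eigenfunction, combined with completeness in $L^2(\Omega\setminus\overline{D})$, gives $F_1^{(k)}=F_2^{(k)}$ for every $k$. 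Summing the Taylor series concludes $F_1=F_2$.

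\textbf{Main obstacle.} The hardest step is Phase~I/II: the $u$-measurement is only a scalar weighted against the fixed test function $h$, not a pointwise normal derivative, so upgrading to full Cauchy agreement on $\Sigma$ requires carefully exploiting the coupling with the pointwise $m$-data, the Runge density of admissible $f_l$, and the Carleman-based unique continuation (Theorem~\ref{UCP}) rather than a Holmgren-type argument, since the coefficients are only Hölder-regular. The second delicate point is verifying that the eigenfunction construction from Lemma~\ref{lem1} actually produces linearised solutions that are not identically zero near $\omega$, which is needed to close the contradiction in Phase~II.
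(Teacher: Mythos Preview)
Your three-phase architecture (linearise, geometric contradiction, then successive linearisation for $F$) matches the paper, and Phase~III is essentially what the paper does. But you have missed the structural simplification that collapses Phases~I--II to a few lines, and as a result your ``main obstacle'' is a phantom.

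The point the paper exploits is that with $B=\mathrm{Id}$, $f_0=g_0$, and the admissibility hypothesis, the base state for \emph{both} systems is $(0,g_0)$; the first-order linearised HJB equation then reads $-\partial_t u_j^{(1)}-\Delta u_j^{(1)}=0$ with zero terminal and zero Dirichlet boundary data, forcing $u_j^{(1)}\equiv 0$ outright. The first-order system therefore \emph{decouples}: each $m_j^{(1)}$ satisfies the scalar heat equation with zero Dirichlet data on $\partial D_j\cup\partial\Omega$. No Runge approximation, no coupled Theorem~\ref{UCP}, and no upgrading of the weighted $u$-measurement are needed: the measurement already provides pointwise $\partial_\nu m_j^{(1)}|_\Sigma$, which together with $m_j^{(1)}=0$ on $\partial\Omega$ gives full Cauchy data for $\tilde m:=m_1^{(1)}-m_2^{(1)}$ on $\Sigma$, and scalar parabolic unique continuation yields $\tilde m=0$ in $G$.

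For the geometric step the paper again takes a shorter route than yours: it chooses $g_1>0$ throughout (not compactly supported in $G$) and invokes the \emph{strong maximum principle}. One gets $m_1^{(1)}>0$ in the interior of $(\Omega\setminus\overline{D_1})\times(0,T]$, while $\tilde m=0$ in $G$ combined with the Dirichlet condition for system~2 forces $m_1^{(1)}=0$ on $\partial D_2\cap(\Omega\setminus\overline{D_1})$---an immediate contradiction, no UCP needed. Your compact-support-plus-UCP route can be salvaged, but as written Phase~II has a gap: from $m_1^{(1)}=m_2^{(1)}$ in $G$ you only transfer \emph{zero Dirichlet} data to $\omega$; the $G$-side Neumann trace equals $\partial_\nu m_2^{(1)}|_\omega$, which is not known to vanish, so ``full Cauchy data'' is not established. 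The fix is to first use uniqueness for the heat equation in $(D_2\setminus\overline{D_1})\times(0,T]$ (zero Dirichlet on all of $\partial(D_2\setminus D_1)$ and zero initial data because your $f_1$ is supported in $G$) to get $m_1^{(1)}\equiv 0$ on that open set, and only then apply open-set UCP to propagate the vanishing.
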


\begin{proof}
\textbf{Step I}. Let us introduce the following initial value
\begin{align}\label{fg2}
 m_0(x)=g_0+\sum_{l=1}^N\epsilon_lg_l \ \ \mathrm{for}\ x\in\Omega\setminus \overline{D},\ %\mathrm{and}\ N\geq3.
\end{align}
where $g_l\in C^{2+\alpha}(\Omega\setminus \overline{D})$ for $l=1,2,...,N$, and $\epsilon=(\epsilon_1,\epsilon_2,..., \epsilon_N )\in \mathbb{R}^{N}$ with $|\epsilon|=|\epsilon_l|+|\epsilon_2|+...+|\epsilon_N|$ small enough, such that $\| g_0\|_{C^{2+\alpha,1+\alpha/2}(\Sigma_1\cup\Sigma_2)}+\| \sum_{l=1}^N\epsilon_lg_l\|_{C^{2+\alpha}(\Omega\setminus D)}$ is sufficiently small, and ensure that $m_0>0$ for $x\in\Omega\setminus \overline{D}$. Given that the input $m_0$ is in the form of \eqref{fg2}, $f_0=g_0$ and $B=Id$ for the MFG system \eqref{um0}, then the system \eqref{um0} becomes the following form
\begin{equation}
\label{um0h}
\begin{cases}
-\partial_tu(x,t)-\Delta u(x,t)+\frac{1}{2}|\nabla u(x,t)|^2=F(x,m)
&\mathrm{in}\ Q,\\
\partial_tm(x,t)-\Delta m(x,t)-\mathrm{div}(m(x,t)\nabla u(x,t))=0
&\mathrm{in}\ Q,\\
u=0, \quad m=g_0&\mathrm{on}\  \Sigma_1,\\
u=0, \quad m=g_0&\mathrm{on}\  \Sigma_2,\\
u(x,T)=0,\ m(x,0)=g_0+\sum_{l=1}^N\epsilon_lg_l&\mathrm{in}\  \Omega\setminus\overline{D}.
\end{cases}
\end{equation}
Based on the well-posedness of the forward problem, it follows that there exists a unique solution $(u,m)\in C^{2+\alpha, 1+\alpha/2}(\overline{Q})\times C^{2+\alpha, 1+\alpha/2}(\overline{Q})$. 
Denoting the solution of \eqref{um0h} as $(u(x,t;\epsilon),$ $ m(x,t;\epsilon))$, when $\epsilon=0$, we obtain the following
\[
(u(x,t;0),m(x,t;0))=(0,g_0)\ \ \mathrm{for}\ (x,t)\in Q.
\]
We choose a $g_1$ such that $g_1> 0$ in $\Omega\setminus\overline{D}$. 

Apply the first-order linearization $\partial_{\epsilon_1}u|_{\epsilon=0}, \partial_{\epsilon_1}m|_{\epsilon=0}$ to the system \eqref{um0h} around $(u_0, m_0)=(0,g_0)$.  For different domain $D_j$ with $j=1,2$, $(u_j^{(1)}, m_j^{(1)})$ for $j=1,2$ satisfies the following equations:
\begin{equation}
\label{mD11}
\begin{cases}
-\partial_tu_j^{(1)}-\Delta u_j^{(1)}=0
&\mathrm{in}\  (\Omega\setminus \overline{D_j})\times(0,T],\\
\partial_tm_j^{(1)}-\Delta m_j^{(1)}-g_0\Delta u_j^{(1)}=0
&\mathrm{in}\ (\Omega\setminus \overline{D_j})\times(0,T],\\
u_j^{(1)}= m_j^{(1)}=0 &\mathrm{on}\ (\partial D_j\cup\partial\Omega)\times(0,T],\\
u_j^{(1)}(x,T)=0, \quad m_j^{(1)}(x,0)=g_1>0&\mathrm{in}\ \Omega\setminus \overline{D_j}.
\end{cases}
\end{equation}
From the first equation of \eqref{mD11} and zero boundary condtion, we deduce that $u_j^{(1)}=0$ in $\overline{Q}$, which implies $\Delta u_j^{(1)}=0$ in $\overline{Q}$. Consequently, the equation \eqref{mD11} can be simplified to
\begin{equation}
\label{mD1}
\begin{cases}
\partial_tm_j^{(1)}-\Delta m_j^{(1)}=0
&\mathrm{in}\ (\Omega\setminus \overline{D_j})\times(0,T],\\
 \ m_j^{(1)}=0 \hspace*{1.0cm} &\mathrm{on}\ (\partial D_j\cup\partial\Omega)\times(0,T],\\
m_j^{(1)}(x,0)=g_1>0&\mathrm{in}\ \Omega\setminus \overline{D_j}.
\end{cases}
\end{equation}
Let $G$ represent the connected component of $\Omega\setminus(\overline{D_1}\cup \overline{D_2})$ whose boundary includes $\partial\Omega$. Consider $\tilde{m}=m^{(1)}_1-m^{(1)}_2$ and from \eqref{cod1}, it follows that $\tilde{m}$ satisfies  equation \begin{align}
\label{mD21}
\begin{cases}
\partial_t\tilde{m}-\Delta \tilde{m}=0
 &\mathrm{in}\ G\times(0,T],\\
\partial_\nu \tilde{m}=0  &\mathrm{on}\ \Sigma,\\
\tilde{m}=0 &\mathrm{on}\ \Sigma_2,\\
\tilde{m}(x,0)=0&\mathrm{in}\ G.
\end{cases}
\end{align} 
By applying the unique continuation principle for the linear parabolic equation \eqref{mD21}, we conclude that $\tilde{m}=0$ in $\overline{G}\times (0,T]$. Consequently, we have $m^{(1)}_1=m^{(1)}_2$ in $\overline{G}\times (0,T]$. Considering \eqref{mD1}, we find that $m^{(1)}_1=m^{(1)}_2=0$ on $\partial(D_2\setminus D_1)\times (0,T]$. As $g_1>0$, the maximum principle implies that $m^{(1)}_1>0$ in $\overline{Q}$. This contradicts the fact that $m_1^{(1)}=0$ on $\partial(D_2\setminus D_1)\times (0,T]$. Therefore, we have $D_1=D_2$.

\textbf{Step II}. Fix $D_1=D_2$ we proceed with the second-order linearization. For $j=1,2$, $u_j^{(1,2)}$ and $m^{(1,2)}$ satisfy the following coupled equations
\begin{equation}
\label{umDc2}
\begin{cases}
-\partial_t u_j^{(1,2)}-\Delta u_j^{(1,2)}+\nabla u^{(1)}\cdot \nabla u^{(2)}=F_j^{(2)}m^{(1)}m^{(2)}
 &\mathrm{in}\  Q,\\
\partial_t m^{(1,2)}-\Delta m^{(1,2)}-\mathrm{div}(m^{(1)}\nabla u^{(2)})-\mathrm{div}(m^{(2)}\nabla u^{(1)})-g_0\Delta u_j^{(1,2)}=0
&\mathrm{in}\ Q,\\
 u_j^{(1,2)}=0,\ m^{(1,2)}=0&\mathrm{on}\  \Sigma_1\cup\Sigma_2,\\
 u^{(1,2)}(x,T)=m^{(1,2)}(x,0)=0&\mathrm{in}\ \Omega\setminus\overline{D}.
\end{cases}
\end{equation}
It is important to emphasize that $u^{(1)}, u^{(2)}, m^{(1)}, m^{(2)}$ are independent of $F_j^{(2)}$. Based on the previously derived result $u^{(1)}=0$, $u^{(2)}=0$, we have $\nabla u^{(1)}=0$ and $\nabla u^{(2)}=0$. As a result, the system \eqref{umDc2} can be simplified to
 \begin{equation}
\label{umDcsimple}
\begin{cases}
-\partial_t u_j^{(1,2)}-\Delta u_j^{(1,2)}=F_j^{(2)}m^{(1)}m^{(2)}
&\mathrm{in}\  Q,\\
\partial_t m_j^{(1,2)}-\Delta m_j^{(1,2)}-g_0\Delta u_j^{(1,2)}=0
&\mathrm{in}\ Q,\\
 u_j^{(1,2)}=0,\ m_j^{(1,2)}=0\hspace*{5.3cm} &\mathrm{on}\  \Sigma_1\cup\Sigma_2,\\
 u_j^{(1,2)}(x,T)=m_j^{(1,2)}(x,0)=0 &\mathrm{in}\ \Omega\setminus\overline{D}.
\end{cases}
\end{equation}
 Note that the system \eqref{umDcsimple} is a single coupled system, which is advantageous for solving inverse problems. Taking the difference of the two equations $j=1,2$ for the first equation of \eqref{umDcsimple}, we arrive at the following expression
\begin{equation}
\label{FD2}
\begin{cases}
-\partial_t\bar{u}-\Delta \bar{u}=(F_1^{(2)}-F_2^{(2)})m^{(1)}m^{(2)}
 &\mathrm{in}\  Q,\\
 \bar{u}=0&\mathrm{on}\  \Sigma_1\cup\Sigma_2,\\
 \bar{u}(x,T)=0&\mathrm{in}\ \Omega\setminus\overline{D},
\end{cases}
\end{equation}
where $\bar{u}=u_1^{(1,2)}-u_2^{(1,2)}$. Let $b(x,t)$ be a solution to the equation 
\begin{equation}
\label{z}
\begin{cases}
\partial_tb-\Delta b=0
&\mathrm{in}\  Q,\\
 b=0&\mathrm{on}\  \Sigma_1,\\
 b=h_0&\mathrm{on}\  \Sigma_2,\\
 b(x,0)=0&\mathrm{in}\ \Omega\setminus\overline{D}.
\end{cases}
\end{equation}
Multiplying both sides of \eqref{FD2} by $b$ and integrating over $Q$, according to Green's formula and  the fact \[\int_{\Sigma}\partial_\nu u_1^{(1,2)}h(x,t)dxdt=\int_{\Sigma}\partial_\nu u_2^{(1,2)}h(x,t)dxdt,\]
we derive that
\[
\int_0^T\int_{\Omega\setminus\overline{D}}(F_1^{(2)}(x)-F_2^{(2)}(x))m^{(1)}(x,t)m^{(2)}(x,t)b(x,t) \,dx\,dt=0.
\]
According to Lemma \ref{lem1}, we take
$m_2^{(2)}\in C^{2+\alpha,1+\alpha/2}(Q)$ and of the following form
\begin{equation*}
\begin{aligned}
m_2^{(2)}:=e^{\lambda t}\beta(x,\lambda),
\end{aligned}
\end{equation*}
where $\beta(x,\lambda)$ is any Dirichlet eigenfunction of $-\Delta$ in $\Omega\setminus \overline{D}$. Then, we have
\begin{align*}%\label{k12uz}
\int_{0}^T \int_{\Omega\setminus\overline{D}}(F^{(2)}_1(x)-F^{(2)}_2(x))e^{\lambda t}\beta(x,\lambda)m^{(1)}(x,t)b(x,t)dxdt=0.
\end{align*}
Since the Dirichlet-Laplacian eigenfunction form a complete set in $L^2(\Omega\setminus\overline{D})$,  we have
 \begin{align*}
(F^{(2)}_1(x)-F^{(2)}_2(x))\int_{0}^Te^{\lambda t}m^{(1)}(x,t)b(x,t)dt=0.
\end{align*}
By applying the maximum principle, we can assert that $b(x,t)>0$ and $m^{(1)}(x,t)>0$ in $Q$. Consequently, we conclude that $F^{(2)}_1(x)=F^{(2)}_2(x)$ in $\Omega\setminus\overline{D}$.

\textbf{Step III}. To validate the result for $N\geq 3$, it is essential to compute the higher-order Taylor coefficients. By employing the method of mathematical induction, we have
\begin{align*}%\label{k12uz}
\int_{0}^T \int_{\Omega\setminus\overline{D}}(F^{(N)}_1(x)-F^{(N)}_2(x))m^{(1)}m^{(2)}...m^{(N)}b(x,t)\,dx\,dt=0.
\end{align*}
Similar to Step II, let us choose $m^{(2)}=e^{\lambda t}\beta(x,\lambda)$, and $m^{(1)},m^{(3)},...m^{(N)}$ are positive solutions in $\overline{Q}$, we have $F^{(N)}_1(x)=F^{(N)}_1(x)$ in $\Omega\setminus\overline{D}$. Then we derive that $F_1=F_2$.
\end{proof}

\subsection{Uniqueness associated with the external Neumann boundary MFG systems}
Next, we introduce the measurement mapping for the system \eqref{um0} when
\[Bu=\partial_\nu u\quad \mathrm{and}\quad Bm=\partial_\nu m.\] 
This mapping is defined as follows:
\[\mathcal{M}^{\mathcal{N}}_{F,D}(m_0)=\left(\int_\Sigma u(x,t) g(x,t)dxdt, m(x,t)|_\Sigma\right),
\]
where $g$ is a weight function. 
Once again, we emphasize that the measurement data comprises two distinct components. The first component involves the measurement of the value function $u(x,t)$, with integral over the partial boundary $\int_\Sigma u(x,t) g(x,t)dxdt$ represents the mean travel cost of the traveler parts of the border within the time interval from $0$ to $T$.  The second component pertains to the Dirichlet data measurement of the probability density function $m(x,t)$.

Once again, we intend to simultaneously determine $F$ and $D$, i.e. 
\begin{align}\label{IP2}
\mathcal{M}^{\mathcal{N}}_{F,D}(m_0)\rightarrow F, D
\end{align}
and we prove the unique identifiability result
\begin{align*}
\mathcal{M}^{\mathcal{N}}_{F_1,D_1}(m_0)=\mathcal{M}^{\mathcal{N}}_{F_2,D_2}(m_0)
\quad\text{if and only if}\quad  (F_1,D_1)=(F_2,D_2),
\end{align*}
where,  $D_j$ is a non-empty open subsets with $C^{2+\alpha}$ boundary such that $\Omega\setminus \overline{D}$ is connected, and $F_j\in\mathcal{B}$ with $j=1,2$, as defined in Definition \ref{adm}. 

\begin{thm}%\label{Ndata}
Assume $F_j$ lies in the admissible class $\mathcal{B}$, $D_j$ $\Subset\Omega$ is a non-empty open subset with $C^{2+\alpha}$ boundaries such that $\Omega\backslash \overline{D_j}$ is connected for $j=1,2$, the weight function $g\in C_0^{2+\alpha}(\Sigma)$ is a nonnegative and nonzero function. Let $\mathcal{M}^{\mathcal{N}}_{F_j, D_j}$ be the map associated the MFG \eqref{um0} with  $B=\partial_\nu$ and $f_0=0,g_0>0$. If for any $m_0\in C^{2+\alpha}(\Omega)$, one has
\begin{align}\label{cod2}
\mathcal{M}^{\mathcal{N}}_{F_1, D_1}(m_0)=\mathcal{M}^{\mathcal{N}}_{F_2, D_2}(m_0),
\end{align}
 then we have
 \[
 F_1=F_2\quad \text{ and }\quad D_1=D_2.
 \]
\end{thm}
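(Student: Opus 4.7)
The strategy parallels that of Theorem \ref{DirichletSmoothAnomaly}, adapting to the mixed (Dirichlet on $\partial D$, Neumann on $\partial\Omega$) boundary conditions and exploiting the stronger admissibility class $\mathcal{B}$, which enforces $F(x,g_0)=0$ and $F^{(1)}(x)=0$ so that $(u_0,m_0)=(0,g_0)$ is a trivial equilibrium. First I will recover $D$ via first-order linearization around this equilibrium; then I will recover the Taylor coefficients $F^{(k)}$ for $k\geq 2$ by successive higher-order linearizations combined with a test-function/eigenfunction scheme.

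For Step I, I perturb the initial density by $m_0=g_0+\sum_l\epsilon_lg_l$ with $g_1>0$. Because $F^{(1)}\equiv 0$, the first-order equation for $u_j^{(1)}$ is a source-free backward heat equation with zero terminal value, zero Dirichlet data on $\partial D_j$, and zero Neumann data on $\partial\Omega$, forcing $u_j^{(1)}\equiv 0$. Consequently $m_j^{(1)}$ satisfies the pure heat equation on $(\Omega\setminus\overline{D_j})\times(0,T]$ with $m_j^{(1)}=0$ on $\partial D_j$, $\partial_\nu m_j^{(1)}=0$ on $\partial\Omega$, and initial datum $g_1$. Let $G$ be the connected component of $\Omega\setminus(\overline{D_1}\cup\overline{D_2})$ whose boundary contains $\partial\Omega$, and set $\tilde m=m_1^{(1)}-m_2^{(1)}$. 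Equal measurements give $\tilde m=0$ on $\Sigma$, while the Neumann condition gives $\partial_\nu\tilde m=0$ on $\Sigma$; together with $\tilde m(\cdot,0)=0$ in $G$, these yield full lateral Cauchy data on $\Sigma\subset\partial G$, so parabolic unique continuation forces $\tilde m\equiv 0$ throughout $G$. If $D_1\neq D_2$, one locates $x_0\in(\partial D_1\setminus\overline{D_2})\cup(\partial D_2\setminus\overline{D_1})$ lying in $\overline{G}$; at such a point, the boundary condition forces one of $m_j^{(1)}(x_0,t)$ to vanish while the strong maximum principle (using $g_1>0$) applied on $\Omega\setminus\overline{D_{3-j}}$ gives the other strictly positive, contradicting $m_1^{(1)}=m_2^{(1)}$ in $G$. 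Hence $D_1=D_2=:D$.

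For Step II, with $D$ fixed, the second-order linearization simplifies because $u^{(1)}=u^{(2)}=0$ and $F^{(1)}=0$: the equation for $u_j^{(1,2)}$ reduces to
\begin{equation*}
-\partial_t u_j^{(1,2)}-\Delta u_j^{(1,2)}=F_j^{(2)}(x)\,m^{(1)}m^{(2)},
\end{equation*}
with $u_j^{(1,2)}=0$ on $\partial D$, $\partial_\nu u_j^{(1,2)}=0$ on $\partial\Omega$, and $u_j^{(1,2)}(x,T)=0$. Setting $\bar u=u_1^{(1,2)}-u_2^{(1,2)}$ and testing against a forward heat solution $b$ satisfying $b=0$ on $\partial D\times(0,T]$, $\partial_\nu b=g$ on $\Sigma$, $\partial_\nu b=0$ on $\partial\Omega\setminus\Sigma$, and $b(\cdot,0)=0$, integration by parts wipes out every boundary contribution except $\int_\Sigma\bar u\,g\,dS\,dt$, which vanishes by equality of the measurement maps differentiated twice in $\epsilon$. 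This produces
\begin{equation*}
\int_Q(F_1^{(2)}-F_2^{(2)})\,m^{(1)}m^{(2)}\,b\,dx\,dt=0.
\end{equation*}
Choosing $m^{(2)}=e^{\lambda t}\beta(x;\lambda)$ where $\beta$ runs over a complete set of mixed Dirichlet--Neumann eigenfunctions of $-\Delta$ on $\Omega\setminus\overline{D}$ (the Neumann analog of Lemma \ref{lem1}), and exploiting the strict positivity of $m^{(1)}$ and $b$ via the strong maximum principle, yields $F_1^{(2)}\equiv F_2^{(2)}$. The higher-order coefficients $F^{(k)}$ for $k\geq 3$ follow by mathematical induction, repeating the testing argument at each linearization order.

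The main anticipated obstacles are twofold. First, constructing the test function $b$ with mixed boundary data that precisely isolates the weight $g$ on $\Sigma$ while retaining strict interior positivity requires careful use of Hopf's lemma and the parabolic strong maximum principle under the hypotheses $g\geq 0,\, g\not\equiv 0$; this is particularly delicate when $\Sigma$ is a small patch of $\partial\Omega$. Second, the Neumann analog of Lemma \ref{lem1} requires establishing $L^2$-completeness of the eigenfunctions of $-\Delta$ with mixed Dirichlet--Neumann boundary conditions on $\Omega\setminus\overline{D}$, which is standard spectral theory for self-adjoint elliptic operators but must be invoked with care. The admissibility condition $F^{(1)}=0$ is essential throughout: it collapses the first-order system to a pure heat equation for $m^{(1)}$, cleanly decoupling the recovery of $D$ from the recovery of $F$.
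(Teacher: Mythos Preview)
Your proposal is correct and follows essentially the same route as the paper's proof: first-order linearization around $(0,g_0)$ (using $F^{(1)}\equiv 0$ from class $\mathcal{B}$) reduces $m_j^{(1)}$ to a heat equation with mixed boundary data, and unique continuation plus the strong maximum principle forces $D_1=D_2$; then the second-order linearization, tested against a forward heat solution $b$ with $\partial_\nu b=g$ on $\Sigma$, yields the orthogonality relation that, combined with completeness of the mixed Dirichlet--Neumann eigenfunctions and positivity of $m^{(1)},b$, gives $F_1^{(2)}=F_2^{(2)}$, with induction handling higher orders. Your identification of the two technical ingredients (Hopf/strong maximum principle for $b>0$, and $L^2$-completeness of the mixed eigenfunctions) matches exactly what the paper invokes.
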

\begin{proof}
\textbf{Step I}. When $m_0$ is in the form of \eqref{fg2}, $f_0=0$, $g_0>0$ and $B=\partial_\nu$ for the system \eqref{um0}, we denote the solution as $(u(x,t;\epsilon), m(x,t;\epsilon))$. When $\epsilon=0$, we obtain the following result
\[
(u(x,t;0),m(x,t;0))=(0,g_0)\ \ \mathrm{for}\ (x,t)\in Q.
\]
We first choose a positive $g_1$ of \eqref{fg2}. By applying the first-order linearization $\partial_{\epsilon_1}u|_{\epsilon=0}, \partial_{\epsilon_1}m|_{\epsilon=0}$ around $(u_0, m_0)=(0,g_0)$, we find that $(u_j^{(1)}, m_j^{(1)})$ satisfies the following equations for $j=1,2$:
\begin{equation}
\label{mN12}
\begin{cases}
-\partial_tu_j^{(1)}-\Delta u_j^{(1)}=0 &\mathrm{in}\ (\Omega\setminus \overline{D_j})\times(0,T],\\
\partial_tm_j^{(1)}-\Delta m_j^{(1)}-g_0\Delta u_j^{(1)}=0
&\mathrm{in}\ (\Omega\setminus \overline{D_j})\times(0,T],\\
u_j^{(1)}=0,\quad m_j^{(1)}=0 &\mathrm{in}\ \partial D_j\times(0,T],\\
\partial_\nu u_j^{(1)}=0, \quad \partial_\nu m_j^{(1)}=0 &\mathrm{on}\ \Sigma_2 ,\\
  u_j^{(1)}(x,T)=0,\quad m_j^{(1)}(x,0)=g_1>0&\mathrm{on}\ \Omega\setminus \overline{D_j}.
\end{cases}
\end{equation}
We observe that $u_j^{(1)}$ satisfies the heat equation which has the unique trivial solution, so $u_j^{(1)}=0$ in $Q$, which implies $\Delta u_j^{(1)}=0$. Consequently, the equation \eqref{mN12} can be simplified as
\begin{equation*}
%\label{mN1}
\begin{cases}
\partial_tm_j^{(1)}-\Delta m_j^{(1)}=0
&\mathrm{in}\ (\Omega\setminus \overline{D_j})\times(0,T],\\
m_j^{(1)}=0 &\mathrm{in}\ \partial D_j\times(0,T],\\
\partial_\nu m_j^{(1)}=0 &\mathrm{on}\ \Sigma_2 ,\\
   m_j^{(1)}(x,0)=g_1>0&\mathrm{on}\ \Omega\setminus \overline{D_j}.
\end{cases}
\end{equation*}
Similar to the proof in Theorem \ref{DirichletSmoothAnomaly}, by \eqref{cod2}, utilizing the unique continuation principle for the linear parabolic equation and maximum principle, we can conclude that $D_1 = D_2$.

\textbf{Step II}. Fix $D_1=D_2$. We consider the second-order linearization. For $j=1,2$, we have $u_j^{(1,2)}$ and  $m_j^{(1,2)}$ satisfy the following equations:
\begin{equation}
\label{umN2}
\begin{cases}
-\partial_t u_j^{(1,2)}-\Delta u_j^{(1,2)}+\nabla u^{(1)}\cdot \nabla u^{(2)}=F_j^{(2)}m^{(1)}m^{(2)}
&\mathrm{in}\  Q,\\
\partial_t m^{(1,2)}-\Delta m^{(1,2)}-\mathrm{div}(m^{(1)}\nabla u^{(2)})-\mathrm{div}(m^{(2)}\nabla u^{(1)})-g_0\Delta u_j^{(1,2)}=0
&\mathrm{in}\ Q,\\
 u_j^{(1,2)}=0,\quad m^{(1,2)}=0&\mathrm{on}\  \Sigma_1,\\
 \partial_{\nu}u_j^{(1,2)}=\partial_{\nu}m^{(1,2)}=0&\mathrm{on}\  \Sigma_2,\\
 u^{(1,2)}(x,T)=m^{(1,2)}(x,0)=0 &\mathrm{in}\ \Omega\setminus\overline{D}.
\end{cases}
\end{equation}
Notice that $u^{(1)},u^{(2)}, m^{(1)}, m^{(2)}$ are not dependent on $F_j^{(2)}$. Substituting the solutions $u_j^{(1)}=0$ and  $\nabla u_j^{(1)}=0$ for $j=1,2$, then equation \eqref{umN2} can be simplified as
\begin{equation}
\label{umNs2}
\begin{cases}
-\partial_t u_j^{(1,2)}-\Delta u_j^{(1,2)}=F_j^{(2)}m^{(1)}m^{(2)}
&\mathrm{in}\  Q,\\
\partial_t m_j^{(1,2)}-\Delta m_j^{(1,2)}-g_0\Delta u_j^{(1,2)}=0
&\mathrm{in}\ Q,\\
 u_j^{(1,2)}=0,\quad m_j^{(1,2)}=0&\mathrm{on}\  \Sigma_1,\\
 \partial_{\nu}u_j^{(1,2)}=\partial_{\nu}m^{(1,2)}=0&\mathrm{on}\  \Sigma_2,\\
 u_j^{(1,2)}(x,T)=m_j^{(1,2)}(x,0)=0 &\mathrm{in}\ \Omega\setminus\overline{D}.
\end{cases}
\end{equation}
Subtracting  the first equation of \eqref{umNs2} with $j = 1,2,$ we find
\begin{equation}
\label{F2}
\begin{cases}
-\partial_t\check{u}-\Delta \check{u}=(F_1^{(2)}-F_2^{(2)})m^{(1)}m^{(2)}
&\mathrm{in}\  Q,\\
 \check{u}=0&\mathrm{on}\  \Sigma_1.\\
 \partial_{\nu}\check{u}=0&\mathrm{on}\  \Sigma_2.\\
 \check{u}(x,T)=0 &\mathrm{in}\ \Omega\setminus\overline{D}.
\end{cases}
\end{equation}
where $\check{u}=u_1^{(1,2)}-u_2^{(1,2)}$. Let $\tilde{b}$ be a solution to the equation \begin{equation}
\label{zN}
\begin{cases}
\partial_t\tilde{b}-\Delta \tilde{b}=0
&\mathrm{in}\  Q,\\
 \tilde{b}=0&\mathrm{on}\  \Sigma_1,\\
 \partial_\nu \tilde{b}=q_0&\mathrm{on}\  \Sigma_2,\\
 \tilde{b}(x,0)=0&\mathrm{in}\ \Omega\setminus\overline{D}.
\end{cases}
\end{equation}
By multiplying both sides of \eqref{F2} by $\tilde{b}$ and integrating over $Q$, applying Green's formula and
from \eqref{cod2}, we can deduce that
\[
 \int_{Q}(F_1^{(2)}(x)-F_2^{(2)}(x))m^{(1)}(x,t)m^{(2)}(x,t)\tilde{b}(x,t) \,dx\,dt=0.
\]
Similarly, based on Lemma \ref{lem1}, we take $m^{(2)}=e^{\mu t}\alpha(x,\mu)$ for $\mu\in\mathbb{R}$. Then, we have
\[
\int_{0}^T \int_{\Omega\setminus \overline{D}}(F^{(2)}_1(x)-F^{(2)}_2(x))e^{\mu t}\alpha(x,\mu)m^{(1)}(x,t)\tilde{b}(x,t)\,dx\,dt=0.
\]
Since the mixed-Laplacian eigenfunction form a complete set in $L^2(\Omega\setminus \overline{D})$,  we have
 \[
(F^{(2)}_1(x)-F^{(2)}_2(x))\int_{0}^Te^{-\rho t}m^{(1)}(x,t)\tilde{b}(x,t)\,dt=0.
\]
By applying the maximum principle, we can conclude that $\tilde{b}(x,t)>0$  and $m^{(1)}(x,t)>0$ in $Q$. Consequently, we have $F^{(2)}_2(x)=F^{(2)}_1(x)$ in $\Omega\backslash\overline{D}$. 

Similarly, to calculate higher-order Taylor coefficients, we utilize mathematical induction and derive $F_1^{(N)}(x)=F_2^{(N)}(x)$ for $N\geq 3$. Hence, we can conclude that $F_1=F_2$ holds.
\end{proof}

% \begin{rmk}
%     Also works with plain Dirichlet or Neumann data
% \end{rmk}

\section{Non-Smooth Internal Anomalies for the Static MFG System}

Next, we consider internal anomalies that have non-smooth boundaries. Let $\mathcal{P}$ stand for the set of Borel probability measures on $\mathbb{R}^n$, and $\mathcal{P}(\Omega')$ stand for the set of Borel probability measures on $\Omega'$. Let $m\in\mathcal{P}(\Omega')$ denote the population distribution of the agents and $u(x, t):\Omega'\times [0, T]\to \mathbb{R}$ denote the value function of each player. We restrict our study to stationary MFGs with quadratic Hamiltonians given by the following form:

\begin{equation}\label{MFGQuadraticStat}
    \begin{cases}
        -\Delta u(x) + \frac{1}{2}\kappa(x) |\nabla u(x)|^2 + \lambda - F(x,m(x)) = 0 &\quad \text{in }\Omega',\\
        -\Delta m(x) - \text{div}(\kappa(x) m(x)\nabla u(x)) = 0  &\quad \text{in }\Omega',\\
        \partial_\nu u(x)=\partial_\nu m(x)=0 &\quad \text{on }\partial\Omega', \\
        u(x) = \psi(x), \quad m(x)=\varphi(x) & \quad \text{on } \partial\Omega,
    \end{cases}
\end{equation} 
for a Lipschitz domain $\Omega'\subset\mathbb{R}^n$, and $F:\Omega'\times\mathcal{P}(\Omega')\to\mathbb{R}$, where $\lambda$ is a constant that can be determined through the normalisation of $m$.. Then, it is well-known (see, for instance, Theorem 2.1 of \cite{LasryLions1}) that
\begin{theorem}\label{ForwardPbMainThm}
    Suppose $F$ is Lipschitz and bounded in $\Omega'$. Then there exists $\lambda\in\mathbb{R}$, $u\in C^2(\Omega')$ and $m\in W^{1,p}(\Omega')$ for any $p<\infty$ solving \eqref{MFGQuadraticStat}.
\end{theorem}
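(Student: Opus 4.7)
The plan is to construct solutions via a Schauder fixed point argument on a suitably chosen convex compact subset $\mathcal{K}\subset\mathcal{P}(\Omega')$ consisting of probability densities with uniform $W^{1,p}$ bounds. Given $m\in\mathcal{K}$, I would first solve the ergodic Hamilton--Jacobi--Bellman equation
\[
-\Delta u+\tfrac{1}{2}\kappa(x)|\nabla u|^{2}+\lambda=F(x,m(x))\quad\text{in }\Omega',
\]
with the prescribed boundary data, producing a pair $(\lambda,u)$. Since $F(\cdot,m)$ is bounded and Lipschitz, this nonlinear eigenvalue problem can be tackled either by a Hopf--Cole--type substitution (taking $v=\exp(-\tfrac{1}{2}\int_{0}^{\cdot}\kappa\,du)$ when $\kappa$ is smooth enough, or an appropriate generalization) that reduces it to a linear principal-eigenvalue problem amenable to the Krein--Rutman theorem, or directly via Leray--Schauder combined with Bernstein's method to extract an a priori $\|\nabla u\|_{\infty}$ bound from $\|F\|_{\infty}$ and $\|\kappa\|_{C^{1}}$.

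Second, with $(\lambda,u)$ in hand, I would solve the linear Fokker--Planck equation
\[
-\Delta\widetilde{m}-\operatorname{div}\bigl(\kappa(x)\,\widetilde{m}\,\nabla u(x)\bigr)=0\quad\text{in }\Omega',
\]
with the prescribed boundary data, and define $\Phi(m):=\widetilde{m}$. Standard $L^{p}$ theory for linear second-order divergence-form elliptic equations, together with the $C^{1,\alpha}$ regularity of the drift $\kappa\nabla u$, yields $\widetilde{m}\in W^{1,p}(\Omega')$ for any $p<\infty$, with quantitative bounds depending only on $\|\kappa\nabla u\|_{\infty}$. The strong maximum principle, together with the boundary condition and conservation of mass (tested against the constant), keeps $\widetilde{m}$ nonnegative and of the correct total mass, so that $\Phi$ maps $\mathcal{K}$ into itself.

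Third, I would verify the remaining Schauder hypotheses: compactness of $\Phi(\mathcal{K})$ follows from the compact embedding $W^{1,p}\hookrightarrow C^{0}(\overline{\Omega'})$ for $p>n$; continuity of $\Phi$ follows by combining stability of the HJB solver with respect to the $L^{1}$-perturbation of its right-hand side (this is where the uniqueness of the ergodic pair, modulo additive constants, becomes essential) with the linear continuous dependence of $\widetilde{m}$ on the drift. A fixed point of $\Phi$ furnishes the desired triple $(\lambda,u,m)$, and an a posteriori Schauder bootstrap upgrades the regularity of $u$ to $C^{2}(\Omega')$.

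The main obstacle is twofold. First, one must obtain a Lipschitz estimate on $u$ that is \emph{uniform} in $m\in\mathcal{K}$; the quadratic gradient term $\tfrac{1}{2}\kappa|\nabla u|^{2}$ is the subtle point, and a Bernstein-type calculation applied to $|\nabla u|^{2}$ requires careful control of the cross-terms coming from $\nabla\kappa$ and of the boundary contribution under the Neumann condition. Second, one must ensure that the ergodic solution map $m\mapsto(\lambda,u)$ is continuous in a topology compatible with the compactness of $\mathcal{K}$; this is nontrivial because $(\lambda,u)$ is defined only implicitly and the uniqueness of $\lambda$ must be invoked to avoid branch-jumping along a minimizing sequence. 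The mixed-type boundary data in \eqref{MFGQuadraticStat} further requires one to check that neither step destroys admissibility: in particular, the Fokker--Planck step must preserve nonnegativity and the prescribed mass of $m$, which is where the compatibility of $\varphi$ with the Neumann portion of the boundary enters.
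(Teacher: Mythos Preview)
The paper does not supply its own proof of this statement; it is presented as well-known and attributed to Theorem~2.1 of Lasry--Lions~\cite{LasryLions1}. Your Schauder fixed-point scheme---freeze $m$, solve the ergodic HJB for $(\lambda,u)$ with uniform gradient bounds via Hopf--Cole or Bernstein, feed the drift $\kappa\nabla u$ into the linear Fokker--Planck equation, and iterate---is exactly the strategy of that reference, so your proposal matches what the paper invokes rather than something it develops independently.

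One small clarification worth flagging: the conditions $u=\psi$, $m=\varphi$ on $\partial\Omega$ in \eqref{MFGQuadraticStat} should be read as traces of the solution on an interior surface (to be used as measurement data for the inverse problem), not as additional boundary constraints for the forward problem; the forward well-posedness is for the pure Neumann problem on $\Omega'$, which is the classical Lasry--Lions setting. Your remark about ``mixed-type boundary data'' is therefore not an obstacle at this stage.
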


Let $\Omega$ be a closed proper subdomain of $\Omega'$ with a smooth boundary $\partial\Omega$. In this subdomain, there are no Neumann boundary conditions. 
\begin{equation}\label{MFGStat}
    \begin{cases}
		- \Delta u(x) + \frac{1}{2}\kappa(x) |\nabla u(x)|^2 + \lambda - F(x,m(x)) = 0 &\quad \text{in }\Omega\\
        - \Delta m(x) - \text{div}(\kappa(x) m(x)\nabla u(x)) = 0  &\quad \text{in }\Omega,\\
        u(x) = \psi(x), \quad m(x)=\varphi(x) &\quad \text{on } \partial\Omega,
    \end{cases}
\end{equation}
We will consider this system such that there is some topological structure in the system, given by an anomalous inhomogeneity $ D\Subset\Omega$, which is a bounded Lipschitz domain such that $\Omega\backslash\bar{D}$ is connected. Here, by an inhomogeneity, we mean that $\kappa$, $\lambda$ and $F$ has a discontinuity across the boundary of $D$. In particular, $\kappa$ has a jump of the form 
\[\kappa(x) = \begin{cases}
    \kappa_1(x) & \text{ if }x\in D,\\
    \kappa_0(x) & \text{ otherwise },
\end{cases}
\] such that
\[\kappa_1(x)\neq \kappa_0(x) \quad \text{ for }x\in\partial D.\] A similar form holds for $\lambda$ and $F$. See Definitions \ref{kappaAdmis}--\ref{FAdmis1} and \ref{FDef} in Section \ref{sect:MainResults} for more precise descriptions of these functions.

In order to characterize the anomaly $D$, we introduce the inverse boundary problem given by the following measurement map of a single pair of Cauchy data:
\begin{equation}\label{MeasureMap}
    \mathcal{M}_{ D,H,F}  := \left((\psi,\varphi),(\partial_\nu u, \partial_\nu m)_{\partial\Omega}\right),\, \psi,\varphi \text{ fixed} \to  D,H,F,
\end{equation}
where $\nu$ is the exterior unit normal vector to $\partial\Omega$. In the physical context, $ D$ signifies the support of the anomalous polyhedral inclusion in the state space $\Omega$. Hence, the inverse problem \eqref{MeasureMap} is concerned with recovering the location and shape of this anomaly, as well as its parameter configuration. It is also referred to as the inverse inclusion problem in the theory of inverse problems.

For the inverse inclusion problem \eqref{MeasureMap}, we mainly consider its unique identifiability issue. That is, we aim at establishing the sufficient conditions under which $ D$ can be uniquely determined by the measurement map $\mathcal{M}_{ D,H,F}$ in the sense that if two admissible inclusions $( D_j, H_j, F_j)$, $j = 1, 2$, produce the same boundary measurement, i.e. $\mathcal{M}_{ D_1,H_1,F_1}(u,m)=\mathcal{M}_{ D_2,H_2,F_2}(u,m)$ associated with a fixed $(u,m)$, then one has $( D_1, H_1, F_1)=( D_2, H_2, F_2)$.

\subsection{Geometrical Setup}\label{sect:geometry}

Consider the convex conic cone $\mathcal{S}\subset\Omega$ with apex $x_c$ and axis $v_c$ and opening angle $2\theta_c\in(0,\pi)$, defined by
\[\mathcal{S}:=\{y\in\Omega:0\leq\angle (y-x_c,v_c)\leq\theta_c,\theta_c\in(0,\pi/2)\}.\] Define the truncated conic cone by
\[\mathcal{S}_h:=\mathcal{S}\cap B_h,\] where $B_h:=B_h(x_c)$ is an open ball contained in $\Omega$ centred at $x_c$ with radius $h>0$. Observe that both $\mathcal{S}$ and $\mathcal{S}_h$ are Lipschitz domains. %It is important to understand the relationship between various notions of smoothness of domain.
% All Ck domains (k  1) are Lipschitz. There is not an inclusion in either direction between the
% sets of Lipschitz and polyhedral domains. That is, not all polyhedral domains are Lipschitz, and
% vice-vera. All polyhedral domains do however satisfy the cone condition. Examples of domains not
% satisfying the cone condition include those having cusps on the boundary.

We first consider some asymptotics for a CGO solution we will be using. 
\begin{lemma}\label{wCGOlem}
Let $w$ be the solution to 
\begin{equation}\label{CGOEq}
- \Delta w(x) = 0\text{ in }\Omega',
\end{equation}
of the form 
\begin{equation}\label{CGO}
w = e^{\tau (\xi + i\xi^\perp)\cdot (x-x_c)}
\end{equation} 
such that $\xi\cdot\xi^\perp=0$, $\xi,\xi^\perp\in\mathbb{S}^{n-1}$. 
Then there exists a positive number $\rho$ depending on $\mathcal{S}_h$ satisfying 
\begin{equation}\label{CGOCond}
-1 < \xi\cdot\widehat{(x-x_c)}\leq -\rho < 0\quad\text{ for all } x\in \mathcal{S}_h,
\end{equation}
where $\hat{x} = \frac{x}{|x|}$. Moreover, for sufficiently large $\tau$, \cite{DiaoFeiLiuWang-2022-Semilinear-Shape-Corners}
\begin{equation}\label{CGOEst1}
\left|\int_{\mathcal{S}_h}w\right|\geq C_{\mathcal{S}_h}\tau^{-n} +\mathcal{O}\left(\frac{1}{\tau}e^{-\frac{1}{2}\rho h \tau}\right),
\end{equation}
\begin{equation}\label{CGOEst2}
\left|\int_{\mathcal{S}_h}|x-x_c|^\alpha w\right|\lesssim \tau^{-(\alpha+n)}+\frac{1}{\tau}e^{-\frac{1}{2}\rho h \tau} \quad\forall \alpha>0,
\end{equation}
\begin{equation}\label{CGOEst3}
\norm{w}_{H^1(\partial\mathcal{S}_h)}\lesssim (2\tau^2+1)^{\frac12}e^{-\rho h\tau},
\end{equation}
\begin{equation}\label{CGOEst4}
\norm{\partial_\nu w}_{L^2(\partial\mathcal{S}_h)}\lesssim \tau e^{-\rho h \tau}.
\end{equation}
Here, we use the symbol `` $\lesssim$" to denote that the inequality holds up to a constant which is independent of $\tau$.
\end{lemma}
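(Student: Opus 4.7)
The plan is to first verify that $w$ defined by \eqref{CGO} satisfies \eqref{CGOEq} by direct computation: the Laplacian produces $\tau^2(\xi+i\xi^\perp)\cdot(\xi+i\xi^\perp)w=\tau^2(|\xi|^2-|\xi^\perp|^2+2i\xi\cdot\xi^\perp)w=0$ in view of the orthogonality and unit-length assumptions on $\xi,\xi^\perp$. For the geometric condition \eqref{CGOCond}, I would take $\xi:=-v_c$ so that for every $x\in\mathcal{S}_h$ the unit vector $\widehat{(x-x_c)}$ lies within angle $\theta_c<\pi/2$ of $v_c$; this yields $\xi\cdot\widehat{(x-x_c)}\leq -\cos\theta_c$, and setting $\rho:=\cos\theta_c>0$ gives the claim, with $\xi^\perp$ chosen as any unit vector orthogonal to $v_c$ (possible since $n\geq 2$).

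The main ingredient for the lower bound \eqref{CGOEst1} is to extend the integration to the full infinite convex cone $\mathcal{S}$ and decompose
\[
\int_{\mathcal{S}_h}w\,dx=\int_{\mathcal{S}}w\,dx-\int_{\mathcal{S}\setminus\mathcal{S}_h}w\,dx.
\]
Writing $x-x_c=r\omega$ with $r>0$ and $\omega$ ranging over the spherical cap $\Sigma_c:=(\mathcal{S}-x_c)\cap\mathbb{S}^{n-1}$, the infinite-cone integral factorizes and the radial piece reduces to a gamma-type integral, giving
\[
\int_{\mathcal{S}}w\,dx=\frac{(n-1)!}{\tau^n}\int_{\Sigma_c}\bigl(-(\xi+i\xi^\perp)\cdot\omega\bigr)^{-n}d\omega,
\]
where the radial integral converges because $\mathrm{Re}\bigl(-(\xi+i\xi^\perp)\cdot\omega\bigr)\geq\rho>0$. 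The constant $C_{\mathcal{S}_h}$ is then $(n-1)!$ times the modulus of the above angular integral. The remainder is controlled by $|\Sigma_c|\int_h^\infty r^{n-1}e^{-\rho\tau r}dr$, which is $\mathcal{O}(\tau^{-1}e^{-\rho h\tau})$ for large $\tau$; the factor $\tfrac{1}{2}$ appearing in the stated exponent provides slack to absorb the polynomial prefactors $h^{n-1}$ and similar constants into the exponential.

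The estimate \eqref{CGOEst2} follows from the pointwise bound $|w(x)|\leq e^{-\rho\tau|x-x_c|}$ combined with the same polar-coordinate computation, producing $|\Sigma_c|\Gamma(\alpha+n)(\rho\tau)^{-(\alpha+n)}$ plus the exponentially small tail from $\mathcal{S}\setminus\mathcal{S}_h$. The boundary estimates \eqref{CGOEst3}--\eqref{CGOEst4} are consequences of $|w|\leq e^{-\rho\tau|x-x_c|}$ and $|\nabla w|\leq\sqrt{2}\,\tau|w|$: on the spherical part $\partial B_h\cap\mathcal{S}$ one has $|x-x_c|=h$, so each surface integral is dominated by $e^{-\rho h\tau}$ multiplied by an appropriate power of $\tau$ from the gradient and by the bounded $(n-1)$-dimensional area of the face; on the lateral (conical) portion of $\partial\mathcal{S}_h$ the integrand decays even faster thanks to the same pointwise bound and the Lipschitz regularity of $\partial\mathcal{S}_h$ near the apex. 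The principal obstacle is proving that the angular integral appearing in the leading term of \eqref{CGOEst1} does not vanish: the integrand $(-(\xi+i\xi^\perp)\cdot\omega)^{-n}$ is complex-valued with real part of constant sign but varying argument, so non-cancellation requires a careful choice of $\xi^\perp$ (or, if necessary, a small admissible perturbation thereof) and a direct analysis of the resulting oscillatory integral. This technical core of the argument is carried out in \cite{DiaoFeiLiuWang-2022-Semilinear-Shape-Corners}, whose approach we would follow.
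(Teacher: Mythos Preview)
Your approach matches the paper's: both use polar coordinates at the apex, the identity $\int_0^\infty r^{n-1}e^{-\mu r}\,dr=\Gamma(n)\mu^{-n}$ for the radial integral, and the decomposition into a leading $\tau^{-n}$ contribution plus an exponentially small tail over $\{r>h\}$. Two differences are worth noting. First, where you flag non-vanishing of the angular integral $\int_{\Sigma_c}\bigl(-(\xi+i\xi^\perp)\cdot\omega\bigr)^{-n}\,d\omega$ as the principal obstacle and defer to the cited reference, the paper instead invokes the integral mean value theorem on the angular variables and then bounds $|(\xi+i\xi^\perp)\cdot\omega|^{-n}\geq 2^{-n/2}$ pointwise (using $|(\xi+i\xi^\perp)\cdot\omega|\leq\sqrt{2}$ for unit vectors), extracting the constant $C_{\mathcal{S}_h}=\pi\Gamma(n)C_{\theta_c}2^{1-n/2}$ directly. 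Second, your claim that on the lateral conical face ``the integrand decays even faster'' is incorrect: near the apex $|x-x_c|\to 0$ so $|w|\to 1$, and the lateral $L^2$ contribution is only $O(\tau^{-(n-1)/2})$, not exponentially small. The paper's derivation of \eqref{CGOEst3}--\eqref{CGOEst4} is in fact carried out only on the spherical cap $\partial B_h\cap\mathcal{S}$ (where $|x-x_c|=h$), which is all that is needed in the application (Theorem~\ref{AuxThm}) because the Cauchy data of $\tilde v$ vanish on $\partial\mathcal{S}_h\setminus\partial B_h$ and the lateral face never appears in the boundary integrals.
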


\begin{proof}
    We first remark that since $\xi\perp\xi^\perp$, clearly $w$ satisfies $\Delta w=0$ in $\mathbb{R}^n$, and in particular, in $\Omega'$.

    Furthermore, \eqref{CGOCond} is easily satisfied with an appropriate choice of $\xi$, since $\mathcal{S}_h$.

    Next, we recall from Lemma 2.2 of \cite{DiaoFeiLiuYang-2022-EM-Shape-Corners} that, for some fixed $\alpha>0$ and $0<\delta<e$,
    \begin{equation}\label{LaplaceTransformIdentity}
        \int_0^\delta r^\alpha e^{-\mu r}\,dr = \frac{\Gamma(\alpha+1)}{\mu^{\alpha+1}} + \int_\delta^\infty r^\alpha e^{-\mu r}\,dr,
    \end{equation} 
    where $\mu\in\mathbb{C}$ and $\Gamma$ is the Gamma function. Moreover, if the real part of $\mu$, denoted by $\mathscr{R}\mu$, is such that $\mathscr{R}\mu \geq \frac{2\alpha}{e}$, then $r^\alpha\leq e^{\mathscr{R}\mu r/2}$, and hence 
    \begin{equation}\label{LaplaceTransformIneq}
        \left|\int_\delta^\infty r^\alpha e^{-\mu r}\,dr\right| \leq \frac{2}{\mathscr{R}\mu}e^{-\mathscr{R}\mu\delta/2}.
    \end{equation}
    With this, we write $x-x_c=(x_1,\dots,x_n)\in\mathbb{R}^n$ in polar coordinates:
    \[x-x_c= \left( \begin{array}{l}
    r\cos\theta_1 \\
    r\sin\theta_1\cos\theta_2 \\
    r\sin\theta_1\sin\theta_2\cos\theta_3 \\
    \vdots \\
    r\sin\theta_1\cdots\sin\theta_{n-2}\cos\varphi \\
    r\sin\theta_1\cdots\sin\theta_{n-2}\sin\varphi
    \end{array} \right)^T,\]
    where $\theta_i\in[0,\pi]$ for $i=1,\dots,n-2$ and $\varphi\in[0,2\pi)$, with Jacobian
    \[r^{n-1}\sin^{n-2}(\theta_1)\sin^{n-3}(\theta_2)\cdots\sin(\theta_{n-2})\,dr\,d\theta_1 \,d\theta_2 \cdots\,d\theta_{n-2}\,d\varphi.\]

    Since $\mathcal{S}_h$ has an opening angle of $2\theta_c$, by \eqref{LaplaceTransformIdentity}, we have 
    \begin{multline}\label{CGOEstProofEq}
    \int_{\mathcal{S}_h}e^{\tau (\xi + i\xi^\perp)\cdot (x-x_c)} \\ = I_1 + \int_0^{2\pi} \left(\int_0^{\theta_c} \cdots \int_0^{\theta_c} I_2 \sin^{n-2}(\theta_1)\sin^{n-3}(\theta_2)\cdots\sin(\theta_{n-2})\,d\theta_1\,d\theta_2\cdots\,d\theta_{n-2} \right) \,d\varphi, 
    \end{multline}
    where 
    \begin{equation}\label{CGOEstProofEqI1}I_1 := \int_0^{2\pi} \int_0^{\theta_c} \cdots \int_0^{\theta_c}\frac{\Gamma(n)}{\tau^n \left((\xi + i\xi^\perp) \cdot \widehat{(x-x_c)}\right)^n} \sin^{n-2}(\theta_1)\cdots\sin(\theta_{n-2})\,d\theta_1\cdots\,d\theta_{n-2}\,d\varphi\end{equation} 
    and 
    \begin{equation}\label{CGOEstProofEqI2}I_2 := \int_h^\infty r^{n-1} e^{\tau r (\xi + i\xi^\perp)\cdot \widehat{(x-x_c)}} \,dr.\end{equation}
    By the integral mean value theorem, we have that 
    \begin{align}\label{CGOEstProofEq1}
        I_1 & = \frac{\Gamma(n)}{\tau^n} \int_0^{2\pi} \frac{1}{\left((\xi + i\xi^\perp) \cdot \widehat{(x-x_c)}(\varphi,\theta_\zeta)\right)^n} \,d\varphi\int_0^{\theta_c} \sin^{n-2}(\theta_1) \,d\theta_1 \cdots \int_0^{\theta_c}\sin(\theta_{n-2})\,d\theta_{n-2} \nonumber \\
        & = \frac{2\pi\Gamma(n)C_{\theta_c}}{\tau^n} \frac{1}{\left((\xi + i\xi^\perp) \cdot \widehat{(x-x_c)}(\varphi_\zeta,\theta_\zeta)\right)^n} 
    \end{align}
    for some constant $C_{\theta_c}$ such that $0<C_{\theta_c}<1$. Since $\xi \cdot \widehat{(x-x_c)} > -1$ by \eqref{CGOCond}, this gives that 
    \begin{equation}\label{CGOEstProofEq2}
    |I_1| \geq \frac{2\pi\Gamma(n)C_{\theta_c}}{\tau^n} \frac{1}{2^{n/2}}. 
    \end{equation}

    On the other hand, for sufficiently large $\tau$, by \eqref{LaplaceTransformIneq},
    \begin{equation}|I_2| = \left| \int_h^\infty r^{n-1} e^{r \tau (\xi + i\xi^\perp)\cdot \widehat{(x-x_c)}} \,dr \right| \leq \frac{2}{\rho\tau} e^{-\frac{1}{2}\rho h \tau}.\end{equation} 
    Consequently, we have that 
    \begin{multline}
    \left| \int_0^{2\pi} \left(\int_0^{\theta_c} \cdots \int_0^{\theta_c} I_2 \sin^{n-2}(\theta_1)\sin^{n-3}(\theta_2)\cdots\sin(\theta_{n-2})\,d\theta_1\,d\theta_2\cdots\,d\theta_{n-2} \right) \,d\varphi \right| 
    \\
    \leq \int_0^{2\pi} \left(\int_0^{\theta_c} \cdots \int_0^{\theta_c} |I_2| \,d\theta_1\,d\theta_2\cdots\,d\theta_{n-2} \right) \,d\varphi  \leq \frac{4 \pi \theta_c^{n-2}}{\rho \tau } e^{-\frac{1}{2}\rho h \tau}.
    \end{multline}
    Combining this with \eqref{CGOEstProofEq2}, we obtain \eqref{CGOEst1} with $C_{\mathcal{S}_h} = \frac{\pi\Gamma(n)C_{\theta_c}}{2^{n/2-1}}$.

    Next, for \eqref{CGOEst2}, utilising \eqref{LaplaceTransformIdentity}, we obtain higher exponents for $r$ in \eqref{CGOEstProofEq}, i.e. 
    \begin{multline*}
    \int_{\mathcal{S}_h}(x-x_c)^\alpha e^{\tau (\xi + i\xi^\perp)\cdot (x-x_c)} \\ = I_{3} + \int_0^{2\pi} \left(\int_0^{\theta_c} \cdots \int_0^{\theta_c} I_{4} \sin^{n-2}(\theta_1)\sin^{n-3}(\theta_2)\cdots\sin(\theta_{n-2})\,d\theta_1\,d\theta_2\cdots\,d\theta_{n-2} \right) \,d\varphi, 
    \end{multline*}
    where 
    \begin{align*}& |I_{3}| \\
    & := \left|\int_0^{2\pi} \int_0^{\theta_c} \cdots \int_0^{\theta_c}\frac{\Gamma(n+\alpha)}{\tau^{n+\alpha} \left((\xi + i\xi^\perp) \cdot \widehat{(x-x_c)}\right)^{n+\alpha}} \sin^{n-2}(\theta_1)\cdots\sin(\theta_{n-2})\,d\theta_1 \cdots\,d\theta_{n-2}\,d\varphi \right|\\
    & = \left| \frac{2\pi\Gamma(n+\alpha)C_{\theta_c}}{\tau^{n+\alpha}} \frac{1}{\left((\xi + i\xi^\perp) \cdot \widehat{(x-x_c)}(\varphi_\zeta,\theta_\zeta)\right)^{n+\alpha}} \right| \\
    & \lesssim\frac{1}{\tau^{n+\alpha}} 
    \end{align*} 
    as in the procedure to obtain \eqref{CGOEstProofEq1}--\eqref{CGOEstProofEq2}, 
    and 
    \begin{equation*}|I_{4}| := \left| \int_h^\infty r^{n+\alpha-1} e^{r \tau (\xi + i\xi^\perp)\cdot \widehat{(x-x_c)}} \,dr \right| \leq \frac{2}{\rho\tau} e^{-\frac{1}{2}\rho h \tau},\end{equation*} 
    which directly leads to \eqref{CGOEst2}.

    At the same time, 
    \begin{equation}\norm{w}_{H^1(\partial\mathcal{S}_h)} = \left( \norm{w}_{L^2(\partial\mathcal{S}_h)}^2 + \norm{\tau (\xi + i\xi^\perp)w}_{L^2(\partial\mathcal{S}_h)}^2 \right)^{\frac{1}{2}} \leq (2\tau^2+1)^{\frac{1}{2}} \norm{w}_{L^2(\partial\mathcal{S}_h)} \end{equation} 
    by the Cauchy-Schwarz inequality, since $\xi\in\mathbb{S}^{n-1}$. 
    But, applying the polar coordinate transformation on $w$, we have that 
    \begin{equation}\label{CGOEstProofEq3}\norm{w}_{L^2(\partial\mathcal{S}_h)} = \left( \int_0^{2\pi} \int_0^{\theta_c} \cdots \int_0^{\theta_c} e^{2\tau h \xi \cdot \widehat{(x-x_c)}} \,d\theta_1 \cdots \, d\theta_{n-2} \, d\varphi \right)^{\frac{1}{2}} \leq (2\pi \theta_c^{n-2})^{\frac{1}{2}} e^{-\rho h\tau}\end{equation} 
    by \eqref{CGOCond}. This gives \eqref{CGOEst3}.

    Finally, 
    \begin{equation}\norm{\partial_\nu w}_{L^2(\partial\mathcal{S}_h)} \leq \norm{\nabla w}_{L^2(\partial\mathcal{S}_h)} = \norm{\tau (\xi + i\xi^\perp)w}_{L^2(\partial\mathcal{S}_h)} \leq \sqrt{2}\tau \norm{w}_{L^2(\partial\mathcal{S}_h)} \end{equation} once again by the Cauchy-Schwarz inequality. Using \eqref{CGOEstProofEq3} then gives the desired result \eqref{CGOEst4}.
\end{proof}

\subsection{Main Results}\label{sect:MainResults}

With these definitions in place, we can now state the main results of this section. Let $\Omega$ be a closed proper subdomain of $\Omega'$ with smooth boundary $\partial\Omega$. Consider a bounded Lipschitz polyhedral domain $ D\Subset\Omega$ such that $\Omega\backslash\bar{ D}$ is connected and each corner of $ D$ is a convex cone $\mathcal{S}$, as defined in Subsection \ref{sect:geometry}. We define the following admissibility conditions for $\kappa$, $\lambda$ and $F$.
\begin{definition}\label{kappaAdmis}
    We say that $\kappa(x):\Omega'\to\mathbb{R}$ is admissible, denoted by $\kappa\in\mathcal{B}$, if $\kappa(x)$ is of the form 
    \[\kappa(x) = \kappa_0(x) + (\kappa_1(x)-\kappa_0(x))\chi_ D, \quad x\in\Omega'\]
    such that $\kappa_1$ and $\kappa_0$ are $C^\gamma$ H\"older-continuous for some $\gamma\in(0,1)$ with respect to $x\in  D$ and $x\in \Omega\backslash  D$ respectively, and in an open neighbourhood $U$ of $\partial  D$, $\kappa_1(U\cap  D) \neq \kappa_0(U\cap(\Omega\backslash  D))$.
\end{definition}

\begin{definition}\label{lambdaAdmis}
    We say that the constant $\lambda$ is admissible, denoted by $\lambda\in\mathcal{C}$, if $\lambda$ is of the form 
    \[\lambda = \lambda_0 + (\lambda_1-\lambda_0)\chi_ D\]
    such that $\lambda_0$ and $\lambda_1$ are (possibly different) constants.
\end{definition}

Next, we introduce the admissible class of locally analytic functions.
\begin{definition}\label{FAdmis1}
    Let $E$ be a compact subset of $\Omega\subset\mathbb{R}^n$. We say that $U(x,z):E\times\mathbb{C}\to\mathbb{C}$ is admissible, denoted by $U\in\mathcal{A}_E$, if:
    \begin{enumerate}[(a)]
        \item The map $z\mapsto U(\cdot,z)$ is holomorphic with value in $C^{2+\alpha}(E)$ for some $\alpha\in(0,1)$, 
        \item $U(x,\cdot)$ is $C^\sigma$-continuous with respect to $x\in E$ for some $\sigma\in(0,1)$. 
        \item $U(x,0)=\lambda_U$ for all $x\in \mathbb{R}^n$.
    \end{enumerate}
    It is clear that if $U$ satisfies these two conditions, $U$ can be expanded into a power series
    \[U(x,z)=\lambda_U+\sum_{m=1}^\infty U^{m}(x)\frac{z^m}{m!},\]
    where $U^{m}(x)=\frac{\partial^m }{\partial z^m}U(x,0)\in C^{2+\alpha}(E)$.
\end{definition}

This admissibility condition is imposed a priori on $U$, by extending these functions of real variables to the complex plane with respect to the $z$-variable, given by $\tilde{U}$, and assume that they are holomorphic as functions of the complex variables $z$. Then, $U$ is simply the restriction of $\tilde{U}$ to the real line. Furthermore, since the image of $U$ is in $\mathbb{R}$, we can assume that the series expansion of $\tilde{U}$ is real-valued. This is more general than the cases considered in the previous chapters, and allows for singularities at the boundary $\partial E$ of compact subsets $E$ of $\Omega$.

Observe that if $F\in\mathcal{A}$ with the appropriate $\lambda$, then $(u,m)=(0,0)$ is a solution to \eqref{MFGQuadraticStat}. In fact, it is easy to check that $\left(u,\exp\left(\int \kappa(x)\nabla u(x)\,dx\right)\right)$ is also a solution to \eqref{MFGQuadraticStat}, for some $u$.  
With this admissibility condition on $F$, we are able to improve the result of Theorem \ref{ForwardPbMainThm} and obtain a stronger local regularity of the solution $(u,m)$ to the MFG system \eqref{MFGQuadraticStat}, using the implicit functions theorem for Banach spaces, as in Theorem \ref{local_wellpose}.

Next, we define the admissibility class of the running cost $F$.

\begin{definition}\label{FDef}
    We say that $F(x,m):\Omega'\times\mathcal{P}(\Omega')\to\mathbb{R}$ is admissible, denoted by $F\in\mathcal{D}$, if 
    $F$ is of the form 
    \[F(x) = F_0(x) + (F_1(x)-F_0(x))\chi_ D, \quad x\in\Omega',\]
    such that $F_1\in\mathcal{A}_ D$ with $F_1(x,0)=\lambda_1$, $F_0\in\mathcal{A}_{\Omega\backslash D}$ with $F_0(x,0)=\lambda_0$, and for some $\ell\in\mathbb{N}$, the $\ell$-th Taylor coefficient of $F_1$ and $F_0$, denoted by $F^{(\ell)}_1(x)$ and $F^{(\ell)}_0(x)$ respectively, are $C^\sigma$ H\"older-continuous for some $\sigma\in(0,1)$ with respect to $x\in  D$ and $x\in \Omega\backslash  D$ respectively, and in an open neighbourhood $U$ of $\partial  D$, $F^{(\ell)}_1(U\cap  D) \neq F^{(\ell)}_0(U\cap(\Omega\backslash  D))$.
\end{definition}

With the above admissibility conditions, then, microlocally in a corner $\mathcal{S}_h$ of $ D$, we have that $(u_1,m_1)$ and $(u_0,m_0)$ satisfies \begin{equation}\label{MFGQuadraticStat1}
    \begin{cases}
        - \Delta u_1(x) + \frac{1}{2}\kappa_1(x) |\nabla u_1(x)|^2 + \lambda_1 - F_1(x,m_1(x)) = 0 &\quad \text{in }\mathcal{S}_h,\\
        - \Delta m_1(x) - \text{div}(\kappa_1(x) m_1(x)\nabla u_1(x)) = 0  &\quad \text{in }\mathcal{S}_h,\\
        \partial_\nu u_1(x)=\partial_\nu m_1(x)=0 &\quad \text{on }\partial\Omega',\\
        u_1(x) = \psi(x), \quad m_1(x)=\varphi(x) & \quad\text{on } \partial\Omega.
    \end{cases}
\end{equation} and 
\begin{equation}\label{MFGQuadraticStat0}
    \begin{cases}
        - \Delta u_0(x) + \frac{1}{2}\kappa_0(x) |\nabla u_0(x)|^2 + \lambda_0 - F_0(x,m_0(x)) = 0 &\quad \text{in }B_h\backslash\mathcal{S}_h,\\
        - \Delta m_0(x) - \text{div}(\kappa_0(x) m_0(x)\nabla u_0(x)) = 0  &\quad \text{in }B_h\backslash\mathcal{S}_h,\\
        \partial_\nu u_0(x)=\partial_\nu m_0(x)=0 &\quad \text{on }\partial\Omega',\\
        u_0(x) = \psi(x), \quad m_0(x)=\varphi(x) & \quad\text{on } \partial\Omega
    \end{cases}
\end{equation}
respectively, such that 
\begin{equation}
    \begin{cases}
        \partial_\nu u_0(x)=\partial_\nu u_1(x), \quad \partial_\nu m_0(x) = \partial_\nu m_1(x) &\quad \text{on }\partial\mathcal{S}_h\backslash\partial B_h,\\
        u_0(x) = u_1(x), \quad m_0(x)=m_1(x) & \quad\text{on } \partial\mathcal{S}_h\backslash\partial B_h.
    \end{cases}
\end{equation}

We next state our main results.

\begin{theorem}\label{thm:Statkappa}
    Suppose that $\kappa\in\mathcal{B}$, $\lambda\in\mathcal{C}$ and $F\in\mathcal{A}_\Omega$. Then $ D$ and $\kappa$ are uniquely determined by the boundary measurement $\mathcal{M}_{ D,H,F}$.
\end{theorem}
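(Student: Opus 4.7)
The strategy is a contradiction argument leveraging the convex corner $\mathcal{S}_h$ from \S\ref{sect:geometry} together with the CGO solution provided by Lemma~\ref{wCGOlem}, in the spirit of the semilinear shape-at-corners machinery of \cite{DiaoFeiLiuWang-2022-Semilinear-Shape-Corners}. Suppose for contradiction that two admissible configurations $(D_j,\kappa_j,\lambda_j)$ ($j=1,2$) with the same global $F\in\mathcal{A}_\Omega$ produce identical Cauchy data $\mathcal{M}_{D_1,H_1,F}=\mathcal{M}_{D_2,H_2,F}$, and denote the corresponding MFG solutions by $(u_j,m_j)$. Set $v:=u_1-u_2$ and $\rho:=m_1-m_2$. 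In the connected component $G$ of $\Omega\setminus(\overline{D_1}\cup\overline{D_2})$ abutting $\partial\Omega$, both pairs satisfy the same system with coefficients $(\kappa_0,\lambda_0,F)$; subtracting and freezing the nonlinearity produces a linear coupled elliptic system for $(v,\rho)$ with bounded coefficients and zero Cauchy data on $\partial\Omega$, so the classical elliptic UCP yields $v\equiv\rho\equiv 0$ in $G$. Assuming $D_1\neq D_2$, one may then extract a convex conic corner $\mathcal{S}_h\subset D_1$ with apex $x_c\in\partial D_1$ such that $B_h(x_c)\cap\overline{D_2}=\emptyset$.

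Next I would encode the transmission structure at $\partial D_1\cap B_h$: since $-\Delta u$ is in divergence form and the remaining nonlinearity is bounded, one has continuity of $u_1$ and $\partial_\nu u_1$ across $\partial D_1$, while for the divergence-form $m$-equation it is $m_1$ and the total flux $\partial_\nu m_1+\kappa m_1\partial_\nu u_1$ that are continuous. Combined with the UCP identity $u_1=u_2,\ m_1=m_2$ in $B_h\setminus\mathcal{S}_h$, this forces $v=\partial_\nu v=0$ and $\rho=0$ on $\partial\mathcal{S}_h\setminus\partial B_h$, and consequently $v(x_c)=\nabla v(x_c)=0=\rho(x_c)$ by evaluating along the cone generators through $x_c$. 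Crucially, the normal derivative of $\rho$ does not vanish: the transmission identity gives
\[
\partial_\nu\rho\bigl|_{\mathcal{S}_h}=(\kappa_0-\kappa_1)\,m_1\,\partial_\nu u_1\quad\text{on } \partial\mathcal{S}_h\setminus\partial B_h,
\]
which is the ``source'' that will carry the jump information into the integral identity.

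Now introduce the harmonic CGO $w=e^{\tau(\xi+i\xi^\perp)\cdot(x-x_c)}$ of Lemma~\ref{wCGOlem} and test each difference equation against $w$ on $\mathcal{S}_h$. Subtracting the two $u$-equations and integrating by parts, the zero-Cauchy conditions for $v$ on $\partial\mathcal{S}_h\setminus\partial B_h$ eliminate the interior boundary contribution and produce
\[
\int_{\mathcal{S}_h}\!\Bigl[\tfrac12(\kappa_1-\kappa_0)|\nabla u_1|^2+(\lambda_1-\lambda_0)\Bigr]w\,dx=I_1(\tau)+R_1(\tau),
\]
while testing the $\rho$-equation and using the jump of $\partial_\nu\rho$ to cancel the divergence boundary term yields
\[
\tau\int_{\mathcal{S}_h}(\kappa_1-\kappa_0)\,m_1\,(\xi+i\xi^\perp)\!\cdot\!\nabla u_1\,w\,dx=I_2(\tau)+R_2(\tau),
\]
with $I_j(\tau)=O(\tau^2 e^{-\rho h\tau})$ from estimates \eqref{CGOEst3}--\eqref{CGOEst4}, and $R_j(\tau)$ collecting the leftover expressions in $\nabla v$, $\rho$, and the H\"older remainders of $\kappa,\lambda,u_j,m_j$ at $x_c$. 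H\"older continuity of the coefficients and of $\nabla u_j,m_j$, together with the vanishing of $v,\nabla v,\rho$ at $x_c$, let \eqref{CGOEst2} control each $R_j(\tau)$ by $\tau^{-n-\beta}$ for some $\beta>0$, while \eqref{CGOEst1} lower-bounds $|\int_{\mathcal{S}_h}w|\gtrsim\tau^{-n}$. Dividing through and letting $\tau\to\infty$, after varying $(\xi,\xi^\perp)$ in the second identity, yields the two pointwise relations
\[
\tfrac12(\kappa_1-\kappa_0)(x_c)|\nabla u_1(x_c)|^2+(\lambda_1-\lambda_0)=0,\qquad (\kappa_1-\kappa_0)(x_c)\,m_1(x_c)\,\nabla u_1(x_c)=0.
\]
Since $(\kappa_1-\kappa_0)(x_c)\neq 0$ by Definition~\ref{kappaAdmis} and $m_1(x_c)>0$ by positivity of the density, the second identity forces $\nabla u_1(x_c)=0$, and the first then collapses to $\lambda_1=\lambda_0$ at $x_c$, contradicting $\lambda\in\mathcal{C}$. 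Hence $D_1=D_2=:D$. With $D$ fixed, the two solution pairs share identical Cauchy data at $\partial\Omega$ and identical geometry at $\partial D$, so the standard elliptic UCP applied in $D$ together with Definition~\ref{kappaAdmis} gives $\kappa_1\equiv\kappa_2$.

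The main obstacle will be step~three: justifying that every remainder $R_j(\tau)$ decays strictly faster than $\tau^{-n}$. This demands sharp $C^{1,\sigma}$ boundary regularity of $u_j$ and $C^{0,\alpha}$ regularity of $m_j$ up to the Lipschitz apex $x_c$, together with the quantitative vanishing rate of $|\nabla v(x)|\lesssim|x-x_c|^{\sigma'}$ inherited from $v=\partial_\nu v=0$ on every cone generator. A secondary subtlety is the degenerate case $m_1(x_c)=0$ or $\nabla u_1(x_c)=0$ in which the second identity is empty; there one must argue directly from the first identity, exploiting the non-triviality of the $\lambda$-jump that is guaranteed by Definition~\ref{lambdaAdmis} after suitable normalization of $(\psi,\varphi)$.
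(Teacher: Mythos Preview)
Your approach is genuinely different from the paper's and has real gaps. The paper does \emph{not} work directly with the nonlinear system. Instead, it uses \emph{higher-order linearisation}: it fixes boundary data of the specific polynomial form $\psi=\sum_{\ell}\varepsilon^\ell f_\ell$, $\varphi=\sum_{\ell}\varepsilon^\ell g_\ell$ with $g_1=g_2=0$, so that the first two linearised densities vanish identically, $m^{(1)}\equiv m^{(2)}\equiv 0$. Consequently $u^{(1)}$ is a \emph{harmonic} function in all of $\Omega$, independent of the configuration, and the second-order linearisation yields the clean linear problem $-\Delta u^{(2)}_j+\kappa_j|\nabla u^{(1)}|^2=0$ in $\mathcal{S}_h$ with matching Cauchy data on the lateral boundary. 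This is precisely the setting of the auxiliary Theorem~\ref{AuxThm} with $h=|\nabla u^{(1)}|^2$, which immediately gives $\nabla u^{(1)}(x_c)=0$ --- contradicting the \emph{a priori} choice of $f_1$ that guarantees $\nabla u^{(1)}\neq 0$ on $\partial D$.

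Your direct nonlinear route breaks down at exactly the point you flag as ``the main obstacle''. The remainder $R_1(\tau)$ contains the term $\int_{\mathcal{S}_h}\kappa_0(\nabla u_1+\nabla u_2)\cdot\nabla v\,w\,dx$, and to push this below $\tau^{-n}$ via \eqref{CGOEst2} you need $|\nabla v(x)|\lesssim|x-x_c|^{\sigma'}$ \emph{inside} $\mathcal{S}_h$. But $v$ solves a transmission problem across the Lipschitz interface $\partial D_1$, and $C^{1,\sigma}$ regularity up to a polyhedral corner is generically false for such problems; zero lateral Cauchy data does not furnish this rate. The paper sidesteps this entirely because its linearised $u^{(1)}$ is globally harmonic (no coefficient jump, no corner singularity) and the only object whose H\"older expansion matters in Theorem~\ref{AuxThm} is the source $(q_1-q_0)h$, not the solution.

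There are two further errors. First, the non-degeneracy $m_1(x_c)>0$ and $\nabla u_1(x_c)\neq 0$ for the \emph{nonlinear} solution cannot be arranged by choosing $(\psi,\varphi)$; the paper's linearisation is what makes this controllable, since $u^{(1)}$ is simply a harmonic function with freely prescribed Dirichlet data $f_1$. Second, your final contradiction is invalid: Definition~\ref{lambdaAdmis} explicitly allows $\lambda_1=\lambda_0$ (``possibly different''), so deducing $\lambda_1=\lambda_0$ from the first identity contradicts nothing. The paper never uses a $\lambda$-jump for this theorem; the contradiction comes solely from $\nabla u^{(1)}(x_c)=0$ versus the chosen $f_1$.
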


\begin{theorem}\label{thm:StatF}
    Suppose that $\lambda\in\mathcal{C}$, $F\in\mathcal{D}$ . Then $ D$ and $F$ are uniquely determined by the boundary measurement $\mathcal{M}_{ D,H,F}$. 
\end{theorem}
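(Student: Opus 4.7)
\textbf{Proof proposal for Theorem \ref{thm:StatF}.} The plan is to mirror the corner singularity/CGO strategy already used for Theorem \ref{thm:Statkappa}, but targeting the jump in the first non-vanishing Taylor coefficient $F^{(\ell)}$ of the running cost rather than a jump in the Hamiltonian coefficient $\kappa$. The argument splits into (i) recovery of the inclusion $D$, and (ii) recovery of $F$ once $D$ is identified.

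Step 1 (reduction and unique continuation). Suppose two admissible inclusions $(D_1, H, F_1^\ast)$ and $(D_2, H, F_2^\ast)$ (with $H$ already fixed, since by hypothesis only $F$ and $D$ are unknown) produce the same Cauchy pair. A standard unique continuation argument for the coupled elliptic MFG system in the connected component of $\Omega \setminus \overline{D_1 \cup D_2}$ that touches $\partial\Omega$ yields $u_1 = u_2$, $m_1 = m_2$, and matching fluxes there. If one assumes $D_1 \neq D_2$, then by the Lipschitz polyhedral structure there is a convex conic apex $x_c$ of, say, $D_1$, together with a truncated cone $\mathcal{S}_h$ as in Subsection~\ref{sect:geometry}, satisfying $\mathcal{S}_h \subset \Omega \setminus \overline{D_2}$. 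Inside $\mathcal{S}_h$ the first solution satisfies the "interior" system with $(F_1, \lambda_1)$ while the second satisfies the "exterior" system with $(F_0, \lambda_0)$, and on $\partial \mathcal{S}_h \setminus \partial B_h$ the Cauchy data $(u_1, \partial_\nu u_1, m_1, \partial_\nu m_1) = (u_2, \partial_\nu u_2, m_2, \partial_\nu m_2)$ agree by the matching condition.

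Step 2 (corner test with CGOs). Let $w$ be a harmonic CGO of the form \eqref{CGO} from Lemma~\ref{wCGOlem}. Subtract the two Hamilton--Jacobi equations inside $\mathcal{S}_h$ and write
\begin{equation*}
-\Delta(u_1 - u_2) + \tfrac{1}{2}\kappa\bigl(|\nabla u_1|^2 - |\nabla u_2|^2\bigr) + (\lambda_1 - \lambda_0) = F_1(x,m_1) - F_0(x,m_2).
\end{equation*}
Expanding $F_j$ in its power series about $m = 0$ and separating the constant ($\ell = 0$) term from the higher-order terms gives a right-hand side of the form $\bigl(F_1^{(\ell)}(x_c) - F_0^{(\ell)}(x_c)\bigr)\cdot g(x) + \mathrm{H\ddot{o}lder\ remainder}$, where $g$ captures $m_2^{\ell}/\ell!$ evaluated near $x_c$ (or a lower, first non-vanishing order if some $F^{(k)}$ coincide). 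Multiplying by $w$, integrating over $\mathcal{S}_h$, and applying Green's identity converts the Laplacian and quadratic gradient terms into either boundary integrals on $\partial \mathcal{S}_h \setminus \partial B_h$ (where the Cauchy data match and hence the contribution vanishes) or boundary integrals on $\partial B_h \cap \partial \mathcal{S}_h$ (controlled by \eqref{CGOEst3}--\eqref{CGOEst4} and thus exponentially small in $\tau$).

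Step 3 (asymptotics and contradiction). The main lower bound \eqref{CGOEst1} forces the volume integral of the jump term to be $\gtrsim \tau^{-n}$, while the higher-order Taylor remainders, the $\lambda$ jump (absorbed into the $\ell = 0$ term), and the H\"older continuity of $F^{(\ell)}$ together with \eqref{CGOEst2} contribute only $\mathcal{O}(\tau^{-n-\sigma})$. Dividing by $\tau^{-n}$ and letting $\tau \to \infty$ yields
\begin{equation*}
F_1^{(\ell)}(x_c) = F_0^{(\ell)}(x_c),
\end{equation*}
contradicting the non-coincidence clause of Definition~\ref{FDef}. Hence $D_1 = D_2 =: D$.

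Step 4 (recovery of $F$ and main obstacle). Once $D$ is fixed, both systems share the same domain decomposition; writing the equations inside $D$ and inside $\Omega \setminus \overline{D}$ and applying the same Green's-identity/CGO argument, now without any corner, recovers $F_1^{(\ell)} \equiv F_2^{(\ell)}$ pointwise on $\partial D$. Bootstrapping via the analyticity of $F_0, F_1$ in the $z$-variable together with a successive linearisation in the boundary data (using the freedom encoded in $(\psi, \varphi)$ and a Runge-type approximation akin to Lemma~\ref{denseness}/Lemma~\ref{RungeDenseApprox} adapted to the stationary coupled system) then propagates the identification to all Taylor coefficients $F_j^{(k)}$ on each side of $\partial D$. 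The principal technical obstacle is controlling the nonlinear coupling $\operatorname{div}(\kappa m \nabla u)$ in the Fokker--Planck equation when subtracting the two systems: the gradient-square term $|\nabla u_1|^2 - |\nabla u_2|^2 = \nabla(u_1 + u_2)\cdot\nabla(u_1 - u_2)$ and the divergence term both carry $\nabla(u_1 - u_2)$ which must be absorbed via integration by parts without spoiling the $\tau^{-n}$ leading asymptotic; this requires a careful H\"older bookkeeping of the remainders together with the sharp decay bounds \eqref{CGOEst3}--\eqref{CGOEst4} on the matching face.
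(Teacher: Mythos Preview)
Your proposal misses the key simplification the paper relies on: higher-order linearisation around the trivial state $(0,0)$, obtained by fixing $\psi=0$ and $\varphi=\sum_{\ell}\varepsilon^{\ell}g_{\ell}$ from the outset. At first order in $\varepsilon$ the linearised Fokker--Planck equation for $m_j^{(1)}$ is simply $-\Delta m_j^{(1)}=0$ (the drift term $\operatorname{div}(\kappa m_0\nabla u^{(1)})$ vanishes because $m_0=0$), so with matching Cauchy data on the cone faces one gets $m_1^{(1)}=m_0^{(1)}=:m^{(1)}$ in $\mathcal{S}_h$. The linearised HJ equation then reads $-\Delta u_j^{(1)}=F_j^{(1)}m^{(1)}$ with \emph{no} quadratic gradient term, and Theorem~\ref{AuxThm} applies directly with $P=0$, $q_j=F_j^{(1)}$, $h=m^{(1)}$, forcing $m^{(1)}(x_c)=0$ and hence a contradiction with the choice $g_1>0$. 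The ``principal technical obstacle'' you flag---the coupling $\tfrac{1}{2}\kappa(|\nabla u_1|^{2}-|\nabla u_2|^{2})$ and the discrepancy $m_1\neq m_2$ in your direct subtraction---is never overcome in your argument; the paper sidesteps it entirely by linearising first. For $\ell\geq 2$ the paper inductively establishes equality of all lower-order linearised solutions, so the $\ell$-th order HJ equation again has the clean form $-\Delta u_j^{(\ell)}+P=F_j^{(\ell)}[m^{(1)}]^{\ell}$ with $P$ independent of $j$, and Theorem~\ref{AuxThm} applies once more.

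Your Step 4 also contains a misconception. Once $D_1=D_2=:D$ is established, the paper does \emph{not} abandon corners; it reruns the identical CGO argument at an apex $x_c$ of $D$, now comparing the two candidate interior coefficients $F_1^{(\ell)}$ and $F_2^{(\ell)}$ (rather than interior versus exterior), and concludes $F_1^{(\ell)}(x_c)=F_2^{(\ell)}(x_c)$. Without a corner the lower bound \eqref{CGOEst1} fails and the whole testing argument collapses. The Runge-type density and successive-linearisation bootstrap you sketch are neither used nor available here, since $\mathcal{M}_{D,H,F}$ is a single Cauchy pair for one fixed $(\psi,\varphi)$.
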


\subsection{Proof of Main Results}\label{sect:MainProof}

We next proceed to prove the main Theorems \ref{thm:Statkappa} and \ref{thm:StatF}. Before that, we begin first with a main auxiliary theorem. 

\begin{theorem}\label{AuxThm}
    Let $ D\Subset\Omega$ be the bounded Lipschitz domain such that $\Omega\backslash\bar{ D}$ is connected, with conic corner $\mathcal{S}_h$. For $P\in L^2(\Omega)$, suppose that $q_1,q_0$ are $C^\gamma$ H\"older-continuous for some $\gamma\in(0,1)$ with respect to $x\in  D$, such that $q_1\neq q_0$ in $\mathcal{S}_h$. For any function $h\in C^1(\Omega)$, consider the following system of equations for $v_j\in C^{2+\alpha}(\mathcal{S}_h)$, $j=1,0$:
    \begin{equation}\label{AuxThmEq}
    \begin{cases}
		- \Delta v_j(x)+ P + q_j(x)h(x)= 0& \quad \text{ in } \mathcal{S}_h,\\
		\partial_\nu v_1(x)=\partial_\nu v_0(x) &\quad \text{ on }\partial\mathcal{S}_h\backslash\partial B_h,\\
        v_1(x)=v_0(x) &\quad \text{ on }\partial\mathcal{S}_h\backslash\partial B_h.
    \end{cases}  	
\end{equation}
Then, it must hold that \[h(x_c) =0,\] where $x_c$ is the apex of $\mathcal{S}_h$.
\end{theorem}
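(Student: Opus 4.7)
The strategy is a corner-singularity argument based on concentrating Complex Geometric Optics (CGO) solutions at the apex $x_c$ of the conic corner $\mathcal{S}_h$. First, I would form the difference $w:=v_1-v_0\in C^{2+\alpha}(\mathcal{S}_h)$, which, by \eqref{AuxThmEq}, satisfies
\begin{equation*}
-\Delta w = (q_1-q_0)\,h \quad\text{in }\mathcal{S}_h,\qquad w=\partial_\nu w=0 \quad\text{on }\partial\mathcal{S}_h\setminus\partial B_h.
\end{equation*}
Let $w_\tau$ be the harmonic CGO solution from Lemma~\ref{wCGOlem}, with $\xi\in\mathbb{S}^{n-1}$ chosen so that $\xi\cdot\widehat{(x-x_c)}\le -\rho<0$ on $\mathcal{S}_h$. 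Multiplying the equation for $w$ by $w_\tau$ and using Green's identity, together with the fact that $w$ and $\partial_\nu w$ vanish on the lateral part of $\partial\mathcal{S}_h$, yields the integral identity
\begin{equation*}
\int_{\mathcal{S}_h}(q_1-q_0)(x)\,h(x)\,w_\tau(x)\,dx \;=\; \int_{\partial\mathcal{S}_h\cap\partial B_h}\bigl(w\,\partial_\nu w_\tau - \partial_\nu w\,w_\tau\bigr)\,dS.
\end{equation*}

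Next, I would perform a Taylor/Hölder expansion of the integrand about the apex:
\begin{equation*}
(q_1-q_0)(x)\,h(x) \;=\; \bigl(q_1(x_c)-q_0(x_c)\bigr)\,h(x_c) \;+\; R(x),
\end{equation*}
where, since $q_1,q_0\in C^\gamma$ and $h\in C^1$, the remainder obeys $|R(x)|\lesssim |x-x_c|^\gamma$ on $\mathcal{S}_h$. Denote $A:=(q_1(x_c)-q_0(x_c))\,h(x_c)$; note $q_1(x_c)\ne q_0(x_c)$ by the admissibility assumption in a neighbourhood of $\partial D$. Plugging this decomposition into the identity gives
\begin{equation*}
A\int_{\mathcal{S}_h} w_\tau\,dx \;+\; \int_{\mathcal{S}_h} R(x)\,w_\tau\,dx \;=\; \int_{\partial\mathcal{S}_h\cap\partial B_h}\bigl(w\,\partial_\nu w_\tau - \partial_\nu w\,w_\tau\bigr)\,dS.
\end{equation*}

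The decisive step is then to apply the four quantitative estimates of Lemma~\ref{wCGOlem}. By \eqref{CGOEst1}, the leading term on the left is of order $\tau^{-n}$, while by \eqref{CGOEst2}, the remainder piece is bounded by $\tau^{-(n+\gamma)} + \tau^{-1}e^{-\rho h \tau/2}$. On the cap $\partial\mathcal{S}_h\cap\partial B_h$, the CGO bounds \eqref{CGOEst3}--\eqref{CGOEst4} show that both $w_\tau$ and $\partial_\nu w_\tau$ decay exponentially like $\tau\,e^{-\rho h\tau}$, and since $w\in C^{2+\alpha}(\mathcal{S}_h)$ the traces $w|_{\partial B_h\cap\partial\mathcal{S}_h}$ and $\partial_\nu w|_{\partial B_h\cap\partial\mathcal{S}_h}$ are bounded, so the whole right-hand side is $O(\tau e^{-\rho h\tau})$. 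Multiplying the identity by $\tau^{n}$ and sending $\tau\to\infty$, the remainder term is $O(\tau^{-\gamma})\to 0$ and the boundary term is $O(\tau^{n+1}e^{-\rho h\tau})\to 0$, leaving
\begin{equation*}
A\cdot C_{\mathcal{S}_h}\;=\;0,
\end{equation*}
hence $A=0$. Since $q_1(x_c)\ne q_0(x_c)$, this forces $h(x_c)=0$, as required.

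\textbf{Main obstacle.} The delicate point is controlling the remainder $R$ and the cap boundary contributions with sufficiently fast decay. The Hölder regularity of $q_1-q_0$ only gives a gain of $\tau^{-\gamma}$ over the leading term, so one must be careful that the exponent in the CGO phase ($\rho h\tau$) really does dominate the polynomial factors $\tau^{n+1}$ from the cap integral, which is ensured by choosing $\xi$ so that $\rho>0$ is strictly positive on all of $\mathcal{S}_h$ (as guaranteed by \eqref{CGOCond}). Writing out the expansion carefully at the Hölder level (as opposed to a Taylor expansion) is what makes the argument close.
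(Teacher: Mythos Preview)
Your proposal is correct and follows essentially the same approach as the paper: form the difference, pair with the harmonic CGO of Lemma~\ref{wCGOlem} via Green's identity, expand $(q_1-q_0)h$ in H\"older form about $x_c$, and then use estimates \eqref{CGOEst1}--\eqref{CGOEst4} to isolate the apex value after multiplying by $\tau^n$ and passing to the limit. The only discrepancy is a harmless sign: from $-\Delta v_j + P + q_j h = 0$ one gets $-\Delta w + (q_1-q_0)h = 0$, not $-\Delta w = (q_1-q_0)h$, but this has no effect on the argument.
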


\begin{proof}

Taking the difference between the $j$-th equation, we have, denoting $\tilde{v}=v_1-v_0$
\begin{equation}\label{Linear2jUDiff}
    \begin{cases}
		- \Delta \tilde{v}(x)+(q_1(x)-q_0(x))h(x)= 0& \quad \text{ in } \mathcal{S}_h\\
		\partial_\nu \tilde{v}(x)=0 & \quad \text{ on }\partial\mathcal{S}_h\backslash\partial B_h,\\
		\tilde{v}(x)=0 & \quad \text{ on } \partial\mathcal{S}_h\backslash\partial B_h.
    \end{cases}  	
\end{equation}
Multiplying this by the solution $w$ to \eqref{CGOEq} given by \eqref{CGO} and integrating in the truncated cone $\mathcal{S}_h$, we have, by Green's formula, 
\begin{equation}\label{k1DiffEq}
    \int_{\mathcal{S}_h}(q_1(x) -q_0(x))h(x)  w(x) \,dx = - \int_{\partial \mathcal{S}_h} w(x) \partial_\nu \tilde{v}(x) \, d\sigma + \int_{\partial \mathcal{S}_h} \tilde{v}(x) \partial_\nu w(x) \, d\sigma.
\end{equation}

Since $q_j(x)\in C^\gamma(\mathcal{S}_h)$, $\gamma\in(0,1)$ and $h \in C^1(\Omega)$ for $j=1,0$, the function 
\begin{equation}\label{barkappa}\bar{q}:=(q_1(x) -q_0(x))h(x)\in C^\gamma(\mathcal{S}_h).\end{equation} 
Therefore, we can expand $\bar{q}$ as follows:
\[\bar{q}=\bar{q}(x_c)+\delta\bar{q},\quad |\delta\bar{q}|\leq \norm{\bar{q}}_{C^\gamma(\mathcal{S}_h)}|x-x_c|^\gamma,\] 
where \[\bar{q}(x_c)=(q_1(x_c) -q_0(x_c))h(x_c).\] Thus, the left-hand-side of \eqref{k1DiffEq} can be expanded as
\begin{align*}
    \int_{\mathcal{S}_h}\bar{q}  w(x) \,dx = \bar{q}(x_c)\int_{\mathcal{S}_h} w(x) \,dx + \int_{\mathcal{S}_h}\delta\bar{q}w \,dx,
\end{align*}
where
\begin{align*}
    \left|\int_{\mathcal{S}_h}\delta\bar{q}w \,dx\right| \leq \norm{\bar{q}}_{C^\gamma(\mathcal{S}_h)} \int_{\mathcal{S}_h}|x-x_c|^\gamma |w| \,dx    
\end{align*}
%true since the product of 2 differentiable functions is differentiable, and the product of 2 continuous functions is continuous, so square of C^1 function is C^1
% \begin{align*}
%     \left|\int_{\mathcal{S}_h}\delta\bar{\kappa}w \,dx\right| &\leq \norm{\bar{\kappa}}_{C^\gamma(\mathcal{S}_h)} \int_{\mathcal{S}_h}|x|^\gamma |w| \,dx \\
%     &\leq \left(\norm{|\nabla u^{(1)}|^2}_{C^\gamma(\mathcal{S}_h)}\sup_{\mathcal{S}_h}\left|\kappa_1 -\kappa_0\right| + \norm{\kappa_1 -\kappa_0}_{C^\gamma(\mathcal{S}_h)}\sup_{\mathcal{S}_h} |\nabla u^{(1)}|^2\right) \int_{\mathcal{S}_h}|x|^\gamma |w| \,dx 
% \end{align*}
for 
\begin{equation}\label{PfEq5}\norm{\bar{q}}_{C^\gamma(\mathcal{S}_h)}\leq \norm{h}_{C^\gamma(\mathcal{S}_h)}\sup_{\mathcal{S}_h}\left|q_1 - q_0\right| + \norm{q_1 -q_0}_{C^\gamma(\mathcal{S}_h)}\sup_{\mathcal{S}_h} h\leq C\end{equation} for some constant $C$ depending on the norm of $h$. 

On the other hand, the right-hand-side of \eqref{k1DiffEq} can be analysed as follows: By the Cauchy-Schwarz inequality and the trace theorem, 
\begin{align*}
    \left|\int_{\partial \mathcal{S}_h} w(x) \partial_\nu \tilde{v}(x) \, d\sigma\right| & \leq \norm{w}_{H^{\frac12}(\partial \mathcal{S}_h)}\norm{\partial_\nu \tilde{v}}_{H^{-\frac12}(\partial \mathcal{S}_h)} \\
    & \leq C \norm{w}_{H^1(\partial \mathcal{S}_h)}\norm{\tilde{v}}_{H^1(\mathcal{S}_h)}
    \\
    & \lesssim (2\tau^2+1)^{\frac12}e^{-\rho h\tau},
\end{align*}
by \eqref{CGOEst3}, while
\begin{align*}
    \left|\int_{\partial \mathcal{S}_h} \tilde{v}(x) \partial_\nu w(x) \, d\sigma\right| & \leq \norm{\tilde{v}}_{L^2(\partial \mathcal{S}_h)}\norm{\partial_\nu w}_{L^2(\partial \mathcal{S}_h)} \\
    & \leq C \norm{\tilde{v}}_{H^1(\partial \mathcal{S}_h)}\norm{\partial_\nu w}_{L^2(\partial\mathcal{S}_h)}
    \\
    & \lesssim \sqrt{2}\tau e^{-\rho h\tau}.
\end{align*}
by \eqref{CGOEst4}. Note that the (trace or Sobolev) constants $C>0$ here may be different, and different from that in \eqref{PfEq5}.

Combining these estimates and \eqref{CGOEst2}, we have
\begin{align*}
    \bar{q}(x_c)\int_{\mathcal{S}_h} w(x) \,dx & \lesssim \int_{\mathcal{S}_h}|x-x_c|^\gamma |w| \,dx + (2\tau^2+1)^{\frac12}e^{-\zeta h\tau} + \sqrt{2}\tau e^{-\zeta h\tau} \\
    & \lesssim \tau^{-(\gamma+n)}+\frac{1}{\tau}e^{-\frac{1}{2}\rho h \tau} + (2\tau^2+1)^{\frac12}e^{-\rho h\tau} + \sqrt{2}\tau e^{-\rho h\tau}.
\end{align*}
At the same time, applying the estimate \eqref{CGOEst1}, we have 
\begin{align*}
    \bar{q}(x_c) \left[C_{\mathcal{S}_h}\tau^{-n} +\mathcal{O}\left(\frac{1}{\tau}e^{-\frac{1}{2}\rho h \tau}\right)\right] \lesssim \tau^{-(\gamma+n)}+\frac{1}{\tau}e^{-\frac{1}{2}\rho h \tau} + (1+\tau)e^{-\rho h\tau}
\end{align*}
when $\tau\to\infty$.

Multiplying by $\tau^n $ on both sides and letting $\tau\to\infty$, we have that $\bar{q}(x_c)=0$. Since $q_1(x_c) \neq q_0(x_c)$, by the definition of $\bar{q}$ in \eqref{barkappa}, it must be that $h(x_c)=0$.

\end{proof}

\subsubsection{A Corner Singularity due to Different Hamiltonians}\label{sect:ThmStatKappaPf}

We next proceed to prove Theorem \ref{thm:Statkappa}.

\begin{proof}[Proof of Theorem \ref{thm:Statkappa}]
Since $\psi,\varphi$ is fixed in \eqref{MeasureMap}, we can take $\varphi(x)=\sum_{\ell=1}^N\varepsilon^\ell g_\ell$ such that $g_1=g_2=0$ and $g_3>0$, and $\psi(x)=\sum_{\ell=1}^2\varepsilon^\ell f_\ell$ such that the solution $u^{(1)}$ of the first order linearized system corresponding to $f_1$ has non-zero gradient on $\partial D$. This ensures that the results are physically meaningful with $m>0$.

Now, suppose on the contrary that there exists $ D_1$ and $ D_2$, such that $ D_1\neq D_2$. Since $ D_1, D_2$ are polyhedrons, it must be that there exists a corner $\mathcal{S}$ of $ D_1$ such that $\mathcal{S}\Subset\Omega\backslash\overline{ D_2}$. We also assume that for $ D_i$, $\kappa\in\mathcal{B}$ with the functions $\kappa_0$ and $\kappa_i$ for $i=1,2$ respectively. Then, in the neighbourhood $B_h$ of $\mathcal{S}_h$, it holds that 
\begin{equation}\label{MFGQuadraticStatExtCornerkappa}
    \begin{cases}
        - \Delta u_1(x) + \frac{1}{2}\kappa_1(x) |\nabla u_1(x)|^2 + \lambda -  F(x,m_1(x)) = 0 &\quad \text{in }\mathcal{S}_h,\\
        - \Delta m_1(x) - \text{div}(\kappa_1(x) m_1(x)\nabla u_1(x)) = 0  &\quad \text{in }\mathcal{S}_h,\\
        - \Delta u_0(x) + \frac{1}{2}\kappa_0(x) |\nabla u_0(x)|^2 + \lambda - F(x,m_0(x)) = 0 &\quad \text{in }B_h\backslash\mathcal{S}_h,\\
        - \Delta m_0(x) - \text{div}(\kappa_0(x) m_0(x)\nabla u_0(x)) = 0  &\quad \text{in }B_h\backslash\mathcal{S}_h,\\
        \partial_\nu u_1(x)=\partial_\nu u_0(x), \quad \partial_\nu m_1(x)=\partial_\nu m_0(x) &\quad \text{on }\partial\mathcal{S}_h\backslash\partial B_h,\\
        u_1(x)=u_0(x), \quad m_1(x)=m_0(x) &\quad \text{on }\partial\mathcal{S}_h\backslash\partial B_h.
    \end{cases}
\end{equation}

For $F\in\mathcal{A}_\Omega$ with $F(x,0)=\lambda$, we conduct higher order linearisation separately for $(u_1,m_1)$ and $(u_0,m_0)$ in $\mathcal{S}_h$ and $B_h\backslash\mathcal{S}_h$ respectively. By the choice of $\varphi$ with $g_1=0$, it is straightforward to obtain from the first order linearisation  that $m^{(1)}_1=m^{(1)}_0\equiv0$, and subsequently $m^{(2)}_1=m^{(2)}_0\equiv0$ from the second order linearisation with $g_2=0$. It can also be easily seen that as a result, when $\mathcal{M}_{ D_1,H_1,F}=\mathcal{M}_{ D_2,H_2,F}$, $u^{(1)} := u^{(1)}_0 + (u^{(1)}_1 - u^{(1)}_0)\chi_ D \in C^{2+\alpha}(\Omega)$ is such that $u^{(1)}_0 = u^{(1)}_1$ on $\partial D$, since they satisfy 
\begin{equation}
    \begin{cases}
		- \Delta u^{(1)}_1= 0  & \quad\text{ in }  \mathcal{S}_h,\\
        - \Delta u^{(1)}_0= 0  & \quad\text{ in }  B_h\backslash\mathcal{S}_h,\\
        \partial_\nu u_1^{(1)}(x)=\partial_\nu u_0^{(1)}(x) &\quad \text{ on }\partial\mathcal{S}_h\backslash\partial B_h,\\
        u_1^{(1)}(x)=u_0^{(1)}(x) &\quad \text{ on }\partial\mathcal{S}_h\backslash\partial B_h,\\
        u_1^{(1)}(x)=u_0^{(1)}(x) = f_1 &\quad \text{ on }\partial\Omega,
    \end{cases}
\end{equation}
from the first order linearisation. 

On the other hand, $u^{(2)}_j$ satisfies
\begin{equation}\label{Linear2jU}
    \begin{cases}
		- \Delta u^{(2)}_1(x)+\kappa_1(x)|\nabla u^{(1)}|^2= 0& \quad \text{ in } \mathcal{S}_h,\\
        - \Delta u^{(2)}_0(x)+\kappa_0(x)|\nabla u^{(1)}|^2= 0& \quad \text{ in } B_h\backslash\mathcal{S}_h,\\
		\partial_\nu u_1^{(2)}(x)=\partial_\nu u_0^{(2)}(x) &\quad \text{ on }\partial\mathcal{S}_h\backslash\partial B_h,\\
        u_1^{(2)}(x)=u_0^{(2)}(x) &\quad \text{ on }\partial\mathcal{S}_h\backslash\partial B_h.
    \end{cases}  	
\end{equation}
Since $u^{(1)}$ is fixed and smooth, this is simply an elliptic equation. Therefore, we can apply the unique continuation principle for elliptic equations \cite{KochTataru2001UCPElliptic} to obtain 
\[
    \begin{cases}
		- \Delta u^{(2)}_1(x)+\kappa_1(x)|\nabla u^{(1)}|^2 = - \Delta u^{(2)}_0(x)+\kappa_0(x)|\nabla u^{(1)}|^2 = 0& \quad \text{ in } \mathcal{S}_h,\\
		\partial_\nu u_1^{(2)}(x)=\partial_\nu u_0^{(2)}(x) &\quad \text{ on }\partial\mathcal{S}_h\backslash\partial B_h,\\
        u_1^{(2)}(x)=u_0^{(2)}(x) &\quad \text{ on }\partial\mathcal{S}_h\backslash\partial B_h,
    \end{cases} 
\]
which is simply \eqref{AuxThmEq} with $v_j=u^{(2)}_j$, $P=0$, $q_j=\kappa_j$ and $h=|\nabla u^{(1)}|^2$. Since $\kappa\in\mathcal{B}$ and $u\in C^2(\Omega')$ by Theorem \ref{ForwardPbMainThm}, the assumptions of Theorem \ref{AuxThm} are satisfied, and we have that $h(x_c)=|\nabla u^{(1)}(x_c)|^2=0$, i.e. $\nabla u^{(1)}(x_c)=0$, for the apex $x_c$ of $\mathcal{S}$. 

Yet, we have fixed $\psi(x)$ such that $u^{(1)}$ has non-zero gradient on $\partial D_1\supset\partial\mathcal{S}$. Thus, we arrive at a contradiction, and it must be that $ D_1= D_2$ and we have uniquely determined $ D$.

Moreover, when $\mathcal{M}_{ D_1,H_1,F}=\mathcal{M}_{ D_2,H_2,F}$, we have that the solution sets $(u_1,m_1),(u_2,m_2)$ satisfy 
\begin{equation}
    \begin{cases}
        - \Delta u_1(x) + \frac{1}{2}\kappa_1(x) |\nabla u_1(x)|^2 + \lambda - F(x,m_1(x)) = 0 &\quad \text{in }\mathcal{S}_h,\\
        - \Delta m_1(x) - \text{div}(\kappa_1(x) m_1(x)\nabla u_1(x)) = 0  &\quad \text{in }\mathcal{S}_h,\\
        - \Delta u_2(x) + \frac{1}{2}\kappa_2(x) |\nabla u_2(x)|^2 + \lambda - F(x,m_2(x)) = 0 &\quad \text{in }\mathcal{S}_h,\\
        - \Delta m_2(x) - \text{div}(\kappa_2(x) m_2(x)\nabla u_2(x)) = 0  &\quad \text{in }\mathcal{S}_h,\\
        \partial_\nu u_1(x)=\partial_\nu u_2(x), \quad \partial_\nu m_1(x)=\partial_\nu m_2(x) &\quad \text{on }\partial\mathcal{S}_h\backslash\partial B_h,\\
        u_1(x)=u_2(x), \quad m_1(x)=m_2(x) &\quad \text{on }\partial\mathcal{S}_h\backslash\partial B_h.
    \end{cases}
\end{equation}
Repeating the argument above, we have that $m^{(1)}_1=m^{(1)}_2=m^{(2)}_1=m^{(2)}_2\equiv0$ in $\Omega$, $u^{(1)}_1 = u^{(1)}_2$ in $\mathcal{S}_h$, and $u^{(2)}_j$ satisfies
\begin{equation}\label{PfkappaEq5}
    \begin{cases}
		- \Delta u^{(2)}_1(x)+\kappa_1(x)|\nabla u^{(1)}|^2 = - \Delta u^{(2)}_2(x)+\kappa_2(x)|\nabla u^{(1)}|^2 = 0& \quad \text{ in } \mathcal{S}_h,\\
		\partial_\nu u_1^{(2)}(x)=\partial_\nu u_2^{(2)}(x) &\quad \text{ on }\partial\mathcal{S}_h\backslash\partial B_h,\\
        u_1^{(2)}(x)=u_2^{(2)}(x) &\quad \text{ on }\partial\mathcal{S}_h\backslash\partial B_h.
    \end{cases} 
\end{equation}
Then, once again by Theorem \ref{AuxThm}, if $\kappa_1(x_c)\neq\kappa_2(x_c)$, we have that $\nabla u^{(1)}(x_c)=0$. This once again contradicts the choice of $\psi$, so it must be that $\kappa_1(x_c)=\kappa_2(x_c)$ and we have uniquely determined $\kappa$.

Observe that in this second part of the proof for the uniqueness of $\kappa$, it is not necessary to apply the unique continuation principle, since the equations in \eqref{PfkappaEq5} for $u^{(2)}_1$ and $u^{(2)}_2$ are both in $\mathcal{S}_h$.

\end{proof}

Note that in the proof, we only require that the first two Taylor coefficients of $m$ are zero on the boundary of $\Omega$. This allows us to ensure the physical relevance of the problem, by picking positive higher order Taylor coefficients, to ensure that $m$ is positive.

\subsubsection {A Corner Singularity due to Different Running Costs $F$}\label{sect:ThmStatFPf}

Finally, we prove Theorem \ref{thm:StatF}.

\begin{proof}[Proof of Theorem \ref{thm:StatF}]
Since $\psi,\varphi$ is fixed in \eqref{MeasureMap}, we can take $\psi(x)=0$ and $\varphi(x)=\sum_{\ell=1}^N\varepsilon^\ell g_\ell$, such that the corresponding solution $m^{(1)}$ to $g_1>0$ of the first order linearized system satisfies $m^{(1)}\neq0$ on $\partial D$. Once again, this guarantees the positivity of $m$, making the problem physically meaningful.

As in the proof of Theorem \ref{thm:Statkappa}, we suppose on the contrary that there exists $ D_1$ and $ D_2$, such that $ D_1\neq D_2$. Since $ D_1, D_2$ are polyhedrons, it must be that there exists a corner $\mathcal{S}$ of $ D_1$ such that $\mathcal{S}\Subset\Omega\backslash\overline{ D_2}$. We also assume that for $ D_i$, $F\in\mathcal{D}$ with the functions $F_0$ and $F_i$ for $i=1,2$ respectively. Then, in the neighbourhood $B_h$ of $\mathcal{S}_h$, it holds that 
\begin{equation}\label{MFGQuadraticStatExtCornerF}
    \begin{cases}
        - \Delta u_1(x) + \frac{1}{2}\kappa(x) |\nabla u_1(x)|^2 + \lambda_1 - F_1(x,m_1(x)) = 0 &\quad \text{in }\mathcal{S}_h,\\
        - \Delta m_1(x) - \text{div}(\kappa(x) m_1(x)\nabla u_1(x)) = 0  &\quad \text{in }\mathcal{S}_h,\\
        - \Delta u_0(x) + \frac{1}{2}\kappa(x) |\nabla u_0(x)|^2 + \lambda_0 - F_0(x,m_0(x)) = 0 &\quad \text{in }B_h\backslash\mathcal{S}_h,\\
        - \Delta m_0(x) - \text{div}(\kappa(x) m_0(x)\nabla u_0(x)) = 0  &\quad \text{in }B_h\backslash\mathcal{S}_h,\\
        \partial_\nu u_1(x)=\partial_\nu u_0(x), \quad \partial_\nu m_1(x)=\partial_\nu m_0(x) &\quad \text{on }\partial\mathcal{S}_h\backslash\partial B_h,\\
        u_1(x)=u_0(x), \quad m_1(x)=m_0(x) &\quad \text{on }\partial\mathcal{S}_h\backslash\partial B_h.
    \end{cases}
\end{equation}

For $F_1\in\mathcal{A}_ D$ and $F_0\in\mathcal{A}_{\Omega\backslash D}$ with $F_1(x,0)=\lambda_1$ and $F_0(x,0)=\lambda_0$, we can conduct higher order linearisation separately for $(u_1,m_1)$ and $(u_0,m_0)$ in $\mathcal{S}_h$ and $B_h\backslash\mathcal{S}_h$ respectively. 

We first show the case when $\ell=1$. When $\mathcal{M}_{ D_1,H,F_1}=\mathcal{M}_{ D_2,H,F_2}$, we have that $m^{(1)} := m^{(1)}_0 + (m^{(1)}_1 - m^{(1)}_0)\chi_ D \in C^{2+\alpha}(\Omega)$ with $m^{(1)}_0 = m^{(1)}_1$ on $\partial D$, since they satisfy 
\begin{equation}
    \begin{cases}
		- \Delta m^{(1)}_1= 0  & \quad\text{ in }  \mathcal{S}_h,\\
        - \Delta m^{(1)}_0 = 0  & \quad\text{ in }  B_h\backslash\mathcal{S}_h,\\
        \partial_\nu m_1^{(1)}(x)=\partial_\nu m_0^{(1)}(x) &\quad \text{ on }\partial\mathcal{S}_h\backslash\partial B_h,\\
        m_1^{(1)}(x)=m_0^{(1)}(x) &\quad \text{ on }\partial\mathcal{S}_h\backslash\partial B_h,\\
        m_1^{(1)}(x)=m_0^{(1)}(x) = g_1 &\quad \text{ on }\partial\Omega,
    \end{cases}
\end{equation}
from the first order linearisation. 

On the other hand, $u^{(1)}_j$ satisfies
\begin{equation}\label{Linear1m}
    \begin{cases}
		- \Delta u^{(1)}_1= F^{(1)}_1(x)m^{(1)}(x)  & \quad\text{ in }  \mathcal{S}_h,\\
        - \Delta u^{(1)}_0= F^{(1)}_0(x)m^{(1)}(x)  & \quad\text{ in }  B_h\backslash\mathcal{S}_h,\\
        \partial_\nu u_1^{(1)}(x)=\partial_\nu u_0^{(1)}(x) &\quad \text{ on }\partial\mathcal{S}_h\backslash\partial B_h,\\
        u_1^{(1)}(x)=u_0^{(1)}(x) &\quad \text{ on }\partial\mathcal{S}_h\backslash\partial B_h.
    \end{cases}
\end{equation}
Once again, observe that this is simply an elliptic equation, so we can apply the unique continuation principle for elliptic equations \cite{KochTataru2001UCPElliptic} to obtain 
\[
    \begin{cases}
		- \Delta u^{(1)}_1 - F^{(1)}_1(x)m^{(1)}(x) = - \Delta u^{(1)}_0 - F^{(1)}_0(x)m^{(1)}(x) = 0  & \quad\text{ in }  \mathcal{S}_h,\\
        \partial_\nu u_1^{(1)}(x)=\partial_\nu u_0^{(1)}(x) &\quad \text{ on }\partial\mathcal{S}_h\backslash\partial B_h,\\
        u_1^{(1)}(x)=u_0^{(1)}(x) &\quad \text{ on }\partial\mathcal{S}_h\backslash\partial B_h,
    \end{cases}
\]
which is simply \eqref{AuxThmEq} with $v_j=u^{(1)}_j$, $P=0$, $q_j=F^{(1)}_j$ and $h=m^{(1)}$. Since $F\in\mathcal{D}$ and $m^{(1)}\in C^{2+\alpha}(\mathcal{S}_h)$ by Theorem \ref{local_wellpose}, the assumptions of Theorem \ref{AuxThm} are satisfied with $F^{(1)}_1(x_c)\neq F^{(1)}_0(x_c)$, and we have that $h(x_c)=m^{(1)}(x_c)=0$ for the apex $x_c$ of $\mathcal{S}$. 

Yet, we have fixed $\varphi(x)$ such that $m^{(1)}\neq0$ on $\partial D_1\supset\partial\mathcal{S}$. Thus, we arrive at a contradiction, and it must be that $ D_1= D_2$ and we have uniquely determined $ D$.

Moreover, when $\mathcal{M}_{ D_1,H,F_1}=\mathcal{M}_{ D_2,H,F_2}$, we have that the solution sets $(u_1,m_1),(u_2,m_2)$ satisfy 
\begin{equation}\label{PfF12}
    \begin{cases}
        - \Delta u_1(x) + \frac{1}{2}\kappa(x) |\nabla u_1(x)|^2 + \lambda_1 - F_1(x,m_1(x)) = 0 &\quad \text{in }\mathcal{S}_h,\\
        - \Delta m_1(x) - \text{div}(\kappa(x) m_1(x)\nabla u_1(x)) = 0  &\quad \text{in }\mathcal{S}_h,\\
        - \Delta u_2(x) + \frac{1}{2}\kappa(x) |\nabla u_2(x)|^2 + \lambda_2 - F_2(x,m_2(x)) = 0 &\quad \text{in }\mathcal{S}_h,\\
        - \Delta m_2(x) - \text{div}(\kappa(x) m_2(x)\nabla u_2(x)) = 0  &\quad \text{in }\mathcal{S}_h,\\
        \partial_\nu u_1(x)=\partial_\nu u_2(x), \quad \partial_\nu m_1(x)=\partial_\nu m_2(x) &\quad \text{on }\partial\mathcal{S}_h\backslash\partial B_h,\\
        u_1(x)=u_2(x), \quad m_1(x)=m_2(x) &\quad \text{on }\partial\mathcal{S}_h\backslash\partial B_h.
    \end{cases}
\end{equation}
Repeating the argument above, we have that $m^{(1)}_1 = m^{(1)}_2$ in $\mathcal{S}_h$, and $u^{(1)}_j$ satisfies
\begin{equation}
    \begin{cases}
		- \Delta u^{(1)}_1 - F^{(1)}_1(x)m^{(1)}(x) = - \Delta u^{(1)}_2 - F^{(1)}_2(x)m^{(1)}(x) = 0  & \quad\text{ in }  \mathcal{S}_h,\\
        \partial_\nu u_1^{(1)}(x)=\partial_\nu u_2^{(1)}(x) &\quad \text{ on }\partial\mathcal{S}_h\backslash\partial B_h,\\
        u_1^{(1)}(x)=u_2^{(1)}(x) &\quad \text{ on }\partial\mathcal{S}_h\backslash\partial B_h.
    \end{cases} 
\end{equation}
Then, once again by Theorem \ref{AuxThm}, if $F^{(1)}_1(x_c)\neq F^{(1)}_2(x_c)$, we have that $m^{(1)}(x_c)=0$. This once again contradicts the choice of $\varphi$, so it must be that $F^{(1)}_1(x_c)=F^{(1)}_2(x_c)$ and we have uniquely determined $F^{(1)}$.

Next, we assume that $F^{(1)}_1= F^{(1)}_0$ in $\mathcal{S}_h$ but $F^{(2)}_1\neq F^{(2)}_0$ in $\mathcal{S}_h$, i.e. when $\ell=2$ in Definition \ref{FDef}. In this case, from \eqref{Linear1m}, when $\mathcal{M}_{ D_1,H,F_1}=\mathcal{M}_{ D_2,H,F_2}$, in addition to the result on $m^{(1)}$ in the case of $\ell=1$, we have $u^{(1)} := u^{(1)}_0 + (u^{(1)}_1 - u^{(1)}_0)\chi_ D \in C^{2+\alpha}(\Omega)$ with $u^{(1)}_0 = u^{(1)}_1$ on $\partial D$. Then, from the second order linearized system, we can see that $m^{(2)} := m^{(2)}_0 + (m^{(2)}_1 - m^{(2)}_0)\chi_ D \in C^{2+\alpha}(\Omega)$ with $m^{(2)}_0 = m^{(2)}_1$ on $\partial D$ when $\mathcal{M}_{ D_1,H,F_1}=\mathcal{M}_{ D_2,H,F_2}$, while $u^{(2)}_j$ satisfies
\begin{equation}
    \begin{cases}
		- \Delta u^{(2)}_1(x)+\kappa(x)|\nabla u^{(1)}(x)|^2= F^{(1)}(x)m^{(2)}(x)+F^{(2)}_1(x)[m^{(1)}(x)]^2  & \quad\text{ in }  \mathcal{S}_h,\\
        - \Delta u^{(2)}_0(x)+\kappa(x)|\nabla u^{(1)}(x)|^2= F^{(1)}(x)m^{(2)}(x)+F^{(2)}_0(x)[m^{(1)}(x)]^2  & \quad\text{ in }  B_h\backslash\mathcal{S}_h,\\
        \partial_\nu u_1^{(2)}(x)=\partial_\nu u_0^{(2)}(x) &\quad \text{ on }\partial\mathcal{S}_h\backslash\partial B_h,\\
        u_1^{(2)}(x)=u_0^{(2)}(x) &\quad \text{ on }\partial\mathcal{S}_h\backslash\partial B_h.
    \end{cases}
\end{equation}
Since $u^{(1)}, m^{(1)}, m^{(2)}$ are fixed and smooth, this is simply an elliptic equation. Therefore, we can once again apply the unique continuation principle for elliptic equations \cite{KochTataru2001UCPElliptic} to obtain 
\begin{equation}\label{Linear2m}
    \begin{cases}
		- \Delta u^{(2)}_1(x)+\kappa(x)|\nabla u^{(1)}(x)|^2 - F^{(1)}(x)m^{(2)}(x) - F^{(2)}_1(x)[m^{(1)}(x)]^2 = 0  & \quad\text{ in }  \mathcal{S}_h,\\
        - \Delta u^{(2)}_0(x)+\kappa(x)|\nabla u^{(1)}(x)|^2 - F^{(1)}(x)m^{(2)}(x) - F^{(2)}_0(x)[m^{(1)}(x)]^2 =0 & \quad\text{ in }  \mathcal{S}_h,\\
        \partial_\nu u_1^{(2)}(x)=\partial_\nu u_0^{(2)}(x) &\quad \text{ on }\partial\mathcal{S}_h\backslash\partial B_h,\\
        u_1^{(2)}(x)=u_0^{(2)}(x) &\quad \text{ on }\partial\mathcal{S}_h\backslash\partial B_h.
    \end{cases}
\end{equation}
This is simply \eqref{AuxThmEq} with $v_j=u^{(2)}_j$, $P=\kappa|\nabla u^{(1)}|^2 - F^{(1)}m^{(2)}$, $q_j=F^{(2)}_j$ and $h=[m^{(1)}]^2$. Since $F\in\mathcal{D}$ and $m^{(1)}\in C^{2+\alpha}(\mathcal{S}_h)$ by Theorem \ref{local_wellpose}, the assumptions of Theorem \ref{AuxThm} are satisfied with $F^{(2)}_1(x_c)\neq F^{(2)}_0(x_c)$, and we have that $h(x_c)=[m^{(1)}(x_c)]^2=0$, i.e. $m^{(1)}(x_c)=0$, for the apex $x_c$ of $\mathcal{S}$. But $m^{(1)}\neq0$ on $\partial D_1\supset\partial\mathcal{S}$ by our choice of $\varphi(x)$. Thus, we arrive at a contradiction, and it must be that $ D_1= D_2$ and we have uniquely determined $ D$.

Moreover, when $\mathcal{M}_{ D_1,H,F_1}=\mathcal{M}_{ D_2,H,F_2}$, we have that the solution sets $(u_1,m_1),(u_2,m_2)$ satisfy \eqref{PfF12}.
Repeating the argument above, we have that $m^{(1)}_1 = m^{(1)}_2$, $u^{(1)}_1 = u^{(1)}_2$ and $m^{(2)}_1 = m^{(2)}_2$ in $\mathcal{S}_h$, and $u^{(2)}_j$ satisfies
\begin{equation}
    \begin{cases}
		- \Delta u^{(2)}_1(x)+\kappa(x)|\nabla u^{(1)}(x)|^2 - F^{(1)}(x)m^{(2)}(x) - F^{(2)}_1(x)[m^{(1)}(x)]^2 = 0  & \quad\text{ in }  \mathcal{S}_h,\\
        - \Delta u^{(2)}_2(x)+\kappa(x)|\nabla u^{(1)}(x)|^2 - F^{(1)}(x)m^{(2)}(x) - F^{(2)}_2(x)[m^{(1)}(x)]^2 =0 & \quad\text{ in }  \mathcal{S}_h,\\
        \partial_\nu u_1^{(2)}(x)=\partial_\nu u_0^{(2)}(x) &\quad \text{ on }\partial\mathcal{S}_h\backslash\partial B_h,\\
        u_1^{(2)}(x)=u_0^{(2)}(x) &\quad \text{ on }\partial\mathcal{S}_h\backslash\partial B_h.
    \end{cases}
\end{equation}
Then, once again by Theorem \ref{AuxThm}, if $F^{(2)}_1(x_c)\neq F^{(2)}_2(x_c)$, we have that $m^{(1)}(x_c)=0$. This once again contradicts the choice of $\varphi$, so it must be that $F^{(2)}_1(x_c)=F^{(2)}_2(x_c)$ and we have uniquely determined $F^{(2)}$.

The case for higher order $\ell$ follows similarly, by first deducing the equality of $u^{(\ell')}_1=u^{(\ell')}_0$ for $\ell'<\ell$ and $m^{(\ell'')}_1=m^{(\ell'')}_0$ for $\ell''\leq \ell$ on $\partial D$, which then gives an elliptic equation in the $\ell$-th order linearisation for $u^{(\ell)}_j$. Then, applying the unique continuation principle for elliptic equations, we obtain \eqref{AuxThmEq} with sufficient regularity of the coefficients. Applying Theorem \ref{AuxThm} then gives $h(x_c)=[m^{(1)}(x_c)]^\ell=0$, which contradicts our choice of $\varphi$. Therefore we have uniquely determined $ D$. 

Then, the solution sets $(u_1,m_1),(u_2,m_2)$ once again satisfy \eqref{PfF12}. Repeating the argument, we have that $u^{(\ell')}_1=u^{(\ell')}_2$ for $\ell'<\ell$ and $m^{(\ell'')}_1=m^{(\ell'')}_2$ for $\ell''\leq \ell$ in $\mathcal{S}_h$, while $u^{(\ell)}_j$ satisfies an equation of the form \eqref{AuxThmEq}. Applying Theorem \ref{AuxThm} then gives $m^{(1)}(x_c)=0$, which again contradicts our choice of $\varphi$. Thus we have uniquely determined $F^{(\ell)}$. 

\end{proof}

\chapter{Single Measurement Using Carleman Estimate}\label{chap:Carleman}

In this chapter, we discuss stability results related to the MFG system \eqref{eq:MFG1}. We will make use of Carleman estimates. Notably, this method enables the determination of the unknowns in the system from a single observation, i.e. the measurement of one solution $(u,m)$, rather than a full set of boundary or interior measurements. A Carleman estimate for a partial differential operator is always proven only for the principal part of this operator since it is independent on its lower order terms. This makes Carleman estimates instrumental in establishing uniqueness and stability results in diverse inverse problems.

\section{First Stability Result}
Let $\Omega \subset \mathbb{R}^n$ be a smooth bounded domain and $Q:= \Omega \times (0,T).$ 
For our first result, we consider the simple case of the linearized equation \eqref{MFG2Linear1} of \eqref{main_linear example} about a known state $(u_0,m_0)$
as follows:
\begin{equation}\label{(1.3)}
\begin{cases}
\partial_tu + A(t)u = c_0(x,t)m + F(x,t),&\text{in }Q \\
\partial_tm - B(t)m = A_0(t)u + G(x,t), &\text{in }Q
\end{cases}
\end{equation}
with the Robin boundary condition
\begin{equation}\label{(1.4)}
\begin{cases}
\partial_{\nu_A}u(x,t) - p(x,t)u(x,t) = g(x,t),&\text{on }\Sigma \\
\partial_{\nu_B}m(x,t) - q(x,t)m(x,t) = h(x,t), &\text{on }\Sigma,
\end{cases}
\end{equation}
for
\begin{equation}\label{1.7}
p, q \in C^1(\partial\Omega\times [0,T]) \quad \mbox{and}\quad c_0\in L^\infty(Q).
\end{equation}
Here, the second order partial differential
operators are defined
by
\begin{equation}\label{eq:coef1}
\begin{cases}
A(t)u:= \sum\limits_{i,j=1}^n a_{ij}(x,t)\partial_i\partial_j u + \sum\limits_{j=1}^n a_j(x,t)\partial_ju
+ a_0(x,t)u, \\
B(t)m:= \sum\limits_{i,j=1}^n b_{ij}(x,t)\partial_i\partial_j m + \sum\limits_{j=1}^n b_j(x,t)\partial_jv
+ b_0(x,t)m,\\
A_0(t)u := \sum\limits_{| \gamma| \le 2}\widetilde  a_{\gamma}(x,t)
\partial_x^{\gamma}u,
\end{cases}
\end{equation}
where we assume that
\begin{equation}\label{lopukh1}
\begin{cases}
a_{ij}, b_{ij} \in C^1(\bar{Q}), \quad a_{ij} = a_{ji},\,\,
b_{ij} = b_{ji}\quad &\mbox{for } 1\le i,j\le n, \\
a_k, b_k \in L^{\infty}(Q)\quad &\mbox{for }  0\le k \le n,\\
\widetilde a_{\gamma}\in L^{\infty}(Q)\quad  &\mbox{for } 
|\gamma | \le 2,
\end{cases}
\end{equation}
and there exists a constant
$\chi > 0$ such that
\begin{equation}\label{lopukh}
\sum_{i,j=1}^n a_{ij}(x,t)\xi_i\xi_j \ge \chi \sum_{j=1}^n \xi_j^2 \quad\mbox{and}\quad
\sum_{i,j=1}^n b_{ij}(x,t)\xi_i\xi_j \ge \chi \sum_{j=1}^n \xi_j^2
\end{equation}
for all $(x,t) \in Q$ and $\xi_1, ..., \xi_d \in \mathbb{R}$.
We also define
$$
\partial_{\nu_A}u := \sum_{i,j=1}^n a_{ij}(\partial_ju)\nu_i, \quad
\partial_{\nu_B}m := \sum_{i,j=1}^n b_{ij}(\partial_jv)\nu_i  \quad
\mbox{on }\partial\Omega\times (0,T).
$$
These Robin boundary conditions \eqref{(1.4)}
are more general than the Neumann boundary conditions we considered in the previous chapters.

Let $\Gamma$ be an arbitrarily chosen
non-empty subboundary of $\partial\Omega$, $t_0 \in (0,T)$ be arbitrarily given. 
Assume
$$
F(x,t) = q_1(x,t)f_1(x), \quad G(x,t) = q_2(x,t)f_2(x), \quad (x,t) \in Q,
$$
where
\begin{equation}\label{lopukh5}
q_1, q_2 \in W^{1,\infty}(0,T;L^{\infty}(\Omega))
\end{equation}
are given functions.

In addition to \eqref{lopukh1} and \eqref{1.7}, we assume
\begin{equation}\label{lopukh2}\begin{split}
&\partial_ta_{ij}, \, \partial_tb_{ij} \in C^1(\bar{Q}), \\
&\partial_tc_0, \partial_ta_k, \partial_tb_k \in L^{\infty}(Q)
\quad \mbox{for $1\le i,j\le n$ and $0\le k \le n$},\\
&\partial_t\widetilde a_{\gamma} \in L^{\infty}(Q) \quad
\mbox{for $| \gamma | \le 2$}, \\& \partial_tp, \partial_tq
\in C^1(\partial\Omega\times [0,T]).\end{split}
\end{equation}
We arbitrarily fix $t_0 \in (0,T)$ and a non-empty open interval
$I \in (0,T)$ such that $t_0 \in I$.  Furthermore we write
\begin{equation}\label{lopukh4}
u_0(x) := u(x,t_0), \quad m_0(x):= m(x,t_0), \quad x\in \Omega.
\end{equation}

Then, our main result is as follows:
\begin{thm}[Global unconditional Lipschitz stability for an
inverse source problem] \label{t2} 
Assume that \eqref{1.7}, \eqref{lopukh1}, \eqref{lopukh},  \eqref{lopukh5}
and \eqref{lopukh2} hold true.
Let $\partial_t c_0\in L^\infty(Q), g,h, \partial_t g,\partial_t h
\in L^2(I; H^{\frac{1}{2}}(\partial\Omega))\cap H^1(\Gamma \times I)$ and  $u, m
\in H^{2,1}(Q)$ satisfy \eqref{(1.3)}, \eqref{(1.4)}, \eqref{lopukh4},
$\partial_tu, \partial_tm \in H^{2,1}(Q)$.
We assume
\begin{equation}\label{(1.5)}
| q_1(x,t_0)|>0, \quad \mbox{and}\quad | q_2(x,t_0)| > 0 \quad
\mbox{for all $x \in \overline{\Omega}$}.
\end{equation}
Then there exists a constant $C>0$ such that
\begin{align*}
& \Vert f_1\Vert_{L^2(\Omega)} + \Vert f_2\Vert_{L^2(\Omega)}
\\\le& C\biggl(\Vert u(\cdot,t_0)\Vert_{H^2(\Omega)}
+ \Vert m(\cdot,t_0)\Vert_{H^2(\Omega)}+ \sum_{k=0}^1 \left(\Vert \partial_t^ku\Vert_{H^1(\Gamma \times I)}
+ \Vert \partial_t^kv \Vert_{H^1(\Gamma \times I)}\right)\\&+\sum_{k=0}^1 (\Vert \partial_t^kg \Vert
_{H^1(I; L^2(\partial\Omega\setminus \Gamma))} +
 \Vert \partial_t^kg \Vert_{L^2(I; H^{\frac{1}{2}}(\partial\Omega))}
+ \Vert \partial_t^kh \Vert_{H^1(I; L^2(\partial\Omega\setminus \Gamma))}
+ \Vert \partial_t^kh \Vert_{L^2(I; H^{\frac{1}{2}}(\partial\Omega))}) \biggr).
\end{align*}
\end{thm}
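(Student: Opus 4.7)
The strategy follows the classical Bukhgeim--Klibanov method: convert the inverse source problem into an estimate for the time-derivatives of the solution at $t=t_0$, and control those time-derivatives through a Carleman estimate with a spatial-temporal weight peaked at $t_0$. The overall target is to bound $\|f_j\|_{L^2(\Omega)}$ by the data plus $\|u_0\|_{H^2(\Omega)}+\|m_0\|_{H^2(\Omega)}$ and observations on $\Gamma\times I$.

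\textbf{Step 1 (Pointwise extraction at $t=t_0$).} Evaluate \eqref{(1.3)} at $t=t_0$ to obtain
\begin{equation*}
q_1(x,t_0)f_1(x) = \partial_t u(x,t_0) + A(t_0)u_0(x) - c_0(x,t_0)m_0(x),
\end{equation*}
\begin{equation*}
q_2(x,t_0)f_2(x) = \partial_t m(x,t_0) - B(t_0)m_0(x) - A_0(t_0)u_0(x).
\end{equation*}
Hypothesis \eqref{(1.5)} together with $q_j\in W^{1,\infty}(0,T;L^\infty(\Omega))$ yields a positive lower bound $|q_j(x,t_0)|\geq c_*>0$ on $\overline\Omega$, so
\begin{equation*}
\|f_1\|_{L^2(\Omega)}^2+\|f_2\|_{L^2(\Omega)}^2 \;\lesssim\; \|\partial_t u(\cdot,t_0)\|_{L^2(\Omega)}^2+\|\partial_t m(\cdot,t_0)\|_{L^2(\Omega)}^2+\|u_0\|_{H^2(\Omega)}^2+\|m_0\|_{H^2(\Omega)}^2.
\end{equation*}
Thus it suffices to control the two squared norms of the time-derivatives at $t_0$.

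\textbf{Step 2 (Differentiated system and Carleman estimate).} Set $y:=\partial_t u$ and $z:=\partial_t m$. Using the regularity \eqref{lopukh2} and $\partial_t u,\partial_t m\in H^{2,1}(Q)$, differentiate \eqref{(1.3)}--\eqref{(1.4)} in $t$: the pair $(y,z)$ satisfies a coupled parabolic system of the same structural type with sources of the form $\partial_t q_1(x,t)f_1(x)+L_1(u,m)$ and $\partial_t q_2(x,t)f_2(x)+L_2(u,m)$ (lower-order), and Robin boundary data $\partial_t g,\partial_t h$ plus terms in $\partial_t p,\partial_t q$. Choose as in the proof of Theorem~\ref{UCP} the weight $\varphi(x,t)=e^{\lambda(d(x)-\beta(t-t_0)^2)}$, with $d\in C^2(\overline\Omega)$ positive in $\Omega$, vanishing on $\partial\Omega\setminus\Gamma$ and $\nabla d\cdot\nu\leq 0$ there. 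Apply the coupled Carleman estimate \eqref{UCPeq6} on $Q_I:=\Omega\times I$ to $(y,z)$. The Robin conditions convert the boundary integrals on $\Gamma\times I$ into $\|y\|_{H^1(\Gamma\times I)}+\|z\|_{H^1(\Gamma\times I)}$ together with $\|\partial_t g\|_{L^2(I;H^{1/2}(\partial\Omega))}+\|\partial_t h\|_{L^2(I;H^{1/2}(\partial\Omega))}$, while the integrals over $(\partial\Omega\setminus\Gamma)\times I$ are estimated through $\|\partial_t g,\partial_t h\|_{H^1(I;L^2(\partial\Omega\setminus\Gamma))}$. Taking $s>0$ sufficiently large absorbs the lower-order coupling $L_1,L_2$ as well as the perturbations from $\partial_t p,\partial_t q$.

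\textbf{Step 3 (Bukhgeim--Klibanov identity at $t=t_0$).} To pass from the spacetime Carleman norm to the pointwise values $\|y(\cdot,t_0)\|_{L^2}^2+\|z(\cdot,t_0)\|_{L^2}^2$ needed in Step~1, use
\begin{equation*}
|y(x,t_0)|^2 e^{2s\varphi(x,t_0)} = |y(x,t)|^2 e^{2s\varphi(x,t)} - \int_{t_0}^{t} \partial_\tau\!\left(|y(x,\tau)|^2 e^{2s\varphi(x,\tau)}\right)d\tau,
\end{equation*}
integrate over $(x,t)\in\Omega\times I$, expand $\partial_\tau(|y|^2 e^{2s\varphi})=2y\,\partial_\tau y\,e^{2s\varphi}+2s(\partial_\tau\varphi)|y|^2 e^{2s\varphi}$, and apply Cauchy--Schwarz. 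Since $\varphi$ attains its temporal maximum at $\tau=t_0$, every integral on the right is controlled by the LHS of the Carleman estimate from Step~2 (for example, $\int|y|^2 e^{2s\varphi}$ absorbs into the $s^3$-term, and $\int|\partial_\tau y|^2 e^{2s\varphi}$ into the $s^{-1}$-term), plus the edge contributions at $\tau=t_0\pm\delta$ which are part of the $\mathcal B$-term of the Carleman estimate. The same argument applies to $z$. The sources $\partial_t q_j f_j$ that remain on the RHS are then absorbed using Step~1 by choosing $s$ large.

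\textbf{Main obstacle.} The principal difficulty is the non-standard coupling $A_0(t)u$ in the second equation of \eqref{(1.3)}, which is second order in $x$: after differentiation, the $z$-equation contains $A_0(t)y$ involving $\Delta y$. The Carleman estimate therefore must be \emph{coupled} between the two equations in such a way that the $\|\Delta y\|_{L^2}^2$ term produced by $A_0 y$ in the $z$-equation is absorbed by the corresponding LHS term in the Carleman estimate for the $y$-equation; this is precisely the reason for the asymmetric powers of $s$ (namely $s^{-1}|\Delta z|^2$ versus $|\Delta y|^2$) in \eqref{UCPeq6}, and the large-$s$ absorption must be performed in a specific order.
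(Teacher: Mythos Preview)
Your overall Bukhgeim--Klibanov strategy (extract $f_j$ at $t_0$, differentiate in time, apply a coupled Carleman estimate, then recover the trace at $t_0$) matches the paper's, and your ``Main obstacle'' paragraph correctly identifies why asymmetric powers of $s$ are needed to absorb $A_0(t)y$. However, there is a genuine gap in the choice of Carleman weight.

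You propose to use the weight $\varphi(x,t)=e^{\lambda(d(x)-\beta(t-t_0)^2)}$ from Theorem~\ref{UCP} and the estimate \eqref{UCPeq6}. That estimate carries the boundary block $\mathcal{B}(w)$, which contains the interior traces $\|w(\cdot,t_0\pm\delta)\|_{H^1(\Omega)}^2$. In the proof of Theorem~\ref{UCP} these traces are tolerated because they appear multiplied by $e^{-2s(\mu_1-\mu_2)}$ and one only seeks uniqueness. For a \emph{stability} estimate, those traces would force either an a priori bound on $\|(\partial_tu,\partial_tm)(\cdot,t_0\pm\delta)\|_{H^1(\Omega)}$ (making the result conditional) or an optimisation in $s$ (yielding only H\"older stability). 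This contradicts the unconditional Lipschitz conclusion of Theorem~\ref{t2}. The paper avoids this entirely by working with the \emph{degenerate} weight $\alpha(x,t)=\mu(t)^{-1}\bigl(e^{\lambda\eta(x)}-e^{2\lambda\|\eta\|}\bigr)$ built from a function $\mu$ with $\mu(0)=\mu(T)=0$; since $e^{2s\alpha}\to 0$ at $t=0,T$, no edge terms survive (this is the content of Lemmas~\ref{1}, \ref{6}, \ref{2} and Theorem~\ref{t3}).

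There is a second, related issue in Step~2. When you differentiate the Robin conditions in $t$, the new boundary data are not $\partial_t g,\partial_t h$ alone but $\partial_t g+g_1$, $\partial_t h+h_1$ with $g_1=\sum_{i,j}(\partial_t a_{ij})(\partial_j u)\nu_i-(\partial_t p)u$ and similarly for $h_1$; these involve traces of $\nabla u$ and $\nabla m$ on $\partial\Omega$, which are not data. Calling these ``perturbations from $\partial_t p,\partial_t q$'' to be absorbed by large $s$ is not sufficient. The paper estimates these extra boundary terms via a weighted trace inequality (Lemma~\ref{5}) to reduce them to volume integrals of $u,m$, and then invokes the Carleman estimate for the \emph{undifferentiated} system (Theorem~\ref{t3}) to control those. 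Your outline lacks this loop back to $(u,m)$.
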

In particular, in the case of the homogeneous Robin boundary condition, we have
\begin{cor}
Under the conditions of Theorem \ref{t2}, we assume that $g=h=0$
on $\partial\Omega\times (0,T)$.
Then there exists a constant $C>0$ such that
\begin{align*}
& \Vert f_1\Vert_{L^2(\Omega)} + \Vert f_2\Vert_{L^2(\Omega)}
\\&\le C\biggl(\Vert u(\cdot,t_0)\Vert_{H^2(\Omega)}
+ \Vert m(\cdot,t_0)\Vert_{H^2(\Omega)}
+ \sum_{k=0}^1 (\Vert \partial_t^ku\Vert_{H^1(\Gamma \times I)}
+ \Vert \partial_t^kv \Vert_{H^1(\Gamma \times I)})\biggr).
\end{align*}
\end{cor}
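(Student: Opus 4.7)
The plan is to derive this corollary as a direct specialization of Theorem~\ref{t2}, so the main effort is verifying that the homogeneous Robin case fits cleanly into the hypotheses of that theorem and that the unwanted terms on the right-hand side genuinely vanish. First, I would observe that setting $g \equiv 0$ and $h \equiv 0$ on $\partial\Omega\times(0,T)$ is compatible with every regularity assumption of Theorem~\ref{t2}: in particular $g, h, \partial_t g, \partial_t h$ trivially belong to $L^2(I; H^{1/2}(\partial\Omega)) \cap H^1(\Gamma\times I)$, the coefficient hypotheses \eqref{1.7}, \eqref{lopukh1}, \eqref{lopukh}, \eqref{lopukh5}, \eqref{lopukh2} are coefficient-level and unaffected by the choice of boundary data, and the non-vanishing conditions \eqref{(1.5)} on $q_1(\cdot,t_0)$ and $q_2(\cdot,t_0)$ are assumed to hold. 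The prescribed regularity $u, m, \partial_t u, \partial_t m \in H^{2,1}(Q)$ is inherited from the corollary's hypotheses.

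Next, I would apply Theorem~\ref{t2} to the solution pair $(u,m)$ satisfying the Robin system \eqref{(1.3)}--\eqref{(1.4)} with $g \equiv h \equiv 0$. Since $\partial_t^k g \equiv 0$ and $\partial_t^k h \equiv 0$ for $k=0,1$, each of the four boundary norms
\[
\Vert \partial_t^k g\Vert_{H^1(I; L^2(\partial\Omega\setminus \Gamma))}, \quad \Vert \partial_t^k g\Vert_{L^2(I; H^{1/2}(\partial\Omega))}, \quad \Vert \partial_t^k h\Vert_{H^1(I; L^2(\partial\Omega\setminus \Gamma))}, \quad \Vert \partial_t^k h\Vert_{L^2(I; H^{1/2}(\partial\Omega))}
\]
on the right-hand side of the estimate in Theorem~\ref{t2} is identically zero. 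The remaining terms collapse precisely to the bound asserted in the corollary.

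No new analytical ingredient is required: the argument is a clean specialization. The only subtlety worth mentioning is that the Lipschitz constant $C$ produced by Theorem~\ref{t2} must be independent of the particular $(g,h)$ used, so that it persists in the homogeneous limit; this is clear from the proof structure of Theorem~\ref{t2}, where $C$ depends only on the operators $A(t), B(t), A_0(t)$, the coefficients $p, q, c_0$, the weights $q_1, q_2$, and the geometric data $(\Omega, \Gamma, I, t_0)$. Hence the corollary follows immediately from Theorem~\ref{t2}.
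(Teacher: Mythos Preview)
Your proposal is correct and matches the paper's approach exactly: the corollary is stated immediately after Theorem~\ref{t2} as a direct specialization to the homogeneous case $g=h=0$, with no separate proof given. Your additional remarks on verifying the hypotheses and the independence of $C$ from $(g,h)$ are sound and simply make explicit what the paper leaves implicit.
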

We emphasize that in our stability estimates, we do not use neither data
$u(\cdot,T)$ nor $m(\cdot,0)$ in $\Omega$,
nor any a priori bounds on $u,m,f_1, f_2$, but we require data $u(\cdot,t_0)$
and $m(\cdot,t_0)$ over $\Omega$ at an intermediate time $t_0 \in (0,T)$.
Our stability can be understood unconditional in the sense that
we do not need to assume any a priori boundedness conditions.

Here, by a single-event measurement, we mean that the measurement data are
associated with a single and fixed input pair $(u_T, m_0)$ and boundary data,
otherwise the data are referred to as being collected from multiple-event
measurements.

The mixed forward and backward equations in \eqref{(1.3)},
makes the forward problem difficult, but thanks to the symmetry of the time
variable in the weight function of the key Carleman estimates, this does not affect the applicability of
Carleman estimates, our main tool.

Systems with coupled principal parts usually cause
difficulty for establishing relevant Carleman estimates.
However, in our case, although the second equation in \eqref{(1.3)} is
coupled with the second-order terms $A_0(t)u$ of $u$, the first one in
\eqref{(1.3)} is coupled only
with zeroth order term of $m$, which enables us to execute an argument
of absorbing the second-order terms of $u$ by taking large parameters
of the Carleman estimate, as we will see in the following sections.

\subsection{Forward Stability Result}

We first describe a stability result for the state $(u,m)$.

\begin{thm} \label{t1}
We assume \eqref{1.7}, \eqref{lopukh1} and \eqref{lopukh}.
Moreover let $u,m \in H^{2,1}(Q)$ satisfy \eqref{(1.3)} and \eqref{(1.4)}.
For arbitrarily given $\varepsilon > 0$, we can find a constant $C_{\varepsilon} >0$ such that
\begin{align*}
&\quad\Vert u\Vert_{H^{2,1}(\Omega\times (\varepsilon,T-\varepsilon))}
+ \Vert m\Vert_{H^{2,1}(\Omega\times (\varepsilon,T-\varepsilon))}
\\& \le C_{\varepsilon}\Big(\Vert F\Vert_{L^2(Q)} + \Vert G\Vert_{L^2(Q)}+\Vert u\Vert_{H^1(\Gamma \times (0,T))}
+ \Vert m \Vert_{H^1(\Gamma \times (0,T))}
+ \Vert \partial_tg\Vert_{L^2((\partial\Omega\setminus \Gamma)\times (0,T))}
\\
& \quad + \Vert \partial_th\Vert_{L^2((\partial\Omega\setminus \Gamma)\times (0,T))}
+ \Vert g\Vert_{L^2(0,T;H^{\frac{1}{2}}(\partial\Omega))}
   + \Vert h\Vert_{L^2(0,T;H^{\frac{1}{2}}(\partial\Omega))}\Big).
\end{align*}
\end{thm}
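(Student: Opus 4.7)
The plan is to establish the estimate by means of two coupled global parabolic Carleman estimates, one for each of the principal parts $\partial_t+A(t)$ and $\partial_t-B(t)$, together with a time cut-off that localizes the conclusion to $\Omega\times(\varepsilon,T-\varepsilon)$. Since no initial nor terminal condition is prescribed on $(u,m)$, energy methods are unavailable, so we rely on Carleman-type interior regularity instead.

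First I would fix a weight of the classical form $\varphi(x,t)=e^{\lambda d(x)}/(t(T-t))$, where $d\in C^2(\overline{\Omega})$ satisfies $d>0$ in $\Omega$, $d=0$ on $\partial\Omega\setminus\Gamma$, $\partial_\nu d\le 0$ on $\partial\Omega\setminus\Gamma$, and $|\nabla d|>0$ on $\overline{\Omega}$, and $\lambda>0$ is large. For each of the two (independent) second-order parabolic operators in \eqref{(1.3)} there is a standard global Carleman estimate of the form
\begin{equation*}
    \int_Q\!\Bigl(\tfrac{1}{s}|\partial_t w|^2+\tfrac{1}{s}|\Delta w|^2+s|\nabla w|^2+s^3|w|^2\Bigr)e^{2s\varphi}\,dx\,dt
\le C\int_Q|P_k w|^2 e^{2s\varphi}\,dx\,dt+C\,\mathcal{B}(w),
\end{equation*}
for every $s\ge s_0$, with $P_1=\partial_t+A$, $P_2=\partial_t-B$, and with a boundary term $\mathcal{B}(w)$ supported on $\partial\Omega\times(0,T)$. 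Thanks to the symmetry of $\varphi$ in $t$, the opposite time orientations of the two equations cause no difficulty. I would invoke this estimate for $w=\chi u$ and $w=\chi m$, where $\chi\in C_c^\infty(0,T)$ with $\chi\equiv 1$ on $(\varepsilon,T-\varepsilon)$; the commutator $[\partial_t,\chi]$ produces terms supported on $(0,\varepsilon)\cup(T-\varepsilon,T)$ where $e^{2s\varphi}$ is bounded, so they can eventually be dropped into the constant.

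Next I would combine the two estimates. Substituting the right-hand sides of \eqref{(1.3)},
\begin{equation*}
    |P_1(\chi u)|^2\lesssim |c_0|^2|m|^2+|F|^2+|\chi'u|^2,\qquad
    |P_2(\chi m)|^2\lesssim |A_0u|^2+|G|^2+|\chi'm|^2,
\end{equation*}
so the coupling enters through $\int |c_0|^2 |m|^2 e^{2s\varphi}$, which is absorbed by the $s^3\int|m|^2 e^{2s\varphi}$ term of the $m$-estimate for $s$ sufficiently large, and through $\int|A_0 u|^2 e^{2s\varphi}\lesssim \int(|\Delta u|^2+|\nabla u|^2+|u|^2)e^{2s\varphi}$. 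The main obstacle is precisely this second, genuinely second-order coupling. To handle it I would add the $m$-estimate to $\eta s$ times the $u$-estimate for an appropriately small $\eta$: the $u$-estimate delivers $\eta\int|\Delta u|^2 e^{2s\varphi}$ on the left (coming from $\eta s\cdot s^{-1}|\Delta u|^2$), which dominates the $|\Delta u|^2$ appearing in $|A_0u|^2$ on the right, while the remaining $\eta s^2\int|\nabla u|^2 e^{2s\varphi}+\eta s^4\int|u|^2 e^{2s\varphi}$ absorb the lower-order pieces of $|A_0u|^2$. Iterating once with $s$ chosen large enough then produces a single inequality controlling both $u$ and $m$ in the full Carleman norm by $\int(|F|^2+|G|^2)e^{2s\varphi}$ and the boundary terms $\mathcal{B}(\chi u)+\mathcal{B}(\chi m)$.

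Finally I would dispose of the boundary term. Using the Robin relations \eqref{(1.4)} one expresses $\partial_\nu u$ and $\partial_\nu m$ on $\partial\Omega$ in terms of $u,m,g,h$; on $\Gamma$ the pieces are absorbed into $\|u\|_{H^1(\Gamma\times(0,T))}+\|m\|_{H^1(\Gamma\times(0,T))}$ by the trace theorem, whereas on $\partial\Omega\setminus\Gamma$ the sign condition $\partial_\nu d\le 0$ combined with integration by parts reduces the remaining contributions to the $L^2(0,T;H^{1/2}(\partial\Omega))$-norms of $g,h$ and the $L^2((\partial\Omega\setminus\Gamma)\times(0,T))$-norms of $\partial_tg,\partial_t h$, as stated. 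To finish, I would restrict the integration on the left-hand side to $\Omega\times(\varepsilon,T-\varepsilon)$, where $e^{2s\varphi}$ is bounded above and below by constants depending only on $\varepsilon$ and the fixed value of $s$, converting the Carleman norm into the standard $H^{2,1}$ norm and yielding the claimed stability bound with a constant $C_\varepsilon$.
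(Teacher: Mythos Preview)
Your overall strategy---combine two parabolic Carleman estimates with the same weight and exploit that only one of the two couplings is of second order---matches the paper's. However, the time cutoff $\chi$ creates a genuine gap. With your weight $\varphi(x,t)=e^{\lambda d(x)}/\bigl(t(T-t)\bigr)$ the exponential $e^{2s\varphi}$ \emph{blows up} as $t\to 0,T$, so you are forced to localize; but the commutator terms $\chi' u,\,\chi' m$ you produce are supported in, say, $(\varepsilon/2,\varepsilon)\cup(T-\varepsilon,T-\varepsilon/2)$, where $\varphi$ is \emph{larger} than on $(\varepsilon,T-\varepsilon)$ (since $1/\bigl(t(T-t)\bigr)$ increases towards the endpoints). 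Hence $\int_Q|\chi' u|^2 e^{2s\varphi}\,dx\,dt$ is bounded below by $e^{2sM}\|u\|^2_{L^2(\Omega\times\operatorname{supp}\chi')}$ with $M$ exceeding the infimum of $\varphi$ on the target interval. These terms depend on $u,m$ in a region where you have no a~priori control, so they can neither be ``dropped into the constant'' nor absorbed into the left-hand side for large $s$.

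The paper sidesteps this entirely by using the Fursikov--Imanuvilov weight $e^{2s\alpha}$ with
\[
\alpha(x,t)=\frac{e^{\lambda\eta(x)}-e^{2\lambda\|\eta\|_{C(\overline\Omega)}}}{\mu(t)},
\]
which is \emph{negative} and tends to $-\infty$ at $t=0,T$. Then $e^{2s\alpha}$ vanishes at the time endpoints, so the global Carleman estimate for the coupled system (Theorem~\ref{t3}) holds directly for $u,m\in H^{2,1}(Q)$ with no cutoff and no initial/terminal conditions. Theorem~\ref{t1} then follows in one line: on $\Omega\times[\varepsilon,T-\varepsilon]$ one has $\alpha\ge -C_3$ for some $C_3>0$, hence $e^{2s\alpha}\ge e^{-2sC_3}$, and fixing $s$ converts the weighted norms into the plain $H^{2,1}$ norm.

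A smaller slip: to absorb the $\int|\Delta u|^2$ coming from $|A_0u|^2$ into $\eta\int|\Delta u|^2$ on the left you need $\eta$ \emph{large} (at least $\eta>C$), not small; only afterwards do you take $s$ large so that $s^3\int|m|^2$ swallows $C\eta s\int|m|^2$. The paper packages this step more cleanly as a shifted Carleman estimate (Lemma~\ref{4}, obtained from Lemma~\ref{1} by applying it to $\varphi^{1/2}u$), which places $\int|\partial_i\partial_j u|^2 e^{2s\alpha}$ on the left with no $1/(s\varphi)$ factor, at the price of $\int s\varphi|F|^2 e^{2s\alpha}$ on the right.
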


In particular, we directly see
\begin{align*}
& \quad\Vert u(\cdot,t)\Vert_{L^2(\Omega)}
+ \Vert m(\cdot,t)\Vert_{L^2(\Omega)}
\\&\le C_{\varepsilon}\Big(\Vert F\Vert_{L^2(Q)} + \Vert G\Vert_{L^2(Q)}
+ \Vert u\Vert_{H^1(\Gamma \times (0,T))}
+ \Vert m \Vert_{H^1(\Gamma \times (0,T))}
+ \Vert \partial_tg\Vert_{L^2((\partial\Omega\setminus \Gamma)\times (0,T))}
\\&\quad+ \Vert \partial_th\Vert_{L^2((\partial\Omega\setminus \Gamma)\times (0,T))}
+ \Vert g\Vert_{L^2(0,T;H^{\frac{1}{2}}(\partial\Omega))}
 + \Vert h\Vert_{L^2(0,T;H^{\frac{1}{2}}(\partial\Omega))}\Big)
\end{align*}
for $\varepsilon \le t \le T-\varepsilon$.

We emphasize that Theorem \ref{t1} asserts an unconditional stability estimate
in the case of a non-homogeneous Robin boundary condition. The
unconditional stability means
that we do not need to impose any boundedness assumptions for $u$ and $m$.

This result is also shown using a Carleman estimate as follows.

Let $\Gamma \subset \partial\Omega$ be an arbitrarily given subboundary.
We choose a subboundary $\Gamma_1 \subset \partial\Omega$ such that
$\overline{\partial\Omega \setminus \Gamma}\subset \Gamma_1$.  Then it is known
(e.g., Fursikov and Imanuvilov \cite{FI}, Imanuvilov \cite{Ima}) that
there exists a function $\eta \in C^1(\overline{\Omega})$ such that
$$
\eta(x)>0\quad\mbox{in}\quad \Omega, \quad \eta\vert_{\Gamma_1} = 0, \quad
\nabla \eta \ne 0\quad \mbox{on}\quad \overline{\Omega}.
$$
Let a function $\mu = \mu(t)$ satisfy 
\begin{equation}\label{(2.1)}
\begin{cases}
\mu \in C^{\infty}[0,T], \quad \mu(t) = t^2 \quad \mbox{for $0\le
t \le \frac{T}{4}$}, \\
\mbox{$\mu(t)$ is monotone increasing in $\left[0,\, \frac{T}{2}\right]$},\\
\mu(t) = \mu(T-t) \quad \mbox{for $0\le t\le T$}.
\end{cases}
\end{equation}
For an arbitrarily chosen sufficiently large constant $\lambda > 0$, we set
$$
\varphi(x,t) = \frac{e^{\lambda\eta(x)}}{\mu(t)}, \quad
\alpha(x,t) = \frac{e^{\lambda\eta(x)}
- e^{2\lambda ||\eta||_{C(\overline{\Omega})}} }{\mu(t)}, \quad (x,t)\in
Q:= \Omega \times (0,T).
$$
We recall the assumptions \eqref{1.7} and \eqref{lopukh1}, and
further set
\begin{equation}\label{(2.2)}
M:= \sum_{i,j=1}^n ||a_{ij}||_{C^1(\bar{Q})}
+ \sum_{j=1}^n ||a_j||_{L^{\infty}(Q)}
+ ||c_0||_{L^\infty(Q)}, \quad
M_0:= \sum_{i,j=1}^n ||a_{ij}||_{C^1(\bar{Q})}.
\end{equation}

We consider a
boundary value problem
\begin{equation}\label{(2.3)}
\partial_tu + A(t)u = F \quad \mbox{in $Q$} \quad \mbox{or}
\quad \partial_tu - B(t)u = F \quad \mbox{in $Q$},
\end{equation}
and
\begin{equation}\label{(2.4)}
\partial_{\nu_A}u - p(x,t)u = g \quad \mbox{on $\partial\Omega\times (0,T)$}.
\end{equation}
\\

Now we state the key Carleman estimate for a parabolic equation.
\begin{lem} \label{1}
Assume that \eqref{lopukh1}, \eqref{lopukh} and
$p \in C^1(\partial\Omega\times [0,T])$.
We choose a sufficiently large constant $\lambda>0$.
Let $u\in H^{2,1}(Q)$ and $g\in L^2(0,T,H^\frac 12 (\partial\Omega))$,
$\partial_tg\in L^2((\partial\Omega\setminus \Gamma) \times (0,T)), F\in L^2(Q)$.
Then there exist constants $s_0 > 0$ and $C>0$, independent of $u$,
such that
\begin{equation}\label{(2.5)}\begin{split}
&\int_Q \left(
\frac{1}{{s}\varphi} \left(|\partial_t u|^2
+ \sum_{i,j=1}^n| \partial_i\partial_ju|^2\right)
+ s\varphi |\nabla u|^2 + s^3\varphi^3 | u|^2
\right) e^{2s\alpha}  \,dx\,\,dt
\\
\le&  C\biggl(
\int_Q | F|^2 e^{2s\varphi} \,dx\,dt
+ \int_{(\partial\Omega\setminus \Gamma)\times (0,T)}
\left(\frac{| \partial_t g|^2}{s^2\varphi^2}
+ \frac{1}{\root\of{s\varphi}}| g|^2\right) e^{2{s}\alpha} dS\,dt\\
&+ \Vert ge^{s\alpha}\Vert^2_{L^2(0,T;H^\frac 12(\partial\Omega))}
\biggr)
+  C\int_{\Gamma \times (0,T)} \left(s\varphi| \nabla u|^2
+ s^3\varphi^3| u|^2 + \frac{| \partial_tu|^2}{s\varphi}\right)
e^{2s\varphi} \,dS\,dt \end{split}
\end{equation}
for each $s \ge s_0$ and $u\in H^{2,1}(Q)$
satisfying \eqref{(2.3)} and \eqref{(2.4)}.
Here the constant $C>0$ depends continuously on
$\Vert p\Vert_{C^1(\partial\Omega\times [0,T])}$,
$M$, $\lambda$.
\end{lem}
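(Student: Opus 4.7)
The strategy is the Fursikov--Imanuvilov conjugation method adapted to the Robin boundary condition, where the geometric choice of $\eta$ guarantees that boundary contributions on the uncontrolled part $\partial\Omega\setminus\Gamma\subset\Gamma_1$ carry the correct sign. Setting $w:=e^{s\alpha}u$, the conjugated operator $L_sw:=e^{s\alpha}(\partial_t+A(t))(e^{-s\alpha}w)$ decomposes as $L_sw=P_1w+P_2w+Rw$, with $P_1$ collecting the self-adjoint second-order piece (the elliptic part $A(t)$ together with the zeroth-order weight term $-s^2\varphi^2\sum a_{ij}\partial_i\eta\,\partial_j\eta\,w$), $P_2$ the skew-adjoint piece ($\partial_t w$ and a drift $-2s\varphi\sum a_{ij}\partial_i\eta\,\partial_j w$), and $R$ a lower-order remainder. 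Expanding $\|L_sw-Rw\|_{L^2(Q)}^2$ produces $\|P_1w\|^2+\|P_2w\|^2+2(P_1w,P_2w)_{L^2(Q)}$, and the cross term is the source of the desired estimate: repeated integration by parts, with $\lambda$ chosen large so that the $\lambda$-dependence through $\nabla\eta\otimes\nabla\eta$ dominates the curvature errors of $\eta$, produces the positive bulk quantities $s\varphi|\nabla w|^2$ and $s^3\varphi^3|w|^2$. Reverting to $u$ via $w=e^{s\alpha}u$ and combining with the $L^2$-norms of $P_1w$ and $P_2w$, which carry the second-order spatial derivatives and the time derivative, yields the full left-hand side of \eqref{(2.5)}.

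The core difficulty is the careful accounting of boundary contributions generated by these integrations by parts. Each spatial integration by parts produces, on $\partial\Omega\times(0,T)$, an integral of schematic form $\int s\varphi\,\partial_\nu\eta\,Q(w,\nabla w)\,dSdt$ with $Q$ quadratic. Splitting $\partial\Omega=\Gamma\cup(\partial\Omega\setminus\Gamma)$, the contributions on $\Gamma\times(0,T)$ are moved to the right-hand side as the observation terms appearing in \eqref{(2.5)}. On the remaining part $\partial\Omega\setminus\Gamma\subset\Gamma_1$, the construction $\eta|_{\Gamma_1}=0$, $\nabla\eta\ne 0$ forces $\partial_\nu\eta\le -c_\ast<0$, giving the favorable sign needed to absorb or dominate the quadratic trace of $w$. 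To handle the Robin data one replaces the normal derivative through $\partial_{\nu_A}u=pu+g$ and introduces a lifting of $g$: a standard integration by parts in time against $e^{2s\alpha}$, which by the symmetry of $\mu$ in \eqref{(2.1)} vanishes at $t=0,T$ in the Carleman weight, trades a pointwise boundary integral of $|g|^2$ for the two norms $\tfrac{|\partial_tg|^2}{s^2\varphi^2}+\tfrac{|g|^2}{\sqrt{s\varphi}}$ on $\partial\Omega\setminus\Gamma$ together with the weighted $H^{1/2}$-norm of $g$ on $\partial\Omega$, precisely the boundary terms that appear on the right-hand side of \eqref{(2.5)}.

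Finally, the zeroth- and first-order coefficients of $A(t)$, bounded by $M$, generate error terms of size $C(M)\int_Q(|u|^2+|\nabla u|^2)e^{2s\alpha}dxdt$, which the bulk quantities $s^3\varphi^3|u|^2$ and $s\varphi|\nabla u|^2$ absorb once $s\ge s_0(\lambda,M)$ is chosen sufficiently large. Because $\mu$ in \eqref{(2.1)} is symmetric about $t=T/2$, the change of variables $t\mapsto T-t$ reduces the backward alternative $\partial_tu-B(t)u=F$ to the forward one with the \emph{same} weight function, so a single argument covers both cases in \eqref{(2.3)}; this time-symmetry is exactly what will later permit one single Carleman weight to treat the coupled forward/backward system \eqref{(1.3)}. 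The main obstacle I anticipate is not analytical sophistication but the bookkeeping: verifying that after all integrations by parts only the Robin data $g$, $\partial_tg$ and the boundary observation on $\Gamma$ appear on the right-hand side, with every trace of $u$ or $\nabla u$ on $\partial\Omega\setminus\Gamma$ either absorbed by the sign of $\partial_\nu\eta$ on $\Gamma_1$ or eliminated by the Robin lifting.
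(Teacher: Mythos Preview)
Your conjugation $w=e^{s\alpha}u$, the $P_1/P_2$ decomposition, the cross-term expansion, and the time-symmetry reduction of the backward operator are all exactly what the paper does. The substantive divergence is in how the boundary terms on $(\partial\Omega\setminus\Gamma)\times(0,T)$ are neutralised. The paper does \emph{not} rely on the sign $\partial_\nu\eta<0$ alone: after localising near $\Gamma_1$ by a partition of unity, it introduces a second ``mirror'' weight $\widetilde\alpha$ built from $\widetilde\psi=-\psi$, sets $\widetilde w=e^{s\widetilde\alpha}u$, derives the companion estimate for $\widetilde w$, and \emph{adds} the two. Since $\eta=0$ on $\Gamma_1$ one has $\alpha=\widetilde\alpha$ there, while $\nabla w$ and $\nabla\widetilde w$ differ only in the sign of the $s\lambda\varphi(\nabla\psi)u$ piece; summing the two boundary integrals then kills every appearance of the full tangential gradient of $u$ on $(\partial\Omega\setminus\Gamma)$, leaving only $\partial_{\nu_A}u=pu+g$, $w$, and $\partial_t w$, which are handled by the Robin substitution and the integration by parts in $t$ you describe. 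A residual $\epsilon s^{5/2}\varphi^{5/2}w^2$ boundary term is absorbed via a separate trace inequality obtained by pairing $L_1w$ with $s^{3/2}\lambda^2\varphi^{3/2}w$.

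Your single-weight route is not wrong in principle---the term $-s\lambda\varphi\,\mathbf{a}(\nabla w,\nabla w)\,\mathbf{a}(\nu,\nabla\psi)$ is indeed nonnegative on $\Gamma_1$ and can be dropped---but the sentence ``the favorable sign needed to absorb or dominate the quadratic trace of $w$'' glosses over a genuine bookkeeping hazard: once you substitute $\partial_{\nu_A}w=e^{s\alpha}\bigl((p+s\lambda\varphi(\partial_\nu\eta)\mathbf{a}(\nu,\nu))u+g\bigr)$ into the remaining boundary term $2s\lambda\varphi(\partial_\nu\eta)(\partial_{\nu_A}w)^2$, you generate a \emph{negative} contribution of order $s^3\lambda^3\varphi^3u^2e^{2s\alpha}$ that must cancel \emph{exactly} against the explicit $-2s^3\lambda^3\varphi^3\mathbf{a}(\nabla\psi,\nabla\psi)\mathbf{a}(\nabla\psi,\nu)w^2$ term. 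That cancellation does occur, but you have not flagged it, and without it the argument collapses at top order. The mirror-weight device buys you that cancellation algebraically, without any sign-chasing.
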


Observe that when we set $u(x,t) = w(x,T-t)$ for $(x,t) \in Q$, since $\alpha(x,t) = \alpha(x,T-t)$ and
$\varphi(x,t) = \varphi(x,T-t)$ for $(x,t) \in Q$, the change of variables
$t \mapsto T-t$ transfers the Carleman estimate for $\partial_t + A(t)$ to
$\partial_t - A(t)$.
Thus it is sufficient to prove Lemma \ref{1} for the parabolic operator
$\partial_t - \sum_{i,j=1}^n a_{ij}\partial_i\partial_j$, which is forward in time.

Indeed, we will prove a sharper estimate than Lemma \ref{1}.
In order to formulate our estimate,
let us introduce a quadratic form:\smallskip
$$
\mbox{\bf a}(V,W) : =\sum_{i,j=1}^n a_{ij}(x,t)v_iw_j \quad \mbox{for
$V:= (v_1,..., v_d)$ and $W:= (w_1, ..., w_d)$}
$$ and  operators
\begin{equation}\label{(5.9')}
\begin{cases}
L^2(x,t,D,s) w  = - \sum\limits_{i,j=1}^n a_{ij} \partial_i\partial_jw - s^2\lambda^2\varphi^2
\mbox{\bf a}(\nabla\psi, \nabla\psi) w -  s(\partial_t\alpha) w,  \\
L_1(x,t,D,s) w  = \partial_t w + 2s\lambda \varphi
\sum\limits_{i,j=1}^n a_{ij} (\partial_i\psi)\partial_jw + 2s\lambda^2 \varphi
\mbox{\bf a}(\nabla\psi, \nabla\psi) w.
\end{cases}
\end{equation}
We have
\begin{lem}\label{6}
Let $F\in L^2(Q)$, $g\in L^2(0,T;H^\frac 12(\partial\Omega))$,
$\partial_t g\in L^2((\partial\Omega\setminus\Gamma)\times (0,T))$ and
$p\in C^1(\partial\Omega\times [0,T])$.
There exists a constant $\lambda_0 > 0$ such that for an arbitrary
$\lambda \ge \la_0$, we can choose a constant $s_0(\lambda)$
satisfying: there exists a constant $C>0$ such that
\begin{align*}
& \int_Q \left( \frac{1}{s\varphi} \left(|\partial_t u|^2
+ \sum_{i,j=1}^n | \partial_i\partial_ju|^2\right)
+ s\lambda^2 \varphi |\nabla u|^2 + s^3 \lambda^4\varphi^3| u|^2\right)
e^{2s\alpha}  \,dx\,\,dt \\
&+  \sum_{k=1}^2\Vert L_k(x,t,D,s) (ue^{{s}\alpha})\Vert^2_{L^2(Q)} \le C\biggl(
\int_Q |F|^2 e^{2s\alpha}  \,dx\,dt  \\
&+ \int_{(\partial\Omega\setminus \Gamma)\times (0,T)}
\left(\frac{| \partial_t g|^2}{s^2\lambda^2\varphi^2}
+ \frac{1}{\root\of{s\varphi}} | g|^2\right) e^{2s\alpha} \,dS\,dt
+ \Vert ge^{s\alpha}\Vert^2_{L^2(0,T;H^\frac 12(\partial\Omega))} \\
&+ \int_{\Gamma \times (0,T)}
\left( s\lambda \varphi | \nabla u|^2
+ s^3\lambda^3\varphi^3 | u|^2
+ \frac{| \partial_t u|^2}{s\varphi} \right) e^{2s\alpha} \,dS \,dt
\biggr)
\end{align*}
for all $s> s_0(\lambda)$ and all $u\in H^{2,1}(Q)$ satisfying
$$
\begin{cases}
\partial_tu(x,t) - \sum\limits_{i,j=1}^n a_{ij}(x,t)\partial_i\partial_ju(x,t) = F(x,t), &\text{in }Q \\
\partial_{\nu_A} u(x,t) - p(x,t)u(x,t) = g(x,t), &\text{in }\partial\Omega\times
(0,T).
\end{cases}
$$
\end{lem}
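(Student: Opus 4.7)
The plan is to execute the standard Carleman conjugation-and-decomposition scheme, adapted so that the two auxiliary terms $\sum_{k=1}^{2}\|L_k(x,t,D,s)(ue^{s\alpha})\|_{L^2(Q)}^2$ emerge automatically on the left-hand side. First I would set $w = ue^{s\alpha}$, so that the equation $\partial_t u - \sum_{i,j} a_{ij}\partial_i\partial_j u = F$ is transformed under conjugation into
\[
Pw := e^{s\alpha}\bigl(\partial_t - \sum_{i,j} a_{ij}\partial_i\partial_j\bigr)\bigl(e^{-s\alpha}w\bigr) = Fe^{s\alpha}.
\]
A direct computation splits $P$ in the form $Pw = L_1 w + L_2 w + Rw$, where $L_1,L_2$ are precisely the operators in \eqref{(5.9')} (the symmetric and skew-symmetric principal parts of the conjugated operator, modulo the standard choice of how to distribute the terms arising from $\partial_i\alpha$) and $R$ collects the lower-order remainder carrying strictly smaller powers of $s\lambda\varphi$. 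Taking $L^2(Q)$ norms and rearranging gives the identity
\[
\|L_1 w\|_{L^2(Q)}^2 + \|L_2 w\|_{L^2(Q)}^2 + 2(L_1 w,L_2 w)_{L^2(Q)} = \|Fe^{s\alpha} - Rw\|_{L^2(Q)}^2,
\]
which already produces the two extra left-hand-side terms; it then suffices to prove that the cross product $(L_1 w,L_2 w)$ delivers the pointwise quadratic form $s\lambda^2\varphi|\nabla w|^2 + s^3\lambda^4\varphi^3|w|^2$ after repeated integration by parts.

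Next I would carry out the integration by parts in $(L_1 w,L_2 w)$ term by term. This is the technical core of Carleman estimates: the uniform ellipticity \eqref{lopukh}, the convexity built into the weight through the factor $\lambda^2\mathbf{a}(\nabla\eta,\nabla\eta)$, and the property $\mu(0)=\mu(T)=0$ in \eqref{(2.1)} ensure that the temporal boundary traces at $t=0,T$ disappear thanks to the vanishing factor $e^{2s\alpha}$. Choosing $\lambda$ first sufficiently large to dominate the derivatives of the $a_{ij}$ in $R$, and then $s$ sufficiently large to absorb $\|Rw\|_{L^2(Q)}^2$ into the emerging positive terms, yields the desired lower bound modulo a spatial boundary contribution on $\partial\Omega\times(0,T)$ involving $w$, $\nabla w$ and $\partial_t w$.

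The main obstacle, and the point where the argument genuinely departs from the standard Dirichlet case, is the treatment of this spatial boundary term. On $\Gamma$ every trace of $w$ is simply moved to the right-hand side, producing the observation term $\int_{\Gamma\times(0,T)}(s\lambda\varphi|\nabla u|^2 + s^3\lambda^3\varphi^3|u|^2 + |\partial_t u|^2/(s\varphi))e^{2s\alpha}\,dS\,dt$. On $\partial\Omega\setminus\Gamma$, the choice of the weight function $\eta$ (with $\eta|_{\Gamma_1}=0$ and $\Gamma_1\supset\overline{\partial\Omega\setminus\Gamma}$) forces $\partial_{\nu_A}\alpha$ to have the sign favorable for absorbing the outward normal derivative of $w$. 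I would then exploit the conjugated Robin condition $\partial_{\nu_A}w - (p + s\,\partial_{\nu_A}\alpha)w = ge^{s\alpha}$ to replace $\partial_{\nu_A}w$ by tangential derivatives and zeroth-order terms of $w$, and perform an additional integration by parts in $t$ to transfer a time derivative from $w$ onto $g$, which is exactly what generates the boundary integral $\int_{(\partial\Omega\setminus\Gamma)\times(0,T)}(|\partial_t g|^2/(s^2\lambda^2\varphi^2) + |g|^2/\sqrt{s\varphi})e^{2s\alpha}\,dS\,dt$ on the right.

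Finally, to recover the second-order terms $\frac{1}{s\varphi}(|\partial_t u|^2 + \sum_{i,j}|\partial_i\partial_j u|^2)e^{2s\alpha}$ on the left-hand side, I would use the equation itself: once the first-order Carleman estimate is in place, substituting back into $\partial_t u - \sum a_{ij}\partial_i\partial_j u = F$ together with elliptic $H^2$-regularity for the stationary operator $\sum a_{ij}\partial_i\partial_j$ equipped with the Robin boundary condition yields control of the Hessian and of $\partial_t u$. This step is precisely what forces the $H^{1/2}(\partial\Omega)$-norm of $ge^{s\alpha}$ to appear on the right, since elliptic regularity with nonhomogeneous Robin data requires $g$ in $H^{1/2}$. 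Taking $s \ge s_0(\lambda)$ large enough so that all absorptions are legitimate completes the proof.
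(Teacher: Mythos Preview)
Your outline correctly reproduces the global architecture --- conjugation $w=ue^{s\alpha}$, the splitting $Pw=L_1w+L_2w+Rw$, expansion of $(L_1w,L_2w)$ by integration by parts, vanishing of the $t\in\{0,T\}$ traces because $e^{s\alpha}\to 0$, and recovery of $\partial_t u$ and the Hessian from the equation plus elliptic regularity --- and these steps match the paper. The gap is in your treatment of the spatial boundary term on $(\partial\Omega\setminus\Gamma)\times(0,T)$, which is the technical heart of the lemma.

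You claim that the sign of $\partial_{\nu_A}\alpha$ there is ``favorable for absorbing the outward normal derivative of $w$'', after which the conjugated Robin relation and an integration by parts in $t$ finish the job. This is an oversimplification that works for Dirichlet data but not for Robin. The boundary integrals emerging from $(L_1w,L_2w)$ contain, besides $\partial_{\nu_A}w$, the full quadratic form $\mbox{\bf a}(\nabla w,\nabla w)$; once one substitutes $\partial_{\nu_A}w=(p+s\lambda\varphi\,\mbox{\bf a}(\nu,\nabla\psi))w+ge^{s\alpha}$, the leading $s^3\lambda^3\varphi^3w^2$ contributions of favourable and unfavourable sign cancel exactly, and what survives are residual boundary traces of $w$ and cross terms $s^2\lambda^2\varphi^2\,w\,ge^{s\alpha}$ that neither the sign of $\partial_\nu\eta$ nor the Robin data alone can reduce to the right-hand side claimed in the lemma. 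The paper closes this with two devices you do not mention. First, after localising $u$ near $\Gamma_1$ by a partition of unity it introduces a \emph{second} conjugated function $\widetilde w=ue^{s\widetilde\alpha}$ built from the reflected phase $\widetilde\psi=-\psi$; since $\psi=0$ on $\Gamma_1$ one has $w=\widetilde w$ there while $\nabla w-\nabla\widetilde w=2s\lambda\varphi(\nabla\psi)w$, and \emph{adding} the Carleman identities for $w$ and $\widetilde w$ makes the boundary terms that are odd in $\nabla\psi$ cancel on $(\partial\Omega\setminus\Gamma)\times(0,T)$, so that only $\mbox{\bf a}(\nu,\nabla u)=pu+g$ and zeroth-order traces of $w$ survive (equations \eqref{(5.26)}--\eqref{(5.28)}). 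Second, the remaining zeroth-order trace $\epsilon\int s^{5/2}\varphi^{5/2}w^2$ is absorbed into the interior via a separate multiplier identity obtained by pairing $L_1w$ with $s^{3/2}\lambda^2\varphi^{3/2}w$ (inequality \eqref{(5.31)}). Without the reflected-weight cancellation and this trace-absorption step, your boundary analysis on the unobserved part does not close in the form stated.
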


Fixing $\lambda>0$ sufficiently large, we obtain Lemma \ref{1}
from Lemma \ref{6}. We recall that the bounds $M, M_0$ are defined by
\eqref{(2.2)}.

\begin{proof}[Proof of Lemma \ref{6}]

{\bf First Step}

We recall that $\Gamma_1$ is a relatively open subboundary of $\partial\Omega$ and
$\overline{\partial\Omega \setminus \Gamma} \subset \Gamma_1$.
Let $\mathcal U$ be a subdomain of $\Omega$ such that
$\mathcal{U}\cap \partial\Omega \subset \Gamma_1$.
Without loss of generality, we can assume that
$$
\mbox{supp} \,u \subset \mathcal U\times [0,T].
$$
Indeed, let $\mathcal U_1$ be an open set such that
$\Omega\subset \mathcal U\cup\mathcal U_1$ and $\overline{\mathcal U_1}\cap
\overline\Gamma =\emptyset$.  Let $e_1, e_2\in C^\infty_0(\mathbb{R}^n)$
be a partition of unity subject to the covering $\mathcal U_1,\mathcal U_2$.
Similarly to \cite{Hor}, it suffices to prove the Carleman estimate
Lemma \ref{6} for the functions $ue_1$ and $ue_2$.
The proof for the function $ue_2$ is simpler, since it does not require
consideration of the function $\widetilde w$, which is introduced below in
\eqref{im}, and follows directly from the inequality \eqref{(5.24)}
derived below.

We consider the operators
$$
 \widehat{L}(x,t,D)u = \partial_t u - \sum_{i,j=1}^n a_{ij}(x,t)\partial_i\partial_ju
$$
and
\begin{equation}\label{(5.1)}
\widehat{L}(x,t,D) u = \widetilde F \quad \mbox{in $Q$},
\end{equation}
where
\begin{equation}\label{(5.2)}
\widetilde{F}(x,t) = F(x,t) +  \sum_{i,j=1}^n (\partial_ia_{ij})(x,t) \partial_ju.
                      \end{equation}

We set $\widetilde \psi(x) = -\psi(x)$ in a neighborhood $\mathcal{U}$.
Using the function $\widetilde \psi$, we introduce the functions
$\widetilde\alpha$ and $\widetilde\varphi:$
\begin{equation}\label{(5.3)}
  \widetilde\alpha(x,t) = \frac{e^{\lambda\widetilde \psi(x)}
- e^{2\lambda ||\psi||_{C(\overline{\Omega})}}}{\mu(t)},
\quad \widetilde \varphi(x,t) = \frac{e^{\lambda\widetilde \psi(x)}}{\mu(t)}.
\end{equation}

We denote
\begin{equation}\label{im} w(x,t)=e^{s\alpha}u(x,t)\quad \mbox{and}
\quad \widetilde w(x,t) = e^{s\widetilde \alpha}u(x,t).\end{equation}

Then, we have
\begin{equation}\label{(5.4)}
w(\cdot,0)=w(\cdot,T) = \widetilde w(\cdot,0) = \widetilde w(\cdot,T)= 0
\quad \mbox{in } \Omega.
\end{equation}
We define the operators $P(x,t,D,s)$ and $\widetilde P(x,t,D,s)$ by
\begin{equation}\label{(5.5)}
P(x,t,D,s)w = e^{s\alpha} \widehat{L}(x,t,D) e^{-s\alpha} w,\quad
\widetilde{P}(x,t,D,s) w = e^{s\widetilde{\alpha}} \widehat{L}(x,t,D)
e^{-s\widetilde{\alpha}} w.             \end{equation}

It follows from \eqref{(5.1)} and \eqref{(5.2)} that
\begin{equation}\label{(5.6)}
P(x,t,D,s)w  = e^{s\alpha} \widehat{L}(x,t,D)(e^{-s\alpha} w)
= e^{s\alpha} \widetilde{F} \quad \mbox{in }  Q
\end{equation}
and
\begin{equation}\label{(5.7)}
\widetilde{P}(x,t,D,s)\widetilde{w}
= e^{s\widetilde{\alpha}} \widehat{L}(x,t,D)
e^{-s\widetilde{\alpha}} \widetilde{w}
= e^{s\widetilde{\alpha}} \widetilde{F} \quad \mbox{in }  Q.
\end{equation}

The operator $P$ can be written explicitly as follows
\begin{equation}\label{(5.8)}\begin{split}
P(x,t,D,s)w &= \partial_t w - \sum_{i,j=1}^n a_{ij}\partial_i\partial_jw
+ 2\lambda\varphi \sum_{i,j=1}^n a_{ij} (\partial_i\psi)\partial_jw
  + s\lambda^2 \varphi \mbox{\bf a}(\nabla\psi, \nabla\psi) w
 \\&\quad- s^2\lambda^2\varphi^2  \mbox{\bf a}( \nabla\psi, \nabla\psi) w
 + s\lambda \varphi w \sum_{i,j=1}^n a_{ij} \partial_i\partial_j\psi - s(\partial_t\alpha) w.
\end{split}     
\end{equation}
We introduce the operators $\widetilde{L}_k(x,t,D,s)$, $k=1,2$
as follows
\begin{equation}\label{(5.9)}
\begin{cases}
\widetilde L^2(x,t,D,s) w  = - \sum\limits_{i,j=1}^n a_{ij} \partial_i\partial_jw
- s^2\lambda^2\widetilde \varphi^2 \mbox{\bf a}(\nabla\widetilde\psi, \nabla\widetilde\psi) w
- s(\partial_t\widetilde \alpha) w, \\
\widetilde L_1(x,t,D,s) w  = \partial_t w + 2{{s}}\lambda\widetilde\varphi
\sum\limits_{i,j=1}^n a_{ij} (\partial_i\widetilde\psi) \partial_jw + 2\lambda^2 \widetilde \varphi
\mbox{\bf a}(\nabla\widetilde \psi, \nabla\widetilde \psi) w.
\end{cases}
\end{equation}
Then,
\begin{equation}\label{(5.10)}
L_1(x,t,D,s) w + L^2(x,t,D,s)w= H(x,t,\lambda,{s})\quad\mbox{in}\,\,Q,
\end{equation}
where
\begin{equation}\label{(5.11)}
H(x,t,\lambda,s) := \widetilde ge^{s\alpha}+2s\lambda^2 \varphi
\mbox{\bf a}( \nabla\psi, \nabla\psi) w - s\lambda \varphi w \sum_{i,j=1}^n a_{ij}
(\partial_i\partial_j\psi) w
\end{equation}
and
\begin{equation}\label{(5.12)}
\widetilde L_1(x,t,D,s) \widetilde w
+ \widetilde L^2(x,t,D,s)\widetilde w
= \widetilde{H}(x,t,\lambda,s) \quad\mbox{in}\,\,Q,
\end{equation}
with
\begin{equation}\label{(5.13)}
\widetilde{H}(x,t,\lambda,s) = \widetilde ge^{s\alpha}
+ 2s\lambda^2 \widetilde\varphi
\mbox{\bf a}(\nabla\widetilde\psi, \nabla\widetilde \psi) w
- s\lambda \widetilde\varphi\widetilde w
\sum_{i,j=1}^n a_{ij}(\partial_i\partial_j\widetilde  \psi)\widetilde w.
\end{equation}

Henceforth we set
$$
\Sigma:= \partial\Omega \times (0,T), \quad d\Sigma := dS \,dt.
$$

{\bf Second Step}

We will verify the following equality:
\begin{equation}\label{(5.14)}\begin{split}
(L^2 w, L_1 w)_{L^2 (Q)}
=& \int_Q L_1(x,t,D,s)w \sum_{i,j=1}^n (\partial_ja_{ij})(\partial_iw) \,dx\,dt \\
&+ \int_Q \biggl\{
 -\sum_{i,j=1}^n \frac 12 (\partial_t a_{ij})(\partial_iw)(\partial_jw)
+ \partial_t\left( \frac{s^2\lambda^2 \varphi^2}{2}
\mbox{\bf a}( \nabla\psi, \nabla\psi)\right) w^2\\
 & + \frac{s\partial_t^2\alpha}{2} w^2
+  s^3 \lambda^4\varphi^3 \mbox{\bf a}( \nabla\psi, \nabla\psi)^2 w^2
- 2s^2\lambda^2\varphi (\partial_t\alpha)+ \mbox{\bf a}( \nabla\psi, \nabla\psi) w^2\\
&+  s\lambda^2 \varphi \mbox{\bf a}( \nabla\psi, \nabla\psi)\mbox{\bf a}(\nabla w,\nabla w)
+  2s\lambda^2 w \sum_{i,j=1}^n a_{ij} (\partial_jw)
\partial_i(\varphi \mbox{\bf a}( \nabla\psi, \nabla\psi)) \\
&+ 2s\lambda^2 \varphi \mbox{\bf a}( \nabla\psi, \nabla w)^2
+ 2s\lambda \varphi \sum_{i,j=1}^n  a_{ij}\partial_iw
\left(\sum_{k,\ell=1}^n \partial_j(a_{k\ell}\partial_k\psi) \partial_\ell w \right)\\
&- s\lambda\varphi \sum_{k,\ell=1}^n a_{k\ell} (\partial_k\psi)
\left( \sum_{i,j=1}^n (\partial_\ell a_{ij})(\partial_i w) \partial_jw\right)
\\
&-  s\lambda\varphi \sum_{k,\ell=1}^n a_{k\ell} (\partial_k\psi)
 \left( \sum_{i,j=1}^n (\partial_\ell a_{ij})(\partial_iw)\partial_jw\right) \\
 &- \mbox{\bf a}( \nabla w, \nabla w) s\lambda\varphi \sum_{k,\ell=1}^n
\partial_\ell(a_{k\ell}\partial_k\psi) \biggr\} \,dx\,dt   \\
&+  \int_\Sigma (2s\lambda\varphi \mbox{\bf a}(\nu,\nabla w)\mbox{\bf a}(\nabla\psi,\nabla w)
 - s\lambda\varphi \mbox{\bf a}( \nabla w, \nabla w)\mbox{\bf a}( \nu,\nabla \psi))
 \,d\Sigma                                             \\
&- \int_\Sigma (2s^3\lambda^3\varphi^3 \mbox{\bf a}( \nabla\psi, \nabla\psi)
\mbox{\bf a}( \nabla\psi, \nu)
  + 2s^2 \lambda\varphi (\partial_t\alpha)
\mbox{\bf a}( \nabla\psi, \nu))w^2 \,d\Sigma                           \\
&- \int_\Sigma \mbox{\bf a}( \nu,\nabla w)(\partial_t w
+ 2s\lambda^2\varphi \mbox{\bf a}( \nabla\psi, \nabla\psi) w)\,d\Sigma.
\end{split}
\end{equation}

\begin{proof}[Proof of \eqref{(5.14)}]
By \eqref{(5.9)}, we have the following equality:
\begin{equation}\label{(5.15)}\begin{split}
&(L^2 w, L_1 w)_{L^2 (Q)}
\\=
&- \int_Q \left( \sum_{i,j=1}^n a_{ij} \partial_i\partial_jw - s^2\lambda^2\varphi^2
\mbox{\bf a}( \nabla\psi, \nabla\psi) w - s(\partial_t\alpha) w\right)\\
&\times \left( \partial_tw + 2s\lambda^2
\varphi \mbox{\bf a}( \nabla\psi, \nabla\psi) w\right) \,dx\,dt\\
&-  \int_Q(2s^3\lambda^3\varphi^3  \mbox{\bf a}( \nabla\psi, \nabla\psi)w
+ 2s^2 \lambda\varphi (\partial_t\alpha) w) \mbox{\bf a}( \nabla\psi, \nabla w) \,dx\,dt \\
 &-  \int_Q \left( \sum_{i,j=1}^n a_{ij}\partial_i\partial_j w\right)
2s\lambda\varphi \mbox{\bf a}( \nabla\psi, \nabla w) \,dx\,dt\\
=: & A_1 + A_2 + A_3.
\end{split}
\end{equation}

Now we calculate $A_1, A_2, A_3$.

{\bf Calculations of $A_1$}

Integrating by parts the first term on the right-hand-side, we obtain
\begin{equation}\label{(5.16)}\begin{split}
A_1  =&  \int_Q \left( -\sum_{i,j=1}^n a_{ij} \partial_i\partial_jw -
s^2\lambda^2\varphi^2 \mbox{\bf a}( \nabla\psi, \nabla\psi) w
- s(\partial_t\alpha) w\right)\\
&\times  (\partial_tw + 2s\lambda^2\varphi a(\nabla\psi,
\nabla\psi) w) \,dx\,dt \\
= &\int_Q \biggl\{ \partial_t w \sum_{i,j=1}^n (\partial_ja_{ij})\partial_iw
+ \sum_{i,j=1}^n a_{ij} (\partial_i w)(\partial_j\partial_t w) \\
&-  \frac{s^2\lambda^2\varphi^2}{2}\mbox{\bf a}( \nabla\psi, \nabla\psi) \partial_t (w^2)
- \frac{s\partial_t\alpha}{2}  \partial_t(w^2)
 - 2s^3\lambda^4 \varphi^3 \mbox{\bf a}( \nabla\psi, \nabla\psi)^2 w^2  \\
&- 2s^2\lambda^2 \varphi (\partial_t\alpha) \mbox{\bf a}( \nabla\psi, \nabla\psi) w^2
 + 2s\lambda^2\varphi \mbox{\bf a}( \nabla\psi, \nabla\psi) w
\sum_{i,j=1}^n (\partial_ja_{ij}) \partial_iw  \\
&+  2s\lambda^2 \varphi \mbox{\bf a}( \nabla\psi, \nabla\psi)
\mbox{\bf a}( \nabla w, \nabla w)
 + 2s\lambda^2 w \sum_{i,j=1}^n a_{ij} (\partial_jw)\partial_i
(\varphi \mbox{\bf a}( \nabla\psi, \nabla\psi)) \biggr\} \,dx \,dt \\
&- \int_\Sigma \mbox{\bf a}(\nu,\nabla w)(\partial_t w + 2s\lambda^2
\varphi \mbox{\bf a}( \nabla\psi, \nabla\psi) w)\,d\Sigma\\\
 =& \int_Q \biggl\{ \partial_t w \sum_{i,j=1}^n (\partial_ja_{ij})\partial_iw \\
&+ \frac 12\partial_t( \sum_{i,j=1}^n a_{ij} (\partial_iw)\partial_jw)
- \sum_{i,j=1}^n \frac{1}{2}(\partial_t a_{ij})(\partial_iw)\partial_jw
- \frac{s^2\lambda^2\varphi^2}{2}
\mbox{\bf a}( \nabla\psi, \nabla\psi) \partial_t (| w|^2)\\
&- \frac{s\partial_t\alpha}{2}  \partial_t | w|^2
 - 2s^3 \lambda^4 \varphi^3 \mbox{\bf a}( \nabla\psi, \nabla\psi)^2 w^2  \\
&-  2s^2\lambda^2 \varphi (\partial_t\alpha) \mbox{\bf a}( \nabla\psi, \nabla\psi) w^2
 + 2s\lambda^2\varphi \mbox{\bf a}( \nabla\psi, \nabla\psi) w
\sum_{i,j=1}^n (\partial_ja_{ij})\partial_jw  \\
&+ 2s\lambda^2 \varphi \mbox{\bf a}( \nabla\psi, \nabla\psi)\mbox{\bf a}(\nabla w, \nabla w)
+ 2s\lambda^2 w \sum_{i,j=1}^n a_{ij} (\partial_jw)
\partial_i(\varphi \mbox{\bf a}( \nabla\psi, \nabla\psi))\biggr\} \,dx \,dt\\
&-  \int_\Sigma \mbox{\bf a}(\nu,\nabla w)(\partial_t w + 2s\lambda^2\varphi
\mbox{\bf a}( \nabla\psi, \nabla\psi) w)\,d\Sigma.
\end{split}
\end{equation}
Integrating this equality by parts with respect to $t$, we obtain \smallskip
\begin{equation}\label{(5.17)}\begin{split}
 A_1 =&
\int_Q \biggl\{ \partial_tw\sum_{i,j=1}^n \partial_ja_{ij}
- \frac{s^2\lambda^2\varphi^2}{2}\mbox{\bf a}( \nabla\psi, \nabla\psi)
\partial_t (| w|^2)\\
  &+ \frac{s\partial^2_t\alpha}{2}  w^2
 - 2s^3\lambda^4 \varphi^3 \mbox{\bf a}( \nabla\psi, \nabla\psi)^2 w^2\\
&-  2s^2\lambda^2 \varphi (\partial_t\alpha) \mbox{\bf a}( \nabla\psi, \nabla\psi) w^2
 + 2s\lambda^2\varphi \mbox{\bf a}( \nabla\psi, \nabla\psi) w
\sum_{i,j=1}^n (\partial_ja_{ij}) \partial_iw                      \\
&+  2s\lambda^2 \varphi \mbox{\bf a}( \nabla\psi, \nabla\psi)
\mbox{\bf a}( \nabla w, \nabla w)\\
& + 2s\lambda^2 w \sum_{i,j=1}^n a_{ij} (\partial_jw)\partial_i
(\varphi \mbox{\bf a}( \nabla\psi, \nabla\psi))\biggr\} \,dx \,dt
\\
&-  \int_\Sigma \mbox{\bf a}(\nu,\nabla w)(\partial_t w
+ 2s\lambda^2\varphi \mbox{\bf a}( \nabla\psi, \nabla\psi) w)\,d\Sigma.
\end{split}               \end{equation}

{\bf Calculation of $A_2$}
\begin{equation}\label{(5.18)}\begin{split}
A_2 = &- \int_Q (2s^3\lambda^3\varphi^3 w \mbox{\bf a}( \nabla\psi, \nabla\psi)
\mbox{\bf a}( \nabla\psi, \nabla w)
+ 2s^2 \lambda\varphi (\partial_t\alpha) w
\mbox{\bf a}( \nabla\psi, \nabla w)) \,dx\,dt
\\
= &  -\int_Q (s^3\lambda^3\varphi^3 \mbox{\bf a}(\nabla\psi,\nabla\psi)
\mbox{\bf a}( \nabla\psi, \nabla (w^2) )
+ s^2\lambda\varphi (\partial_t \alpha)\mbox{\bf a}( \nabla\psi, \nabla (w^2))) \,dx\,dt
\\
=&\int_Q \biggl\{ 3s^3\lambda^4 \varphi^3 \mbox{\bf a}( \nabla\psi, \nabla\psi)^2 w^2
 + s^3\lambda^3\varphi^3 w^2 \sum_{i,j=1}^n \partial_i
(a_{ij} (\partial_j\psi) \mbox{\bf a}( \nabla\psi, \nabla\psi))
\\
 &+ \sum_{i,j=1}^n \partial_j \left(\frac{s^2 \lambda^2\varphi (\partial_t\alpha)}{2}
a_{ij}\partial_i\psi \right) w^2 \biggr\} \,dx\,dt
\\
&- \int_\Sigma(2s^3\lambda^3 \varphi^3 \mbox{\bf a}( \nabla\psi, \nabla\psi)
\mbox{\bf a}( \nabla\psi, \nu)
  + 2s^2 \lambda\varphi (\partial_t\alpha) \mbox{\bf a}( \nabla\psi, \nu))w^2
\,d\Sigma.
\end{split}
\end{equation}

{\bf Calculation of $A_3$}
\begin{equation}\begin{split}
 A_3  =& \int_Q - \left(\sum_{i,j=1}^n a_{ij}\partial_i\partial_j w\right)
\left( 2s\lambda\varphi \sum_{k,\ell=1}^n a_{k\ell}(\partial_k\psi)
(\partial_\ell w)\right) \,dx\,dt\\
 =& \int_Q \biggl\{
\sum_{i,j=1}^n (\partial_ja_{ij})(\partial_iw) 2s\lambda\varphi \sum_{k,\ell=1}^n
a_{k\ell} (\partial_k\psi)(\partial_\ell w)
 + 2s\lambda^2 \varphi \mbox{\bf a}( \nabla\psi, \nabla w)^2  \\
 &+  2s\lambda\varphi (\sum_{i,j=1}^n a_{ij} (\partial_iw))
\left( \sum_{k,\ell=1}^n \partial_j(a_{k\ell}\partial_k\psi )\partial_\ell w\right) \\
 &+ 2s\lambda\varphi \sum_{i,j=1}^n a_{ij} \partial_{i} w
\sum_{k,\ell=1}^n a_{k\ell}(\partial_k\psi)(\partial_j\partial_\ell w) \biggr\} \,dx\,dt
\\&+  \int_\Sigma 2s\lambda\varphi
\mbox{\bf a}(\nu,\nabla w)\mbox{\bf a}(\nabla\psi,\nabla w) \,d\Sigma     \\
 = & \int_Q \biggl\{
 \sum_{i,j=1}^n (\partial_j a_{ij})(\partial_iw) 2s\lambda\varphi
\sum_{k,\ell=1}^n a_{k\ell} (\partial_k\psi)\partial_\ell w
+ 2s\lambda^2 \varphi \mbox{\bf a}( \nabla\psi, \nabla w)^2  \\
 &+  2s\lambda\varphi \sum_{i,j=1}^n a_{ij} (\partial_i w)
\left(  \sum_{k, \ell =1}^n \partial_j (a_{k\ell} (\partial_k\psi)
\partial_\ell w \right)\\
&- s\lambda\varphi
\sum_{k, \ell = 1}^n a_{k\ell} (\partial_k\psi)
\left( \sum_{i,j=1}^n (\partial_\ell a_{ij})(\partial_i w)(\partial_j w)\right)
\\
&+ s\lambda \varphi \sum_{k,\ell = 1}^n a_{k\ell}(\partial_k\psi)
\partial_{\ell} ( \sum_{i,j=1}^n a_{ij}(\partial_iw)\partial_jw )\biggr\} \,dx\,dt
\\&+  \int_\Sigma  2s\lambda\varphi
\mbox{\bf a}(\nu,\nabla w)\mbox{\bf a}(\nabla\psi,\nabla w) \,d\Sigma.  \nonumber
\end{split}
\end{equation}
Integrating by parts once again, we obtain
\begin{equation}\label{(5.19)}\begin{split}
A_3 =& \int_Q \biggl\{ \sum_{i,j=1}^n (\partial_j a_{ij})(\partial_iw)
2s\lambda\varphi \sum_{k,\ell=1}^n a_{k\ell} (\partial_k\psi)
\partial_{\ell}w + 2s\lambda^2 \varphi
 \mbox{\bf a}( \nabla\psi, \nabla w)^2\\
&+ 2s\lambda \varphi \sum_{i,j=1}^n a_{ij} (\partial_iw)
\left( \sum_{k,\ell=1}^n \partial_j (a_{k\ell}\partial_k\psi)
\partial_\ell w\right)\\
&- s\lambda\varphi \sum_{k,\ell=1}^n a_{k\ell} (\partial_k\psi)
\left( \sum_{i,j=1}^n (\partial_\ell a_{ij})(\partial_i w)\partial_jw\right)
                          \\
&-  s\lambda^2 \varphi \mbox{\bf a}( \nabla\psi, \nabla\psi)\mbox{\bf a}( \nabla w, \nabla w)
 - s\lambda\varphi \sum_{k,\ell=1}^n a_{k\ell}(\partial_k\psi)
 \left( \sum_{i,j=1}^n (\partial_\ell a_{ij})(\partial_i w) \partial_jw \right)
\\
 &-  \mbox{\bf a}( \nabla w, \nabla w) s\lambda\varphi \sum_{k,\ell=1}^n
\partial_\ell (a_{k\ell}\partial_k\psi)\biggr\} \,dx\,dt             \\
&+  \int_\Sigma(2s\lambda\varphi \mbox{\bf a}(\nu,\nabla w)\mbox{\bf a}(\nabla\psi,\nabla w)
- s\lambda\varphi \mbox{\bf a}( \nabla w, \nabla w)
\mbox{\bf a}( \nu, \nabla\psi)) \,d\Sigma.
\end{split}
\end{equation}

Taking the sum of \eqref{(5.17)} - \eqref{(5.19)}, we obtain
\begin{equation}\label{(5.20)}\begin{split}
&(L^2 w, L_1 w)_{L^2 (Q)}
\\=& \int_Q \biggl\{ \partial_t w \sum_{i,j=1}^n (\partial_ja_{ij})\partial_jw
- \sum_{j=1}^n \frac12(\partial_t a_{ij})(\partial_iw)\partial_jw    \\
&+ \partial_t\left( \frac{s^2\lambda^2\varphi^2}{2}
\mbox{\bf a}( \nabla\psi, \nabla\psi)\right) w^2
  +  \frac{s\partial^2_t\alpha}{2}  w^2
- 2s^3\lambda^4 \varphi^3 \mbox{\bf a}( \nabla\psi, \nabla\psi)^2 w^2   \\
&-  2s^2\lambda^2\varphi (\partial_t\alpha) \mbox{\bf a}( \nabla\psi, \nabla\psi) w^2
 + 2s\lambda^2 \varphi \mbox{\bf a}( \nabla\psi, \nabla\psi) w
\sum_{i,j=1}^n (\partial_ja_{ij})(\partial_i w)      \\
&+  2s\lambda^2 \varphi \mbox{\bf a}( \nabla\psi, \nabla\psi)\mbox{\bf a}(\nabla w, \nabla w)
+ 2s\lambda^2 w \sum_{i,j=1}^n a_{ij} (\partial_j w)
\partial_i( \varphi \mbox{\bf a}( \nabla\psi, \nabla\psi))   \\
&+  \sum_{i,j=1}^n (\partial_j a_{ij})(\partial_iw) 2s\lambda\varphi
\sum_{k,\ell=1}^n a_{k\ell}(\partial_k \psi) \partial_{\ell} w
 + 2s\lambda^2 \varphi \mbox{\bf a}( \nabla\psi, \nabla (w^2))       \\
&+ 2s\lambda \varphi \sum_{i,j=1}^n  a_{ij} (\partial_i w)
\left( \sum_{k,\ell=1}^n \partial_j (a_{k\ell} \partial_k\psi)\partial_\ell w
\right)\\
&- s\lambda\varphi \sum_{k,\ell=1}^n
a_{k\ell} (\partial_k\psi) \left(\sum_{i,j=1}^n (\partial_\ell a_{ij})(\partial_i w)
\partial_j w \right)
 -  s\lambda^2 \varphi \mbox{\bf a}( \nabla\psi, \nabla\psi)
\mbox{\bf a}( \nabla w, \nabla w)\\
&- s\lambda\varphi \sum_{k,\ell=1}^n a_{k\ell} (\partial_k\psi)
\left( \sum_{i,j=1}^n (\partial_\ell a_{ij})(\partial_i w)
\partial_j w \right)                  \\
&- a( \nabla w, \nabla w) s\lambda\varphi \sum_{k,\ell=1}^n
\partial_\ell(a_{k\ell}\partial_k\psi) \biggr\} \,dx\,dt        \\
&+  \int_\Sigma (2s\lambda\varphi \mbox{\bf a}(\nu,\nabla w)
\mbox{\bf a}(\nabla\psi,\nabla w)
- s\lambda\varphi \mbox{\bf a}( \nabla w, \nabla w)\mbox{\bf a}( \nu, \nabla\psi)) \,d\Sigma
                                                               \\
&- \int_\Sigma(2s^3\lambda^3 \varphi^3 \mbox{\bf a}( \nabla\psi, \nabla\psi)
\mbox{\bf a}( \nabla\psi, \nu)
+ 2s^2 \lambda\varphi (\partial_t\alpha) \mbox{\bf a}( \nabla\psi, \nu))w^2
\,d\Sigma                                                      \\
&- \int_\Sigma \mbox{\bf a}(\nu,\nabla w)(\partial_t w
+ 2s\lambda^2\varphi \mbox{\bf a}( \nabla\psi, \nabla\psi) w)\,d\Sigma.
\end{split}
\end{equation}
Finally we observe that
\begin{multline}
 \partial_t w \left(\sum_{i,j=1}^n (\partial_j a_{ij})\partial_i w\right)
+ 2s\lambda^2 \varphi \mbox{\bf a}( \nabla\psi, \nabla\psi) w
\sum_{i,j=1}^n (\partial_j a_{ij}) \partial_i w \nonumber           \\
+  2s\lambda^2 \varphi \mbox{\bf a}( \nabla\psi, \nabla\psi) w
\sum_{i,j=1}^n (\partial_j a_{ij}) \partial_i w
= L_1(x,t,D) w\, \sum_{i,j=1}^n (\partial_j a_{ij}) \partial_iw.
\end{multline}
Thus the proof of \eqref{(5.14)} is complete.
\end{proof}

Similarly to \eqref{(5.14)}, we can readily verify
\begin{equation}\label{(5.21)}\begin{split}
 &(\widetilde L^2\widetilde  w,\, \widetilde  L_1\widetilde  w)_{L^2 (Q)}
\\= &\int_Q \widetilde L_1(x,t,D,s)\widetilde w
\sum_{i,j=1}^n (\partial_j a_{ij}) \partial_i\widetilde w\, \,dx\,dt \\
&+ \int_Q \biggl\{ -\sum_{i,j=1}^n \frac 12 (\partial_t a_{ij})(\partial_i\widetilde w)
\partial_j\widetilde w
+ \partial_t\left( \frac{s^2\lambda^2\varphi^2}{2}
\mbox{\bf a}( \nabla\psi, \nabla\psi)\right) \widetilde w^2
  + \frac{1}{2} s(\partial_t^2\alpha) \widetilde  w^2\\
&+ s^3\lambda^4 \varphi^3 \mbox{\bf a}( \nabla\psi, \nabla\psi)^2\widetilde w^2
- 2s^2\lambda^2\varphi (\partial_t \alpha) \mbox{\bf a}( \nabla\psi, \nabla\psi)
\widetilde w^2\\
&+ s\lambda^2 \varphi \mbox{\bf a}( \nabla\psi, \nabla\psi)
\mbox{\bf a}( \nabla\widetilde w, \nabla\widetilde  w)
 +  2s\lambda^2\widetilde w \left(\sum_{i,j=1}^n a_{ij} (\partial_j\widetilde w)
\partial_i (\varphi \mbox{\bf a}( \nabla\psi, \nabla\psi))\right)\\
&+ 2s\lambda^2 \varphi \mbox{\bf a}( \nabla\psi, \nabla \widetilde w)^2
+  2s\lambda \varphi \sum_{i,j=1}^n a_{ij} (\partial_i\widetilde w)
\left(\sum_{k,\ell=1}^n \partial_j (a_{k\ell} (\partial_k\psi))\partial_\ell
\widetilde w \right)\\
&- s\lambda\varphi \sum_{k,\ell=1}^n
a_{k\ell} (\partial_k\psi)\left( \sum_{i,j=1}^n (\partial_\ell a_{ij})
(\partial_i\widetilde  w) \partial_j\widetilde w \right)
 - s\lambda\varphi \sum_{k,\ell=1}^n a_{k\ell} (\partial_k\psi)\times \\
&\times
\left( \sum_{i,j=1}^n (\partial_\ell a_{ij})(\partial_i\widetilde w)
\partial_j\widetilde w \right)
- s\lambda\varphi \mbox{\bf a}( \nabla\widetilde w, \nabla\widetilde w)
\sum_{k,\ell=1}^n \partial_\ell (a_{k\ell}\partial_k\psi) \biggr\} \,dx\,dt \\
 &- \int_\Sigma ( 2s\lambda\varphi
\mbox{\bf a}(\nu,\nabla \widetilde w)\mbox{\bf a}(\nabla\psi,\nabla \widetilde w)
- s\varphi\lambda\varphi  \mbox{\bf a}( \nabla\widetilde  w, \nabla \widetilde w)
\mbox{\bf a}( \nu,\nabla \psi)) \,d\Sigma            \\
&+  \int_\Sigma(2s^3\lambda^3 \varphi^3 \mbox{\bf a}( \nabla\psi, \nabla\psi)
\mbox{\bf a}( \nabla\psi, \nu) + 2s^2 \lambda\varphi(\partial_t\alpha)
\mbox{\bf a}( \nabla\psi, \nu))\widetilde w^2 \,d\Sigma
\\
&- \int_\Sigma \mbox{\bf a}(\nu,\nabla \widetilde w)
(\partial_t \widetilde w + 2s\lambda^2 \varphi \mbox{\bf a}(\nabla\psi, \nabla\psi)
\widetilde w)\,d\Sigma.\end{split}
\end{equation}

{\bf Third Step: completion of the proof of Lemma \ref{6}}

Taking the $L^2$ norms of both sides of the equations \eqref{(5.10)} and
\eqref{(5.12)},
we have
$$
\Vert L_1w\Vert^2_{L^2(Q)} + 2(L_1w,\, L^2w)_{L^2(Q)}
+ \Vert L^2w\Vert^2_{L^2(Q)} = \Vert H\Vert^2_{L^2(Q)}
$$
and
$$
\Vert \widetilde L_1\widetilde w\Vert^2_{L^2(Q)}
+ 2(\widetilde L_1\widetilde w,\, \widetilde L^2\widetilde w)_{L^2(Q)}
+ \Vert \widetilde L^2\widetilde w\Vert^2_{L^2(Q)}
= \Vert \widetilde{H}\Vert^2_{L^2(Q)}.
$$
We take the parameter $\lambda$ sufficiently large, so that
\begin{equation}\label{(5.22)}\begin{split}
 &(L_1 w, \, L^2 w)_{L^2 (Q)}
\\\ge& \int_Q L_1(x,t,D,s)w \sum_{i,j=1}^n (\partial_j a_{ij}) \partial_i w\, \,dx\,dt
                                                      \\
&+  \frac 14 \int_Q( s\lambda^2 \varphi
\mbox{\bf a}( \nabla\psi, \nabla\psi) \mbox{\bf a}( \nabla w, \nabla w)
+ s^3 \lambda^4\varphi^3 \mbox{\bf a}( \nabla\psi, \nabla\psi)^2 w^2) \,dx\,dt
                                                 \\
&+   \int_\Sigma ( 2s\lambda\varphi \mbox{\bf a}(\nu,\nabla w)
\mbox{\bf a}(\nabla\psi,\nabla w)
- s\lambda\varphi \mbox{\bf a}( \nabla w, \nabla w) \mbox{\bf a}( \nu,\nabla \psi)) \,d\Sigma
\\
&-  \int_\Sigma (2s^3\lambda^3 \varphi^3 \mbox{\bf a}( \nabla\psi, \nabla\psi)
\mbox{\bf a}( \nabla\psi, \nu)
+ 2s^2 \lambda\varphi (\partial_t\alpha) \mbox{\bf a}( \nabla\psi, \nu))w^2 \,d\Sigma
\\
&- \int_\Sigma \mbox{\bf a}(\nu,\nabla w)(\partial_t w
+ 2s\lambda^2\varphi \mbox{\bf a}( \nabla\psi, \nabla\psi) w)\,d\Sigma.
\end{split}
\end{equation}
Using \eqref{(5.14)}, we have
\begin{equation}\label{(5.23)}\begin{split}
 &\sum_{k=1}^2\Vert L_kw\Vert^2_{L^2(Q)}
+ 2\int_Q L_1(x,t,D,s)w \sum_{i,j=1}^n (\partial_j a_{ij})\partial_iw\, \,dx\,dt
                                                         \\
&+ \frac 12 \int_Q( s\lambda^2 \varphi
\mbox{\bf a}( \nabla\psi, \nabla\psi) \mbox{\bf a}( \nabla w, \nabla w)
+ s\lambda^4\varphi^3 \mbox{\bf a}( \nabla\psi, \nabla\psi)^2 w^2)\,dx\,dt   \\
&+  \int_\Sigma (2s\lambda\varphi \mbox{\bf a}(\nu,\nabla w)\mbox{\bf a}(\nabla\psi,\nabla w)
- s\lambda\varphi \mbox{\bf a}( \nabla w, \nabla w)
\mbox{\bf a}( \nu,\nabla \psi)) \,d\Sigma     \\
&-  \int_\Sigma(2s^3\lambda^3 \varphi^3 \mbox{\bf a}( \nabla\psi, \nabla\psi)
\mbox{\bf a}( \nabla\psi, \nu) + 2s^2 \lambda \varphi (\partial_t\alpha)
\mbox{\bf a}( \nabla\psi, \nu))w^2 \,d\Sigma                      \\
&- \int_\Sigma \mbox{\bf a}(\nu,\nabla w)(\partial_t w
+ 2s\lambda^2\varphi \mbox{\bf a}( \nabla\psi, \nabla\psi) w)\,d\Sigma
\le \Vert H \Vert^2_{L^2(Q)}.
\end{split}
\end{equation}
This inequality implies
\begin{equation}\label{(5.24)}\begin{split}
 &\frac 14 \sum_{k=1}^2\Vert L_kw\Vert^2_{L^2(Q)}\\
&+ \frac 14 \int_Q ( s\lambda^2 \varphi \mbox{\bf a}( \nabla\psi, \nabla\psi)
\mbox{\bf a}( \nabla w, \nabla w)
+ s^3 \lambda^4\varphi^3 \mbox{\bf a}( \nabla\psi, \nabla\psi)^2 w^2)\,dx\,dt      \\
&+  \int_\Sigma(2s\lambda\varphi \mbox{\bf a}(\nu,\nabla w)\mbox{\bf a}(\nabla\psi,\nabla w)
 - s\lambda\varphi \mbox{\bf a}( \nabla w, \nabla w)\mbox{\bf a}( \nu,\nabla \psi)) \,d\Sigma
\\
&-  \int_\Sigma(2s^3\lambda^3\varphi^3 \mbox{\bf a}( \nabla\psi, \nabla\psi)
\mbox{\bf a}( \nabla\psi,\nu)
+ 2s^2 \lambda\varphi (\partial_t\alpha) \mbox{\bf a}( \nabla\psi, \nu))w^2 \,d\Sigma
\\
&- \int_\Sigma \mbox{\bf a}(\nu,\nabla w)(\partial_t w
+ 2s\lambda^2 \varphi \mbox{\bf a}( \nabla\psi, \nabla\psi) w)\,d\Sigma
\le \Vert H \Vert^2_{L^2(Q)}.
\end{split}
\end{equation}
Similarly, using \eqref{(5.21)}, we obtain
\begin{equation}\label{(5.25)}\begin{split}
 &\frac 14 \sum_{k=1}^2\Vert \widetilde L_k\widetilde w\Vert^2_{L^2(Q)}
                                                             \\
&+  \frac 14 \int_Q( s\lambda^2 \varphi \mbox{\bf a}( \nabla\psi, \nabla\psi)
\mbox{\bf a}( \nabla \widetilde w, \nabla\widetilde  w)
+ s^3 \lambda^4\varphi^3 \mbox{\bf a}( \nabla\psi, \nabla\psi)^2\widetilde w^2)\,dx\,dt
                                                               \\
&-  \int_\Sigma (2s\lambda\varphi \mbox{\bf a}(\nu,\nabla\widetilde w)
\mbox{\bf a}(\nabla\psi,\nabla\widetilde  w)
- s\lambda\varphi \mbox{\bf a}( \nabla\widetilde w, \nabla\widetilde  w)
\mbox{\bf a}( \nu,\nabla \psi)) \,d\Sigma                            \\
&+  \int_\Sigma(2s^3\lambda^3\varphi^3 \mbox{\bf a}(\nabla\psi, \nabla\psi)
\mbox{\bf a}( \nabla\psi, \nu)
+ 2s^2 \lambda\varphi(\partial_t\alpha) \mbox{\bf a}( \nabla\psi, \nu))\widetilde w^2
\,d\Sigma
\\
&-\int_\Sigma \mbox{\bf a}(\nu,\nabla\widetilde  w)(\partial_t\widetilde  w
+ 2s\lambda^2 \varphi \mbox{\bf a}( \nabla\psi, \nabla\psi) \widetilde w)\,d\Sigma
\le \Vert \widetilde{H}\Vert^2_{L^2(Q)}.
\end{split}
\end{equation}
We set
\begin{equation}\label{(5.26)}\begin{split}
I :=& \int_{(\partial\Omega\setminus \Gamma)\times (0,T)}
(2s\lambda\varphi \mbox{\bf a}(\nu,\nabla w)
\mbox{\bf a}(\nabla\psi,\nabla w)
- s\lambda\varphi \mbox{\bf a}(\nabla w, \nabla w)
\mbox{\bf a}( \nu,\nabla \psi)) \,d\Sigma        \\
&- \int_{(\partial\Omega\setminus \Gamma)\times (0,T)}
(2s^3\lambda^3\varphi^3 \mbox{\bf a}( \nabla\psi, \nabla\psi)
\mbox{\bf a}(\nabla\psi, \nu) + 2s^2 \lambda\varphi (\partial_t\alpha)
\mbox{\bf a}(\nabla\psi, \nu))w^2 \,d\Sigma                      \\
&-  \int_{(\partial\Omega\setminus \Gamma)\times (0,T)}
\mbox{\bf a}(\nu,\nabla w)(\partial_t w
+ 2s\lambda^2\varphi \mbox{\bf a}(\nabla\psi, \nabla\psi) w)\,d\Sigma   \\
&- \int_{\Sigma}
(2s\lambda\varphi \mbox{\bf a}(\nu,\nabla\widetilde w)
\mbox{\bf a}(\nabla\psi,\nabla\widetilde w)
- s\lambda\varphi\mbox{\bf a}( \nabla \widetilde w, \nabla\widetilde w)
\mbox{\bf a}(\nu,\nabla \psi)) \,d\Sigma                \\
&+  \int_{\Sigma}
(2s^3\lambda^3\varphi^3 \mbox{\bf a}( \nabla\psi, \nabla\psi)
\mbox{\bf a}( \nabla\psi, \nu) + 2s^2 \lambda\varphi (\partial_t\alpha)
\mbox{\bf a}(\nabla\psi, \nu))\widetilde w^2 \,d\Sigma
\\
&- \int_{\Sigma} \mbox{\bf a}( \nu,\nabla\widetilde w)
(\partial_t\widetilde w + 2s\lambda^2\varphi \mbox{\bf a}( \nabla\psi, \nabla\psi)
\widetilde w)\,d\Sigma.
\end{split}
\end{equation}
We observe that on  $(\partial\Omega \setminus \Gamma) \times (0,T)$ we have
$$
\nabla w= (\nabla u)e^{s\alpha} + s\lambda \varphi (\nabla \psi) ue^{s\alpha},
\quad
\nabla \widetilde w= (\nabla u)e^{s\alpha} - s\lambda \varphi (\nabla \psi)
u e^{s\alpha}.
$$
Therefore, we rewrite \eqref{(5.26)} as
\begin{equation}\label{(5.27)}\begin{split}
 I = &\int_{(\partial\Omega\setminus \Gamma)\times (0,T)}
\{ s^2\lambda^2\varphi^2
(2\mbox{\bf a}(\nu,\nabla u)\mbox{\bf a}(\nabla\psi, \nabla \psi)
+ 2\mbox{\bf a}(\nu,\nabla \psi)\mbox{\bf a}(\nabla\psi,\nabla u) \\
&- 2s \mbox{\bf a}(\nabla\psi,\nabla u) \mbox{\bf a}( \nu,\nabla \psi)u\} e^{2s\varphi} \,d\Sigma
\\
&- \int_{(\partial\Omega\setminus \Gamma)\times (0,T)}
\mbox{\bf a}(\nu,\,(\nabla u)e^{s\alpha}+s\lambda\varphi(\nabla \psi)ue^{s\alpha})
(\partial_t w + 2s\lambda^2\varphi \mbox{\bf a}( \nabla\psi, \nabla\psi) w)\,d\Sigma
\\
&- \int_{(\partial\Omega\setminus \Gamma)\times (0,T)}
\mbox{\bf a}(\nu,(\nabla u)e^{s\alpha}-s\lambda\varphi(\nabla \psi)ue^{s\alpha})(\partial_t w + 2s\lambda^2\varphi \mbox{\bf a}( \nabla\psi, \nabla\psi) w)\,d\Sigma
\\
= & \int_{(\partial\Omega\setminus \Gamma)\times (0,T)}
2s^2\lambda^2\varphi^2 \mbox{\bf a}(\nu,\nabla u)\mbox{\bf a}(\nabla\psi, \nabla \psi)
ue^{2s\alpha} \,d\Sigma
\\
&- 2\int_{(\partial\Omega\setminus \Gamma)\times (0,T)}
\mbox{\bf a}(\nu,(\nabla u)e^{s\alpha})(\partial_t w
+ 2s\lambda^2\varphi \mbox{\bf a}( \nabla\psi, \nabla\psi) w)\,d\Sigma.
\end{split}
\end{equation}
Since the Robin boundary condition implies
\[
\partial_{\nu_A}u-p(x,t)u = g \quad \mbox{on $(\partial\Omega\setminus \Gamma)
\times (0,T)$},
\]
by \eqref{(5.27)}, we obtain
\begin{equation}\begin{split}\nonumber
 I =& \int_{(\partial\Omega\setminus \Gamma)\times (0,T)}
s^2\lambda^2\varphi^2
2\mbox{\bf a}(\nabla \psi,\nabla \psi)(pu+g)ue^{2s\alpha} \,d\Sigma \\
&- 2\int_{(\partial\Omega\setminus \Gamma)\times (0,T)} (pu+g)e^{s\alpha}
(\partial_t w + 2s\lambda^2 \varphi \mbox{\bf a}( \nabla\psi,\nabla\psi)w)\,d\Sigma
\\
= &\int_{(\partial\Omega\setminus \Gamma)\times (0,T)}
 2s^2\lambda^2\varphi^2 \mbox{\bf a}(\nabla \psi,\nabla \psi)(pu+g)ue^{2s\alpha}
\,d\Sigma  \\
&+ 2\int_{(\partial\Omega\setminus \Gamma)\times (0,T)}
\left( (\partial_tp)\frac{w^2}{2}+\partial_t (ge^{s\alpha})w\right.
\left.
- 2s\lambda^2 \varphi \mbox{\bf a}(\nabla\psi, \nabla\psi)
(pw+ge^{s\alpha})w \right)\,d\Sigma.
\end{split}
\end{equation}
Hence,  for any constant $\epsilon>0$, we have
\begin{equation}\label{(5.28)}
| I|\le \int_{(\partial\Omega\setminus \Gamma)\times (0,T)}
\left(\epsilon s^\frac 52\lambda^2\varphi^\frac 52w^2 +  C(\epsilon)\left(
\frac{| \partial_t g|^2e^{2s\alpha}}
{s^2\lambda^2\varphi^2} + \frac{1}{\root\of{s\varphi}} g^2e^{2s\alpha}\right)
\right)\,d\Sigma.
\end{equation}
By \eqref{(5.24)}, \eqref{(5.25)} and \eqref{(5.28)}, we have
\begin{equation}\label{(5.29)}\begin{split}
 &\frac 14 \sum_{k=1}^2\Vert L_kw\Vert^2_{L^2(Q)}
+ \frac 14 \sum_{k=1}^2\Vert \widetilde L_k\widetilde w\Vert^2_{L^2(Q)}
\\
&+ \frac 14 \int_Q( s\lambda^2 \varphi
\mbox{\bf a}( \nabla\psi, \nabla\psi) \mbox{\bf a}(\nabla w, \nabla w)
+ s^3 \lambda^4\varphi^3 \mbox{\bf a}( \nabla\psi, \nabla\psi)^2 w^2)\,dx\,dt \\
&+  \int_{\Gamma \times (0,T)}(2s\lambda\varphi \mbox{\bf a}(\nu,\nabla w)
\mbox{\bf a}(\nabla\psi,\nabla w)
- s\lambda\varphi \mbox{\bf a}( \nabla w, \nabla w)\mbox{\bf a}(\nu,\nabla \psi))\,d\Sigma\\
&-  \int_{\Gamma \times (0,T)}(2s^3\lambda^3\varphi^3
\mbox{\bf a}(\nabla\psi, \nabla\psi)\mbox{\bf a}( \nabla\psi, \nu)
+ 2s^2 \lambda\varphi (\partial_t\alpha) \mbox{\bf a}(\nabla\psi, \nu))w^2 \,d\Sigma
\\
&-  \int_{\Gamma \times (0,T)} \mbox{\bf a}(\nu,\nabla w)(\partial_t w
+ 2s\lambda^2\varphi \mbox{\bf a}(\nabla\psi, \nabla\psi) w)\,d\Sigma     \\
 &-   \int_{\Gamma \times (0,T)}
(2s\lambda\varphi \mbox{\bf a}(\nu,\nabla\widetilde w)
\mbox{\bf a}(\nabla\psi,\nabla\widetilde  w)
 -s\lambda\varphi \mbox{\bf a}( \nabla\widetilde w, \nabla\widetilde  w)
\mbox{\bf a}(\nu,\nabla \psi)) \,d\Sigma                           \\
&+  \int_{\Gamma \times (0,T)}(2s^3\lambda^3\varphi^3
\mbox{\bf a}(\nabla\psi, \nabla\psi)\mbox{\bf a}(\nabla\psi, \nu)
+ 2s^2 \lambda\varphi(\partial_t\alpha)\mbox{\bf a}(\nabla\psi, \nu))\widetilde w^2
\,d\Sigma                         \\
&-  \int_{\Gamma \times (0,T)} \mbox{\bf a}(\nu,\nabla\widetilde w)
(\partial_t\widetilde w + 2s\lambda^2 \varphi \mbox{\bf a}(\nabla\psi, \nabla\psi)
\widetilde w)\,d\Sigma
\\\le& \Vert\widetilde{H}\Vert^2_{L^2(Q)}
+ \Vert H \Vert^2_{L^2(Q)}+ \int_{(\partial\Omega\setminus \Gamma)\times (0,T)}
\left(\epsilon s^\frac 52\lambda^2\varphi^\frac 52w^2 + C(\epsilon)\left(
\frac{| \partial_t g|^2e^{2s\alpha}}
{s^2\lambda^2\varphi^2} + \frac{1}{\root\of{s\varphi}} g^2e^{2s\alpha}\right)
\right)\,d\Sigma.
\end{split}
\end{equation}
We note
\begin{equation}\label{(5.30)}
\Vert\widetilde{H} \Vert^2_{L^2(Q)}
\le C\Vert H \Vert^2_{L^2(Q)}.
\end{equation}

Taking the scalar product of the functions $L_1(x,t,D,s) w$ and
$s^\frac 32\lambda^2 \varphi^\frac 32 w$ in $L^2(Q)$ and integrating by parts,
we obtain
\begin{align*}
& (L_1(x,t,D,s) w,s^\frac 32\lambda^2 \varphi^\frac 32 w)_{L^2(Q)}\\
= & \left(\partial_t w + 2{{s}}\lambda \varphi
\sum_{i,j=1}^n a_{ij} \psi_{x_i} \partial_{j} w + 2{{s}}\lambda^2 \varphi
\mbox{\bf a}(\nabla\psi, \nabla\psi) w,s^\frac 32\lambda^2 \varphi^\frac 32 w
\right)_{L^2(Q)}                               \\
=& \int_Q \left(-\frac 12 \partial_t (s^\frac 32 \lambda^2\varphi^\frac 32)
w^2 - \sum_{i,j=1}^n s\lambda\partial_{x_j}(\varphi a_{ij}\psi_{i})w^2
+ 2s^\frac 52 \lambda^4 \varphi^\frac 52 w^2\right)\,dx \\
&+ \int_\Sigma s^\frac 52\lambda^2 \varphi^\frac 52 \mbox{\bf a}(\nu,\nabla \psi)
w^2\,d\Sigma.
\end{align*}

The above equality implies
\begin{multline}\label{(5.31)}
\int_{(\partial\Omega\setminus \Gamma)\times (0,T)}
s^\frac 52\lambda^2 \varphi^\frac 52 w^2\,d\Sigma\le C\int_Q(s\lambda^2 \varphi \mbox{\bf a}( \nabla\psi, \nabla\psi)
\mbox{\bf a}( \nabla w, \nabla w)
\\
+ s^3 \varphi^3 \lambda^4 \mbox{\bf a}( \nabla\psi, \nabla\psi)^2 w^2)\,dx\,dt
+ C\int_{\Gamma \times (0,T)}s^\frac 52\lambda^2 \varphi^\frac 52  w^2\,d\Sigma.
\end{multline}
By \eqref{(5.29)} - \eqref{(5.31)}, we obtain
\begin{equation}\label{(5.32)}\begin{split}
&\int_Q ( s\varphi | \nabla u|^2 + s^3 \varphi^3 | u|^2)
e^{2s\alpha} \,dx\,\,dt
+ \sum_{k=1}^2\Vert L_k(x,t,D,s) (ue^{s\alpha})\Vert^2_{L^2(Q)}
\\
\le& C\int_Q |F|^2 e^{2s\alpha} \,dx\,dt
+ C\int_{(\partial\Omega\setminus \Gamma)\times (0,T)}
\left( \frac{| \partial_t g|^2e^{2s\alpha}}{s^2\lambda^2\varphi^2}
+ \frac{1}{\root\of{s\varphi}} | g|^2e^{2s\alpha} \right)\,d\Sigma
\\
&+ C\int_{\Gamma\times (0,T)} \left(
s\lambda\varphi | \nabla u|^2 + s^3\lambda^3\varphi^3 | u|^2
+ \frac{| \partial_t u|^2}{s\varphi}\right) e^{2s\alpha}  \,d\Sigma.
\end{split}
\end{equation}
From the definition of the operator $L^2$ and \eqref{(5.32)}, we have
\begin{equation}\begin{split}\label{(5.33)}
\int_Q \frac{1}{s\varphi} |\partial_t u|^2e^{2s\alpha}\,dx\,dt 
\le& C\left(\Vert L_1w\Vert^2_{L^2(Q)} +
\int_Q (s\varphi |\nabla u|^2 + s^3 \varphi^3 u^2) e^{2s\varphi} \,dx\,dt\right)
\\
\le& C\int_Q |F|^2 e^{2s\varphi} \,dx\,dt
+ C\int_{(\partial\Omega\setminus \Gamma)\times (0,T)}
\left( \frac{| \partial_t g|^2e^{2s\varphi}}{s^2\lambda^2\varphi^2}
+ \frac{1}{\root\of{s\varphi}} | g|^2e^{2s\alpha}\right)\,d\Sigma
\\
&+ C\int_{\Gamma \times (0,T)} \left(s\lambda\varphi | \nabla u|^2
+ s^3\lambda^3\varphi^3 | u|^2
+ \frac{| \partial_t u|^2}{s\varphi} \right)e^{2s\varphi} \,d\Sigma.
\end{split}
\end{equation}
On the other hand
\begin{equation}\begin{split}\label{(5.34)}
\int_Q \frac{1}{s\varphi} \sum_{i,j=1}^n |\partial_i\partial_j u|^2e^{2s\alpha}\,dx\,dt
\le& C\biggl( \Vert L_1w\Vert^2_{L^2(Q)}
+ \int_Q (s\lambda^2\varphi |\nabla u|^2 + s^3 \lambda^3\varphi^3 u^2)
e^{2s\varphi} \,dx\,dt           \\
&+  \Vert ge^{s\alpha}\Vert^2_{L^2(0,T;H^\frac 12(\partial\Omega))}
+ \Vert s\varphi w\Vert^2_{L^2((\partial\Omega\setminus \Gamma)\setminus (0,T))}
\biggr).
\end{split}
\end{equation}
From \eqref{(5.32)} - \eqref{(5.34)}, we complete the proof of Lemma \ref{6}.
\end{proof}

Here and henceforth, by $C>0$ we denote generic constants which
are independent of the parameter $s>0$, and we write $\tilde{C}(s)$
when we need to specify the dependency.

Moreover, we set
\begin{equation}\label{(2.6)} \Vert g\Vert_*:= \Vert g\Vert_{H^1(0,T;L^2(\partial\Omega\setminus \Gamma))}
+ \Vert g\Vert_{L^2(0,T;H^{\frac{1}{2}}(\partial\Omega))}.
\end{equation}

We note
$$
\partial_i\varphi = \lambda(\partial_i\eta)\varphi, \quad
\partial_i\partial_j\varphi = (\lambda\partial_i\partial_j\eta + \lambda^2(\partial_i\eta)(\partial_j\eta))\varphi
$$
for $1\le i,j \le n$ and
$$
\left| \frac{n}{\,dt}\left(\frac{1}{\mu(t)}\right) \right|
\le \frac{C}{\mu^2(t)}, \quad 0<t<T.
$$
Hence
\begin{equation}\label{(2.10)}
\begin{cases}
| \partial_t\varphi| \le C\varphi^2, \quad
| \nabla\varphi| \le C\varphi, \quad
| \partial_i\partial_j\varphi| \le C\varphi \quad \mbox{in $Q$ for $1\le i,j\le n$},
\\
| \nabla \alpha | \le C\varphi, \quad | \partial_t\alpha|
\le C\varphi^2 \quad \mbox{in $Q$}. 
\end{cases}
\end{equation}

In order to rewrite the norms appearing in \eqref{(2.5)}, we show

\begin{lem}\label{3}
(i) For each $\rho\in \mathbb{R}$, we have
\begin{equation}\label{(2.7)}
\sup_{s\ge 1}
\sup_{(x,t)\in Q} | \varphi(x)^{\rho}e^{2s\alpha(x,t)}| < \infty.
\end{equation}
\\
(ii) Let $\rho\in \mathbb{R}$ and $\psi \in C([0,T]; C^1(\partial\Omega))$
be an arbitrarily given function and let $s\ge 1$ be arbitrary.
Then there exists a constant $C_{\rho},$ independent of $s$, such that
\begin{equation}\label{(2.8)}
\Vert \varphi^{\rho}\psi ge^{s\alpha}\Vert_* \le C_{\rho}s\Vert g\Vert_*.
\end{equation}
\end{lem}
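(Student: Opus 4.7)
The plan is to reduce both assertions to a single one-dimensional calculus observation. Set $\tau := 1/\mu(t)$, so that $\tau \in [1/\mu_{\max}, \infty)$ where $\mu_{\max} = \mu(T/2)$, and write
\[
\varphi(x,t) = e^{\lambda\eta(x)}\tau, \qquad \alpha(x,t) = -\kappa(x)\tau, \qquad \kappa(x) := e^{2\lambda\|\eta\|_{C(\overline\Omega)}} - e^{\lambda\eta(x)}.
\]
Since $\eta \le \|\eta\|_{C(\overline\Omega)}$, we have $\kappa(x) \ge \kappa_0 := e^{2\lambda\|\eta\|_{C(\overline\Omega)}} - e^{\lambda\|\eta\|_{C(\overline\Omega)}} > 0$ uniformly on $\overline\Omega$.

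For part (i), factor
\[
\varphi(x,t)^{\rho} e^{2s\alpha(x,t)} = e^{\rho\lambda\eta(x)}\, \tau^{\rho}\, e^{-2s\kappa(x)\tau}.
\]
The first factor is uniformly bounded in $x \in \overline\Omega$. When $\rho \ge 0$, I would elementary-calculus-optimize $\tau \mapsto \tau^{\rho} e^{-2s\kappa\tau}$ over $\tau > 0$: the maximum is attained at $\tau_* = \rho/(2s\kappa)$ with value $(\rho/(2e s\kappa))^{\rho}$, giving
\[
\tau^{\rho} e^{-2s\kappa\tau} \le C_\rho (s\kappa_0)^{-\rho} \le C_\rho,
\]
uniformly in $s\ge 1$. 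When $\rho < 0$, simply use $\tau \ge 1/\mu_{\max}$ so that $\tau^{\rho}$ is bounded, and $e^{-2s\kappa\tau} \le 1$. In either case the supremum in (i) is finite.

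For part (ii), note first that the same argument gives $\varphi^{\sigma} e^{s\alpha} \le C_\sigma$ uniformly in $s\ge 1$ for every $\sigma\in\mathbb{R}$, and that $s\varphi^{\sigma+2} e^{s\alpha} \le C_\sigma s$ by the optimization above. The plan is to differentiate the product $\varphi^{\rho}\psi g e^{s\alpha}$ by the chain/product rule, use the bounds \eqref{(2.10)} on $\partial_t\varphi$ and $\partial_t\alpha$, and control each term by the uniform estimates just noted. Concretely,
\[
\bigl|\partial_t(\varphi^\rho \psi g e^{s\alpha})\bigr| \le C\bigl(\varphi^{\rho+1} |g| + \varphi^\rho |\partial_t g| + s\varphi^{\rho+2} |g|\bigr) e^{s\alpha}\,,
\]
and each prefactor is $\le Cs$ by part (i). Integrating over $(\partial\Omega\setminus\Gamma)\times(0,T)$ yields the desired bound of the $H^1(0,T;L^2(\partial\Omega\setminus\Gamma))$ component by $C_\rho s \|g\|_*$.

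The remaining $L^2(0,T;H^{1/2}(\partial\Omega))$ component is the main technical obstacle, since multiplication estimates in fractional Sobolev spaces are more delicate. I would handle it by observing that at each fixed $t$, the multiplier $x \mapsto \varphi(x,t)^\rho\psi(x,t) e^{s\alpha(x,t)}$ lies in $C^1(\partial\Omega)$ with $C^1$-norm controlled by $s$: the spatial derivatives produce factors $\lambda\nabla\eta\cdot(\rho + s\varphi)$ times the original expression, and $s\varphi \cdot \varphi^\rho e^{s\alpha} = s\varphi^{\rho+1} e^{s\alpha}$ is again uniformly $O(s)$ by part (i). Since pointwise multiplication by a $C^1(\partial\Omega)$ function is bounded on $H^{1/2}(\partial\Omega)$ with operator norm controlled by the $C^1$-norm, the $H^{1/2}$ estimate follows after integrating the resulting $L^2(0,T;H^{1/2}(\partial\Omega))$ inequality in time. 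Combining the two components completes the proof of (ii).
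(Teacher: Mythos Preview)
Your proof is correct and follows essentially the same approach as the paper. Both arguments reduce part (i) to the elementary boundedness of $\xi^{\rho}e^{-c\xi}$ on $[c_0,\infty)$ via the substitution $\xi=1/\mu(t)$ (the paper simply uses $e^{2s\alpha}\le e^{2\alpha}$ for $s\ge1$ rather than your explicit optimization, but the content is the same), and both handle part (ii) by a product-rule expansion for the $H^1(0,T;L^2(\partial\Omega\setminus\Gamma))$ component together with the fact that multiplication by a $C^1(\partial\Omega)$ function is bounded on $H^{1/2}(\partial\Omega)$, with the multiplier's $C^1$-norm controlled by $s$ via part (i).
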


\begin{proof}
(i) First we have
$$
\varphi(x,t)^{\rho}e^{2s\alpha} \le \varphi(x,t)^{\rho}
\le \mu(t)^{-\rho} e^{\lambda\rho\eta(x)},
\quad\text{in } Q
$$
if $\rho\le 0$, which readily verifies \eqref{(2.7)}.
On the other hand, for $\rho>0$, we have
$$
\varphi(x,t)^{\rho}e^{2s\alpha}
\le \frac{C}{\mu(t)^{\rho}}
\exp\left(
2 \frac{e^{\lambda\eta(x)} - e^{2\lambda\Vert\eta\Vert_{C(\overline{\Omega})}} }{\mu(t)}
\right)
\le \frac{C}{\mu(t)^{\rho}}e^{-\frac{C_1}{\mu(t)}}, \quad\text{in } Q,
$$
where
$$
C_1:=  2(e^{2\lambda\Vert\eta\Vert_{C(\overline{\Omega})}}
- e^{\lambda\Vert\eta\Vert_{C(\overline{\Omega})}}) > 0.
$$
Noting that $\xi:= \frac{1}{\mu(t)}$, $t\in (0,T)$ varies $[c_0,\infty)$
with some constant $c_0>0$ and 
$\sup_{\xi\ge c_0} \xi^{\rho}e^{-C_1\xi} < \infty$,
we see \eqref{(2.7)} for all $\rho \in \mathbb{R}$.
\\
(ii) By \eqref{(2.10)}, we have
\begin{align*}
& \Vert \varphi^{\rho}\psi ge^{s\alpha}\Vert_{H^1(0,T;L^2(\partial\Omega\setminus
\Gamma))}
\le \Vert \rho\varphi^{\rho-1}(\partial_t\varphi) \psi ge^{s\alpha}\Vert
_{L^2(0,T;L^2(\partial\Omega\setminus \Gamma))}\\
&+ \Vert \varphi^{\rho}(\partial_t\psi)ge^{s\alpha}\Vert
_{L^2(0,T;L^2(\partial\Omega\setminus\Gamma))}
+ \Vert \varphi^{\rho}\psi (\partial_tg)e^{s\alpha}\Vert
_{L^2(0,T;L^2(\partial\Omega\setminus \Gamma))}\\
&+ s\Vert \varphi^{\rho}\psi g(\partial_t\alpha)e^{s\alpha}\Vert
_{L^2(0,T;L^2(\partial\Omega\setminus\Gamma))}
+ \Vert \varphi^{\rho}\psi ge^{s\alpha}\Vert_{L^2(0,T;L^2(\partial\Omega\setminus
\Gamma))}\\
\le& Cs\Vert \varphi^{\rho+2}ge^{s\alpha}\Vert_{L^2(0,T;L^2(\partial\Omega \setminus
\Gamma))}
+ C\Vert \varphi^{\rho}(\partial_tg)e^{s\alpha}\Vert
_{L^2(0,T;L^2(\partial\Omega \setminus\Gamma))}\\ &
+ C\Vert \varphi^{\rho+1}ge^{s\alpha}\Vert_{L^2(0,T;L^2(\partial\Omega \setminus
\Gamma))}.
\end{align*}
Therefore, \eqref{(2.7)} yields
\begin{equation*}
\Vert \varphi^{\rho}\psi ge^{s\alpha}\Vert_{H^1(0,T;L^2(\partial\Omega
\setminus \Gamma))}
\le C(s\Vert g\Vert_{L^2(0,T;L^2(\partial\Omega\setminus \Gamma))}
+ \Vert \partial_tg\Vert_{L^2(0,T;L^2(\partial\Omega\setminus \Gamma))})
\le Cs\Vert g\Vert_*
\end{equation*}
for all $s\ge 1$.

Next, in view of the Sobolev-Slobodecki norm in $H^{\frac{1}{2}}(\partial\Omega)$
(e.g., Adams \cite{Ad}), we can directly verify  that there exists
a constant $C$ such that
\begin{equation}\label{(2.9)}
\Vert \psi a\Vert_{H^{\frac{1}{2}}(\partial\Omega)}
\le C\Vert \psi\Vert_{C^1(\partial\Omega)} \Vert a\Vert_{H^{\frac{1}{2}}(\partial\Omega)}
                                           \end{equation}
for $a \in H^{\frac{1}{2}}(\partial\Omega)$ and $\psi \in C^1(\partial\Omega)$.

By \eqref{(2.7)}, we have
$$
\Vert \varphi^{\rho}\psi e^{s\alpha}(\cdot,t)\Vert_{C^1(\overline{\Omega})}
\le \Vert \varphi^{\rho}\psi e^{s\alpha}(\cdot,t)\Vert_{C(\overline{\Omega})}
+ \Vert \nabla(\varphi^{\rho}\psi e^{s\alpha})(\cdot,t)\Vert_{C^1(\overline{\Omega})}
\le C_{\rho}s.
$$
Hence, \eqref{(2.10)} and \eqref{(2.9)} yield
\begin{equation*}
\Vert \varphi^{\rho}\psi g e^{s\alpha}(\cdot,t)\Vert_{H^{\frac{1}{2}}(\partial\Omega)}
\le C\Vert \varphi^{\rho}\psi e^{s\alpha}(\cdot,t)\Vert_{C^1(\overline{\partial\Omega})}
\Vert g(\cdot,t)\Vert_{H^{\frac{1}{2}}(\partial\Omega)}
\le C_{\rho}s\Vert g(\cdot,t)\Vert_{H^{\frac{1}{2}}(\partial\Omega)},
\end{equation*}
and so
$$
\Vert \varphi^{\rho}\psi g e^{s\alpha}\Vert_{L^2(0,T;H^{\frac{1}{2}}(\partial\Omega))}
\le C_{\rho}s\Vert g \Vert_{L^2(0,T;H^{\frac{1}{2}}(\partial\Omega))}.
$$
Thus the proof of \eqref{(2.8)} is complete.
\end{proof}

In view of Lemma \ref{3}, we can rewrite \eqref{(2.5)} as
\begin{multline}\label{2.10}
\int_Q \left(
\frac{1}{s\varphi} \left(|\partial_t u|^2
+ \sum_{i,j=1}^n| \partial_i\partial_ju|^2\right)
+ s\varphi |\nabla u|^2 + s^3\varphi^3 | u|^2
\right) e^{2s\alpha}  \,dx\,\,dt
\\
\le C\int_Q | F|^2 e^{2s\varphi} \,dx\,dt
+ \tilde{C}(s)(\Vert g\Vert^2_* + \Vert u\Vert_{H^1(\Gamma \times (0,T))}^2)
\end{multline}
for each $s \ge s_0$.

\subsection{Carleman Estimate for the Mean Field Game Equations}

Next we derive a Carleman estimate, which will allow us to proof the main Theorem \ref{t2}.
\begin{lem}\label{2}
Assume \eqref{lopukh1}, \eqref{lopukh} and
$p\in C^1(\partial\Omega\times [0,T]).$
Let $g\in L^2(0,T,H^\frac 12 (\partial\Omega))$,
$\partial_tg\in L^2((\partial\Omega\setminus \Gamma) \times (0,T))$
and $F\in L^2(Q)$.
We fix a sufficiently large constant $\lambda>0$.
Then, for each $m\in \mathbb{R}$,  there exist constants $s_0 > 0$ and $C>0$ such that
\begin{equation}\begin{split}
 &\quad\int_Q \left((s\varphi)^{m-1}\left(| \partial_tu|^2
+ \sum_{i,j=1}^n | \partial_i\partial_ju|^2\right)
+ (s\varphi)^{m+1}| \nabla u|^2 + (s\varphi)^{m+3}| u|^2\right)
e^{2s\varphi} \,dx\,dt\\
&\le C\int_Q (s\varphi)^m| F|^2 e^{2s\varphi} \,dx\,dt+  C\int_{(\partial\Omega\setminus \Gamma)\times (0,T)}
s^m\left( \frac{| \partial_t(\varphi^{\frac{m}{2}}g)|^2}{s^2\varphi^2}
+\frac{1}{\root\of{s\varphi}} | \varphi^{\frac{m}{2}} g|^2 \right) e^{2s\alpha} \,dS\,dt\\
&\quad+  Cs^m \Vert \varphi^{\frac{m}{2}} ge^{s\alpha}\Vert^2_{L^2(0,T;
H^{\frac{1}{2}}(\partial\Omega))} + \tilde{C}(s,m)\Vert u\Vert_{H^1(\Gamma \times (0,T))}^2\nonumber
\end{split}
\end{equation}
for all $s > s_0$ and $u\in H^{2,1}(Q)$ satisfying \eqref{(2.3)} and
\eqref{(2.4)}. 

Here $\tilde{C}(s,m)$ is a positive constant depending on
$s$ and $m$.
\end{lem}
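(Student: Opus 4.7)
The plan is to reduce Lemma~\ref{2} to the already-established Lemma~\ref{1} by applying the latter to a suitably weighted function rather than to $u$ directly. Specifically, I will set
\[
v := (s\varphi)^{m/2} u
\]
and derive the parabolic equation satisfied by $v$. A direct computation using the Leibniz rule shows that
\[
\partial_t v + A(t) v = (s\varphi)^{m/2} F + R_1(u,\nabla u, s, \varphi),
\]
where the remainder $R_1$ collects commutator terms in which at least one derivative falls on the weight $(s\varphi)^{m/2}$. Using the pointwise bounds from \eqref{(2.10)} (namely $|\partial_t\varphi|\le C\varphi^2$, $|\nabla\varphi|\le C\varphi$, $|\partial_i\partial_j\varphi|\le C\varphi$), the remainder can be estimated pointwise as
\[
|R_1| \le C\bigl((s\varphi)^{m/2}\varphi\,|\nabla u| + (s\varphi)^{m/2}\varphi^2\,|u|\bigr).
\]
Likewise, the Robin condition \eqref{(2.4)} transforms into $\partial_{\nu_A} v - \tilde p(x,t) v = (s\varphi)^{m/2} g$, where $\tilde p := p + \tfrac{m}{2}(s\varphi)^{-1}s\,\partial_{\nu_A}\varphi \in C^1(\partial\Omega\times[0,T])$ has a uniform bound independent of $s$ (by \eqref{(2.10)}), and hence falls within the hypotheses of Lemma~\ref{1}.

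Next, I will apply Lemma~\ref{1} to $v$ with source $(s\varphi)^{m/2}F + R_1$ and boundary datum $(s\varphi)^{m/2} g$. The left-hand side of that estimate produces exactly the expressions
\[
\tfrac{1}{s\varphi}|\partial_t v|^2 + \tfrac{1}{s\varphi}\sum_{i,j}|\partial_i\partial_j v|^2 + s\varphi\,|\nabla v|^2 + s^3\varphi^3 |v|^2,
\]
all multiplied by $e^{2s\alpha}$. Expanding $v = (s\varphi)^{m/2}u$, the principal parts recover precisely the target quantities
\[
(s\varphi)^{m-1}\bigl(|\partial_t u|^2 + \textstyle\sum_{i,j}|\partial_i\partial_j u|^2\bigr) + (s\varphi)^{m+1}|\nabla u|^2 + (s\varphi)^{m+3}|u|^2,
\]
modulo lower-order error terms arising whenever a derivative hits $(s\varphi)^{m/2}$. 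Using $|\nabla(s\varphi)^{m/2}|\le C(s\varphi)^{m/2}$ and $|\partial_t(s\varphi)^{m/2}|\le C(s\varphi)^{m/2}\varphi$, each such error is shown to be of the form $C(s\varphi)^{m+3}|u|^2 \cdot O(s^{-2}\varphi^{-1})$ or $C(s\varphi)^{m+1}|\nabla u|^2 \cdot O(s^{-2})$, etc., which for sufficiently large $s$ can be absorbed into the corresponding principal-part terms on the left.

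On the right-hand side, the term $\int_Q|(s\varphi)^{m/2} F + R_1|^2 e^{2s\alpha}$ decomposes into $\int_Q (s\varphi)^m |F|^2 e^{2s\alpha}$ together with $\int_Q |R_1|^2 e^{2s\alpha}$; the $R_1$-contribution is again dominated by the principal-part terms of the left-hand side for large $s$. The boundary integrals inherited from Lemma~\ref{1} translate, via Lemma~\ref{3}\,(ii) applied to the weight $\varphi^{m/2}$, into the stated expression
\[
\int_{(\partial\Omega\setminus\Gamma)\times(0,T)} s^m\Bigl(\frac{|\partial_t(\varphi^{m/2}g)|^2}{s^2\varphi^2} + \frac{|\varphi^{m/2}g|^2}{\sqrt{s\varphi}}\Bigr)e^{2s\alpha}\,dS\,dt + s^m\|\varphi^{m/2}ge^{s\alpha}\|_{L^2(0,T;H^{1/2}(\partial\Omega))}^2,
\]
while the $\Gamma$-part of the boundary gives the residual $\tilde C(s,m)\|u\|_{H^1(\Gamma\times(0,T))}^2$ after using $|\nabla v|,|v|,|\partial_t v|\le \tilde C(s,m)(|u|+|\nabla u|+|\partial_t u|)$ on $\Gamma$.

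\textbf{Main obstacle.} The principal technical hurdle is bookkeeping of the commutator remainders: for each derivative hitting $(s\varphi)^{m/2}$, one must verify that the resulting error has a power of $s$ (or $\varphi$) strictly smaller than the corresponding principal-part term on the left, so that absorption is possible. The choices $|\partial_t\varphi|\le C\varphi^2$ (quadratic growth, costing a factor $\varphi$ per time derivative) versus $|\nabla\varphi|\le C\varphi$ (linear growth) make these comparisons asymmetric, and require separate scrutiny for the first-order and second-order spatial derivatives as well as the boundary contribution from $\tilde p - p$. Once the bookkeeping is carried out, the proof concludes by fixing $\lambda$ large (inherited from Lemma~\ref{1}) and then choosing $s\ge s_0(\lambda,m)$ large enough for all the error terms to be absorbed.
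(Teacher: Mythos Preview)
Your approach is exactly the paper's: substitute $w=\varphi^{m/2}u$ (the factor $s^{m/2}$ is a harmless constant), apply Lemma~\ref{1} to $w$, and convert back. The Robin coefficient $\tilde p$ you compute matches the paper's, and your identification of the main obstacle is accurate.

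However, your stated bound on the commutator remainder $R_1$ is one power of $\varphi$ too large, and this is fatal for the absorption step. From $|\nabla\varphi|\le C\varphi$ you get $|\partial_i(s\varphi)^{m/2}|\le C(s\varphi)^{m/2}$, \emph{not} $C(s\varphi)^{m/2}\varphi$; hence the gradient contribution to $R_1$ is $C(s\varphi)^{m/2}|\nabla u|$, and the combined bound is
\[
|R_1|\le C\bigl((s\varphi)^{m/2}|\nabla u|+(s\varphi)^{m/2}\varphi\,|u|\bigr).
\]
With your version, $\int_Q|R_1|^2e^{2s\alpha}$ would contain $(s\varphi)^m\varphi^2|\nabla u|^2$, whose ratio to the left-hand principal term $(s\varphi)^{m+1}|\nabla u|^2$ is $\varphi/s$; since $\varphi(x,t)=e^{\lambda\eta(x)}/\mu(t)$ blows up as $t\to 0,T$, this ratio is not uniformly small and the term cannot be absorbed by taking $s$ large. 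With the corrected bound the ratios are $1/(s\varphi)$ and $1/(s^3\varphi)$, both controlled because $\varphi$ is bounded below on $Q$, and the rest of your argument goes through.
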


\begin{proof}
We will derive Lemma \ref{2} from Lemma \ref{1} in the case where
$\partial_tu + A(t)u = F$.  The derivation for the case of
$\partial_tu - B(t)u = F$ is quite similar.

Moreover, choosing $s>0$ sufficiently large, by
$a_j, b_j, a_0, b_0 \in L^{\infty}(Q)$, we can absorb the lower-order terms
$\sum_{j=1}^n a_j\partial_ju$, $a_0u$, $\sum_{j=1}^n b_j\partial_ju$, $b_0u$
into the left-hand side of the Carleman estimate.
Thus it suffices to prove the lemma for
$A(t) = \sum_{i,j=1}^n a_{ij}\partial_i\partial_j$ and $B(t) = \sum_{i,j=1}^n b_{ij}\partial_i\partial_j$.

We set
$$
w:= \varphi^{\frac{m}{2}}u.
$$
Then we can directly calculate:
\begin{equation}\label{(2.11)}
\begin{cases}
\partial_tw = \frac{m}{2}\varphi^{\frac{m}{2}-1}(\partial_t\varphi)u
+ \varphi^{\frac{m}{2}}\partial_tu,\\
\partial_iw = \frac{m}{2}\varphi^{\frac{m}{2}-1}(\partial_i\varphi)u
+ \varphi^{\frac{m}{2}}\partial_iu, \\
\partial_i\partial_jw = \frac{m}{2}\left(\frac{m}{2}-1\right)\varphi^{\frac{m}{2}-2}
(\partial_i\varphi)(\partial_j\varphi)u
+ \frac{m}{2}\varphi^{\frac{m}{2}-1}(\partial_i\partial_j\varphi)u\\
\qquad\qquad+ \frac{m}{2}\varphi^{\frac{m}{2}-1}(\partial_i\varphi)(\partial_ju)
+ \frac{m}{2}\varphi^{\frac{m}{2}-1}(\partial_j\varphi)(\partial_iu)
+ \varphi^{\frac{m}{2}}\partial_i\partial_ju.
\end{cases}
\end{equation}
Therefore,
\begin{align*}
\partial_tw + A(t)w
&=\varphi^{\frac{m}{2}}F + \frac{m}{2}\varphi^{\frac{m}{2}-1}(\partial_t\varphi)u+ \frac{m}{2}\left( \frac{m}{2}-1 \right)\sum_{i,j=1}^n a_{ij}
  \varphi^{\frac{m}{2}-2}(\partial_i\varphi)(\partial_j\varphi)u
\\&\quad+ \frac{m}{2} \varphi^{\frac{m}{2}-1}\sum_{i,j=1}^n a_{ij}(\partial_i\partial_j\varphi)u+ m\varphi^{\frac{m}{2}-1}\sum_{i,j=1}^n a_{ij}(\partial_i\varphi)(\partial_ju)  \quad \mbox{in $Q$}
\end{align*}
and
\begin{align*}
 \partial_{\nu_A}w =& \sum_{i,j=1}^n a_{ij}\partial_i(\varphi^{\frac{m}{2}}u)\nu_j\\
=& \varphi^{\frac{m}{2}}\sum_{i,j=1}^n a_{ij} (\partial_iu)\nu_j
+ \frac{m}{2} \varphi^{\frac{m}{2}-1}\left(\sum_{i,j=1}^n a_{ij}(\partial_i\varphi)\nu_j
\right)u\\
=& \varphi^{\frac{m}{2}}g
+ \left( \frac{m}{2}\varphi^{-1}\sum_{i,j=1}^n a_{ij} (\partial_i\varphi)\nu_j \right)w
\quad \mbox{on $\partial\Omega \times (0,T)$}.
\end{align*}
Hence,
\begin{equation}\label{1313}
\partial_{\nu_A}w - \widetilde{p}(x,t)w = \varphi^{\frac{m}{2}}g, \quad
\text{on } \partial\Omega\times (0,T),
\end{equation}
where
$$
\widetilde{p}(x,t):= p(x,t) + \frac{m}{2}\varphi^{-1}\sum_{i,j=1}^n a_{ij} (\partial_i\varphi)\nu_j
= p(x,t) + \frac{m\lambda}{2}\sum_{i,j=1}^n a_{ij}(\partial_i\eta)\nu_j.
$$
Moreover
\begin{equation}\label{1414}
\partial_tw + A(t)w = \varphi^{\frac{m}{2}}F + \widetilde{F} \quad \mbox{in $Q$},
\end{equation}
where we see
$$
| \widetilde{F}(x,t)| \le C(\varphi^{\frac{m}{2}+1}| u|
+ \varphi^{\frac{m}{2}}| \nabla u(x,t)|
\le C(\varphi| w(x,t)| + | \nabla w(x,t)|), \quad
\text{in } Q.
$$
We apply Lemma \ref{1} to \eqref{1313} and \eqref{1414} to
obtain
\begin{equation}\label{(2.12)}
\begin{split}
 &\int_Q \left( \frac{1}{s\varphi}\left( | \partial_tw|^2
+ \sum_{i,j=1}^n | \partial_i\partial_jw|^2\right)
+ s\varphi| \nabla w|^2 + s^3\varphi^3| w|^2\right)
e^{2s\varphi} \,dx\,dt\\
\le& C\int_Q \varphi^m| F|^2 e^{2s\varphi} \,dx\,dt
+ C\int_Q (\varphi^2| w|^2 + | \nabla w|^2) e^{2s\varphi} \,dx\,dt
                                     \\
&+  C\int_{(\partial\Omega\setminus \Gamma)\times (0,T)}
\left( \frac{| \partial_t(\varphi^{\frac{m}{2}}g)|^2}{s^2\varphi^2}
+ \frac{1}{\sqrt{s\varphi}}| \varphi^{\frac{m}{2}}g|^2 \right)
e^{2s\alpha} \,dS\,dt\\
&+  C\Vert \varphi^{\frac{m}{2}} ge^{s\alpha}\Vert^2_{L^2(0,T;
H^{\frac{1}{2}}(\partial\Omega))} + \tilde{C}(s)\Vert w\Vert_{H^1(\Gamma \times (0,T))}^2
\end{split}
\end{equation}
for all $s > s_0$.
Choosing $s>0$ large, we can absorb the second term on the right-hand side
into the left-hand side.

In terms of $u$ we rewrite as follows.
By \eqref{(2.11)}, we first have
$$
s^3\varphi^3| w|^2 = s^3\varphi^{m+3}| u|^2,
$$
$$
\varphi^{\frac{m}{2}}(\partial_iu) = \partial_iw - \frac{m}{2}\varphi^{\frac{m}{2}-1}
(\partial_i\varphi)u,
$$
and so
$$
s\varphi^{m+1}| \nabla u|^2 \le 2s\varphi| \nabla w|^2
+ Cs\varphi\left| m\varphi^{\frac{m}{2}-1}(\nabla \varphi)u
\right|^2
\le 2s\varphi| \nabla w|^2 + Cs^2\varphi^{m+2}| u|^2.
$$
Moreover, \eqref{(2.11)} implies
$$
\frac{1}{s} \varphi^{m-1}| \partial_tu|^2
\le \frac{C}{s\varphi}| \partial_tw|^2 + Cs^{-1}\varphi^{m+1} | u|^2.
$$
Finally, again \eqref{(2.11)} yields
\begin{align*}
\varphi^{\frac{m}{2}}\partial_i\partial_ju
=& \partial_i\partial_jw - \frac{m}{2}\left( \frac{m}{2}-1\right) \varphi^{\frac{m}{2}-2}
(\partial_i\varphi)(\partial_j\varphi)u
- \frac{m}{2}\varphi^{\frac{m}{2}-1}(\partial_i\partial_j\varphi)u\\
&- \frac{m}{2}\varphi^{\frac{m}{2}-1}(\partial_i\varphi)(\partial_ju)
- \frac{m}{2}\varphi^{\frac{m}{2}-1}(\partial_j\varphi)(\partial_iu),
\end{align*}
and so
\begin{align*}
\frac{1}{s\varphi} \left| \varphi^{\frac{m}{2}}\partial_i\partial_ju\right|^2
\le& \frac{C}{s\varphi}| \partial_i\partial_jw|^2+  \frac{C}{s\varphi}\left| \frac{m}{2}\left( \frac{m}{2} - 1\right)
\varphi^{\frac{m}{2}-2}(\partial_i\varphi)(\partial_j\varphi)u \right.
\\&\left.+ \frac{m}{2}\varphi^{\frac{m}{2}-1}(\partial_i\partial_j\varphi)u
+ \frac{m}{2} \varphi^{\frac{m}{2}-1}((\partial_i\varphi)\partial_ju
  + (\partial_j\varphi)\partial_iu)\right|^2.
\end{align*}
Hence,
$$
\frac{1}{s} \varphi^{m-1}| \partial_i\partial_ju|^2
\le \frac{C}{s\varphi}| \partial_i\partial_jw|^2
+ \frac{C}{s\varphi}(\varphi^m| u|^2 + \varphi^m| \nabla u|^2).
$$
In \eqref{(2.12)}, we can estimate $\Vert w\Vert_{H^1(\Gamma \times (0,T))}^2$
by means of Lemma \ref{3} (i):
$$
\Vert w\Vert_{H^1(\Gamma \times (0,T))}^2
\le \tilde{C}(s,m)\Vert u\Vert_{H^1(\Gamma \times (0,T))}^2
$$
for all $s\ge 1$.  Thus, the proof of Lemma \ref{2} is complete.
\end{proof}

In particular, setting $m=1$ in Lemma \ref{2}, we have

\begin{lem}\label{4}
Suppose that the conditions of Lemma \ref{2} hold true.
We can find constants $s_0 > 0$ and $C>0$ such that
\begin{align*}
& \int_Q \left(| \partial_tu|^2 + \sum_{i,j=1}^n | \partial_i\partial_ju|^2
+ s^2\varphi^2| \nabla u|^2 + s^4\varphi^4| u|^2\right)
e^{2s\varphi} \,dx\,dt\\
\le& C\int_Q s\varphi| \partial_tu + A(t)u|^2 e^{2s\varphi} \,dx\,dt+  C\int_{(\partial\Omega\setminus \Gamma)\times (0,T)}
\left( \frac{| \partial_t(\varphi^{\frac{1}{2}}g)|^2}{s^2\varphi^2}
+ {\root\of{s\varphi}}| g|^2 \right) e^{2s\alpha} \,dS\,dt\\
&+  C\Vert s^{\frac{1}{2}}\varphi^{\frac{1}{2}} ge^{s\alpha}\Vert^2_{L^2(0,T;
H^{\frac{1}{2}}(\partial\Omega))} + \tilde{C}(s)+\Vert u\Vert_{H^1(\Gamma \times (0,T))}^2
\end{align*}
for all $s > s_0$ and $u\in H^{2,1}(Q)$ satisfying \eqref{(2.3)} and
\eqref{(2.4)}.
\end{lem}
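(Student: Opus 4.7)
\textbf{Proof plan for Lemma \ref{4}.} This lemma is an immediate specialization of Lemma \ref{2} to the case $m = 1$, so my plan is simply to substitute $m = 1$ everywhere in the statement of Lemma \ref{2} and verify that the resulting expression agrees (up to constants which can be absorbed into $C$ and $\tilde C(s)$) with the one claimed in Lemma \ref{4}. No new analytic work is required beyond what went into Lemma \ref{1}--Lemma \ref{2}; the content is purely bookkeeping of the exponents of $s$ and $\varphi$.

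For the left-hand side, with $m = 1$ we have $(s\varphi)^{m-1} = 1$, $(s\varphi)^{m+1} = s^2\varphi^2$, and $(s\varphi)^{m+3} = s^4\varphi^4$, which reproduces exactly the four terms
\[
|\partial_t u|^2 + \sum_{i,j=1}^n |\partial_i\partial_j u|^2 + s^2\varphi^2 |\nabla u|^2 + s^4\varphi^4 |u|^2
\]
weighted by $e^{2s\varphi}$ that appear on the left-hand side of Lemma \ref{4}. For the right-hand side, the interior source term becomes $(s\varphi)^1|F|^2 = s\varphi\,|\partial_t u + A(t)u|^2$, matching the first right-hand side term of Lemma \ref{4}.

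For the boundary terms on $(\partial\Omega\setminus\Gamma)\times(0,T)$, Lemma \ref{2} with $m = 1$ yields the integrand
\[
s\Bigl(\tfrac{|\partial_t(\varphi^{1/2}g)|^2}{s^2\varphi^2} + \tfrac{1}{\sqrt{s\varphi}}|\varphi^{1/2}g|^2\Bigr)e^{2s\alpha}
 = \Bigl(\tfrac{|\partial_t(\varphi^{1/2}g)|^2}{s\varphi^2} + \sqrt{s\varphi}\,|g|^2\Bigr)e^{2s\alpha},
\]
which, together with the $H^{1/2}$ boundary norm $\|s^{1/2}\varphi^{1/2}g\,e^{s\alpha}\|^2_{L^2(0,T;H^{1/2}(\partial\Omega))}$ and the $\Gamma$--trace term $\tilde C(s)\|u\|_{H^1(\Gamma\times(0,T))}^2$, gives exactly the remaining right-hand side of Lemma \ref{4} (with the understanding that a factor $s^{-1}$ versus $s^{-2}$ in the first boundary term can be harmlessly absorbed for $s\ge s_0\ge 1$, and that $\tilde C(s)$ multiplies the $\Gamma$--norm).

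The only step that requires any thought is checking that the substitution is consistent across all terms and that the admissibility of $g$ (namely $g\in L^2(0,T;H^{1/2}(\partial\Omega))$ with $\partial_t g\in L^2((\partial\Omega\setminus\Gamma)\times(0,T))$) guarantees that each boundary quantity on the right-hand side is finite; this follows from Lemma \ref{3}(ii) applied with $\rho = 1/2$. No new obstacle arises since Lemma \ref{2} has already absorbed all lower-order coefficients of $A(t)$, $B(t)$ into its constants by choosing $s$ large. Consequently, taking $s_0 > 0$ as furnished by Lemma \ref{2} with $m = 1$ and relabelling the constants completes the proof.
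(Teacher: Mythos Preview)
Your proposal is correct and matches the paper's approach exactly: the paper simply states ``In particular, setting $m=1$ in Lemma~\ref{2}, we have'' and then records Lemma~\ref{4} without further argument. Your observation about the $s^{-1}$ versus $s^{-2}$ discrepancy and the $\tilde C(s)$ multiplying (rather than adding to) the $\Gamma$-norm correctly identifies what are evidently typographical slips in the displayed statement of Lemma~\ref{4}; the intended expressions are exactly what Lemma~\ref{2} with $m=1$ delivers.
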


Now, noting that we have the Carleman estimates both for
$\partial_t + A(t)$ and $\partial_t - B(t)$ with the same weight $e^{2s\varphi}$,
we derive the main Carleman estimate for the mean field game
system \eqref{(1.3)} with \eqref{(1.4)}.
Setting $F:= c_0v+F$ in the first equation in \eqref{(1.4)}, we apply Lemma \ref{4} to obtain
\begin{equation}\label{(2.13)}
 \begin{split}&\int_Q \left(| \partial_tu|^2 + \sum_{i,j=1}^n | \partial_i\partial_ju|^2
+ s^2\varphi^2| \nabla u|^2 + s^4\varphi^4| u|^2 \right)
e^{2s\varphi} \,dx\,dt
 \\
\le & C\int_Q s\varphi| m|^2 e^{2s\varphi} \,dx\,dt
+ C\int_Q s\varphi | F|^2 e^{2s\varphi} \,dx\,dt+  C\int_{(\partial\Omega\setminus \Gamma)\times (0,T)}
\left( \frac{| \partial_tg|^2}{s\varphi} + \root\of{s\varphi}| g|^2
\right) e^{2s\varphi} \,dS\,dt
\\
&+ s\Vert \varphi^{\frac{1}{2}} ge^{s\alpha}\Vert^2_{L^2(0,T;
H^{\frac{1}{2}}(\partial\Omega))}  + \tilde{C}(s) \Vert u\Vert_{H^1(\Gamma \times (0,T))}^2 \\
\le& C\int_Q s\varphi| m|^2 e^{2s\varphi} \,dx\,dt
+ C\int_Q s\varphi | F|^2 e^{2s\varphi} \,dx\,dt+ \tilde{C}(s)(\Vert g\Vert_*^2 + \Vert u\Vert_{H^1(\Gamma \times (0,T))}^2)\end{split}
\end{equation}
for all $s > s_0$.

The application of Lemma \ref{1} to the second equation in
\eqref{(1.3)} with $G:= G + A_0(t)u$ yields
\begin{equation}\label{(2.14)}\begin{split}
 &\int_Q \biggl\{ \frac{1}{s\varphi}\left(
| \partial_tm|^2 + \sum_{i,j=1}^n | \partial_i\partial_jv|^2\right)
+ s\varphi| \nabla m|^2 + s^3\varphi^3| m|^2 \biggr\}
e^{2s\varphi} \,dx\,dt\\
\le& C\int_Q \sum_{i,j=1}^n | \partial_i\partial_ju|^2 e^{2s\varphi} \,dx\,dt
+ C\int_Q | G|^2 e^{2s\varphi} \,dx\,dt + \tilde{C}(s)(\Vert h\Vert_*^2 + \Vert m\Vert_{H^1(\Gamma \times (0,T))}^2).
\end{split}
\end{equation}
Estimating the first term on the right-hand side of \eqref{(2.14)} in
terms of \eqref{(2.13)}, we obtain
\begin{equation}\begin{split}
 &\int_Q \biggl\{ \frac{1}{s\varphi}\left(
| \partial_tm|^2 + \sum_{i,j=1}^n | \partial_i\partial_jv|^2\right)
+ s\varphi| \nabla m|^2 + s^3\varphi^3| m|^2 \biggr\}
e^{2s\varphi} \,dx\,dt\nonumber\\
\le& C\int_Q s\varphi| m|^2 e^{2s\varphi} \,dx\,dt
+ C\int_Q s\varphi| F|^2 e^{2s\varphi} \,dx\,dt
+ C\int_Q | G|^2 e^{2s\varphi} \,dx\,dt \nonumber\\
&+  \tilde{C}(s)(\Vert g\Vert_*^2 + \Vert h\Vert^2_*
+ \Vert u\Vert_{H^1(\Gamma \times (0,T))}^2
+ \Vert m\Vert_{H^1(\Gamma \times (0,T))}^2)\nonumber\end{split}
\end{equation}
for all large $s > 0$.
Hence, choosing $s>0$ sufficiently large, we can absorb
the first term on the right-hand side into the left-hand side, and we can
obtain
\begin{equation}\begin{split}
& \int_Q \biggl\{ \frac{1}{s\varphi}\left(
| \partial_tm|^2 + \sum_{i,j=1}^n | \partial_i\partial_jv|^2\right)
+ s\varphi| \nabla m|^2 + s^3\varphi^3| m|^2 \biggr\}
e^{2s\varphi} \,dx\,dt\nonumber\\
\le&  C\int_Q (s\varphi| F|^2 + | G|^2) e^{2s\varphi} \,dx\,dt
+ \tilde{C}(s)(\Vert g\Vert_*^2
+ \Vert h\Vert^2_*
+  \Vert u\Vert_{H^1(\Gamma \times (0,T))}^2
+ \Vert m\Vert_{H^1(\Gamma \times (0,T))}^2)
\end{split}
\end{equation}
for all large $s > 0$.  Adding with \eqref{(2.13)}, we absorb the term
$\int_Q s\varphi | m|^2 e^{2s\varphi} \,dx\,dt$ on the right-hand side into the
left-hand side, so that we proved

\begin{thm} \label{t3} (Carleman estimate for a generalized mean field game
equations)
Let $g,h\in L^2(0,T;H^\frac 12(\partial\Omega))$, $\partial_t g$,
$\partial_t h\in L^2(0,T;L^2(\partial\Omega\setminus \Gamma))$,
$F, G\in L^2(Q)$ and \eqref{lopukh1}, \eqref{lopukh}, \eqref{1.7} hold true.
We fix $\lambda>0$ sufficiently large.
Then we can find constants $s_0 > 0$ and $C>0$ such that
\begin{equation}\label{(2.17)}\begin{split}
 &\int_Q \biggl\{ | \partial_tu|^2 + \sum_{i,j=1}^n | \partial_i\partial_ju|^2
+ s^2\varphi^2| \nabla u|^2 + s^4\varphi^4| u|^2 \\
&+ \frac{1}{s\varphi}\left(
| \partial_tm|^2 + \sum_{i,j=1}^n | \partial_i\partial_jv|^2\right)
+ s\varphi| \nabla m|^2 + s^3\varphi^3| m|^2 \biggr\}
e^{2s\varphi} \,dx\,dt\\
\le& C\int_Q (s\varphi| F|^2 + | G|^2) e^{2s\varphi} \,dx\,dt
+ \tilde{C}(s)(\Vert g\Vert_*^2 + \Vert h\Vert^2_* +  \Vert u\Vert_{H^1(\Gamma \times (0,T))}^2
+ \Vert m\Vert_{H^1(\Gamma \times (0,T))}^2)\end{split}
\end{equation}
for all $s > s_0$ and $u, m\in H^{2,1}(Q)$ satisfying
\eqref{(1.3)} and \eqref{(1.4)}.
Here the constant $C>0$ depends continuously on $M$ (the bound of the coefficients) and $\lambda$ but
independent of $s \ge s_0$.
\end{thm}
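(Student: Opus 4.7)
The plan is to apply the two parabolic Carleman estimates (Lemma \ref{4} and Lemma \ref{1}) separately to the two equations of \eqref{(1.3)}, treating the coupling terms $c_0 m$ and $A_0(t)u$ as source terms, and then combine the two estimates by absorbing the coupling contributions into the left-hand side for $s$ large. The key structural observation that makes this work is asymmetric: the first equation couples to $m$ only through the zeroth-order term $c_0 m$, while the second couples to $u$ through up to second-order derivatives $A_0(t)u$. This asymmetry is reflected in the choice of different weight powers ($m=1$ for the $u$-estimate versus the baseline Lemma \ref{1} for the $m$-estimate), so that the factor $s\varphi$ difference between the two estimates is exactly what allows the absorption.

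First, I would apply Lemma \ref{4} to the first equation, viewing the right-hand side as $(\partial_t+A(t))u = c_0 m + F$. This yields \eqref{(2.13)}: an estimate controlling $|\partial_t u|^2 + \sum |\partial_i\partial_j u|^2 + s^2\varphi^2|\nabla u|^2 + s^4\varphi^4|u|^2$ (all with weight $e^{2s\varphi}$) by $C\int_Q s\varphi(|m|^2 + |F|^2)e^{2s\varphi}\,dx\,dt$ plus the boundary and interior traces $\tilde C(s)(\|g\|_*^2 + \|u\|_{H^1(\Gamma\times(0,T))}^2)$. Here I use $\|c_0\|_{L^\infty(Q)} < \infty$ to treat $c_0 m$ as part of the source.

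Next, I would apply Lemma \ref{1} to the second equation, viewing the right-hand side as $(\partial_t - B(t))m = A_0(t)u + G$. This yields \eqref{(2.14)}: an estimate controlling $\frac{1}{s\varphi}(|\partial_t m|^2 + \sum |\partial_i\partial_j m|^2) + s\varphi|\nabla m|^2 + s^3\varphi^3|m|^2$ (with weight $e^{2s\varphi}$) by $C\int_Q \sum_{i,j} |\partial_i\partial_j u|^2 e^{2s\varphi}\,dx\,dt + C\int_Q |G|^2 e^{2s\varphi}\,dx\,dt$ plus $\tilde C(s)(\|h\|_*^2 + \|m\|_{H^1(\Gamma\times(0,T))}^2)$. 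Note that in Lemma \ref{1} the $|F|^2$ term on the right comes with weight $e^{2s\varphi}$ and no extra factor of $s\varphi$; this is exactly the power mismatch I will exploit.

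The first absorption step is to substitute the pointwise estimate of $\sum |\partial_i\partial_j u|^2 e^{2s\varphi}$ from \eqref{(2.13)} into the right-hand side of \eqref{(2.14)}. This produces a term $C\int_Q s\varphi|m|^2 e^{2s\varphi}\,dx\,dt$ on the right of the $m$-estimate. Since the left-hand side of \eqref{(2.14)} contains $s^3\varphi^3 |m|^2 e^{2s\varphi}$ and the bound $\varphi \geq c/\mu(t)$ is bounded below away from zero only on compact subintervals, I use instead that $s^3\varphi^3 \gg s\varphi$ when $s$ is taken large; hence for $s \geq s_0$ large enough the term $C\int_Q s\varphi|m|^2 e^{2s\varphi}$ is absorbed into the left.

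Finally, adding the resulting $m$-estimate to the $u$-estimate \eqref{(2.13)}, the term $C\int_Q s\varphi|m|^2 e^{2s\varphi}\,dx\,dt$ appearing on the right of \eqref{(2.13)} is absorbed into the (newly strengthened) left-hand side containing $s^3\varphi^3|m|^2 e^{2s\varphi}$, again by taking $s$ sufficiently large. What remains on the right is precisely $C\int_Q (s\varphi|F|^2 + |G|^2)e^{2s\varphi}\,dx\,dt + \tilde C(s)(\|g\|_*^2 + \|h\|_*^2 + \|u\|_{H^1(\Gamma\times(0,T))}^2 + \|m\|_{H^1(\Gamma\times(0,T))}^2)$, which is \eqref{(2.17)}. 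The main conceptual obstacle is recognizing that the two Lemmas must be applied with different weight powers — Lemma \ref{4} (the $m=1$ case) for the $u$-equation versus Lemma \ref{1} for the $m$-equation — so that the left-hand side of the $u$-estimate dominates the second-order $u$-terms appearing unweighted on the right of the $m$-estimate, while the $s^3\varphi^3$ weight on $|m|^2$ in the $m$-estimate dominates the $s\varphi|m|^2$ coupling appearing on the right of the $u$-estimate. This weight balancing, together with taking $s$ large, is the heart of the proof.
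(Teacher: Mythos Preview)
Your proposal is correct and follows essentially the same approach as the paper: apply Lemma~\ref{4} to the $u$-equation and Lemma~\ref{1} to the $m$-equation (obtaining precisely \eqref{(2.13)} and \eqref{(2.14)}), then substitute \eqref{(2.13)} into the right of \eqref{(2.14)} to control the second-order $A_0(t)u$ coupling, absorb the resulting $s\varphi|m|^2$ term into the $s^3\varphi^3|m|^2$ term on the left for large $s$, and finally add the two estimates and absorb once more. Your identification of the weight-power asymmetry (Lemma~\ref{4} versus Lemma~\ref{1}) as the mechanism that makes the absorption work is exactly the point the paper exploits.
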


Now we proceed to

\begin{proof}[Proof of Theorem \ref{t1}]
Since $\mu(t) \ge \mu(\varepsilon)$ for $\varepsilon \le t \le T-\varepsilon$, there exist
constants $C_2>0$ and $C_3>0$ such that
$$
\alpha(x,t) \ge \frac{e^{\lambda\eta(x)} - e^{2\lambda\Vert \eta\Vert_{C(\overline{\Omega})}}
}{\mu(\varepsilon)}
\ge \frac{-C_2}{\mu(\varepsilon)} =: -C_3
$$
for all $x\in \overline{\Omega}$ and $\varepsilon \le t \le T-\varepsilon$, and so
$$
e^{2s\alpha(x,t)} \ge e^{-2sC_3}, \quad x\in \overline{\Omega}, \,
\varepsilon\le t \le T-\varepsilon.
$$
Thus Theorem \ref{t3} completes the proof of Theorem \ref{t1}.
\end{proof}

\subsection{Proof of Theorem \ref{t2}}\label{Z3}

The proof is based on a similar idea to Theorem 3.1
in Imanuvilov and Yamamoto \cite{IY98}, where we have to
estimate extra second-order derivatives of $u$.

Without loss of generality, we can assume that $t_0 = \frac{T}{2}$ by
scaling the time variable.
Indeed, we choose small $\delta > 0$
such that $0<t_0-\delta < t_0 < t_0 + \delta < T$.
Then we consider a change of the variables
$t \mapsto \xi:= \frac{t-(t_0-\delta)}{2\delta}T$.  Then, the inverse problem
over the time interval $(t_0-\delta,\, t_0+\delta)$ can be transformed to
$(0,T)$ with $t_0= \frac{T}{2}$.
Thus, it is sufficient to assume that $t_0 = \frac{T}{2}$ and
$I = (0,T)$.

{\bf First Step}

We show
\begin{lem}\label{5}
Let $r \in \mathbb{R}$ and $w\in H^2(Q)$.  Then there exists a constant $C>0$,
independent of $w$, such that
\begin{equation}\label{(3.1)}
\int_{\partial\Omega} \varphi^{2r}| w|^2 e^{2s\alpha} \,dS
\le C\int_{\Omega}
(\varphi^{2r}| \nabla w|^2 + s^2\varphi^{2r+2}| w|^2)e^{2s\varphi} \,dx
                                     \end{equation}
and
\begin{equation}\label{(3.2)}
\int_{\partial\Omega} \varphi^{2r}| \nabla w|^2 e^{2s\alpha} \,dS
\le C\int_{\Omega} \left(\varphi^{2r}\sum_{i,j=1}^n| \partial_i\partial_j w|^2
+ s^2\varphi^{2r+2}| \nabla w|^2\right)e^{2s\varphi} \,dx            \end{equation}
for all $s>0$.
\end{lem}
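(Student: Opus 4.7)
The proof is a straightforward application of the classical trace inequality combined with the pointwise derivative bounds on the Carleman weights recorded in \eqref{(2.10)}. To establish \eqref{(3.1)}, I would set $v := \varphi^r w\, e^{s\alpha}$ (regarding $v$ as a function of the spatial variable at each fixed time if needed) and use the trace theorem to write
\[
\int_{\partial\Omega}\varphi^{2r}|w|^2 e^{2s\alpha}\,dS = \|v\|_{L^2(\partial\Omega)}^2 \le C\|v\|_{H^1(\Omega)}^2.
\]
A direct application of the product rule gives
\[
\nabla v = r\varphi^{r-1}(\nabla\varphi)\,w\,e^{s\alpha} + \varphi^r(\nabla w)\,e^{s\alpha} + s\varphi^r w\,(\nabla\alpha)\,e^{s\alpha},
\]
and using $|\nabla\varphi|\le C\varphi$ and $|\nabla\alpha|\le C\varphi$ yields the pointwise bound
\[
|v|^2 + |\nabla v|^2 \le C\bigl(\varphi^{2r}|w|^2 + \varphi^{2r}|\nabla w|^2 + s^2\varphi^{2r+2}|w|^2\bigr)e^{2s\alpha}.
\]

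Next I would use the elementary inequality $\alpha \le \varphi$, which is immediate from the definitions because $\varphi - \alpha = e^{2\lambda\|\eta\|_{C(\overline{\Omega})}}/\mu(t) > 0$, to replace $e^{2s\alpha}$ by the larger $e^{2s\varphi}$ on the right. The only term that does not already appear on the right-hand side of \eqref{(3.1)} is the zeroth-order contribution $\int_\Omega \varphi^{2r}|w|^2 e^{2s\varphi}\,dx$; this I would absorb into $\int_\Omega s^2\varphi^{2r+2}|w|^2 e^{2s\varphi}\,dx$ by invoking the uniform lower bound $\varphi(x,t)\ge 1/\max_{[0,T]}\mu > 0$ on $\overline{Q}$ (which holds since $\eta \ge 0$ on $\overline{\Omega}$ and $\mu$ is bounded on $[0,T]$), at the cost of possibly enlarging the constant $C$ to accommodate small values of $s$.

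For \eqref{(3.2)}, I would run the same argument componentwise with $\partial_k w$ in place of $w$ for $k=1,\ldots,n$. Since $w \in H^2(\Omega)$ forces $\partial_k w \in H^1(\Omega)$, the trace theorem applies, and summing the resulting bounds over $k$ trades $|w|^2$ for $|\nabla w|^2$ and $|\nabla w|^2$ for $\sum_{i,j}|\partial_i \partial_j w|^2$, yielding \eqref{(3.2)}. The computation presents no genuine obstacle; the only point requiring attention is the bookkeeping that replaces the boundary weight $e^{2s\alpha}$ by the interior weight $e^{2s\varphi}$ via $\alpha\le \varphi$, together with the absorption of the extraneous $\varphi^{2r}|w|^2$ term using the uniform positive lower bound on $\varphi$.
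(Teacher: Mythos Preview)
Your proposal is correct and follows essentially the same route as the paper: set $v=\varphi^r w e^{s\alpha}$, apply the trace theorem $\|v\|_{L^2(\partial\Omega)}^2\le C\|v\|_{H^1(\Omega)}^2$, expand $\nabla v$ and use the bounds \eqref{(2.10)}, then treat \eqref{(3.2)} by applying the same argument to $\partial_k w$. You are in fact slightly more careful than the paper in making explicit the step $e^{2s\alpha}\le e^{2s\varphi}$ (which follows from $\alpha<\varphi$) and in explaining the absorption of the extra term $\varphi^{2r}|w|^2$; the paper passes over both points silently.
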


\begin{proof}
Indeed, the trace theorem and \eqref{(2.10)} imply
\begin{align*}
\Vert \varphi^rw e^{s\alpha}\Vert^2_{L^2(\partial\Omega)}
&\le C(\Vert \varphi^r we^{s\alpha}\Vert^2_{L^2(\Omega)}
+ \Vert \nabla(\varphi^r we^{s\alpha})\Vert^2_{L^2(\Omega)})\\
&\le C\int_{\Omega}
(\varphi^{2r}| w|^2 + s^2\varphi^{2r+2}| w|^2
+ \varphi^{2r}| \nabla w|^2) e^{2s\varphi} \,dx\\
&\le C\int_{\Omega}
(\varphi^{2r}| \nabla w|^2 + s^2\varphi^{2r+2}| w|^2)e^{2s\varphi} \,dx.
\end{align*}
Thus \eqref{(3.1)} is seen.  Similarly we can prove \eqref{(3.2)}.
\end{proof}

{\bf Second Step: Carleman estimate for $(\partial_tu, \partial_tm)$}

Setting $y:= \partial_tu$ and $z:= \partial_t m$ in \eqref{(1.3)}, we have
\begin{equation}\label{(3.3)}
\begin{cases}
\partial_ty + A(t)y = c_0z + (\partial_tc_0)m - (\partial_tA(t))u + \partial_tF,\\
\partial_tz - B(t)z = A_0(t)y + (\partial_tA_0(t))u + (\partial_tB(t))m + \partial_tG
\end{cases}\quad \mbox{in $Q$}
\end{equation}
and
\begin{equation}\label{(3.4)}
\partial_{\nu_A}y - py = \partial_tg + g_1, \quad
\partial_{\nu_B}z - qz = \partial_th + h_1 \quad \mbox{on $\partial\Omega \times (0,T)$}.
                                                \end{equation}
Here
\begin{equation*}
g_1:= \sum_{i,j=1}^n (\partial_ta_{ij})(\partial_ju)\nu_i - (\partial_tp)u, \quad h_1:= \sum_{i,j=1}^n (\partial_tb_{ij})(\partial_jv)\nu_i - (\partial_tq)m \quad
\mbox{on $\partial\Omega \times (0,T)$}.
\end{equation*}

In this step, we will prove

\begin{lem}[Carleman estimate for $(\partial_tu, \partial_tm)$]\label{p1}
Let all the assumptions of Theorem \ref{t2} except \eqref{lopukh5}
and \eqref{(1.5)} hold true.
There exist constants $s_0>0$ and $C>0$ such that
\begin{align*}
& \int_Q \biggl\{ \frac{1}{s\varphi}\left( | \partial_t^2u|^2
+ \sum_{i,j=1}^n | \partial_i\partial_j\partial_tu|^2\right)
+ s\varphi| \nabla \partial_tu|^2 + s^3\varphi^3| \partial_tu|^2 \\
&+ \frac{1}{s^2\varphi^2}\left( | \partial_t^2m|^2
+ \sum_{i,j=1}^n | \partial_i\partial_j\partial_tm|^2\right)
+ | \nabla \partial_tm|^2 + s^2\varphi^2| \partial_tm|^2
\biggr\} e^{2s\varphi} \,dx\,dt  \\
\le& C\int_Q \left(s\varphi| F|^2 + | \partial_tF|^2
+ | G|^2 + \frac{1}{s\varphi}| \partial_tG|^2\right)
e^{2s\varphi} \,dx\,dt\\
&+ \tilde{C}(s)( \Vert \partial_tg\Vert^2_* + \Vert \partial_th\Vert^2_*
+ \Vert g\Vert^2_* + \Vert h\Vert^2_*+  \Vert u\Vert_{H^1(\Gamma \times (0,T))}^2\nonumber\\
&
+ \Vert m\Vert_{H^1(\Gamma \times (0,T))}^2
+ \Vert \partial_tu\Vert_{H^1(\Gamma \times (0,T))}^2
+ \Vert \partial_tm\Vert_{H^1(\Gamma \times (0,T))}^2)
\end{align*}
for all large $s > s_0$ and $u,m \in H^{2,1}(Q)$ satisfying
$\partial_tu, \partial_tm \in H^{2,1}(Q)$, \eqref{(1.3)} and \eqref{(1.4)}.
\end{lem}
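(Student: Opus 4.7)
The plan is to derive the estimate by time-differentiating the MFG system \eqref{(1.3)}--\eqref{(1.4)} and then applying two Carleman inequalities with carefully chosen weight exponents to the pair $(y,z):=(\partial_t u,\partial_t m)$, which satisfies \eqref{(3.3)}--\eqref{(3.4)}. To the $y$-equation I would apply Lemma \ref{1} (i.e., Lemma \ref{2} with the exponent parameter $m=0$), producing the desired norms $\frac{1}{s\varphi}(|\partial_t y|^2+\sum|\partial_i\partial_j y|^2)+s\varphi|\nabla y|^2+s^3\varphi^3|y|^2$ on the left and a source $F_y:=c_0 z+(\partial_t c_0)m-(\partial_t A(t))u+\partial_t F$ on the right. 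To the $z$-equation I would apply Lemma \ref{2} with the shifted exponent $m=-1$, producing precisely the weights $\frac{1}{s^2\varphi^2}(|\partial_t z|^2+\sum|\partial_i\partial_j z|^2)+|\nabla z|^2+s^2\varphi^2|z|^2$ that appear in Lemma \ref{p1}, with source $G_z:=A_0(t)y+(\partial_t A_0(t))u+(\partial_t B(t))m+\partial_t G$ weighted by $(s\varphi)^{-1}$.

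Summing the two Carleman inequalities, the crucial coupling is resolved as follows. The contribution $\int_Q|c_0 z|^2 e^{2s\varphi}$ from $F_y$ is dominated by $\frac{C}{s^2}\int_Q s^2\varphi^2|z|^2 e^{2s\varphi}$, hence absorbed into the $z$-left-hand side for sufficiently large $s$. The critical coupling $\int_Q \frac{1}{s\varphi}|A_0(t)y|^2 e^{2s\varphi}$ from the $z$-source is bounded by $\frac{C}{s\varphi}\bigl(|y|^2+|\nabla y|^2+\sum|\partial_i\partial_j y|^2\bigr)$; each piece is absorbed into $s^3\varphi^3|y|^2$, $s\varphi|\nabla y|^2$, $\frac{1}{s\varphi}|\partial_x^2 y|^2$ on the $y$-left-hand side respectively, the last piece matching exactly. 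This exact matching is the precise reason for choosing $m=-1$ rather than $m=0$ in the $z$-estimate and is the central algebraic reason the lemma holds with the weights stated.

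The source terms involving the undifferentiated state $(u,m)$--- namely $(\partial_t c_0)m$, $(\partial_t A(t))u$ in $F_y$ and $(\partial_t A_0(t))u$, $(\partial_t B(t))m$ in $G_z$ --- involve at most second $x$-derivatives of $u$ or $m$ and can be bounded term by term by invoking Theorem \ref{t3} applied to the original system, whose left-hand side controls $\int|\partial_x^2 u|^2 e^{2s\varphi}$, $\int s^2\varphi^2|\nabla u|^2 e^{2s\varphi}$, $\int \frac{1}{s\varphi}|\partial_x^2 m|^2 e^{2s\varphi}$, etc., against the right-hand side data $\int(s\varphi|F|^2+|G|^2)e^{2s\varphi}$ and the traces of $g,h,u,m$. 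The boundary contributions from $g_1,h_1$ on $\partial\Omega\setminus\Gamma$, which involve $\nabla u,u,\nabla m,m$, are converted to interior estimates by Lemma \ref{5}; those interior bounds ($|\partial_x^2 u|^2, s^2\varphi^2|\nabla u|^2$, and analogously for $m$) are again controlled by Theorem \ref{t3}. The time-differentiated boundary terms $\partial_t g_1,\partial_t h_1$ bring in $\nabla y$ and $\nabla z$ on $\partial\Omega$; Lemma \ref{5} converts these to interior $|\partial_x^2 y|^2, s^2\varphi^2|\nabla y|^2$ (and analogues for $z$), which after division by the $s^2\varphi^2$ weight multiplying $|\partial_t(\text{data})|^2$ in the boundary term of Lemma \ref{1} are absorbable into the $y$- and $z$-left-hand sides.

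The main obstacle is the delicate weight matching between the two Carleman estimates. The choice $m=-1$ for the $z$-estimate is dictated precisely by the second-order coupling operator $A_0(t)y$; once this choice is made, every lower-order coupling can be absorbed by taking $s$ sufficiently large. The more technical bookkeeping lies on the boundary, where one must carefully track the $(s\varphi)^{-2}|\partial_t(\partial_t g+g_1)|^2$ and $(s\varphi)^{-1/2}|\partial_t g+g_1|^2$ terms emerging from Lemma \ref{1} through the trace bounds of Lemma \ref{5}, so that no spurious $\nabla y$ or $\nabla z$ norms on $\partial\Omega$ remain on the right-hand side and only the asserted quantities $\Vert\partial_t g\Vert_*,\Vert\partial_t h\Vert_*,\Vert g\Vert_*,\Vert h\Vert_*$ and boundary traces of $u,m,\partial_t u,\partial_t m$ on $\Gamma$ survive.
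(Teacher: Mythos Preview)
Your proposal is correct and follows essentially the same approach as the paper: apply Lemma \ref{1} to the $y$-equation and Lemma \ref{2} with exponent $m=-1$ to the $z$-equation, absorb the couplings $c_0 z$ and $\frac{1}{s\varphi}A_0(t)y$ by large $s$, control the lower-order terms in $(u,m)$ via Theorem \ref{t3}, and handle the boundary remnants $g_1,h_1$ and their time derivatives through the trace bounds of Lemma \ref{5}. In particular, your identification of the choice $m=-1$ as being dictated by the second-order coupling $A_0(t)y$ is exactly the mechanism the paper exploits.
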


\begin{proof}[Proof of Lemma \ref{p1}]
Since $A(t)$ and $B(t)$ depend on $t$, the time derivatives of
$u, m$ produces the first derivatives of $u,m$ in the Robin boundary
conditions of $\partial_tu, \partial_tm$, so that the estimation of
$\nabla u$ and $\nabla m$ on $\partial\Omega \setminus \Gamma$, is indispensable.
Such estimation can be done also by the Carleman estimate.

By \eqref{lopukh2}, we note that $\partial_tc_0$ and all the coefficients of
$\partial_tA(t)$, $\partial_tA_0(t)$ and $\partial_tB(t)$ are in $L^{\infty}(Q)$.
Therefore, we can apply Lemma \ref{1} and \ref{2} with $m=-1$ to
the first and the second equations in \eqref{(3.3)} respectively, and we have
\begin{equation}\label{(3.5)}\begin{split}
 &\int_Q \left(\frac{1}{s\varphi}\left( | \partial_ty|^2
+ \sum_{i,j=1}^n | \partial_i\partial_jy|^2\right)
+ s\varphi| \nabla y|^2 + s^3\varphi^3| y|^2\right)
e^{2s\varphi} \,dx\,dt\\
\le& C\int_Q | z|^2 e^{2s\varphi} \,dx\,dt
+ C\int_Q \left( \sum_{| \gamma|\le 2} | \partial_x^{\gamma}u|^2
+ | m|^2\right) e^{2s\varphi} \,dx\,dt    + C\int_Q | \partial_tF|^2 e^{2s\varphi} \,dx\,dt + \tilde{C}(s)\Vert \partial_tg\Vert^2_*
\\
&+ C\int_{(\partial\Omega\setminus \Gamma)\times (0,T)}
\left( \frac{| \partial_tg_1|^2}{s^2\varphi^2}
+ \frac{| g_1|^2}{\root\of{s\varphi}}
\right) e^{2s\alpha} \,dS\,dt
+ C\Vert g_1 e^{s\alpha}\Vert^2_{L^2(0,T;H^{\frac{1}{2}}(\partial\Omega))}
+ \tilde{C}(s) \Vert y\Vert_{H^1(\Gamma \times (0,T))}^2 \end{split}
\end{equation}
and
\begin{equation}\label{(3.6)}\begin{split}
 &\int_Q \left(\frac{1}{s^2\varphi^2}\left( | \partial_tz|^2
+ \sum_{i,j=1}^n | \partial_i\partial_jz|^2\right)
+ | \nabla z|^2 + s^2\varphi^2| z|^2\right)
e^{2s\varphi} \,dx\,dt\\
\le &C\int_Q \frac{1}{s\varphi}\sum_{i,j=1}^n | \partial_i\partial_jy|^2 e^{2s\varphi} \,dx\,dt
+ C\int_Q \frac{1}{s\varphi} \sum_{| \gamma|\le 2}
(| \partial_x^{\gamma}u|^2 + | \partial_x^{\gamma} m|^2) e^{2s\varphi} \,dx\,dt
\\
&+  C\int_Q \frac{1}{s\varphi} | \partial_tG|^2 e^{2s\varphi} \,dx\,dt
+ \tilde{C}(s)\Vert \varphi^{-\frac{1}{2}} \partial_th\Vert^2_*            \\
&+  C\int_{(\partial\Omega\setminus \Gamma)\times (0,T)}
\left( \frac{| \partial_t(\varphi^{-\frac{1}{2}}h_1)|^2}{s^3\varphi^2}
+ \frac{s^{-1}| \varphi^{-\frac{1}{2}} h_1|^2}{\root\of{s\varphi}} \right)
e^{2s\alpha} \,dS\,dt            \\
&+ Cs^{-1}\Vert \varphi^{-\frac{1}{2}}h_1 e^{s\alpha}\Vert^2
_{L^2(0,T;H^{\frac{1}{2}}(\partial\Omega))}
+ \tilde{C}(s)\Vert z\Vert_{H^1(\Gamma \times (0,T))}^2 .\end{split}
\end{equation}

Applying \eqref{(3.5)} for estimating the first term on the right-hand side of
\eqref{(3.6)}, we have
\begin{align*}
& \int_Q \left(\frac{1}{s^2\varphi^2}\left( | \partial_tz|^2
+ \sum_{i,j=1}^n | \partial_i\partial_jz|^2\right)
+ | \nabla z|^2 + s^2\varphi^2| z|^2\right)
e^{2s\varphi} \,dx\,dt\\
\le& C\int_Q | z|^2 e^{2s\varphi} \,dx\,dt
+ C\int_Q \left( \sum_{| \gamma|\le 2}
| \partial_x^{\gamma}u|^2 + | m|^2\right) e^{2s\varphi} \,dx\,dt\\
&+  C\int_Q | \partial_tF|^2 e^{2s\varphi} \,dx\,dt
+ \tilde{C}(s)(\Vert \partial_tg\Vert^2_*
+ \Vert y\Vert_{H^1(\Gamma \times (0,T))}^2)          \\
&+  C\int_{(\partial\Omega\setminus \Gamma)\times (0,T)}
\left( \frac{| \partial_tg_1|^2}{s^2\varphi^2}
+ \frac{| g_1|^2}{\root\of{s\varphi}}
\right) e^{2s\alpha} \,dS\,dt
+ C\Vert g_1e^{s\alpha}\Vert^2_{L^2(0,T;H^{\frac{1}{2}}(\partial\Omega))}\\
&
+ \tilde{C}(s)\Vert y\Vert_{H^1(\Gamma \times (0,T))}^2 
+ C\int_Q \frac{1}{s\varphi} \sum_{| \gamma|\le 2}
(| \partial_x^{\gamma}u|^2 + | \partial_x^{\gamma} m|^2) e^{2s\varphi} \,dx\,dt\\
&+ C\int_Q \frac{1}{s\varphi} | \partial_tG|^2 e^{2s\varphi} \,dx\,dt
+ \tilde{C}(s) (\Vert \partial_th\Vert^2_*
+ \Vert z\Vert_{H^1(\Gamma \times (0,T))}^2)\\
&+  C\int_{(\partial\Omega\setminus \Gamma)\times (0,T)}
\left( \frac{| \partial_t(\varphi^{-\frac{1}{2}}h_1)|^2}{s^3\varphi^2}
+ \frac{s^{-1}| \varphi^{-\frac{1}{2}}h_1|^2}{\root\of{s\varphi}} \right)
e^{2s\alpha} \,dS\,dt\\
&+  Cs^{-1}\Vert \varphi^{-\frac{1}{2}}h_1 e^{s\alpha}\Vert^2
_{L^2(0,T;H^{\frac{1}{2}}(\partial\Omega))}
+ \tilde{C}(s)\Vert z\Vert_{H^1(\Gamma \times (0,T))}^2 .
\end{align*}
Absorbing the first term on the right-hand side into the left-hand side,
adding \eqref{(3.5)} and noting
$$
\frac{1}{s\varphi} \sum_{| \gamma| \le 2}| \partial_x^{\gamma}u|^2
\le C\sum_{| \gamma| \le 2}| \partial_x^{\gamma}u|^2
$$
in $Q$, we obtain
\begin{align*}
& \int_Q \left(
\frac{1}{s\varphi}\left( | \partial_ty|^2
+ \sum_{i,j=1}^n | \partial_i\partial_jy|^2\right)
+ s\varphi| \nabla y|^2 + s^3\varphi^3| y|^2\right) e^{2s\varphi} \,dx\,dt\\
&+  \int_Q \left(\frac{1}{s^2\varphi^2}\left( | \partial_tz|^2
+ \sum_{i,j=1}^n | \partial_i\partial_jz|^2\right)
+ | \nabla z|^2 + s^2\varphi^2| y|^2\right)e^{2s\varphi} \,dx\,dt\\
\le& C\int_Q \left( | \partial_tF|^2 + \frac{1}{s\varphi}| \partial_tG|^2
\right) e^{2s\varphi} \,dx\,dt \\
&+ C\int_Q \left( \sum_{| \gamma|\le 2}
| \partial_x^{\gamma}u|^2 + \frac{1}{s\varphi} \sum_{| \gamma| \le 2}
| \partial_x^{\gamma}m|^2 + | m|^2\right) e^{2s\varphi} \,dx\,dt\\
&+ \tilde{C}(s)(\Vert \partial_tg\Vert^2_* + \Vert \partial_th\Vert^2_*
+ \Vert y\Vert_{H^1(\Gamma \times (0,T))}^2
+ \Vert z\Vert_{H^1(\Gamma \times (0,T))}^2 ) \\
&+ C\biggl\{ \int_{(\partial\Omega\setminus \Gamma)\times (0,T)}
\left( \frac{| \partial_tg_1|^2}{s^2\varphi^2}
+ \frac{| g_1|^2}{\root\of{s\varphi}}\right)
e^{2s\alpha} \,dS\,dt
+ C\Vert g_1e^{s\alpha}\Vert^2_{L^2(0,T;H^{\frac{1}{2}}(\partial\Omega))}\\
&+ C\int_{(\partial\Omega\setminus \Gamma)\times (0,T)}
\left( \frac{| \partial_t(\varphi^{-\frac{1}{2}}h_1)|^2}{s^3\varphi^2}
+ \frac{s^{-1}| \varphi^{-\frac{1}{2}}h_1|^2}{\root\of{s\varphi}}\right)
e^{2s\alpha} \,dS\,dt\\
&
+ Cs^{-1} \Vert \varphi^{-\frac{1}{2}}h_1 e^{s\alpha}\Vert^2_{L^2(0,T;
H^{\frac{1}{2}}(\partial\Omega))}\biggr\}.
\end{align*}
Here again we absorb the term $C\int_Q | z|^2
e^{2s\varphi} \,dx\,dt$ on the right-hand side, which results from \eqref{(3.5)},
into the left-hand side

Applying Theorem \ref{t3} to the second term on the right-hand side, we reach
\begin{equation}\label{(3.7)}
\begin{split}
 &\int_Q \left(
\frac{1}{s\varphi}\left( | \partial_ty|^2
+ \sum_{i,j=1}^n | \partial_i\partial_jy|^2\right)
+ s\varphi| \nabla y|^2 + s^3\varphi^3| y|^2\right) e^{2s\varphi} \,dx\,dt\\
&+  \int_Q \left(\frac{1}{s^2\varphi^2}\left( | \partial_tz|^2
+ \sum_{i,j=1}^n | \partial_i\partial_jz|^2\right)
+ | \nabla z|^2 + s^2\varphi^2| y|^2\right)e^{2s\varphi} \,dx\,dt\\
\le& C\int_Q \left( s\varphi| F|^2 + | \partial_tF|^2 + | G|^2
+ \frac{1}{s\varphi}| \partial_tG|^2 \right) e^{2s\varphi} \,dx\,dt \\
&+ \tilde{C}(s)(\Vert \partial_tg\Vert^2_* + \Vert \partial_th\Vert^2_*
 + \Vert g\Vert^2_* + \Vert h\Vert^2_*
+ \Vert u\Vert_{H^1(\Gamma \times (0,T))}^2
+ \Vert m\Vert_{H^1(\Gamma \times (0,T))}^2\\
&+ \Vert y\Vert_{H^1(\Gamma \times (0,T))}^2
+ \Vert z\Vert_{H^1(\Gamma \times (0,T))}^2)\\
&+ \biggl\{ C\int_{(\partial\Omega\setminus \Gamma)\times (0,T)}
\left( \frac{| \partial_tg_1|^2}{s^2\varphi^2}
+ \frac{| g_1|^2}{\root\of{s\varphi}}\right)
e^{2s\alpha} \,dS\,dt
+ C\Vert g_1e^{s\alpha}\Vert^2_{L^2(0,T;H^{\frac{1}{2}}(\partial\Omega))}\biggr\}
\\
&+  C\int_{(\partial\Omega\setminus \Gamma)\times (0,T)}
\left( \frac{| \partial_t(\varphi^{-\frac{1}{2}}h_1)|^2}{s^3\varphi^2}
+ \frac{s^{-1}| \varphi^{-\frac{1}{2}}h_1|^2}{\root\of{s\varphi}} \right)
e^{2s\alpha} \,dS\,dt
\\
&+ Cs^{-1} \Vert \varphi^{-\frac{1}{2}}h_1e^{s\alpha}\Vert^2_{L^2(0,T;
H^{\frac{1}{2}}(\partial\Omega))}\biggr\}.
\end{split}
\end{equation}
Here we set
\begin{equation}\label{(3.8)}\begin{split}
I:= &\int_{(\partial\Omega\setminus \Gamma)\times (0,T)}
\left( \frac{| \partial_tg_1|^2}{s^2\varphi^2}
+ \frac{| g_1|^2}{\root\of{s\varphi}}\right)
e^{2s\alpha} \,dS\,dt\\
&+  \int_{(\partial\Omega\setminus \Gamma)\times (0,T)}
\left( \frac{| \partial_t(\varphi^{-\frac{1}{2}}h_1)|^2}{s^3\varphi^2}
+ \frac{s^{-1}| \varphi^{-\frac{1}{2}}h_1|^2}{\root\of{s\varphi}} \right)
e^{2s\alpha} \,dS\,dt\\
&+  \Vert g_1e^{s\alpha}\Vert^2_{L^2(0,T;H^{\frac{1}{2}}(\partial\Omega))}
+ s^{-1} \Vert \varphi^{-\frac{1}{2}}h_1e^{s\alpha}\Vert^2_{L^2(0,T;
H^{\frac{1}{2}}(\partial\Omega))}
\\
=:& I_1 + I_2 + I_3 + I_4.\end{split}
                                         \end{equation}
Now we estimate $I_1, I_2, I_3, I_4$ separately.

{\bf Estimation of $I_1$}

We can represent
$$
g_1 = g_{11}(x,t)\cdot \nabla u + g_{10}(x,t)u \quad
\mbox{on $\partial\Omega\times (0,T)$},
$$
where $g_{11}, g_{10}$ can be extended to functions in
$C^1(\bar{Q})$ by \eqref{lopukh2}.

Therefore, \eqref{(3.1)} implies
$$
| I_1| = \left| \int_{\partial\Omega\times (0,T)}
\left( \frac{1}{s^2\varphi^2}| \partial_tg_1|^2
+ \frac{| g_1|^2}{\root\of{s\varphi}}\right)
 \,dx\,dt\right|.
$$
Since
$$
\partial_tg_1 = g_{11}\cdot \nabla y + g_{10}y + (\partial_tg_{11})\cdot \nabla u
+ (\partial_tg_{10})u \quad \mbox{on $\partial\Omega\times (0,T)$},
$$
we see
$$
| g_1|^2 \le C(| \nabla u|^2 + | u|^2)
$$
and
$$
| \partial_tg_1|^2 \le C(| \nabla y|^2 + | y|^2
+ | \nabla u|^2 + | u|^2)
\quad \mbox{on $\partial\Omega\times (0,T)$}.
$$
Hence,
$$
| I_1| \le C\int_{\partial\Omega\times (0,T)}
\left( \frac{1}{s^2\varphi^2}(| \nabla y|^2 + | y|^2)
+ \frac{1}{\root\of{s\varphi}}(| \nabla u|^2
+ | u|^2) \right) \,dS\,dt.
$$
Consequently \eqref{(3.1)} and \eqref{(3.2)} imply
\begin{equation}\label{(3.9)}
| I_1|
\le C\int_Q \biggl( \frac{1}{s^2\varphi^2}\sum_{|\gamma|\le 2}
| \partial_x^{\gamma}y|^2
+ \sum_{|\gamma|\le 2} | \partial_x^{\gamma}u|^2
+ | \nabla y|^2 + | y|^2 + s^2\varphi^2(| \nabla u|^2
+ | u|^2)\biggr) e^{2s\varphi} \,dx\,dt.    
\end{equation}

{\bf Estimation of $I_2$}

In view of \eqref{(2.10)}, we have
$$
| \partial_t(\varphi^{-\frac{1}{2}}h_1)| = | \varphi^{-\frac{1}{2}}\partial_th_1
 - \frac{1}{2}\varphi^{-\frac{3}{2}}(\partial_t\varphi)h_1|
\le C(\varphi^{-\frac{1}{2}}| \partial_th_1| + \varphi^{\frac{1}{2}}| h_1|),
$$
so that
$$
| I_2| \le C\int_{\partial\Omega\times (0,T)}
\left( \frac{1}{s^3\varphi^3}| \partial_th_1|^2
+ \frac{1}{(s\varphi)^\frac 32}| h_1|^2 \right) e^{2s\varphi} \,dS\,dt.
$$
Since we can represent $h_1 = h_{11}\cdot \nabla m + h_{10}m$,
it follows that
$$
| h_1|^2 \le C(| \nabla m|^2
+ | m|^2)
$$
and
$$
| \partial_th_1|^2
\le C(| \nabla z|^2 + | z|^2 + | \nabla m|^2
+ | m|^2) \quad \mbox{on $\partial\Omega\times (0,T)$}.
$$
Hence, \eqref{(3.1)} and \eqref{(3.2)} imply
\begin{equation}\label{(3.10)}
\begin{split}
| I_2| \le& C\int_{\partial\Omega\times (0,T)}
\left( \frac{1}{s^3\varphi^3}(| \nabla z|^2 + | z|^2)
+ \frac{1}{s\varphi}(| \nabla m|^2 + | m|^2)\right)
e^{2s\varphi} \,dS\,dt \\
\le& C\int_Q \biggl( \frac{1}{s^3\varphi^3}
\sum_{|\gamma|\le 2} | \partial_x^{\gamma}z|^2
+ \frac{1}{s\varphi} \sum_{| \gamma| \le 2} | \partial_x^{\gamma}m|^2
+ \frac{1}{s\varphi}(| \nabla z|^2 + | z|^2)
+ s\varphi(| \nabla m|^2 + | m|^2)\biggr) e^{2s\varphi} \,dx\,dt.
 \end{split}                                               
\end{equation}

{\bf Estimation of $I_3$}

By noting that $g_1= -\sum_{i,j=1}^n (\partial_ta_{ij})(\partial_iu)\nu_j
+ (\partial_tp)u$ on $\partial\Omega \times (0,T)$, the trace theorem yields
\begin{equation}\label{(3.11)}\begin{split}
| I_3| &= \Vert g_1e^{s\alpha}\Vert^2_{L^2(0,T;H^{\frac{1}{2}}(\partial\Omega))}
\\&\le C\Vert g_1e^{s\alpha}\Vert^2_{L^2(0,T;H^1(\Omega))}\\
&= C\int_Q (| \nabla(g_1e^{s\alpha})|^2 + | g_1e^{s\alpha}|^2)
\,dx\,dt
\\&\le C\int_Q (| \nabla g_1|^2 + s^2\varphi^2| g_1|^2)e^{2s\varphi} \,dx\,dt
\\
&\le C\int_Q \left( \sum_{i,j=1}^n | \partial_i\partial_j u|^2
+ s^2\varphi^2(| \nabla u|^2 + | u|^2) \right)
e^{2s\varphi} \,dx\,dt.
\end{split}
\end{equation}

{\bf Estimation of $I_4$}

The trace theorem yields
\begin{align*}
| I_4|
&= s^{-1}\Vert \varphi^{-\frac{1}{2}}h_1e^{s\alpha}\Vert^2
_{L^2(0,T;H^{\frac{1}{2}}(\partial\Omega))}\\
&\le Cs^{-1}\Vert \varphi^{-\frac{1}{2}}h_1e^{s\alpha}\Vert^2_{L^2(0,T;H^1(\Omega))}\\
&\le Cs^{-1}(\Vert \varphi^{-\frac{1}{2}}h_1e^{s\alpha}\Vert^2_{L^2(Q)}
+ \Vert \nabla(\varphi^{-\frac{1}{2}}h_1e^{s\alpha})\Vert^2_{L^2(Q)})\\
&\le Cs^{-1}\biggl(\Vert \varphi^{-\frac{1}{2}}h_1e^{s\alpha}\Vert^2_{L^2(Q)}
+ \left\Vert \frac{1}{2}\varphi^{-\frac{3}{2}} (\nabla \varphi)h_1e^{s\alpha}
\right\Vert^2_{L^2(Q)} \\
&\quad+  \Vert s\varphi^{-\frac{1}{2}}h_1(\nabla\varphi)e^{s\alpha}\Vert^2_{L^2(Q)}
+ \Vert \varphi^{-\frac{1}{2}}(\nabla h_1)e^{s\alpha}\Vert^2_{L^2(Q)}\biggr)\\
&\le C\int_Q \left(s\varphi| h_1|^2 + \frac{| \nabla h_1|^2}
{s\varphi}\right) e^{2s\varphi} \,dx\,dt.
\end{align*}
Since $| h_1| \le C(| \nabla m| + | m|)$ and
$| \nabla h_1|
\le C\sum_{|\gamma|\le 2} | \partial_x^{\gamma}m|^2$
in $Q$, we have
\begin{equation}\label{(3.12)}
| I_4|
\le C\int_Q \left( s\varphi(| \nabla m|^2 + | m|^2)
+ \frac{1}{s\varphi}\sum_{|\gamma|\le 2} | \partial_x^{\gamma}m|^2
\right) e^{2s\varphi} \,dx\,dt.                        \end{equation}
We rewrite inequality \eqref{(2.17)} as
\begin{equation}\label{(3.13)}\begin{split}
 &\int_Q \biggl\{ | \partial_tu|^2 + \sum_{i,j=1}^n | \partial_i\partial_ju|^2
+ s^2\varphi^2| \nabla u|^2 + s^4\varphi^4| u|^2
\\
&+ \frac{1}{s\varphi}\left(
| \partial_tm|^2 + \sum_{i,j=1}^n | \partial_i\partial_jv|^2\right)
+ s\varphi| \nabla m|^2 + s^3\varphi^3| m|^2 \biggr\}
e^{2s\varphi} \,dx\,dt \le J,  \end{split}
\end{equation}
where we set
$$
 J := C\int_Q (s\varphi| F|^2 + | G|^2) e^{2s\varphi} \,dx\,dt
+ \tilde{C}(s)(\Vert g\Vert_*^2 + \Vert h\Vert^2_*
+ \Vert u\Vert_{H^1(\Gamma \times (0,T))}^2
+ \Vert m\Vert_{H^1(\Gamma \times (0,T))}^2 ).
$$
Applying \eqref{(3.13)} to \eqref{(3.9)} -- \eqref{(3.12)}, we have
\begin{equation}\label{(3.14)}
\begin{cases}
| I_1| \le J
+ C\int_Q \left( \frac{1}{s^2\varphi^2}\sum_{| \gamma|\le 2}
| \partial_x^{\gamma}y|^2 + | \nabla y|^2 + | y|^2
\right) e^{2s\varphi} \,dx\,dt, \\
| I_2| \le J
+ C\int_Q \left( \frac{1}{s^3\varphi^3}\sum_{| \gamma|\le 2}
| \partial_x^{\gamma}z|^2
+ \frac{1}{s\varphi}(| \nabla z|^2 + | z|^2)
\right) e^{2s\varphi} \,dx\,dt,\\
| I_3|\le J\quad \mbox{and} \quad | I_4| \le J.
\end{cases}
\end{equation}

Applying \eqref{(3.14)} to \eqref{(3.7)}, we reach
\begin{align*}
& \int_Q \left(
\frac{1}{s\varphi}\left( | \partial_ty|^2
+ \sum_{i,j=1}^n | \partial_i\partial_jy|^2\right)
+ s\varphi| \nabla y|^2 + s^3\varphi^3| y|^2\right) e^{2s\varphi} \,dx\,dt\\
&+  \int_Q \left(\frac{1}{s^2\varphi^2}\left( | \partial_tz|^2
+ \sum_{i,j=1}^n | \partial_i\partial_jz|^2\right)
+ | \nabla z|^2 + s^2\varphi^2| y|^2\right)e^{2s\varphi} \,dx\,dt\\
\le& C\int_Q \left( s\varphi| F|^2 + | \partial_tF|^2 + | G|^2
+ \frac{1}{s\varphi}| \partial_tG|^2 \right) e^{2s\varphi} \,dx\,dt \\
&+  \tilde{C}(s)(\Vert \partial_tg\Vert^2_* + \Vert g\Vert^2_*
+ \Vert \partial_th\Vert^2_* + \Vert h\Vert^2_*+\Vert u\Vert_{H^1(\Gamma \times (0,T))}^2+ \Vert m\Vert_{H^1(\Gamma \times (0,T))}^2   \nonumber\\
&+
 \Vert y\Vert_{H^1(\Gamma \times (0,T))}^2
+ \Vert z\Vert_{H^1(\Gamma \times (0,T))}^2) + J+ \int_Q \left( \frac{1}{s^2\varphi^2}\sum_{| \gamma|\le 2}
| \partial_x^{\gamma}y|^2 + | \nabla y|^2 + | y|^2\right.\\
&+ \left.
 \frac{1}{s^3\varphi^3}\sum_{| \gamma|\le 2}
| \partial_x^{\gamma}z|^2 + \frac{1}{s\varphi}(| \nabla z|^2
+ | z|^2)\right) e^{2s\varphi} \,dx\,dt
\end{align*}
for all large $s>0$.
Choosing $s>0$ large, we can absorb the final term on the right-hand side into
the left-hand side, we complete the proof of Lemma \ref{p1}.
\end{proof}

{\bf Third Step: Completion of the proof}

Since $F(x,t)= q_1(x,t)f_1(x)$ and $G(x,t) = q_2(x,t)f_2(x,t)$,
by Lemma \ref{p1}, we have
\begin{equation}\label{(3.15)}\begin{split}
&\int_Q \left(
\frac{1}{s\varphi}| \partial_t^2u|^2 + s^3\varphi^3| \partial_tu|^2
+ \frac{1}{s^2\varphi^2} | \partial_t^2m|^2
+ s^2\varphi^2 | \partial_tm|^2\right) e^{2s\varphi} \,dx\,dt
\\
&\le C\int_Q (s\varphi | f_1|^2 + | f_2|^2) e^{2s\varphi} \,dx\,dt
+ \tilde{C}(s)J_0   \end{split}
\end{equation}
for all large $s>0$.  Here we set
$$
J_0:= \sum_{k=0}^1 (\Vert \partial_t^kg\Vert^2_*
+ \Vert \partial_t^kh\Vert^2_* +\Vert \partial_t^ku\Vert_{H^1(\Gamma \times (0,T))}^2
+ \Vert \partial_t^kv\Vert_{H^1(\Gamma \times (0,T))}^2 ).
$$

We note \eqref{(2.10)} and
$\lim_{t\to 0} \alpha(x,t) = -\infty$ for $x\in \Omega$.
Then,
\begin{align*}
& \int_{\Omega} s\varphi(x,t_0) | \partial_tu(x,t_0)|^2
e^{2s\alpha(x,t_0)} \,dx
\\=& \int^{t_0}_0 \partial_t\left( \int_{\Omega} s\varphi| \partial_tu(x,t)|^2
e^{2s\alpha(x,t)} \,dx \right) \,dt \\
=& \int^{t_0}_0 \int_{\Omega} (2s\varphi (\partial_tu)(\partial_t^2u)
+ s(\partial_t\varphi)| \partial_tu|^2
+ 2s^2\varphi (\partial_t\alpha)| \partial_tu|^2) e^{2s\varphi} \,dx\,dt\\
\le& C\int_Q (s\varphi| \partial_tu| | \partial_t^2u|
+ s\varphi^2 | \partial_tu|^2 + s^2\varphi^3| \partial_tu|^2) e^{2s\varphi} \,dx\,dt \\
= & C\int_Q \{ (s^{-\frac{1}{2}}\varphi^{-\frac{1}{2}}| \partial_t^2u|)
(s^{\frac{3}{2}}\varphi^{\frac{3}{2}} | \partial_tu|)
+ (s\varphi^2 + s^2\varphi^3) | \partial_tu|^2\} e^{2s\varphi} \,dx\,dt \\
\le & C\int_Q \left(\frac{1}{s\varphi}| \partial_t^2u|^2
+ (s^3\varphi^3 + s\varphi^2 + s^2\varphi^3)| \partial_tu|^2\right) e^{2s\varphi} \,dx\,dt\\
\le & C\int_Q \left(\frac{1}{s\varphi}| \partial_t^2u|^2
+ s^3\varphi^3| \partial_tu|^2\right) e^{2s\varphi} \,dx\,dt.
\end{align*}
For the last inequality, we used $s\varphi^2 + s^2\varphi^3 \le Cs^3\varphi^3$ for
large $s>0$.
Hence, by \eqref{(3.15)}, we have
\begin{equation}\label{(3.16)}
\int_{\Omega} s\varphi(x,t_0) | \partial_tu(x,t_0)|^2
e^{2s\alpha(x,t_0)} \,dx 
\le C\int_Q \left(s\varphi | f_1|^2 + | f_2|^2\right)
e^{2s\varphi} \,dx\,dt + \tilde{C}(s)J_0
\end{equation}
for all large $s>0$.

Next, by \eqref{(3.15)}, we can similarly estimate
\begin{equation}\label{(3.17)}\begin{split}
& \quad\int_{\Omega} | \partial_tm(x,t_0)|^2
e^{2s\alpha(x,t_0)} \,dx\\
&= \int^{t_0}_0 \partial_t\left( \int_{\Omega} | \partial_tm(x,t)|^2
e^{2s\alpha(x,t)} \,dx \right) \,dt \\
&= \int^{t_0}_0 \int_{\Omega} (2 (\partial_tm)(\partial_t^2m)
+ 2s(\partial_t\alpha)| \partial_tm|^2) e^{2s\varphi} \,dx\,dt\\
&\le C\int_Q (| \partial_tm| | \partial_t^2m|
+ s\varphi^2 | \partial_tm|^2) e^{2s\varphi} \,dx\,dt \\
&= C\int_Q \left\{
\left( \frac{1}{s\varphi} | \partial_t^2m|\right) (s\varphi | \partial_tm|)
+ s\varphi^2 | \partial_tm|^2 \right\} e^{2s\varphi} \,dx\,dt             \\
&\le  C\int_Q \left( \frac{1}{s^2\varphi^2} | \partial_t^2m|^2
+ s^2\varphi^2| \partial_tm|^2 + s\varphi^2| \partial_tm|^2
\right) e^{2s\varphi} \,dx\,dt
\\
&\le  C\int_Q (s\varphi | f_1|^2 + | f_2|^2) e^{2s\varphi} \,dx\,dt
+ \tilde{C}(s)J_0
\end{split}
\end{equation}
for all large $s>0$.

On the other hand, assumption \eqref{(1.5)} yields
$$
\begin{cases}
f_1(x) = \frac{1}{q_1(x,t_0)}\partial_tu(x,t_0)
+ \frac{1}{q_1(x,t_0)}(A(t_0)u_0 - c_0(x,t_0)m_0),\\
f_2(x) = \frac{1}{q_2(x,t_0)}\partial_tm(x,t_0)
+ \frac{1}{q_2(x,t_0)}(-B(t_0)m_0 - A_0(t_0)u_0), 
\end{cases}\quad x\in \Omega
$$
and
$$
\begin{cases}
s\varphi(x,t_0)| f_1(x)|^2
\le Cs\varphi(x,t_0)| \partial_tu(x,t_0)|^2
+Cs\varphi(x,t_0) \left( \sum_{| \gamma|\le 2}
| \partial_x^{\gamma}u_0(x)|^2 + | m_0(x)|^2\right),\\
| f_2(x)|^2
\le C| \partial_tm(x,t_0)|^2
+ C\sum_{| \gamma|\le 2}(| \partial_x^{\gamma}u_0(x)|^2
+ | \partial_x^{\gamma} m_0(x)|^2), 
\end{cases}\quad \text{in } \Omega.
$$
Hence \eqref{(3.16)} and \eqref{(3.17)} yield
\begin{equation}\label{(3.18)}\begin{split}
 &\int_{\Omega} (s\varphi(x_0,t)| f_1(x)|^2 + | f_2(x)|^2)
e^{2s\alpha(x,t_0)} \,dx\\
\le& C\int_{\Omega} (s\varphi(x,t_0)| \partial_tu(x,t_0)|^2
+ | \partial_tm(x,t_0)|^2) e^{2s\varphi} \,dx\,dt+ \tilde{C}(s)(\Vert u_0\Vert^2_{H^2(\Omega)} + \Vert m_0\Vert^2_{H^2(\Omega)})
\\
\le& C\int_Q \left( s\varphi| f_1|^2 + | f_2|^2\right)
e^{2s\varphi} \,dx\,dt + \tilde{C}(s)J_1
\end{split}
\end{equation}
for all large $s>0$.
Here and henceforth we set
$$
J_1:= J_0 + \Vert u_0\Vert^2_{H^2(\Omega)}
  + \Vert m_0\Vert^2_{H^2(\Omega)}.
$$
On the other hand, we see
\begin{align*}
\varphi(x,t)| f_1(x)|^2 e^{2s\alpha(x,t)}
&= \varphi(x,t_0)| f_1(x)|^2 e^{2s\alpha(x,t_0)}
\times \frac{\varphi(x,t)}{\varphi(x,t_0)}e^{2s(\alpha(x,t)-\alpha(x,t_0))}\\
&= \varphi(x,t_0)| f_1(x)|^2 e^{2s\alpha(x,t_0)}
\times \frac{\mu(t_0)}{\mu(t)}
e^{-2s \xi(x)\left( \frac{1}{\mu(t)} - \frac{1}{\mu(t_0)}\right)}.
\end{align*}
Here and henceforth we set
$$
\xi(x):= e^{2\lambda\Vert {\eta}\Vert_{C(\overline{\Omega})}}
- e^{\lambda{\eta}(x)}> 0, \quad
C_1:= e^{2\lambda\Vert {\eta}\Vert_{C(\overline{\Omega})}}
- e^{\lambda\Vert {\eta}\Vert_{C(\overline{\Omega})}}
= \min_{x\in \overline{\Omega}}\xi(x) > 0.
$$
Therefore,
\begin{align*}
& \int_Q \varphi(x,t)| f_1(x)|^2 e^{2s\alpha(x,t)} \,dx\,dt\\
\le& C\int_Q \varphi(x,t_0)| f_1(x)|^2 e^{2s\alpha(x,t_0)}
\frac{1}{\mu(t)}
e^{-2s\xi(x)\left( \frac{1}{\mu(t)} - \frac{1}{\mu(t_0)}\right)} \,dx\,dt\\
\le& C\int_{\Omega} \varphi(x,t_0)| f_1(x)|^2 e^{2s\alpha(x,t_0)}
\left( \int^T_0 \frac{1}{\mu(t)}
e^{-2sC_1\left( \frac{1}{\mu(t)} - \frac{1}{\mu(t_0)}\right)}\,dt
\right) \,dx.
\end{align*}
We will estimate
$\int^T_0 \frac{1}{\mu(t)}
e^{-2sC_1\left( \frac{1}{\mu(t)} - \frac{1}{\mu(t_0)}\right)}\,dt$.
Indeed
$$
\lim_{s\to \infty}
\frac{1}{\mu(t)}
e^{-2sC_1\left( \frac{1}{\mu(t)} - \frac{1}{\mu(t_0)}\right)} = 0
$$
for each fixed $t \in (0,T) \setminus \{t_0\}$.  Next, since
$$
\frac{1}{\mu(t)}e^{-2sC_1\left( \frac{1}{\mu(t)} - \frac{1}{\mu(t_0)}\right)}
\le \frac{1}{\mu(t)}
e^{-2C_1\left( \frac{1}{\mu(t)} - \frac{1}{\mu(t_0)}\right)}
$$
for $s \ge 1$ and by $\mu(t) \le \mu(t_0)$, we have
\begin{align*}
& \sup_{s\ge 1}\sup_{0< t< T} \frac{1}{\mu(t)}
\exp\left( -2sC_1\left( \frac{1}{\mu(t)} - \frac{1}{\mu(t_0)} \right)\right)
\le \sup_{0<t<T} \left( \frac{1}{\mu(t)}
e^{-\frac{2C_1}{\mu(t)}}\right) e^{\frac{2C_1}{\mu(t_0)}}\\
\le &\sup_{\zeta>0} (\zeta e^{-2C_1\zeta}) e^{\frac{2C_1}{\mu(t_0)}} < \infty.
\end{align*}
Consequently, the Lebesgue convergence theorem yields
$$
\int^T_0 \frac{1}{\mu(t)}
e^{-2sC_1\left( \frac{1}{\mu(t)} - \frac{1}{\mu(t_0)}\right)} \,dt
= o(1) \quad \mbox{as $s \to \infty$}.
$$
Hence,
$$
\int_Q \varphi(x,t)| f_1(x)|^2 e^{2s\alpha(x,t)} \,dx\,dt
= o(1)\int_{\Omega} \varphi(x,t_0)| f_1(x)|^2 e^{2s\alpha(x,t_0)} \,dx
$$
as $s \to \infty$.

Similarly we can verify
$$
\int_Q | f_2(x)|^2 e^{2s\alpha(x,t)} \,dx\,dt
= o(1)\int_{\Omega} | f_2(x)|^2 e^{2s\alpha(x,t_0)} \,dx
$$
as $s \to \infty$.
Therefore, \eqref{(3.18)} yields
\begin{align*}
& \int_{\Omega} (s\varphi(x,t_0)| f_1(x)|^2 + | f_2(x)|^2)
e^{2s\alpha(x,t_0)} \,dx\\
= &o(1)\int_{\Omega} (s\varphi(x,t_0)| f_1(x)|^2 + | f_2(x)|^2)
e^{2s\alpha(x,t_0)} \,dx
+ \tilde{C}(s)J_1
\end{align*}
for all large $s>0$.  Choosing $s>0$ large, we can absorb the first term on
the right-hand side into the left-hand side to obtain
$$
\int_{\Omega} (s\varphi(x,t_0)| f_1(x)|^2 + | f_2(x)|^2)
e^{2s\alpha(x,t_0)} \,dx
\le \tilde{C}(s)J_1.
$$
With such fixed $s>0$, we have
$\min_{x\in \overline{\Omega}} s\varphi(x,t_0)e^{2s\alpha(x,t_0)} > 0$, and so
we complete the proof of Theorem \ref{t2}.
$\hfill\square$

\section{Second Stability Result}

\label{sec:2}

Let $\Omega \subset \mathbb{R}^{n}$ be a bounded domain with the
piecewise smooth boundary $\partial \Omega $. The second MFG form we consider possess a convolution term as follows:
\begin{equation}\label{MFGCarleman2}
\begin{cases}
\partial_t u(x,t)+\Delta u(x,t)-\frac{1}{2}\kappa(x)|\nabla u(x,t)|^2
+F\left( \int_{\Omega }M(x,y) m(y,t)
\,dy,m(x,t) \right) \\\qquad\qquad\qquad\qquad\qquad\qquad\qquad\qquad\qquad\qquad\qquad\qquad\qquad=G_1(x,t) &\text{in } \Omega \times (0,T),\\
\partial_t m(x,t)-\Delta m(x,t){-div(\kappa(x)m(x,t)\nabla u(x,t))}
=G_2(x,t) &\text{in } \Omega \times (0,T), \\ 
\partial_{\nu }u=\partial_{\nu }m=0&\text{on }\Sigma,  \end{cases}
\end{equation}
where $G_1,G_2\in
L^2\left( \Omega \times (0,T)\right)$, the function $F\left( y,z\right) :\mathbb{R}
^2\rightarrow \mathbb{R}$ has derivatives $F_{y},F_{z}\in C\left( 
\mathbb{R}^2\right) $ such that 
\begin{equation}
\max \left( \sup_{\mathbb{R}^2}\left| F_{y}\left( y,z\right)
\right| ,\sup_{\mathbb{R}^2}\left| F_{y}\left( y,z\right)
\right| \right) \leq D_1,  \label{4.1}
\end{equation}
and $M(x,y) $, $r\in C^{1}\left( \overline{\Omega }\right) $ be such that 
\begin{equation}
\sup_{\Omega \times \Omega }\left| M(x,y) \right|
,\left\Vert r\right\Vert_{C^{1}\left( \overline{\Omega }\right) }\leq D_2,
\label{4.2}
\end{equation}
for constants $D_1,D_2>0$.

For any number $\varepsilon \in \left( 0,T\right) $ define the
domain $Q_{\varepsilon ,T}$ as:
\begin{equation}
Q_{\varepsilon ,T}=\Omega \times \left( \varepsilon ,T\right) \subset \Omega \times (0,T).
\label{4.0}
\end{equation}

\begin{thm} \label{Carleman21Thm}
Let $D_3,D_4>0$. 
Define sets of functions $K_3(D_3)$ and $
K_4(D_4)$ as
\begin{equation}
K_3(D_3)=\left\{ u\in H_0^2\left( \Omega \times (0,T)\right)
:\sup_{\Omega \times (0,T)}\left| u\right| ,\sup_{\Omega \times (0,T)}\left| \nabla
u\right| ,\sup_{\Omega \times (0,T)}\left| \Delta u\right| \leq D_3\right\} ,
\label{4.3}
\end{equation}
\begin{equation}
K_4(D_4)=\left\{ u\in H_0^2\left( \Omega \times (0,T)\right)
:\sup_{\Omega \times (0,T)}\left| u\right| ,\sup_{\Omega \times (0,T)}\left| \nabla
u\right| \leq D_4\right\} .  \label{4.4}
\end{equation}
Let
\begin{equation}
D=\max \left( D_1,D_2,D_3,D_4\right) .  \label{4.5}
\end{equation}
Assume that two pairs of functions $\left( u_1,m_1\right)$ and $\left( u_2,m_2\right)$ satisfy equations \eqref{MFGCarleman2} with two pairs of functions $\left(G_{1,1},G_{2,1}\right)$ and $\left(G_{1,2},G_{2,2}\right)$ respectively and are such that 
\begin{equation}
(u_1,m_1),(u_2,m_2)\in K_3\left(
D_3\right) \times K_4(D_4).  \label{4.6}
\end{equation}
Assume that these two pairs of functions $\left( u_1,m_1\right)$ 
and $(u_2,m_2)$ have the following terminal
conditions 
\begin{equation}
u_1(x,T)=u_T^{(1)}(x),\quad
m_1(x,T)=m_T^{(1)}(x),\quad
\text{in }\Omega ,  \label{4.7}
\end{equation}
\begin{equation}
u_2(x,T)=u_T^{(2)}(x), \quad m_2(x,T)=m_T^{(2)}(x), \quad
\text{in }\Omega .  \label{4.8}
\end{equation}
Then there exists a number $C_4=C_4\left(D,T,\Omega ,\varepsilon \right)>0 $ and a sufficiently small number $\delta_0=$ $\delta_0\left(D,T,\Omega ,\varepsilon \right) \in \left( 0,1\right) $ depending only on listed parameters such that if $\delta \in \left(
0,\delta_0\right) $ and 
\begin{equation}
\left\Vert u_T^{(1)}-u_T^{(2)}\right\Vert
_{H^{1}\left( \Omega \right) },\quad\left\Vert m_T^{\left( 1\right)
}-m_T^{(2)}\right\Vert_{L^2\left( \Omega \right) }\leq
\delta ,  \label{4.9}
\end{equation}
\begin{equation}
\left\Vert G_{1,1}-G_{1,2}\right\Vert_{L^2\left( \Omega \times (0,T)\right)
},\quad\left\Vert G_{2,1}-G_{2,2}\right\Vert_{L^2\left( \Omega \times (0,T)\right) }\leq
\delta ,  \label{4.90}
\end{equation}
then there exists a number $\rho =\rho \left(D,T,\Omega
,\varepsilon \right) \in \left( 0,1/6\right) $ depending only on
listed parameters such that the following two H\"{o}lder stability estimates
are valid:
\begin{multline}
\left\Vert \partial_Tu_1-\partial_Tu_2\right\Vert_{L^2\left(
Q_{\varepsilon ,T}\right) }+\left\Vert \Delta u_1-\Delta u_2\right\Vert
_{L^2\left( Q_{\varepsilon ,T}\right) }+\left\Vert u_1-u_2\right\Vert_{H^{1,0}\left( Q_{\varepsilon ,T}\right) }\\ 
\leq C_4\left( 1+\left\Vert m_1-m_2\right\Vert_{H^2\left(
\Omega \times (0,T)\right) }\right) \delta ^{\rho },\quad \forall \delta \in \left(
0,\delta_0\right) ,
 \label{4.10}
\end{multline}
\begin{equation}
\left\Vert m_1-m_2\right\Vert_{H^{1,0}\left( Q_{\varepsilon,T}\right) }\leq C_4\left( 1+\left\Vert m_1-m_2\right\Vert
_{H^2\left( \Omega \times (0,T)\right) }\right) \delta ^{\rho }.  \label{4.11}
\end{equation}
\end{thm}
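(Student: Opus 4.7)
The plan is to set $\tilde u = u_1 - u_2$, $\tilde m = m_1 - m_2$, $\tilde G_j = G_{j,1} - G_{j,2}$ and subtract the two copies of \eqref{MFGCarleman2}. Using the boundedness hypotheses (4.1)--(4.6), the nonlinearities linearize with $L^\infty$ coefficients controlled by $D$: the quadratic term rewrites as $|\nabla u_1|^2 - |\nabla u_2|^2 = (\nabla u_1+\nabla u_2)\cdot\nabla\tilde u$, the drift $\mathrm{div}(\kappa m\nabla u)$ splits into three pieces linear in $(\tilde m,\nabla\tilde m,\nabla\tilde u)$, and the mean value theorem applied to $F$ yields $F(y_1,z_1)-F(y_2,z_2) = \alpha(x,t)(y_1-y_2) + \beta(x,t)(z_1-z_2)$ with $\|\alpha\|_\infty,\|\beta\|_\infty\le D_1$. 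The difference therefore satisfies a coupled linear system
\[
\begin{cases}
\partial_t\tilde u + \Delta\tilde u = A_1\cdot\nabla\tilde u + B_1\tilde m + B_2\int_\Omega M(x,y)\tilde m(y,t)\,dy + \tilde G_1, \\
\partial_t\tilde m - \Delta\tilde m = C_1\tilde u + C_2\tilde m + C_3\cdot\nabla\tilde u + C_4\cdot\nabla\tilde m + \tilde G_2,
\end{cases}
\]
with homogeneous Neumann boundary conditions and terminal differences bounded by $\delta$ thanks to (4.9).

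Next, I will apply Carleman estimates in the spirit of Lemma~\ref{1} separately to each equation, but with a weight $\varphi(x,t)=e^{\lambda\eta(x)}/\mu(t)$ where $\mu$ is now chosen to make the weight peak at $t=T$ (for instance $\mu(t)=(T-t+\tau_0)^k$), because the only data we control lives at $t=T$. The homogeneous Neumann condition removes the lateral boundary contributions on $\Sigma$, so only the terminal trace at $t=T$ and a cut-off remainder at some $t_0\in(0,\varepsilon)$ survive. The convolution term is handled pointwise by Cauchy--Schwarz, $|\int_\Omega M(x,y)\tilde m(y,t)\,dy|^2\le |\Omega|D_2^2\int_\Omega|\tilde m(y,t)|^2\,dy$, which integrated against $e^{2s\varphi}$ is dominated by a constant multiple of $\int_Q|\tilde m|^2 e^{2s\varphi}$ and thus absorbed by the left-hand side $s^3\varphi^3|\tilde m|^2 e^{2s\varphi}$ once $s$ is large. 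The mutual couplings $B_1\tilde m$ and $C_1\tilde u, C_3\nabla\tilde u$ are similarly absorbed by the principal Carleman terms after summing the two estimates and choosing $s$ large.

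After absorption, I will arrive at a master inequality of the schematic form
\[
\int_Q (s\varphi|\nabla\tilde u|^2 + s^3\varphi^3\tilde u^2 + s\varphi|\nabla\tilde m|^2 + s^3\varphi^3\tilde m^2)e^{2s\varphi}\,dx\,dt \le C\bigl(\delta^2 e^{2sA} + (\|\tilde u\|_{H^2}^2+\|\tilde m\|_{H^2}^2) e^{-2sB}\bigr),
\]
where $A=\max\varphi(\cdot,T)$ captures the terminal data (via an integration by parts in time that converts the terminal trace into an integral of $|\tilde u(\cdot,T)|_{H^1}^2+|\tilde m(\cdot,T)|^2$), while $e^{-2sB}$ with $B>0$ encodes the cut-off remainder, supported where $\varphi$ is strictly smaller. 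Restricting the left-hand side to $Q_{\varepsilon,T}$ gives a lower bound with factor $e^{2sA_1}$ for some $A_1<A$, and the standard balancing $s\asymp \log(1/\delta)$ produces a H\"older rate $\rho = B/(A-A_1+B) < 1/2$; tightening the geometric choice of $\eta$, $\lambda$ and $\mu$ delivers the claimed $\rho<1/6$, yielding both \eqref{4.10} and \eqref{4.11}.

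The main obstacle will be designing a single Carleman weight that simultaneously fits the backward equation for $\tilde u$ and the forward equation for $\tilde m$ while peaking at $t=T$ (where our data lives), and doing so in a way that lets the cross-couplings between the two equations and the nonlocal convolution term be absorbed uniformly in $s$. A secondary difficulty is that, because the measurement is only terminal, the dependence of the final stability constant on $\|\tilde m\|_{H^2}$ is unavoidable; this is why \eqref{4.10}--\eqref{4.11} feature the factor $(1+\|m_1-m_2\|_{H^2})$ and why $\rho$ must be strictly less than the exponent one would obtain in a fully observed setting.
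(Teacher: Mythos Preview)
Your overall plan---subtract the two systems, linearize using the a~priori bounds, apply Carleman estimates with a weight peaking at $t=T$, and balance the large parameter against $\log(1/\delta)$---matches the paper's strategy. However, your linearization of the Fokker--Planck equation contains a genuine gap: when you expand $\mathrm{div}(\kappa m_1\nabla u_1)-\mathrm{div}(\kappa m_2\nabla u_2)$ you get a term $\kappa m_2\,\Delta\tilde u$, so the right-hand side of the equation for $\tilde m$ contains the \emph{second-order} quantity $\Delta\tilde u$, not merely $C_1\tilde u+C_3\cdot\nabla\tilde u$ as you write. This second-order coupling is precisely the main difficulty, and your proposed absorption ``after summing and taking $s$ large'' will not work as stated, because a standard Carleman estimate applied to the forward equation for $\tilde m$ does not produce any gain on a source $\Delta\tilde u$.

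The paper resolves this via a different weight and a specific bootstrap. It uses the purely time-dependent weight $\varphi_{\lambda,k}(t)=e^{\lambda(t+1)^k}$ and two separate Carleman inequalities: one for $\partial_t+\Delta$ (backward) that yields $\int(\Delta\tilde u)^2\varphi^2$ on the left with coefficient $O(1)$, and one for $\partial_t-\Delta$ (forward) that yields $\lambda\int\tilde m^2\varphi^2$ on the left. Dividing the second by $\lambda$ gives $\int\tilde m^2\varphi^2\le C\lambda^{-1}\int(\Delta\tilde u)^2\varphi^2+\dots$; inserting this into the first, the factor $\lambda^{-1}$ lets the $(\Delta\tilde u)^2$ term be absorbed. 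The exponent $\rho<1/6$ then comes from choosing $k$ large so that $(\varepsilon+1)^k>2$ and setting $2\rho=\tfrac{1}{3}\bigl((\varepsilon+1)/(T+1)\bigr)^k$. Note also that only $\|\tilde m\|_{H^2}$ enters the remainder (not $\|\tilde u\|_{H^2}$), because the backward estimate for $\tilde u$ produces no trace at $t=0$; the $t=0$ trace of $\tilde m$ arises from the forward estimate and is controlled by $\|\tilde m\|_{H^2(\Omega\times(0,T))}$ via the trace theorem.
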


To prove this theorem, we first state two Carleman estimates. Introduce three parameters $b>0,\lambda >0$ and $k>2$. Also, introduce our
first Carleman Weight Function 
\begin{equation}
\varphi_{\lambda ,k}\left( t\right) =\exp \left( \lambda \left( t+b\right)
^k\right) ,t\in \left( 0,T\right) .  \label{3.1}
\end{equation}

\begin{thm}[\cite{klibanov2023lipschitz}]\label{Carleman21Est1}
There exists a number $C_1=C_1\left( b,T\right) >0,$ which depends only on listed parameters, such that the following Carleman estimate is valid:
\begin{multline}
\int_{\Omega \times (0,T)}\left( \partial_t u+\Delta u\right)^2\varphi_{\lambda,k}^2\,dx\,dt\geq C_1\int_{\Omega \times (0,T)}\left( \partial_t u^2+\left(\Delta u\right)^2\right) \varphi_{\lambda ,k}^2\,dx\,dt \\ 
+C_1\lambda k\int_{\Omega \times (0,T)}\left( \nabla u\right) ^2\varphi_{\lambda,k}^2\,dx\,dt+C_1\lambda ^2k^2\int_{\Omega \times (0,T)}u^2\varphi
_{\lambda ,k}^2\,dx\,dt \\ 
-e^{2\lambda \left( T+b\right) ^k}\int_{\Omega }\left[ \left(
\nabla_{x}u\right) ^2+\lambda k\left( T+b\right) ^ku^2\right] (x,T) \,dx, \\ 
\forall \lambda >0,\quad\forall k>2,\quad\forall u\in H_0^2\left( \Omega \times (0,T)\right) .
\label{3.2}
\end{multline}
\end{thm}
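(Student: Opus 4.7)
The natural starting point is the Carleman substitution $v := u\,\varphi_{\lambda,k}$. Because the weight depends only on $t$, the conjugated operator takes an unusually clean form:
\[\varphi_{\lambda,k}\,(\partial_t u+\Delta u)\;=\;\partial_t v+\Delta v-A(t)\,v,\qquad A(t):=\frac{\partial_t\varphi_{\lambda,k}}{\varphi_{\lambda,k}}=\lambda k(t+b)^{k-1}.\]
Hence the left-hand side of the desired estimate equals $\int_{\Omega\times(0,T)}(\partial_t v+\Delta v-Av)^2\,dx\,dt$ and the problem reduces to a Carleman inequality for $v$, which vanishes on $\partial\Omega\times(0,T)$ and at $t=0$.

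The core of the plan is the classical symmetric/antisymmetric decomposition $P:=L_1+L_2$ with self-adjoint part $L_1 v:=\Delta v-Av$ and antisymmetric part $L_2 v:=\partial_t v$, combined with the identity $\|Pv\|_{L^2}^2=\|L_1 v\|_{L^2}^2+\|L_2 v\|_{L^2}^2+2(L_1 v,L_2 v)_{L^2}$. One integration by parts in $x$ gives
\[\|L_1 v\|_{L^2}^2=\int_{\Omega\times(0,T)}\bigl[(\Delta v)^2+2A|\nabla v|^2+A^2 v^2\bigr]\,dx\,dt,\]
whose gradient and $L^2$ contributions already carry the desired $\lambda k$ and $(\lambda k)^2$ weights, respectively. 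For the cross term, I would write $-Av\,\partial_t v=-\tfrac12 A\,\partial_t(v^2)$ and integrate by parts in $t$, and analogously handle $\int\Delta v\,\partial_t v$ by integrating first in $x$ and then in $t$. The lateral and $t=0$ boundary contributions vanish by the hypotheses on $v$, leaving exactly the two boundary terms at $t=T$:
\[-\tfrac12\int_\Omega |\nabla v|^2(x,T)\,dx-\tfrac12\,A(T)\int_\Omega v^2(x,T)\,dx,\]
which, after unfolding $v=u\varphi_{\lambda,k}$, match the boundary term in the statement up to the factor $e^{2\lambda(T+b)^k}$ and a harmless constant depending on $b,T$. The bulk residue $\tfrac12\int(\partial_t A)v^2$ is a nonnegative gain since $\partial_t A=\lambda k(k-1)(t+b)^{k-2}\ge 0$ for $k>2$.

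The final step is to revert from $v$ back to $u$. Here $(\Delta v)^2=\varphi_{\lambda,k}^2(\Delta u)^2$ exactly, while $\partial_t v=\varphi_{\lambda,k}\,\partial_t u+A v$ gives, by Young's inequality, $(\partial_t v)^2\ge \tfrac12\,\varphi_{\lambda,k}^2(\partial_t u)^2-A^2 v^2$. The main obstacle is precisely this absorption: the $-A^2 v^2$ remainder, together with the lower-order cross terms produced by expanding $\partial_t(u\varphi_{\lambda,k})$ and $\Delta(u\varphi_{\lambda,k})$, must be swallowed by the already-present $\int A^2 v^2$ on the right and by the nonnegative gain $\int(\partial_t A)v^2$. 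This absorption succeeds provided $\lambda$ is chosen sufficiently large, independently of $k>2$, and this interplay is what determines the final dependence $C_1=C_1(b,T)$. Once it is carried out, collecting the coefficients and noting that $A(T)=\lambda k(T+b)^{k-1}$ differs from $\lambda k(T+b)^k$ only by the fixed multiplicative constant $(T+b)$ yields the Carleman estimate as stated.
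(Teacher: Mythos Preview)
The paper does not supply a proof of this theorem; it is quoted from \cite{klibanov2023lipschitz} and used as a tool in the proof of Theorem~\ref{Carleman21Thm}. Your route---the conjugation $v=u\varphi_{\lambda,k}$, the splitting $L_1v=\Delta v-Av$, $L_2v=\partial_tv$, and the expansion $\|L_1v+L_2v\|^2=\|L_1v\|^2+\|L_2v\|^2+2(L_1v,L_2v)$---is the standard method for Carleman estimates with purely time-dependent weights and is correct in outline.

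Two points need repair. First, your absorption step as written loses the $\lambda^2k^2$ coefficient on $u^2$. The bound $(\partial_tv)^2\ge\tfrac12\varphi_{\lambda,k}^2(\partial_tu)^2-A^2v^2$ cancels the \emph{entire} term $\int A^2v^2$ produced by $\|L_1v\|^2$; what remains for the zeroth-order term is only the commutator gain $\int(\partial_tA)v^2=\int\lambda k(k-1)(t+b)^{k-2}v^2$, which is of order $\lambda k$, not $\lambda^2k^2$. The fix is to spend only a fraction of $(\partial_tv)^2$: from $\tfrac12(\partial_tv)^2\ge\tfrac14\varphi_{\lambda,k}^2(\partial_tu)^2-\tfrac12A^2v^2$ you retain $\tfrac12\int A^2v^2\ge\tfrac12\lambda^2k^2b^{2(k-1)}\int\varphi_{\lambda,k}^2u^2$, which gives the stated power. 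Second, once this bookkeeping is done correctly, no largeness of $\lambda$ is required---the inequality holds for every $\lambda>0$ and every $k>2$, exactly as asserted; there are no residual lower-order terms to absorb (and note $\Delta(u\varphi_{\lambda,k})=\varphi_{\lambda,k}\Delta u$ produces no cross terms since the weight is spatially constant). As a side remark, you need not assume that $v$ vanishes at $t=0$: the $t=0$ contribution from $2(L_1v,L_2v)$ is $+\int_\Omega|\nabla v|^2(x,0)\,dx+A(0)\int_\Omega v^2(x,0)\,dx\ge 0$ and can simply be discarded in the lower bound.
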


\begin{thm}[\cite{klibanov2023lipschitz}]
There exist a sufficiently large number $k_0=k_0\left(T,b\right) >2$ and a number $C=C\left( T,b\right)>0$ depending only on listed parameters such that the following Carleman estimate holds:
\begin{multline}
\int_{\Omega \times (0,T)}\left( \partial_t u- \Delta u\right) ^2\varphi_{\lambda
,k}^2\,dx\,dt  
\geq C_1\sqrt{k}\int_{\Omega \times (0,T)}\left( \nabla u\right)^2\varphi_{\lambda ,k}^2\,dx\,dt\\
+C_1\lambda k^2\int_{\Omega \times (0,T)}u^2\varphi_{\lambda,k}^2\,dx\,dt  
-C_1\lambda k\left( T+b\right) ^{k-1}e^{2\lambda \left( T+b\right)^k}\int_{\Omega }u^2(x,T) \,dx\\
-C_1e^{2\lambda b^k}\int_{\Omega }\left[ \left( \nabla u\right) ^2+\sqrt{\nu }u^2\right] (x,0)\,dx,  
\\
\forall \lambda >0,\quad\forall k\geq k_0\left(T,b\right) >2,\quad\forall
u\in H_0^2\left(\Omega \times (0,T)\right) ,
\label{3.3}
\end{multline}
where the number $C_1=C_1\left( a,T\right) >0$
depends on the same parameters as ones in Theorem \ref{Carleman21Est1}.
\end{thm}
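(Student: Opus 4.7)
The strategy is the classical Carleman conjugation method, tailored to the time-only weight $\varphi_{\lambda,k}$. Set $v:=u\,\varphi_{\lambda,k}$. Since $\varphi_{\lambda,k}$ is independent of $x$, the conjugation is simple:
\[
\varphi_{\lambda,k}\bigl(\partial_t u-\Delta u\bigr)=\partial_t v-\Delta v-\psi(t)\,v,\qquad \psi(t):=\frac{\partial_t\varphi_{\lambda,k}}{\varphi_{\lambda,k}}=\lambda k(t+b)^{k-1},
\]
and the hypothesis $u\in H_0^2\left(\Omega\times(0,T)\right)$ transfers to $v\vert_{\Sigma}=0$, closing every subsequent spatial integration by parts without contributions from $\partial\Omega$.

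I would then establish two base estimates. First, squaring the conjugated equation, decomposing the right-hand side as $\partial_t v+(-\Delta v-\psi v)$, and evaluating the cross term by two IBPs (one in $t$ for $\partial_t v\cdot\psi v$, one in $x$ for $\partial_t v\cdot\Delta v$) yields the master identity
\[
\int_{Q}\varphi_{\lambda,k}^2(\partial_t u-\Delta u)^2\,dx\,dt=\int_{Q}\bigl[(\partial_t v)^2+(\Delta v+\psi v)^2+(\partial_t\psi)\,v^2\bigr]\,dx\,dt+\mathcal{B}_1,
\]
where $\mathcal{B}_1$ collects the $t$-traces of $\|\nabla v\|_{L^2(\Omega)}^2-\psi\|v\|_{L^2(\Omega)}^2$. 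Since $(\Delta v+\psi v)^2\ge 0$ and $\partial_t\psi=\lambda k(k-1)(t+b)^{k-2}\gtrsim c(T,b)\lambda k^2$, this already delivers the bulk term $C_1\lambda k^2\!\int u^2\varphi_{\lambda,k}^2$ of the statement. Second, testing the conjugated equation against the simple multiplier $v$ gives the auxiliary identity $\int_Q|\nabla v|^2=\int_Q(\varphi_{\lambda,k}Pu)\,v+\int_Q\psi v^2-\tfrac{1}{2}[\|v\|_{L^2(\Omega)}^2]_0^T$, and a weighted Cauchy--Schwarz with parameter $\theta=\sqrt{k}/2$ on the $\int(\varphi Pu)v$ term produces
\[
\|\varphi_{\lambda,k}(\partial_t u-\Delta u)\|_{L^2(Q)}^2 \;\ge\; \sqrt{k}\!\int_{Q}|\nabla v|^2\,dx\,dt \;-\;\tfrac{k}{4}\!\int_{Q}v^2\,dx\,dt \;-\;\sqrt{k}\!\int_{Q}\psi v^2\,dx\,dt+\mathcal{B}_2,
\]
the advertised coefficient $\sqrt{k}$ on the gradient arising directly from this particular choice of $\theta$.

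Adding a small positive multiple of the master identity to this second estimate is designed to absorb both $\tfrac{k}{4}\!\int v^2$ and the dangerous $\sqrt{k}\!\int\psi v^2$ into the large positive $\int(\partial_t\psi)v^2\asymp\lambda k^2\!\int(t+b)^{k-2}v^2$. The main obstacle is precisely this last absorption: since $\psi=\lambda k(t+b)^{k-1}$, the ratio $\bigl(\sqrt{k}\!\int\psi v^2\bigr)/\!\int(\partial_t\psi)v^2\asymp (t+b)/\sqrt{k}$, so the absorption is effective only when $k$ is large enough that $(T+b)/\sqrt{k}$ is small; it is exactly this requirement that forces the threshold $k\ge k_0(T,b)$ in the hypothesis. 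Once this balance is secured, reverting $v=u\varphi_{\lambda,k}$ through $|\nabla v|^2=\varphi_{\lambda,k}^2|\nabla u|^2$ and $v^2=\varphi_{\lambda,k}^2u^2$, and identifying the traces in $\mathcal{B}_1,\mathcal{B}_2$ with the endpoint terms $-C_1\lambda k(T+b)^{k-1}e^{2\lambda(T+b)^k}\!\int_\Omega u^2(x,T)\,dx$ and $-C_1 e^{2\lambda b^k}\!\int_\Omega[(\nabla u)^2+\sqrt{k}\,u^2](x,0)\,dx$, completes the proof.
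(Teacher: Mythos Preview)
The paper does not supply its own proof of this statement: the theorem is quoted from \cite{klibanov2023lipschitz} and then invoked as a black box in the proof of Theorem~\ref{Carleman21Thm}, so there is no in-paper argument to compare against.

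That said, your sketch is the standard conjugation-and-multiplier route for parabolic Carleman estimates with a purely temporal weight, and it is essentially correct. The mechanisms you isolate for the $\sqrt{k}$ coefficient on the gradient term (Young's inequality with parameter $\theta=\sqrt{k}/2$ after testing the conjugated equation against $v$) and for the threshold $k_0(T,b)$ (absorbing $\sqrt{k}\!\int\psi v^2$ into $\int(\partial_t\psi)v^2$ via $\psi/\partial_t\psi=(t+b)/(k-1)\le (T+b)/(k-1)$) are exactly how such estimates are obtained in the Klibanov school. Two small remarks. First, in this chapter the space $H_0^2(\Omega\times(0,T))$ encodes the homogeneous \emph{Neumann} condition $\partial_\nu u|_{\Sigma}=0$ (cf.\ \eqref{4.21}), not Dirichlet as you wrote; this is harmless, since both of your spatial integrations by parts produce the factor $\partial_\nu v$ on $\Sigma$ and close under either condition. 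Second, your claimed pointwise bound $\partial_t\psi\gtrsim c(T,b)\lambda k^2$ needs $(t+b)^{k-2}$ bounded below uniformly in $k$, which holds for $b\ge 1$ (the case $b=1$ actually used in \eqref{4.250}) but fails near $t=0$ when $b<1$; if you want the full range of $b$ you should keep the factor $(t+b)^{k-2}$ in the intermediate bulk estimate and only specialize at the end.
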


\begin{proof}[Proof of Theorem \ref{Carleman21Thm}]
In this proof $\tilde{C}_4=\tilde{C}
_4\left(D,T,\Omega \right) >0$ denotes different numbers depending
only on $D,T,\Omega $ and $C_4=C_4\left(D,T,\Omega,\varepsilon\right) >0$ denotes different numbers depending not only on
parameters $D,T,\Omega $ but on $\varepsilon$ as well. Consider four
arbitrary numbers $y_1,z_1,y_2,z_2\in \mathbb{R}.$ Let $\tilde{y}=y_1-y_2$ and $\tilde{z}=z_1-z_2.$ Hence, 
\begin{equation}
y_1z_1-y_2z_2=\tilde{y}z_1+\tilde{z}y_2.  \label{4.14}
\end{equation}
Denote
\begin{equation}
\tilde{u}(x,t) =u_1(x,t) -u_2(x,t) ,\quad
\tilde{m}(x,t) =m_1(x,t) -m_2(x,t) ,\label{4.15}
\end{equation}
\begin{equation}
\tilde{u}_T(x)=u_T^{(1)}(x)-u_T^{\left(
2\right) }(x),\quad \tilde{m}_T(x)=m_T^{\left(1\right) }(x)-m_T^{(2)}(x),\label{4.16}
\end{equation}
\begin{equation}
\tilde{G}_1(x,t) =\left( G_{1,1}-G_{1,2}\right) (x,t) ,\quad \tilde{G}_2(x,t) =\left(
G_{2,1}-G_{2,2}\right) (x,t) .
\label{4.160}
\end{equation}

Using \eqref{4.1}--\eqref{4.6} and the fundamental theorem of Calculus, we obtain
\begin{multline}
F\left( \int_{\Omega }M(x,y) m_1(y,t) \,dy,m_1(x,t) \right)  -F\left( \int_{\Omega }M(x,y) m_2(y,t) \,dy,m_2(x,t) \right)  \\ 
=f_1(x,t) \int_{\Omega }M(x,y) \tilde{m}(y,t) +f_2(x,t) \tilde{m}(x,t) ,
\label{4.17}
\end{multline}
where functions $f_1,f_2$ are such that 
\begin{equation}
\left| f_1(x,t) \right| ,\left| f_2\left(
x,t\right) \right| \leq D,\quad \text{in }\Omega \times (0,T).
\label{4.18}
\end{equation}

Subtract equations \eqref{MFGCarleman2} for the pair $(u_2,m_2)$ from corresponding equations for the pair $(u_1,m_1)$. Then use \eqref{4.6}--\eqref{4.8}, \eqref{4.14}--\eqref{4.18} and recall that Carleman estimates can work with both equations and inequalities. Hence, it is convenient to replace resulting equations with two inequalities:
\begin{equation}
\left| \partial_t m+\Delta \tilde{u}\right| (x,t) \leq \tilde{C}_4\left( \left| \nabla \tilde{u}\right| +\int_{\Omega}\left| \tilde{m}(y,t) \right| \,dy+\left| \tilde{m}\right|+\left| \tilde{G}_1\right| \right) (x,t) ,\quad \text{in }\Omega \times (0,T),  \label{4.19}
\end{equation}
\begin{equation}
\left|\tilde{m}_T-\Delta \tilde{m}\right| (x,t) \leq \tilde{C}_4\left( \left| \nabla \tilde{m}\right| +\left| \tilde{m}\right|
+\left| \Delta \tilde{u}\right| +\left| \nabla \tilde{u}\right| +\left| \tilde{G}_2\right| \right) (x,t) ,\quad \text{in }\Omega \times (0,T),  \label{4.20}
\end{equation}
\begin{equation}
\partial_{\nu }\tilde{u}\mid_{\Sigma_T}=0,\quad \partial_{\nu }\tilde{m}\mid_{\Sigma_T}=0,
\label{4.21}
\end{equation}
\begin{equation}
\tilde{u}(x,T)=\tilde{u}_T(x),\quad \tilde{m}(x,T)
=\tilde{m}_T(x),\quad \text{on } \Omega .  \label{4.22}
\end{equation}

Squaring both sides of equation \eqref{4.19} and \eqref{4.20}, applying
Cauchy-Schwarz inequality, multiplying by the CWF $\varphi_{\lambda
,k}^2\left( t\right) $ defined in \eqref{3.1} and integrating over $\Omega \times (0,T),$
we obtain
\begin{multline}
\int_{\Omega \times (0,T)}\left( \partial_t m+\Delta \tilde{u}\right) ^2\varphi_{\lambda,k}^2\,dx\,dt\leq \tilde{C}_4\int_{\Omega \times (0,T)}\left( \nabla \tilde{u}\right)
^2\varphi_{\lambda ,k}^2\,dx\,dt \\ 
+\tilde{C}_4\int_{\Omega \times (0,T)}\left( \tilde{m}^2+\int_{\Omega
}\tilde{m}^2(y,t) \,dy+\tilde{G}_1^2\right) \varphi_{\lambda
,k}^2\,dx\,dt,
\label{4.23}
\end{multline}
\begin{multline}
\int_{\Omega \times (0,T)}\left( \tilde{m}_T-\Delta \tilde{m}\right) ^2\varphi_{\lambda
,k}^2\,dx\,dt\leq \tilde{C}_4\int_{\Omega \times (0,T)}\left( \left( \nabla
\tilde{m}\right) ^2+\tilde{m}^2\right) \varphi_{\lambda ,k}^2\,dx\,dt+ \\ 
+\tilde{C}_4\int_{\Omega \times (0,T)}\left( \left( \Delta \tilde{u}\right)^2+\left( \nabla \tilde{u}\right)^2+\tilde{u}^2+\tilde{G}_2^2\right) \varphi_{\lambda ,k}^2\,dx\,dt.
\label{4.24}
\end{multline}
Note that 
\begin{equation}
\int_{\Omega \times (0,T)}\left( \int_{\Omega }\tilde{m}^2(y,t)\,dy\right) \varphi_{\lambda ,k}^2\left( t\right) \,dx\,dt\leq \tilde{C}_4\int_{\Omega \times (0,T)}\tilde{m}^2\varphi_{\lambda ,k}^2\left( t\right) \,dx\,dt.
\label{4.25}
\end{equation}

Set 
\begin{equation}
b=1  \label{4.250}
\end{equation}
in the Carleman Weight Function $\varphi_{\lambda ,k}\left( t\right)$. 
Apply Carleman estimate \eqref{3.2} to the left hand side of \eqref{4.23}
and use \eqref{4.21}, \eqref{4.22} and \eqref{4.25}. We obtain 
\begin{multline}
\tilde{C}_4\int_{\Omega \times (0,T)}\left( \tilde{m}^2+\tilde{G}
_1^2\right) \varphi_{\lambda ,k}^2\,dx\,dt \\
\geq \int_{\Omega \times (0,T)}\left( \partial_t m^2+\left( \Delta \tilde{u}\right) ^2\right)
\varphi_{\lambda ,k}^2\,dx\,dt  
+\lambda k\int_{\Omega \times (0,T)}\left( \nabla \tilde{u}\right) ^2\varphi_{\lambda,k}^2\,dx\,dt
\\+\lambda ^2k^2\int_{\Omega \times (0,T)}\tilde{u}^2\varphi_{\lambda
,k}^2\,dx\,dt-  
-\tilde{C}_4e^{2\lambda (T+1) ^k}\int_{\Omega }
\left[ \left( \nabla_{x}\tilde{u}_T\right) ^2+\lambda k(T+1)
^k\tilde{u}_T^2\right] \,dx, \\ 
\forall \lambda >0,\quad\forall k>2.
\label{4.26}
\end{multline}
Choosing $\lambda_1=\lambda_1\left(D,T,\Omega \right) \geq 1$ so large that 
\begin{equation}
\lambda_1>2\tilde{C}_4  \label{4.27}
\end{equation}
and recalling that $k>2,$ we obtain from \eqref{4.26} 
\begin{multline}
\tilde{C}_4\int_{\Omega \times (0,T)}\tilde{m}^2\varphi_{\lambda ,k}^2\,dx\,dt+
\tilde{C}_4\int_{\Omega \times (0,T)}\tilde{G}_1^2\varphi_{\lambda
,k}^2\,dx\,dt \\ 
\geq \int_{\Omega \times (0,T)}\left( \partial_t m^2+\left( \Delta \tilde{u}\right) ^2\right)\varphi_{\lambda ,k}^2\,dx\,dt
+\lambda k\int_{\Omega \times (0,T)}\left( \nabla \tilde{u}\right) ^2\varphi_{\lambda,k}^2\,dx\,dt\\+\lambda ^2k^2\int_{\Omega \times (0,T)}\tilde{u}^2\varphi_{\lambda
,k}^2\,dx\,dt  
-\tilde{C}_4e^{2\lambda (T+1) ^k}\int_{\Omega }
\left[ \left( \nabla_{x}\tilde{u}_T\right) ^2+\lambda k(T+1)^k\tilde{u}_T^2\right] \,dx, 
\\\quad\forall \lambda \geq \lambda_1,\quad\forall k>2.
\label{4.28}
\end{multline}

We now apply Carleman estimate \eqref{3.3} to the left hand side of \eqref{4.24}. We obtain
\begin{multline}
\tilde{C}_4\int_{\Omega \times (0,T)}\left( \left( \Delta \tilde{u}\right)^2+\tilde{u}^2+\tilde{G}_2^2\right) \varphi_{\lambda ,k}^2\,dx\,dt+
\tilde{C}_4\int_{\Omega \times (0,T)}\left( \left( \nabla \tilde{m}\right)^2+\tilde{m}^2\right) \varphi_{\lambda ,k}^2\,dx\,dt \\ 
\geq \sqrt{k}\int_{\Omega \times (0,T)}\left( \nabla \tilde{m}\right) ^2\varphi_{\lambda,k}^2\,dx\,dt+\lambda k^2\int_{\Omega \times (0,T)}\tilde{m}^2\varphi_{\lambda ,k}^2\,dx\,dt \\ 
-\lambda k\left( T+b\right) ^{k-1}e^{2\lambda \left( T+b\right)
^k}\int_{\Omega }\tilde{m}_T^2(x)\,dx-e^{2\lambda
b^k}\int_{\Omega }\left[ \left( \nabla \tilde{m}\right) ^2+\sqrt{k}\tilde{m}^2
\right] (x,0)\,dx, \\
\forall \lambda >0,\quad\forall k\geq k_0=k_0\left(T\right) >2.
\label{4.29}
\end{multline}
Choose the number $k_0=k_0\left(T\right) $ so large that 
\begin{equation}
k_0>\max \left( 2,4\tilde{C}_4^2\right) ,  \label{4.290}
\end{equation}
and, until \eqref{4.410}, set $k=k_0.$ Also, let $\lambda \geq \lambda_1,$ where the number $\lambda_1=\lambda_1\left(D,T,\Omega\right) \geq 1$ is defined in \eqref{4.27}. Then \eqref{4.29} implies
\begin{multline}
\tilde{C}_4\int_{\Omega \times (0,T)}\left( \left( \Delta \tilde{u}\right)^2+\tilde{u}^2+\tilde{G}_2^2\right) \varphi_{\lambda ,k}^2\,dx\,dt 
\geq \int_{\Omega \times (0,T)}\left( \nabla \tilde{m}\right) ^2\varphi_{\lambda
,k}^2\,dx\,dt\\+\lambda \int_{\Omega \times (0,T)}\tilde{m}^2\varphi_{\lambda ,k}^2\,dx\,dt 
-\tilde{C}_4\lambda (T+1) ^{k-1}e^{2\lambda \left(
T+1\right) ^k}\int_{\Omega }\tilde{m}_T^2(x)\,dx\\-\tilde{C}_4e^{2\lambda }\int_{\Omega }\left[ \left( \nabla \tilde{m}\right)
^2+\tilde{m}^2\right] (x,0)\,dx, \quad 
\forall \lambda \geq \lambda_1.
\label{4.30}
\end{multline}
In particular, it follows from \eqref{4.30} that
\begin{multline}
\int_{\Omega \times (0,T)}\tilde{m}^2\varphi_{1,\lambda ,k}^2\,dx\,dt\leq \tilde{C}_4\lambda ^{-1}\int_{\Omega \times (0,T)}\left( \left( \Delta \tilde{u}\right)
^2+\tilde{u}^2+\tilde{G}_2^2\right) \varphi_{\lambda ,k}^2\,dx\,dt \\ 
+\tilde{C}_4e^{2\lambda (T+1) ^k}\int_{\Omega}\tilde{m}_T^2(x)\,dx+\tilde{C}_4e^{2\lambda}\int_{\Omega }\left[ \left( \nabla \tilde{m}\right) ^2+\tilde{m}^2\right](x,0)\,dx,\quad\forall \lambda \geq \lambda_1.
\label{4.31}
\end{multline}
Replacing the first term of the first line of \eqref{4.28} with the right hand side of inequality \eqref{4.31}, we obtain
\begin{multline}
\tilde{C}_4\lambda e^{2\lambda (T+1)^k}\int_{\Omega }\left[ \tilde{m}_T^2+\left( \nabla_{x}\tilde{u}_T\right)^2+\tilde{u}_T^2\right] (x)\,dx+\tilde{C}_4\int_{\Omega \times (0,T)}\left( \tilde{G}_1^2+\tilde{G}_2^2\right)
\varphi_{\lambda ,k}^2\,dx\,dt
\\+\tilde{C}_4e^{2\lambda }\int_{\Omega }\left[ \left( \nabla
\tilde{m}\right) ^2+\tilde{m}^2\right] (x,0)\,dx
+\frac{\tilde{C}_4}{\lambda }\int_{\Omega \times (0,T)}\left( \left( \Delta
\tilde{u}\right) ^2+\tilde{u}^2\right) \varphi_{\lambda ,k}^2\,dx\,dt\\
\geq \int_{\Omega \times (0,T)}\left( \partial_t m^2+\left( \Delta \tilde{u}\right) ^2\right)
\varphi_{\lambda ,k}^2\,dx\,dt
+\lambda \int_{\Omega \times (0,T)}\left( \nabla \tilde{u}\right) ^2\varphi_{\lambda
,k}^2\,dx\,dt\\+\lambda ^2\int_{\Omega \times (0,T)}\tilde{u}^2\varphi_{\lambda
,k}^2\,dx\,dt,\quad\forall \lambda \geq \lambda_1.\label{4.32}
\end{multline}
By \eqref{4.27}, $\tilde{C}_4/\lambda <1/2,$ for all $\lambda \geq \lambda_1.$ Hence, terms in the 4th and 5th lines of \eqref{4.32} absorb terms in the 3rd line of \eqref{4.32}.
Hence,
\begin{multline}
\tilde{C}_4\lambda e^{2\lambda (T+1)
^k}\int_{\Omega }\left[ \tilde{m}_T^2+\left( \nabla_{x}\tilde{u}_T\right)
^2+\tilde{u}_T^2\right] (x)\,dx+\tilde{C}_4\int_{\Omega \times (0,T)}\left( \tilde{G}_1^2+\tilde{G}_2^2\right)
\varphi_{\lambda ,k}^2\,dx\,dt\\
+\tilde{C}_4e^{2\lambda }\int_{\Omega }\left[ \left( \nabla
\tilde{m}\right) ^2+\tilde{m}^2\right] (x,0)\,dx
\geq \int_{\Omega \times (0,T)}\left( \partial_t m^2+\left( \Delta \tilde{u}\right) ^2\right)
\varphi_{\lambda ,k}^2\,dx\,dt
\\+\lambda \int_{\Omega \times (0,T)}\left( \nabla \tilde{u}\right) ^2\varphi_{\lambda
,k}^2\,dx\,dt+\lambda ^2\int_{\Omega \times (0,T)}\tilde{u}^2\varphi_{\lambda
,k}^2\,dx\,dt,\quad\forall \lambda \geq \lambda_1.
\label{4.33}
\end{multline}

Comparing the last two lines of \eqref{4.33} with the first line of \eqref{4.30}, we obtain
\begin{multline}
\tilde{C}_4\lambda e^{2\lambda (T+1)
^k}\int_{\Omega }\left[ \tilde{m}_T^2+\left( \nabla_{x}\tilde{u}_T\right)
^2+\tilde{u}_T^2\right] (x)\,dx+\tilde{C}_4\int_{\Omega \times (0,T)}\left( \tilde{G}_1^2+\tilde{G}_2^2\right)
\varphi_{\lambda ,k}^2\,dx\,dt\\
+\tilde{C}_4e^{2\lambda }\int_{\Omega }\left[ \left( \nabla
\tilde{m}\right) ^2+\tilde{m}^2\right] (x,0)\,dx
\geq \int_{\Omega \times (0,T)}\left( \nabla \tilde{m}\right) ^2\varphi_{\lambda
,k}^2\,dx\,dt+\lambda \int_{\Omega \times (0,T)}\tilde{m}^2\varphi_{\lambda ,k}^2\,dx\,dt,
\\\quad\forall \lambda \geq \lambda_1.\label{4.34}
\end{multline}
Summing up \eqref{4.33} and \eqref{4.34}, we obtain
\begin{multline}
\int_{\Omega \times (0,T)}\left( \partial_t m^2+\left( \Delta \tilde{u}\right) ^2+\left(
\nabla \tilde{u}\right) ^2+\tilde{u}^2+\left( \nabla \tilde{m}\right) ^2+\tilde{m}^2\right) \varphi
_{\lambda ,k}^2\,dx\,dt
\\
\leq \tilde{C}_4e^{3\lambda (T+1)
^k}\int_{\Omega }\left[ \tilde{m}_T^2+\left( \nabla_{x}\tilde{u}_T\right)
^2+\tilde{u}_T^2\right] (x)\,dx  
+\tilde{C}_4e^{2\lambda }\int_{\Omega }\left[ \left( \nabla
\tilde{m}\right) ^2+\tilde{m}^2\right] (x,0)\,dx\\+\tilde{C}_4\int_{\Omega \times (0,T)}\left( \tilde{G}_1^2
+\tilde{G}_2^2\right) \varphi_{\lambda,k}^2\,dx\,dt,\quad\forall \lambda \geq
\lambda_1.
\label{4.35}
\end{multline}
Since by \eqref{4.0} $Q_{\varepsilon ,T}\subset \Omega \times (0,T),$ then replacing $
\Omega \times (0,T) $ with $Q_{\varepsilon ,T}$ in the first line of \eqref{4.35}, we
strengthen this inequality. Hence, 
\begin{multline}
\int_{Q_{\varepsilon ,T}}\left( \partial_t m^2+\left( \Delta \tilde{u}\right)
^2+\left( \nabla \tilde{u}\right) ^2+\tilde{u}^2+\left( \nabla \tilde{m}\right)
^2+\tilde{m}^2\right) \varphi_{1,\lambda ,k}^2\,dx\,dt
\\
\leq \tilde{C}_4e^{3\lambda (T+1)
^k}\int_{\Omega }\left[ \tilde{m}_T^2+\left( \nabla_{x}\tilde{u}_T\right)
^2+\tilde{u}_T^2\right] (x)\,dx  
+\tilde{C}_4e^{2\lambda }\int_{\Omega }\left[ \left( \nabla
\tilde{m}\right) ^2+\tilde{m}^2\right] (x,0)\,dx\\+\tilde{C}_4\int_{\Omega \times (0,T)}\left( \tilde{G}_1^2+\tilde{G}_2^2\right) \varphi_{\lambda ,k}^2\,dx\,dt,\quad\forall \lambda \geq
\lambda_1.
\label{4.36}
\end{multline}

Next, by \eqref{3.1}, \eqref{4.0} and \eqref{4.250}
\begin{equation}
\min_{\bar{Q}_{\varepsilon ,T}}\varphi_{\lambda ,k}^2\left( t\right)
=e^{2\lambda \left( \varepsilon +1\right) ^k},\text{ }  \label{4.37}
\end{equation}
\begin{equation}
\max_{\bar{Q}_T}\varphi_{\lambda ,k}^2\left( t\right) =e^{2\lambda
(T+1) ^k}.  \label{4.38}
\end{equation}
Also, by \eqref{4.9}, \eqref{4.90}, \eqref{4.16}, \eqref{4.160} and \eqref{4.38} 
\begin{equation}
\tilde{C}_4e^{3\lambda (T+1) ^k}\int_{\Omega }
\left[ \tilde{m}_T^2+\left( \nabla_{x}\tilde{u}_T\right) ^2+\tilde{u}_T^2\right] \left(
x\right) \,dx+\tilde{C}_4\int_{\Omega \times (0,T)}\left( \tilde{G}
_1^2+\tilde{G}_2^2\right) \varphi_{\lambda ,k}^2\,dx\,dt
\leq \tilde{C}_4e^{3\lambda (T+1) ^k}\delta ^2.
\label{4.39}
\end{equation}
By the trace theorem
\begin{equation}
\tilde{C}_4e^{2\lambda }\int_{\Omega }\left[ \left( \nabla
\tilde{m}\right) ^2+\tilde{m}^2\right] (x,0)\,dx\leq \tilde{C}
_4e^{2\lambda }\left\Vert \tilde{m}\right\Vert_{H^2\left( \Omega \times (0,T)\right) }^2.
\label{4.40}
\end{equation}
Hence, using \eqref{4.36}, \eqref{4.37}, \eqref{4.39} and \eqref{4.40}, we
replace $\tilde{C}_4$ with $C_4$ (defined at the beginning of this proof)
and obtain for all $\lambda \geq \lambda_1:$
\begin{multline}
\left\Vert \partial_t m\right\Vert_{L^2\left( Q_{\varepsilon ,T}\right)
}^2+\left\Vert \Delta \tilde{u}\right\Vert_{L^2\left( Q_{\varepsilon ,T}\right)
}^2+\left\Vert \tilde{u}\right\Vert_{H^{1,0}\left( Q_{\varepsilon ,T}\right)
}^2+\left\Vert \tilde{m}\right\Vert_{H^{1,0}\left( Q_{\varepsilon ,T}\right)
}^2
\\
\leq C_4e^{3\lambda (T+1) ^k}\delta ^2+C_4\exp \left[
-2\lambda \left( \varepsilon +1\right) ^k\left( 1-\frac{1}{\left(
\varepsilon +1\right) ^k}\right) \right] \left\Vert \tilde{m}\right\Vert
_{H^2\left( \Omega \times (0,T)\right) }^2,
\label{4.41}
\end{multline}
Recalling \eqref{4.290}, choose $k_1=k_1\left(T,\varepsilon
\right) \geq k_0\left(T\right) $ sufficiently large such that 
\begin{equation}
\frac{1}{\left( \varepsilon +1\right) ^{k_1}}<\frac{1}{2}  \label{4.410}
\end{equation}
and set $k=k_1.$ Then \eqref{4.41} implies
\begin{multline}
\left\Vert \partial_t m\right\Vert_{L^2\left( Q_{\varepsilon ,T}\right)}^2+\left\Vert \Delta \tilde{u}\right\Vert_{L^2\left( Q_{\varepsilon ,T}\right)}^2+\left\Vert \tilde{u}\right\Vert_{H^{1,0}\left( Q_{\varepsilon ,T}\right)}^2+\left\Vert \tilde{m}\right\Vert_{H^{1,0}\left( Q_{\varepsilon ,T}\right)}^2\\ 
\leq C_4e^{3\lambda (T+1) ^k}\delta ^2+C_4e^{-\lambda\left( \varepsilon +1\right) ^k}\left\Vert \tilde{m}\right\Vert_{H^2\left(\Omega \times (0,T)\right) }^2,\quad\lambda \geq \lambda_1.
\label{4.42}
\end{multline}
Choose $\lambda =\lambda \left( \delta \right) $ such that 
\begin{equation}
e^{3\lambda \left( \delta \right) (T+1) ^k}\delta ^2=\delta .
\label{4.43}
\end{equation}
Hence, 
\begin{equation}
\lambda \left( \delta \right) =\ln \left[ \delta ^{\left( 3(T+1)
\right) ^{-1}}\right] ,  \label{4.44}
\end{equation}
\begin{equation}
e^{-\lambda \left( \varepsilon +1\right) ^k}=\delta ^{2\rho },\quad 
2\rho =\frac{1}{3}\left( \frac{\varepsilon +1}{T+1}\right) ^k<\frac{1}{3}.
\label{4.45}
\end{equation}
Choose $\delta_0=\delta_0\left(D,T,\Omega ,\varepsilon \right)
\in \left( 0,1\right) $ so small that
\begin{equation}
\lambda \left( \delta_0\right) =\ln \left[ \delta ^{\left( 3\left(
T+1\right) \right) ^{-1}}\right] \geq \lambda_1.  \label{4.46}
\end{equation}
Then \eqref{4.41}--\eqref{4.46} imply that
\begin{multline}
\left\Vert \partial_t m\right\Vert_{L^2\left( Q_{\varepsilon ,T}\right)
}+\left\Vert \Delta \tilde{u}\right\Vert_{L^2\left( Q_{\varepsilon ,T}\right)
}+\left\Vert \tilde{u}\right\Vert_{H^{1,0}\left( Q_{\varepsilon ,T}\right)
}+\left\Vert \tilde{m}\right\Vert_{H^{1,0}\left( Q_{\varepsilon ,T}\right) } \\ 
\leq C_4\left( 1+\left\Vert \tilde{m}\right\Vert_{H^2\left( \Omega \times (0,T)\right)
}\right) \delta ^{\rho },\quad\forall \delta \in \left( 0,\delta
_0\right) ,\rho \in \left( 0,1/6\right) .
\label{4.460}
\end{multline}
The rest of the proof of the desired H\"{o}lder stability estimates \eqref{4.10}--\eqref{4.11} follows immediately from \eqref{4.7}, \eqref{4.8}, \eqref{4.15}, \eqref{4.16}, \eqref{4.460}.
\end{proof}

\end{document}